\newcommand{\maru}[1]{\raise0.2ex\hbox{\textcircled{\scriptsize{#1}}}}
\def\MARU#1{{\rm\ooalign{\hfil\lower.168ex\hbox{#1}\hfil \crcr\mathhexbox20D}}}
    \newcommand{\BC}{{\mathbb {C}}} 
     \newcommand{\BF}{{\mathbb {F}}}
    \newcommand{\BQ}{{\mathbb {Q}}} \newcommand{\BR}{{\mathbb {R}}}
     \newcommand{\BT}{{\mathbb {T}}}
     \newcommand{\BZ}{{\mathbb {Z}}}
    \newcommand{\CK}{{\mathcal {K}}} 
    \newcommand{\CM}{{\mathcal {M}}} 
    \newcommand{\CO}{{\mathcal {O}}} 
     \newcommand{\CR}{{\mathcal {R}}}
     \newcommand{\CT}{{\mathcal {T}}}
     \newcommand{\CV}{{\mathcal {V}}}
     \newcommand{\SF}{{\mathscr {F}}}
    \newcommand{\fa}{{\mathfrak{a}}} 
     \newcommand{\ff}{{\mathfrak{f}}}
    \newcommand{\fg}{{\mathfrak{g}}} 
     \newcommand{\fl}{{\mathfrak{l}}}
    \newcommand{\fm}{{\mathfrak{m}}} \newcommand{\fn}{{\mathfrak{n}}}
     \newcommand{\fp}{{\mathfrak{p}}}
    \newcommand{\Aut}{{\mathrm{Aut}}}
    \newcommand{\cond}{\mathrm{cond^r}}
    \newcommand{\cyc}{{\mathrm{cyc}}}
    \newcommand{\End}{{\mathrm{End}}}
    \newcommand{\Frob}{{\mathrm{Frob}}}
    \newcommand{\Gal}{{\mathrm{Gal}}} \newcommand{\GL}{{\mathrm{GL}}}
    \newcommand{\Hom}{{\mathrm{Hom}}}
    \newcommand{\Ind}{{\mathrm{Ind}}}
    \newcommand{\loc}{{\mathrm{loc}}}
    \newcommand{\ord}{{\mathrm{ord}}} \newcommand{\rank}{{\mathrm{rank}}}
    \renewcommand{\mod}{\ \mathrm{mod}\ }
    \newcommand{\rec}{{\mathrm{rec}}}
    \newcommand{\Res}{{\mathrm{Res}}}
    \newcommand{\Sel}{{\mathrm{Sel}}}
    \newcommand{\sgn}{{\mathrm{sgn}}}
    \newcommand{\ur}{{\mathrm{ur}}}
    \font\cyr=wncyr10
    \newcommand{\Sha}{\hbox{\cyr X}}
    \newcommand{\ov}{\overline}
    \newcommand{\ra}{\rightarrow} 
    \newcommand{\nequiv}{\equiv\hspace{-10pt}/\ }
    \theoremstyle{plain}
    \newtheorem{thm}{Theorem}[section] \newtheorem{cor}[thm]{Corollary}
    \newtheorem{lem}[thm]{Lemma}  \newtheorem{prop}[thm]{Proposition}
    \newtheorem {conj}[thm]{Conjecture} \newtheorem{defn}[thm]{Definition}
     \newtheorem{lem-defn}[thm]{Lemma-Definition}
\newtheorem{fact}[thm]{Fact}
\theoremstyle{remark} \newtheorem{remark}[thm]{Remark}
\theoremstyle{remark} 
\theoremstyle{remark} \newtheorem{example}{Example}
\theoremstyle{remark} 
    \numberwithin{equation}{section}
\newcommand{\isom}{\overset{\sim}{\rightarrow}}
\begin{document}
\title{A local sign decomposition for symplectic self-dual Galois representations of rank two}
\author{Ashay Burungale, Shinichi Kobayashi, Kentaro Nakamura, Kazuto Ota}
\address{Ashay A. Burungale:  The University of Texas at Austin, Austin, TX 78712, USA.
} 
\email{ashayburungale@gmail.com}

\address{Shinichi Kobayashi: Faculty of Mathematics,
Kyushu University, 744, Motooka, Nishi-ku, Fukuoka, 819-0395, Japan.}
\email{kobayashi@math.kyushu-u.ac.jp}

\address{Kentaro Nakamura: Faculty of Mathematics,
Kyushu University, 744, Motooka, Nishi-ku, Fukuoka, 819-0395, Japan.}
\email{nakamura.kentaro.858@m.kyushu-u.ac.jp}

\address{Kazuto Ota: \textsc{Department of Mathematics, Graduate School of Science, Osaka University Toyonaka, Osaka 560-0043, Japan}} 
\email{
kazutoota@math.sci.osaka-u.ac.jp}

\begin{abstract}

 We prove the existence of a new structure on the first Galois cohomology of generic families of symplectic self-dual $p$-adic representations of $G_{\BQ_p}$ of rank two (a local sign decomposition): a functorial decomposition into free rank one Lagrangian submodules which 
encodes the $p$-adic variation of Bloch--Kato subgroups via completed epsilon constants, mirroring a    
symplectic structure.

The local sign decomposition has diverse local as well as global arithmetic consequences. This includes compatibility of
the Mazur--Rubin arithmetic local constant and completed epsilon constants,
answering a question of Mazur and Rubin. 
The compatibility
 leads to new cases of the $p$-parity conjecture for Hilbert modular forms at supercuspidal primes $p$. We also formulate and prove an analogue of Rubin's conjecture 
 over ramified quadratic extensions of $\BQ_p$. Using it, we construct an integral $p$-adic $L$-function for anticyclotomic deformation of a CM elliptic curve at primes $p$ ramified in the CM field.

\end{abstract}

\maketitle
\tableofcontents

\section{Introduction}\label{s:Intro}
This paper aims to study the arithmetic of symplectic self-dual $p$-adic representations of $G_{\BQ_p}$ of rank two.
Such representations arise naturally in arithmetic geometry and automorphic representation theory, and deform in $p$-adic families. 
The Bloch--Kato conjecture suggests epsilon constants to be pivotal to their arithmetic. 
 Our main result reveals a new structure on the first Galois cohomology of such representations (a local sign decomposition): a functorial decomposition into free rank one Lagrangian submodules encoding Bloch--Kato subgroups via completed epsilon constants, mirroring a symplectic structure (see Theorem~\ref{thm, intro main}). It has local as well as global arithmetic applications (see \S\ref{ss:applications}).
\subsection{Overview} 
A fundamental conjecture of Deligne, Beilinson, and Bloch--Kato \cite{BK} posits: special values of the complex $L$-function $L(M,s)$ associated to 
a motive $M$ over (a finite extension of) $\BQ$ at integers 
 encode the arithmetic of $M$ in the guise of  
 Bloch--Kato Selmer groups $H^1_{\rm f}(\BQ,M_p)$ associated to  
 the $p$-adic realizations $M_p$. It is a far-reaching generalization of the Birch and Swinnerton-Dyer conjecture for elliptic curves. 

The Bloch--Kato Selmer group $H^1_{\rm f}(\BQ,M_p)$ is a $p$-adic avatar of motivic cohomology. 
 A foundational insight of Bloch and Kato: to introduce the Bloch--Kato subgroup $$H^1_{\rm f}(\BQ_p,V) \subset H^1(\BQ_p,V)$$ 
  for any $p$-adic de Rham  representation $V$ of $G_{\BQ_p}:=\Gal(\ov{\BQ}_p/\BQ_p)$ 
 via $p$-adic Hodge theory, and define $H^1_{\rm f}(\BQ,M_p)$ consisting of global 
Galois 
 cohomology classes whose localization at $p$ lives in $H^1_{\rm f}(\BQ_{p},M_p)$ and which are unramified at other primes. This notion encompasses 
  the Mordell--Weil group of rational points on an elliptic curve  
 as well as 
  the Chow groups of cycles on an algebraic variety  in terms of Galois representations. 
For example, if $V=V_pE$ is the $p$-adic Tate module of an elliptic curve $E_{/\BQ_p}$, 
then  
$H^1_{\rm{f}}(\BQ_p, V)=E(\BQ_p)\hat{\otimes}_{\BZ_p} \BQ_p$ and a discrete analogue of the Bloch--Kato Selmer group 
for $E_{/\BQ}$ is the usual Selmer group $\Sel_{p^\infty}(E)$, 
encoding $E(\BQ)$ and the Tate--Shafarevich group $\Sha(E)[p^\infty]$.

An enriching landscape arises from self-dual motives $M$. 
The associated Galois representations are also self-dual, arising naturally in arithmetic geometry and the realm of automorphic forms. For example, a  Tate twist of the middle dimensional \'etale cohomology of a variety over $\BQ$ (such as the Tate module of an elliptic curve) 
and Galois representations associated to a self-dual cuspidal automorphic representation are self-dual.  
The corresponding Deligne--Langlands epsilon constant $\varepsilon(M)$ (with a suitable normalization) equals $+1$ or $-1$,   conjecturally the sign of the functional equation of the $L$-function $L(M,s)$. It is a product of local epsilon constants $\varepsilon_v(M)\in \{\pm 1\}$. 
The global $\varepsilon$-constant $\varepsilon(M)$ encodes the arithmetic of $M$. 
For example, 
the $p$-parity conjecture posits 
(the Bloch--Kato conjecture mod $2$): $\varepsilon(M)$
determines the parity of the rank of the Bloch--Kato Selmer group $H^1_{\rm f}(\BQ,M_p)$ 
 for any prime $p$. 
Among self-dual representations, an innate dichotomy presents itself: being symplectic or orthogonal. In the latter case  
$\varepsilon(M)$ equals\footnote{More precisely, Saito \cite{Sa} proved $\varepsilon(M_p)=+1$ 
for all but finitely many primes $p$.} $+1$.
 The symplectic case is 
more of an enigma, 
 and akin to phenomena in symplectic geometry.

As for the automorphic realm, 
self-dual representations also form a distinctive landscape. They are enriched by the Gan--Gross--Prasad (GGP) conjectures \cite{GGP}, linking  central $L$-values (resp.~derivatives) of certain self-dual automorphic $L$-functions $L(\pi,s)$ with automorphic periods (resp.~arithmetic cycles) if the automorphic $\varepsilon$-constant 
$\varepsilon(\pi)=+1$ (resp.~$\varepsilon(\pi)=-1$). 
The GGP conjectures have witnessed striking progress over the last decade (cf.~\cites{Zh,BP,YZZ}). Their $p$-adic analogues are also emerging, with applications to the Bloch--Kato conjecture. 
For example, the $p$-adic Waldspurger formula \cites{BDP1,LZZ} is instrumental in the recent progress \cites{Sk,BST,BST1} towards the Birch and Swinnerton-Dyer conjecture.

A salient feature of $p$-adic Galois representations is that they deform in $p$-adic families: global and local deformation rings parametrize self-dual deformations of such a fixed mod $p$ representation. In the automorphic habitat eigenvarieties and completed cohomology parametrize $p$-adic families of automorphic representations. 
For a $p$-adic family $\CM$ of symplectic self-dual Galois representations, 
the variation of $\varepsilon$-constants $\varepsilon(M)$ at geometric 
specializations $M$ of $\CM$  is elemental to its arithmetic. For example, if $\varepsilon(M)=-1$ systematically over $\CM$, then the $p$-parity conjecture predicts 
existence of a $p$-adic family of non-torsion algebraic cycles, a systematic example of which arises from the arithmetic GGP cycles.

In this paper we explore local analogues of such phenomena for symplectic self-dual Galois representations: 
connections between local $\varepsilon$-constants and local arithmetic cycles. 
Specifically, for a $p$-adic de Rham representation $V$ of $G_{\BQ_p}$, an element of $H^1_{\rm{f}}(\BQ_p, V)$ is considered as a local rational point or arithmetic cycle associated to $V$ after Bloch and Kato. 
 For symplectic self-dual de Rham representations $V$ of rank $2$, 
we explore connections between the $\varepsilon$-constant $\varepsilon_p(V)\in\{\pm 1\}$ 
and the Bloch--Kato subgroup $H^1_{\rm{f}}(\BQ_p, V)$. 

Our main result reveals a new structure on the Galois cohomology of rather general symplectic self-dual $\CR$-representations $\CT$ of $G_{\BQ_p}$: the existence of a functorial decomposition 
\begin{equation}\label{eq:lsd}
H^1(\BQ_p,\CT) = H^1_{+}(\BQ_p,\CT)\oplus H^1_{-}(\BQ_{p},\CT)
\end{equation}
into free $\CR$-submodules $H^1_{\pm}(\BQ_p,\CT)$ of rank one which are Lagrangian respect to the symmetric Tate pairing on $H^1(\BQ_p,\CT)$, mirroring an intrinsic symplectic structure  (see Theorem~\ref{thm, intro main}). Here $\CR$ is any commutative Noetherian local complete $\BZ_p$-algebra with finite residue field, $\CT$ a symplectic self-dual $\CR$-representation of $G_{\BQ_{p}}$ of rank two with $H^{0}(\BQ_p,\ov{\CT})=0$ for 
$\ov{\CT}:=\CT\otimes \CR/\mathfrak{m}_{\CR}$, and functoriality refers to compatibility of 
\eqref{eq:lsd} with respect to variation of the pairs $(\CR,\CT)$. 
A key feature of the local sign decomposition \eqref{eq:lsd} is that for de Rham representations $T$ precisely one of the submodules $H^1_{\pm}(\BQ_{p},T)$ equals the Bloch--Kato subgroup $H^{1}_{\rm f}(\BQ_{p},T)$, the sign being determined by its completed $\varepsilon$-constant 
$\hat{\varepsilon}$: 
 an amalgamation of the    
$\varepsilon$-constant and Hodge--Tate weights. 
In light of its functoriality, 
the local sign decomposition 
encodes variation of Bloch--Kato subgroups in a $p$-adic family $\CT$ of symplectic self-dual de Rham representations $T$, such as over a universal local deformation ring. 
It is rooted in the principle of Kato's local $\varepsilon$-conjecture \cite{Ka93} and relies on aspects of the $p$-adic local Langlands correspondence \cite{ColpLL}.

The local sign decomposition has diverse arithmetic consequences. This includes compatibility of
the Mazur--Rubin arithmetic local constant and $\hat{\varepsilon}$-constants for rank two $p$-adic representations of $G_{\BQ_p}$ (see Theorem~\ref{thm, MR, intro}) 
and the $p$-parity conjecture (see Theorem~\ref{thm, intro parity}). 
The compatibility 
links Bloch--Kato subgroups of congruent de Rham representations with their $\hat{\varepsilon}$-constants, answering a question of Mazur and Rubin \cite{MR}. 
In turn we establish new cases of 
the $p$-parity conjecture for Hilbert modular newforms  for supercuspidal primes $p$. 
This gives a new perspective on the work of Mazur--Rubin \cite{MR} and Nekov\'a\v{r}  \cites{NekMRl,NekMRtame}: 
the local sign decomposition \eqref{eq:lsd} is a categorification - an \'etale cohomological incarnation - of the numerical results of \cites{MR,NekMRl,NekMRtame}. 
As a concrete application, it significantly generalizes their results.

A special case of the local sign decomposition for a family of induced Galois representations  
gives a new proof of Rubin's 1987 conjecture \cite{Ru} 
on the structure of anticyclotomic local units over the unramified quadratic extension of $\BQ_p$ (see Theorem~\ref{thm, Rubin's conjecture}), as well as a formulation and proof of its analogue over ramified quadratic extensions of $\BQ_p$ (see Theorem~\ref{thm, Rubin's conjecture ram}). While the former
was resolved \cite{BKO21} in 2021, the latter had not been conjectured. 
 The ramified case exhibits drastic variation of $\varepsilon$-constants and new phenomena abound, for the study of which Rubin-type conjecture lays foundation.

The local sign decomposition is interlaced with Iwasawa theory, which 
seeks to study the arithmetic of a motive $M$ as it deforms in a $p$-adic family $\CM$.  
One aims to formulate an analog of the Bloch--Kato conjecture for the family $\CM$ itself - an Iwasawa main conjecture.
It ought to account  for the variation of Bloch--Kato subgroups over geometric specializations of $\CM$. 
However,  
 these subgroups typically 
 do not interpolate over a $p$-adic family, such as a non-ordinary family. In contrast our signed subgroups $H^1_{\pm}(\BQ_p,T)$ do. 
This leads to an analogue of the Bloch--Kato Selmer group for rank two $p$-adic symplectic self-dual families 
$\CM$ {of $G_\BQ$-representations}
 and a framework for integral Iwasawa theory at primes of non-semistable reduction. 
As an illustration, we introduce an integral $p$-adic $L$-function for the anticyclotomic deformation of a CM elliptic curve for primes $p$ ramified in the CM field, the first example of its kind (see~Theorem~\ref{thm, pL}).

\subsection{Local sign decomposition}
\subsubsection{Set up} Let $p$ be an odd prime. Let $\ov{\BQ}_p$ be an algebraic closure of $\BQ_p$ and $G_{\BQ_p}=\Gal(\ov{\BQ}_p/\BQ_p)$.

Let $\CT$ be a continuous $\CR$-representation of $G_{\BQ_p}$, where $\CT$ is a free $\CR$-module of finite rank and 
 $\CR$ a commutative topological $\BZ_p$-algebra 
satisfying either of the following conditions:

\begin{enumerate} 
 \item[i)] $\CR$ is a $\mathrm{Jac}(\CR)$-adically complete Noetherian semi-local ring such that 
 $\CR/\mathrm{Jac}(\CR)$ is a finite ring.
 \item[ii)] $\CR$ is a finite product of finite extensions of $\BQ_p$. 
\end{enumerate} 
For example $\CR$ can be (the integer ring of) a finite field extension of $\BQ_p$, a finite field of characteristic $p$ or (a quotient of) the power series ring $\BZ_p[\![X_{1},\cdots,X_{r}]\!]$. 
For the first example we use the notation $(R,T)$ instead of $(\CR,\CT)$ to emphasize that it does {not} correspond to a family of representations.

An $\CR$-representation $\CT$ is symplectic self-dual if there exists a skew-symmetric and $G_{\BQ_p}$-equivariant pairing 
 $\langle \;, \; \rangle: \CT \times \CT \rightarrow \CR(1)$ such that  
 $\CT \cong {\rm Hom}_\CR(\CT, \CR(1))$ for $\CR(1):=\CR\otimes_{\BZ_{p}}\BZ_{p}(1)$. 
The arithmetic of the following Galois representations will be the focus of this paper. 
\begin{defn}
A symplectic self-dual pair $(\CR,\CT)$ is generic 
if it satisfies the following.
\begin{itemize}
\item[i)] $H^{0}(\BQ_p,\ov{\CT}_\fm)=0$ if $\CR$ is semi-local, $\fm$ any maximal ideal and $\ov{\CT}_\fm:=\CT\otimes_{\CR}\CR/\fm$. 
\item[ii)] $H^{0}(\BQ_p,T)=0$ if $R:=\CR$ a product of finite field extensions of $\BQ_p$ and $T:=\CT$.
\end{itemize}

\end{defn}
We refer to Examples \ref{ex, ssd} and  \ref{example, ssd two} in \S\ref{ss:ssd}.  
For any generic pair $(\CR, \CT)$, the Galois cohomology $H^1(\BQ_p, \CT)$ is a free $\CR$-module of rank $\rank_{\CR}\CT$. It commutes with base change for $\CR$ and the associated Tate pairing over $\CR$ is perfect.

\subsubsection*{de Rham representations} The seminal works of Fontaine \cite{Fo} and Bloch--Kato \cite{BK} suggest that de Rham representations are geometric and akin to arithmetic phenomena. 
They are defined over (the integer ring of) a finite field extension\footnote{In particular, they correspond to the notation $(R,T)$ as above and not a family of representations $(\CR,\CT)$.} of $\BQ_p$. 
We now describe some arithmetic invariants attached to symplectic self-dual de Rham representations of $G_{\BQ_p}$. 

For such a representation $V$, 
let $$\varepsilon_p(V)\in \{\pm 1\}$$ denote the $\varepsilon$-constant of the associated Weil--Deligne 
 representation (cf.~\S\ref{ss:Epsilon}).

For a de Rham 
representation $V$ of $G_{\BQ_p}$, let $\Gamma(V)\in \BQ^\times$ be the associated $\Gamma$-factor (cf.~\S\ref{ss:Gamma}).

It is a $p$-adic analogue of the archimedean $\Gamma$-factor. 
In the symplectic self-dual case we have $\Gamma(V) \in \{\pm1\}$ and 
 in the rank two case 
 $\Gamma(V)=(-1)^{k-1}$  
 where $(k,1-k)$ are the Hodge--Tate weights of $V$ with $k>0$ (cf.~Lemma~\ref{prop:gamma}). Here and throughout the paper, we normalize the Hodge--Tate weights so that the $p$-adic cyclotomic character has Hodge--Tate weight $1$. 
 \begin{defn}
 For a symplectic self-dual de Rham representation $V$ of $G_{\BQ_p}$, the completed $\varepsilon$-constant 
  is defined by 
 \[\hat{\varepsilon}_p(V):=\Gamma(V)\varepsilon_p(V) \in \{\pm 1\}.\]
 For a $G_{\BQ_p}$-stable lattice $T \subset V$, we define $\hat{\varepsilon}_p(T)$ as $\hat{\varepsilon}_p(V)$. 
  \end{defn}
  \noindent The notion of completed $\varepsilon$-constant will recur through the paper.
  
  A fundamental invariant associated to a de Rham representation $V$ of $G_{\BQ_p}$ is the Bloch--Kato subgroup 
  \[
H^1_{\rm{f}}(\BQ_p, V):=
 \mathrm{Ker}\left(H^1(\BQ_p, V) \rightarrow  H^1(\BQ_p, V\otimes_{\BQ_p} B_{\rm{crys}}) \right). 
\]
For a $G_{\BQ_p}$-stable lattice $T \subset V$, define $H^1_{\rm{f}}(\BQ_p, T)=\iota^{-1}(H^1_{\rm{f}}(\BQ_p, V))$ where  $\iota: H^1(\BQ_p, T) \rightarrow H^1(\BQ_p, V)$ is the natural map.

\subsubsection{Main result}\label{ss,mr}
We consider the arithmetic of symplectic self-dual representations of $G_{\BQ_p}$ of rank two. The central result of this paper is the following. 
\begin{thm}\label{thm, intro main}
Let $p$ be an odd prime. 
For all generic symplectic self-dual pairs  $(\CR, \CT)$ of $G_{\BQ_p}$-representations of rank two,
 there is a functorial decomposition 
\[
H^1(\BQ_p, \CT)=H^1_+(\BQ_p, \CT) \oplus H^1_-(\BQ_p, \CT)
\]
into free $\CR$-submodules of rank one satisfying the following. 
\begin{enumerate}
\item[1)] The $\CR$-submodules $H^1_\pm (\BQ_p, \CT) \subset H^1(\BQ_p,\CT)$  
are  Lagrangian\footnote{That is, they are maximal isotropic submodules (cf.~Definition~\ref{def:Lagrangian}). Note that we use the terminology `Lagrangian' though the Tate pairing is {\it not} skew symmetric.}
with respect to the symmetric Tate pairing on $H^1(\BQ_p,\CT)$. 
 \item[2)] For any morphism $\CT \rightarrow \CT'$ of $\CR$-representations, the canonical morphism induces $\CR$-module homomorphisms 
 \[
 H^1_\pm(\BQ_p, \CT)\rightarrow   H^1_\pm(\BQ_p, \CT').  
 \]
  \item[3)] For any continuous $\BZ_p$-algebra homomorphism $\CR \rightarrow \CR'$, the canonical morphism induces $\CR'$-module isomorphisms 
 \[
 H^1_\pm(\BQ_p, \CT)\otimes_\CR  \CR' \cong  H^1_\pm(\BQ_p, \CT\otimes_\CR  \CR').  
 \]
 In other words, the decomposition is compatible with base change. 
\item[4)] If $\CT\otimes_{\BZ_p}\BQ_p =: T\otimes_{\BZ_p} \BQ_p$ is de Rham, then  
\[
H^1_{-\hat{\varepsilon}_p(T)}(\BQ_p, T)=H^1_{\mathrm{f}}(\BQ_p, T), 
\]
where $H^1_{\pm 1}(\BQ_p,T):=H^1_{\pm}(\BQ_p,T)$. 
\end{enumerate}
Moreover, the properties 3)-4) characterize the decomposition uniquely. 
\end{thm}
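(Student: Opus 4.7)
The strategy is to first establish uniqueness, which pins down exactly what must be constructed, and then to build the decomposition over a universal deformation ring, transporting the general case via property 3).

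\smallskip

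Uniqueness from properties 3) and 4) proceeds as follows. Fix a generic residual representation $\ov{T}$ over a finite field equipped with a symplectic form, and let $\CR^{\univ}$ denote the associated universal deformation ring (for symplectic self-dual deformations) with universal object $\CT^{\univ}$. Every generic pair $(\CR,\CT)$ arises by base change along some continuous $\BZ_p$-algebra map $\CR^{\univ}\to\CR$, so by property 3) it suffices to prove uniqueness for $(\CR^{\univ},\CT^{\univ})$. The key point is that crystalline (hence de Rham) points are Zariski dense in the rigid-analytic generic fibre of $\Spec \CR^{\univ}$, and at each such point property 4) determines $H^{1}_{\pm}$ unambiguously: it must be either $H^{1}_{\mathrm{f}}$ or its orthogonal complement, the sign being the value of $\hat{\varepsilon}_p$. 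Because $H^{1}(\BQ_p,\CT^{\univ})$ is a free $\CR^{\univ}$-module of rank two, a rank-one direct summand is classified by a morphism to $\BP^{1}$, and Zariski density forces two such morphisms that agree on a dense set to coincide.

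\smallskip

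For existence, the decomposition is constructed over $\CR^{\univ}$ and pushed to general $(\CR,\CT)$ by base change. Two interlocking tools are envisaged. On the Zariski dense locus of trianguline deformations, a canonical triangulation of the associated $(\varphi,\Gamma)$-module over the Robba ring produces a canonical $\CR^{\univ}$-line in $H^{1}(\BQ_p,\CT^{\univ})$ which at crystalline specializations coincides with $H^{1}_{\mathrm{f}}$ or with its Tate-dual complement; the discriminator is $\hat{\varepsilon}_p$. To analytically continue this line to a free rank-one direct summand of $H^{1}(\BQ_p,\CT^{\univ})$ defined over all of $\Spec\CR^{\univ}$, the plan is to invoke Colmez's $p$-adic local Langlands correspondence for $\GL_{2}(\BQ_p)$ together with the principle of Kato's local $\varepsilon$-isomorphism in families. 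Having constructed the line $H^{1}_{-\hat{\varepsilon}_p}(\BQ_p,\CT^{\univ})$, its orthogonal complement under the symmetric Tate pairing defines $H^{1}_{+\hat{\varepsilon}_p}$; both are rank one and Lagrangian because an isotropic rank-one submodule of a rank-two module with perfect symmetric pairing is automatically maximal isotropic. Properties 2) and 3) are built into the universal construction, while property 4) holds by construction on the de Rham locus.

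\smallskip

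The main obstacle is precisely the analytic continuation across the non-trianguline, and a fortiori non-de Rham, loci of $\Spec \CR^{\univ}$: Bloch--Kato's definition of $H^{1}_{\mathrm{f}}$ has no direct meaning there, yet the decomposition must produce a free rank-one summand interpolating the Bloch--Kato side at all de Rham points. This is the juncture at which the categorifying power of the $p$-adic local Langlands correspondence and of Kato's $\varepsilon$-conjecture must be brought to bear; pointwise $p$-adic Hodge theory alone will not suffice. The symplectic self-dual, rank two hypothesis is crucial here, as it forces an involutive symmetry on $H^{1}$ whose categorification by the sign set $\{\pm 1\}$ is measured precisely by $\hat{\varepsilon}_p$.
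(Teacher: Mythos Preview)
Your uniqueness argument is correct and matches the paper's approach (reduction to the universal deformation ring plus Zariski density of crystalline points with prescribed sign, established in the appendix).

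The existence argument, however, has two genuine gaps.

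First, a Lagrangian is by definition its own orthogonal complement: $L=L^{\perp}$. So once you have one Lagrangian line, taking its orthogonal under the Tate pairing returns the same line, not a complementary one. You cannot produce $H^{1}_{+}$ from $H^{1}_{-}$ this way; both must be constructed simultaneously. Relatedly, the notation ``$H^{1}_{-\hat{\varepsilon}_p}(\BQ_p,\CT^{\univ})$'' is not well-posed: $\hat{\varepsilon}_p$ is a function of the de Rham specialization, not a constant over $\Spec\CR^{\univ}$, so there is no single sign to attach to the line you are trying to build. The decomposition must come with an intrinsic $\{+,-\}$ labeling that only a posteriori matches $-\hat{\varepsilon}_p$ at de Rham points.

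Second, ``analytic continuation from the trianguline locus'' is exactly the hard step, and pointwise knowledge on a dense set does not by itself extend a rank-one summand across the non-trianguline locus. The paper bypasses this entirely: rather than building one line and extending it, it constructs an \emph{involution} $w_{\CT}$ on $H^{1}(\BQ_p,\CT)$ directly for every generic $(\CR,\CT)$, and defines $H^{1}_{\pm}$ as its $\pm 1$-eigenspaces. Two equivalent constructions of $w_{\CT}$ are given. One compares the symmetric Tate pairing with the skew-symmetric pairing on $H^{1}(\BQ_p,\CT)$ furnished by Kato's local $\varepsilon$-isomorphism (now a theorem in rank two); the unique operator intertwining them is $w_{\CT}$. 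The other realizes $w_{\CT}$ as the action of $\left(\begin{smallmatrix}0&1\\1&0\end{smallmatrix}\right)$ on Colmez's $D^{\natural}\boxtimes\mathbf{P}^{1}$, transported to $H^{1}$ via an isomorphism $(D^{\natural}\boxtimes\mathbf{P}^{1})^{\alpha_p=1}/(\gamma-1)\cong H^{1}(\BQ_p,\CT)$. The involution is automatically functorial and base-change compatible, and an explicit computation with the de Rham $\varepsilon$-isomorphism shows that $w_{V}$ acts on $H^{1}_{\mathrm f}(\BQ_p,V)$ by $-\hat{\varepsilon}_p(V)$, which both gives property 4) and fixes the global labeling. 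The fact that $w_{\CT}\neq\pm\mathrm{id}$ (hence both eigenspaces have rank one) follows because one pairing is symmetric and the other skew.
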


The above local sign decomposition is an \'etale manifestation of symplectic self-duality. 
We emphasize its categorical nature: it is functorial and applies over rather general $\CR$, for example over finite fields of characteristic $p$ as well as universal local deformation rings (cf.~Example~\ref{example, ssd two}). 
In view of the de Rham property 4) and the base change property 3)  it encodes the variation of Bloch--Kato subgroups over any generic $p$-adic symplectic self-dual family of rank $2$ in terms of $\hat{\varepsilon}$-constants of its de Rham specializations. 

The uniqueness 
 is determined by
  the variation over all generic symplectic self-dual 
  pairs of rank two, 
  while the de Rham property applies over specific base: (integer ring of) finite extensions of $\BQ_p$.
  We refer to  \S\ref{ss, sgn sub} 
for some examples of signed submodules.

\begin{remark}\noindent
\begin{itemize}
\item[i)] If $\CR$ is local or a finite extension of $\BQ_p$, 
then $H^1(\BQ_p,\CT)$ apparently has zero or two Lagrangians.
 For de Rham representations $T$, there are exactly two Lagrangians since 
 $H^1_{\rm f}(\BQ_p,T)$ is a Lagrangian.
  By Theorem~\ref{thm, intro main}, the same holds for any generic $\CT$, 
i.e. 
 $H^1(\BQ_p,\CT)$ is a metabolic space. 
 \item[ii)] Theorem~\ref{thm, intro main} excludes non-generic representations and the prime $p=2$. 
 A variant of the local sign decomposition also holds for these excluded cases (see Theorem~\ref{thm, main3}).
Moreover, for odd primes $p$, non-generic representations seem to exhibit an integral decomposition mirroring the local sign decomposition (see~\S\ref{subsubsection, anomalous} for the de Rham and mod $p$ cases).  
\end{itemize}
\end{remark}

\subsubsection{The signed submodules and universal norms}
For  any pair $(\CR,\CT)$ as in \S\ref{ss,mr} and $\varepsilon \in \{\pm \}$, 
define 
\[
Z_\varepsilon(\CT):=\{v \in H^1(\BQ_p, \CT)\,|\, s(v) \in H^1_\mathrm{f}(\BQ_p, T_s) \; \text{for any de Rham specialization $T_s$ 
with $\hat{\varepsilon}_p(T_s)=-\varepsilon \cdot 1$} \}. 
\]

A universal norm is an element of $H^1(\BQ_p,\CT)$ whose specialization corresponding to any de Rham specialization $T_s$ lies in the Bloch--Kato subgroup $H^1_{\rm f}(\BQ_p,T_s)$, 
and 
 {the universal norm subgroup} $H^1_{\mathrm{f}}(\BQ_p, \CT)$   
 is the set of all universal norms. 

\begin{prop}\label{prop, decomp via Z}  
For any generic symplectic self-dual $G_{\BQ_p}$-representation $(\CR,\CT)$ of rank two, 
we have the inclusion $$H^1_{\pm}(\BQ_p,\CT)\subseteq Z_{\pm}(\CT).$$ 
It is an equality if $\CR$ contains Zariski dense specializations $s$ with $T_s$ de Rham and  
$\hat{\varepsilon}_p(T_s)=\mp 1$. 
Moreover, the equality for both signs holds if and only if 
$H^1_{\mathrm{f}}(\BQ_p,\CT)=\{0\}.$

\end{prop}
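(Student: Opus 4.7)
The plan is to derive all three assertions directly from the functoriality of the decomposition in Theorem~\ref{thm, intro main}: base change (property 3), the de Rham identification $H^1_{-\hat{\varepsilon}_p(T)}(\BQ_p,T)=H^1_{\mathrm{f}}(\BQ_p,T)$ (property 4), and the directness of the decomposition $H^1=H^1_+\oplus H^1_-$.

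For the first inclusion, take $v\in H^1_{\varepsilon}(\BQ_p,\CT)$ and any de Rham specialization $s$ with $\hat{\varepsilon}_p(T_s)=-\varepsilon$; base change gives $s(v)\in H^1_{\varepsilon}(\BQ_p,T_s)$, which by property~4 coincides with $H^1_{\mathrm{f}}(\BQ_p,T_s)$. Hence $v\in Z_{\varepsilon}(\CT)$. For the equality under Zariski density, let $v\in Z_{\varepsilon}(\CT)$ and decompose $v=v_++v_-$ along $H^1_+\oplus H^1_-$. For a de Rham specialization $s$ with $\hat{\varepsilon}_p(T_s)=-\varepsilon$, $s(v)\in H^1_{\mathrm{f}}(\BQ_p,T_s)=H^1_{\varepsilon}(\BQ_p,T_s)$; since $s(v_{\varepsilon})$ also lies there, directness forces $s(v_{-\varepsilon})=0$. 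Writing $v_{-\varepsilon}=c\cdot e$ for an $\CR$-basis $e$ of the free rank-one module $H^1_{-\varepsilon}(\BQ_p,\CT)$, base change makes $\bar{e}\in H^1_{-\varepsilon}(\BQ_p,T_s)$ an $R_s$-basis, so $s(v_{-\varepsilon})=s(c)\bar{e}=0$ forces $s(c)=0$, i.e.~$c\in\ker(s)$. Zariski density of the chosen set of specializations yields $c\in\bigcap_s\ker(s)=\sqrt{0}$, which vanishes in the reduced quotient; since $e$ generates a free rank one submodule, this is enough to conclude $v_{-\varepsilon}=0$ and $v\in H^1_{\varepsilon}(\BQ_p,\CT)$.

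For the last assertion, observe that by the very definitions $Z_+(\CT)\cap Z_-(\CT)=H^1_{\mathrm{f}}(\BQ_p,\CT)$. If $H^1_\pm=Z_\pm$ for both signs, then $Z_+\cap Z_-=H^1_+\cap H^1_-=\{0\}$, giving one direction. Conversely, assume $H^1_{\mathrm{f}}(\BQ_p,\CT)=\{0\}$; fix $\varepsilon$ and take $v\in Z_{\varepsilon}(\CT)$ with decomposition $v=v_++v_-$. Since $v_{\varepsilon}\in H^1_{\varepsilon}\subseteq Z_{\varepsilon}$, the element $v_{-\varepsilon}=v-v_{\varepsilon}$ also lies in $Z_{\varepsilon}$. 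On the other hand, for any de Rham specialization $s$ with $\hat{\varepsilon}_p(T_s)=+\varepsilon$, property~4 gives $H^1_{-\varepsilon}(\BQ_p,T_s)=H^1_{\mathrm{f}}(\BQ_p,T_s)$, whence $s(v_{-\varepsilon})\in H^1_{\mathrm{f}}(\BQ_p,T_s)$, i.e.~$v_{-\varepsilon}\in Z_{-\varepsilon}$ as well. Therefore $v_{-\varepsilon}\in Z_+\cap Z_-=H^1_{\mathrm{f}}(\BQ_p,\CT)=\{0\}$ and $v\in H^1_{\varepsilon}(\BQ_p,\CT)$, completing the equality.

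The main obstacle is the Zariski density step: one must translate a geometric density statement about specializations of $\Spec(\CR)$ into the vanishing of an element $c\in\CR$ via the intersection $\bigcap_s\ker(s)$. When $\CR$ is reduced this intersection is zero and the argument runs cleanly; for a general coefficient ring (e.g.~a universal deformation ring with nilpotents) the intersection is only the nilradical, and a little extra care is needed to pass from nilpotence of $c$ to vanishing of $v_{-\varepsilon}$ in the free rank-one submodule. The remaining assertions are essentially formal once Theorem~\ref{thm, intro main} is in hand.
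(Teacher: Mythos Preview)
Your approach is essentially identical to the paper's (the inclusion $H^1_\pm\subseteq Z_\pm$ and the ``moreover'' clause are the content of the unlabeled proposition in \S\ref{s: sgn submodules}, and the Zariski-density equality is Proposition~\ref{prop, decomp by using Z}, argued exactly as you do: decompose $v=v_++v_-$, use base change and property~4) to see $s(v_{-\varepsilon})=0$, then invoke density).

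One correction: your sentence ``since $e$ generates a free rank one submodule, this is enough to conclude $v_{-\varepsilon}=0$'' does not follow. If $c$ is merely nilpotent and $e$ is a basis of a free rank-one $\CR$-module, then $c\cdot e\neq 0$ unless $c=0$; freeness gives you nothing here. You correctly flag the issue in your final paragraph, but the attempted fix in the body is wrong. The paper's proof is equally terse at this step (``Hence, it follows that $r_{-\varepsilon}=0$ by the Zariski density assumption''); it goes through because in the paper's sole application (Theorem~\ref{thm, univ}, to the universal framed deformation ring $R_{\ov\rho}^{\Box}$) the ring is reduced and $p$-torsion free by \cite{BIP}, so Zariski density in the generic fibre genuinely forces $\bigcap_s\ker(s)=0$. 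For the general statement you should either assume $\CR$ reduced or read ``Zariski dense'' as $\bigcap_s\ker(s)=0$.
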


Along with the Zariski density of signed crystalline points on a local deformation ring (cf.~Theorem \ref{thm, density sgn crystalline}), we obtain the following.  
\begin{thm}\label{thm, universal norm zero}
Let $\BT_{\ov{\rho}}^{\Box}$ be the universal framed deformation of a generic mod $p$ representation $\ov{\rho}:G_{\BQ_p}\ra \GL_{2}(\ov{\BF}_{p})$. Then $H^1_{\rm f}(\BQ_p, \BT_{\ov{\rho}}^{\Box})=0$. 
\end{thm}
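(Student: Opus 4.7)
The plan is to deduce Theorem~\ref{thm, universal norm zero} directly from Proposition~\ref{prop, decomp via Z} combined with Theorem~\ref{thm, density sgn crystalline}. The key equivalence supplied by the ``moreover'' clause of Proposition~\ref{prop, decomp via Z} states that $H^1_{\rm f}(\BQ_p,\CT)=\{0\}$ precisely when both of the inclusions $H^1_{\pm}(\BQ_p,\CT)\subseteq Z_{\pm}(\CT)$ are equalities; and the sufficient criterion for each such equality is Zariski density of de Rham specializations of the opposite sign. So the entire proof reduces to realizing both signs of $\hat{\varepsilon}$ Zariski densely on the universal framed deformation space of $\ov{\rho}$.

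First, I would verify that the pair $(R,\CT)=(\BR^{\Box}_{\ov{\rho}},\BT^{\Box}_{\ov{\rho}})$ is a generic symplectic self-dual pair of rank two in the sense of the paper. The framed deformation ring $\BR^{\Box}_{\ov{\rho}}$ is a Noetherian complete local $\BZ_p$-algebra with finite residue field, so it fits into class i) of the allowed base rings. The symplectic self-duality (fixing $\det=\chi_{\rm cyc}$ up to the fixed twist) is built into the deformation problem. Genericity of the residual representation $\ov{\rho}$ gives $H^0(\BQ_p,\ov{\CT})=0$, so Theorem~\ref{thm, intro main} and Proposition~\ref{prop, decomp via Z} apply to this pair.

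Next, I would invoke Theorem~\ref{thm, density sgn crystalline} on the Zariski density of signed crystalline points. Applied to $\BR^{\Box}_{\ov{\rho}}$, it yields for each sign $\delta\in\{\pm\}$ a Zariski dense set of classical points $s\in\Spec \BR^{\Box}_{\ov{\rho}}[1/p]$ at which the specialization $T_s$ is crystalline (hence de Rham) with $\hat{\varepsilon}_p(T_s)=\delta\cdot 1$. Crystalline lifts of $\ov{\rho}$ with widely varying Hodge--Tate weights $k$ (controlling $\Gamma(T_s)=(-1)^{k-1}$) and crystalline Frobenius data (controlling $\varepsilon_p(T_s)$) produce both signs of the completed $\varepsilon$-constant Zariski densely, so this should be clean to cite once the theorem is in place.

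Combining the two sign densities with Proposition~\ref{prop, decomp via Z} immediately yields $H^1_{+}(\BQ_p,\BT^{\Box}_{\ov{\rho}})=Z_{+}(\BT^{\Box}_{\ov{\rho}})$ and $H^1_{-}(\BQ_p,\BT^{\Box}_{\ov{\rho}})=Z_{-}(\BT^{\Box}_{\ov{\rho}})$, whereupon the ``moreover'' equivalence of Proposition~\ref{prop, decomp via Z} forces $H^1_{\rm f}(\BQ_p,\BT^{\Box}_{\ov{\rho}})=\{0\}$. The main conceptual obstacle is thus entirely deferred to Theorem~\ref{thm, density sgn crystalline}: one needs a genuine Zariski density statement for each sign separately, not merely density of crystalline points. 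Once that density is available, the present theorem is a formal consequence, illustrating how the local sign decomposition cleanly categorifies universal-norm phenomena.
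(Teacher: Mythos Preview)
Your proposal is correct and follows essentially the same approach as the paper: the paper's proof (see Theorem~\ref{thm, univ}) simply cites Theorem~\ref{thm, density sgn crystalline} for the Zariski density of crystalline points of each sign on $R_{\ov{\rho}}^{\Box}$ and then invokes Proposition~\ref{prop, decomp by using Z} (the body-text version of Proposition~\ref{prop, decomp via Z}) to conclude. Your preliminary verification that $(R_{\ov{\rho}}^{\Box},\BT_{\ov{\rho}}^{\Box})$ is a generic symplectic self-dual pair is recorded in the paper as Example~\ref{example, ssd two}~b3).
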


\subsection{Applications}\label{ss:applications}
\subsubsection{Mazur--Rubin arithmetic local constants, epsilon constants and the parity conjecture}
\subsubsection*{Backdrop}\noindent 
We first introduce the parity conjecture and Mazur--Rubin local constants  (cf.~\cites{MR,NekMRl}).

Let $M$ be a pure motive over a number field $F$ with coefficients in a number field $L_M$. 
For a prime $\mathfrak{p}$ of $L_M$ above $p$, let $V:=M_\mathfrak{p}$ be the $p$-adic representation of $G_{F}:=\Gal(\ov{\BQ}/F)$ associated to $M$ 
with coefficients in $L:=L_{M,\mathfrak{p}}$. 
Suppose that $M$ is symplectic self-dual, i.e. 
there exists an isomorphism 
$c : M\cong M^\vee(1)$ of pure motives satisfying the equality $-c=c^{\vee}(1) : M=(M^{\vee}(1))^{\vee}(1)\cong M^\vee(1)$.

Let $H^1_{\rm{f}}(F, V)$ be the Block--Kato Selmer group associated to $V$ 
 and put\footnote{Note that $H^0(F,V)=0$ since $M$ is pure and self-dual. We still include the general definition since it will be used for geometric Galois representations.} 
$$
\chi_{\rm{f}}(F, V)=\dim_{L} H^1_{\rm{f}}(F, V) - \dim_{L} H^0(F, V).
$$
Let $\varepsilon(V)\in\{\pm 1\}$ be the associated global epsilon constant. 
In fact $\varepsilon(V)$ is defined for any geometric $p$-adic representation $V$ of $G_F$  (cf.~Definition~\ref{def:epsilon}). 
By the Fontaine--Mazur conjecture, such a $V$ arises from a motive over $F$. 
Conjecturally, $\varepsilon(V)$ is the sign of the functional equation of the $L$-function $L(V,s)$.

We have the following mod $2$ analogues of the Bloch--Kato conjecture. 
\begin{conj}\label{conj, p-parity, intro}($p$-parity conjecture) 
For any geometric symplectic self-dual $p$-adic representation $V$ of $G_F$, 
we have
$$
\varepsilon(V)=(-1)^{\chi_{\rm{f}}(F, V)}.
$$
\end{conj}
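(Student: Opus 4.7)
The plan is to attack this conjecture by combining the Mazur--Rubin congruence method with the local sign decomposition (Theorem~\ref{thm, intro main}), which is the structural engine of the paper. Given a geometric symplectic self-dual $p$-adic representation $V$ of $G_F$, I would first seek an auxiliary such representation $V_0$ having (i) the same residual representation $\ov{V}\cong\ov{V_0}$, generic in the sense of the paper at each place above $p$, and (ii) the $p$-parity conjecture already known for $V_0$ by independent (typically automorphic) methods -- for instance Nekov\'a\v{r}-type results at semistable reduction primes, modular parity theorems in ordinary cases, or explicit CM cases. It then suffices to establish
\[
\varepsilon(V)\varepsilon(V_0)=(-1)^{\chi_{\rm f}(F,V)+\chi_{\rm f}(F,V_0)}.
\]

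Both sides factor as products over places of $F$. For the right-hand side I would invoke the Mazur--Rubin formalism: the parity difference between $H^1_{\rm f}(F_v,V)$ and $H^1_{\rm f}(F_v,V_0)$, taken inside a common ambient $H^1(F_v,-)$ coming from a congruence of lattices, is encoded in a local arithmetic constant $\delta_v(V,V_0)\in\{\pm 1\}$. At archimedean $v$ and finite $v\nmid p$, identifying $\delta_v$ with the local epsilon ratio $\varepsilon_v(V)/\varepsilon_v(V_0)$ is classical, modulo well-known local epsilon calculations. The genuine novelty appears at $v\mid p$: here $H^1_{\rm f}$ is $p$-adic-Hodge-theoretic and behaves erratically within a congruence class, particularly at supercuspidal or wildly ramified types, which is precisely where classical tools break down.

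The decisive input at such primes is Theorem~\ref{thm, intro main}(4), which pins down $H^1_{\rm f}(F_v,T)$ as the signed Lagrangian $H^1_{-\hat{\varepsilon}_v(T)}(F_v,T)$. Applying this to $V$ and $V_0$ inside a common framed deformation ring $\CR$ -- whose genericity and triviality of universal norms is secured by Theorem~\ref{thm, universal norm zero} -- and invoking the functoriality and base-change properties (2)--(3) of the decomposition, I would deduce the key local identity
\[
\delta_v(V,V_0)=\hat{\varepsilon}_v(V)\hat{\varepsilon}_v(V_0),
\]
namely the Mazur--Rubin compatibility of Theorem~\ref{thm, MR, intro}. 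Combined with the factorization $\hat{\varepsilon}_v=\Gamma_v\cdot\varepsilon_v$ and a parity count of the Hodge--Tate contributions, $\delta_v$ matches $\varepsilon_v(V)/\varepsilon_v(V_0)$ at every $v\mid p$, reconciling the local terms on both sides and hence delivering the global parity identity.

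The hard part is \emph{not} the local sign input -- that is precisely what the paper's machinery delivers cleanly, even at supercuspidal primes above $p$, where all previous numerical approaches falter. The real obstacle is the global step of producing a suitable auxiliary $V_0$: one must prescribe its ramification outside $p$ to match that of $V$, keep its residual representation generic at every place of $F$ above $p$, and arrange that parity is already known for $V_0$. In the Hilbert modular setting advertised in the abstract I would realise this by deforming in a Hilbert eigenvariety toward an ordinary or semistable specialization, employing level-raising and lowering to align tame ramification, and exploiting a suitable solvable base change to isolate the known parity case. Non-generic residual types and the prime $p=2$ fall outside the scope of Theorem~\ref{thm, intro main} and would require the variant alluded to in Theorem~\ref{thm, main3}.
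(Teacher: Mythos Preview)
The statement you are attempting is labeled a \emph{conjecture}; the paper does not claim or prove it in this generality. What the paper does prove are special cases: the relative $p$-parity conjecture for rank-two representations with $p$ odd and totally split in $F$ (Theorem~\ref{parity-family, intro}/Theorem~\ref{thm, parity-family}), and the $p$-parity conjecture for Hilbert modular forms under those same hypotheses plus residual irreducibility (Theorem~\ref{thm, intro parity}/Theorem~\ref{thm, parity}). Your proposal is therefore not a proof of the stated conjecture but a strategy template, and it is essentially the template the paper follows for those special cases.

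Two concrete gaps in your write-up deserve flagging. First, the local sign decomposition of Theorem~\ref{thm, intro main} is proved only for rank-two representations of $G_{\BQ_p}$, not for general rank or for $G_{F_v}$ with $F_v\neq\BQ_p$; this is why the paper's parity results all carry the hypothesis that $p$ is totally split in $F$. You invoke Theorem~\ref{thm, intro main}(4) at ``each place above $p$'' without noting this restriction. Second, for the Hilbert modular case the paper does not deform along an eigenvariety or use level-raising/lowering; it uses Hida's direct congruence (Theorem~3.2 of \cite{H}) to a parallel-weight-two form and then appeals to Nekov\'a\v{r}'s weight-two result \cite[Thm.~C]{NekMRtame} as the known base case. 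Your eigenvariety sketch is plausible in spirit but is not what the paper does and would require separate justification.
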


\begin{conj}(relative $p$-parity conjecture)\label{conj, relative p-parity, intro}
Let $V$ and $V'$ be any two geometric symplectic self-dual $p$-adic representations of $G_F$. 
Then 
\begin{equation}\label{conj, intro relative p-parity II}
(-1)^{\chi_{\rm{f}}(F, V)-\chi_{\rm{f}}(F, V')}=\varepsilon(V)/\varepsilon(V').
\end{equation}
\end{conj}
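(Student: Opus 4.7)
The plan is to prove the relative $p$-parity conjecture in the setting where the restriction of $V$ and $V'$ to the decomposition groups at primes above $p$ is generic of rank two, deploying the local sign decomposition (Theorem~\ref{thm, intro main}) as the decisive input at $p$. I would first reduce the assertion to the case of \emph{congruent} pairs: symplectic self-dual representations $V, V'$ admitting $G_F$-stable lattices $T \subset V$ and $T' \subset V'$ with isomorphic reductions $T/p^n \cong T'/p^n$ for some $n \geq 1$. Both lattices then arise as specializations of a single generic symplectic self-dual $\CR$-family $\CT$, and the general case is reached by chaining such congruences together in the spirit of Mazur--Rubin and Nekov\'a\v{r}.

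Applying the Mazur--Rubin--Nekov\'a\v{r} framework, the parity difference decomposes as a sum of local contributions
\[
\chi_{\mathrm{f}}(F,V) - \chi_{\mathrm{f}}(F,V') \equiv \sum_{v} \delta_v(T,T') \pmod{2},
\]
where each $\delta_v(T,T')$ measures the local discrepancy between the Bloch--Kato conditions for $T$ and $T'$ at the place $v$. At non-$p$-adic places $v$, the equality $(-1)^{\delta_v(T,T')} = \varepsilon_v(V)/\varepsilon_v(V')$ follows from \cites{MR,NekMRl,NekMRtame} together with the standard epsilon factor computations.

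The decisive step is at $v \mid p$. Theorem~\ref{thm, intro main} identifies $H^1_{\mathrm{f}}(\BQ_p, T)$ with the signed submodule $H^1_{-\hat{\varepsilon}_p(T)}(\BQ_p, T)$, so within the interpolating family $\CT$ the Bloch--Kato subgroup swaps between the Lagrangians $H^1_\pm(\BQ_p, \CT)$ exactly when $\hat{\varepsilon}_p$ flips sign. Base-change compatibility of the decomposition then yields $(-1)^{\delta_p(T,T')} = \hat{\varepsilon}_p(V)/\hat{\varepsilon}_p(V')$, which—after cancellation of $\Gamma$-factors, valid when the Hodge--Tate weights agree along the family—becomes $\varepsilon_p(V)/\varepsilon_p(V')$. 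Summing the local identities over all $v$ yields the desired relative $p$-parity identity. This reveals the local sign decomposition as an \'etale categorification of the Mazur--Rubin arithmetic local constant, in line with the remark in \S\ref{ss:applications}.

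The main obstacle is that Theorem~\ref{thm, intro main} is confined to \emph{rank two} symplectic self-dual representations of $G_{\BQ_p}$ satisfying the genericity condition $H^0(\BQ_p,\ov{\CT}) = 0$. The strategy therefore proves the conjecture only when the local representation at each prime above $p$ (suitably inducted to $G_{\BQ_p}$) has this form—a novel but restricted setting covering, e.g., Hilbert modular newforms at supercuspidal primes. Reaching the full conjecture would require extending the local sign decomposition to higher rank and to non-generic residual representations, presumably building on the variants hinted at in Theorem~\ref{thm, main3}, together with an automorphic global input to propagate the congruences.
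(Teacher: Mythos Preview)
The statement you are addressing is a \emph{conjecture}; the paper does not prove it in general but only in the restricted setting of Theorem~\ref{parity-family, intro} (rank two, $p$ odd and totally split in $F$, residually symplectically isomorphic lattices). Your overall architecture---Mazur--Rubin local decomposition, Nekov\'a\v{r}'s results away from $p$, and the local sign decomposition at $p$---is indeed the paper's approach to that restricted theorem, so in that sense you have correctly identified the strategy.

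There is, however, a genuine gap in your treatment of the $\Gamma$-factors. You write that the identity $(-1)^{\delta_p(T,T')}=\hat{\varepsilon}_p(V)/\hat{\varepsilon}_p(V')$ ``after cancellation of $\Gamma$-factors, valid when the Hodge--Tate weights agree along the family---becomes $\varepsilon_p(V)/\varepsilon_p(V')$.'' This is both too restrictive and not the correct mechanism. The Hodge--Tate weights need \emph{not} agree: the key application is precisely to congruent modular forms of different weights (cf.~Theorem~\ref{thm, intro parity}). The paper's actual argument (proof of Theorem~\ref{thm, parity-family}) does not cancel the $\Gamma$-factors against each other but rather shows that their ratio equals the ratio of \emph{archimedean} epsilon constants:
\[
\prod_{v\in S_\infty}\frac{\varepsilon_v(V_1)}{\varepsilon_v(V_2)}=\prod_{v\in S_p}\frac{\Gamma(V_1|_{G_{F_v}})}{\Gamma(V_2|_{G_{F_v}})},
\]
via the self-duality relation $h_m=h_{-1-m}$, which forces $g_v^-(V_i)+g_v^+(V_i)\equiv -1\pmod 2$. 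This is the step that turns the \emph{completed} $\hat{\varepsilon}_p$ into the global $\varepsilon$, and your sketch omits it.

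Two smaller points: the phrase ``suitably inducted to $G_{\BQ_p}$'' is not what the paper does---it simply assumes $p$ is totally split so that each $F_v=\BQ_p$; and your stated obstacle of genericity at $p$ is in fact overcome in the paper (Theorem~\ref{thm, MR, intro} and \S\ref{subsubsection, anomalous} cover the anomalous case as well, via an ad hoc but complete analysis).
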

The relative $p$-parity conjecture implies that the $p$-parity conjecture for $V$ and $V'$ are equivalent, allowing flexibility.
In practice, one seeks to study it for congruent or $p$-adic families of Galois 
representations.
Suppose that $V$ and $V'$ are residually symplectically isomorphic, i.e. there exist $G_F$-stable
symplectic self-dual lattices 
$T \subset V$ and $T'\subset V'$  such that  
$\overline{T}:=T/\mathfrak{p}T$ and $\overline{T}':=T'/\mathfrak{p}T'$ 
are symplectically isomorphic 
over a finite field $\BF$. 
Then Mazur and Rubin \cite{MR} proved that there exist local invariants $\delta_{v}(T,T')$ such that 
\begin{equation}\label{equation, intro relative selmer}
\chi_{\rm{f}}(F, V)-\chi_{\rm{f}}(F, V') \equiv  \sum_{v: \text{finite}} \delta_v(T, T')\quad  \mod 2.
\end{equation}
So the left-hand side of 
\eqref{conj, intro relative p-parity II}
 is a product of local signs called arithmetic local constants, defined as follows. 
For a finite place $v$, define 
Selmer structures $\mathscr{F}_v$ and $\mathscr{F}_v'$
on $H^1(F_v, \overline{T})$ by
$
\mathscr{F}_v:=\mathrm{Im}\left(H^1_{\rm{f}}(F_v, T)  \rightarrow H^1(F_v, \overline{T}) \right) 
$
and 
\[
\mathscr{F}_v':=\mathrm{Im}\left(H^1_{\rm{f}}(F_v, T')  \rightarrow H^1(F_v, \overline{T'}) \cong H^1(F_v, \overline{T}) \right).
\]

Then $\mathscr{F}_v$ and $\mathscr{F}_v'$ are Lagrangian subspaces,  
whose relative position is measured by the following.

\begin{defn}
The Mazur--Rubin arithmetic local constant $\delta_v(T,T')$ 
is defined by 
\[
\delta_v(T, T'):=\dim_{\BF} \left(\mathscr{F}_v/\mathscr{F}_v\cap \mathscr{F}_v' \right) \mod 2\quad \in \BZ/2\BZ.
\]
\end{defn}

 In light of Conjecture~\ref{conj, relative p-parity, intro} and \eqref{equation, intro relative selmer},  
Mazur and Rubin \cite[p.~580]{MR} predicted\footnote{They did not state a precise conjecture.} 
\begin{equation}\label{MR}\tag{MR}
\text{a link between the local constants $\delta_v(T,T')$ and 
$\varepsilon_v(V)/\varepsilon_v(V')$ for any finite place $v$.}
\end{equation} 

They proved some results  towards the question \eqref{MR} for elliptic curves \cite[\S5-6]{MR}, with applications to large Selmer ranks. (Subsequently, the Mazur--Rubin local constants found numerous arithmetic applications, such as \cites{MR1,MR2,KMR}.) 
A few years later, 
 Nekov\'a\v{r} proved that 
\begin{equation}\label{equation, MR vs DL, intro}
 \varepsilon_v(V)/\varepsilon_v(V')=(-1)^{\delta_v(T, T')}
\end{equation}
for $G_{F_v}$-representations $V,V'$ 
 which are residually symplectically isomorphic
if $v \nmid p$ ~\cite{NekMRl}  or
 if $v|p$ and $V, V'$ become Barsotti--Tate over a tamely ramified abelian extension ~\cite{NekMRtame}. 
This compatibility 
was a key to  his 
proof of the $p$-parity conjecture for 
certain abelian varieties 
 and 
Hilbert modular forms.
 
Now we describe our results towards the above problems. 

\subsubsection*{\it A compatibility of Mazur--Rubin and Deligne--Langlands local constants} 
\begin{thm}\label{thm, MR, intro}
Let $p$ be an odd prime. 
Let $T_1$ and $T_2$ be symplectic self-dual de Rham 
representations of $G_{\BQ_p}$ of rank $2$  
which are residually symplectically isomorphic. 
Put $V_i = T_{i} \otimes_{\BZ_p} \BQ_p$ for $i\in\{1,2\}$. 
Then 
\begin{equation}\label{equation, our MR vs DL, intro}
 \frac{\hat{\varepsilon}_p(V_1)}{\hat{\varepsilon}_p(V_2)}
 =(-1)^{\delta_p(T_1, T_2)}. 
\end{equation}
\end{thm}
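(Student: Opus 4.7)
The plan is to deduce the identity directly from the local sign decomposition (Theorem~\ref{thm, intro main}) applied to $T_1$, $T_2$, and their common residual representation. Since $T_1$ and $T_2$ are residually symplectically isomorphic, I would first fix $G_{\BQ_p}$-stable symplectic self-dual lattices $T_i\subset V_i$ so that the reductions $\overline{T}_i$ are identified as symplectic $G_{\BQ_p}$-representations over a common residue field $\BF$, and denote this common residual representation by $\overline{T}$. I shall assume throughout the genericity condition $H^0(\BQ_p,\overline{T})=0$, so that Theorem~\ref{thm, intro main} applies; the non-generic case would be handled in the same fashion by the variant Theorem~\ref{thm, main3}.

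The heart of the argument is to rewrite the two Selmer structures $\mathscr{F}_p$ and $\mathscr{F}_p'$ in terms of signed submodules of $\overline{T}$. Applying Theorem~\ref{thm, intro main} to each pair $(\CO_i,T_i)$ and to $(\BF,\overline{T})$ yields decompositions
\[
H^1(\BQ_p,T_i)=H^1_+(\BQ_p,T_i)\oplus H^1_-(\BQ_p,T_i),\qquad H^1(\BQ_p,\overline{T})=H^1_+(\BQ_p,\overline{T})\oplus H^1_-(\BQ_p,\overline{T})
\]
into free rank one Lagrangian submodules. The base change property (3), together with the fact that each $H^1_\pm(\BQ_p,T_i)$ is a direct summand of the free module $H^1(\BQ_p,T_i)$, yields that the image of $H^1_\pm(\BQ_p,T_i)$ under the reduction map $H^1(\BQ_p,T_i)\to H^1(\BQ_p,\overline{T})$ coincides with $H^1_\pm(\BQ_p,\overline{T})$. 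Combining this with the de Rham identification (4), namely $H^1_{\mathrm{f}}(\BQ_p,T_i)=H^1_{-\hat{\varepsilon}_p(V_i)}(\BQ_p,T_i)$, I obtain
\[
\mathscr{F}_p^{(i)}:=\Im\bigl(H^1_{\mathrm{f}}(\BQ_p,T_i)\to H^1(\BQ_p,\overline{T})\bigr)=H^1_{-\hat{\varepsilon}_p(V_i)}(\BQ_p,\overline{T}),\qquad i=1,2.
\]

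With this identification it remains only to read off $\delta_p(T_1,T_2)$ from the relative position of the two isotropic lines $\mathscr{F}_p^{(1)}$ and $\mathscr{F}_p^{(2)}$ inside the two-dimensional metabolic plane $H^1(\BQ_p,\overline{T})$. If $\hat{\varepsilon}_p(V_1)=\hat{\varepsilon}_p(V_2)$, the two Selmer structures coincide, so $\delta_p(T_1,T_2)=0$ and both sides of the asserted equality are $+1$. If $\hat{\varepsilon}_p(V_1)=-\hat{\varepsilon}_p(V_2)$, the two Selmer structures are transverse by virtue of the direct sum decomposition, so $\mathscr{F}_p^{(1)}\cap \mathscr{F}_p^{(2)}=0$, whence $\delta_p(T_1,T_2)=\dim_{\BF}\mathscr{F}_p^{(1)}=1$ and both sides are $-1$.

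The substantive content of Theorem~\ref{thm, MR, intro} is therefore concentrated in Theorem~\ref{thm, intro main}: granting the existence, functoriality, and de Rham characterization of the signed submodules, the deduction above is little more than a bookkeeping of Lagrangians modulo $p$. The one delicate point in the deduction itself is the coherent handling of lattices with a priori distinct coefficient rings, which is subsumed by the base change clause (3) of Theorem~\ref{thm, intro main} once a common residual field is arranged. This viewpoint also explains why Nekov\'a\v{r}'s case-by-case verifications of \eqref{equation, MR vs DL, intro} can be replaced by a single structural statement once one has the categorified signed decomposition.
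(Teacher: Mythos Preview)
Your treatment of the generic case is correct and matches the paper's argument essentially verbatim: identify $\mathscr{F}_p^{(i)}$ with $H^1_{-\hat{\varepsilon}_p(V_i)}(\BQ_p,\overline{T})$ via properties (3) and (4) of Theorem~\ref{thm, intro main}, then read off $\delta_p$ from whether the two lines coincide or are transverse.

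The gap is your dismissal of the anomalous case. Theorem~\ref{thm, main3} only furnishes a decomposition of $H^1(\BQ_p,T\otimes_{\BZ_p}\BQ_p)$ over $R[1/p]$, whereas $\delta_p(T_1,T_2)$ is defined via the images of $H^1_{\mathrm{f}}(\BQ_p,T_i)$ inside the \emph{mod $p$} cohomology $H^1(\BQ_p,\overline{T})$. Inverting $p$ destroys exactly the information you need: there is no base change property from $R[1/p]$ to $\BF$, and in the anomalous case $H^1(\BQ_p,T_i)$ typically has torsion, so the reduction map does not interact cleanly with any $p$-inverted decomposition. The paper devotes an entire subsection (\S\ref{subsubsection, anomalous}) to this case, proving a separate structural result (Theorem~\ref{thm, anomalous basis}) that builds a chain of lattices $T=T_0\subset T_1\subset\cdots\subset T_m$ with $T_m$ generic, tracks how bases of the signed pieces propagate down the chain, and controls the intersection $\overline{H^1_{\mathrm{f}}(\BQ_p,T)}\cap H^1(\BQ_p,C)$ for the trivial subrepresentation $C\subset\overline{T}$. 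The reducible and irreducible subcases are handled by different arguments. The final deduction then uses transitivity of $\delta_p$ to compare $T_1$ and $T_2$ through this auxiliary Lagrangian $H^1(\BQ_p,C)$. None of this is a formal consequence of Theorem~\ref{thm, main3}.
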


This completely answers the Mazur--Rubin question \eqref{MR} for rank two representations of $G_{\BQ_p}$. 
It is a $p$-local analogue of the relative parity Conjecture~\ref{conj, relative p-parity, intro}. 
For generic representations, this compatibility is a simple consequence of the functoriality of the local sign decomposition.
So, the latter may be viewed as an absolute version of the Mazur--Rubin constant. {For non-generic representations, the compatibility is approached via this viewpoint (cf.~\S\ref{subsubsection, anomalous})}.

Note that the compatibility \eqref{equation, our MR vs DL, intro} differs from \eqref{equation, MR vs DL, intro}
  if 
the Hodge--Tate weights of $V$ and $V'$ differ. 
This was speculated by Nekov\'a\v{r}, but neither did he state a precise conjecture 
 \cite[p.~2]{NekMRtame} nor did the completed $\varepsilon$-constant appear in the prior results. 
Theorem~\ref{thm, MR, intro} is
 the first general compatibility result for non-semistable Galois representations or general Hodge--Tate weights. The prior results rely on explicit description of the corresponding $\varepsilon$-constants\footnote{which is so far only available for trianguline or mildly ramified representations} and $p$-adic tools such as Breuil--Kisin modules ~\cite{NekMRtame}.

\subsubsection*{Relative $p$-parity conjecture}

\begin{thm}\label{parity-family, intro}
Let $F$ be a number field and $p$ an odd prime totally split in $F$. 
Let $T_1$ and $T_2$ be geometric symplectic self-dual $\CO_L$-representations of $G_{F}$ of rank two 
which are residually symplectically isomorphic, where $L$ is a finite extension of $\BQ_p$. 
 Put $V_i = T_i \otimes_{\BZ_p} \BQ_p$ 
for $i\in\{1,2\}$. 
Then the relative $p$-parity Conjecture \ref{conj, relative p-parity, intro} is true for $V_1$ and $V_2$, i.e. 
\[
(-1)^{\chi_{\rm{f}}(F, V_1)-\chi_{\rm{f}}(F, V_2)}=\varepsilon(V_1)/\varepsilon(V_2). 
\]
\end{thm}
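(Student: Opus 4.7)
The plan is to combine the Mazur--Rubin Selmer parity formula \eqref{equation, intro relative selmer} with a local-to-global analysis of $\varepsilon$-constants: Nekov\'a\v r's compatibility \eqref{equation, MR vs DL, intro} at primes away from $p$ and Theorem~\ref{thm, MR, intro} at primes above $p$. The assumption that $p$ is totally split in $F$ is crucial, since it guarantees $F_v \simeq \BQ_p$ for every $v\mid p$, bringing every such place into the scope of Theorem~\ref{thm, MR, intro}. After reducing everything to a product over finite places, a residual discrepancy between the completed $\hat{\varepsilon}_p$ and the bare $\varepsilon_p$ will appear, to be matched against the archimedean $\varepsilon$-factors by a Hodge-theoretic calculation.

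Concretely, \eqref{equation, intro relative selmer} yields
\[
(-1)^{\chi_{\rm{f}}(F,V_1)-\chi_{\rm{f}}(F,V_2)} \;=\; \prod_{v\ \text{finite}} (-1)^{\delta_v(T_1,T_2)}.
\]
At each finite $v\nmid p$, \eqref{equation, MR vs DL, intro} converts the local Mazur--Rubin sign into $\varepsilon_v(V_1)/\varepsilon_v(V_2)$. At each $v\mid p$, since $F_v\simeq\BQ_p$, Theorem~\ref{thm, MR, intro} gives
\[
(-1)^{\delta_v(T_1,T_2)} \;=\; \frac{\hat{\varepsilon}_p(V_{1,v})}{\hat{\varepsilon}_p(V_{2,v})} \;=\; \frac{\Gamma(V_{1,v})}{\Gamma(V_{2,v})}\cdot\frac{\varepsilon_v(V_1)}{\varepsilon_v(V_2)}.
\]
Multiplying over all finite places and invoking the global factorization $\varepsilon(V_i)=\prod_v \varepsilon_v(V_i)$ (with the product ranging over all places of $F$), the theorem reduces to the identity
\[
\prod_{v\mid p}\frac{\Gamma(V_{1,v})}{\Gamma(V_{2,v})} \;=\; \prod_{w\mid\infty}\frac{\varepsilon_w(V_1)}{\varepsilon_w(V_2)}.
\]

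The hard part is this last identity, which I expect to be the main obstacle. Its two sides depend only on the Hodge--Tate types of $V_1$ and $V_2$: by Lemma~\ref{prop:gamma}, $\Gamma(V_{i,v})=(-1)^{k_{i,v}-1}$ where $(k_{i,v},1-k_{i,v})$ are the Hodge--Tate weights at $v$, while the archimedean $\varepsilon$-factor of a symplectic self-dual geometric representation is $\pm 1$ determined by its Hodge numbers. Because $p$ is totally split in $F$, a choice of embedding $\iota:\overline{\BQ}_p\hookrightarrow\BC$ sets up a bijection between places $v\mid p$ and complex embeddings of $F$, under which the Hodge--Tate decomposition matches the Hodge structure. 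This is precisely the design principle of the completed $\varepsilon$-constant $\hat{\varepsilon}_p=\Gamma\cdot\varepsilon_p$, namely that $\Gamma$ plays the role of the archimedean completion. The identity then follows by a place-by-place invocation of Deligne's formulas for archimedean $\varepsilon$-factors of Hodge structures. Once established, the chain of equalities collapses to $(-1)^{\chi_{\rm{f}}(F,V_1)-\chi_{\rm{f}}(F,V_2)}=\varepsilon(V_1)/\varepsilon(V_2)$, proving Theorem~\ref{parity-family, intro}.
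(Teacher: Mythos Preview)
Your overall strategy is exactly the paper's: combine the Mazur--Rubin formula \eqref{equation, intro relative selmer} with Nekov\'a\v{r}'s compatibility \eqref{equation, MR vs DL, intro} at $v\nmid p$ and Theorem~\ref{thm, MR, intro} at $v\mid p$ (using $F_v\simeq\BQ_p$), reducing to the identity
\[
\prod_{v\mid p}\frac{\Gamma(V_{1,v})}{\Gamma(V_{2,v})}=\prod_{w\mid\infty}\frac{\varepsilon_w(V_1)}{\varepsilon_w(V_2)}.
\]

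The difference lies in how this residual identity is handled. You propose matching archimedean places to $p$-adic places via $\iota:\overline{\BQ}_p\hookrightarrow\BC$ and invoking Deligne's formulas for $\varepsilon$-factors of Hodge structures. This is conceptually on target but slightly misdirected in the present context: the $V_i$ are geometric $p$-adic Galois representations, not motives, so there is no Hodge structure to feed into Deligne's formulas. The archimedean product $\prod_{w\mid\infty}\varepsilon_w(V_i)$ is \emph{defined} in the paper (equation~\eqref{equation, def arch epsilon}) purely in terms of $p$-adic Hodge--Tate data, namely as $(-1)^{\sum_{v\mid p}g_v^-(V_i)}\cdot(-1)^{r_2(F)\dim_L V_i/2}$ where $g_v^-(V_i)=\sum_{m<0}m\,h_m(V_{i,v})$. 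Since $\Gamma(V_{i,v})=(-1)^{g_v^+(V_i)}$ by Lemma~\ref{prop:gamma} (with $g_v^+=\sum_{m>0}m\,h_m$), and the $r_2$-term cancels in the ratio, the identity reduces to $g_v^+(V_i)+g_v^-(V_i)\equiv -1\pmod 2$ for each $v\mid p$. The paper verifies this by a three-line computation using only the self-duality symmetry $h_m=h_{-1-m}$. This is both more elementary than your sketch and does not require any motive-versus-Galois-representation comparison.
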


The prior works of Nekov\'a\v{r} \cites{NekMRsome,NekMRl,NekMRtame} and Pottharst--Xiao \cite{PX} primarily assume $V_i$ to be trianguline at primes above $p$, in which case the $\varepsilon$-constants admit an elementary description.

\subsubsection*{$p$-parity conjecture for Hilbert modular forms}

\begin{thm}\label{thm, intro parity}
Let $F$ be a totally real field and $p$ an odd prime totally split in $F$. 
Let $f$ 
be a Hilbert modular newform over $F$ of weight $k=\sum_{\sigma}k_{\sigma}\sigma\in 2\BZ_{>0}[\Hom(F,\ov{\BQ})]$ and trivial central character. 
Put $k_{0}=\max_{\sigma}k_{\sigma}$. 
Let $$\rho_f:G_F\to \Aut_{L}(V_f)$$
be an associated 
$p$-adic Galois representation.  
Suppose that 
the residual representation $\overline{T}_f$ is irreducible. 
Then the $p$-parity Conjecture \ref{conj, p-parity, intro} is true for 
$V_{f}(k_{0}/2)$, i.e. 
\[
\ord_{s=0}L(V_f(k_0/2),s)\equiv \dim_{L}H^1_{\mathrm{f}}(F, V_f(k_0/2)) \mod 2.
\]
\end{thm}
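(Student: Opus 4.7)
My plan is to reduce to a congruent case where the $p$-parity is already known, by invoking the relative $p$-parity Theorem~\ref{parity-family, intro}. The overall architecture mirrors Nekov\'a\v{r}'s strategy \cites{NekMRsome,NekMRl,NekMRtame}: replace $f$ by a residually congruent newform $g$ whose $p$-adic Galois representation is more tractable, and transfer the parity statement via a local-epsilon / Mazur--Rubin comparison. The new ingredient is Theorem~\ref{thm, MR, intro} (equivalently, Theorem~\ref{parity-family, intro}), which removes the previous restriction that the local representations at $v\mid p$ be Barsotti--Tate over a tame abelian extension; this is precisely the obstacle that had prevented treatment of supercuspidal primes $p$.

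First I would set up the self-dual normalization. Twisting by $k_0/2$, together with the triviality of the central character, renders $V_f(k_0/2)$ geometric, pure, and symplectic self-dual as a $G_F$-representation with coefficients in $L$. Residual irreducibility of $\overline{T}_f$ guarantees that, up to scaling the symplectic pairing, $V_f(k_0/2)$ admits a $G_F$-stable symplectic self-dual $\CO_L$-lattice whose reduction is well-defined as a symplectic residual representation. Since $p$ is totally split in $F$, for every prime $v\mid p$ one has $G_{F_v}\cong G_{\BQ_p}$, so each local component of $V_f(k_0/2)|_{G_{F_v}}$ is a symplectic self-dual de Rham representation of $G_{\BQ_p}$ of rank two, placing us squarely within the hypotheses of Theorems~\ref{thm, intro main}, \ref{thm, MR, intro}, and \ref{parity-family, intro}.

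Second I would produce a companion newform $g$ on (possibly) a congruent Hilbert modular eigensystem satisfying: (i) $\overline{\rho}_g\cong\overline{\rho}_f$ as symplectic residual $G_F$-representations over $\BF$, and (ii) at every prime $v\mid p$ the local component of $g$ is principal series, Steinberg, or of a type that becomes Barsotti--Tate after a tame abelian extension, so that the $p$-parity conjecture for $V_g(k_0/2)$ is already known by the earlier works of Nekov\'a\v{r} \cites{NekMRsome,NekMRl,NekMRtame} (or Pottharst--Xiao \cite{PX} in the trianguline case). The existence of such $g$ can be arranged by combining classical level-raising/lowering in the Hilbert setting (Jarvis, Fujiwara, Dimitrov) with Taylor--Wiles--Kisin patching: one deforms $\overline{\rho}_f|_{G_{F_v}}$ within the appropriate component of the local framed deformation ring to a point of semistable or potentially Barsotti--Tate type at each $v\mid p$, and invokes modularity of the resulting deformation to realize it by a classical Hilbert newform. (Alternatively, one may work on the Hilbert eigenvariety and pass to a nearby classical crystabelline point, noting that existence of such a point is guaranteed by the density theorems on local deformation rings, e.g. Theorem~\ref{thm, density sgn crystalline}.)

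Third, with $g$ in hand, I would apply Theorem~\ref{parity-family, intro} to the pair $\bigl(V_f(k_0/2),V_g(k_0/2)\bigr)$ to obtain
\[
(-1)^{\chi_{\rm{f}}(F,V_f(k_0/2))-\chi_{\rm{f}}(F,V_g(k_0/2))}=\varepsilon\bigl(V_f(k_0/2)\bigr)/\varepsilon\bigl(V_g(k_0/2)\bigr),
\]
and then combine this with the known $p$-parity conjecture for $V_g(k_0/2)$ to conclude $p$-parity for $V_f(k_0/2)$. Finally one translates $\dim_L H^1_{\mathrm{f}}(F,V_f(k_0/2))\equiv \ord_{s=0}L(V_f(k_0/2),s)\pmod{2}$ via the standard identification of the sign of the functional equation of $L(V_f,s)$ with $\varepsilon(V_f(k_0/2))$ (and the purity/self-duality which forces $H^0(F,V_f(k_0/2))=0$).

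The main obstacle I anticipate is the second step: producing a residually symplectically congruent Hilbert newform $g$ whose local $p$-adic type at every $v\mid p$ is covered by the prior $p$-parity results, while staying within the modular locus and preserving the symplectic pairing on the reduction. The control on the local type at $p$ is delicate because congruence methods classically affect ramification away from $p$, so one must argue via the geometry of local deformation rings (or eigenvarieties) that the target $p$-adic type is hit by some classical modular point on the same residual component; this is exactly where the totally-split hypothesis and the residual irreducibility are used in an essential way.
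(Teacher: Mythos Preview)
Your overall architecture matches the paper's exactly: reduce via the relative $p$-parity Theorem~\ref{parity-family, intro} to a residually congruent newform $g$ for which $p$-parity is already known. The difference lies entirely in how you propose to produce $g$.

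The paper dispatches your ``main obstacle'' in one line: Hida's congruence theorem \cite[Thm.~3.2]{H}, \cite[Thm.~2.3]{H89} supplies, for any $f$, a \emph{parallel weight two} Hecke eigenform $h\in S_{2t,t}(U_0(p^\alpha\fn))$ with $\lambda_f(\mathfrak{l})\equiv\lambda_h(\mathfrak{l})\bmod\fm$ for all $\mathfrak{l}\nmid p\fn$. Setting $g=h\otimes\omega^{k_0/2-1}$ gives the required symplectic residual isomorphism $\overline{T}_f(k_0/2)\cong\overline{T}_g(1)$, and Nekov\'a\v{r}'s result \cite[Thm.~C]{NekMRtame} already establishes $p$-parity for parallel weight two Hilbert modular forms \emph{without} any hypothesis on the local type at $v\mid p$. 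No patching, no eigenvariety geometry, no control of the $p$-adic local type is needed.

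Your route---manufacturing a $g$ with prescribed tractable local type at each $v\mid p$ via Taylor--Wiles--Kisin or eigenvariety density---is plausible in spirit but substantially heavier, and you have not actually carried it out; you correctly flag it as the crux. Global modularity of a lift with prescribed local components at $p$ needs a genuine $R=\BT$ input, and the eigenvariety approach runs into precisely the constraints that limit \cite{JN}. The paper's point is that the target class for $g$ can simply be ``parallel weight two,'' and Hida hands you such a $g$ for free.
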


The above result allows $f$ to be supercuspidal at primes above $p$ and $k$ arbitrary, complementing prior results towards the $p$-parity conjecture over totally real fields (see also Proposition~\ref{prop, reduced-parity}).

 For elliptic newforms and non-Eisenstein primes $p$, i.e. primes for which the residual representation is ireducible,  the $p$-parity conjecture is a theorem of   Nekov\'a\v{r} \cite{NekMRsome}, the case of elliptic curves also due to Dockchitser brothers \cite{DD}.
If $F\neq \BQ$, then 
the parallel weight two  and the $p$-ordinary parallel weight cases are  
due to Nekov\'a\v{r}
\cites{NekMRsome,NekMRtame, NekSC}, 
the latter under some hypotheses.
The higher weight finite slope case
is due to Johansson--Newton \cite{JN}, 
which exploits the geometry of Hilbert modular eigenvariety.
Their work excludes supercuspidal primes 
and requires an additional condition if $[F:\BQ]$ is odd 
 \cite[Rem.~1.2.4]{JN}.

\subsubsection{Rubin's conjecture}\label{ss:RuC} While exploring anticyclotomic Iwasawa theory of CM elliptic curves at primes $p$ inert in the CM field, Rubin found a new structure on the module of twisted local units along the anticyclotomic $\BZ_p$-extension of the unramified quadratic extension of $\BQ_p$ and proposed it as a conjecture \cite{Ru}.  
Conditional on it, he envisioned an integral Iwasawa theory for this conjugate symplectic self-dual deformation, which is non-trianguline. 
\subsubsection*{The conjecture}
Let $p$ be an odd prime and 
$K$ the unramified quadratic extension of $\BQ_p$ 
with integer ring $\CO$.
Let $\SF$ be the Lubin--Tate formal group over $\CO$ 
for the uniformizing parameter $\pi:=-p$. 
For $n \geq -1$, write $K_n=K(\SF[\pi^{n+1}])$ 
and $K_\infty=\cup_n K_n$. 
The Galois action on the $\pi$-adic Tate module $T_\pi\SF$
defines a conjugate symplectic self-dual character  
$
\psi: \Gal(K_\infty/K) \xrightarrow{\simeq} \mathrm{Aut}(T_\pi\SF)\cong \CO^\times 
$ (cf.~Definition~\ref{def:csd}).

Let $K_\infty^{\rm ac} \subset K_\infty$ be the anticyclotomic $\BZ_p$-extension of $K$ and put $\Gamma=\Gal(K_\infty^{\rm ac}/K)$.
Define the Iwasawa algebra
$
\Lambda_2=\CO[\![\Gal(K_\infty/K)]\!] \text{ and } 
\Lambda=\CO[\![\Gamma]\!]$. 
 Denote by $\Lambda^{\iota}$ the $\Lambda$-representation of $G_K$ of rank one, on which the $G_K$-action is given by
the character $G_K\to \Gamma \subseteq \Lambda^{\times}$
sending $g\in G_K$ to the image $[g^{-1}]$ of $g^{-1}$ in $\Gamma\subseteq  \Lambda^{\times}$.  
  For a $\Lambda_2$-module $M$, 
put $M^*=M\otimes_\CO T_\pi\SF^{\otimes -1}
=\mathrm{Hom}_\CO(T_\pi\SF, M)$ and let the Galois group act  diagonally on it.

For $n\in \BZ_{\geq 0}$, let $U_n$ be the group of principal units in $K_n^\times$ and 
put ${U}_\infty^*=\varprojlim_n \,(U_n\otimes_{\BZ_p}\CO)^*$.
Let 
$
V_{\infty}^*
$
be the projection of $U_\infty^*$ to the anticicyclotomic $\BZ_p$-line, 
which is a free $\Lambda$-module of rank $2$.
For a finite order character  $\chi: \Gamma \rightarrow \overline{\BQ}_p^\times$, 
let $\delta_\chi: V_{\infty}^{*} \ra \ov{\BQ}_p$ 
be the Coates--Wiles logarithmic derivative as in \cite[\S 2.1.1]{BKO24} and 
let  $\Xi^{\pm}$ denote the set of such 
characters with conductor even or odd power of $p$ respectively.

An insight of Rubin is to introduce 
the $\Lambda$-submodules
\begin{align*}
 V^{*, \pm}_\infty:=\{v \in V_\infty^*\;|\;\delta_\chi(v)=0 \quad \text{for every $\chi \in \Xi^\mp$} \}.
\end{align*}
He showed that $V^{*, \pm}_\infty\cong \Lambda$ and 
$V^{*, +}_\infty\cap V^{*, -}_\infty=\{0\}$.
In 1987, Rubin proposed the following conjecture. 
 \begin{thm}\label{thm, Rubin's conjecture}(Rubin's conjecture) 
 As $\Lambda$-modules, we have
$$V_\infty^*=V^{*, +}_\infty\oplus V^{*, -}_\infty.$$
\end{thm}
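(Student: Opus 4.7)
The plan is to realize Rubin's signed submodules $V_\infty^{*,\pm}$ as the signed submodules $H^1_\pm$ of Theorem~\ref{thm, intro main}, applied to a suitable induced family of $G_{\BQ_p}$-representations, and to conclude the decomposition via property 4) of that theorem together with Proposition~\ref{prop, decomp via Z}. Concretely, form the free $\Lambda$-module of rank two
\[
\CT := \Ind_{G_K}^{G_{\BQ_p}}\bigl(T_\pi\SF \otimes_\CO \Lambda^\iota\bigr),
\]
viewed as a continuous $\Lambda$-representation of $G_{\BQ_p}$. Since $\psi$ is conjugate symplectic self-dual and $K/\BQ_p$ is the quadratic unramified extension, $\CT$ inherits a canonical symplectic self-dual pairing over $\Lambda$. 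The residue $\ov{\CT} \cong \Ind_{G_K}^{G_{\BQ_p}}\ov\psi$ is irreducible because $\ov\psi$ is not stable under $\Gal(K/\BQ_p)$, so $H^0(\BQ_p, \ov{\CT}) = 0$ and the pair $(\Lambda, \CT)$ is generic. Applying Theorem~\ref{thm, intro main} yields a Lagrangian decomposition $H^1(\BQ_p, \CT) = H^1_+(\BQ_p, \CT) \oplus H^1_-(\BQ_p, \CT)$. Shapiro's lemma gives $H^1(\BQ_p, \CT) \cong H^1(K, T_\pi\SF \otimes_\CO \Lambda^\iota)$, and Coleman's theory for the Lubin--Tate tower identifies this Iwasawa cohomology with $V_\infty^*$ as $\Lambda$-modules.

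It remains to match $V_\infty^{*,\pm}$ with $H^1_\pm(\BQ_p, \CT)$. For a finite-order character $\chi: \Gamma \to \ov{\BQ}_p^\times$, the specialization $T_\chi := \CT \otimes_{\Lambda,\chi} \ov{\BQ}_p$ is a two-dimensional de Rham $G_{\BQ_p}$-representation with Hodge--Tate weights $(0, 1)$, so $\Gamma(T_\chi) = 1$ and $\hat{\varepsilon}_p(T_\chi) = \varepsilon_p(T_\chi)$. Inductivity of Deligne--Langlands $\varepsilon$-factors along $K/\BQ_p$ expresses $\varepsilon_p(T_\chi)$ in terms of the Lubin--Tate $\varepsilon$-factor of $\psi\chi^{-1}$ over $K$, and a direct conductor computation shows that the resulting sign is governed by the parity of the conductor exponent of $\chi$, taking opposite values on $\Xi^+$ and $\Xi^-$. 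On the other hand, Kato's explicit reciprocity law for the Lubin--Tate Coleman map identifies the Coates--Wiles derivative $\delta_\chi$, up to a nonzero constant, with the Bloch--Kato dual exponential at the specialization $T_\chi$; hence $\delta_\chi(v) = 0$ is equivalent to the image of $v$ in $H^1(\BQ_p, T_\chi)$ lying in $H^1_{\rm f}(\BQ_p, T_\chi)$. Combining these inputs with property 4) of Theorem~\ref{thm, intro main}, the subgroup $V_\infty^{*,\pm}$ coincides with the universal-norm submodule $Z_\pm(\CT)$ of Proposition~\ref{prop, decomp via Z} (after fixing a sign convention). Since the characters in each $\Xi^\pm$ are Zariski dense in $\Spec\Lambda$, that proposition forces $H^1_\pm(\BQ_p, \CT) = Z_\pm(\CT) = V_\infty^{*,\pm}$, and Rubin's decomposition follows.

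The technical heart is the computation of $\hat{\varepsilon}_p(T_\chi)$ via the inductivity formula and Lubin--Tate $\varepsilon$-factor data, establishing that the parity of the conductor exponent of $\chi$ precisely controls the sign, so that the $\Xi^\pm$-dichotomy in Rubin's definition aligns with the $\pm$-dichotomy of Theorem~\ref{thm, intro main}; together with assembling Kato's explicit reciprocity law into a $\Lambda$-adic statement compatible with the local sign decomposition. Once these classical-but-delicate inputs are in place, the Rubin decomposition follows formally from Theorem~\ref{thm, intro main} and Proposition~\ref{prop, decomp via Z}.
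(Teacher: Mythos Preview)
Your proposal is correct and follows essentially the same route as the paper: form the induced $\Lambda$-representation $\tilde{\BT}_\psi=\Ind_{K/\BQ_p}(T_\psi^{\otimes-1}(1)\otimes_\CO\Lambda^\iota)$, verify it is generic symplectic self-dual of rank two, apply the local sign decomposition (Theorem~\ref{thm, intro main}), identify $H^1(\BQ_p,\tilde{\BT}_\psi)\cong V_\infty^*$ via Shapiro and Coleman theory, compute $\hat\varepsilon_p(\Ind_{K/\BQ_p}\psi\chi)=(-1)^{a(\chi)}$ by inductivity of epsilon factors, relate $\delta_\chi$ to the dual exponential, and invoke the Zariski-density characterization (Proposition~\ref{prop, decomp via Z}) to match $V_\infty^{*,\pm}$ with $H^1_\pm$. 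The paper carries out exactly these steps in Proposition~\ref{prop, lsd and Rubin's conjecture} and the proof of Corollary~\ref{cor, Rubin's conjecture refined}, with the epsilon computation recorded as Fact~\ref{lem, unram eps} (proved in Proposition~\ref{prop, root number, inert}).
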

The conjecture was resolved  
 in 2021 by us \cite{BKO21} based on global as well as local tools.

\subsubsection*{Rubin's conjecture and local sign decomposition}

\begin{prop}\label{prop, lsd and Rubin's conjecture}
For the Lubin--Tate character $\psi: G_{K} \ra \CO^\times$ as above, 
put $T_\psi=\CO(\psi)$ and 
$\tilde{\mathbb{T}}_\psi =\mathrm{Ind}_{K/\BQ_p}(T_\psi^{\otimes -1}(1) \otimes_{\CO} \Lambda^\iota)$.
Then $\tilde{\mathbb{T}}_\psi$ is a generic symplectic {self-dual 
 $\Lambda$-representation} of $G_{\BQ_p}$ and there is a canonical isomorphism 
 $H^1(\BQ_p, \tilde{\mathbb{T}}_\psi)\cong V_\infty^*$.
 Under this isomorphism, the local sign decomposition as in Theorem~\ref{thm, intro main}
 coincides with Rubin's decomposition as in Theorem~\ref{thm, Rubin's conjecture}.
\end{prop}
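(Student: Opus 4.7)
The plan is to establish the three claims of the proposition in sequence, and then invoke Theorem~\ref{thm, intro main} together with Proposition~\ref{prop, decomp via Z}. I would begin by verifying that $\tilde{\mathbb{T}}_\psi$ is a generic symplectic self-dual $\Lambda$-representation of $G_{\BQ_p}$. The conjugate-self-duality of the Lubin--Tate character $\psi$ (which lives with values in a formal group whose Norm uniformizer is $-p$, so that $\psi \cdot \psi^c = \chi_{\mathrm{cyc}}$ up to a controlled sign) together with the tautological pairing on $\Lambda^\iota$ coming from the involution on $\Gamma$ produces a $G_{\BQ_p}$-equivariant pairing on the induction valued in $\Lambda(1)$. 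Tracing through the induction formalism and the sign arising from $\psi|_{\BZ_p^\times}$ shows the pairing is skew-symmetric, hence symplectic. For genericity, $\ov{\tilde{\mathbb{T}}}_\psi$ is the induction of a character that is nontrivial modulo the maximal ideal because $\psi$ has nontrivial reduction on inertia; the absence of $G_{\BQ_p}$-fixed vectors reduces to checking that the reduction $\ov{\psi}$ does not descend to $\BQ_p$, which follows from its inertial type.

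Next I would identify $H^1(\BQ_p, \tilde{\mathbb{T}}_\psi)$ with $V_\infty^*$. Shapiro's lemma gives
\[
H^1(\BQ_p, \tilde{\mathbb{T}}_\psi) \cong H^1(K, T_\psi^{\otimes -1}(1) \otimes_\CO \Lambda^\iota),
\]
and standard Iwasawa cohomology unwinding rewrites the right-hand side as $\varprojlim_n H^1(K_n^{\mathrm{ac}}, T_\psi^{\otimes -1}(1))$, where $K_n^{\mathrm{ac}}$ are the layers of the anticyclotomic $\BZ_p$-extension. Lubin--Tate Kummer theory for $\SF$ identifies $H^1(K_n^{\mathrm{ac}}, T_\pi\SF^{\otimes -1}(1))$ with a $T_\pi\SF^{\otimes -1}$-twist of the local units modulo a finite part, and passage to the inverse limit, combined with projection to the anticyclotomic line, yields exactly $V_\infty^*$. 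This is a standard computation (compare \cite{BKO21}), so I would cite it rather than grind through it.

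The heart of the argument is to match the two decompositions. For each finite-order character $\chi:\Gamma\to \ov{\BQ}_p^\times$ the specialization $T_\chi$ of $\tilde{\mathbb{T}}_\psi$ along the height-one prime associated to $\chi$ is the induced representation of a locally algebraic character, hence de Rham (in fact potentially crystalline). The Coates--Wiles logarithmic derivative $\delta_\chi$ factors through the dual exponential of $T_\chi$, so that
\[
\delta_\chi(v) = 0 \iff \text{the specialization of $v$ lies in } H^1_{\mathrm{f}}(\BQ_p, T_\chi).
\]
Hence Rubin's definition of $V_\infty^{*,\pm}$ translates exactly into the condition appearing in the set $Z_{\pm}(\tilde{\mathbb{T}}_\psi)$ of Proposition~\ref{prop, decomp via Z}, provided the signs match: I must show that the de Rham specializations with $\hat{\varepsilon}_p(T_\chi) = \mp 1$ correspond precisely to the characters $\chi$ with conductor of the opposite parity, i.e.\ to $\Xi^{\pm}$.

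The main obstacle is this last $\hat{\varepsilon}_p$ computation. The induction formula for epsilon-constants expresses $\varepsilon_p(T_\chi)$ as a product of $\varepsilon_K$ of the twisted character times a fixed Langlands constant $\lambda(K/\BQ_p)$, and the parity of the conductor of $\chi$ controls the sign of the Gauss sum contribution; meanwhile the Hodge--Tate weights of $T_\chi$ are constant in $\chi$, so $\Gamma(T_\chi)$ contributes a fixed sign. Working out the sign carefully - and verifying it is consistent with the normalization of $\Xi^{\pm}$ used in \cite{BKO24} - is where the real work lies. Once the matching $\chi \in \Xi^{\mp} \leftrightarrow \hat{\varepsilon}_p(T_\chi) = \pm 1$ is established, Zariski density of each signed family of characters in $\Spec \Lambda$ together with Proposition~\ref{prop, decomp via Z} yields $H^1_{\pm}(\BQ_p, \tilde{\mathbb{T}}_\psi) = Z_{\pm}(\tilde{\mathbb{T}}_\psi) = V_\infty^{*,\pm}$, completing the proof.
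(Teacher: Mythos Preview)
Your approach is essentially the same as the paper's: symplectic self-duality and genericity are handled as in Lemma~\ref{lem, induced rep satisfy the condition}, the identification $H^1(\BQ_p,\tilde{\BT}_\psi)\cong V_\infty^*$ together with $\delta_\chi\leftrightarrow\exp^*_\chi$ is cited from \cite{BKO24}, the epsilon computation is exactly Fact~\ref{lem, unram eps} (proved as Proposition~\ref{prop, root number, inert}), and the final step is Corollary~\ref{cor, decomp by using Z}. One small correction: your tentative sign matching is inverted. The correct statement is $\chi\in\Xi^\pm\Leftrightarrow\hat{\varepsilon}_p(T_\chi)=\pm1$ (Fact~\ref{lem, unram eps}), so $V_\infty^{*,+}$, defined by vanishing on $\Xi^-$, matches $Z_{S,+}=H^1_+$ directly; since you already flag the sign as the point requiring care, this does not affect the soundness of the outline.
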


We refer to the proof of Corollary~\ref{cor, Rubin's conjecture refined} for the above connection. 

The local sign decomposition gives a new perspective on Rubin's conjecture and 
leads to refinements 
(see Corollary~\ref{cor, Rubin's conjecture refined}). 
In retrospect the following variation of $\varepsilon$-constants initiates the conjecture. 

\begin{fact}\label{lem, unram eps} 
For $\chi\in\Xi^\pm$, we have
$ 
\hat{\varepsilon}_p(\mathrm{Ind}_{K/\BQ_p} (\psi \chi))=\pm 1.
$
\end{fact}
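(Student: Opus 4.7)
The plan is to compute $\hat{\varepsilon}_p(V)=\Gamma(V)\cdot\varepsilon_p(V)$ for $V:=\mathrm{Ind}_{K/\BQ_p}(\psi\chi)$ by evaluating the two factors separately. For the $\Gamma$-factor, I would first pin down the Hodge--Tate weights of $V$. Since $\SF$ is a one-dimensional formal $\CO$-module, the Lubin--Tate character $\psi$ has Hodge--Tate weight $1$ at the identity embedding $K\hookrightarrow\ov{\BQ}_p$ and weight $0$ at the Frobenius-conjugate embedding; the finite-order twist $\chi$ contributes only weight $0$. Consequently $V$ has Hodge--Tate weights $(0,1)$ over $\BQ_p$. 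Writing these as $(k,1-k)$ with $k=1$, Lemma~\ref{prop:gamma} gives $\Gamma(V)=(-1)^{k-1}=1$, so $\hat{\varepsilon}_p(V)=\varepsilon_p(V)$.

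For the local $\varepsilon$-factor, I would invoke Deligne's inductivity of local constants for the unramified quadratic extension $K/\BQ_p$:
\[
\varepsilon_p(V,\psi_0)\;=\;\lambda(K/\BQ_p,\psi_0)\cdot\varepsilon_K(\psi\chi,\psi_0\circ\mathrm{Tr}_{K/\BQ_p}),
\]
where $\psi_0$ is the standard additive character of $\BQ_p$ and $\lambda(K/\BQ_p,\psi_0)$ is Langlands' local constant, which is a fixed sign independent of $\chi$ since $K/\BQ_p$ is unramified. Because $K/\BQ_p$ has trivial different and $\psi\chi$ is conjugate self-dual, the suitably normalized $\varepsilon_K(\psi\chi,\cdot)$ takes values in $\{\pm 1\}$; concretely it is a normalized Gauss sum of modulus $q^{c/2}$, with $q=p^2$ and $c$ the Artin conductor exponent of $\psi\chi$ over $K$.

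The decisive step is to identify $c$ with a quantity of the stated parity and to analyze the sign of the Gauss sum. Since $\psi$ is $\CO^\times$-valued with conductor exponent $1$ while $\chi$ is $\mu_{p^\infty}$-valued of conductor exponent $n$, no cancellation occurs in the wild ramification filtration, so $c=\max(1,n)$. For $n\geq 2$, the Gauss sum can be evaluated in closed form by the stationary-phase method (equivalently, Deligne--Henniart's formulas for $\varepsilon$-factors of depth-$\geq 1$ characters), and the resulting sign is seen to alternate with the parity of $n$. For $n\in\{0,1\}$ the tame case is treated by a direct summation against $\psi_0\circ\mathrm{Tr}_{K/\BQ_p}$ over $\CO^\times/U^{(1)}\cong\BF_{p^2}^\times$. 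Combining the alternation with the fixed contribution from $\lambda(K/\BQ_p,\psi_0)$ gives $\hat{\varepsilon}_p(V)=+1$ for $\chi\in\Xi^+$ and $\hat{\varepsilon}_p(V)=-1$ for $\chi\in\Xi^-$. The main obstacle is careful sign bookkeeping, particularly in the low-conductor tame range $n\leq 1$, where the interaction between the Lubin--Tate ramification and the twist by $\chi$ is most subtle and must be checked directly against the convention fixing $\lambda(K/\BQ_p,\psi_0)$.
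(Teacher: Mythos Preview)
Your $\Gamma$-factor computation is fine, so $\hat\varepsilon_p(V)=\varepsilon_p(V)$, and the inductivity strategy for the $\varepsilon$-factor is the right one. The gap is in how you handle $\psi$.

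By definition (\S\ref{ss:eps-ssd}), $\varepsilon_p(V)$ is the $\varepsilon$-constant of the Weil--Deligne representation $\mathrm{WD}(V)$. The Lubin--Tate character $\psi$, viewed on $K^\times$ via reciprocity, is not locally constant on $\CO_K^\times$, so your assertion that $\psi$ has ``conductor exponent $1$'' is a category error. The functor $D_{\mathrm{pst}}$ sends $\psi$ to the \emph{unramified} character $\phi_K$ (Proposition~\ref{prop, unramified p-adic WD char}), so the smooth character whose $\varepsilon$-constant you actually need is $\phi_K\chi$, of conductor exactly $a(\chi)$, not $\max(1,a(\chi))$.

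With this correction the computation (Proposition~\ref{prop, root number, inert}) is short: since both $K/\BQ_p$ and $\phi_K$ are unramified, inductivity reduces $\varepsilon_p(V)$ to $\varepsilon(\phi_K\chi,\xi_K)$; the unramified-twist formula gives $\varepsilon(\phi_K\chi,\xi_K)=\varepsilon(\chi,\xi_K)\cdot\phi_K(p)^{a(\chi)}$ with $\phi_K(p)=-p^{-1}$; and Fr\"ohlich--Queyrut \cite{FQ} --- applicable precisely because $\chi|_{\BQ_p^\times}=1$ --- gives $W(\chi,\xi_K)=\chi(\delta)$. Since $\delta^2\in\BQ_p^\times$ and $\chi$ has odd $p$-power order, $\chi(\delta)=1$, whence $\varepsilon_p(V)=(-1)^{a(\chi)}$. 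No stationary phase, no Gauss-sum evaluation, no tame/wild case split is needed. Your proposal asserts the alternation without proving it; the missing key is Fr\"ohlich--Queyrut, which collapses the entire analysis into the single value $\chi(\delta)$.
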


Our approach also gives a proof of the generalization of 
Rubin's conjecture to conjugate symplectic self-dual Galois characters $\psi$ over $K$ which are ramified 
or   
with Hodge--Tate weights 
$(k+1,-k)$ for $k\in\BZ_{\geq 0}$ (cf.~\cite{Kazim}), which remained open 
 (see Theorem~\ref{thm, rubin decomposition} and  
 Corollary~\ref{cor, lsd ind special}). 
\subsubsection{An analogue of Rubin's conjecture over ramified quadratic extensions of $\BQ_p$} 
\subsubsection*{Set-up} Let $p$ be an odd prime. In this subsection $K$ denotes a ramified quadratic extension of $\BQ_p$ and $\omega_{K/\BQ_p}$ the associated quadratic character over $\BQ_p$. 
Let $\CO$ denote the integer ring of $K$.

Let $\psi$ be a conjugate symplectic self-dual Galois character over $K$ with Hodge--Tate weights $(1,0)$ and (additive) conductor one\footnote{There are exactly two such $\psi$ as explicitly constructed in ~\S\ref{subsubsection, ramified p-adic WD}.}.
It arises from a CM elliptic curve with CM by an order of an imaginary quadratic field in which $p$ ramifies. 
Let $L$ be the extension of $K$ generated by the image of $\psi$.
 Put $T_\psi=\CO_L(\psi)$ and let $\Lambda:=\CO_{L}[\![\Gamma]\!]$ denote the anticyclotomic Iwasawa algebra for $\Gamma:=\Gal(K_{\infty}^{\rm ac}/K)$.

We have the following notable anticyclotomic variation of local $\varepsilon$-constants at $p$ 
(cf.~Proposition~\ref{prop, ramified epsilon}). 

 \begin{fact}\label{lem, ram eps}
Let $\chi$ be a finite order anticyclotomic character over $K$ of order $p^n>1$.  
 Then for $a \in \BZ_p^\times$,  
\[
\hat{ \varepsilon}_p({\rm Ind}_{K/\BQ_p}T_\psi(\chi^a))= \left(\frac{a}{p}\right)\hat{\varepsilon}_p({\rm Ind}_{K/\BQ_p}T_\psi(\chi)), \quad 
  \hat{\varepsilon}_p({\rm Ind}_{K/\BQ_p}T_\psi(\chi))=\hat{\varepsilon}_p({\rm Ind}_{K/\BQ_p}T_\psi(\chi^{-p})). 
\]
\end{fact}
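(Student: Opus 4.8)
\emph{Plan of proof.} The strategy is to reduce both assertions to a computation of Deligne--Langlands $\varepsilon$-factors of characters of $K^{\times}$, and then to exploit that the quantities involved are a priori $\{\pm1\}$-valued. Since $\hat{\varepsilon}_p=\Gamma(\cdot)\,\varepsilon_p(\cdot)$ and the $\Gamma$-factor depends only on the Hodge--Tate weights (Lemma~\ref{prop:gamma}), which are unaffected by the finite-order twists $\chi^{a}$ and $\chi^{-p}$, it is enough to prove the two identities with $\varepsilon_p$ in place of $\hat{\varepsilon}_p$. Moreover $\chi^{a}$ and $\chi^{-p}$ are again anticyclotomic over $K$, so ${\rm Ind}_{K/\BQ_p}T_{\psi}(\chi^{a})$ and ${\rm Ind}_{K/\BQ_p}T_{\psi}(\chi^{-p})$ remain symplectic self-dual; hence each of their $\varepsilon_p$'s, and therefore each of the two ratios
\[
\varepsilon_p\bigl({\rm Ind}_{K/\BQ_p}T_{\psi}(\chi^{a})\bigr)\big/\varepsilon_p\bigl({\rm Ind}_{K/\BQ_p}T_{\psi}(\chi)\bigr),\qquad
\varepsilon_p\bigl({\rm Ind}_{K/\BQ_p}T_{\psi}(\chi)\bigr)\big/\varepsilon_p\bigl({\rm Ind}_{K/\BQ_p}T_{\psi}(\chi^{-p})\bigr),
\]
lies in $\{\pm1\}$.

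Next I would invoke inductivity of $\varepsilon$-factors, $\varepsilon_p({\rm Ind}_{K/\BQ_p}W)=\lambda(K/\BQ_p)^{\dim W}\,\varepsilon_K(W)$ with $\lambda(K/\BQ_p)$ the Langlands constant attached to $\psi_{\BQ_p}\circ\mathrm{Tr}_{K/\BQ_p}$, together with the explicit Weil--Deligne representation of $\psi$ from \S\ref{subsubsection, ramified p-adic WD}: its smooth part $\psi^{\mathrm{sm}}$ is a character of $K^{\times}$ of conductor one, while $\mathrm{WD}(\chi^{a})=\chi^{a}$ as $\chi$ has finite order. In each ratio the factor $\lambda(K/\BQ_p)$ cancels, so the two claims become
\[
\varepsilon_K(\psi^{\mathrm{sm}}\chi^{a},\psi_K)\big/\varepsilon_K(\psi^{\mathrm{sm}}\chi,\psi_K)=\Bigl(\tfrac{a}{p}\Bigr),\qquad
\varepsilon_K(\psi^{\mathrm{sm}}\chi,\psi_K)=\varepsilon_K(\psi^{\mathrm{sm}}\chi^{-p},\psi_K),
\]
with $\psi^{\mathrm{sm}},\chi$ now regarded as characters of $K^{\times}$ by local class field theory and $\psi_K=\psi_{\BQ_p}\circ\mathrm{Tr}_{K/\BQ_p}$. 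A preliminary bookkeeping of conductors is needed here: an anticyclotomic character of exact order $p^{n}$ over the ramified quadratic $K$ is trivial on $\mu_{p-1}(K)$ and on $\BQ_p^{\times}$, and the $\tau$-anti-invariant part of $1+\mathfrak{p}_K$ jumps only in odd $\mathfrak{p}_K$-degree, so $a(\chi)=2n$; consequently $a(\psi^{\mathrm{sm}}\chi^{a})=2n$ is independent of $a\in\BZ_p^{\times}$, whereas $a(\psi^{\mathrm{sm}}\chi^{-p})=2(n-1)$ for $n\ge2$ (and $=1$ for $n=1$).

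For the $\varepsilon$-factor computation I would use Tate's Gauss-sum formula for $\varepsilon_K(\cdot,\psi_K)$ and perform the change of variables $x\mapsto ax$ ($a\in\BZ_p^{\times}$) in the defining integral. Because $\chi$ is anticyclotomic it is trivial on $\BQ_p^{\times}\ni a$, so this substitution contributes exactly the factor $\psi^{\mathrm{sm}}(a)^{-1}$; and the determinant relation $\det\bigl({\rm Ind}_{K/\BQ_p}\psi^{\mathrm{sm}}\bigr)=\omega_{K/\BQ_p}\cdot(\psi^{\mathrm{sm}}|_{\BQ_p^{\times}})$, combined with the fact that ${\rm Ind}_{K/\BQ_p}T_{\psi}$ is symplectic self-dual with cyclotomic determinant, forces $\psi^{\mathrm{sm}}|_{\BQ_p^{\times}}$ to be an unramified twist of $\omega_{K/\BQ_p}$, so that $\psi^{\mathrm{sm}}(a)=\bigl(\tfrac{a}{p}\bigr)$ for $a\in\BZ_p^{\times}$. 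A stationary-phase evaluation of the remaining Gauss sum --- using that the linearization parameter $\beta$ of $\chi$, characterized by $\chi(1+y)=\psi_K(\beta y)$ for $v_K(y)\ge\lceil a(\chi)/2\rceil$, rescales to $a\beta$ under $\chi\mapsto\chi^{a}$ --- then shows that the first ratio is $\bigl(\tfrac{a}{p}\bigr)$ times a $p$-power root of unity; since that ratio lies in $\{\pm1\}$ by the first paragraph and $p$ is odd, the root of unity is $1$. The second identity follows the same pattern: one compares the two Gauss sums at the (even) conductor levels $2n$ and $2(n-1)$, the discrepancy of $q_K$-powers being absorbed by the self-dual normalization, and the same stationary-phase input forces the ratio to be a $p$-power root of unity, hence again $1$.

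The crux of the argument is the stationary-phase step --- in particular tracking how the critical element depends on $a$ and verifying that everything beyond the quadratic symbol is a root of unity --- together with the careful matching of conductors and Haar measures when passing through inductivity. Once each ratio is pinned down to $\bigl(\tfrac{a}{p}\bigr)$, resp.\ $1$, times a $p$-power root of unity, the $\{\pm1\}$-valuedness of symplectic self-dual local $\varepsilon$-constants closes the proof. (For $p=3$ and the ramified quadratic $K$ containing $\mu_3$, the conductor count needs a minor variant, handled analogously.)
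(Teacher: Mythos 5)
Your overall strategy — pass to Weil--Deligne $\varepsilon$-constants of characters of $K^\times$ via inductivity (the Langlands $\lambda$-constant cancelling in ratios), apply Lamprecht's stationary-phase formula to track the linearization parameter, and then use the $\{\pm1\}$-valuedness of symplectic self-dual $\varepsilon$-constants together with $p$ odd to eliminate the residual $p$-power root of unity — is essentially the route the paper takes in Proposition~\ref{prop, ramified epsilon} and Corollary~\ref{cor, ramified root number}. For the first identity your argument is sound: with $\phi_K$ the Weil--Deligne character of $\psi$ (conductor one), one has $c_{\chi^a}\equiv a^{-1}c_\chi$ modulo $1+\fm_{\CO_K}$, so the ratio of root numbers is $\sim\phi_K(a^{-1})=\left(\frac{a}{p}\right)$, and $\{\pm1\}$-valuedness closes the gap.

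The second identity as you argue it contains a genuine gap. In the same picture the linearization parameter for $\chi^{-p}$ is $c_{\chi^{-p}}\equiv -p^{-1}c_\chi$ modulo $1+\fm_{\CO_K}$, so the ratio of root numbers is $\sim\phi_K(-p^{-1})$; since $\phi_K|_{\BQ_p^\times}=\omega_{K/\BQ_p}\cdot\omega_1$, this equals $\omega_{K/\BQ_p}(-p)\cdot p$, and modulo $\BR_{>0}\cdot\mu_{p^\infty}$ the surviving factor is the sign $\omega_{K/\BQ_p}(-p)=\left(\frac{\delta^2/p}{p}\right)$. This is \emph{not} a $p$-power root of unity: it is $-1$ when $K=\BQ_p(\sqrt{\epsilon p})$ with $\epsilon\in\BZ_p^\times$ a non-square. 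Your claim that ``the same stationary-phase input forces the ratio to be a $p$-power root of unity, hence again $1$'' therefore fails; you did not track the $\phi_K$-value at the $(-p)$-rescaling. Note that the identity the paper actually establishes is for the exponent $-\delta^2$ (with $\delta$ a uniformizer of $K$ and $\delta^2\in\BQ_p$), namely Corollary~\ref{cor, ramified root number}(ii) and Lemma~\ref{lem, ramified completed epsilon}(ii): in that case $c_{\chi^{-\delta^2}}\equiv-\delta^{-2}c_\chi$, and $\phi_K(-\delta^{-2})=\left(\frac{-1}{p}\right)\phi_K(\delta)^{-2}=\left(\frac{-1}{p}\right)p^*=p\in\BR_{>0}$, so your stationary-phase argument does close. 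The two versions differ by a twist by the unit $-\delta^2/p$, contributing $\left(\frac{\delta^2/p}{p}\right)$ via the first identity. Finally, you should exclude $\chi^p=1$ (i.e.\ require $n\ge 2$) as Corollary~\ref{cor, ramified root number}(ii) does: when $n=1$ the twist is trivial and the conductor comparison you set up degenerates, and indeed for $\chi$ of order $p$ the sign $\hat{\varepsilon}_p(\mathrm{Ind}_{K/\BQ_p}T_\psi(\chi))$ genuinely varies with $\chi$ while $\hat{\varepsilon}_p(\mathrm{Ind}_{K/\BQ_p}T_\psi)$ does not.
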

\noindent The above variation contrasts the unramified case, where the $\varepsilon$-constant only depends on the order of $\chi$ 
(cf.~Fact~\ref{lem, unram eps}).  Note also that 
${\rm Ind}_{K/\BQ_p}T_\psi(\chi)$ is symplectic self-dual since $\psi\chi$ is conjugate symplectic self-dual, and so the $\varepsilon$-constant ${\varepsilon}_p({\rm Ind}_{K/\BQ_p}T_\psi(\chi))$ is canonically defined, independent of the choice of additive character and embedding of its coefficient field into $\BC$, unlike the case of $\psi\chi$ (cf.~\S\ref{ss:eps-ssd}).

\begin{defn}
For $n\in \BZ_{\geq 1}$, define a partition of the set $\Xi_{n}$ of anticyclotomic characters over $K$ of order $p^n$ by 
 \[
 \Xi_{n}=\Xi_{\psi,n}^+ \cup \Xi_{\psi,n}^-=\{\chi \,|\, \hat{\varepsilon}_p({\rm Ind}_{K/\BQ_p}T_\psi(\chi^{-1}))=+1\}\cup
 \{\chi \,|\, \hat{\varepsilon}_p({\rm Ind}_{K/\BQ_p} T_\psi(\chi^{-1}))=-1\}.
 \] 
 \end{defn}
For a given $\psi$, the above partition is canonically defined in view of the preceding paragraph. 
Note that $|\Xi_{\psi,n}^+| = |\Xi_{\psi,n}^-|$ by Fact~\ref{lem, ram eps}.  For $\cdot\in\{\emptyset, +,-\}$, put  $\Xi^\cdot_\psi=\cup_n \Xi_{\psi,n}^\cdot$.  
 Consider the anticyclotomic deformation $$\mathbb{T}_\psi =T_\psi^{\otimes -1}(1) \otimes_{\CO} \Lambda^\iota $$
 where $\Lambda^\iota$ is as in \S\ref{ss:RuC}.   
  Note that $\BT_\psi$ is a $\Lambda$-conjugate symplectic self-dual $G_{K}$-representation of rank one. 
   Define
 \[
 H^1_{\pm}(K,\mathbb{T}_\psi)=\{ v \in H^1(K,\mathbb{T}_\psi)\;|\; \exp^*_\chi v=0 \;\text{if $\chi \in \Xi^\mp_\psi$}\}
 \]
 and likewise $H^1_{\pm}(K,\BT_\psi \otimes_{\BZ_p}\BQ_p)\subset H^1(K,\BT_\psi) \otimes_{\BZ_p}\BQ_p$, 
 where $\exp^*_\chi$ is the composite of the dual exponential map and 
\[
H^1(K, \mathbb{T}_{\phi}) =\varprojlim_m H^1(K_m, T_{\phi}^{\otimes -1}(1))  \to H^1(K_n, T_{\phi}^{\otimes -1}(1)) \to H^1(K_n, T_{\phi}^{\otimes -1}(1))^{\chi^{-1}} =H^1(K, T_{\phi}^{\otimes -1}(1)(\chi))
\]
where the first arrow is defined using Shapiro's lemma, and  the second is given by $x\mapsto \sum_{\sigma \in \Gal(K_n^{}/K) } x^{\sigma }\chi(\sigma)$.

 \subsubsection*{Result}
 \begin{thm}\label{thm, Rubin's conjecture ram} Let $p$ be an odd prime and $K$ a ramified quadratic extension of $\BQ_p$. Let $\psi$ be a conjugate symplectic self-dual Galois character over $K$ of conductor one with Hodge--Tate weights $(1,0)$ and $L$ the finite extension of $K$ generated by its image. 
 If $p=3$, suppose that either $K=\BQ_3(\sqrt{3})$ or $K=\BQ_3(\sqrt{-3})$ and $\psi \nequiv 1 \mod{\fm_{\CO_L}}$. 
 Then the following holds. 
 \noindent\begin{itemize}
 \item[i)] The $\Lambda$-modules $H^1_{\pm}(K, \mathbb{T}_\psi)$ are free of rank one.
 \item[ii)] The $\Lambda$-submodules $H^1_{\pm}(K,\BT_\psi) \subset H^1(K, \BT_\psi)$ are Lagrangian.  
 \item[iii)] As $\Lambda$-modules, we have 
 $$
 H^1(K, \mathbb{T}_\psi)= {H}^1_{+}(K, \mathbb{T}_\psi) \oplus {H}^1_{-}(K, \mathbb{T}_\psi).
   $$
 \end{itemize}
 Moreover, in the excluded case\footnote{In this case $H^1(K,\BT_\psi)$ is not a free $\Lambda$-module and a different formulation is necessary.} that $p=3$, $K=\BQ_3(\sqrt{-3})$ and $\psi \equiv 1 \mod{\fm_{\CO_L}}$, the assertions as in parts i)-iii) hold for the $\Lambda\otimes_{\BZ_p}\BQ_p$-modules $H^1_{\cdot}(K,\BT_{\psi}\otimes_{\BZ_p}\BQ_p)$.
 \end{thm}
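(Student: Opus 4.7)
The plan is to reduce to Theorem~\ref{thm, intro main} via Shapiro's lemma applied to the induced representation, following the strategy of Proposition~\ref{prop, lsd and Rubin's conjecture} in the unramified setting. Form $\tilde{\BT}_\psi := \mathrm{Ind}_{K/\BQ_p}(\BT_\psi)$, a free rank-two $\Lambda$-representation of $G_{\BQ_p}$. Shapiro's lemma gives a canonical $\Lambda$-linear isomorphism $H^1(\BQ_p, \tilde{\BT}_\psi) \cong H^1(K, \BT_\psi)$ compatible with the symmetric Tate pairings. Since $\psi$ is conjugate symplectic self-dual with Hodge--Tate weights $(1,0)$, one computes $\det(\tilde{\BT}_\psi) \cong \Lambda(1)$, and the corresponding skew-symmetric perfect pairing $\tilde{\BT}_\psi \times \tilde{\BT}_\psi \to \Lambda(1)$ makes $\tilde{\BT}_\psi$ symplectic self-dual.

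First I would verify genericity of $\tilde{\BT}_\psi$. By Shapiro, $H^0(\BQ_p, \overline{\tilde{\BT}}_\psi) = H^0(K, \overline{\BT}_\psi)$, and modulo the maximal ideal of $\Lambda$ the representation $\BT_\psi$ reduces to the character $\bar{\psi}^{-1}\bar{\chi}_{\mathrm{cyc}}$ of $G_K$. Genericity thus fails precisely when $\bar{\psi} = \bar{\chi}_{\mathrm{cyc}}|_{G_K}$. Enumerating the residual characters compatible with our hypotheses (Hodge--Tate weights $(1,0)$, conductor one, conjugate symplectic) shows that this obstruction arises only in the listed exceptional case $p=3$, $K=\BQ_3(\sqrt{-3})$, $\psi \equiv 1 \bmod \fm_{\CO_L}$.

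Outside the exceptional case, Theorem~\ref{thm, intro main} yields a decomposition $H^1(\BQ_p, \tilde{\BT}_\psi) = H^1_+(\BQ_p, \tilde{\BT}_\psi) \oplus H^1_-(\BQ_p, \tilde{\BT}_\psi)$ into free rank-one Lagrangian $\Lambda$-submodules, which Shapiro transports to $H^1(K, \BT_\psi)$, giving parts i)--iii) once the resulting submodules are matched with those defined via the vanishing of $\exp^*_\chi$. For each finite-order anticyclotomic $\chi$, the de Rham specialization of $\tilde{\BT}_\psi$ at $\chi$ corresponds under Shapiro to $H^1(K, T_\psi^{\otimes -1}(1)(\chi))$, and its Bloch--Kato subgroup equals $\ker(\exp^*_\chi)$ (the Hodge--Tate weights being $(0,1)$ together with ramification of $\psi$ ruling out unramified contributions). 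Part~4 of Theorem~\ref{thm, intro main} combined with Fact~\ref{lem, ram eps} then shows that $H^1_{\pm}(\BQ_p, \tilde{\BT}_\psi)$ specializes into $H^1_{\mathrm{f}}$ exactly at $\chi \in \Xi^{\mp}_\psi$. Since Fact~\ref{lem, ram eps} also yields $|\Xi^{+}_{\psi,n}|=|\Xi^{-}_{\psi,n}|$, both $\Xi^{\pm}_\psi$ are Zariski dense in $\Spec\Lambda$, so Proposition~\ref{prop, decomp via Z} identifies $H^1_{\pm}(\BQ_p, \tilde{\BT}_\psi)$ with the universal-norm subgroups associated with the prescribed sign, translating to $H^1_{\pm}(K,\BT_\psi)$.

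In the excluded exceptional case integral genericity fails, but $H^0(\BQ_p, \overline{\tilde{\BT}}_\psi)$ becomes trivial after inverting $p$, so the rational form of Theorem~\ref{thm, intro main} applied to $\tilde{\BT}_\psi \otimes_{\BZ_p}\BQ_p$ produces the claimed decomposition over $\Lambda \otimes_{\BZ_p}\BQ_p$. The main obstacle will be the genericity analysis: establishing that the residual coincidence $\bar{\psi} = \bar{\chi}_{\mathrm{cyc}}|_{G_K}$ under the stated hypotheses occurs only in the exceptional case requires a careful combination of Hodge--Tate weight constraints with the structure of conjugate symplectic characters of conductor one on $G_K$ for ramified quadratic $K/\BQ_p$; this is precisely the phenomenon that also underlies the delicate anticyclotomic variation of $\hat{\varepsilon}_p$ captured in Fact~\ref{lem, ram eps}.
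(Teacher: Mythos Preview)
Your approach is correct and matches the paper's: form $\tilde{\BT}_\psi=\mathrm{Ind}_{K/\BQ_p}\BT_\psi$, use Shapiro's lemma, verify symplectic self-duality and genericity (this is the paper's Lemma~\ref{lem, induced rep satisfy the condition}), apply Theorem~\ref{thm, main}, and identify the resulting signed submodules with the $\exp^*_\chi$-defined ones via the de Rham characterization (Corollary~\ref{cor, decomp by using Z}), using Zariski density of each $\Xi^{\pm}_\psi$. One minor correction for the excluded case: the hypothesis needed for the rational variant (Theorem~\ref{thm, main3}) is not that the \emph{residual} $H^0$ becomes trivial after inverting $p$ (that statement is vacuous), but rather that $H^2(\BQ_p,\tilde{\BT}_\psi)\otimes_{\BZ_p}\BQ_p=0$, which the paper verifies via Lemma~\ref{lem, van} by noting $H^0(K,T_s)=0$ for every characteristic-zero specialization $T_s$ since $(\psi\chi^{-1})|_{\BQ_p^\times}$ is never trivial.
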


The above result 
can be interpreted in terms of local units 
as in Rubin's conjecture 
(cf.~Theorem~\ref{thm, Rubin's conjecture}), yet  we present the above formulation  
to highlight 
the general viewpoint of this paper. 
It initiates the development of integral anticyclotomic Iwasawa theory of CM elliptic curves at primes $p$ ramified in the CM field. As an illustration, we 
construct an integral $p$-adic $L$-function 
(see~Theorem~\ref{thm, pL}). 

We refer to Theorems~\ref{thm, lsd ind} and
\ref{thm, rubin decomposition ramified} for $\psi$ with arbitrary conductor and Hodge--Tate weights. 

\subsubsection{A new example of a bounded $p$-adic $L$-function 
for a symplectic self-dual {deformation}: anticyclotomic CM $p$-adic $L$-function for ramified (non-semistable) primes
}\label{ss:pL-intro}

\subsubsection*{Backdrop}\label{ss:pL-bck} 
{The Hasse--Weil $L$-function is associated to any motive over a number field, a function of complex variable. An important problem is to find its $p$-adic analogue. 

Despite constructions of $p$-adic $L$-functions in various settings, the conjectural framework remains elusive in general, 
for example non-trianguline deformations. 
The conjectural framework is known for   
cyclotomic deformations of a motive for semistable $p$ (cf. ~\cites{PRbook, PR-semistable}) and for 
Panchishkin deformations (cf. ~\cite{Gr91}), the former being trianguline at $p$.
The framework relies on the existence and properties of a family of isotropic subspaces of an appropriate size 
in $p$-local Galois cohomology groups, and the properties are reflected in the corresponding $p$-adic $L$-functions. 
In both cases, such an isotropic subspace is defined as the Galois cohomology group of a $p$-local subrepresentation, more precisely, a sub 
$(\varphi,\Gamma)$-module over the Robba ring arising from a triangulation of the given family.   
Since the subrepresentation is in general defined only over the rigid analytic space associated to the family, 
the corresponding $p$-adic $L$-function typically has unbounded denominators\footnote{One may seek their bounded counterpart. As such, the signed Iwasawa theory initiated by Pollack \cite{Po} and the second-named author ~\cite{Ko0} sometimes works well.}. As for  Panchishkin deformations, their definition itself assumes the existence of such a subrepresentation over a formal scheme, and 
the conjectural $p$-adic $L$-function is expected to have bounded denominators.

For symplectic self-dual deformations, {the authors} expect that Lagrangian subspaces 
manifest the aforementioned subspaces,  
and the local sign decomposition is the first step towards a new class of $p$-adic $L$-functions. 
Note also that the $\varepsilon$-constant variation, which is mild for semistable or Panchishkin deformations but wild in general, is an obstruction to the existence of a bounded $p$-adic $L$-function: for geometric specializations with global $\varepsilon$-constant $-1$, the central $L$-values 
vanish.

As an application of the local sign decomposition, we construct a {\it bounded}
 $p$-adic $L$-function for 
the anticyclotomic $\BZ_p$-deformation of a CM elliptic curve for primes $p$ ramified in the CM field. 
This conjugate symplectic self-dual deformation is non-trianguline at $p$ and the variation of $\varepsilon$-constants is wild.

\subsubsection*{
The anticyclotomic CM deformation at ramified primes}

{ Iwasawa theory of CM elliptic curves began with the seminal work of Coates and Wiles \cite{CW}. On the analytic side, Katz \cite{Kz} pioneered the construction of pertinent $p$-adic $L$-functions. The theory exhibits peculiar features along the anticyclotomic direction}. 

Let $E$ be a CM  elliptic curve over $\BQ$
and $\CK$ the CM field. 
Suppose that $E$ has CM by $\CO_K$.
Let $p$ be an odd prime and $\CK_\infty^{\rm ac}$ the anticyclotomic $\BZ_p$-extension of $\CK$ 
with the $n$-th layer $\CK_n^{\rm ac}$. The behavior of 
the Mordell--Weil rank of $E(\CK_n^{\rm ac})$ is elemental to Iwasawa theory. 
For $p$ split in $\CK$,
 there exists a constant $c$ such that for $n\gg 0$ we have  
\begin{equation}\label{intro, eq, mw, split}
 \mathrm{rank}_{\CO_K}\,E(\CK_n^{\rm ac})=\frac{1-\varepsilon(E_{/\BQ})}{2}\cdot p^{n}+c.
\end{equation}
 In contrast if $p$ is inert in $K$, 
Greenberg~\cites{Gr83,Gr01} noticed that 
global epsilon constants vary along $\CK_\infty^{\rm ac}$ and   
\[
\mathrm{rank}_{\CO_K}\,E(\CK_n^{\rm ac})-\mathrm{rank}_{\BZ}\,E(\CK_{n-1}^{\rm ac})=
\varepsilon_n p^{n-1}(p-1)\]
 for $n\gg 0$, 
where $\varepsilon_n \in \{0,1\}$ depending on the parity of $n$. Rubin initiated the study of Iwasawa theory \cites{Ru,BKO21} related to this phenomenon. 
Finally, if $p$ is ramified in $\CK$, then global epsilon constants vary drastically 
(see~Fact~\ref{lem, ram eps, bis}) and   
 $$\mathrm{rank}_{\CO_K}\,E(\CK_n^{\rm ac})=\frac{p^{n}-1}{2}+c$$ 
 for $n\gg 0$ (cf.~Theorem~\ref{vGZK}).  
So new points of infinite order appear in basically every layer of $\CK_\infty^{\rm ac}$. 
Note that the variation of Mordell--Weil ranks in the above three cases is markedly different.

A basic problem is to develop anticyclotomic CM Iwasawa theory at ramified primes reflecting the above phenomena. As a first step, we construct an integral $p$-adic $L$-function.

\subsubsection*{Set-up} 
For an elliptic curve $E$ over $\BQ$ with CM by an order of 
an imaginary quadratic field $\mathcal{K}$, 
let $\phi$ be the associated Hecke character over $\CK$ so that $L({\bf \phi},s)=L(E_{/\BQ},s)$.

Let $p$ be an odd prime ramified in $\CK$ and $K$ the completion of $\CK$ at the prime above $p$. 
Fix embeddings $\iota_{\infty}:\overline{\BQ} \hookrightarrow  \BC$ and $\iota_{p}:\overline{\BQ}\hookrightarrow  \overline{\BQ}_p$.
Let ${\phi}$ also denote the associated $p$-adic Galois character over $\CK$. 
Put
$\Lambda=\mathcal{O}_K[\![\mathrm{Gal}(\mathcal{K}_\infty^{\rm ac}/\mathcal{K})]\!]$. 
As a reflection of 
the variation of local epsilon constants at $p$ (cf. Fact \ref{lem, ram eps}), 
we have the following notable variation of the global epsilon constants $\varepsilon(\phi\chi)$
along $\CK_\infty^{\rm ac}$.
 \begin{fact}\label{lem, ram eps, bis}
Let $\chi$ be a finite order anticyclotomic character over $\CK$ of order $p^n>1$.  
 Then for $a \in \BZ_p^\times$,  
 \[
 \varepsilon(\phi\chi^a)= \left(\frac{a}{p}\right)\varepsilon(\phi\chi), \quad 
  \varepsilon(\phi\chi)=\varepsilon(\phi\chi^{-p}). 
 \]
\end{fact}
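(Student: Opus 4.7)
Reduce both global identities to their local counterparts at $p$ (Fact~\ref{lem, ram eps}) by showing that all remaining local contributions to the ratios $\varepsilon(\phi\chi^a)/\varepsilon(\phi\chi)$ and $\varepsilon(\phi\chi^{-p})/\varepsilon(\phi\chi)$ are trivial. By the inductivity of $\varepsilon$-constants,
\[
\varepsilon(\phi\chi) = \lambda_{\CK/\BQ} \cdot \varepsilon_\CK(\phi\chi),
\]
where the constant $\lambda_{\CK/\BQ}$ cancels in any such ratio, so it suffices to analyse the Hecke-character $\varepsilon$-constant $\varepsilon_\CK(\phi\chi) = \prod_\lambda \varepsilon_\lambda(\phi\chi)$ over the places $\lambda$ of $\CK$.

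\textbf{Archimedean and prime-to-$p$ factors.} Since $\chi$ has finite order, the infinity type of $\phi\chi$ coincides with that of $\phi$, so archimedean factors are insensitive to the twist. For a finite place $\lambda \nmid p$, the character $\chi_\lambda$ is unramified, as only the prime of $\CK$ above $p$ ramifies in $\CK_\infty^{\mathrm{ac}}/\CK$. The standard formula for unramified twists gives
\[
\frac{\varepsilon_\lambda(\phi\chi^a)}{\varepsilon_\lambda(\phi\chi)} = \chi_\lambda(\pi_\lambda)^{(a-1)f(\phi_\lambda)}, \qquad \frac{\varepsilon_\lambda(\phi\chi^{-p})}{\varepsilon_\lambda(\phi\chi)} = \chi_\lambda(\pi_\lambda)^{(-p-1)f(\phi_\lambda)},
\]
so the aggregate prime-to-$p$ contribution is a power of $\chi(\mathfrak{f}^{(p)}(\phi))$, where $\mathfrak{f}^{(p)}(\phi)$ denotes the prime-to-$p$ part of the conductor of $\phi$. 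Because $E$ is defined over $\BQ$, one has $\mathrm{Ind}_{\CK/\BQ}\phi \cong \mathrm{Ind}_{\CK/\BQ}\phi^c$, forcing $\mathfrak{f}(\phi) = \mathfrak{f}(\phi^c) = \overline{\mathfrak{f}(\phi)}$, and in particular $\mathfrak{f}^{(p)}(\phi)$ is stable under complex conjugation. Combined with the anticyclotomic identity $\chi(\overline{\mathfrak{a}}) = \chi(\mathfrak{a})^{-1}$ (Artin reciprocity applied to $\chi \circ c = \chi^{-1}$), this yields $\chi(\mathfrak{f}^{(p)}(\phi))^2 = 1$; the odd $p$-power order of $\chi$ then forces $\chi(\mathfrak{f}^{(p)}(\phi)) = 1$, so every prime-to-$p$ contribution to the ratio is trivial.

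\textbf{Factor at $p$ and conclusion.} At the prime above $p$, the local representation of $G_{\BQ_p}$ attached to $\phi\chi$ is $\mathrm{Ind}_{K/\BQ_p}\bigl(T_\psi(\chi|_{G_K})\bigr)$ with $\psi = \phi|_{G_K}$. Twisting by a finite-order character preserves Hodge--Tate weights, so the $\Gamma$-factor in $\hat{\varepsilon}_p = \Gamma \cdot \varepsilon_p$ is unchanged and Fact~\ref{lem, ram eps} translates into the analogous identities for $\varepsilon_p$. Combining these with the cancellation of all other local factors yields the two claimed global identities. The principal nontrivial input is Fact~\ref{lem, ram eps} itself, whose proof relies on an explicit description of the ramified Weil--Deligne representations attached to $\mathrm{Ind}_{K/\BQ_p}T_\psi(\chi)$; the auxiliary conductor analysis (the $c$-stability of $\mathfrak{f}(\phi)$ and the anticyclotomic vanishing) is routine but must be executed with care regarding the normalizations of additive characters and Haar measures across all places.
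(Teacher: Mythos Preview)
Your approach is correct and follows the same overall strategy as the paper (Proposition~\ref{lem, eps global}): decompose the global $\varepsilon$-ratio into local pieces, show the archimedean and prime-to-$p$ contributions are trivial, and invoke the local result at $p$ (Fact~\ref{lem, ram eps}, equivalently Corollary~\ref{cor, ramified root number}).

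The one place your argument is more elaborate than necessary is the prime-to-$p$ step. You compute the aggregate contribution as a power of $\chi(\mathfrak{f}^{(p)}(\phi))$, then invoke $c$-stability of the conductor and anticyclotomy to get $\chi(\mathfrak{f}^{(p)}(\phi))^2=1$, and finally use odd order to conclude. The paper bypasses the conductor analysis entirely: it notes that for each $v\nmid p$ the ratio $\varepsilon_v(\phi\chi)/\varepsilon_v(\phi)$ is a power of $\chi_v(\pi_v)$, hence a $p$-power root of unity; the product over all $v\nmid p$ is therefore a $p$-power root of unity. But both the global $\varepsilon$-constants and the $p$-local ones lie in $\{\pm1\}$ by self-duality, so this product lies in $\{\pm1\}\cap\mu_{p^\infty}=\{1\}$ since $p$ is odd. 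This avoids appealing to the structure of $\mathfrak{f}(\phi)$ altogether. Your route works too, of course, and has the mild advantage of identifying the prime-to-$p$ contribution explicitly rather than just bounding it.
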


\begin{defn}
 For $n\in \BZ_{\geq 1}$, define a partition of the set $\Xi_{n}$ of anticyclotomic characters over $\CK$ of order $p^n$ by 
 \[
 \Xi_{n}=\Xi_{\phi, n}^+ \cup \Xi_{\phi, n}^-=\{\chi \,|\, \varepsilon(\phi\chi^{-1})=+1\}\cup
 \{\chi \,|\, \varepsilon(\phi\chi^{-1})=-1\}.
 \] 
 \end{defn}

   Note that $|\Xi_{\phi, n}^+| = |\Xi_{\phi, n}^-|$ by Fact~\ref{lem, ram eps, bis}.   
  For $\cdot\in\{\emptyset, +,-\}$, put  $\Xi^\cdot_{\phi}=\cup_n \Xi_{\phi, n}^\cdot$. 
If $\chi\in\Xi^-_{\phi}$, then $L(\phi\chi^{-1},1)=0$ since $\varepsilon(\phi\chi)=-1$. A natural problem: 
 to construct a $p$-adic $L$-function interpolating algebraic part of the $L$-values $L(\phi\chi^{-1},1)$ for $\chi\in\Xi_{\phi}^+$.

Put 
\[\varepsilon=\varepsilon_\phi
:=
\sgn({\varepsilon}({\phi})/\varepsilon_p({\rm Ind}_{K/\BQ_p}\phi_p)), \qquad \BT_{\phi}=T_{{\phi}}^{\otimes -1}(1)\otimes_{\CO_K} 
\Lambda^{\iota}.
\]

\subsubsection*{Result}

\begin{thm}\label{thm, pL}
Let $E$ be a CM elliptic curve defined over $\BQ$ with CM by an order of an imaginary quadratic field $\CK$. Let $\phi$ be the associated Hecke character. 
Let $p\geq 3$ be a prime ramified in $\CK$ and $K$ the completion of $\CK$ at the prime above $p$.
If either $p\ge 5$ or $p=3$ and ${ \phi} \nequiv 1 \mod{\fm_{\CO_L}}$ on $G_K$ (Case I), then let $v_\varepsilon$ be a basis of the $\Lambda$-module $H^1_{\varepsilon}(K, \mathbb{T}_{ \phi}^{})$. In the excluded case that $p=3$ and ${ \phi} \equiv 1 \mod{\fm_{\CO_L}}$ on $G_K$ (Case II),
let $v_\varepsilon$ be a basis of the $\Lambda\otimes_{\BZ_p}\BQ_p $-module $H^1_{\varepsilon}(K, \mathbb{T}_{\boldsymbol \phi}^{})\otimes_{\BZ_p} \BQ_p$.

Then there exists a $p$-adic $L$-function 
$$\mathscr{L}_{p,v_\varepsilon}(E) \in 
\begin{cases}
 \Lambda \qquad &(\text{Case I})\\
\; \Lambda\otimes_{\BZ_p}\BQ_p  \qquad &(\text{Case II})
\end{cases}
$$ 
 such that  for $\chi \in \Xi^{+}_{\phi}$ we have 
 \[
 \chi^{-1}(\mathscr{L}_{p,v_\varepsilon}(E))=
 \frac{1}{\delta_{v_\varepsilon}^{\omega_E}(\chi)}\cdot \frac{L({\phi}\chi^{-1}, 1)}{\Omega_\infty}
 \]
where $\exp^*_\chi(v_\varepsilon)= \delta_{v_\varepsilon}^{\omega_E}(\chi)\cdot \omega_E$ for $\omega_E$ the N\'eron differential of a minimal Weierstrass model over $\CO_{\CK}$, and 
$\Omega_\infty \in \BC^\times$ is the CM period (cf. \S\ref{ss,erl}).
 \end{thm}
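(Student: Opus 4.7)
The plan is to construct a global Iwasawa-cohomology class whose $p$-local image automatically lies in the signed submodule $H^1_\varepsilon(K, \mathbb{T}_\phi)$ singled out by Theorem~\ref{thm, Rubin's conjecture ram}, and then divide by the basis $v_\varepsilon$ to extract the $p$-adic $L$-function.

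First, I would produce a global class $z_\phi \in H^1(\CK, \mathbb{T}_\phi)$ by specializing a norm-compatible system of elliptic units along the anticyclotomic tower $\CK_\infty^{\rm ac}/\CK$ to the CM character $\phi$ (as in the work of Coates--Wiles and de Shalit), or equivalently via the Perrin-Riou regulator applied to the anticyclotomic specialization of Katz's two-variable CM $p$-adic $L$-function. Writing $\loc_p z_\phi \in H^1(K, \mathbb{T}_\phi)$ for its $p$-localization, I would next establish an explicit reciprocity law of the shape
\[
\exp^{*}_\chi(\loc_p z_\phi) \;=\; \frac{L(\phi\chi^{-1},1)}{\Omega_\infty}\cdot \omega_E
\]
for every $\chi \in \Xi_\phi$ (up to an absorbable nonzero constant).

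Now comes the key observation: for $\chi \in \Xi_\phi^{-\varepsilon}$, Fact~\ref{lem, ram eps, bis} combined with $\varepsilon=\sgn(\varepsilon(\phi)/\varepsilon_p(\Ind_{K/\BQ_p}\phi_p))$ forces the global root number $\varepsilon(\phi\chi^{-1})=-1$, whence $L(\phi\chi^{-1},1)=0$. The reciprocity law then gives $\exp^*_\chi(\loc_p z_\phi)=0$ for all $\chi \in \Xi_\phi^{-\varepsilon}$, which by the very definition of the signed submodule means
\[
\loc_p z_\phi \;\in\; H^1_\varepsilon(K, \mathbb{T}_\phi).
\]
Theorem~\ref{thm, Rubin's conjecture ram} guarantees that this target is free of rank one over $\Lambda$ in Case I (resp.\ over $\Lambda\otimes_{\BZ_p}\BQ_p$ in Case II), with basis $v_\varepsilon$. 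I then define $\mathscr{L}_{p,v_\varepsilon}(E)$ as the unique element satisfying $\loc_p z_\phi = \mathscr{L}_{p,v_\varepsilon}(E)\cdot v_\varepsilon$. The interpolation formula is immediate from $\Lambda$-equivariance of $\exp^*_\chi$: for $\chi \in \Xi_\phi^+$,
\[
\chi^{-1}\bigl(\mathscr{L}_{p,v_\varepsilon}(E)\bigr)\cdot \delta_{v_\varepsilon}^{\omega_E}(\chi)\cdot \omega_E \;=\; \exp^*_\chi(\loc_p z_\phi) \;=\; \frac{L(\phi\chi^{-1},1)}{\Omega_\infty}\cdot \omega_E.
\]

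The hard part will be the explicit reciprocity law: the representation $\Ind_{K/\BQ_p} T_\phi^{\otimes -1}(1)$ is non-trianguline and non-semistable at $p$, so the classical Perrin-Riou and Kato machinery does not directly apply. The workaround is to work abelian-locally with the rank-one $G_K$-character $\phi^{-1}(1)$, which is de Rham with Hodge--Tate weights $(0,1)$, via Coleman power series attached to a Lubin--Tate-type formal group coming from the CM factor becoming good after a ramified base change, and then to transfer back to the rank-two $G_{\BQ_p}$ side by Shapiro's lemma. A secondary subtlety is integrality: in Case II the freeness of $H^1_\varepsilon$ holds only after inverting $p$, so $\mathscr{L}_{p,v_\varepsilon}(E)$ a priori acquires denominators there, which is precisely the source of the dichotomy between $\Lambda$ and $\Lambda\otimes_{\BZ_p}\BQ_p$ in the statement.
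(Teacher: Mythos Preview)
Your proposal is correct and follows essentially the same strategy as the paper: construct the global class from elliptic units (the paper's zeta morphism $\mathfrak{z}_{\boldsymbol{\phi}}(\gamma_A)$), use the explicit reciprocity law to see that its $p$-localization lands in $H^1_\varepsilon(K,\mathbb{T}_\phi)$ because the $L$-values vanish on the opposite sign, then divide by $v_\varepsilon$.

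The one place where you overestimate the difficulty is the explicit reciprocity law. You flag it as ``the hard part'' because $\mathrm{Ind}_{K/\BQ_p}T_\phi^{\otimes -1}(1)$ is non-trianguline and non-semistable, but the paper never works on the $\GL_2$ side for this step: via Shapiro it stays with the rank-one $G_K$-character, and then the reciprocity law is literally Kato's \cite[Thm.~15.9]{Kato} for elliptic units, quoted verbatim as Theorem~\ref{thm, erl}. No new Coleman-series or Lubin--Tate development is needed; the workaround you sketch is already the classical result. Everything else---the sign computation matching $\varepsilon^{(p)}(\boldsymbol{\phi})$ to the global partition, the Case~I/II dichotomy from freeness only after inverting $p$---you have exactly right.
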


 In the above interpolation formula the non-vanishing of $\delta_{v_\varepsilon}^{\omega_E}(\chi)$ is a 
 consequence of Theorem~\ref{thm, Rubin's conjecture ram} 
 (cf.~Corollary~\ref{cor, nv dual exp}). 

\begin{remark}
In case II the integral elliptic unit main conjecture is subtle to formulate \cite{JLK} and so is an integral refinement of 
$\mathscr{L}_{p,v_\varepsilon}(E)$. 
The $\CO_\CK$-equivariant framework \cite{JLK} may shed some light on the latter.
\end{remark}

 \subsection{Idea of the proof: local sign decomposition} 
 The basic idea is to construct an involution $w_\CT$ on $H^1(\BQ_p, \CT)$ for generic symplectic self-dual pairs $(\CR,\CT)$ of rank two so that 
its eigenspace decomposition yields the local sign decomposition. 

We present two methods for the construction of $w_\CT$:

\begin{itemize}
\item Colmez's $w$-operator in the $p$-adic local Langlands correspondence, 
\item Kato's local epsilon conjecture at $p$. 
\end{itemize}

In the following, we outline the construction via Colmez's $w$-operator and refer to \S\ref{ss:lsd via Kato's eps} for the approach via Kato's local $\varepsilon$-conjecture. Here we only remark that the latter implies the existence of a symplectic structure on $H^1(\BQ_p,\CT)$. Both methods are useful in verifying the properties in Theorem~\ref{thm, intro main}.

For simplicity of notation, suppose that $\CR$ is a complete local Noetherian ring with finite residue field. 
Put 
$
\mathcal{E}_\CR:=\varprojlim_n (\CR/\mathfrak{m}_\CR^n[\![X]\!][\frac{1}{X}])$.  
Let 
$D:=D(\CT)$ be the \'etale $(\varphi, \Gamma)$-module over $\mathcal{E}_\CR$ associated to $(\CR,\CT)$ and 
$\psi$ be the left inverse of $\varphi$.
Then we have
$$
D^{\psi=1}\cong H^1_{\mathrm{Iw}}(\BQ_p,{\CT}), 
$$ where the latter denotes the cyclotomic Iwasawa cohomology \cite{CC}. 

Colmez \cite{ColpLL} defined an operator 
$
w: D^{\psi=0} \rightarrow D^{\psi=0, \iota}, 
$ by 
\[
w(x):=\lim_{n\rightarrow +\infty} \sum_{i \in \BZ_p^\times \!\!\mod p^n}
(1+X)^{1/i}\sigma_{-1/i^2}
(\varphi^n\psi^n(1+X)^{-i}x).
\]
where  $\sigma_a(1+X)=(1+X)^a$ and
$\iota$ is the $\Gamma$-twist arising from $\Gamma \rightarrow \Gamma, \gamma \mapsto \gamma^{-1}$.
The $w$-operator corresponds
to the action of 
$\begin{pmatrix}0 & 1
\\ 1 & 0\end{pmatrix}
\in \mathrm{GL}_2(\BQ_p)$ on the $R[\mathrm{GL}_2(\BQ_p)]$-module $\Pi(D)$ corresponding to $D$ via the $p$-adic local Langlands correspondence (cf.~\cite{ColpLL}). 
Let $\delta_D : \mathbb{Q}_p^{\times}\rightarrow \CR^{\times}$ be the character corresponding to $\chi_{\mathrm{cyc}}^{-1}\mathrm{det}_{\CR}\CT$ by local class field theory. We remark that $\delta_D=\mathbf{1}$ in our self-dual case. Using $D$ and $w$, Colmez defined an $R[\mathrm{GL}_2(\BQ_p)]$-module $D\boxtimes_{\delta_D}\mathbf{P}^1$, a sub $R[\mathrm{GL}_2(\BQ_p)]$-module\footnote{Note that the $\mathrm{GL}_2(\BQ_p)$-stability of $D^{\natural}\boxtimes_{\delta_D}\mathbf{P}^1$ is a deep result of Colmez.} $D^{\natural}\boxtimes_{\delta_D}\mathbf{P}^1$  and 
$$\Pi(D):=D\boxtimes_{\delta_D}\mathbf{P}^1/D^{\natural}\boxtimes_{\delta_D}\mathbf{P}^1.$$
Moreover, he proved that (under some assumptions) there exists a natural $\CR[\mathrm{GL}_2(\BQ_p)]$-linear isomorphism 
$$D^{\natural}\boxtimes_{\delta_D}\mathbf{P}^1\cong \Pi(D)^*\otimes \delta_D\circ \mathrm{det}.$$

By definition of $D^{\natural}\boxtimes_{\delta_D}\mathbf{P}^1$, one has a natural $\begin{pmatrix}\mathbb{Z}_p^{\times}& 0 \\ 
0& 1\end{pmatrix}\cong\Gamma$-linear map 
$$(D^{\natural}\boxtimes_{\delta_D}\mathbf{P}^1)^{\alpha_p=1}\rightarrow D^{\psi=1}\cong H^1_{\mathrm{Iw}}(\BQ_p,{T})$$
for $\alpha_p=\begin{pmatrix}p& 0 \\ 
0& 1\end{pmatrix}$. 
We show that this map induces
$$(D^{\natural}\boxtimes_{\delta_D}\mathbf{P}^1)^{\alpha_p=1}/(\gamma-1)\cong H^1_{\mathrm{Iw}}(\BQ_p,{\CT})/(\gamma-1)\cong H^1(\BQ_p,{\CT})$$
under our generic assumption (see Theorem \ref{c}). Note that the action of $\begin{pmatrix}0 & 1\\ 1 & 0\end{pmatrix}$ on $D^{\natural}\boxtimes_{\delta_D}\mathbf{P}^1$ naturally induces an action on the subquotient $(D^{\natural}\boxtimes_{\delta_D}\mathbf{P}^1)^{\alpha_p=1}/(\gamma-1)$ under our self-dual assumption. This induced action finally yields an involution $w_\CT$ on $H^1_{\mathrm{Iw}}(\BQ_p,{\CT})$ through the above isomorphim. Namely, the involution $w_\CT$ is defined as the Galois theoretic counterpart of the automorphic action of $\begin{pmatrix}0 & 1\\ 1 & 0\end{pmatrix}$ on $\Pi(D)^*$. 

A link between the $w$-operator and the $p$-adic Hodge theoretic invariants like  local $\varepsilon$-constant, 
$H^1_{\mathrm{f}}$ and Hodge--Tate weights, is due to the third-named author \cite{NaKato}.
 It is based on Colmez's $p$-adic Kirillov model and 
Emerton's local-global compatibility. The link leads to a connection between the local sign decomposition, Bloch--Kato subgroups and the completed $\varepsilon$-constant.

{We give two different constructions of the involution $w_\CT$ as both approaches seem to be important for  generalizations of the local sign decomposition. 

}

 \subsection{Vistas}
 \subsubsection*{Iwasawa theory} The proof of Rubin's conjecture \cite{BKO21} led to development of anticyclotomic CM Iwasawa theory at inert primes \cites{BKO24,BKOe,BKOY, BHKO,BKOd}. In the same vein we expect the ramified analogue of Rubin's conjecture as in Theorem~\ref{thm, Rubin's conjecture ram} to be ancillary to the case of ramified primes.
 In a follow-up \cite{BKNOa} we formulate and prove anticyclotomic CM main conjecture for such primes $p$ of additive reduction. 
 Moreover, we begin the study of $p$-adic Waldspurger-type formulas  
 for $\mathscr{L}_{p,v_{\varepsilon}}(E)$ (cf.~\cites{BDP1, LZZ,BKO24}).

 Applications of Theorem~\ref{thm, Rubin's conjecture ram} to non-CM anticyclotomic Iwasawa theory as in  \cites{BBL1,BBL2} will also be explored.   
In a different direction we plan to study the existence of a bounded $p$-adic $L$-function and Iwasawa theory 
for the universal symplectic self-dual deformation over deformation ring of a mod $p$ symplectic self-dual representation of $G_\BQ$. It will exploit the  local sign decomposition and 
 \cites{Nak1,CoWa}}.

 \subsubsection*{Higher rank case} A basic problem is to find analogue of the local sign decomposition for general 
  self-dual Galois representations. 
 In a sequel to this paper 
we will propose a new framework for the arithmetic of symplectic self-dual Galois representations of general rank and an analogue.

 \subsection{Contents} Part \ref{part I} focuses on the local sign decomposition. It is local, and considers $p$-adic representations of $G_{\BQ_p}$ of rank two. Section \ref{s:bck} recalls basic notions. The main result is formulated in section \ref{s:mr-lsd}, followed by its proof in section \ref{s:cst}. Then 
 section \ref{s: sgn submodules} describes  
 a characterization of the signed submodules appearing in the local sign decomposition. It partly relies on the Zariski density of crystalline points with a given sign on universal local deformation rings, which is shown in Appendix \ref{s:appendix}.
 
 Part \ref{part II} presents some local and global applications. Section~\ref{s:mr} establishes the compatibility of the Mazur--Rubin arithmetic local constants and $\hat{\varepsilon}$-constants, and some results towards the $p$-parity conjecture. Then a formulation and proof of Rubin-type conjectures over quadratic extensions of $\BQ_p$ is given in section~\ref{s:Rubin}. Based on it, section~\ref{s:pL} describes a construction of the $p$-adic $L$-function 
 $\mathscr{L}_{p,v_{\varepsilon}}(E)$.

 \subsubsection*{Acknowledgements} We thank 
 Matthias Flach, Wei He, 
  Vytautas Pa\v{s}k\=unas, Christopher Skinner, Ye Tian, Seidai Yasuda, Shou-Wu Zhang and Wei Zhang for  instructive conversations.

The authors would like to express their sincere gratitude to Karl Rubin for encouragement and his inspiring conjecture \cite{Ru}, the search of whose ramified analogue initiated this exploration.  

This work was partially supported by the NSF grant DMS 2302064, 
and the JSPS KAKENHI grants JP22H00096, JP22K03231, JP25K06935, JP21K13774 and JP25K06953. 
The first and the second-named authors also acknowledge support of 
the Max Planck Institute for Mathematics, Bonn. 

\part{Local sign decomposition}\label{part I}
\section{Background}\label{s:bck}

\subsection{Notation}\label{ss:notation}
Throughout, we fix a prime $p$. 

Let $K$ be a finite extension of $\BQ_p$ with residue field $\BF_q$. Let $K_0$ denote the maximal unramified subfield. For a seaprable closure $\ov{K}$ of $K$, let $K_0^{\rm un}$ denote the maximal unramified field extension of $K_0$ in $\ov{K}$.  For a field $F$, define $\ov{F}$ analogously and set $G_F :=\Gal(\ov{F}/F)$. 

Let $W_K$ be the Weil group of $K$, i.e. 
the subgroup of $G_K$ 
consisting of elements whose images in $G_{\BF_q}$  are integral powers of the geometric Frobenius. 
It is a topological group such 
that the inertia group $I_K$ is open, and the relative topology 
on $I_K$ from $W_K$ is the Krull topology. 
Let $v_K: W_K \rightarrow \BZ$ be the valuation that sends the geometric Frobenius to $1$. 
Let $\omega_1$ denote the unramified character of $W_K$ 
taking the value $q^{-1}$ on any geometric Frobenius.

We normalize the Hodge--Tate weights so that the $p$-adic cyclotomic character over $\BQ_p$ has Hodge--Tate weight $1$.

\subsection{Symplectic self-dual Galois representations}\label{ss:ssd}
Let $K$ be a finite extension of $\BQ_p$.

Let $T$ be a continuous $R$-representation\footnote{In the rest of the paper we use the uniform notation $(R,T)$ unlike the introduction.} of $G_{K}$, where $T$ is a free $R$-module and 
 $R$ a commutative topological $\BZ_p$-algebra 
satisfying either of the following: 

\begin{enumerate}
 \item[i)] $R$ is a $\mathrm{Jac}(R)$-adically complete Noetherian semi-local ring such that 
 $R/\mathrm{Jac}(R)$ is a finite ring.
 \item[ii)] $R$ is a finite product of finite extensions of $\BQ_p$. 
\end{enumerate}

\begin{defn}\label{def:ssd}
An $R$-representation $T$ is symplectic self-dual if there exists a skew-symmetric and $G_K$-equivariant pairing 
 $$\langle \;, \; \rangle: T \times T \rightarrow R(1)$$ such that  
 $T \cong {\rm Hom}_R(T, R(1))$ for $R(1):=R\otimes_{\BZ_{p}}\BZ_{p}(1)$. 
\end{defn}

\begin{example}\label{ex, ssd}
\noindent
\begin{itemize}
\item[a)] The $p$-adic Tate module $T=T_{p}E$ of an elliptic curve $E$ defined over $K$ is symplectic self-dual with $R=\BZ_p$. 
\item[b)]  For an $R$-representation $T$ of rank two,  it is symplectic self-dual  if and only if ${\rm det}_R T$ is the $p$-adic cyclotomic character $\chi_{\cyc}$ over $R$. 
\item[c)]  For
any  smooth proper variety $X$ defined over $K$ of dimension $2n+1$, the pair 
($R=\BQ_p$, $T=H^{2n+1}_{\text{ \rm\'et}}(X_{\overline{K}}, \BQ_p(n+1))$) is symplectic self-dual. 
\item[d)] Any cupsidal automorphic representation over a number field which is regular, algebraic and self-dual typically gives rise to  symplectic self-dual Galois representations. 
For example, let $f$ be an elliptic newform of weight $k\geq 2$ and trivial Neben-type, $L$ the completion of its Hecke field at a  prime $\fp$ above $p$ and 
$V_\fp(f)$ the associated  
$G_{\BQ}$-representation with Hodge--Tate weights  
 $(1-k,0)$.
Then the pair ($R=L$, $T=V_\fp(f)(\frac{k}{2}))$ is symplectic self-dual $G_{\BQ_p}$-representation. 
\item[e)] The induction of conjugate symplectic self-dual representations is self-dual. For example, let $K$ be
a quadratic extension of $\BQ_p$ 
 and $T$ a conjugate symplectic self-dual $R$-representation\footnote{This corresponds to replacing $T\times T$ in Definition \ref{def:ssd} with $T\times T^c$ for $T$ a representation of $G_K$, $c\in\Gal(K/\BQ_p)$ the non-trivial element and $T^c$ the inner conjugate.} of $G_K$. 
Let $K_\infty$ be the anticyclotomic $\BZ_p$-extension of $K$ and $\Lambda_{R}=R[\![\Gal(K_{\infty}/K)]\!]$ the Iwasawa algebra. 
Then the pair $(\Lambda_{R},\Ind_{K/\BQ_p}(T\otimes_{R} \Lambda_{R}))$ 
is symplectic self-dual. 
A concrete example arises from anticyclotomic $\BZ_p$-deformation of a conjugate symplectic self-dual $p$-adic Galois character over $K$. 
 \end{itemize}
\end{example}

  \subsubsection*{Rank two case}
This paper considers symplectic self-dual pairs $(R,T)$ of $G_{\BQ_p}$-representations of rank two. 
The above examples a), b), d) and e) are of rank two. 
We explicate the prototypical ones: 
\begin{example}\label{example, ssd two}\noindent
\begin{itemize}
\item[b1)] Any two dimensional Galois representation $\ov{\rho}: G_{\BQ_p} \ra \Aut_{\BF}(\ov{T})$ over a finite field $\BF$ of characteristic $p$ with $\det_{\BF}\ov{T}$ being the mod $p$ cyclotomic character is symplectic self-dual.
\item[b2)] Any rank two $G_{\BQ_p}$-representation $(R,T)$ with $\det_{R}T=\chi_\cyc$  is symplectic self-dual, where $R$ is the integer ring of a $p$-adic local field and $T\otimes_{\BZ_p}\BQ_p$ de Rham.
\item[b3)] For $\ov{\rho}$ as in b1), let $R_{\ov{\rho}}^{\Box}$ be the universal framed deformation ring with determinant $\chi_\cyc$ and $\BT_{\ov{\rho}}^{\Box}$ the universal deformation  (cf.~\cite[Cor.\ in \S 23,  \S 24]{M}). Then the pair $(R_{\ov{\rho}}^{\Box},\BT_{\ov{\rho}}^{\Box})$ is symplectic self-dual.
\end{itemize}
\end{example}

\subsection{Symplectic self-dual Weil--Deligne representations}

Let $K$ be a finite extension of $\BQ_p$.

A Weil--Deligne representation $W$ of $K$ 
is a pair $(\rho, N)$ where 
$\rho: W_K \rightarrow \mathrm{GL}(W)$  
is a finite dimensional continuous $\BC$-representation on $W$ and 
$N$ a nilpotent endomorphism of $W$ satisfying 
\[
\rho(g)N\rho(g)^{-1}=\omega_1(g) N \qquad (g\in W_K). 
\]
\begin{defn}
A Weil--Deligne representation $W$ 
is symplectic self-dual if there exists a non-degenerate skew-symmetric form $B$ on $W$ 
satisfying 
$$B(gm, gn)=B(m,n) \quad (g\in W_K), \qquad B(Nm, n)+B(m, Nn)=0.$$ 
More generally, $W$ is essentially self-dual of weight $t\in\BZ$ if 
$W \otimes \omega_1^{t/2}$ is self-dual. 

\end{defn}
If $\dim_{\BC}W=2$, then it is symplectic self-dual if and only if $\det W=1$.

\subsection{$p$-adic Galois representations and Weil--Deligne representations}\label{ss:Gal-WD}
Let $K$ be a finite extension of $\BQ_p$. 

For any prime $\ell\neq p$, by fixing an isomorphism $\overline{\BQ}_\ell \cong \BC$, 
we can associate a Weil--Deligne representation of $W_{K}$ over $\BC$ 
with an $\ell$-adic representation of $G_{K}$ simply by  Grothendieck's monodromy theorem.
Moreover, there is an equivalence of categories between $\ell$-adic representations of $G_{K}$ and $\ell$-integral Weil--Deligne representations of $W_{K}$ over $\ov{\BQ}_\ell$ (cf.~\cite[\S1]{Tay}). 
The case $\ell=p$ is more complicated.
Fontaine's $D_{\rm{pst}}$-functor as recalled below associates 
a Weil--Deligne representation to a $p$-adic de Rham representation. 
The association is far from being a bijection and in particular, it does not encode the Hodge filtration. 
Though we describe the properties of 
symplectic self-dual Galois and Weil--Deligne representations in parallel,  
we suggest the reader to keep an eye on the type of representation. 
As such, we use the letter $W$ to denote 
Weil--Deligne representations and $V$ for $p$-adic Galois representations.

Let $L$ be a finite extension of $\BQ_p$. 
Let $V$ be a $p$-adic representation of $G_K$ with coefficients in $L$. 
Fix an algebraic closure $\overline{K}$ of $K$. 

For a finite extension $K'/K$ in $\overline{K}$, put $$D_{\mathrm{pst}}^{K'}(V):=(\mathbf{B}_{\rm{st}} \otimes_{\BQ_p}V)^{G_{K'}}$$ 
where $\mathbf{B}_{\rm{st}}$ is the $p$-adic semi-stable period ring of Fontaine.  
This is a $(K'_0 \otimes_{\BQ_p} L)$-vector space 
with operators $\varphi$ and $N$ satisfying $N \circ \varphi=p \varphi \circ N$.
Furthermore, if $K'/K$ is normal, the natural action of $G_K$ on $\mathbf{B}_{\rm{st}} \otimes_{\BQ_p}V$ defines a $K'_0$-semiliner action of 
$\mathrm{Gal}(K'/K)$ on $D_{\mathrm{pst}}^{K'}(V)$ that commutes with $\varphi$ and $N$.  
In other words, $D_{\mathrm{pst}}^{K'}(V)$ is a $(\varphi, N, G_{K'/K})$-module. 
Put
\[ 
D_{\mathrm{pst}}(V):=\varinjlim_{K'/K} D_{\mathrm{pst}}^{K'}(V)
\]
where $K'$ runs through all finite extensions of $K$. 
Then $D_{\mathrm{pst}}(V)$ is $(K_0^{\mathrm{un}} \otimes_{\BQ_p} L)$-module 
with the operator $\varphi$ and $N$ satisfying $N \circ \varphi=p \varphi \circ N$, and 
a natural $K_0^{\mathrm{un}}$-semi-linear  action of the Weil group $W_K$ that commutes with $\varphi$ and $N$. 
If $V$ is de Rham, hence, potentially semi-stable, 
then we have $\dim_{K_0^{\mathrm{un}}} D_{\mathrm{pst}}(V)=\dim_{\BQ_p} V$.

For $\sigma \in W_K$ and $w \in D_{\mathrm{pst}}(V)$, 
define a $K_0^{\mathrm{un}}$-linear  action of $\sigma$ on $w$ by 
\[
\sigma \cdot w:=\sigma \varphi^{fv_K(\sigma)} w 
\]
where  $f=[K_0:\BQ_p]$ and $v_K$ is as in \S\ref{ss:notation}. 
Consider the map 
\[K_0^{\rm{un}} \otimes L \rightarrow \overline{K} \otimes L=\prod_{\tau: L \hookrightarrow \overline{K}} \overline{K}.\] 
Pick an embedding $\tau: L \hookrightarrow \ov{K}$ and an isomorphism 
$\iota: \overline{K} \cong \BC$. 
Then the above $K_0^{\mathrm{un}}$-linear action defines 
a continuous $\BC$-representation 
\[\rho_{\iota \circ \tau}: W_K \rightarrow \mathrm{Aut}_{\BC}(D_{\mathrm{pst}}(V)\otimes_{\iota\circ\tau} \BC).\]
\begin{defn}\label{def:WD}
A Weil--Deligne representation 
associated to a $p$-adic representation $V$ of $G_K$ and the embedding $\iota\circ \tau : L \hookrightarrow \BC$ is given by  
$$\mathrm{WD}(V)_{\iota\circ \tau}=(\rho_{\iota \circ \tau}, N).$$
 \end{defn}
We often simply denote it by $\mathrm{WD}(V)$. 

\begin{lem}
Let $V$ be a symplectic self-dual de Rham representation of $G_K$. 
Then the associated Weil--Deligne representation $\mathrm{WD}(V)_{\iota\circ \tau}$ 
is essentially symplectic of weight $-1$. 
\end{lem}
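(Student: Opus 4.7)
The plan is to deduce the statement formally from the fact that $D_{\mathrm{pst}}$ is a tensor functor from de Rham $L$-representations of $G_K$ to $(\varphi, N, W_K)$-modules over $K_0^{\mathrm{un}}\otimes_{\BQ_p} L$, together with a direct identification of $\mathrm{WD}(L(1))$. Concretely, the symplectic self-duality gives a skew-symmetric perfect $G_K$-equivariant pairing $\langle\,,\,\rangle : V\otimes_L V\to L(1)$, and tensor-functoriality of $D_{\mathrm{pst}}$ (together with its compatibility with duals) produces a $(\varphi, N, W_K)$-equivariant perfect skew-symmetric pairing
\[
D_{\mathrm{pst}}(V)\otimes D_{\mathrm{pst}}(V)\longrightarrow D_{\mathrm{pst}}(L(1)),
\]
which after base change along $\iota\circ\tau : L\hookrightarrow \BC$ yields the analogous pairing of Weil--Deligne representations $\mathrm{WD}(V)_{\iota\circ\tau}\otimes \mathrm{WD}(V)_{\iota\circ\tau}\to \mathrm{WD}(L(1))_{\iota\circ\tau}$.

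The first concrete step is to pin down $\mathrm{WD}(L(1))_{\iota\circ\tau}$ as the unramified character $\omega_1$. Using Fontaine's $t\in\mathbf{B}_{\mathrm{cris}}$ with $\varphi(t)=pt$ and $g(t)=\chi_{\mathrm{cyc}}(g)t$, and a generator $e$ of $\BQ_p(1)$, the element $t^{-1}\otimes e$ spans $D_{\mathrm{cris}}(\BQ_p(1))$ over $K_0$, is $G_K$-fixed, and satisfies $\varphi(t^{-1}\otimes e)=p^{-1}(t^{-1}\otimes e)$. Applying the definition of the modified Weil action $\sigma\cdot w=\sigma\varphi^{fv_K(\sigma)}w$ given just before the lemma, for a geometric Frobenius lift $\sigma$ (so $v_K(\sigma)=1$) one gets $\sigma\cdot(t^{-1}\otimes e)=q^{-1}(t^{-1}\otimes e)$. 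Hence $\mathrm{WD}(\BQ_p(1))=\omega_1$, and the $L$-linear version gives $\mathrm{WD}(L(1))_{\iota\circ\tau}=\omega_1$.

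Combining the two steps, one obtains a perfect skew-symmetric pairing $\mathrm{WD}(V)_{\iota\circ\tau}\otimes \mathrm{WD}(V)_{\iota\circ\tau}\to \omega_1$. Non-degeneracy comes from the tensor functor applied to the isomorphism $V\isom \mathrm{Hom}_L(V,L(1))$; skew-symmetry is preserved by tensor-functoriality; and compatibility with $N$ amounts to the identity $B(Nm,n)+B(m,Nn)=0$, which follows from the fact that $N$ acts as $0$ on $\mathrm{WD}(L(1))=\omega_1$ (it is unramified). Equivalently, $\mathrm{WD}(V)_{\iota\circ\tau}\otimes\omega_1^{-1/2}$ carries a non-degenerate skew-symmetric Weil--Deligne pairing to the trivial character, which is precisely the assertion that $\mathrm{WD}(V)_{\iota\circ\tau}$ is essentially symplectic of weight $-1$.

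The only genuine point to be careful about is the normalization computation identifying $\mathrm{WD}(L(1))=\omega_1$: the sign of the $\varphi$-eigenvalue on $t^{-1}\otimes e$, the convention that $v_K$ sends geometric Frobenius to $1$, and the modified Weil action $\sigma\varphi^{fv_K(\sigma)}$ all have to be threaded through consistently, as the paper's conventions ($\omega_1(\mathrm{Frob}_{\mathrm{geom}})=q^{-1}$) matter for the sign of the weight. Once this bookkeeping is done, all remaining assertions are formal consequences of the tensor-functoriality of $D_{\mathrm{pst}}$.
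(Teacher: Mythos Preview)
Your proof is correct and follows essentially the same approach as the paper: both use that $\mathrm{WD}$ (via $D_{\mathrm{pst}}$) is a tensor functor compatible with duals, together with the identification $\mathrm{WD}(L(1))\cong\omega_1$, to transport the symplectic self-duality of $V$ to the Weil--Deligne side. The paper's proof is a terse one-line version (writing $\mathrm{WD}(V^*(1))\cong \mathrm{WD}(V)^*\otimes\omega_1$ and stopping there), while you spell out the computation of $\mathrm{WD}(L(1))$ via $t^{-1}\otimes e$ and explicitly track skew-symmetry and $N$-compatibility; these extra details are correct and fill in what the paper leaves implicit.
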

\begin{proof}
 By construction, we have a canonical isomorphism 
 $$\mathrm{WD}(V^*(1))\cong \mathrm{WD}(V^*) \otimes \omega_1
 \cong \mathrm{WD}(V)^* \otimes \omega_1 .$$  
 The assertion follows from this. 
\end{proof}

We have the following compatibility between the Weil--Deligne and induction functors.

\begin{prop}\label{prop, induced p-adic WD}
Let $K$ and $L$ be $p$-adic local fields, and $\ov{K}$ an algebraic closure. 
Let $K' \subset \ov{K}$ be a finite extension of $K$ and $V$ a $p$-adic representation of $G_{K'}$ over $L$. 
Then we have a canonical isomorphism
\[
\mathrm{WD}(\mathrm{Ind}_{K'/K}V)\cong \mathrm{Ind}_{K'/K} \mathrm{WD}(V). 
\]
\end{prop}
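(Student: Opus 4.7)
The strategy is to unravel the construction of $\mathrm{WD}$ and check that each ingredient commutes with $\mathrm{Ind}_{K'/K}$; the underlying principle is Frobenius reciprocity for Fontaine's functor $D_{\mathrm{pst}}$, which is essentially a tensor operation and therefore compatible with induction.

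\emph{Step 1: $D_{\mathrm{pst}}$ commutes with induction.} First I would show the underlying $(\varphi,N,G_K)$-module isomorphism. Choose a finite Galois extension $M/K$ inside $\overline{K}$ containing $K'$ over which $V$ becomes semistable. By Mackey's formula, $(\mathrm{Ind}_{K'/K}V)|_{G_M}\cong\bigoplus_{\sigma\in G_K/G_{K'}}V^{\sigma}$, and the residual $G_{M/K}$-action on the left corresponds to the standard shuffle action on the right. Applying $(\mathbf{B}_{\mathrm{st}}\otimes_{\BQ_p}-)^{G_M}$ then yields
\[
D_{\mathrm{st}}^{M}(\mathrm{Ind}_{K'/K}V)\;\cong\;\bigoplus_{\sigma\in G_K/G_{K'}} D_{\mathrm{st}}^{M}(V^{\sigma})
\]
as $(\varphi,N,G_{M/K})$-modules over $M_{0}\otimes_{\BQ_p}L$. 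Passing to the direct limit over such $M$ produces a canonical isomorphism
\[
D_{\mathrm{pst}}(\mathrm{Ind}_{K'/K}V)\;\cong\;\mathrm{Ind}_{G_{K'}}^{G_K}D_{\mathrm{pst}}(V)
\]
of $(\varphi,N,W_K)$-modules over $K_{0}^{\mathrm{un}}\otimes_{\BQ_p}L$, where the right-hand side carries the obvious induced structure; note that the base ring does not change, since $K$ and $K'$ share the same algebraic closure $\overline{K}$.

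\emph{Step 2: Linearising the Weil-group action and base change.} Next I would pass from the $(\varphi,N,W)$-module to the Weil--Deligne representation as in Definition~\ref{def:WD}, by twisting the semilinear $W$-action via $\sigma\mapsto\sigma\varphi^{fv(\sigma)}$ and base changing along $\iota\circ\tau:L\hookrightarrow\BC$. Both operations are exact and commute with direct sums, so they commute with induction. The key compatibility to check is that the twist is consistent on $W_{K'}\subset W_K$: for $\sigma\in W_{K'}$ one has $f_Kv_K(\sigma)=f_{K'}v_{K'}(\sigma)$, since $v_K|_{W_{K'}}=f_{K'/K}\cdot v_{K'}$ and $f_K\cdot f_{K'/K}=f_{K'}$. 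Therefore the isomorphism from Step~1 descends to a $\BC$-linear isomorphism of Weil--Deligne representations after identifying $G_K/G_{K'}$ with $W_K/W_{K'}$ via the canonical bijection coming from $W_{K'}=W_K\cap G_{K'}$ and the density of $W_K$ in $G_K$; the monodromy operators match summand by summand because $N$ is $G_M$-equivariant on $\mathbf{B}_{\mathrm{st}}\otimes V$.

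\emph{Main obstacle.} The content is essentially formal Frobenius reciprocity, but the real work is in bookkeeping. One must simultaneously track the $\varphi$-twist across the change of base field $K\leftrightarrow K'$, justify the identity $f_Kv_K=f_{K'}v_{K'}$ on $W_{K'}$ that underlies the linearisation compatibility, and check that the direct limit defining $D_{\mathrm{pst}}$ (as $M$ grows through Galois extensions that trivialise $V$) respects the induction structure uniformly. Once these are arranged, the canonical isomorphism is natural in $V$ and functorial in $K'/K$, and the proposition follows.
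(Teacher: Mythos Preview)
Your proposal is correct and aligns with the paper's approach: the paper's entire proof reads ``This follows from definitions, whose details we leave to the interested reader,'' and your outline is precisely the unraveling of those definitions---Mackey decomposition for $D_{\mathrm{pst}}$, followed by the compatibility check $f_K v_K(\sigma)=f_{K'}v_{K'}(\sigma)$ on $W_{K'}$ for the linearisation step. You have supplied exactly the details the paper omits.
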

\begin{proof} This follows from definitions, whose details we leave to the interested reader. 
\end{proof}

\subsection{Epsilon constants}\label{ss:Epsilon}

Let $K$ be a local field. 

For a Weil--Deligne representation $W$ of $W_K$, 
a non-trivial additive character $\xi: K \rightarrow \BC^{\times}$ and 
a Haar measure $dx$, let\footnote{Throughout Part \ref{part I}, we use the notation $\varepsilon(\cdot)$ instead of $\varepsilon_p(\cdot)$ as in \S\ref{s:Intro} or part \ref{part II}.} 
\[
\varepsilon(W, \xi, dx) \in \BC^\times
\]
be the associated Deligne--Langlands epsilon constant (cf.~\cites{T,TLC}). 

If $W$ arises from a motive over a number field, 
then $\varepsilon(W,\xi,dx)$ is conjecturally related to 
local component of the conjectural functional equation of the associated $L$-function.

\subsubsection{Self-dual epsilon constants}\label{ss: self-dual epsilon}

A non-trivial additive character $\xi$ on $K$ defines the 
Pontryagin duality $$K \cong K^\vee, y \mapsto (x \mapsto \xi(xy)).$$ 
Let $dx_{\xi}$ be the Haar measure on $K$ which is self-dual with respect to $\xi$.

\begin{lem}\label{lem, epsilon=pm 1}
 If a Weil--Deligne representation $W$ is essentially symplectic self-dual of weight $-1$, then $$\varepsilon(W, \xi, dx_{\xi})\in\{\pm 1\}$$ and 
 it does not depend on the choice of $\xi$. 
\end{lem}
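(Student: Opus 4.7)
The proof will combine Deligne's local functional equation with the standard transformation laws for local $\varepsilon$-factors under change of additive character and under twisting by unramified characters.

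First, unravel the hypothesis. By definition, $W$ essentially symplectic self-dual of weight $-1$ means that $W\otimes\omega_1^{-1/2}$ admits a nondegenerate $W_K$-equivariant skew form, equivalently that there is a $W_K$-equivariant symplectic pairing $W\otimes W\to\omega_1$. This yields $W^\vee\cong W\otimes\omega_1^{-1}$ and $\det_W=\omega_1^{\dim W/2}$. Under local class field theory (with the convention $\omega_1(\Frob_{\mathrm{geom}})=q^{-1}$), the character $\omega_1$ corresponds to $|\cdot|$ on $K^\times$, so $\det_W(a)=|a|^{\dim W/2}$ for every $a\in K^\times$.

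Independence of $\xi$: any two non-trivial additive characters of $K$ differ by scaling $\xi\mapsto\xi_a$, $\xi_a(x)=\xi(ax)$, for some $a\in K^\times$, and the self-dual Haar measures satisfy $dx_{\xi_a}=|a|^{1/2}dx_\xi$. The standard formula
\[
\varepsilon(W,\xi_a,dx_{\xi_a})=\det_W(a)\,|a|^{-\dim W/2}\,\varepsilon(W,\xi,dx_\xi),
\]
combined with $\det_W(a)=|a|^{\dim W/2}$, gives the cancellation $\varepsilon(W,\xi_a,dx_{\xi_a})=\varepsilon(W,\xi,dx_\xi)$, proving independence.

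To establish $\varepsilon(W,\xi,dx_\xi)\in\{\pm 1\}$, apply Deligne's local functional equation
\[
\varepsilon(W,\xi,dx_\xi)\cdot\varepsilon(W^\vee,\overline\xi,dx_\xi)=1, \qquad \overline\xi(x)=\xi(-x).
\]
Using the change-of-character formula with $a=-1$ together with $|-1|=1$ and $\det_W(-1)=\omega_1(-1)^{\dim W/2}=1$, this rewrites as $\varepsilon(W,\xi,dx_\xi)\cdot\varepsilon(W^\vee,\xi,dx_\xi)=1$. Now use $W^\vee\cong W\otimes\omega_1^{-1}$ together with the $W_K$-equivariant symplectic pairing; tracking the effect of the unramified twist against the self-dual normalization of $dx_\xi$, one finds $\varepsilon(W^\vee,\xi,dx_\xi)=\varepsilon(W,\xi,dx_\xi)$, whence $\varepsilon(W,\xi,dx_\xi)^2=1$.

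The main obstacle is this last identification $\varepsilon(W^\vee,\xi,dx_\xi)=\varepsilon(W,\xi,dx_\xi)$, which is exactly where the weight $-1$ normalization enters crucially: the twist contribution from $\omega_1^{-1}$ must be absorbed by the self-dual Haar measure convention. A clean alternative is to reduce to the genuinely symplectic self-dual case: set $W':=W\otimes\omega_1^{-1/2}$, which is symplectic self-dual with $\det_{W'}=1$, so the usual functional equation argument ($W'\cong (W')^\vee$ and $\det_{W'}(-1)=1$) yields $\varepsilon(W',\xi,dx_\xi)\in\{\pm 1\}$ directly; one then transports this through the twist, where the weight $-1$ hypothesis ensures the transport factor is trivial with respect to the self-dual measure. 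The bookkeeping of the Artin conductor $a(W)$ and the level $n(\xi)$ in the unramified twist formula is the key computational point.
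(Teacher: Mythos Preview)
Your independence argument is fine and matches the paper's. The gap is in your functional equation: the correct form (Tate, \emph{Number theoretic background}, (3.4.7)) is
\[
\varepsilon(W,\xi,dx_\xi)\,\varepsilon(W^*(\omega_1),\xi_{-1},dx_\xi)=1,
\]
with the partner $W^\vee\otimes\omega_1$, not $W^\vee$. Your subsequent claim that $\varepsilon(W^\vee,\xi,dx_\xi)=\varepsilon(W,\xi,dx_\xi)$ via $W^\vee\cong W\otimes\omega_1^{-1}$ is false: the unramified twist formula gives
\[
\varepsilon(W\otimes\omega_1^{-1},\xi,dx_\xi)=\omega_1(\pi)^{-(a(W)+d\cdot n(\xi))}\,\varepsilon(W,\xi,dx_\xi)=q^{\,a(W)+d\cdot n(\xi)}\,\varepsilon(W,\xi,dx_\xi),
\]
a nontrivial power of $q$ in general. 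No ``self-dual measure convention'' absorbs this. Your alternative via $W'=W\otimes\omega_1^{-1/2}$ inherits the same error: with the correct functional equation one finds $\varepsilon(W',\xi,dx_\xi)^2=q^{\,a(W')+d\cdot n(\xi)}$, not $1$; the transport back to $W$ then produces exactly the factor $q^{-(a(W')+d\cdot n(\xi))/2}$ needed to land in $\{\pm1\}$, but this is a computation you have not carried out and which contradicts your intermediate claim $\varepsilon(W')\in\{\pm1\}$.

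The fix is immediate once you use the correct functional equation: the weight $-1$ self-duality is \emph{exactly} the statement $W\cong W^*(\omega_1)$, so Tate's (3.4.7) collapses to $\varepsilon(W,\xi,dx_\xi)\,\varepsilon(W,\xi_{-1},dx_\xi)=1$, and your already-proved independence of $\xi$ (noting $dx_{\xi_{-1}}=dx_\xi$ since $|-1|=1$) gives $\varepsilon(W,\xi,dx_\xi)^2=1$. This is the paper's proof. The point is that the $\omega_1$ twist in the functional equation and the weight $-1$ in the self-duality are made for each other; there is no residual bookkeeping with $a(W)$ and $n(\xi)$.
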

\begin{proof}
This is well-known. We include a proof just to confirm our normalization. 

Another additive character is of the form $\xi_{a}(x):=\xi(ax)$ for some $a \in K^\times$. 
Let $d$ be the dimension of $W$. 
Since $\det W=\omega_1^{\frac{d}{2}}$ and $dx_{\xi_a}=\omega_1(a)^{\frac{1}{2}}dx_{\xi}$, 
by (3.4.3) and (3.4.4) of \cite{T},  we have 
\[
\varepsilon(W, \xi_{a}, dx_{\xi_a})=\det W(a)\, \omega_1(a)^{-d} \varepsilon(W, \xi, dx_{\xi_a})
=\det W(a)\, \omega_1(a)^{-d}\, \omega_1(a)^{\frac{d}{2}}  \varepsilon(W, \xi, dx_\xi)
= \varepsilon(W, \xi, dx_\xi). 
\]
Also by (3.4.7) of \cite{T}, we have 
\[
\varepsilon(W, \xi, dx_\xi)\varepsilon(W^*(\omega_1), \xi_{-1}, dx_\xi)=1. 
\]
Since $W\cong W^*(\omega_1)$, it follows that $\varepsilon(W, \xi, dx_\xi)^2=1$. 
\end{proof}

From now, 
we denote the epsilon constant $\varepsilon(W, \xi, dx_\xi)$ simply by $\varepsilon(W)$. 

\subsubsection{Epsilon constants for 
symplectic self-dual de Rham representations}\label{ss:eps-ssd}
Let $K$ be a finite extension of $\BQ_p$ and 
$V$ a symplectic self-dual de Rham representation of $G_K$ with coefficients in a $p$-adic local field $L$. 
Then, the epsilon constant
\[
\varepsilon(\mathrm{WD}(V)_{\iota\circ \tau}) \in \{\pm 1\}
\]
does not depend on the choice of embeddings $\iota$ and $\tau$ as in Definition~\ref{def:WD} by Rohrlich~\cite{Ro}. 
This is the epsilon constant associated with $V$, which we
simply denote by $\varepsilon(V)$.  

\subsection{Gamma constants and completed epsilon constants}

Since the Weil--Deligne 
representation $\mathrm{WD}(V)$ does not encode  
the Hodge filtration of a de Rham representation $V$, neither does 
the epsilon constant $\varepsilon(V)$. 
On the other hand, if $V$ arises from a motive over a number field, the archimedean epsilon constants are 
determined by the associated Hodge realization. We introduce a modification of the epsilon constant $\varepsilon(V)$ by a $p$-adic counterpart of the archimedean epsilon constant. 

\subsubsection{Gamma constants}\label{ss:Gamma}
Kato and Perrin-Riou independently defined the following $\Gamma$-constant for a de Rham representation $V$ encoding its
$p$-adic Hodge theoretic invariants.  
\begin{defn}
For a de Rham $L$-representation $V$ of $G_{\BQ_p}$ over a $p$-adic local field $L$, the  $\Gamma$-constant is defined by 
\[
\Gamma(V):=\prod_{r \in \BZ} \Gamma^*(r)^{-\dim_L{\rm gr}^{-r}D_{{\rm dR}}(V)}
\]
where 
\[
\Gamma^*(r):=
\begin{cases}
 (r-1)! \quad &(r \geq 1) \\
 \frac{(-1)^r}{(-r)!} \quad &(r \leq 0).  
\end{cases}
\]
\end{defn}

If $V$ arises from a motive over a number field, then the $\Gamma$-constant is related to the associated archimedean epsilon constants.

\begin{lem}\label{prop:gamma}
Let $(k_0,k_1,\cdots,k_{n-1},k_n )$ be Hodge--Tate weights of 
a de Rham $L$-representation $V$ of $G_{\BQ_p}$ without counting multiplicity 
 so that $k_0 < k_1<\cdots<k_{n-1}<k_{n}$. 
If $V$ is symplectic self-dual, then $n$ is odd, 
 $k_i=1-k_{n-i}$ for $0\leq i \leq n$  and 
\begin{align*}
\Gamma(V)&=(-1)^{\sum_{i=0}^{\frac{n-1}{2}} k_{i} \dim_L{\rm gr}^{-k_{i}}D_{{\rm dR}}(V)}
=(-1)^{\sum_{i > 0} i \dim_L{\rm gr}^{i}D_{{\rm dR}}(V)}.
\end{align*}
\end{lem}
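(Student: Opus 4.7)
The proof is a direct computation based on the Hodge symmetry imposed by symplectic self-duality. First I would translate the isomorphism $V\cong V^*(1)$ into a symmetry on $d_j:=\dim_L{\rm gr}^j D_{{\rm dR}}(V)$: the standard functorial identities $D_{{\rm dR}}(V^*)\cong D_{{\rm dR}}(V)^\vee$ (with dualized filtration) and $D_{{\rm dR}}(W(1))\cong D_{{\rm dR}}(W)$ with filtration shifted by $-1$ give
\[
d_j = d_{-1-j}\qquad (j\in\BZ).
\]
Under the convention that $\chi_{\rm cyc}$ has Hodge--Tate weight $+1$, i.e. $d_{-k}$ records the multiplicity of the Hodge--Tate weight $k$, this is exactly the statement that $k\mapsto 1-k$ permutes the multiset of Hodge--Tate weights. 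Ordering distinct weights $k_0<\cdots<k_n$, the induced bijection must be $k_i\leftrightarrow k_{n-i}$ with $k_i+k_{n-i}=1$, and since $2k_i=1$ has no integer solution no weight is self-paired, so $n$ is odd and $m_i:=d_{-k_i}=m_{n-i}$.

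For the computation of $\Gamma(V)$, I would reindex by $s=-r$ to rewrite $\Gamma(V)=\prod_s \Gamma^*(-s)^{-d_s}$, then split at $s=0$. In the half $s\leq -1$ the change of variables $t=-1-s\geq 0$, combined with $d_s=d_t$, pairs the two halves:
\[
\Gamma(V) \;=\; \prod_{s\geq 0}\bigl(\Gamma^*(-s)\,\Gamma^*(s+1)\bigr)^{-d_s}.
\]
The crucial elementary identity, immediate from the case definition of $\Gamma^*$, is
\[
\Gamma^*(-s)\,\Gamma^*(s+1)\;=\;\frac{(-1)^s}{s!}\cdot s!\;=\;(-1)^s\qquad (s\geq 0).
\]
Substituting and discarding the $s=0$ factor yields $\Gamma(V)=(-1)^{\sum_{i>0} i\,d_i}\in\{\pm 1\}$, which is the second form asserted in the lemma.

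To match the first form, I would reinterpret $\sum_{s>0}s\,d_s$: the condition $s>0$ with $d_s\neq 0$ means $-s$ is a negative Hodge--Tate weight $k_j$ and $d_s=m_j$, so $\sum_{s>0}s\,d_s=-\sum_{k_j<0}k_j\,m_j$. The self-duality symmetry forces $k_i\leq 0$ exactly for $0\leq i\leq(n-1)/2$, with $k_i=0$ possible only at $i=(n-1)/2$ and contributing trivially; thus $\sum_{i=0}^{(n-1)/2}k_i\,m_i=\sum_{k_j<0}k_j\,m_j$. The two exponents in the lemma therefore differ by an overall sign, so they are congruent mod $2$ and define the same element of $\{\pm 1\}$.

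There is no substantial obstacle; the proof is entirely combinatorial once the Hodge symmetry $d_j=d_{-1-j}$ is extracted. The only mild subtlety is the sign bookkeeping between the two equivalent exponents, which is invisible modulo $2$.
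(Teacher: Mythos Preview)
Your proof is correct and follows essentially the same approach as the paper: both extract the symmetry $k_i+k_{n-i}=1$ with matching multiplicities from $V\cong V^*(1)$, then pair the factors of $\Gamma(V)$ via the elementary identity $\Gamma^*(-s)\Gamma^*(s+1)=(-1)^s$ (equivalently $\Gamma^*(k_i)\Gamma^*(k_{n-i})=(-1)^{k_i}$). Your write-up is slightly more detailed in verifying the equivalence of the two exponent forms, which the paper leaves implicit.
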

\begin{proof}
Note that the Hodge--Tate weights of $V^*(1)$ without counting multiplicity are the following tuple of integers in increasing order  
\[1-k_n<1-k_{n-1}<\cdots <1-k_1<1-k_0. 
\]
Hence $V^*(1)\cong V$ implies that $k_i=1-k_{n-i}$ for $0\leq i \leq n$
and $\dim_L{\rm gr}^{-k_i}D_{{\rm dR}}(V)=\dim_L{\rm gr}^{-k_{n-i}}D_{{\rm dR}}(V)$. 

In particular, $n$ is odd as otherwise $k_{n/2}\notin \BZ$. 
For $i<n/2$,  we have 
\[
\Gamma^*(k_i)\Gamma^*(k_{n-i})=\frac{(-1)^{k_{i}}}{(-k_{i})!} (k_{n-i}-1)! 
=(-1)^{k_{i}} 
\]
since $k_i \le 0 < k_{n-i}$. 
The assertion follows.
\end{proof}

\begin{cor}\label{cor, dR ssd two}
Let $V$ be a two dimensional symplectic self-dual de Rham representation of $G_{\BQ_p}$
such that $H^0(\mathbb{Q}_p, V)=\{0\}$. 
Let $(k,1-k)$ be the Hodge--Tate weights of $V$ for an integer $k \geq 1$.  
Then $\Gamma(V)=(-1)^{k-1}$ and 
$\mathrm{dim}_{L}(H^1_{\rm f}(\mathbb{Q}_p, V))=1$. 
\end{cor}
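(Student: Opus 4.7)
\emph{Proof plan.} The corollary specializes already-established tools, and I would handle the two assertions separately.

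For the first claim, the rank-two symplectic self-dual setting slots into Lemma~\ref{prop:gamma} with the two Hodge--Tate weights $k_0 = 1-k < k_1 = k$ (so $n = 1$ in the notation of the lemma, which is odd as required; the weights are distinct because $k$ is a positive integer). The second formulation of the lemma reads $\Gamma(V) = (-1)^{\sum_{i > 0} i \dim_L \mathrm{gr}^i D_{\mathrm{dR}}(V)}$. The only strictly positive index with a nonzero graded piece is $i = k-1$ (of dimension one), so the exponent reduces to $k-1$, giving $\Gamma(V) = (-1)^{k-1}$.

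For the dimension of $H^1_{\mathrm{f}}$, I would apply the Bloch--Kato dimension formula for de Rham representations of $G_{\BQ_p}$:
\[
\dim_L H^1_{\mathrm{f}}(\BQ_p, V) \;=\; \dim_L H^0(\BQ_p, V) + \dim_L \bigl(D_{\mathrm{dR}}(V)/\mathrm{Fil}^0 D_{\mathrm{dR}}(V)\bigr).
\]
The hypothesis $H^0(\BQ_p, V) = 0$ kills the first term. For the tangent space, the Hodge--Tate weights $(k, 1-k)$ with $k \geq 1$ satisfy $1-k \leq 0 < k$, so the $-k$-graded piece contributes to $D_{\mathrm{dR}}(V)/\mathrm{Fil}^0$ while the $(k-1)$-graded piece sits inside $\mathrm{Fil}^0$; the quotient is thus one-dimensional, and the formula gives $\dim_L H^1_{\mathrm{f}}(\BQ_p, V) = 1$.

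I foresee no substantive obstacle. The only point requiring light care is that the paper's normalization (cyclotomic character of Hodge--Tate weight $+1$) means that weight $w$ corresponds to $\mathrm{gr}^{-w} D_{\mathrm{dR}}(V) \neq 0$, a convention consistent with both the definition of $\Gamma(V)$ and the Hodge filtration conventions used above, so no translation is needed.
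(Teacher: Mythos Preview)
Your proposal is correct and matches the paper's approach: both deduce $\Gamma(V)=(-1)^{k-1}$ directly from Lemma~\ref{prop:gamma} and compute $\dim_L H^1_{\rm f}(\BQ_p,V)=\dim_L(D_{\mathrm{dR}}(V)/\mathrm{Fil}^0 D_{\mathrm{dR}}(V))=1$ from the Hodge filtration (the paper writes out the full filtration table, you invoke the Bloch--Kato dimension formula, but the content is identical). One tiny expositional wrinkle: when $k=1$ there is no strictly positive $i$ with $\mathrm{gr}^i\neq 0$, so your sentence ``the only strictly positive index \ldots\ is $i=k-1$'' fails literally, though the exponent $\sum_{i>0} i\dim\mathrm{gr}^i = 0 = k-1$ still comes out right.
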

\begin{proof} 
The assertion on the $\Gamma$-constant is a special case of Lemma~\ref{prop:gamma}. 
As for the latter, note that
\[
\mathrm{dim}_{L}(\mathrm{Fil}^iD_{\mathrm{dR}}(V))=
\begin{cases} 0 & (k \le i),\\ 
1 & (-k <  i <  k), \\
2 & (i \le -k).\end{cases}
 \]
 In particular, we have 
 $\mathrm{dim}_{L}(H^1_{\rm f}(\mathbb{Q}_p, V))=\mathrm{dim}_{L}(D_{\mathrm{dR}}(V)/\mathrm{Fil}^0D_{\mathrm{dR}}(V))=1.$
\end{proof}

\subsubsection{Completed epsilon constants}
The following notion is ancillary to the paper. 
\begin{defn}\label{def:cep}
For a symplectic self-dual de Rham representation $V$ of $G_{\BQ_p}$, 
define the completed epsilon constant $\hat{\varepsilon}(V)$ by 
\[
\hat{\varepsilon}(V):=\Gamma(V)\varepsilon(V) \in \{\pm 1\}. 
\]
\end{defn}
The sign of $\hat{\varepsilon}(V)$ will often be referred to as the sign of $V$.

\subsection{Galois cohomology of generic representations} In this subsection we present basic properties of Galois cohomology of generic $p$-adic representations of $G_{\BQ_p}$.

\begin{defn}
A $G_{\mathbb{Q}_p}$-representation $(R,T)$ is generic if 
\begin{itemize}
\item[i)] $H^{i}(\BQ_p,\ov{T}_\fm)=0$ for $R$ 
a $\mathrm{Jac}(R)$-adically complete Noetherian semi-local ring with  
 $R/\mathrm{Jac}(R)$ finite, $i\in\{0,2\}$, $\fm$ any maximal ideal of $R$ and $\ov{T}_\fm:=T\otimes_{R}R/\fm$,
\item[ii)] $H^{i}(\BQ_p,T)=0$ for $R$ a product of finite field extensions of $\BQ_p$ and $i \in\{0,2\}$.
\end{itemize}
Otherwise, we say that $(R,T)$ is non-generic or anomalous\footnote{Indeed, for an elliptic curve $E$ over $\BQ_p$ 
with good reduction at $p$, the $p$-adic Tate module is anomalous if and only if $p$ is an anomalous prime.}.
\end{defn}

In the case i) the generic assumption implies that 
$\mathrm{rank}_{R/\fm}H^1(\mathbb{Q}_p,  \overline{T}_{\fm})=\rank_{R}T$, and the Tate dual $T^*(1)$ satisfies the same by Tate duality and the Euler--Poincare characteristic formula. 
In particular, for a symplectic self-dual representation, 
the generic condition is equivalent to 
$$
H^{0}(\mathbb{Q}_p,\ov{T}_{\fm})=0. 
$$

The main result of this subsection is the following.
\begin{prop}\label{a}
Let $(R,T)$ be a generic symplectic self-dual $G_{\BQ_p}$-representation of rank $n$.
\begin{itemize}
\item[1)] We have $H^i(\mathbb{Q}_p, T)=0$ for $i=0,2$. 
\item[2)]For any finite $R$-module $M$, the canonical map 
$$H^1(\mathbb{Q}_p, T)\otimes_RM\rightarrow H^1(\mathbb{Q}_p, T\otimes_RM)$$ is an 
isomorphism.
\item[3)] The $R$-module $H^1(\mathbb{Q}_p, T)$ is free of rank $n$. 
\item[4)]For any ideal $I$ of $R$, the canonical map 
$$H^1(\mathbb{Q}_p, T)\otimes_RR/I\rightarrow H^1(\mathbb{Q}_p, T\otimes_RR/I)$$ is 
an isomorphism.
\item[5)] Suppose that $p$ is odd. Let $\gamma$ be a topological generator of $\Gamma:=\mathrm{Gal}(\BQ_p(\zeta_{p^\infty})/\BQ_p)$ 
and put $H^1_{\rm Iw}(\BQ_p,T)=\varprojlim_{n} H^1(\BQ_p(\zeta_{p^n}),T)$. 
Then 
the canonical map 
$$H^1_{\mathrm{Iw}}(\mathbb{Q}_p, T)/(\gamma-1)\rightarrow H^1(\mathbb{Q}_p, T)$$ is 
an isomorphism. 
\item[6)]The Tate pairing, which we denote by 
$$H^1(\mathbb{Q}_p, T^*(1))\times H^1(\mathbb{Q}_p, T)\rightarrow R, \qquad (x, y)\mapsto \{x, y\}_T$$
 is a perfect pairing. It induces a symmetric pairing on $H^1(\BQ_p,T)$.

\end{itemize}
\end{prop}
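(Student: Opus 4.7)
The plan is to prove the six items in an interlocking order, establishing 1)--4) as a package and then deducing 5) and 6) formally. The central inputs are: local Tate duality for $R$-coefficients, the Euler--Poincar\'e characteristic formula, the derived base-change spectral sequence in continuous cohomology, and Shapiro's lemma for Iwasawa cohomology.

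For item 1), I would first dispose of the residue-field case. In case i), reducing to a complete local factor, the hypothesis gives $H^0(\BQ_p, \ov T_\fm)=0$; since $\ov T_\fm$ is symplectic self-dual, $\ov T_\fm\cong \ov T_\fm^*(1)$, and local Tate duality yields $H^2(\BQ_p, \ov T_\fm)\cong H^0(\BQ_p, \ov T_\fm)^{\vee}=0$. To lift to $T$ in case i), I iterate the long exact sequence attached to $0\to \fm T\to T\to \ov T_\fm\to 0$, use that $\fm^n T/\fm^{n+1}T$ is a direct sum of copies of $\ov T_\fm$, and conclude $H^0(\BQ_p, T)\subseteq \bigcap_n \fm^n T=0$ by Krull. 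For $H^2$, the derived base change against $R/\fm$ applied to the $R$-flat complex $\mathbf{R}\Gamma(\BQ_p, T)$ produces a Tor spectral sequence whose abutment in degree $2$ is $H^2(\BQ_p,\ov T_\fm)=0$; since only the term $H^2(\BQ_p,T)\otimes_R R/\fm$ contributes (there is no $H^3$), Nakayama (applied to the finitely generated $R$-module $H^2(\BQ_p, T)$) forces $H^2(\BQ_p, T)=0$. Case ii) is immediate from the hypothesis together with local Tate duality. The same spectral sequence, now for arbitrary finitely generated $M$, degenerates in view of $H^0=H^2=0$ to the base-change isomorphism $H^1(\BQ_p, T)\otimes_R M\cong H^1(\BQ_p, T\otimes_R M)$, which is item 2); item 4) is its special case. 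Finally, applying this with $M=R/\fm$ and combining with the Euler--Poincar\'e formula yields $\dim_{R/\fm} H^1(\BQ_p, T)\otimes_R R/\fm=n$, while vanishing of $\mathrm{Tor}_1^R(H^1(\BQ_p, T), R/\fm)$ (a leftover of the same spectral sequence, using $H^0(\BQ_p, \ov T_\fm)=0$) promotes this to $R$-freeness of rank $n$, giving item 3).

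For item 6), local Tate duality with $R$-coefficients (which follows from the residue-field case together with items 2)--4)) gives a perfect pairing $H^1(\BQ_p, T)\times H^1(\BQ_p, T^*(1))\to H^2(\BQ_p, R(1))\cong R$; composing with the symplectic self-dual isomorphism $T\cong T^*(1)$ transports this to a pairing on $H^1(\BQ_p, T)$. Symmetry follows from two cancelling signs: the cup product is anti-commutative in bi-degree $(1,1)$ and the pairing $T\otimes T\to R(1)$ is skew-symmetric by hypothesis. For item 5), observe that when $R$ satisfies condition i), so does $\Lambda:=R[\![\Gamma]\!]$, and $(\Lambda, T\otimes_R \Lambda^\iota)$ is again a generic symplectic self-dual pair of rank $n$ (cf.~Example~\ref{ex, ssd}~e)), since its residue representation coincides with $\ov T_\fm$. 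Applying items 1)--4) to this new pair, Shapiro's lemma identifies $H^1(\BQ_p, T\otimes_R \Lambda^\iota)$ with $H^1_{\mathrm{Iw}}(\BQ_p, T)$, making it a free $\Lambda$-module of rank $n$; base change along the augmentation $\Lambda\to R$ (whose kernel is $(\gamma-1)$, using that $p$ is odd so $\Gamma\cong\BZ_p$ has $\gamma-1$ a regular element) then produces the desired isomorphism $H^1_{\mathrm{Iw}}(\BQ_p, T)/(\gamma-1)\cong H^1(\BQ_p, T)$. Case ii) is handled by choosing a $G_{\BQ_p}$-stable lattice and inverting $p$.

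The main obstacle is ensuring the derived base-change formalism in continuous cohomology is available and compatible with the semi-local, $\Jac(R)$-adically complete setup, particularly when one applies it with $\Lambda=R[\![\Gamma]\!]$ in place of $R$: one must check that the flat base-change quasi-isomorphism $\mathbf{R}\Gamma(\BQ_p, T)\otimes_R^L M\simeq \mathbf{R}\Gamma(\BQ_p, T\otimes_R M)$ holds in adequate generality, so that the bootstrap from the residue field works uniformly. The symmetry claim in 6) and the reduction of 5) to the package 1)--4) are then formal, modulo a careful sign/convention audit for the former.
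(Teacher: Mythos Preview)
Your approach to items 1)--4) and 6) is essentially sound, though it leans on derived base change where the paper uses an elementary diagram chase (for 2)) and length induction (for 3)). Both methods work; yours is cleaner provided the perfectness and base-change formalism for $\mathbf{R}\Gamma(\BQ_p,T)$ is set up, which it is for these coefficient rings.

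There is, however, a genuine gap in your argument for item 5). You assert that $(\Lambda, T\otimes_R\Lambda^\iota)$ with $\Lambda=R[\![\Gamma]\!]$ is again a generic \emph{symplectic self-dual} pair, citing Example~\ref{ex, ssd}~e). This is false. That example concerns the \emph{anticyclotomic} deformation of a conjugate self-dual representation over a quadratic extension, where the involution on the tower kills the extra twist. For the \emph{cyclotomic} tower over $\BQ_p$, the determinant of $T\otimes_R\Lambda^\iota$ (say in rank two) is $\chi_{\mathrm{cyc}}\cdot\kappa^{-2}$, where $\kappa:G_{\BQ_p}\to\Lambda^\times$ is the universal cyclotomic character; this is not $\chi_{\mathrm{cyc}}$, so the pair is not symplectic self-dual and you cannot feed it back into the proposition. (A separate slip: $\Gamma\cong\BZ_p^\times\cong\BZ_p\times\BZ/(p-1)$, not $\BZ_p$; it is nonetheless procyclic for $p$ odd, so a topological generator $\gamma$ exists and $(\gamma-1)$ is the augmentation ideal.)

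The gap is repairable in two ways. One is to observe that your proofs of 1)--4) only use \emph{genericity} (the vanishing of $H^0,H^2$ at the residue level), never the self-duality, and the cyclotomic deformation is generic since its residue is $\ov T_\fm$; then your Shapiro/base-change argument goes through. The other is the paper's route: use the descent exact sequence
\[
0\to H^1_{\mathrm{Iw}}(\BQ_p,T)/(\gamma-1)\to H^1(\BQ_p,T)\to H^2_{\mathrm{Iw}}(\BQ_p,T)^{\gamma=1}\to 0
\]
and kill the right-hand term by showing $H^2_{\mathrm{Iw}}(\BQ_p,T)^{\Delta=1}/(\gamma_0-1)\hookrightarrow H^2(\BQ_p,T)=0$ (here $\Delta$ is the torsion part of $\Gamma$ and $\gamma_0$ generates the free part), then invoke Nakayama over $\BZ_p[\![\gamma_0-1]\!]$.
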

\begin{proof}

Note that 1) is clear, and 4) is a special case of 2). 

As for 2), it is trivial for $M=R^m$. For a general $M$, take an exact sequence 
$$0\rightarrow N\rightarrow  R^m\rightarrow M\rightarrow 0$$
of $R$-modules. It induces the exact sequence
\begin{equation}\label{Appendix, equation, (i)}
H^1(\mathbb{Q}_p, T)\otimes_RN\rightarrow H^1(\mathbb{Q}_p, T)\otimes_R R^m\rightarrow H^1(\mathbb{Q}_p, T)\otimes_RM\rightarrow 0
\end{equation}
and 
$$0\rightarrow T\otimes_RN\rightarrow  T\otimes_RR^m\rightarrow T\otimes_RM\rightarrow 0.$$
In turn, the latter induces the exact sequence
\begin{equation}\label{Appendix, equation, (ii)}
0\rightarrow H^1(\mathbb{Q}_p, T\otimes_RN)\rightarrow H^1(\mathbb{Q}_p, T\otimes_R R^m)\rightarrow H^1(\mathbb{Q}_p, T\otimes_RM)\rightarrow 0
\end{equation}  
since $H^0(\mathbb{Q}_p, T\otimes_RM)=H^2(\mathbb{Q}_p, T\otimes_RN)=0$. By 
considering the canonical map from (\ref{Appendix, equation, (i)}) to  
$(\ref{Appendix, equation, (ii)})$, 
it follows that the canonical map 
$$H^1(\mathbb{Q}_p, T)\otimes_RM\rightarrow H^1(\mathbb{Q}_p, T\otimes_RM)$$
is surjective. Since $M$ is an arbitrary $R$-module, this surjectivity also holds for $N$. Then, considering the 
canonical map from  (\ref{Appendix, equation, (i)}) to  
$(\ref{Appendix, equation, (ii)})$ again, 
we deduce that 
the canonical map 
$$H^1(\mathbb{Q}_p, T)\otimes_RM\rightarrow H^1(\mathbb{Q}_p, T\otimes_RM)$$
is an isomorphism by a simple diagram chase. 

Henceforth, we suppose that 
$R$ is a $\mathrm{Jac}(R)$-adically complete Noetherian semi-local ring with  
 $R/\mathrm{Jac}(R)$ finite. A slight variant of the following arguments applies if $R$ is a product of finite field extensions of $\BQ_p$, which we leave to the interested reader. 

We prove 3) by induction on the length of $R$ as a $\BZ_p$-module. 
We may assume that $R$ is local. 
When $R=\mathbb{F}$ is a finite field of characteristic $p$, note that $\mathrm{dim}_{\mathbb{F}}H^1(\mathbb{Q}_p, \overline{T})=n$ by the Euler--Poincar\'e formula. 
For a local $R$ with residue field $\BF$, take a non-zero ideal $I$ of $R$ such that $\mathfrak{m}_R I=0$. 
The exact sequence
$$0\rightarrow I\rightarrow R\rightarrow R/I\rightarrow 0,$$ 
induces the following exact sequence 
$$0\rightarrow  H^1(\mathbb{Q}_p, \overline{T})\otimes_{\mathbb{F}}I\rightarrow H^1(\mathbb{Q}_p, T)\rightarrow H^1(\mathbb{Q}_p, T\otimes_RR/I)\rightarrow 0, $$
where we note that $T\otimes_RI=\overline{T}\otimes_{\mathbb{F}}I$ since $\mathfrak{m}_R I=0$. The latter implies that 
$$\mathrm{length}_{\BZ_p}(H^1(\mathbb{Q}_p, T))=n\cdot \mathrm{length}_{\BZ_p}(R)$$
since $\mathrm{length}_{\BZ_p}(H^1(\mathbb{Q}_p, T \otimes_{R} R/I))=n\cdot \mathrm{length}_{\BZ_p}(R/I)$
 by the induction hypothesis. Moreover, by the induction hypothesis, we have an isomorphism 
 $$(R/I)^n\isom H^1(\mathbb{Q}_p, T\otimes_RR/I)=H^1(\mathbb{Q}_p, T)\otimes_RR/I.$$
 Take an $R$-linear lift 
 $$f : R^n\rightarrow H^1(\mathbb{Q}_p, T)$$ of the above isomorphism. It is surjective by Nakayama's lemma.  
 Hence, considering the lengths of both sides, it follows that $f$ is an isomorphism.

We now consider 5). In view of the short exact sequence
$$0\rightarrow H^1_{\mathrm{Iw}}(\mathbb{Q}_p, T)/(\gamma-1)\rightarrow H^1(\mathbb{Q}_p, T)\rightarrow H^2_{\mathrm{Iw}}(\mathbb{Q}_p,T)^{\gamma=1}\rightarrow 0, $$
it suffices to show that $H^2_{\mathrm{Iw}}(\mathbb{Q}_p,T)^{\gamma=1}=0$, which is equivalent to 
$H^2_{\mathrm{Iw}}(\mathbb{Q}_p,T)^{\Delta=1}=0$,  
where $\Delta \subset \Gamma$ denotes the torsion subgroup. 
Let $\gamma_0$ be a topological generator of the free part $\Gamma_{\mathrm{free}}$ of $\Gamma$. 
By 1), we have
$$H^2_{\mathrm{Iw}}(\mathbb{Q}_p,T)^{\Delta=1}/(\gamma_0-1)\isom H^2_{\mathrm{Iw}}(\mathbb{Q}_p,T)/(\gamma-1)
\isom H^2(\mathbb{Q}_p, T)=0.$$ 
 Since $H^2_{\mathrm{Iw}}(\mathbb{Q}_p,T)^{\Delta=1}$ is a finitely generated (torsion) $\mathbb{Z}_p[\![\Gamma_{\mathrm{free}}]\!]\isom \mathbb{Z}_p[\![[\gamma_0]-1]\!]$-module, Nakayama's lemma implies 
 that $H^2_{\mathrm{Iw}}(\mathbb{Q}_p,T)^{\Delta=1}=0$. 

For 6), we may assume that $R$ is local with residue field $\BF$. It suffices to show that the $R$-linear map 
$$H^1(\mathbb{Q}_p, T^*(1))\rightarrow \mathrm{Hom}_R(H^1(\mathbb{Q}_p, T), R), \qquad x\mapsto [y\mapsto \{x, y\}_T]$$
is bijective. Since both the sides are free $R$-modules of the same rank, it suffices to show it is surjective. 
In view of Nakayama's lemma, it suffices to show that the surjectivity holds after tensoring with $\mathbb{F}$ over $R$. 
By 4), the latter map is given by  
$$H^1(\mathbb{Q}_p, \overline{T}^*(1))\rightarrow \mathrm{Hom}_R(H^1(\mathbb{Q}_p, \overline{T}), \mathbb{F}) : x\mapsto [y\mapsto \{x, y\}_{\overline{T}}],$$
which is bijective by Tate duality. 
The induced pairing on $H^1(\BQ_p,T)$ is symmetric by~\cite[Prop.~3.4]{BKOY}. 
\end{proof}

For a symplectic self-dual pair $(R,T)$ we set $$
H^i(\mathbb{Q}_p, T\otimes_{\mathbb{Z}_p}\mathbb{Q}_p):=H^i(\mathbb{Q}_p, T)\otimes_{\mathbb{Z}_p}\mathbb{Q}_p, \qquad R[1/p]:=R\otimes_{\mathbb{Z}_p}\mathbb{Q}_p.$$
In the anomalous case we have the following variant of Proposition~\ref{a}.

\begin{prop}\label{a3}
Let $(R,T)$ be a symplectic self-dual $G_{\BQ_p}$-representation of rank $n$. 
Assume that $$H^2(\mathbb{Q}_p, T\otimes_{\mathbb{Z}_p}\mathbb{Q}_p)=0.$$ 
\begin{itemize} 
\item[1)] We have $H^0(\mathbb{Q}_p, T\otimes_{\mathbb{Z}_p}\mathbb{Q}_p)=0$ and the $R[1/p]$-module $H^1(\mathbb{Q}_p, T\otimes_{\mathbb{Z}_p}\mathbb{Q}_p)$ is locally free of rank $n$. 
\item[2)]For any continuous $\mathbb{Z}_p$-algebra homomorphism $R\rightarrow R'$, we have $$H^i(\mathbb{Q}_p, (T\otimes_R  R')\otimes_{\mathbb{Z}_p}\mathbb{Q}_p)=0$$ for $i\in\{0,2\}$, and
 the canonical map 
$$H^1(\mathbb{Q}_p, T\otimes_{\mathbb{Z}_p}\mathbb{Q}_p)\otimes_{R[1/p]}R'[1/p]\rightarrow H^1(\mathbb{Q}_p, (T\otimes_RR')\otimes_{\mathbb{Z}_p}\mathbb{Q}_p)$$ is 
an isomorphism.
\item[3)]The Tate pairing 
$$H^1(\mathbb{Q}_p, T^*(1)\otimes_{\mathbb{Z}_p}\mathbb{Q}_p)\times H^1(\mathbb{Q}_p, T\otimes_{\mathbb{Z}_p}\mathbb{Q}_p)\rightarrow R[1/p], \qquad (x, y)\mapsto \{x, y\}_T$$
 is perfect, and it induces a symmetric pairing on $H^1(\BQ_p,T\otimes_{\mathbb{Z}_p}\mathbb{Q}_p)$.

\end{itemize}

\end{prop}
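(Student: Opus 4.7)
The strategy parallels Proposition~\ref{a} but works at the level of a perfect-complex representative of continuous Galois cohomology, using symplectic self-duality fiberwise to bootstrap the rational hypothesis into local freeness. Set $V := T \otimes_{\BZ_p} \BQ_p$, let $C^\bullet$ be a perfect complex of finitely generated projective $R$-modules in degrees $[0,2]$ quasi-isomorphic to $R\Gamma_{\mathrm{cts}}(G_{\BQ_p}, T)$, and write $M:=H^1(\BQ_p, V)$. For part~(1), Tate local duality in families together with $T^*(1)\cong T$ gives $H^2(\BQ_p, V)\cong H^0(\BQ_p, V)^\vee$ as $R[1/p]$-modules, so the hypothesis forces $H^0(\BQ_p, V)=0$ and hence $C^\bullet[1/p]\simeq M[-1]$ in $D(R[1/p])$. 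To upgrade $M$ to a locally free $R[1/p]$-module of rank $n$, I would verify the same vanishing fiberwise: for any prime $\mathfrak{p}$ of $R[1/p]$, the fiber $V_\mathfrak{p}$ inherits a symplectic self-duality; surjectivity of $C^1[1/p]\twoheadrightarrow C^2[1/p]$ is preserved under $\otimes_{R[1/p]}\kappa(\mathfrak{p})$, whence $H^2(\BQ_p, V_\mathfrak{p})=0$, and Tate duality on $V_\mathfrak{p}$ then yields $H^0(\BQ_p, V_\mathfrak{p})=0$. The quasi-isomorphism $C^\bullet[1/p]\simeq M[-1]$ translates this fiberwise vanishing into $\mathrm{Tor}^{R[1/p]}_j(M,\kappa(\mathfrak{p}))=0$ for all $j\geq 1$ and all $\mathfrak{p}$; since $M$ is finitely presented over the Noetherian ring $R[1/p]$, this forces flatness and hence projectivity. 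The rank $n$ is then read off from the Tate local Euler characteristic formula at any closed point.

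For part~(2), derived base change for continuous Galois cohomology — valid in our setting since $T$ is $R$-free and $C^\bullet$ is a bounded complex of projectives — provides
$R\Gamma_{\mathrm{cts}}(G_{\BQ_p}, T\otimes_R R')\simeq C^\bullet\otimes_R^L R'$
for any continuous $\BZ_p$-algebra homomorphism $R\to R'$. Inverting $p$ and invoking both the equivalence $C^\bullet[1/p]\simeq M[-1]$ and the projectivity of $M$ from part~(1), the right-hand side collapses to $(M\otimes_{R[1/p]} R'[1/p])[-1]$, concentrated in degree~$1$. This simultaneously yields the vanishing of $H^0$ and $H^2$ for $(T\otimes_R R')\otimes\BQ_p$ and the canonical base change isomorphism for $H^1$.

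For part~(3), Tate local duality in families yields a perfect $R$-bilinear pairing $H^1(\BQ_p, T^*(1))\times H^1(\BQ_p, T)\to R$, which under $T^*(1)\cong T$ and after inverting $p$ descends to a pairing on $H^1(\BQ_p, V)$; perfectness over $R[1/p]$ follows because both sides are locally free of the same rank by part~(1) and the pairing is perfect on every fiber by classical Tate duality. Symmetry of the induced pairing is proved verbatim as in part~6) of Proposition~\ref{a}, via \cite[Prop.~3.4]{BKOY}. The main technical hurdle is the fiberwise Tor-vanishing step in part~(1); once this is in place, the other assertions follow formally from derived base change and duality for perfect complexes.
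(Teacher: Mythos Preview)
Your approach works, but note one slip in part~(1): the isomorphism you cite, $H^2(\BQ_p,V)\cong H^0(\BQ_p,V)^\vee$, is backwards, and over a general $R[1/p]$ the vanishing of $(H^0)^\vee$ does not force $H^0=0$ (e.g.\ $R[1/p]=\BZ_p[\![X]\!][1/p]$ and $M=R[1/p]/(X)$). What the complex-level self-duality $C^\bullet[1/p]\simeq R\mathrm{Hom}_{R[1/p]}(C^\bullet[1/p],R[1/p])[-2]$ actually gives is $H^0\cong\mathrm{Hom}_{R[1/p]}(H^2,R[1/p])$, and that is the direction you need. With this correction your fiberwise Tor argument goes through.

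The paper takes a shorter route that avoids fiberwise checking altogether. Having truncated $C^\bullet[1/p]$ to a complex $P^\bullet$ of projectives in degrees $\leq 1$ (using $H^2=0$), Tate duality plus self-duality rewrites $C^\bullet[1/p]$ as the dual complex $Q^\bullet=\mathrm{Hom}_{R[1/p]}(P^\bullet,R[1/p])[-2]$, which now sits in degrees $\geq 1$. The vanishing $H^0=0$ is then immediate from this degree constraint, and since $H^i(Q^\bullet)=0$ for $i\geq 2$ one strips $Q^\bullet$ down from the top to exhibit $H^1$ as the kernel of a surjection of projectives, hence projective, with $Q^\bullet\simeq H^1[-1]$. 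Parts~(2) and~(3) then follow formally from this single perfect-complex description. Your method trades this one global duality maneuver for a pointwise verification plus the local flatness criterion; both are valid, but the paper's argument is more direct and does not rely on identifying residue fields of $R[1/p]$ or on classical Tate duality at each fiber.
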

\begin{proof}
By the assumption, the perfect complex $C^{\bullet}(G_{\mathbb{Q}_p}, T)\otimes_{\mathbb{Z}_p}\mathbb{Q}_p$ is quasi-isomorphic to 
a bounded complex $P^{\bullet}$ of finite projective $R[1/p]$-modules such that $P^i=0$ for $i\geq 2$. In view of the Tate duality and the self-duality of $(R, T)$, the complex 
$C^{\bullet}(G_{\mathbb{Q}_p}, T)\otimes_{\mathbb{Z}_p}\mathbb{Q}_p$
is also quasi-isomorphic to $$Q^{\bullet}:=\mathrm{Hom}_R(P^{\bullet}, R[-2])$$ whose non-zero terms are in degree $[1, m]$ for some $m\geq1$. 

Since $H^i(Q^{\bullet})\isom H^i(\BQ_p,T\otimes_{\mathbb{Z}_p}\mathbb{Q}_p)=0$ for $i\geq 2$, a standard argument shows that 
$H^1(Q^{\bullet})$ is projective and $Q^{\bullet}$ is quasi-isomorphic to the complex $H^1(Q^{\bullet})[-1]$. Hence, the proposition follows from the base change 
properties, the Tate duality and the Euler--Poincar\'e formula for 
 $C^{\bullet}(G_{\mathbb{Q}_p}, T)\otimes_{\mathbb{Z}_p}\mathbb{Q}_p$. 
\end{proof}
To verify the assumption of the above proposition, a useful criterion is the following. 
\begin{lem}\label{lem, van}
Let $(R,T)$ be a symplectic self-dual $G_{\BQ_p}$-representation. 
 Then  
$H^2(\mathbb{Q}_p, T\otimes_{\mathbb{Z}_p}\mathbb{Q}_p)=0$ if and only if 
$H^0(\mathbb{Q}_p, T_s)=0$ for all the $p$-adic representations $T_s$ obtained from a base change of
$T\otimes \BQ_p$. 
\end{lem}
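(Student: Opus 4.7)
Set $V := T \otimes_{\BZ_p} \BQ_p$ and $A := R[1/p]$. The forward implication ($\Rightarrow$) is immediate from Proposition \ref{a3}(2): if $H^2(\BQ_p, V) = 0$, that proposition yields $H^2(\BQ_p, T_s\otimes_{\BZ_p}\BQ_p)=0$, and a fortiori $H^0(\BQ_p, T_s\otimes_{\BZ_p}\BQ_p) = 0$, for every $T_s := T \otimes_R R'$ arising from a continuous $\BZ_p$-algebra map $R \to R'$.

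For the converse, the plan is to reduce to the residue-field case via a base-change spectral sequence, then invoke Tate local duality and Nakayama's lemma. The continuous Galois cohomology complex $C^\bullet(G_{\BQ_p}, V)$ is a perfect complex of $A$-modules concentrated in degrees $[0,2]$; in particular $H^2(\BQ_p, V)$ is a finitely generated $A$-module. For any closed point $s$ of $\Spec A$ with residue field $L$ (a finite extension of $\BQ_p$), the Tor spectral sequence
\[
E_2^{-j,i} \;=\; \mathrm{Tor}_j^{A}\bigl(H^i(\BQ_p, V),\, L\bigr) \;\Longrightarrow\; H^{i-j}(\BQ_p, T_s)
\]
collapses in total degree $2$: the vanishing $H^i(\BQ_p, V) = 0$ for $i \geq 3$ kills every higher Tor contribution as well as every incoming differential into $E_r^{0,2}$, while the outgoing differentials land in terms with positive first index which are automatically zero. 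This yields the key base-change identity
\[
H^2(\BQ_p, T_s) \;\cong\; H^2(\BQ_p, V) \otimes_{A} L.
\]

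Since $T_s$ is symplectic self-dual over the $p$-adic field $L$, Tate local duality combined with $T_s \cong T_s^*(1)$ gives a canonical isomorphism $H^2(\BQ_p, T_s) \cong H^0(\BQ_p, T_s)^\vee$, which vanishes by hypothesis. Hence $H^2(\BQ_p, V) \otimes_{A} L = 0$ for every residue field $L$ at a closed point of $\Spec A$, and Nakayama's lemma applied to the finitely generated $A$-module $H^2(\BQ_p, V)$ forces $H^2(\BQ_p, V) = 0$. The only technical point, and hence the main (mild) obstacle, is the base-change identity above; this is a routine consequence of the Tor spectral sequence applied to the perfect complex $C^\bullet(G_{\BQ_p}, V)$ concentrated in degrees $[0,2]$, so no $p$-adic Hodge-theoretic input is required.
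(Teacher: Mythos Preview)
The paper does not supply a proof of this lemma (it is explicitly ``left to the interested reader''), so there is nothing to compare against; your argument is what one would expect and is essentially correct. Two remarks are worth making.

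First, the Tor spectral sequence is heavier machinery than necessary for the base-change identity $H^2(\BQ_p,T_s)\cong H^2(\BQ_p,V)\otimes_A L$. Since $C^\bullet(G_{\BQ_p},V)$ is perfect in degrees $[0,2]$, one may represent it by a complex $P^\bullet$ of finite projective $A$-modules with $P^i=0$ for $i>2$; then right-exactness of $-\otimes_A L$ immediately gives $H^2(P^\bullet\otimes_A L)=H^2(P^\bullet)\otimes_A L$. This is the ``top cohomology always base-changes'' principle, and suffices.

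Second, for the converse you assert that every closed point of $\Spec A=\Spec R[1/p]$ has residue field a finite extension of $\BQ_p$. This is what lets you feed the resulting specialization back into the hypothesis and invoke Tate local duality. In case (ii) of the paper's set-up this is trivial; in case (i), where $R$ is a complete Noetherian (semi-)local $\BZ_p$-algebra with finite residue field, it is a standard fact (it underlies the construction of the rigid generic fibre of $\Spf R$ and appears, for instance, in the deformation-ring literature), but it is not entirely content-free and you are using it without comment. Once granted, your Nakayama argument is fine: if $M:=H^2(\BQ_p,V)\neq 0$ then its support contains a maximal ideal $\fn$, the residue field $L=A/\fn$ is $p$-adic, and $0\neq M\otimes_A L=H^2(\BQ_p,T_s)\cong H^0(\BQ_p,T_s)^*$ contradicts the hypothesis.
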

We leave proof of the above elementary lemma to the interested reader.

\subsection{Lagrangian submodules}

Let $R$ be a commutative Noetherian ring. 

Let $M$ be a finitely generated projective $R$-module with a  pairing $( \;,\;)$ 
that induces an $R$-module isomorphism $M\cong \mathrm{Hom}(M, R)$.

\begin{defn}\label{def:Lagrangian}
An $R$-submodule $N$ of $M$ is Lagrangian\footnote{We emphasize that the pairing $( \;,\; )$ is allowed to be symmetric.} 
if 
\begin{itemize}
\item[i)] $N$ and $M/N$ are projective, 
\item[ii)] $N$ is isotropic, and
\item[iii)]  the pairing $( \;,\;)$ 
induces $R$-module isomorphisms $N \cong \mathrm{Hom}(M/N, R)$. 
\end{itemize}
\end{defn}

We have the following preliminary regarding Langrangian submodules of a rank two module.

\begin{lem}\label{prop, lagrangian}
Let $R$ be 
a commutative Noetherian local ring such that $2 \in R^\times$.
 Let $M$ be a free $R$-module of rank two with a symmetric pairing $( \;,\;)$ 
 which induces an isomorphism $M\cong \mathrm{Hom}(M, R)$ of $R$-modules. 
Then the number of Lagrangian $R$-submodules of $M$ is zero or two. 
\end{lem}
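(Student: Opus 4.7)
The plan is to first reduce the Lagrangian condition (for $M$ of rank two) to that of an isotropic rank-one direct summand, then to diagonalize the symmetric pairing, and finally to reduce the count to that of square roots of a unit in $R$. Since $R$ is local any projective $R$-module is free, so a Lagrangian $N$ must be free of rank one, and $M/N$ free of rank one as well. Writing $N = Rv$ for a unimodular $v$, I would complete $v$ to a basis $v, w$ of $M$. Isotropy gives $(v,v) = 0$, so the Gram matrix in $v,w$ is $\begin{pmatrix} 0 & (v,w) \\ (v,w) & (w,w) \end{pmatrix}$, whose determinant $-(v,w)^2$ must be a unit; hence $(v,w) \in R^\times$. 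A direct computation then yields $N^\perp = Rv = N$, which is precisely condition (iii) in Definition~\ref{def:Lagrangian}. So Lagrangians are in bijection with spans of unimodular isotropic vectors modulo $R^\times$.

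Next I would diagonalize the form. Writing the Gram matrix in the initial basis as $\begin{pmatrix} a & b \\ b & c \end{pmatrix}$ with $ac - b^2 \in R^\times$, I would exhibit a vector $v_0$ with $(v_0, v_0) \in R^\times$: if $a$ or $c$ is a unit take $v_0 = e_1$ or $e_2$, and otherwise $a, c \in \mathfrak{m}$ forces $b \in R^\times$ (since $-b^2 \equiv ac - b^2 \pmod{\mathfrak{m}}$ is a unit), so $v_0 = e_1 + e_2$ works because $(v_0, v_0) = a + 2b + c \in R^\times$ using $2 \in R^\times$. Since $2$ is invertible, Gram--Schmidt then produces an orthogonal basis $v_0, w_0$ of $M$ in which the form reads $Q(x v_0 + y w_0) = \alpha x^2 + \beta y^2$ with $\alpha, \beta \in R^\times$ (the second using perfectness of the pairing).

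Finally, in this diagonal basis the isotropy equation $\alpha x^2 + \beta y^2 = 0$ cannot be satisfied with $y \in \mathfrak{m}$, for then unimodularity would force $x \in R^\times$ and so $\alpha x^2 \in R^\times$, while $-\beta y^2 \in \mathfrak{m}^2$. Thus $y \in R^\times$ and one may normalize $y = 1$, reducing the count of Lagrangians to the number of $x \in R$ with $x^2 = -\beta/\alpha$. The last step is the elementary fact that in a local ring with $2 \in R^\times$ a unit admits either $0$ or exactly $2$ square roots: if $x^2 = y^2$ with $x, y \in R^\times$ then $(x-y)(x+y) = 0$; modulo $\mathfrak{m}$ one has $\bar{x} = \pm \bar{y}$, and whichever of $x \mp y$ is congruent to $2y \pmod{\mathfrak{m}}$ is a unit (as $2y \in R^\times$), forcing the other factor to vanish, so $x = \pm y$. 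I do not foresee a serious obstacle; the most delicate point is producing a vector with unit self-pairing when both diagonal entries $a, c$ lie in $\mathfrak{m}$, which is handled by the choice $v_0 = e_1 + e_2$.
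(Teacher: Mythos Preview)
Your proof is correct and takes a genuinely different route from the paper. The paper starts by assuming a Lagrangian $Rv$ exists, then completes $v$ to a \emph{hyperbolic} basis $\{v,w\}$ with $(v,v)=(w,w)=0$ and $(v,w)\in R^\times$ (using $2\in R^\times$ to kill $(w,w)$ via $w\mapsto w-\tfrac{(w,w)}{2(v,w)}v$); in such a basis any isotropic unimodular $x=av+bw$ satisfies $ab=0$ with one of $a,b$ a unit, forcing $Rx\in\{Rv,Rw\}$. You instead diagonalize the form unconditionally and reduce the count of isotropic lines to the count of square roots of the unit $-\beta/\alpha$. Your approach treats the ``zero Lagrangians'' and ``two Lagrangians'' cases uniformly without first producing an isotropic vector, at the cost of the extra square-root lemma; the paper's approach is shorter once a Lagrangian is in hand, since the hyperbolic normal form makes the classification immediate. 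Both arguments use $2\in R^\times$ at a single pivotal step (yours to make $e_1+e_2$ anisotropic when $a,c\in\mathfrak m$, and to separate $\pm y$; the paper's to halve $(w,w)$).
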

\begin{proof}
Since $M$ is of rank two,  Lagrangian submodules
are of the form $Rv$ such that $( v, v )=0$, and $Rv$ and $M/Rv$ are both free.

Given such an element $v$. 
Put $k:=R/\mathfrak{m}_R$ and $\overline{M}:=M\otimes_R k$. 
Since $M/Rv$ is free, 
 $k\overline{v}$ is also a Lagrangian submodule of $\overline{M}$. 
Take $\overline{w} \in \overline{M}$ such that $$( \overline{v}, \overline{w})=1$$ 
and a lift $w \in M$ of $\overline{w}$. 
Then by Nakayama's lemma, $\{v, w\}$ is a basis of $M$ such that $( {v}, {w}) \in R^\times$. 
If necessary, replacing $w$ by 
$w-\frac{( w, w )}{2 ( v, w )}v$, we may assume that $( w, w)=0$. Then  $Rw$ defines another Lagrangian. 

Now pick a Lagrangian $N$. 
It is of the form $N=Rx$, write $$x=a v +bw$$ for $a, b\in R$. 
Then $( x, x)=0$ implies that $ab=0$. 
Since the pairing induces the isomorphism $N \cong  \mathrm{Hom}(M/N, R)$, at least one of $a$ or $b$ must be a unit of $R$. 
Hence, we have $N=Rv$ or $N=Rw$. 
\end{proof}

As for this paper, a key example of Lagrangians arises from generic symplectic self-dual $G_{\BQ_p}$-representations $T$, whose cohomology $H^1(\BQ_p,T)$ is endowed with a symmetric Tate pairing. 
\begin{cor}\label{cor, de Rham Lagrangian}
 Let $p$ be an odd prime and $T$ a generic symplectic self-dual de Rham representation of $G_{\BQ_p}$ of rank two.  
 Then $H^1(\BQ_p, T)$ has exactly two Lagrangian submodules with respect to the  Tate pairing, one of them being the Bloch--Kato subgroup $H^1_{\rm{f}}(\BQ_p, T)$. 
\end{cor}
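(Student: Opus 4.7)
The plan is to verify that the Bloch--Kato subgroup $H^1_{\rm f}(\BQ_p,T)$ is one Lagrangian in $H^1(\BQ_p,T)$ with respect to the symmetric Tate pairing, and then invoke Lemma~\ref{prop, lagrangian} to conclude that there are exactly two. Since $T$ is a de Rham representation, its coefficient ring $R$ is (the integer ring of) a finite extension of $\BQ_p$, in particular a DVR; set $L := R \otimes_{\BZ_p} \BQ_p$ and $V := T \otimes_{\BZ_p} \BQ_p$. By Proposition~\ref{a}, genericity ensures that $H^1(\BQ_p,T)$ is free of rank two over $R$, and that the Tate pairing on it is symmetric and perfect.

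First, I would establish the Lagrangian property over the field $L$. By Corollary~\ref{cor, dR ssd two}, the $L$-vector space $H^1_{\rm f}(\BQ_p,V)$ is one-dimensional inside the two-dimensional $H^1(\BQ_p,V)$. Standard Bloch--Kato local duality identifies $H^1_{\rm f}(\BQ_p,V)$ with the annihilator of $H^1_{\rm f}(\BQ_p,V^*(1))$ under the Tate pairing, and the symplectic self-duality $V \cong V^*(1)$ then forces $H^1_{\rm f}(\BQ_p,V)$ to coincide with its own orthogonal complement. Hence it is isotropic and Lagrangian over $L$.

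Next, I would transfer this property to the integral level. By its very definition, $N := H^1_{\rm f}(\BQ_p,T) = \iota^{-1}(H^1_{\rm f}(\BQ_p,V))$ is saturated in $H^1(\BQ_p,T)$: if $\pi x \in N$ then the image of $x$ in the $L$-vector subspace $H^1_{\rm f}(\BQ_p,V)$ lies there, so $x \in N$. Since $R$ is a DVR, a saturated submodule of a free module is automatically a direct summand, so $N$ is free of rank one with free rank-one quotient. Isotropy of $N$ under the $R$-valued Tate pairing follows from isotropy over $L$ together with the injectivity of $R \hookrightarrow L$. The orthogonal $N^{\perp}$ is also saturated (being the kernel of an $R$-linear map into a torsion-free module) and satisfies $N^\perp \otimes_R L = (N \otimes_R L)^\perp = N \otimes_R L$; two saturated submodules of a free $R$-module with the same $L$-span must coincide, so $N = N^\perp$, which is the Lagrangian condition.

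Having produced one Lagrangian, Lemma~\ref{prop, lagrangian} applies: the hypotheses are met since $R$ is a Noetherian local ring with $2 \in R^\times$ (as $p$ is odd) and $H^1(\BQ_p,T)$ is free of rank two with a symmetric perfect pairing. It therefore gives exactly two Lagrangians, one of which is $H^1_{\rm f}(\BQ_p,T)$. I do not foresee any substantial obstacle; the only delicate point is the saturation argument used to upgrade the Lagrangian property from the generic fiber to the integral level, and this is made essentially formal by $R$ being a DVR.
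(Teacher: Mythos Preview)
Your proposal is correct and follows essentially the same approach as the paper: show that $H^1_{\rm f}(\BQ_p,V)$ is its own orthogonal complement via Bloch--Kato duality and self-duality, pass to the integral lattice using that $H^1_{\rm f}(\BQ_p,T)=H^1(\BQ_p,T)\cap H^1_{\rm f}(\BQ_p,V)$ is saturated, and then invoke Lemma~\ref{prop, lagrangian}. The paper compresses your saturation argument into the single chain $H^1_{\rm f}(\BQ_p,T)^{\perp}=H^1(\BQ_p,T)\cap H^1_{\rm f}(\BQ_p,V)^{\perp}=H^1(\BQ_p,T)\cap H^1_{\rm f}(\BQ_p,V)=H^1_{\rm f}(\BQ_p,T)$, but the content is the same.
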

\begin{proof}Since $H^1(\BQ_p, T)$ is free, one has $$H^1_{\rm{f}}(\BQ_p, T)=H^1(\BQ_p, T)\cap H^1_{\rm{f}}(\BQ_p, V)$$ and $H^1_{\rm{f}}(\BQ_p, V)
=H^1_{\rm{f}}(\BQ_p, T)[1/p]$. 
Therefore, we obtain 
$$H^1_{\rm{f}}(\BQ_p, T)^{\perp}=H^1(\BQ_p, T)\cap H^1_{\rm{f}}(\BQ_p, V)^{\perp}=H^1(\BQ_p, T)\cap H^1_{\rm{f}}(\BQ_p, V)=H^1_{\rm{f}}(\BQ_p, T).$$
\end{proof}

Our main theorem shows the existence of a Lagrangian for any generic symplectic self-dual representation of rank two, 
and so that of exactly two Lagrangians when $R$ is local. 
 We canonically label these submodules and for de Rham representations, it is linked with Bloch--Kato subgroups 
via completed epsilon constants.

\section{Main result} \label{s:mr-lsd} 
The central result of this paper is the following. 
\begin{thm}\label{thm, main}
Let $p$ be an odd prime. 
For all generic symplectic self-dual pairs  $(R, T)$ of $G_{\BQ_p}$-representations of rank two 
there is a functorial decomposition 
\[
H^1(\BQ_p, T)=H^1_+(\BQ_p, T) \oplus H^1_-(\BQ_p, T)
\]
into free $R$-submodules of rank one with the following properties. 
\begin{enumerate}
\item[1)] The $R$-submodules $H^1_\pm (\BQ_p, T) \subset H^1(\BQ_p,T)$ 
are  Lagrangian 
with respect to the symmetric Tate pairing on $H^1(\BQ_p, T)$. 
 \item[2)] For any morphism $T \rightarrow T'$ of $R$-representations, the canonical morphism induces $R$-module homomorphisms 
 \[
 H^1_\pm(\BQ_p, T)\rightarrow   H^1_\pm(\BQ_p, T').  
 \]
  \item[3)] For any continuous $\BZ_p$-algebra homomorphism $R \rightarrow R'$, the canonical morphism induces $R'$-module isomorphisms 
 \[
 H^1_\pm(\BQ_p, T)\otimes_R  R' \cong  H^1_\pm(\BQ_p, T\otimes_R  R').  
 \]
 In other words the decomposition is compatible with base change. 
\item[4)] If $T\otimes_{\BZ_p} \BQ_p$ is de Rham, then  
\[
H^1_{-\hat{\varepsilon}(T)}(\BQ_p, T)=H^1_{\mathrm{f}}(\BQ_p, T), 
\]
where $H^1_{\pm 1}(\BQ_p,T):=H^1_{\pm}(\BQ_p,T)$. 
\end{enumerate}
Moreover, the {properties} 2)-4) characterize the decomposition uniquely. 
\end{thm}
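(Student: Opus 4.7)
The plan is to construct an $R$-linear involution $w_T$ on $H^1(\BQ_p, T)$ and take $H^1_\pm(\BQ_p, T)$ to be its $\pm 1$-eigenspaces; since $p$ is odd, this immediately yields a direct-sum decomposition, and the required freeness of rank one together with the Lagrangian property will then follow from Lemma \ref{prop, lagrangian} once we check that $w_T$ is a non-trivial involution modulo the maximal ideal and that it is self-adjoint for the Tate pairing. To construct $w_T$ I follow the Colmez route sketched in \S \ref{s:Intro}: attach to $T$ its \'etale $(\varphi, \Gamma)$-module $D = D(T)$ over $\CE_R$, so that Cherbonnier--Colmez yields $D^{\psi = 1} \cong H^1_{\mathrm{Iw}}(\BQ_p, T)$. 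As $T$ is symplectic self-dual, the central character $\delta_D$ is trivial; Colmez's module $D^\natural \boxtimes_{\delta_D} \mathbf P^1$ is then an $R[\GL_2(\BQ_p)]$-module on which $\matrixx{0}{1}{1}{0}$ acts, realizing the $w$-operator on $\Pi(D)^*$. The first technical step is to show that under the generic hypothesis the natural map
\[
(D^\natural \boxtimes_{\delta_D} \mathbf P^1)^{\alpha_p = 1}/(\gamma - 1) \lra D^{\psi = 1}/(\gamma - 1) \cong H^1(\BQ_p, T)
\]
is an isomorphism (the last identification by Proposition \ref{a}(5)), so that the $\matrixx{0}{1}{1}{0}$-action descends to the sought involution $w_T$.

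Properties (2) and (3) should then follow without difficulty from the functoriality of $T \mapsto D(T) \mapsto D^\natural \boxtimes_{\delta_D} \mathbf P^1$ in $T$ and in $R$, combined with the base-change statements of Proposition \ref{a}(2)--(5). The Lagrangian property (1) reduces to showing that $\matrixx{0}{1}{1}{0}$ is self-adjoint for the canonical pairing on $\Pi(D)^* \otimes \Pi(D^*(1))^*$ furnished by the $p$-adic local Langlands correspondence; since this pairing matches the Tate pairing on $H^1(\BQ_p, T)$, self-adjointness of $w_T$ forces $H^1_+$ and $H^1_-$ to be mutually orthogonal, hence each isotropic of rank one, i.e.\ Lagrangian.

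The main obstacle is property (4), the de Rham compatibility. When $T$ is de Rham, Corollary \ref{cor, de Rham Lagrangian} shows that $H^1_{\mathrm f}(\BQ_p, T)$ is one of the (at most two) Lagrangians of $H^1(\BQ_p, T)$ in the sense of Lemma \ref{prop, lagrangian}, so it must coincide with $H^1_+$ or $H^1_-$; the point is to identify which one in terms of $\hat\varepsilon(T)$. Here I would invoke the comparison of the third-named author in \cite{NaKato}, which, via Colmez's $p$-adic Kirillov model and Emerton's local--global compatibility, expresses the eigenvalue of $w$ on the Bloch--Kato line in terms of the Weil--Deligne $\varepsilon$-constant $\varepsilon(V)$ and the Hodge--Tate weights; combining with Lemma \ref{prop:gamma}, which in rank two collapses $\Gamma(V)$ to $(-1)^{k-1}$, yields the desired sign $-\hat\varepsilon(V)$. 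This is where the full strength of $p$-adic local Langlands is needed, and where simple trianguline arguments genuinely fail. Finally, uniqueness follows from the existence of Zariski-dense families of de Rham specializations of each sign on the universal framed deformation ring $R^\Box_{\ov\rho}$ (Theorem \ref{thm, density sgn crystalline}): any alternative decomposition satisfying (3) and (4) is forced by (4) to agree with ours at every such specialization, hence on the entire universal ring, and then on every generic $(R, T)$ by base change along $R^\Box_{\ov\rho} \to R$.
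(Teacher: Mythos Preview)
Your overall strategy matches the paper's: define $w_T$ as the action of $\begin{pmatrix}0&1\\1&0\end{pmatrix}$ on $(D^\natural\boxtimes\mathbf P^1)^{\alpha_p=1}/(\gamma-1)\cong H^1(\BQ_p,T)$ (this isomorphism is Theorem~\ref{c}), and deduce properties 2) and 3) from functoriality of Colmez's construction together with Proposition~\ref{a}. Your approach to property 4) via \cite{NaKato} and to uniqueness via Theorem~\ref{thm, density sgn crystalline} is also what the paper does (the latter is Theorem~\ref{thm, univ}).

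However, your argument for the Lagrangian property 1) contains a genuine error. You claim that self-adjointness of $w_T$ for the symmetric Tate pairing $\{\,,\,\}$ forces $H^1_+$ and $H^1_-$ to be mutually orthogonal, ``hence each isotropic of rank one''. But mutual orthogonality of the two rank-one summands in a non-degenerate rank-two pairing forces each summand to be \emph{non}-isotropic, not isotropic: if $\{v_+,v_-\}=0$ then $\{v_+,v_+\}\neq 0$ by non-degeneracy. What is needed is the opposite: $w_T$ must be \emph{anti}-self-adjoint for $\{\,,\,\}$, i.e.\ $\{w_T(x),y\}=-\{x,w_T(y)\}$. Then for $x\in H^1_\pm$ one gets $\{x,x\}=\pm\{w_T(x),x\}=\mp\{x,w_T(x)\}=-\{x,x\}$, hence $\{x,x\}=0$, giving isotropy of each eigenspace.

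The paper establishes this anti-self-adjointness not on the automorphic side but through the Kato $\varepsilon$-isomorphism construction of \S\ref{ss:lsd via Kato's eps}: the key identity is Lemma~\ref{2}, which shows $\{w_T(x),y\}=[x,y]_{T,\zeta}\otimes\zeta^{-1}$ where $[\,,\,]_{T,\zeta}$ is the \emph{skew}-symmetric pairing arising from $\varepsilon_{R,\zeta}(T)$. This immediately gives both isotropy (since $[x,x]=0$) and the non-triviality $w_T\neq\pm\mathrm{id}$ (since otherwise the symmetric $\{\,,\,\}$ would equal $\pm$ a skew pairing, forcing it to vanish). So the paper genuinely needs \emph{both} constructions of $w_T$---the Colmez one for functoriality, and the Kato one for the Lagrangian property and non-scalarity---together with Theorem~\ref{thm, involution} showing they agree. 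Your attempt to extract the Lagrangian property purely from the $\GL_2(\BQ_p)$-module structure of $\Pi(D)^*$ does not work as stated; you would need to identify the Tate pairing with a pairing on the automorphic side for which the $\begin{pmatrix}0&1\\1&0\end{pmatrix}$-action is anti-self-adjoint, and you have not done this.
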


The following is an immediate application of Theorem~\ref{thm, main}, 
 which will be used in our construction of $p$-adic $L$-function at additive primes (cf.~Theorem~\ref{thm, ram pL}). 
\begin{cor}\label{cor, nv dual exp}
Let $(R, T)$ be a generic symplectic self-dual $G_{\BQ_p}$-representation of rank two. 
Let $v_{\epsilon}$ be a basis of $H^1_{\epsilon}(\BQ_p, T)$ 
for $\epsilon \in \{\pm 1\}$. 
Suppose that $V_s$ is a de Rham specialization of $T$ with sign 
$\hat{\varepsilon}(V_s)=\epsilon$. 
Then we have $$\exp^*_{V_s}(v_\epsilon)\not=0$$ 
for $\exp^*_{V_s}$ the dual exponential map.
\end{cor}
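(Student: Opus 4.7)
The plan is to combine Theorem~\ref{thm, main}(3) (base-change compatibility) with Theorem~\ref{thm, main}(4) (the identification of signed submodules with Bloch--Kato subgroups for de Rham specializations), and finally invoke the kernel identification for the dual exponential.

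First I would realize the de Rham specialization as arising from a continuous $\BZ_p$-algebra homomorphism $s:R\to L$, with $L$ a finite extension of $\BQ_p$, so that $V_s$ is the resulting $L$-linear $G_{\BQ_p}$-representation (after, if necessary, inverting $p$ and appealing to Proposition~\ref{a3} for base-change in that setting). Applying Theorem~\ref{thm, main}(3) to $s$ yields an isomorphism
\[
H^1_\epsilon(\BQ_p,T)\otimes_R L \;\xrightarrow{\sim}\; H^1_\epsilon(\BQ_p,V_s).
\]
Because $v_\epsilon$ is an $R$-basis of the free rank-one $R$-module $H^1_\epsilon(\BQ_p,T)$, its image $\bar v_\epsilon\in H^1_\epsilon(\BQ_p,V_s)$ is an $L$-basis, and in particular nonzero.

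Next, by Theorem~\ref{thm, main}(4), the hypothesis $\hat\varepsilon(V_s)=\epsilon$ gives
\[
H^1_{-\epsilon}(\BQ_p,V_s)=H^1_{\mathrm{f}}(\BQ_p,V_s).
\]
Combined with the direct-sum decomposition $H^1(\BQ_p,V_s)=H^1_+(\BQ_p,V_s)\oplus H^1_-(\BQ_p,V_s)$, this forces $H^1_\epsilon(\BQ_p,V_s)\cap H^1_{\mathrm{f}}(\BQ_p,V_s)=\{0\}$, so $\bar v_\epsilon\notin H^1_{\mathrm{f}}(\BQ_p,V_s)$.

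Finally, I would invoke that the dual exponential
\[
\exp^*_{V_s}:H^1(\BQ_p,V_s)\longrightarrow \mathrm{Fil}^0 D_{\mathrm{dR}}(V_s)
\]
has kernel equal to $H^1_{\mathrm{f}}(\BQ_p,V_s)$ in the present setting. Under the symplectic self-duality and genericity hypotheses, one has $H^0(\BQ_p,V_s)=H^0(\BQ_p,V_s^*(1))=0$, and the standard Bloch--Kato formalism (together with local Tate duality pairing $\exp^*_{V_s}$ with $\exp_{V_s^*(1)}$ on $H^1_{\mathrm{f}}(\BQ_p,V_s^*(1))$) yields the required identification. Consequently $\exp^*_{V_s}(\bar v_\epsilon)\neq 0$, as desired. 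The only mildly delicate ingredient is this last kernel identification for $\exp^*_{V_s}$; everything else is a direct consequence of the functoriality packaged in Theorem~\ref{thm, main}.
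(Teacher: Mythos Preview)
Your approach is correct and is precisely what the paper has in mind; it gives no explicit proof, only labeling the corollary an ``immediate application'' of Theorem~\ref{thm, main}, and your route via base change (property 3), the identification $H^1_{-\epsilon}(\BQ_p,V_s)=H^1_{\mathrm f}(\BQ_p,V_s)$ (property 4), and the kernel of $\exp^*$ is the natural one. The point you flag is the only one requiring care: in general $\ker(\exp^*_{V_s})=H^1_{\mathrm g}(\BQ_p,V_s)\supseteq H^1_{\mathrm f}(\BQ_p,V_s)$, so one must also know $D_{\mathrm{crys}}(V_s)^{\varphi=1}=0$, which under the genericity hypothesis $H^0(\BQ_p,V_s)=0$ can be extracted from weak admissibility in the (potentially) semistable analysis.
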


\begin{remark}
The above non-vanishing is a generalization of \cite[Cor.~2.2]{BKO21}, which is a consequence of the proof of Rubin's conjecture. 
\end{remark}

If we use the Lagrangian propery 1) in addition to the properties 3) and 4), 
the uniqueness of the local sign decomposition is relatively easily proven. 

\begin{prop}\label{prop, uniqueness}
 The local sign decomposition as in Theorem \ref{thm, main} is uniquely determined 
by the properties 1), 3) and 4). 
\end{prop}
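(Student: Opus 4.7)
The plan is to exploit the Zariski density of de Rham specializations on the universal framed deformation ring, using properties 3) and 4) to pin down the decomposition at each de Rham point and then using property 1) to propagate that equality globally. Let $\{A_{\pm}(R,T)\}$ and $\{B_{\pm}(R,T)\}$ be two assignments of decompositions both satisfying 1), 3), 4), and fix a generic symplectic self-dual pair $(R,T)$ of rank two. First I would reduce to a universal setting: with $\bar\rho := \bar T$ the residual representation, let $(R^{\Box}_{\bar\rho}, \BT^{\Box}_{\bar\rho})$ be the universal framed deformation with determinant $\chi_{\rm cyc}$, itself a generic symplectic self-dual pair of rank two (cf.~Example~\ref{example, ssd two} b3)). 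Since $(R,T)$ arises as a specialization under some continuous $\BZ_p$-algebra homomorphism $R^{\Box}_{\bar\rho} \to R$, property 3) applied to both $A$ and $B$ reduces the task to proving $A_{\pm}(R^{\Box}_{\bar\rho}, \BT^{\Box}_{\bar\rho}) = B_{\pm}(R^{\Box}_{\bar\rho}, \BT^{\Box}_{\bar\rho})$.

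Next I would use properties 3) and 4) at each de Rham specialization. For every continuous $\BZ_p$-algebra homomorphism $s: R^{\Box}_{\bar\rho} \to \CO_L$ with $L/\BQ_p$ finite and $V_s := \BT^{\Box}_{\bar\rho} \otimes_{R^{\Box}_{\bar\rho}, s} L$ de Rham, property 4) prescribes the sign-$(-\hat\varepsilon(V_s))$ submodule of $H^1(\BQ_p, T_s)$ to be the Bloch--Kato Lagrangian $H^1_{\rm f}(\BQ_p, T_s)$, and Corollary~\ref{cor, de Rham Lagrangian} identifies the complementary Lagrangian uniquely. Applied to both $A$ and $B$ together with property 3), this yields
\[
A_{\pm}(R^{\Box}_{\bar\rho}, \BT^{\Box}_{\bar\rho}) \otimes_s \CO_L \;=\; B_{\pm}(R^{\Box}_{\bar\rho}, \BT^{\Box}_{\bar\rho}) \otimes_s \CO_L
\]
for every such $s$.

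To lift this pointwise agreement to an equality over $R^{\Box}_{\bar\rho}$ I would combine property 1) with the Zariski density of signed crystalline specializations on $\Spec R^{\Box}_{\bar\rho}[1/p]$ provided by Theorem~\ref{thm, density sgn crystalline}. By property 1) the quotient $H^1(\BQ_p, \BT^{\Box}_{\bar\rho})/A_{+}(R^{\Box}_{\bar\rho}, \BT^{\Box}_{\bar\rho})$ is a free $R^{\Box}_{\bar\rho}$-module of rank one, so the natural composition
\[
B_{+}(R^{\Box}_{\bar\rho}, \BT^{\Box}_{\bar\rho}) \hookrightarrow H^1(\BQ_p, \BT^{\Box}_{\bar\rho}) \twoheadrightarrow H^1(\BQ_p, \BT^{\Box}_{\bar\rho})/A_{+}(R^{\Box}_{\bar\rho}, \BT^{\Box}_{\bar\rho})
\]
is an $R^{\Box}_{\bar\rho}$-linear map between two free rank-one modules whose specialization vanishes at every de Rham $s$ above. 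Zariski density of such $s$, combined with the $\BZ_p$-flatness of $R^{\Box}_{\bar\rho}$, forces this map itself to vanish, so $B_{+} \subseteq A_{+}$; equality then follows because both are rank-one direct summands of the same rank-two free module. An identical argument yields $B_{-} = A_{-}$. The hard part of this plan is precisely the Zariski density input, which is the content of Theorem~\ref{thm, density sgn crystalline} proved in Appendix~\ref{s:appendix} via the deformation theory of $(\varphi,\Gamma)$-modules; the rest is a formal manipulation of properties 1), 3), 4) together with Corollary~\ref{cor, de Rham Lagrangian}.
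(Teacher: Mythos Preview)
Your argument is correct, but the paper takes a shorter route. Both proofs reduce to the universal framed deformation $(R^{\Box}_{\bar\rho}, \BT^{\Box}_{\bar\rho})$ via property 3). The paper then applies Lemma~\ref{prop, lagrangian} \emph{directly over the local ring} $R^{\Box}_{\bar\rho}$: a rank-two free module over a local ring with $2$ invertible carries at most two Lagrangians, so the two candidate decompositions either coincide or are swapped. A \emph{single} crystalline specialization $s$ (whose mere existence is Theorem~\ref{thm, Paskunas}) together with property 4) then rules out the swap, since both assignments must identify the sign $-\hat\varepsilon(T_s)$ with $H^1_{\rm f}(\BQ_p, T_s)$. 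Your approach instead applies the two-Lagrangian rigidity only at each de Rham point (via Corollary~\ref{cor, de Rham Lagrangian}) and then invokes the stronger Zariski density input Theorem~\ref{thm, density sgn crystalline}, together with reducedness and $p$-torsion-freeness of $R^{\Box}_{\bar\rho}$, to propagate the pointwise equality globally. The paper's method is more economical: one crystalline point suffices because the Lagrangian rigidity over $R^{\Box}_{\bar\rho}$ already narrows things to two possibilities. Your density-based argument is closer in spirit to the paper's separate uniqueness claim via properties 2)--4) alone (cf.~Theorem~\ref{thm, univ}), where the Lagrangian constraint 1) is not available and density genuinely does the work. A small omission: you should first reduce to $R$ local (as the paper does) before speaking of a single residual representation $\bar\rho$.
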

\begin{proof}
In light of the base change property 3) of the local sign decomposition and completeness of $R$, we may assume that  $R$ is an Artinian semi-local ring. Since $R$ is then a finite product of Artinian rings, we may further assume that $R$ is local. 
In light of the base change property 3) we finally reduce to the case that $T$ is the universal framed deformation
$\mathbb{T}:=\mathbb{T}_{\ov{\rho}}^{\Box}$ of a residual representation $\ov{\rho}$ whose determinant is the cyclotomic character (see b3) of Example~\ref{example, ssd two}).

Suppose that $H^1(\BQ_p,\mathbb{T})$ has two decompositions 
\[
H^1(\BQ_p,\mathbb{T})=Rv_+ \oplus Rv_-=Rw_+ \oplus Rw_-
\]
satisfying the properties 1), 3) and 4) of Theorem \ref{thm, main}.
Since there are only two Lagrangian submodules by~Lemma \ref{prop, lagrangian}, 
it follows that $Rv_+=Rw_+$ or $Rv_+=Rw_-$. 

Suppose that $Rv_+=Rw_-$ and then $Rv_-=Rw_+$. 
Let $s$ be a crystalline specialization of $\BT$ (cf.~Theorem \ref{thm, Paskunas}), and 
$T_s$ denotes the specialization of 
 $\mathbb{T}$ at $s$.  
 Put $\epsilon=\hat{\varepsilon}(T_s)$. Then both 
 $R\,s(v_{-\epsilon})$ and $R\,s(w_{-\epsilon})$ equal  
 $H^1_{\rm{f}}(\BQ_p, T_s)$ by the property 4). 
 Therefore, we obtain $R\,s(v_+)=R\,s(v_{-})$, a contradiction. 
 \end{proof}
 
 For either the anomalous case or the prime $p=2$, we have the following variant of the local sign decomposition.
Recall that by definition, 
$H^i(\mathbb{Q}_p, T\otimes_{\mathbb{Z}_p}\mathbb{Q}_p):=H^i(\mathbb{Q}_p, T)\otimes_{\mathbb{Z}_p}\mathbb{Q}_p$ for $i=0, 1, 2$. 
 
 \begin{thm}\label{thm, main3}
Let $p$ be any prime. 
Let $(R, T)$ be a symplectic self-dual pair of $G_{\BQ_p}$-representations of rank two. Assume that 
 $$H^2(\mathbb{Q}_p, T\otimes_{\mathbb{Z}_p}\mathbb{Q}_p)=0.$$

Then  
there is a functorial decomposition 
\[
H^1(\BQ_p, T\otimes_{\mathbb{Z}_p}\mathbb{Q}_p)=H^1_+(\BQ_p, T\otimes_{\mathbb{Z}_p}\mathbb{Q}_p) \oplus H^1_-(\BQ_p, T\otimes_{\mathbb{Z}_p}\mathbb{Q}_p)
\]
into locally free $R[1/p]$-submodules of rank one with the following properties. 
\begin{enumerate}
\item[1)] The $R[1/p]$-submodules $H^1_\pm (\BQ_p, T\otimes_{\mathbb{Z}_p}\mathbb{Q}_p)\subset H^1(\BQ_p,T\otimes_{\mathbb{Z}_p}\mathbb{Q}_p)$ 
are  Lagrangian 
with respect to the symmetric Tate pairing on $H^1(\BQ_p, T\otimes_{\mathbb{Z}_p}\mathbb{Q}_p)$.
  \item[2)] For any continuous $\BZ_p$-algebra homomorphism $R \rightarrow R'$,  the canonical morphism induces $R'[1/p]$-module isomorphisms 
 \[
 H^1_\pm(\BQ_p, T\otimes_{\mathbb{Z}_p}\mathbb{Q}_p)\otimes_{R[1/p]}  R'[1/p]\cong  H^1_\pm(\BQ_p, (T\otimes_R  R')\otimes_{\mathbb{Z}_p}\mathbb{Q}_p).  
 \]
 In other words the decomposition is compatible with base change. 
\item[3)] If $(L, V)$ is any de Rham specialization of $(R, T)$, then 
\[
H^1_{-\hat{\varepsilon}(V)}(\BQ_p, V)=H^1_{\mathrm{f}}(\BQ_p, V), 
\]
\end{enumerate} 
\end{thm}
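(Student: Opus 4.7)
The plan is to deduce Theorem \ref{thm, main3} from Theorem \ref{thm, main} by a combination of base change and reduction to the generic case, together with the fact that after inverting $p$ the symplectic self-dual hypothesis and the vanishing $H^2(\BQ_p, T \otimes_{\BZ_p} \BQ_p) = 0$ make the Galois cohomology behave well. First, Proposition \ref{a3} supplies the required building blocks: $H^1(\BQ_p, T \otimes_{\BZ_p} \BQ_p)$ is locally free of rank two over $R[1/p]$, carries a perfect symmetric Tate pairing, and is compatible with any continuous $\BZ_p$-algebra base change. So the ambient object on which one wants to build the decomposition is already in place, and it remains to cut out two Lagrangian rank-one direct summands functorially.

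Next, I would split $R$ into a finite product and thereby reduce to the case $R$ is either a complete local Noetherian ring with finite residue field or a single finite extension of $\BQ_p$. When $p$ is odd, the key move is to realize $(R, T)$ as a pullback of a generic pair $(\tilde R, \tilde T)$ along a continuous $\BZ_p$-algebra homomorphism $\tilde R \to R$, where $\tilde T$ is built from a (twist of the) universal framed deformation of a suitable generic residual representation in the sense of Example \ref{example, ssd two} b3). Even when $\ov{T}$ itself is anomalous, the vanishing of $H^2$ after inverting $p$ (equivalently, Lemma \ref{lem, van}) ensures that the anomaly lives purely at the residual level and can be absorbed by enlarging $\tilde R$ without losing genericity. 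Applying Theorem \ref{thm, main} to $(\tilde R, \tilde T)$ then produces the decomposition $H^1(\BQ_p, \tilde T) = H^1_+(\BQ_p, \tilde T) \oplus H^1_-(\BQ_p, \tilde T)$, and the base change property 3) of Theorem \ref{thm, main}, combined with part 2) of Proposition \ref{a3}, transports this splitting to $H^1(\BQ_p, T \otimes_{\BZ_p} \BQ_p)$ as required. The Lagrangian, functoriality in $R$, and de Rham properties all propagate from those of $\tilde T$.

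For $p = 2$, Theorem \ref{thm, main} does not apply, so I would appeal to the alternative construction via Kato's local $\varepsilon$-conjecture sketched in the introduction (cf.\ \S\ref{ss:lsd via Kato's eps}), which works rationally over $R[1/p]$ and is insensitive to the parity of $p$. The Kato $\varepsilon$-isomorphism provides a distinguished trivialization of the determinant of the perfect complex computing $H^1$, compatible with base change, whose interpolation at de Rham points recovers $\hat{\varepsilon}$. From this one extracts the two Lagrangian rank-one submodules, with the sign labelling pinned down by the Bloch--Kato interpolation at de Rham points via Corollary \ref{cor, de Rham Lagrangian}. Uniqueness of the decomposition can then be argued exactly as in Proposition \ref{prop, uniqueness}: by Lemma \ref{prop, lagrangian} applied locally on $\Spec R[1/p]$, a rank-two module with perfect symmetric pairing admits at most two Lagrangian direct summands, and their labelling is forced by a single de Rham specialization.

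The main obstacle is the deformation-theoretic reduction in the anomalous case: one has to produce a generic cover $(\tilde R, \tilde T) \to (R, T)$ compatibly with symplectic self-duality and in a way that commutes with base change so that the Lagrangian submodules of $\tilde T$ descend cleanly to $R[1/p]$. A secondary difficulty is the $p = 2$ case, where the $p$-adic Langlands input to Theorem \ref{thm, main} (Colmez's $w$-operator, Pa\v{s}k\=unas-type results) is unavailable; routing the proof through Kato's $\varepsilon$-conjecture circumvents this but requires setting up the local $\varepsilon$-conjecture over $R[1/p]$ in sufficient generality and verifying that the resulting signed subspaces are actually Lagrangian at every point, which is the most substantive technical step.
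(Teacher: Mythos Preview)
Your approach for $p=2$ via Kato's local $\varepsilon$-isomorphism is exactly what the paper does, but the paper uses it uniformly for all primes and all anomalous cases, not just $p=2$. Concretely, Theorem~\ref{thm, rank two epsilon} ii) provides the $\varepsilon$-isomorphism $\varepsilon_{R[1/p],\zeta}(T\otimes_{\BZ_p}\BQ_p)$ over $R[1/p]$ for any $p$; \S\ref{subsubsection, rank 2 kato}--\S\ref{subsubsection, rank 2 kato self-dual} then extract from it a skew-symmetric pairing on $H^1(\BQ_p,T\otimes_{\BZ_p}\BQ_p)$, compare it with the symmetric Tate pairing, and obtain the involution $w_{T\otimes_{\BZ_p}\BQ_p}$ whose eigenspaces are the signed submodules. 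Proposition~\ref{prop, decomposition and Lagrangian}~ii) gives the decomposition and the Lagrangian property, base change comes from property 1) of Conjecture~\ref{conj, epsilon}, and Proposition~\ref{prop, de Rham property} gives the de Rham property. The whole argument is written to cover the anomalous case over $R[1/p]$ directly, with no detour through a generic cover.

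Your proposed odd-$p$ route via a generic cover $(\tilde R,\tilde T)\to(R,T)$ has a genuine gap in the anomalous case. Genericity is a \emph{residual} condition: for $R$ complete local with finite residue field, $(\tilde R,\tilde T)$ is generic exactly when $H^0(\BQ_p,\tilde T\otimes_{\tilde R}\tilde R/\mathfrak m_{\tilde R})=0$. Any $\BZ_p$-algebra map $\tilde R\to R$ of complete local rings compatible with $\tilde T\otimes_{\tilde R} R\cong T$ forces the residual representations to agree, so if $\ov T$ is anomalous then so is $\tilde T\otimes_{\tilde R}\tilde R/\mathfrak m_{\tilde R}$, and $(\tilde R,\tilde T)$ cannot be generic. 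In particular the universal framed deformation of Example~\ref{example, ssd two}~b3) applied to $\ov\rho=\ov T$ is itself anomalous, and there is no way to ``absorb the anomaly by enlarging $\tilde R$'' while keeping a map to $R$: the residue field and the residual representation are fixed. The vanishing of $H^2(\BQ_p,T\otimes_{\BZ_p}\BQ_p)$ is a characteristic-zero statement and says nothing about the mod-$p$ invariants that control genericity. So this reduction cannot be carried out, and the correct fix is simply to run the $\varepsilon$-isomorphism construction you already describe for $p=2$ uniformly for all $p$, as the paper does.
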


\section{Construction of the local sign decomposition}\label{s:cst} 
The main idea is to construct an involution $w_T$ on the Galois cohomology $H^1 (\BQ_p,T)$ whose eigenspace decomposition leads to the local sign decomposition as in Theorem~\ref{thm, main}.

In this section, we introduce two methods for the construction of $w_T$. 
The first is based on Kato's local epsilon conjecture for rank two representations of $G_{\BQ_p}$ as established by the third-named author and Jacinto ~\cites{NaKato,J}. For generic symplectic self-dual representations, 
the conjecture 
leads to a symplectic pairing on $H^1 (\BQ_p, T)$, 
whose comparison with the symmetric Tate pairing yields the involution $w_T$. 
Since it relies on the abstract symplectic pairing, this definition of $w_T$ is not constructive, 
while our second method is: 
we show that $w_T$ is essentially 
Colmez's operator $w_*$ in $(\varphi, \Gamma)$-theory.  
Both constructions are elemental to
properties of the local sign decomposition.

Throughout, we fix a prime $p$.

\subsection{Kato's local epsilon conjecture}
This subsection describes the eponymous conjecture and some results towards it following \cites{Ka93,Narank1, NaKato}.

\subsubsection{Set-up}\label{ss:Kato s-u}
Let $\ov{\BQ}_p$ be an algebraic closure of $\BQ_p$ and $G_{\BQ_p}:=\Gal(\ov{\BQ}_p/\BQ_p)$. 
Let $W_{\BQ_p}\subset G_{\BQ_p}$ be the Weil group of $\BQ_p$ and $$\rec_{\BQ_p}: \BQ_p^\times \simeq W_{\BQ_p}^{\rm ab}$$ be the reciprocity map of local class field theory normalized so that $\rec_{\BQ_p}(p)$ is a lift of the geometric Frobenius ${\rm Fr}_p \in G_{\BF_p}$. Let $I_p \subset W_{\BQ_p}$ be the inertia subgroup. 

Let $T$ be a continuous $R$-representation of $G_{\BQ_p}$ of rank $r_T$, where  
 $R$ is a commutative topological $\BZ_p$-algebra 
satisfying either of the following:

\begin{enumerate}
 \item[i)] $R$ is a $\mathrm{Jac}(R)$-adically complete Noetherian semi-local ring such that 
 $R/\mathrm{Jac}(R)$ is a finite ring.
 \item[ii)] $R$ is a finite product of finite extensions of $\BQ_p$. 
\end{enumerate}
In the second case we often denote $R$ by $L$. Let $T^*(1) := \Hom_{R}(T,R)(1)$ denote the Tate dual of $T$.

For an $R$-representation $T$ of $G_{\mathbb{Q}_p}$, 
denote the complex of continuous cochains of $G_{\mathbb{Q}_p}$ with values in $T$ by $\mathrm{C}_{\mathrm{cont}}^{\bullet}(G_{\mathbb{Q}_p}, T)$, i.e.
 $$\mathrm{C}_{\mathrm{cont}}^i(G_{\mathbb{Q}_p}, T):=\{c:G_{\mathbb{Q}_p}^{i}\rightarrow T | \text{ continuous maps}\}$$ for $i \in \BZ_{\geqq 0}$ with the usual boundary maps. 
Applying the determinant functor of Knudsen--Mumford \cite{KM} to the perfect complex 
$\mathrm{C}_{\mathrm{cont}}^{\bullet}(G_{\mathbb{Q}_p}, T)$
 of $R$-modules 
 leads to the graded invertible $R$-module 
$$\Delta_{R,1}(T):=\mathrm{Det}_R(\mathrm{C}_{\mathrm{cont}}^{\bullet}(G_{\mathbb{Q}_p},T)).$$
It is is of degree $-r_T$ 
by the Euler--Poincar\'e formula. 

For $a\in R^{\times}$ (resp.~$a\in \mathcal{O}_L^{\times}$ if $R=L$), put 
$$R_a:=\{x\in W(\overline{\mathbb{F}}_p)\hat{\otimes}_{\mathbb{Z}_p}R \mid (\varphi\otimes \mathrm{id}_R)(x)=(1\otimes a)\cdot x\},$$ which is an invertible $R$-module. For $T$ as above, we often regard $\mathrm{det}_R(T)$ as a continuous homomorphism $\mathrm{det}_R(T):G_{\mathbb{Q}_p}^{\mathrm{ab}}\rightarrow R^{\times}$. 
Define a constant 
$$a(T):=\mathrm{det}_R(T)(\mathrm{rec}_{\mathbb{Q}_p}(p))\in R^{\times},$$ and 
a graded invertible $R$-module 
$$\Delta_{R,2}(T):=
(\mathrm{det}_R(T)\otimes_RR_{a(T)}, r_T) . $$

For a graded invertible $R$-module $\mathcal{P}=(P, r)$, put $\mathcal{P}^{\vee}:=(P^*, r)$. For a finite projective $R$-module $M$, 
define a trivialization (Zariski locally) by
$$\mathrm{det}_R(M)\otimes_R\mathrm{det}_R(M^*)\isom R : (e_1\wedge \cdots \wedge e_n)\otimes (e_n^*\wedge \cdots \wedge e_1^*)\mapsto 1$$ 
for $\{e_1, \cdots, e_{n}\}$ a (local) basis of $M$ and $\{e_1^*, \cdots, e_n^*\}$ its dual basis, by which we identify $\mathrm{det}_R(M^*)\isom (\mathrm{det}_R(M))^*$. 

\begin{defn}
For an $R$-representation $T$ of $G_{\mathbb{Q}_p}$, 
the local fundamental line $\Delta_{R}(T)$ is defined by 
$$\Delta_{R}(T):=\Delta_{R,1}(T)\boxtimes\Delta_{R,2}(T).$$
\end{defn} 
The local fundamental line is compatible with the following functorial operations. 
\begin{itemize}
\item[1)] For any continuous $\BZ_p$-algebra homomorphism 
$R\rightarrow R'$, the canonical morphism induces an $R'$-module isomorphism 
$$\Delta_{R}(T)\otimes_RR'\isom \Delta_{R'}(T\otimes_RR').$$
\item[2)] For any exact sequence $0\rightarrow T_1\rightarrow T_2\rightarrow T_3\rightarrow 0$ of 
$R$-representations of $G_{\mathbb{Q}_p}$, the canonical morphism induces an $R$-module isomorphism 
$$\Delta_R(T_2)\isom \Delta_R(T_1)\boxtimes\Delta_R(T_3).$$
\item[3)] There exists a canonical $R$-module isomorphism 
$$\Delta_R(T)\isom 
\Delta_R(T^*)^{\vee}\boxtimes(R(r_T), 0) $$
defined as the product of the following two isomorphisms
$$\Delta_{R,1}(T)\isom \Delta_{R,1}(T^*(1))^{\vee},$$ 
 induced by the Tate duality $\mathrm{C}^{\bullet}_{\mathrm{cont}}(G_{\mathbb{Q}_p}, T)\isom 
\mathbf{R}\mathrm{Hom}_R(\mathrm{C}^{\bullet}_{\mathrm{cont}}(G_{\mathbb{Q}_p}, T^*(1)), R[-2]),$ and the isomorphism
$$\Delta_{R,2}(T)\isom 
\Delta_{R,2}(T^*(1))^{\vee}\boxtimes(R(r_T), 0)$$ 

defined by 
 $$x\otimes y\mapsto [z\otimes w\mapsto y\otimes w\otimes z(x)]$$ for 
$x\in \mathrm{det}_R(T), y\in R_{a(T)}, z\in \mathrm{det}_R(T^*(1))\isom (\mathrm{det}_R(T))^*(r_T)$ and  
$w\in R_{a(T^*(1))}$. 
Here we note that $R_{a(T)}\otimes_RR_{a(T^*(1))}=R$ since $a(T)\cdot a(T^*(1))=1$.
\end{itemize}

For all pairs $(R, T)$ as above and $\zeta \in \varprojlim_{n} \mu_{p^n}(\ov{\BQ}_p)$,  
Kato's local epsilon conjecture \cite{Ka93} posits the existence of a compatible family of canonical trivializations of local fundamental lines 
$$\varepsilon_{R,\zeta}(T):\mathbf{1}_R\isom\Delta_{R}(T),$$ referred to as Kato's local epsilon isomorphisms. 
 A characterizing property is that $\varepsilon_{R,\zeta}(T)$ interpolates the de Rham epsilon isomorphisms 
$$\varepsilon^{\mathrm{dR}}_{L,\zeta}(V):\mathbf{1}_L\isom\Delta_{L}(V)$$ 
for all de Rham pairs $(R,T)=(L,V)$, 
as defined by Kato and recalled in \S\ref{ss:dR-eps} below. 

We first recall epsilon constants defined for representations of the Weil--Deligne group $'W_{\mathbb{Q}_p}$ of $\mathbb{Q}_p$.
Let $C$ be a field of characteristic zero which contains all the $p$-power roots of unity. 
For a $\mathbb{Z}_p$-basis $\zeta=\{\zeta_{p^n}\}_{n\geq 0}\in \Gamma(C, \mathbb{Z}_p(1)):=\varprojlim_{n\geqq 0}\mu_{p^n}(C)$, define an additive character 
$$\psi_{\zeta}:\mathbb{Q}_p\rightarrow C^{\times} \text{ by }
\psi_{\zeta}(\frac{1}{p^n})=\zeta_{p^n}.$$

Let $\rho=(M,\rho)$ be a smooth $C$-representation of the Weil group $W_{\mathbb{Q}_p}$, namely $M$ is a finite dimensional $C$-vector space with a $C$-linear smooth (i.e. locally constant) action $\rho$ of $W_{\mathbb{Q}_p}$. 
The theory of local constants associates to it the epsilon constant $$\varepsilon(\rho, \psi, dx)\in C^{\times}$$ 
(cf.~\cite[Thm.~6.5]{Del} and \S\ref{ss:Epsilon}), which depends on the choices of an additive character $\psi:\mathbb{Q}_p^{\times}\rightarrow C^{\times}$ and a $C$-valued 
Haar measure $dx$ on $\mathbb{Q}_p$. 
 In this article we consider this constant only for the pair $(\psi_{\zeta},dx)$ such that $\int_{\mathbb{Z}_p}dx=1$ (note that $dx$ is self-dual with respect to $\psi_{\zeta}$), and denote it by $$\varepsilon(\rho,\zeta)
:=\varepsilon(\rho,\psi_{\zeta},dx)$$ for simplicity. We note that this constant satisfies 
$f(\varepsilon(\rho,\zeta))=\varepsilon(\rho\otimes_{C,f}C',f(\zeta))\in C'$ for any field homomorphism $f : C\rightarrow C'$, and 
$\varepsilon(\rho,\zeta^a)=\mathrm{det}(\rho)(\mathrm{rec}_{\mathbb{Q}_p}(a))\varepsilon(\rho,\zeta)$ for $a\in \mathbb{Z}_p^{\times}$ by Lemma 4.3 and Theorem 6.5 (b) of \cite{Del}.

Let $M=(M,\rho,N)$ be a $C$-representation of the Weil-Deligne group $'W_{\mathbb{Q}_p}$, i.e. $\rho:=(M,\rho)$ is a smooth $C$-representation of $W_{\mathbb{Q}_p}$ with a $C$-linear endomorphism $N:M\rightarrow M$ such that 
$$\widetilde{\mathrm{Fr}_p}\circ N=p^{-1}\cdot N\circ\widetilde{\mathrm{Fr}_p}$$ for any lift $\widetilde{\mathrm{Fr}_p}\in W_{\mathbb{Q}_p}$ of the geometric 
Frobenius $\mathrm{Fr}_p\in G_{\mathbb{F}_p}$. Its epsilon constant is defined by 
$$\varepsilon(M,\zeta):=\varepsilon(\rho,\zeta)\cdot \mathrm{det}_C(-\mathrm{Fr}_p|M^{I_p}/(M^{N=0})^{I_p}).$$

\subsubsection{de Rham epsilon isomorphism}\label{ss:dR-eps}
 For any de Rham
$L$-representation $V$ of $G_{\mathbb{Q}_p}$, 
we recall Kato's definition \cite{Ka93} of de Rham $\varepsilon$-isomorphism 
$\varepsilon^{\mathrm{dR}}_{L,\zeta}(V): \mathbf{1}_L\isom \Delta_{L}(V)$.

Fix a $\mathbb{Z}_p$-basis $\zeta=\{\zeta_{p^n}\}_{n\geq 0}\in \Gamma(\overline{\mathbb{Q}}_p, \mathbb{Z}_p(1))$. 
Let $V$ be a de Rham $L$-representation of $G_{\BQ_p}$. 
By  
the $p$-adic local monodromy theorem and Fontaine's functor $D_{\mathrm{pst}}(-)$, 
we have an associated functorial Weil--Deligne $L\otimes \BQ_p^{\ur}$-representation
 $$W(V)=(W(V), \rho, N)$$ 
 (cf. ~\S\ref{ss:Gal-WD}).

Let $I_L$ denote the set of embedding of $L$ into $\overline{\mathbb{Q}}_p$.

\begin{defn}
We define the epsilon constant $\varepsilon_{L}(W(V), \zeta)$ of $W(V)$ as 
$$\varepsilon_{L}(W(V), \zeta):=(\varepsilon(W(V)_{\tau},\zeta))_{\tau\in I_L}\in (L\otimes_{\mathbb{Q}_p}\overline{\mathbb{Q}}_p)^{\times},$$
where $\varepsilon(W(V)_{\tau},\zeta)\in \overline{\mathbb{Q}}_p^{\times}$ is the epsilon constant of $W(V)_{\tau}
:=W(V)\otimes_{L\otimes \BQ_p^{\mathrm{ur}},\tau\otimes \mathrm{id}}\overline{\mathbb{Q}}_p$ 
for all $\tau$. 
\end{defn}
The following lemma seems to be well-known, but we couldn't find a published reference for it. 
Put $L_{\infty}=L\otimes_{\mathbb{Q}_p}\mathbb{Q}_p(\mu_{p^{\infty}})\subset L\otimes_{\mathbb{Q}_p}\overline{\mathbb{Q}}_p$. 
\begin{lem}\label{p-epsilon}
We have 
$$\varepsilon_{L}(W(V), \zeta)\in (L_{\infty}^{\times})^{I_p=\mathrm{det}W(V)}.$$
In particular, if $\mathrm{det}(W(V))$ is unramified, then $\varepsilon_{L}(W(V), \zeta)\in L^{\times}$. 
\end{lem}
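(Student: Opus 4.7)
The plan is to apply the two properties of the Deligne--Langlands epsilon constant recalled immediately before the lemma: (i) Galois equivariance under change of coefficients, $f(\varepsilon(\rho,\zeta)) = \varepsilon(\rho \otimes_{C,f} C', f(\zeta))$, and (ii) the transformation formula $\varepsilon(\rho, \zeta^a) = \det \rho(\rec_{\BQ_p}(a))\,\varepsilon(\rho, \zeta)$ for $a \in \BZ_p^\times$, together with an explicit description of how $1\otimes\sigma$ acts on the idempotent decomposition of $L \otimes_{\BQ_p} \overline{\BQ}_p$ for $\sigma\in G_{\BQ_p}$.

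First I would pass to the ring isomorphism $L \otimes_{\BQ_p} \overline{\BQ}_p \cong \prod_{\tau \in I_L} \overline{\BQ}_p$ via $l \otimes y \mapsto (\tau(l)y)_\tau$ and verify that $1 \otimes \sigma$ permutes the primitive idempotents by $e_\tau \mapsto e_{\sigma\tau}$, so that on coordinates $((1 \otimes \sigma)(x))_\tau = \sigma(x_{\sigma^{-1}\tau})$. Setting $x_\tau := \varepsilon(W(V)_\tau, \zeta)$, the condition $\varepsilon_L(W(V), \zeta) \in (L_\infty^\times)^{I_p = \det W(V)}$ unfolds to the coordinatewise identities
\[
\sigma(x_{\sigma^{-1}\tau}) = x_\tau \quad (\sigma \in G_{\BQ_p(\mu_{p^\infty})}), \qquad \sigma(x_{\sigma^{-1}\tau}) = \tau(\det W(V)(\sigma))\,x_\tau \quad (\sigma \in I_p),
\]
for every $\tau \in I_L$.

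The main input is the identification $W(V)_\tau \otimes_{\overline{\BQ}_p, \sigma} \overline{\BQ}_p \cong W(V)_{\sigma\tau}$ as $\overline{\BQ}_p$-valued Weil--Deligne representations of $W_{\BQ_p}$ for every $\sigma \in G_{\BQ_p}$. This is immediate for $\sigma \in I_p$, since $\sigma|_{\BQ_p^{\mathrm{ur}}} = \mathrm{id}$ and only the embedding of $L$ is affected; in general one invokes the $\varphi$-semilinear structure on $D_{\mathrm{pst}}(V)$, which furnishes a canonical isomorphism absorbing the change of $\BQ_p^{\mathrm{ur}}$-embedding. Granting this, (i) gives $\sigma(x_\tau) = \varepsilon(W(V)_{\sigma\tau}, \zeta^{\chi_\cyc(\sigma)})$, and then (ii) combined with $\det(W(V)_{\sigma\tau}) = \sigma\tau \circ \det W(V)$ yields
\[
\sigma(x_\tau) = \sigma\tau\bigl(\det W(V)(\rec_{\BQ_p}(\chi_\cyc(\sigma)))\bigr)\, x_{\sigma\tau}.
\]
Reindexing $\tau \mapsto \sigma^{-1}\tau$ and invoking local class field theory in the normalization of \S\ref{ss:Kato s-u}---which forces $\rec_{\BQ_p}(\chi_\cyc(\sigma))$ and $\sigma$ to have the same image in $I_p^{\mathrm{ab}}$---gives the desired $I_p$-transformation law, while for $\sigma \in G_{\BQ_p(\mu_{p^\infty})}$ the scalar on the right collapses because $\chi_\cyc(\sigma) = 1$, establishing $\varepsilon_L(W(V), \zeta) \in L_\infty^\times$. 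The final assertion is then immediate: if $\det W(V)$ is unramified, the $I_p$-action on $\varepsilon_L$ is trivial, which combined with the $G_{\BQ_p(\mu_{p^\infty})}$-invariance yields invariance under all of $G_{\BQ_p}$ acting on the second factor, so $\varepsilon_L \in (L \otimes_{\BQ_p} \overline{\BQ}_p)^{G_{\BQ_p}} = L$.

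The main subtlety will be justifying the base-change identification $W(V)_\tau^\sigma \cong W(V)_{\sigma\tau}$ for $\sigma$ with nontrivial restriction to $\BQ_p^{\mathrm{ur}}$, which requires a careful unwinding of the Frobenius-semilinear structure on $D_{\mathrm{pst}}(V)$, together with pinning down the precise sign in $\rec_{\BQ_p} \circ \chi_\cyc|_{I_p^{\mathrm{ab}}} = \mathrm{id}$ relative to the geometric Frobenius normalization adopted in the paper.
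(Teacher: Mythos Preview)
Your proposal is correct and follows essentially the same route as the paper: both arguments compute the $1\otimes\sigma$-action on the tuple $(\varepsilon(W(V)_\tau,\zeta))_\tau$ via the base-change property of epsilon constants and the formula $\varepsilon(\rho,\zeta^a)=\det\rho(\rec_{\BQ_p}(a))\,\varepsilon(\rho,\zeta)$, arriving at the transformation law $\sigma(\varepsilon_L)=\det W(V)(\chi_{\cyc}(\sigma))\cdot\varepsilon_L$. The paper carries this out uniformly for $\sigma\in W_{\BQ_p}$ rather than splitting into $I_p$ and $G_{\BQ_p(\mu_{p^\infty})}$, and it silently absorbs the identification $W(V)_\tau\otimes_{\overline{\BQ}_p,\sigma}\overline{\BQ}_p\cong W(V)_{\sigma\tau}$ into the phrase ``the second equality follows from the base change property'' --- so the $\varphi$-semilinear unwinding you flag as the main subtlety is precisely the point the paper elides; your instinct to make it explicit is sound but not a departure in strategy.
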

\begin{proof}
We first note that the image of the character $\mathrm{det}(W(V))$ of $\mathbb{Q}_p^{\times}\isom W_{\mathbb{Q}_p}^{\mathrm{ab}}$ is a subgroup of $L^\times \subset (L\otimes_{\mathbb{Q}_p}\mathbb{Q}_p^{\mathrm{ur}})^{\times}$, and so the notation $(L_{\infty}^{\times})^{I_p=\mathrm{det}W(V)}$ in the lemma is well defined. 
Indeed one has $\mathrm{det}(W(V))=W(\mathrm{det}(V))$ and any de Rham character $\chi : G_{\mathbb{Q}_p}\rightarrow L^{\times}$ (e.g. $\mathrm{det}(V)$) is of the form 
$\chi=\chi_{\mathrm{cyc}}^k\mu_{\lambda}\eta$ for $k\in \mathbb{Z}$, $\mu_{\lambda}$ the unramified character such that $\mu_{\lambda}(\mathrm{Frob}_p)=\lambda\in L^{\times}$, $\eta : \Gamma\isom \mathbb{Z}_p^{\times}\rightarrow L^{\times}$ a finite order character. So $W(\chi)=\mu_{\lambda}\eta\cdot |-|_p^k$, and the image is clearly a subgroup of $L^{\times}$.

As for the lemma, it suffices to show 
$$\sigma(\varepsilon_{L}(W(V), \zeta))=\mathrm{det}(W(V))(\chi_{\mathrm{cyc}}(\sigma))\varepsilon_{L}(W(V), \zeta)$$
 for any $\sigma\in W_{\mathbb{Q}_p}$. 
 Note that 
\begin{multline*}
\sigma((\varepsilon(W(V)_{\tau},\zeta))_{\tau\in I_L})=(\sigma(\varepsilon(W(V)_{\sigma^{-1}\circ\tau},\zeta)))_{\tau\in I_L}\\
=(\varepsilon(W(V)_{\sigma\circ\sigma^{-1}\circ\tau},\sigma(\zeta)))_{\tau\in I_L}
=(\varepsilon(W(V)_{\tau},\zeta^{\chi_{\mathrm{cyc}}(\sigma)}))_{\tau\in I_L},
\end{multline*}
where the second equality follows from the base change property $f(\varepsilon(\rho,\zeta))=\varepsilon(\rho\otimes_{C,f}C',f(\zeta))\in C'$ for any field homomorphism $f : C\rightarrow C'$. 
By the formula $\varepsilon(\rho,\zeta^a)=\mathrm{det}(\rho)(\mathrm{rec}_{\mathbb{Q}_p}(a))\varepsilon(\rho,\zeta)$ for $a\in \mathbb{Z}_p^{\times}$, 
we obtain 
$$\varepsilon(W(V)_{\tau},\zeta^{\chi_{\mathrm{cyc}}(\sigma)})=\mathrm{det}(W(V)_{\tau})(\chi_{\mathrm{cyc}}(\sigma))\varepsilon(W(V)_{\tau},\zeta)
=\tau(\mathrm{det}(W(V))(\chi_{\mathrm{cyc}}(\sigma)))\varepsilon(W(V)_{\tau},\zeta).$$
Therefore, it follows that  
$$(\varepsilon(W(V)_{\tau},\zeta^{\chi_{\mathrm{cyc}}(\sigma)}))_{\tau\in I_L}
=\mathrm{det}(W(V))(\chi_{\mathrm{cyc}}(\sigma))(\varepsilon(W(V)_{\tau},\zeta))_{\tau\in I_L}.$$

\end{proof}

Put 
$$
D_{\mathrm{crys}}(V):=(\mathbf{B}_{\mathrm{crys}}\otimes_{\mathbb{Q}_p}V)^{G_{\mathbb{Q}_p}}
$$ on which the Frobenius $\varphi$ naturally acts, 
and $$D_{\mathrm{dR}}(V):=(\mathbf{B}_{\mathrm{dR}}\otimes_{\mathbb{Q}_p}V)^{G_{\mathbb{Q}_p}}, \ \ \mathrm{Fil}^iD_{\mathrm{dR}}(V):=(t^i\mathbf{B}^+_{\mathrm{dR}}\otimes_{\mathbb{Q}_p}V)^{G_{\mathbb{Q}_p}}\ \text{ and }\ t_V:=D_{\mathrm{dR}}(V)/\mathrm{Fil}^0D_{\mathrm{dR}}(V).$$

Based on the above preliminaries, first define an isomorphism 
$$\theta_{L}(V):\mathbf{1}_L\isom \Delta_{L,1}(V)\boxtimes\mathrm{Det}_L(D_{\mathrm{dR}}(V))$$
as the one induced by the following exact sequence of $L$-vector spaces
\begin{multline}\label{exp}
0\rightarrow \mathrm{H}^0(\mathbb{Q}_p, V)\rightarrow D_{\mathrm{crys}}(V)\xrightarrow{(a)}
D_{\mathrm{crys}}(V)\oplus t_V\xrightarrow{(b)}\mathrm{H}^1(\mathbb{Q}_p, V)\\
\xrightarrow{(c)}D_{\mathrm{crys}}(V^*)^{*}\oplus \mathrm{Fil}^0D_{\mathrm{dR}}(V)\xrightarrow{(d)}D_{\mathrm{crys}}(V^{*})^{*}\rightarrow \mathrm{H}^2(\mathbb{Q}_p, V)\rightarrow 0.
\end{multline}
Here the map (a) is the sum of $1-\varphi:D_{\mathrm{crys}}(V)\rightarrow D_{\mathrm{crys}}(V)$ and  the canonical map $D_{\mathrm{crys}}(V)\rightarrow t_V$, and the maps 
(b) and (c) are defined by using the Bloch--Kato's exponential map and its dual, 
and the map (d) is the dual of (a) for $V^*(1)$ (see \cites{Ka93,FK,Narank1} for the precise definition).

We set $t_{\zeta}:=\mathrm{log}[\zeta (\bmod p)]\in \mathbf{B}^+_{\mathrm{dR}}$ and $h_M:=\sum_{r\in \mathbb{Z}}r\cdot \mathrm{dim}_L\mathrm{gr}^{-r}D_{\mathrm{dR}}(V)$. By Lemma \ref{p-epsilon}, the map $$L_{a_p(T)}\otimes_L\mathrm{det}_L(V)\isom  \mathbf{B}_{\mathrm{dR}}\otimes_{\mathbb{Q}_p}\mathrm{det}(V): x\mapsto \frac{1}{\varepsilon_L(W(V),\zeta)}\cdot\frac{1}{t_{\zeta}^{h_V}}\cdot x ,$$ induces an isomorphism $$L_{a_p(T)}\otimes_L\mathrm{det}_L(V)\isom \mathrm{det}_L(D_{\mathrm{dR}}(V)).$$
We denote its inverse by 
$$\theta_{\mathrm{dR},L}(V,\zeta):\mathrm{Det}_L(D_{\mathrm{dR}}(V))\isom \Delta_{L,2}(V).$$ 

\begin{defn}
The de Rham $\varepsilon$-isomorphism 
$$\varepsilon_{L,\zeta}^{\mathrm{dR}}(V):\mathbf{1}_L\isom\Delta_L(V)$$ is defined as the following composite
\begin{multline*}
\varepsilon_{L,\zeta}^{\mathrm{dR}}(V):\mathbf{1}_L\xrightarrow{\Gamma(V)\cdot\theta_L(V)}
\Delta_{L,1}(V)\boxtimes \mathrm{Det}_L(D_{\mathrm{dR}}(V))
\xrightarrow{\mathrm{id}\boxtimes \theta_{\mathrm{dR},L}(V,\zeta)}
\Delta_{L,1}(V)\boxtimes\Delta_{L,2}(V)=\Delta_L(V).
\end{multline*}
\end{defn}
\subsubsection{The conjecture}
In the early 90's Kato proposed the following fundamental (cf.~\cite[Conj.~1.8]{Ka93}, \cite[Conj.~3.4.3]{FK}, and \cite[Conj.~3.8]{Narank1}).

\begin{conj}[Kato's local epsilon conjecture]\label{conj, epsilon} 
For all triples $(R,T,\zeta)$ as in \S\ref{ss:Kato s-u} such that $T$ is an $R$-representation of $G_{\mathbb{Q}_p}$ and $\zeta$ a $\mathbb{Z}_p$-basis of $\Gamma(\overline{\mathbb{Q}}_p,\mathbb{Z}_p(1))$, 
there exists a unique compatible family of isomorphisms
$$\varepsilon_{R,\zeta}(T):\mathbf{1}_R\isom\Delta_R(T), $$ 
 satisfying the following.
\begin{itemize}
\item[1)]For any continuous $\mathbb{Z}_p$-algebra homomorphism $R\rightarrow R'$, we have 
$$\varepsilon_{R,\zeta}(T)\otimes\mathrm{id}_{R'}=\varepsilon_{R',\zeta}(T\otimes_{R}R')$$ under the canonical isomorphism $$\Delta_{R}(T)\otimes_RR'\isom\Delta_{R'}(T\otimes_RR').$$
\item[2)]For any exact sequence $0\rightarrow T_1\rightarrow T_2\rightarrow T_3\rightarrow 0$ of $R$-representations of $G_{\mathbb{Q}_p}$, we have 
$$\varepsilon_{R,\zeta}(T_2)=\varepsilon_{R,\zeta}(T_1)\boxtimes\varepsilon_{R,\zeta}(T_3)$$ under the canonical isomorphism $$\Delta_R(T_2)\isom\Delta_R(T_1)\boxtimes\Delta_R(T_3).$$
\item[3)]For any $a\in \mathbb{Z}_p^{\times}$, we have
$$\varepsilon_{R,\zeta^a}(T)=\mathrm{det}_R(T)(\mathrm{rec}_{\mathbb{Q}_p}(a))\cdot \varepsilon_{R,\zeta}(T).$$
\item[4)] We have the following commutative diagram 
\[
\xymatrix{
\Delta_{R}(T) \ar[rr]^-{\mathrm{can}} &&  \Delta_{R}(T^*(1))^{\vee}\boxtimes(R(r_T),0) \ar[d]^-{\varepsilon_{R,\zeta}(T^*(1))^{\vee}\boxtimes[ \mathbf{e}_{r_T}\mapsto 1]}.\\
\ar[u]^-{\varepsilon_{R,\zeta}(T)}  \mathbf{1}_R \ar[rr]_-{\mathrm{det}_R(T)(\mathrm{rec}_{\mathbb{Q}_p}(-1))\cdot\mathrm{can}} && \mathbf{1}_R\boxtimes {\bf 1}_R  \\
}
\]

\item[5)]For any triple $(L,V,\zeta)$ such that $V$ is de Rham, 
we have
$$\varepsilon_{L,\zeta}(V)=\varepsilon_{L,\zeta}^{\mathrm{dR}}(V).$$
\end{itemize}

\end{conj}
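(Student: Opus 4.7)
Properties (1)--(5) characterize the family uniquely, so the core task is existence. Uniqueness is essentially formal: property (5) pins down the isomorphism on de Rham specializations, and (1) propagates this datum over $\mathrm{Spec}(R)$ thanks to Zariski density of de Rham specializations in suitable local deformation rings. I therefore focus on existence, proceeding by induction on rank via property (2). The base case is $T=R(\chi)$ for a continuous character $\chi : G_{\BQ_p}\to R^{\times}$: here $\Delta_R(T)$ admits an explicit description in terms of Iwasawa cohomology, and one writes down its trivialization using a Perrin-Riou-style big exponential together with the interpolation of epsilon factors of the finite-order twists $\chi\eta$. Compatibility with (5) then reduces to Kato's explicit reciprocity law, as carried out in \cite{Narank1}.

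\textbf{Rank two via $p$-adic Langlands.} For rank two I would invoke Colmez's $p$-adic local Langlands correspondence, which attaches to $(R,T)$ a $\GL_2(\BQ_p)$-equivariant module $\Pi(D(T))$ together with the sublattice $D^{\natural}\boxtimes_{\delta_D}\mathbf{P}^1$ and its $w$-involution. The epsilon isomorphism is then distilled from the intrinsic structures of this automorphic object, following Nakamura-Jacinto. Compatibility with base change (1) rests on Pa\v{s}k\=unas's theorem that the correspondence deforms flatly in $R$; property (2) is verified for rank-two extensions by comparison with the rank-one construction; properties (3)--(4) follow from intrinsic symmetries of Colmez's functor, in particular the interaction of the $w$-involution with duality and with the twist $\zeta\mapsto \zeta^{a}$.

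\textbf{Main obstacle.} The technical heart is property (5): matching the abstract automorphic construction of $\varepsilon_{R,\zeta}(T)$ with the explicit de Rham recipe built from $\varepsilon_L(W(V),\zeta)$, the $\Gamma$-factor $\Gamma(V)$, and the Bloch--Kato exponential. For trianguline de Rham $V$ both sides admit explicit computation in Colmez's Kirillov model; for non-trianguline de Rham $V$ one would reduce to the trianguline case by Zariski density of signed crystalline specializations (cf.~Theorem~\ref{thm, density sgn crystalline}) together with continuity of the two sides. In rank $\geq 3$ no analogue of the $p$-adic local Langlands correspondence is available for non-trianguline representations, so the conjecture is presently within reach only in rank $\leq 2$; this gap is the principal obstruction to the full conjecture beyond the rank-two setting used in the present paper.
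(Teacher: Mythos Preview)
The statement you are asked to prove is a \emph{conjecture}, not a theorem; the paper does not prove it and does not claim to. What the paper actually establishes (by citation) is Theorem~\ref{thm, rank two epsilon}: the conjecture holds for all pairs $(R,T)$ with $\mathrm{rank}_R(T)\leq 2$, with the rank-one case due to Kato and the rank-two case due to Nakamura and Jacinto. Your proposal is not a proof of the conjecture but rather a sketch of the proof of that rank $\leq 2$ theorem, and you explicitly acknowledge at the end that rank $\geq 3$ is out of reach. So the proposal does not prove the statement as written.

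That said, your outline of the rank $\leq 2$ argument is broadly faithful to the literature the paper cites: rank one via Iwasawa cohomology and explicit reciprocity, rank two via Colmez's functor and the $w$-operator, with Zariski density of crystalline points (now unconditionally available by \cite{BIP}) used to propagate from de Rham specializations. One technical point deserves care: your treatment of property (5) for a \emph{fixed} non-trianguline de Rham $V$ proposes to ``reduce to the trianguline case by Zariski density,'' but a single $V$ is not a family, and the de Rham $\varepsilon$-isomorphism is only defined on the de Rham locus. Nakamura's actual argument does not proceed this way: he computes the abstract isomorphism directly in Colmez's Kirillov model and compares it by hand with the de Rham recipe for Hodge--Tate weights in the range $k_1\leq 0<k_2$, with Jacinto handling the remaining weights. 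Your density argument would require continuity of both sides over a suitable de Rham family containing $V$, which is a separate (and nontrivial) input.
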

\begin{remark} The above property 4) corrects a typo
in the property (4) of \cite[Conj.~2.1]{NaKato}: 
 on the left vertical map therein, the author wrote 
$\varepsilon_{R,-\zeta}(T)(:=\varepsilon_{R,\zeta^{-1}}(T))$ instead of $\varepsilon_{R,\zeta}(T)$.  
We also note that the commutative diagram in property 4) is the same as the following 
\[
\xymatrix{
\Delta_{R}(T) \ar[rr]^-{\mathrm{can}} &&  \Delta_{R}(T^*(1))^{\vee}\boxtimes(R(r_T),0) \ar[d]^-{\varepsilon_{R,\zeta}(T^*(1))^{\vee}\boxtimes[ \mathbf{e}_{r_T}\mapsto 1]}.\\
\ar[u]^-{\varepsilon_{R,\zeta^{-1}}(T)}  \mathbf{1}_R \ar[rr]_-{\mathrm{can}} && \mathbf{1}_R\boxtimes\mathbf{1}_R  \\
}
\]
in view of the property 3).
\end{remark}
\subsubsection{Results} The following is a main result  of \cites{Ka93, NaKato,J}. 
\begin{thm}\label{thm, rank two epsilon}\noindent
\begin{itemize} 
\item[i)] For any odd prime $p$, Kato's local epsilon Conjecture \ref{conj, epsilon} is true for all pairs $(R, T)$ such that $\mathrm{rank}_R(T)\leq 2$. Namely,  
there exists a unique compatible family of isomorphisms
$$\varepsilon_{R,\zeta}(T):\mathbf{1}_R\isom\Delta_R(T)$$ 
satisfying the properties 1), 3), 4), 5) of Conjecture \ref{conj, epsilon} and also the property 2) in the rank two case 
with $T_1$ and $T_3$ of rank one. 
\item[ii)] If $p=2$, then a similar result holds after inverting $p$, i.e. we have compatible families of isomorphisms 
$$\varepsilon_{R[1/p],\zeta}(T\otimes_{\mathbb{Z}_p}\mathbb{Q}_p):\mathbf{1}_{R[1/p]}\isom\Delta_R(T)\otimes_{\mathbb{Z}_p}\mathbb{Q}_p$$ satisfying 
 the properties 1), 3), 4), 5) of Conjecture~\ref{conj, epsilon}.
 \end{itemize}
\end{thm}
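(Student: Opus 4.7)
The plan is to reduce to a universal situation and leverage the $p$-adic local Langlands correspondence for $\GL_2(\BQ_p)$, in the spirit sketched in Section 1. In rank one the statement is classical (due to Kato and Yasuda): one builds $\varepsilon_{R,\zeta}(R(\delta))$ explicitly from Coleman/Coates--Wiles type constructions along the cyclotomic tower, and uniqueness is forced by property 5) together with density of de Rham characters among the $\CO[\![\BZ_p^{\times}]\!]$-characters. So I focus on the rank two part.

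By the base change property 1) it suffices to construct $\varepsilon_{R,\zeta}(T)$ over a universal family covering rank two representations. I would take the universal framed deformation $(R_{\overline{\rho}}^{\Box},\BT_{\overline{\rho}}^{\Box})$ of each residual $\overline{\rho} : G_{\BQ_p}\to \GL_2(\overline{\BF}_p)$. For reducible $\overline{\rho}$ the $\varepsilon$-isomorphism can be assembled on each graded piece via the rank one theory, glued by property 2) on the dense locus of upper-triangular deformations, and extended by continuity. For irreducible $\overline{\rho}$ the $p$-adic local Langlands input is indispensable: I associate to $T$ its \'etale $(\varphi,\Gamma)$-module $D(T)$, and via Colmez's functor the $R[\GL_2(\BQ_p)]$-module $\Pi(D)$ together with the sublattice $D^\natural\boxtimes_{\delta_D}\mathbf{P}^1$. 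The trivialization of $\Delta_{R,1}(T)\boxtimes \Delta_{R,2}(T)$ is then built from (i) the canonical identification $D^{\psi=1}\cong H^1_{\mathrm{Iw}}(\BQ_p,T)$, producing a basis of $\mathrm{Det}_R\,\mathrm{C}^\bullet_{\mathrm{cont}}(G_{\BQ_p},T)$, together with (ii) $\Delta_{R,2}(T)$, which is controlled by $\det_R T$ and the central character of $\Pi(D)$. The Colmez involution $w$ is instrumental for property 4), since it incarnates the local functional equation on the automorphic side.

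The main obstacle is property 5), namely compatibility with the de Rham $\varepsilon$-isomorphism at every de Rham specialization. My strategy is a density argument: since crystalline specializations are Zariski dense in $R_{\overline{\rho}}^{\Box}$ (cf.~Theorem \ref{thm, density sgn crystalline} and Appendix \ref{s:appendix}), it suffices to check property 5) at crystalline points and establish continuity of the constructed $\varepsilon_{R,\zeta}$ along $(\varphi,\Gamma)$-module families. Property 5) will then propagate to every de Rham specialization, since any de Rham $V$ lies in a trianguline family whose crystalline members are Zariski dense. At a crystalline $V$ the comparison reduces to an explicit matching between the automorphic side (the action of $w$ and $\alpha_p$ on $\Pi(D)$) and the $p$-adic Hodge theoretic side (Bloch--Kato exponential and the unramified Weil--Deligne $\varepsilon$-factor), which I would derive by combining the rank one trianguline case with a twisting and induction-on-ramification argument over the trianguline parameters. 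This is where the substantive input of Nakamura and Jacinto enters.

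For part ii), the prime $p=2$, the same strategy applies after inverting $p$: the rank one case is known rationally, the Zariski density of crystalline points on the generic fibre of $R_{\overline{\rho}}^{\Box}$ persists, and one avoids the integral subtleties of $p$-adic local Langlands at $p=2$. Uniqueness of the family $\varepsilon_{R,\zeta}(T)$ (for odd $p$) and of $\varepsilon_{R[1/p],\zeta}(T\otimes_{\BZ_p}\BQ_p)$ (for $p=2$) then follows from property 5) together with Zariski density of de Rham specializations: any two compatible families differ by an element of $R^\times$ (resp.\ $R[1/p]^\times$) that reduces to $1$ under every de Rham specialization, hence is $1$.
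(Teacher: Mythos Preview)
The paper does not prove this theorem; it cites the literature (Kato for rank one, the third-named author \cite{NaKato} and Jacinto \cite{J} for rank two, with B\"ockle--Iyengar--Pa\v{s}k\=unas \cite{BIP} removing the technical hypotheses on deformation rings) and records in a remark which ingredients each reference supplies. Your sketch does capture the correct architecture of \cite{NaKato}: reduce to the universal framed deformation, build $\varepsilon_{R,\zeta}$ via Colmez's $D^{\natural}\boxtimes_{\delta_D}\mathbf{P}^1$ and the operator $w$, and use Zariski density of crystalline points.

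Two steps, however, do not work as written. First, your treatment of reducible $\overline{\rho}$ is off: a generic deformation of a reducible residual representation is not itself reducible, so one cannot build $\varepsilon_{R,\zeta}$ over $R_{\overline{\rho}}^{\Box}$ by gluing rank-one pieces on an upper-triangular locus and extending by continuity. The $p$-adic Langlands construction via $w$ is applied uniformly to all $\overline{\rho}$; property 2) is then a separate compatibility check, not the source of the construction.

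Second, and more seriously, your propagation of property 5) from crystalline to arbitrary de Rham points via trianguline families fails: a de Rham $V$ whose associated Weil--Deligne representation is irreducible (the supercuspidal case) is \emph{not} trianguline and does not sit in a trianguline family with dense crystalline members. In \cite{NaKato} the verification of property 5) at de Rham points is carried out directly, via Colmez's $p$-adic Kirillov model and Emerton's local-global compatibility, linking the automorphic side to the Bloch--Kato exponential and the $\varepsilon$-constant of $W(V)$; this is precisely what the paper highlights in \S1.5 and in the remark after the theorem (with \cite{NaKato} handling Hodge--Tate weights $k_1\le 0<1\le k_2$ and \cite{J} the remaining cases). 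Density of crystalline points underlies the construction and uniqueness over the universal ring, but it does not let you bypass the de Rham comparison.
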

\begin{remark} We describe some of the prior results towards Conjecture~\ref{conj, epsilon} as in Theorem~\ref{thm, rank two epsilon}. 
\begin{itemize}
\item[i)] The rank one case was originally proved by Kato \cite{Ka93} (see also \cites{V,Narank1}). 
\item[ii)] The rank two case was proved by the third-named author \cite{NaKato} for almost all pairs $(R,T)$, and completed by Jacinto \cite{J} building on \cite{NaKato}. 
More precisely, the isomorphism $\varepsilon_{R,\zeta}(T):\mathbf{1}_R\isom\Delta_R(T)$ was defined in \cite{NaKato} for all 
pairs $(R, T)$ such that $\mathrm{rank}_R(T)=2$ under the assumption: the universal (framed or versal) deformation 
ring corresponding to $\overline{T}=T\otimes_RR/\mathfrak{m}_R$ is reduced and $p$-torsion free for $R$ local, and contains a Zariski dense subset consisting of crystalline points in every irreducible component. 
These properties are established for all residual representations $\overline{T}$ by the recent work of B\"{o}ckle--Iyenger--Pa\v{s}k\=unas \cite{BIP}, hence we obtain the isomorphism $\varepsilon_{R,\zeta}(T):\mathbf{1}_R\isom\Delta_R(T)$ for all  pairs $(R, T)$ such that $\mathrm{rank}_R(T)=2$. 
\item[iii)] In \cite{NaKato}, property 5) was proved when the 
Hodge--Tate weights of $V$ are $(k_1, k_2)$ satisfying $k_1\leq 0$ and $k_2\geq 1$. Subsequently, the general case 
was 
established in \cite{J}. For our application to the local sign decomposition,  
 the former suffices. 
\end{itemize}
\end{remark}

\subsection{Definition of  $w_T$ via Kato's local epsilon isomorphism}\label{ss:lsd via Kato's eps}
In this subsection we define an involution $w_T : H^1(\mathbb{Q}_p, T)\rightarrow H^1(\mathbb{Q}_p, T)$ for 
generic symplectic self-dual pairs $(R, T)$ of rank two using 
Kato's epsilon
 isomorphism $\varepsilon_{R,\zeta}(T) : \mathbf{1}_R\isom \Delta_R(T)$ as in Theorem~\ref{thm, rank two epsilon}. 
 In the anomalous case (including the prime $p=2$), we similarly define an involution $w_{T\otimes_{\mathbb{Z}_p}\mathbb{Q}_p} : H^1(\mathbb{Q}_p, T\otimes_{\mathbb{Z}_p}\mathbb{Q}_p)\rightarrow H^1(\mathbb{Q}_p, T\otimes_{\mathbb{Z}_p}\mathbb{Q}_p)$ via the epsilon 
 isomorphism $\varepsilon_{R[1/p],\zeta}(T\otimes_{\mathbb{Z}_p}\mathbb{Q}_p):\mathbf{1}_{R[1/p]}\isom\Delta_R(T)\otimes_{\mathbb{Z}_p}\mathbb{Q}_p.$
\subsubsection{General rank two case}\label{subsubsection, rank 2 kato}
Let $(R, T)$ be a generic rank two representation of $G_{\BQ_p}$, not necessarily symplectic self-dual.

Recall that  $H^i(\mathbb{Q}_p, T)=0$ for $i\in\{0,2\}$ and 
 $H^1(\mathbb{Q}_p, T)$ is a free $R$-module of rank two. Thus, we have
 $\Delta_{R,1}(T)=(\wedge^2 H^1(\mathbb{Q}_p, T))^{-1}$ as an underlying invertible $R$-module, and to give an 
 $R$-module 
 isomorphism 
 $$\mathbf{1}_R\isom \Delta_R(T)=\Delta_{R,1}(T)\otimes_R\Delta_{R,2}(T)$$ is equivalent to give an $R$-module
 isomorphism 
 $$\wedge^2 H^1(\mathbb{Q}_p, T)\isom \Delta_{R,2}(T).$$
 Moreover, the latter is equivalent to giving an $R$-bilinear skew-symmetric perfect pairing 
 $$H^1(\mathbb{Q}_p, T)\times H^1(\mathbb{Q}_p, T)\rightarrow \Delta_{R,2}(T).$$
 
 For a fixed $\mathbb{Z}_p$-base $\zeta\in \mathbb{Z}_p(1)$, 
 we denote by 
 $$[ \ , \ ]_{T, \zeta} : H^1(\mathbb{Q}_p, T)\times H^1(\mathbb{Q}_p, T)\rightarrow \Delta_{R,2}(T)$$ 
 the pairing corresponding to the local epsilon isomorphism 
 $$\varepsilon_{R, \zeta}(T) : \mathbf{1}_R\isom \Delta_{R}(T)$$
 as in Theorem \ref{thm, rank two epsilon}. 
 We also define 
 $$\{\ , \}_T : H^1(\mathbb{Q}_p, T^*(1))\otimes_{R}\Delta_{R,2}(T)\times 
 H^1(\mathbb{Q}_p, T)\rightarrow \Delta_{R,2}(T) : (x\otimes y, z)\mapsto \{x, z\}_T\cdot y.$$
 
\begin{lem}
 There exists an $R$-module morphism $$w_{T,\zeta} : H^1(\mathbb{Q}_p, T)\rightarrow H^1(\mathbb{Q}_p, T^*(1))\otimes_{R}\Delta_{R,2}(T)$$ uniquely characterized by the equality $$\{w_{T,\zeta}(x), y\}_T=[x, y]_{T,\zeta}\in \Delta_{R,2}(T)$$
 for any  $x, y\in H^1(\mathbb{Q}_p, T)$. 
Moreover, for any $a\in \mathbb{Z}_p^{\times}$,  we have 
 $$w_{T, \zeta^a}=\mathrm{det}_R(T)(\sigma_a) w_{T, \zeta}$$ 
\end{lem}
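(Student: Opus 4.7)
The plan is to (i) build $w_{T,\zeta}$ by transposing the bilinear form $[\cdot,\cdot]_{T,\zeta}$ across a tensor-twisted Tate duality, and (ii) read off the scaling in $\zeta$ directly from axiom~3) of Kato's epsilon conjecture. The argument is essentially formal, relying on the perfectness of the Tate pairing in the generic setting together with the $\zeta$-equivariance built into $\varepsilon_{R,\zeta}(T)$; the only minor subtlety, and the main point to be careful about, is keeping track of the local class field theory normalization fixed in \S\ref{ss:Kato s-u}.

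For existence and uniqueness, I would reinterpret both pairings as $R$-linear maps into a common target. By Proposition~\ref{a}(6), the Tate pairing $\{\cdot,\cdot\}_T$ is a perfect pairing of finitely generated projective $R$-modules, so tensoring with the invertible $R$-module $\Delta_{R,2}(T)$ gives a canonical $R$-linear isomorphism
\[
\Phi:H^1(\mathbb{Q}_p,T^*(1))\otimes_R\Delta_{R,2}(T)\;\xrightarrow{\sim}\;\mathrm{Hom}_R\bigl(H^1(\mathbb{Q}_p,T),\Delta_{R,2}(T)\bigr),
\]
sending $z\otimes y$ to $x\mapsto\{z,x\}_T\cdot y$. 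Meanwhile, $[\cdot,\cdot]_{T,\zeta}$ yields a map $x\mapsto[x,-]_{T,\zeta}$ landing in the same target. Setting $w_{T,\zeta}:=\Phi^{-1}\circ\bigl(x\mapsto[x,-]_{T,\zeta}\bigr)$ produces the unique $R$-linear map with the stated adjointness property $\{w_{T,\zeta}(x),y\}_T=[x,y]_{T,\zeta}$.

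For the scaling, axiom~3) of Conjecture~\ref{conj, epsilon} (which holds in rank two by Theorem~\ref{thm, rank two epsilon}) gives
\[
\varepsilon_{R,\zeta^a}(T)=\mathrm{det}_R(T)(\mathrm{rec}_{\mathbb{Q}_p}(a))\cdot\varepsilon_{R,\zeta}(T),
\]
which, through the identification in \S\ref{subsubsection, rank 2 kato} between the epsilon isomorphism and the pairing $[\cdot,\cdot]_{T,\zeta}$, becomes $[\cdot,\cdot]_{T,\zeta^a}=\mathrm{det}_R(T)(\mathrm{rec}_{\mathbb{Q}_p}(a))\cdot[\cdot,\cdot]_{T,\zeta}$. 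Under the normalization of \S\ref{ss:Kato s-u} where $\mathrm{rec}_{\mathbb{Q}_p}(p)$ is a lift of geometric Frobenius, the restriction of $\mathrm{rec}_{\mathbb{Q}_p}(a)$ for $a\in\mathbb{Z}_p^\times$ to $\Gamma$ matches $\sigma_a$, so $\mathrm{det}_R(T)(\mathrm{rec}_{\mathbb{Q}_p}(a))=\mathrm{det}_R(T)(\sigma_a)$. The uniqueness of $w_{T,\zeta}$ established in the previous step then yields the claimed formula $w_{T,\zeta^a}=\mathrm{det}_R(T)(\sigma_a)\cdot w_{T,\zeta}$.
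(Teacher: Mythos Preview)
Your proposal is correct and follows essentially the same approach as the paper: existence and uniqueness come from the perfectness of the Tate pairing in Proposition~\ref{a}(6), and the scaling in $\zeta$ follows from property~3) of Conjecture~\ref{conj, epsilon} (valid by Theorem~\ref{thm, rank two epsilon}). You have simply made explicit the isomorphism $\Phi$ and the passage from $\mathrm{rec}_{\mathbb{Q}_p}(a)$ to $\sigma_a$ that the paper leaves implicit.
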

\begin{proof}
The existence is a simple consequence of part 6) of Proposition \ref{a}. 
The rest follows from property 3) of Conjecture \ref{conj, epsilon}
 since $\varepsilon_{R, \zeta^a}(T)=\mathrm{det}_R(T)(\sigma_a)\varepsilon_{R, \zeta}(T)$.   
\end{proof}

 Applying this construction to $T^*(1)$ for $\zeta^{-1}$ instead of $\zeta$, we obtain an $R$-module morphism 
 $$w_{T^*(1),\zeta^{-1}} : H^1(\mathbb{Q}_p, T^*(1))\rightarrow H^1(\mathbb{Q}_p, T)\otimes_R\Delta_{R,2}(T^*(1)).$$
 The following compatibility for $w_{T,\zeta}$ and $w_{T^*(1),\zeta^{-1}}$ involves a canonical isomorphism 
 $$\mathrm{can} : \Delta_{R,2}(T^*(1))\otimes \Delta_{R,2}(T)\isom R(2)$$
 defined by the tensor products of the canonical isomorphism 
 $\mathrm{det}_R(T^*(1))\otimes_R\mathrm{det}_R(T)\isom (\mathrm{det}_R(T))^*(2)\otimes_R\mathrm{det}_R(T)\isom R(2)$ and $R_{a(T^*(1))}\otimes_RR_{a(T)}\isom R : y\otimes z\mapsto yz$.

 \begin{lem}\label{inv}
 The composite
 \begin{multline*}
 H^1(\mathbb{Q}_p, T)\xrightarrow{w_{T,\zeta}}H^1(\mathbb{Q}_p, T^*(1))\otimes_R\Delta_{R,2}(T)\\
 \xrightarrow{-w_{T^*(1),\zeta^{-1}}\otimes \mathrm{id}_{\Delta_{R,2}(T)}}
 H^1(\mathbb{Q}_p, T)\otimes_R\Delta_{R,2}(T^*(1))\otimes_R\Delta_{R,2}(T)\\
 \xrightarrow{\mathrm{id}\otimes \mathrm{can}} H^1(\mathbb{Q}_p, T)(2)
 \xrightarrow{(-)\otimes \zeta^{\otimes -2}}H^1(\mathbb{Q}_p, T)
 \end{multline*}
 is the identity.
 \end{lem}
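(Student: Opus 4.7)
The plan is to reduce the identity $\varphi = \mathrm{id}$ (where $\varphi$ denotes the four-step composite) to property (4) of Kato's local epsilon Conjecture \ref{conj, epsilon}, which is available in rank two by Theorem \ref{thm, rank two epsilon}. Since $H^{1}(\BQ_{p},T)$ is a free $R$-module of rank two and the Tate pairing is perfect (Proposition \ref{a}), it suffices to verify $\{v,\varphi(x)\}_{T}=\{v,x\}_{T}$ for every $v\in H^{1}(\BQ_{p},T^{*}(1))$. To make this transparent I would fix a basis $e_{1},e_{2}$ of $H^{1}(\BQ_{p},T)$, a Tate-dual basis $f_{1},f_{2}$ of $H^{1}(\BQ_{p},T^{*}(1))$ (so that $\{f_{i},e_{j}\}_{T}=\delta_{ij}$), and bases $d\in \Delta_{R,2}(T)$, $d^{*}\in \Delta_{R,2}(T^{*}(1))$ normalized by $\mathrm{can}(d^{*}\otimes d)=\zeta^{\otimes 2}$. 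Writing $[e_{1},e_{2}]_{T,\zeta}=a\cdot d$ and $[f_{1},f_{2}]_{T^{*}(1),\zeta^{-1}}=b\cdot d^{*}$, the defining properties of $w_{T,\zeta}$ and $w_{T^{*}(1),\zeta^{-1}}$, combined with graded commutativity $\{y,u\}_{T^{*}(1)}=-\{u,y\}_{T}$, give $w_{T,\zeta}(e_{1})=a\,f_{2}\otimes d$ and $w_{T^{*}(1),\zeta^{-1}}(f_{2})=b\,e_{1}\otimes d^{*}$. A direct unwinding of the four steps of $\varphi$ then yields $\varphi(e_{1})=-ab\cdot e_{1}$ and symmetrically $\varphi(e_{2})=-ab\cdot e_{2}$, so the lemma reduces to the scalar identity $ab=-1$.

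To establish $ab=-1$, I would invoke the duality property (4) of Kato's conjecture. By the remark following the conjecture, property (4) is equivalent to the statement that $\varepsilon_{R,\zeta^{-1}}(T)$ corresponds to $\varepsilon_{R,\zeta}(T^{*}(1))^{\vee}\boxtimes [\mathbf{e}_{r_{T}}\mapsto 1]$ under the canonical isomorphism $\Delta_{R}(T)\isom \Delta_{R}(T^{*}(1))^{\vee}\boxtimes(R(r_{T}),0)$. Translated into the language of skew-symmetric pairings, and using Tate duality $H^{1}(\BQ_{p},T^{*}(1))\isom H^{1}(\BQ_{p},T)^{\vee}$ sending $f_{i}$ to $e_{i}^{*}$, this identifies $[-,-]_{T,\zeta^{-1}}$ with the transpose of $[-,-]_{T^{*}(1),\zeta}$ and yields $a'b'=-1$, where $a'$ and $b'$ are the analogous scalars for $\zeta^{-1}$ on the $T$-side and $\zeta$ on the $T^{*}(1)$-side. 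The minus sign here is inherited from the Koszul signs in dualizing graded invertible modules of degree $r_{T}=2$ together with graded commutativity on $H^{1}\cup H^{1}$. Property (3) of the conjecture then yields $a=\det_{R}(T)(\mathrm{rec}_{\BQ_{p}}(-1))\cdot a'$ and $b=\det_{R}(T^{*}(1))(\mathrm{rec}_{\BQ_{p}}(-1))\cdot b'$, so $ab/(a'b')=(\det_{R}(T)\cdot \det_{R}(T^{*}(1)))(\mathrm{rec}_{\BQ_{p}}(-1))=\chi_{\mathrm{cyc}}^{r_{T}}(\mathrm{rec}_{\BQ_{p}}(-1))=1$, since $r_{T}=2$ and $\mathrm{rec}_{\BQ_{p}}(-1)^{2}=e$. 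Hence $ab=a'b'=-1$ as required.

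The main obstacle is the careful bookkeeping of signs and canonical identifications in translating property (4) into the scalar equality $a'b'=-1$. Four distinct sources contribute signs that must be reconciled: the explicit minus sign prefacing $w_{T^{*}(1),\zeta^{-1}}\otimes \mathrm{id}$ in the composite, the skew-symmetry of both pairings $[-,-]_{T,\zeta}$ and $[-,-]_{T^{*}(1),\zeta^{-1}}$, the graded commutativity relating $\{-,-\}_{T}$ and $\{-,-\}_{T^{*}(1)}$, and the Koszul signs arising from the duality operation on graded invertible modules of degree $r_{T}=2$ together with its interaction with the canonical pairing $\det_{R}(M)\otimes \det_{R}(M^{*})\isom R$ used in the definition of $\mathrm{can}$. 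The substance of the lemma is that these signs conspire exactly so that $\varphi=\mathrm{id}$ rather than $\varphi=-\mathrm{id}$; once this sign analysis is verified, the remaining content of the argument is routine $2\times 2$ linear algebra in the chosen bases.
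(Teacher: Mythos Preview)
Your approach is essentially the same as the paper's: fix bases, compute $w_{T,\zeta}$ and $w_{T^{*}(1),\zeta^{-1}}$ in coordinates, and reduce to a scalar identity coming from property (4) of Conjecture~\ref{conj, epsilon}. The paper normalizes its basis so that $a=1$ (choosing $e\in\Delta_{R,2}(T)$ with $[e_1,e_2]_{T,\zeta}=e$) and then asserts directly from property (4) that $[e_1^{*},e_2^{*}]_{T^{*}(1),\zeta^{-1}}=-e^{*}\zeta^{\otimes 2}$, i.e.\ $b=-1$; your detour through $(a',b')$ and property (3) is correct but unnecessary, since property (4) already links $\varepsilon_{R,\zeta}(T)$ and $\varepsilon_{R,\zeta}(T^{*}(1))$ at the \emph{same} $\zeta$ and the passage to $\zeta^{-1}$ on the $T^{*}(1)$-side is absorbed into the constant $\det_{R}(T)(\mathrm{rec}_{\BQ_p}(-1))$ appearing in that diagram.
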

 \begin{proof}
 This lemma follows from the compatibility of local epsilon isomorphism with Tate duality. 
 
 Fix a basis $\{e_1, e_2\}$ of $H^1(\mathbb{Q}_p, T)$ and $\{e\}$ of $\Delta_{R,2}(T)$ such that 
 $$[e_1, e_2]_{T,\zeta}=e.$$
 Let $\{e_1^*, e_2^*\}$ be the dual basis of $H^1(\mathbb{Q}_p, T^*(1))$ with respect to 
 the Tate pairing $\{\ , \ \}_T$, i.e. 
 $$\{e_i^*, e_j\}_{T}=\begin{cases} 0 & (i\neq j)\\ 1& (i=j)\end{cases},$$ 
 and $e^*\zeta^{\otimes 2}\in \Delta_{R, 2}(T^*(1))$ be the basis such that $$\mathrm{can}(e^*\zeta^{\otimes 2}\otimes e)=\zeta^{\otimes 2}.$$
  
 Then we have 
 $$w_{T,\zeta}(e_1)=e_2^*\otimes e, \ \ w_{T,\zeta}(e_2)=-e_1^*\otimes e$$
 since  
 $$\{e_2^*\otimes e, e_1\}_T=0=[e_1, e_1]_{T,\zeta}, \ \ \{e_2^*\otimes e, e_2\}_T=e=[e_1, e_2]_{T,\zeta}$$
 and 
 $$\{-e_1^*\otimes e, e_1\}_T=-e=[e_2, e_1]_{T,\zeta}, \ \ \{-e_1^*\otimes e , e_2\}_T=0=[e_2, e_2]_{T,\zeta}.$$
 By the property 4) of Conjecture \ref{conj, epsilon} and Theorem \ref{thm, rank two epsilon},  
 we have
 $$[e_1^*, e_2^*]_{T^*(1),\zeta^{-1}}=-e^*\zeta^{\otimes 2}.$$
 Similarly, we obtain 
 $$w_{T^*(1),\zeta^{-1}}(e^*_1)=e_2\otimes e^*\zeta^{\otimes 2}, \ \ w_{T^*(1),\zeta^{-1}}(e^*_2)=-e_1\otimes e^*\zeta^{\otimes 2}$$
 since 
 $$\{e_2\otimes e^*\zeta^{\otimes 2}, e_1^*\}_{T^*(1)}=0=[e^*_1, e^*_1]_{T^*(1),\zeta^{-1}}, \ \ \{e_2\otimes e^*\zeta^{\otimes 2} , e^*_2\}_{T^*(1)}=-e^*\zeta^2=[e^*_1, e^*_2]_{T^*(1),\zeta^{-1}}$$
 and 
 $$\{-e_1\otimes e^*\zeta^{\otimes 2}, e^*_1\}_{T^*(1)}=e^*\zeta^{\otimes 2}=[e^*_2, e^*_1]_{T^*(1),\zeta^{-1}}, \ \ \{-e_1\otimes e^*\zeta^{\otimes 2} , e^*_2\}_{T^*(1)}=0=[e^*_2, e^*_2]_{T^*(1),\zeta^{-1}}.$$

Hence, the composite in the Lemma sends
$$e_1\mapsto e_2^*\otimes e\mapsto (e_1\otimes e^*\zeta^{\otimes 2})\otimes e\mapsto e_1\otimes\zeta^{\otimes 2}\mapsto e_1$$
and
$$e_2\mapsto -e_1^*\otimes e\mapsto -(-e_2\otimes e^*\zeta^{\otimes 2})\otimes e\mapsto e_2\otimes\zeta^{\otimes 2}\mapsto e_2,$$
concluding the proof.
 \end{proof}

\subsubsection{Rank two self-dual case}\label{subsubsection, rank 2 kato self-dual}

Let $(R, T)$ be a generic symplectic self-dual representation of $G_{\BQ_p}$ of rank two.

We have $\mathrm{det}_R(T)\isom R(1)$ 
and in turn, 
$$\Delta_{R, 2}(T)=\mathrm{det}_R(T)$$ since  $a(T)=\mathrm{det}_R(T)(\mathrm{rec}_{\mathbb{Q}_p}(p))=\chi_{\mathrm{cyc}}(\mathrm{rec}_{\mathbb{Q}_p}(p))=1$.
Hence, 
$$\mathrm{Hom}_{R}(T, \mathrm{det}_R(T))\isom T^*(1)\otimes_R\Delta_{R,2}(R)(-1),$$
where $\Delta_{R,2}(R)(-1)$ is regarded as an $R$-module. 
By its composition with the following isomorphism 
$$T\isom \mathrm{Hom}_{R}(T, \mathrm{det}_R(T)) : x\mapsto [y\mapsto x\wedge y],$$ 
we obtain an isomorphism 
\begin{equation}\label{eq:dual}
T\isom T^*(1)\otimes_R\Delta_{R,2}(R)(-1),
\end{equation}
by which we identify both sides. 

Define an $R$-module morphism 
$$w_T : H^1(\mathbb{Q}_p, T)\rightarrow H^1(\mathbb{Q}_p, T)$$  
as the composite
$$H^1(\mathbb{Q}_p, T)\xrightarrow{w_{T,\zeta}\otimes \zeta^{\otimes -1}} H^1(\mathbb{Q}_p, T^*(1))\otimes_R\Delta_{R,2}(T)(-1)
\isom H^1(\mathbb{Q}_p, T),$$
where the last isomorphism is induced by \eqref{eq:dual}. 

\begin{lem}\label{lem:involution}
The morphism $w_T$ is an involution and it is independent of the choice of $\zeta$.
\end{lem}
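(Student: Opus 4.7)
The plan is to deduce both assertions from the two available inputs: the scaling relation $w_{T,\zeta^{a}}=\det_{R}(T)(\sigma_{a})\cdot w_{T,\zeta}$, and Lemma~\ref{inv}. In the symplectic self-dual setting $\det_{R}(T)\cong R(1)$, hence $\det_{R}(T)(\sigma_{a})=a$ for $a\in\BZ_{p}^{\times}$ and $\Delta_{R,2}(T)=\det_{R}T=R(1)$ is trivialized once we fix $\zeta$. In what follows I write $\iota: T\isom T^{*}(1)$ for the specialization of \eqref{eq:dual} to the SSD case, where the factor $\Delta_{R,2}(T)(-1)$ is canonically trivial.

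The independence of $\zeta$ is immediate: replacing $\zeta$ by $\zeta^{a}$ multiplies $w_{T,\zeta}$ by $a$ and $\zeta^{\otimes -1}$ by $a^{-1}$, so by $R$-bilinearity $w_{T,\zeta^{a}}(-)\otimes(\zeta^{a})^{\otimes -1}=w_{T,\zeta}(-)\otimes\zeta^{\otimes -1}$, and the composite defining $w_{T}$ is unchanged.

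For the involution, I trivialize the determinant factors via $\zeta$ and let $\hat{w}_{T,\zeta}: H^{1}(\BQ_{p},T)\to H^{1}(\BQ_{p},T^{*}(1))$ and $\hat{w}_{T^{*}(1),\zeta^{-1}}: H^{1}(\BQ_{p},T^{*}(1))\to H^{1}(\BQ_{p},T)$ be the honest $R$-linear maps underlying $w_{T,\zeta}$ and $w_{T^{*}(1),\zeta^{-1}}$. Then $w_{T}=\iota_{*}^{-1}\circ\hat{w}_{T,\zeta}$, and a direct unpacking of Lemma~\ref{inv} yields the identity $\hat{w}_{T^{*}(1),\zeta^{-1}}\circ\hat{w}_{T,\zeta}=-\id$. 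The remaining ingredient is the intertwining identity
\[
\hat{w}_{T,\zeta}\circ\iota_{*}^{-1}=-\iota_{*}\circ\hat{w}_{T^{*}(1),\zeta^{-1}},
\]
extracted from property 4 of Conjecture~\ref{conj, epsilon}: the sign $-1$ on the right is precisely the factor $\det_{R}(T)(\rec_{\BQ_{p}}(-1))=\chi_{\cyc}(\rec_{\BQ_{p}}(-1))=-1$ appearing there. Substituting both identities into $w_{T}^{2}=\iota_{*}^{-1}\circ\hat{w}_{T,\zeta}\circ\iota_{*}^{-1}\circ\hat{w}_{T,\zeta}$ gives
\[
w_{T}^{2}=-\hat{w}_{T^{*}(1),\zeta^{-1}}\circ\hat{w}_{T,\zeta}=\id,
\]
the two minus signs cancelling.

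The main technical task will be extracting the intertwining identity from property 4. This amounts to translating the commutative diagram there into a compatibility between the skew pairings $[\,,\,]_{T,\zeta}$ and $[\,,\,]_{T^{*}(1),\zeta^{-1}}$ under the self-dual isomorphism $\iota$, while carefully tracking the normalization of the canonical map $\mathrm{can}:\Delta_{R,2}(T^{*}(1))\otimes_{R}\Delta_{R,2}(T)\isom R(2)$ and the cup-product sign relating $\{f,x\}_{T}$ to $\{x,f\}_{T^{*}(1)}$ under the swap of factors. Once this sign analysis is settled, both assertions of the lemma follow with no further input.
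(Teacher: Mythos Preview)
Your argument for independence of $\zeta$ is correct and matches the paper exactly.

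For the involution there is a gap in the justification of the intertwining identity $\hat w_{T,\zeta}\circ\iota_*^{-1}=-\iota_*\circ\hat w_{T^*(1),\zeta^{-1}}$. You attribute it to property~4 of Conjecture~\ref{conj, epsilon}, but property~4 is the Tate-duality compatibility and its full content in this setting is already Lemma~\ref{inv}; invoking it a second time to produce a new relation is circular. What is actually needed is the \emph{functoriality} of the epsilon isomorphism under the self-duality isomorphism $\iota:T\isom T^*(1)$: this transports $[-,-]_{T,\zeta}$ to $[-,-]_{T^*(1),\zeta}$, and hence (after tracking how the Tate pairings for $T$ and $T^*(1)$ correspond under $\iota$, which is where the skew-symmetry of $\iota$ enters) relates $\hat w_{T,\zeta}$ to $\hat w_{T^*(1),\zeta}$. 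The scaling relation then gives $\hat w_{T^*(1),\zeta}=-\hat w_{T^*(1),\zeta^{-1}}$ since $\det_R(T^*(1))(\sigma_{-1})=-1$. Combining these yields the intertwining identity, and your computation $w_T^2=\id$ then goes through.

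This is precisely what the paper's one-line proof means by ``since the isomorphism \eqref{eq:dual} is symplectic'': the symplectic identification lets one transport the second arrow $-w_{T^*(1),\zeta^{-1}}$ in Lemma~\ref{inv} back to another copy of $w_T$, so that the composite in Lemma~\ref{inv} becomes $w_T^2$. Your final paragraph gestures at the right ingredients (compatibility of the skew pairings under $\iota$, cup-product sign), but frames them as an unpacking of property~4 rather than as functoriality plus symplecticity; once you make that correction the argument is complete and equivalent to the paper's.
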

\begin{proof}
The second assertion 
 holds  since for any $a\in \mathbb{Z}_p^\times$, we have
$$w_{T,\zeta^a}\otimes (\zeta^a)^{\otimes -1}=\mathrm{det}_R(T)(\sigma_a)w_{T,\zeta}\otimes a^{-1}\zeta^{\otimes -1}
=w_{T,\zeta}\otimes \zeta^{\otimes -1}.$$
As for the first, it is a consequence of Lemma \ref{inv} since the isomorphism \eqref{eq:dual} is symplectic. 
\end{proof}

\begin{defn}
If $p$ is an odd prime and $T$ is generic, then the signed submodules $H^1_{\pm}(\BQ_p , T) \subset H^1 (\BQ_p , T)$ are defined by  
$$H^1_{\pm}(\mathbb{Q}_p, T)=H^1(\mathbb{Q}_p, T)^{w_T=\pm 1}.$$
If either $p=2$ or $T$ is anomalous, then we similarly define $H^1_{\pm}(\BQ_p, T\otimes_{\BZ_p}\BQ_p)$.
\end{defn}

Basic properties of the signed submodules are given by 
Proposition~\ref{prop, decomposition and Lagrangian} below, which relies on the following preliminary. 
\begin{lem}\label{2}
Define a pairing
$$\{-, -\} : H^1(\mathbb{Q}_p,T)\times H^1(\mathbb{Q}_p, T)\rightarrow \Delta_{R,2}(T)(-1)$$
as the composite 
\begin{multline*}
H^1(\mathbb{Q}_p,T)\times H^1(\mathbb{Q}_p, T)\isom H^1(\mathbb{Q}_p, T^*(1))\otimes_R\Delta_{R,2}(T)(-1)
\times H^1(\mathbb{Q}_p, T)\\
\xrightarrow{(x\otimes z, y)\mapsto \{x, y\}_T\otimes z} \Delta_{R,2}(T)(-1),
\end{multline*}
where the first morphism is induced by \eqref{eq:dual}. 
This pairing is symmetric and satisfies 
\begin{equation}\label{equation, comparison of pairings}
\{w_T(x), y\}=[x, y]_{T,\zeta}\otimes \zeta^{\otimes -1}
\end{equation}
 for any $x, y \in H^1(\mathbb{Q}_p,T)$. 
\end{lem}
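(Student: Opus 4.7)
The plan is to prove the two assertions separately. The identity \eqref{equation, comparison of pairings} is essentially formal, while symmetry will follow from the symplectic self-duality combined with graded commutativity of the cup product.

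For \eqref{equation, comparison of pairings}, I would unwind the definition of $w_T$. Write $\phi$ for the isomorphism \eqref{eq:dual}. By construction, $w_T$ is the composite of $(w_{T,\zeta})\otimes \zeta^{\otimes -1}$ followed by $\phi^{-1}_*$, so $\phi_*(w_T(x)) = w_{T,\zeta}(x) \otimes \zeta^{\otimes -1}$. Plugging $(w_T(x), y)$ into the composite defining $\{-,-\}$, the first step applies $\phi_*$ on the left entry (undoing $\phi^{-1}_*$ in the definition of $w_T$) and the second step is the Tate pairing. The resulting element is $\{w_{T,\zeta}(x), y\}_T \otimes \zeta^{\otimes -1}$, which equals $[x, y]_{T,\zeta} \otimes \zeta^{\otimes -1}$ by the defining property of $w_{T,\zeta}$.

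For symmetry, I would reinterpret $\{-,-\}$ as the cup product
\[
H^1(\BQ_p, T) \otimes_R H^1(\BQ_p, T) \;\lra\; H^2(\BQ_p, T \otimes_R T)
\]
composed with the map induced by the alternating pairing $\beta \colon T \otimes_R T \to \det_R T = R(1) \otimes_R \Delta_{R,2}(T)(-1)$ sending $x \otimes y \mapsto x \wedge y$ that underlies \eqref{eq:dual}, followed by the Tate trace $H^2(\BQ_p, R(1)) \cong R$. Graded commutativity of the cup product on degree one classes gives $x \cup y = -s_*(y \cup x)$ in $H^2(\BQ_p, T \otimes_R T)$, where $s$ is the swap on $T \otimes_R T$. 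Since $\beta$ is alternating, $\beta \circ s = -\beta$, so the two sign reversals cancel and we obtain $\{x, y\} = \{y, x\}$.

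The main point to verify carefully is that $\{-,-\}$ really does agree with the cup-product-then-$\beta$ description above; this amounts to tracing through the definition of \eqref{eq:dual} via $x \mapsto (y \mapsto x \wedge y)$ and checking that the twists by $\Delta_{R,2}(T)(-1)$ match up, which is routine. Alternatively, symmetry is simply a reformulation of part 6) of Proposition \ref{a}, since $\{-,-\}$ is precisely the symmetric pairing on $H^1(\BQ_p,T)$ induced by the Tate pairing together with the symplectic self-duality, up to the canonical identification $\Delta_{R,2}(T)(-1) \cong R$.
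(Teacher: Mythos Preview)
Your proof is correct and follows essentially the same approach as the paper: the identity \eqref{equation, comparison of pairings} is unwound directly from the definitions of $w_T$ and $w_{T,\zeta}$, and symmetry is obtained by rewriting $\{-,-\}$ as cup product followed by the wedge map $T\otimes_R T\to\det_R T$, where the sign from graded commutativity cancels against the skew-symmetry of the wedge. Your added remark that this is the symmetric pairing of Proposition~\ref{a}~6) is also valid, and your explicit sign bookkeeping is slightly more detailed than what the paper writes.
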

\begin{proof}
The last equality immediately follows from the definition of 
$w_{T,\zeta}$ and $w_T$. 

It remains to show that the pairing $\{-, -\}$ is symmetric. 
By definition, it equals the composite  
\begin{multline*}
H^1(\mathbb{Q}_p,T)\times H^1(\mathbb{Q}_p, T)\xrightarrow{\cup} H^2(\mathbb{Q}_p, T\otimes_RT)
\xrightarrow{H^2([x\otimes y\mapsto x\wedge y])}H^2(\mathbb{Q}_p, \mathrm{det}_R(T))\\
\isom H^2(\mathbb{Q}_p, R(1))\otimes\Delta_{R,2}(T)(-1)
\isom \Delta_{R,2}(T)(-1).
\end{multline*}
Here the second morphism is induced by the morphism $x\otimes y \mapsto x\wedge y$ for $x,y\in T$.
The preceding composite is symmetric since so is the composite
$$H^1(\mathbb{Q}_p,T)\times H^1(\mathbb{Q}_p, T)\xrightarrow{\cup} H^2(\mathbb{Q}_p, T\otimes_RT)
\xrightarrow{H^2([x\otimes y\mapsto x\wedge y])}H^2(\mathbb{Q}_p, \mathrm{det}_R(T)).$$ 
\end{proof}
\begin{prop}\label{prop, decomposition and Lagrangian} 
\noindent 
\begin{itemize}
\item[i)] Let $p$ be an odd prime and $(R, T)$ a generic symplectic self-dual representation of $G_{\BQ_p}$ of rank two. 
Then we have a decomposition 
$$H^1(\mathbb{Q}_p,T)=H^1_+(\mathbb{Q}_p, T)\oplus H^1_-(\mathbb{Q}_p,T)$$ 
into free $R$-submodules of rank one. 
Moreover, the submodules $H^1_\pm(\mathbb{Q}_p, T)$ are Lagrangian with respect to the pairing 
$\{-, -\} : H^1(\mathbb{Q}_p,T)\times H^1(\mathbb{Q}_p, T)\rightarrow \Delta_{R,2}(T)(-1)$ 
as in Lemma~\ref{2}.
\item[ii)] Let $p$ be a prime and $(R,T)$ a symplectic self-dual representation of $G_{\BQ_p}$ of rank two such that 
$$
H^2(\BQ_p,T)\otimes_{\BZ_p}\BQ_p=0.
$$
Then 
we have a decomposition 
$$H^1(\mathbb{Q}_p,T)\otimes_{\BZ_p}\BQ_p=H^1_+(\mathbb{Q}_p, T\otimes_{\BZ_p}\BQ_p)\oplus 
H^1_-(\mathbb{Q}_p,T\otimes_{\BZ_p}\BQ_p)$$ 
into Lagrangian locally free $R[1/p]$-submodules of rank one.
\end{itemize}
\end{prop}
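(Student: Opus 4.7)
For part i), the plan is to exhibit the decomposition as the eigenspace decomposition of the involution $w_T$ constructed in Lemma~\ref{lem:involution}. Since $p$ is odd one has $2 \in R^\times$, so
\[
H^1(\BQ_p, T) = \ker(w_T - 1) \oplus \ker(w_T + 1) =: H^1_+(\BQ_p, T) \oplus H^1_-(\BQ_p, T),
\]
each summand being a direct summand of the free rank two $R$-module $H^1(\BQ_p, T)$ given by Proposition~\ref{a} (3). Isotropy of $H^1_\pm$ under $\{-,-\}$ follows directly from \eqref{equation, comparison of pairings}: for $x, y \in H^1_\epsilon$ with $\epsilon \in \{\pm 1\}$, the skew-symmetry of $[-,-]_{T,\zeta}$ (intrinsic to $\wedge^2 H^1(\BQ_p,T)$) together with the symmetry of $\{-,-\}$ yields
\[
\epsilon \{x,y\} = [x,y]_{T,\zeta} \otimes \zeta^{\otimes -1} = -[y,x]_{T,\zeta} \otimes \zeta^{\otimes -1} = -\epsilon \{y,x\} = -\epsilon \{x,y\},
\]
whence $\{x,y\} = 0$ upon dividing by $2$.

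The main obstacle is ruling out the degenerate situation in which one eigenspace has rank two and the other vanishes. I would handle this by reduction modulo each maximal ideal $\fm$ of $R$: the base-change property (1) of Conjecture~\ref{conj, epsilon} (granted for rank two by Theorem~\ref{thm, rank two epsilon}) ensures that $w_T$ descends to the involution $w_{\overline{T}_\fm}$ on the two-dimensional $R/\fm$-vector space $H^1(\BQ_p, \overline{T}_\fm) \cong H^1(\BQ_p, T) \otimes_R R/\fm$ (using Proposition~\ref{a} (2)), which still carries the non-degenerate symmetric form induced by $\{-,-\}$. If one eigenspace of $w_{\overline{T}_\fm}$ were zero, the other would exhaust $H^1(\BQ_p, \overline{T}_\fm)$ while remaining isotropic of dimension two, contradicting non-degeneracy of a symmetric form in odd characteristic. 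Hence both eigenspaces are one-dimensional at every residue field, and since each $H^1_\pm(\BQ_p, T)$ is a finitely generated projective $R$-module (as a direct summand of a free module) of constant rank one over the semi-local ring $R$, it is free of rank one by a Nakayama argument.

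To conclude the Lagrangian property of part i), observe that in a decomposition $H^1(\BQ_p, T) = H^1_+ \oplus H^1_-$ with both summands isotropic, the perfect symmetric pairing $\{-,-\}$ has block-antidiagonal form; perfectness of the total pairing therefore forces perfectness of its restriction $H^1_+ \times H^1_- \to \Delta_{R,2}(T)(-1) = R$, which is exactly the duality $H^1_\pm \isom \Hom_R(H^1_\mp, R)$ required by Definition~\ref{def:Lagrangian}. Part ii) is handled by the identical argument over $R[1/p]$, using the rational epsilon isomorphism of Theorem~\ref{thm, rank two epsilon} (ii) to produce $w_{T \otimes \BQ_p}$ and invoking Proposition~\ref{a3} in place of Proposition~\ref{a}; the conclusion is weakened from free to locally free of rank one because $R[1/p]$ need not be semi-local, and the hypothesis $p \neq 2$ is no longer required since $2 \in R[1/p]^\times$ automatically.
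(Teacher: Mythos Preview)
Your proof is correct and follows essentially the same approach as the paper: both use the eigenspace decomposition of the involution $w_T$, establish isotropy of the eigenspaces via the identity $\{w_T(x),y\}=[x,y]_{T,\zeta}\otimes\zeta^{\otimes-1}$ together with the symmetry/skew-symmetry clash, and rule out the degenerate case by observing that a full-rank isotropic eigenspace contradicts nondegeneracy of the pairing. The only minor difference is organizational: the paper argues directly over $R$ that $w_T\neq\pm\mathrm{id}$ (which strictly speaking suffices only when $R$ is connected), whereas you pass to each residue field and then invoke constant-rank/Nakayama---your version is slightly more careful for genuinely semi-local $R$, but the underlying idea is identical.
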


\begin{proof}
In the following we consider part i), and an analogous argument applies for part ii).

Note that  $w_T$ is an involution by Lemma~\ref{lem:involution} and $2\in R^\times$,  
so the existence of the decomposition follows. 
Since $H^1(\mathbb{Q}_p,T)$ is a free $R$-module of rank two, 
to show that $H^1_{\pm}(\BQ_p , T)$ are of rank one, it suffices to show that 
$w_T$ is not a scalar morphism, i.e. $w_T\not=\pm \mathrm{id}$. 

We assume that $w_T=\pm\mathrm{id}$ and deduce a contradiction below. It considers the case $w_T=\mathrm{id}$ and 
an analogous argument applies if $w_T=-\mathrm{id}$.

We have 
$$\{x, y\}=\{w_T(x), y\}=[x, y]_{T,\zeta}\otimes \zeta^{\otimes -1}$$
for arbitrary $x, y\in H^1(\mathbb{Q}_p, T)$. Then it follows that   
$[x, y]_{T,\zeta}=0$ since the pairing $[\, ,\,]_{T,\zeta}$ is skew-symmetric and $\{\, ,\, \}$  symmetric. 
This contradicts the fact that the pairing $[\, , \, ]_{T,\zeta}$ induces an isomorphism 
$\mathrm{det}(H^1(\mathbb{Q}_p, T))\isom \Delta_{R,2}(T)$.

Finally, we show the Lagrangian property. 
For any $x_\pm \in H^1_\pm(\mathbb{Q}_p, T)$, 
 we have 
\[\{x_\pm, x_\pm\}=\pm \{w_T(x_\pm), x_\pm\}=\pm [x_\pm, x_\pm]_{T, \zeta}\otimes \zeta^{\otimes -1}=0\] since $[\,, \,]_{T,\zeta}$ is skew-symmetric.  
On the other hand, if $x_\pm$ is a basis, then $\{x_+, x_-\} \in R^\times$ since the pairing is non-degenerate. 
Hence, the proof concludes.
\end{proof}

\subsection{Local sign decomposition for de Rham representations}
This subsection gives a description of the local sign  decomposition as in \S\ref{ss:lsd via Kato's eps} for de Rham representations. 

Let $V$ be a two dimensional de Rham $L$-representation of $G_{\BQ_p}$. 
Suppose that it is symplectic self-dual with Hodge Tate weights $k_1\leq k_2$ and generic, i.e. $H^0(\mathbb{Q}_p, V)=0$. Then we have
$(k_1, k_2)=(1-k, k)$ 
for some $k\in \BZ_{\geq 1}$. Moreover, 
 $$\mathrm{dim}_{L}(H^1_{\rm f}(\mathbb{Q}_p, V))=\mathrm{dim}_{L}(D_{\mathrm{dR}}(V)/\mathrm{Fil}^0D_{\mathrm{dR}}(V))=1$$ 
and $\Gamma(V)=(-1)^{k-1}$
 by Corollary~\ref{cor, dR ssd two}. 
Recall that $\varepsilon_L(W(V))\in \{\pm 1\}$ and it is independent of the choice of $\zeta$. 
 
Since we assume that $H^0(\mathbb{Q}_p, V)=0$ and so $H^2(\mathbb{Q}_p, V)\simeq H^0(\mathbb{Q}_p, V)^*=0$, 
 the de Rham epsilon isomorphism $\varepsilon_{L,\zeta}^{\mathrm{dR}}(V) : \mathbf{1}_L\isom \Delta_{L}(V)$ 
 is induced by an isomorphism 
 $$\wedge^2 H^1(\mathbb{Q}_p, V)\isom \Delta_{L,2}(V)$$
 whose explicit definition is given in the proof of Proposition \ref{prop, de Rham property} below. To give such an isomorphism is equivalent to giving an 
 $L$-linear skew-symmetric pairing 
 $$H^1(\mathbb{Q}_p, V)\times H^1(\mathbb{Q}_p, V)\rightarrow \Delta_{L, 2}(V).$$
 Proceeding as in \S\ref{subsubsection, rank 2 kato self-dual},  the de Rham epsilon isomorphism leads to such an involution, 
 which we denote by 
 $$w_V^{\mathrm{dR}} : H^1(\mathbb{Q}_p, V)\rightarrow H^1(\mathbb{Q}_p, V).$$
  Here we note that $w_V^{\mathrm{dR}}$ satisfies the same property as in Lemma \ref{inv} since 
 $\varepsilon_{L,\zeta}^{\mathrm{dR}}(V)$ is also compatible with Tate duality by Lemma 3.7 (2) of 
 \cite{Narank1}. 
 
 \begin{defn}
 The signed submodules $H^1_{\pm, {\rm dR}}(\BQ_p , V) \subset H^1 (\BQ_p , V)$ are defined by   
 $$H^1_{\pm, \mathrm{dR}}(\mathbb{Q}_p, V):=H^1(\mathbb{Q}_p, V)^{w_{V}^{\mathrm{dR}}=\pm 1}$$
 \end{defn}
 \begin{prop}\label{1.5}
 Let $V$ be a  two-dimensional de Rham $L$-representation of $G_{\BQ_p}$ which is symplectic self-dual and generic.  
 Then we have a decomposition 
$$H^1(\mathbb{Q}_p,V)=H^1_{+, {\rm dR}}(\mathbb{Q}_p, T)\oplus H^1_{-,{\rm dR}}(\mathbb{Q}_p,T)$$ 
into $L$-vector subspaces of dimension one.  
 \end{prop}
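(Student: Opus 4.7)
The plan is to mimic the proof of Proposition \ref{prop, decomposition and Lagrangian} (i), replacing Kato's local epsilon isomorphism $\varepsilon_{R,\zeta}(T)$ with the de Rham epsilon isomorphism $\varepsilon_{L,\zeta}^{\mathrm{dR}}(V)$ throughout. Concretely, since $V$ is generic, $H^0(\mathbb{Q}_p,V)=H^2(\mathbb{Q}_p,V)=0$, so $\varepsilon^{\mathrm{dR}}_{L,\zeta}(V)\colon \mathbf{1}_L \isom \Delta_L(V)$ is determined by an isomorphism $\wedge^2 H^1(\mathbb{Q}_p,V) \isom \Delta_{L,2}(V)$, equivalently by a non-degenerate skew-symmetric pairing $[-,-]_{V,\zeta}^{\mathrm{dR}}$ on $H^1(\mathbb{Q}_p,V)$. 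Exactly as in \S\ref{subsubsection, rank 2 kato self-dual}, this pairing together with the symplectic self-duality isomorphism $V \isom V^*(1)\otimes \Delta_{L,2}(V)(-1)$ produces an $L$-linear endomorphism $w_V^{\mathrm{dR}}$ of $H^1(\mathbb{Q}_p,V)$.

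The first step is to verify that $w_V^{\mathrm{dR}}$ is an involution. The excerpt already notes that $\varepsilon_{L,\zeta}^{\mathrm{dR}}(V)$ satisfies the Tate duality compatibility (property 4) of Conjecture \ref{conj, epsilon}) by Lemma 3.7 (2) of \cite{Narank1}. Substituting this input into the linear algebra computation of Lemma \ref{inv} produces a verbatim de Rham version, from which $(w_V^{\mathrm{dR}})^2 = \mathrm{id}$ follows exactly as in Lemma \ref{lem:involution}. Since $\mathrm{char}(L)=0$, in particular $2 \in L^\times$, the involution $w_V^{\mathrm{dR}}$ is diagonalizable and yields a decomposition
\[
H^1(\mathbb{Q}_p,V) = H^1_{+,\mathrm{dR}}(\mathbb{Q}_p,V) \oplus H^1_{-,\mathrm{dR}}(\mathbb{Q}_p,V).
\]

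To see that both eigenspaces are one-dimensional, note that $\dim_L H^1(\mathbb{Q}_p,V) = 2$ by Corollary~\ref{cor, dR ssd two} (or the local Euler characteristic formula combined with the genericity of $V$), so it suffices to rule out $w_V^{\mathrm{dR}} = \pm \mathrm{id}$. Proceeding as in Lemma \ref{2}, I would introduce the symmetric pairing
\[
\{-,-\}^{\mathrm{dR}} \colon H^1(\mathbb{Q}_p,V) \times H^1(\mathbb{Q}_p,V) \longrightarrow \Delta_{L,2}(V)(-1)
\]
built from the Tate cup product and the symplectic self-duality isomorphism $V \isom V^*(1)\otimes\Delta_{L,2}(V)(-1)$. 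By construction it satisfies the identity $\{w_V^{\mathrm{dR}}(x), y\}^{\mathrm{dR}} = [x,y]^{\mathrm{dR}}_{V,\zeta} \otimes \zeta^{\otimes -1}$. If $w_V^{\mathrm{dR}} = \pm \mathrm{id}$, then the left-hand side is symmetric in $x,y$ while the right-hand side is skew-symmetric, forcing $[-,-]^{\mathrm{dR}}_{V,\zeta} = 0$. This contradicts the fact that $\varepsilon^{\mathrm{dR}}_{L,\zeta}(V)$ is an isomorphism, so $w_V^{\mathrm{dR}} \neq \pm \mathrm{id}$ and each eigenspace has dimension one.

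The only real step requiring care is the bookkeeping for the de Rham Tate duality compatibility underlying the involution property; once this is supplied by the cited lemma from \cite{Narank1}, every other ingredient -- diagonalizability, the symmetric/skew comparison, and the rank count via $\dim_L H^1(\mathbb{Q}_p,V) = 2$ -- transports directly from the proof of Proposition \ref{prop, decomposition and Lagrangian}. I do not anticipate a genuine obstacle; the content of Proposition \ref{1.5} is essentially a restatement of the previous proposition in the de Rham setting, and the role of the argument is to show that the de Rham epsilon isomorphism encodes a Lagrangian decomposition of $H^1(\mathbb{Q}_p,V)$, which will then be matched with the Bloch--Kato decomposition in subsequent sections.
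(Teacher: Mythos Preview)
Your proposal is correct and follows exactly the approach the paper intends: the paper's proof simply reads ``This follows just as in the proof of Proposition~\ref{prop, decomposition and Lagrangian},'' and you have faithfully unpacked that reference, including the Tate-duality input from \cite{Narank1} that the paper cites immediately before stating Proposition~\ref{1.5}.
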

 \begin{proof}
  This follows just as in the proof of Proposition \ref{prop, decomposition and Lagrangian}.  
 \end{proof}

\begin{prop}\label{prop, de Rham property}
For $\epsilon:=-\Gamma(V)\varepsilon(W(V))$, 
we have 
$$H^1_{\epsilon, \mathrm{dR}}(\mathbb{Q}_p, V)=H^1_{\rm f}(\mathbb{Q}_p, V).$$

\end{prop}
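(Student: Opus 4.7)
The plan is to first show that $H^1_{\rm f}(\BQ_p, V)$ is stable under the involution $w_V^{\mathrm{dR}}$, so that it coincides with one of the two one-dimensional eigenspaces $H^1_{\pm, \mathrm{dR}}(\BQ_p, V)$ of Proposition~\ref{1.5}. Then I would identify the eigenvalue by unpacking the definition of the de Rham epsilon isomorphism $\varepsilon^{\mathrm{dR}}_{L,\zeta}(V) = \Gamma(V)\cdot \theta_L(V)\boxtimes \theta_{\mathrm{dR},L}(V,\zeta)$.

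For stability, first note that by Proposition~\ref{a}.6 the Tate pairing $\{-,-\}$ on $H^1(\BQ_p, V)$ is symmetric, and by Corollary~\ref{cor, de Rham Lagrangian} the subspace $H^1_{\rm f}(\BQ_p, V)$ is a one-dimensional Lagrangian with respect to it. Next, the bilinear pairing $[-,-]_{V,\zeta}^{\mathrm{dR}}$ induced on $H^1(\BQ_p, V)$ by $\varepsilon^{\mathrm{dR}}_{L,\zeta}(V)$ is skew-symmetric, since it comes from an identification of $\wedge^2 H^1(\BQ_p, V)$ with the graded line $\Delta_{L,2}(V)$. Because $\dim_L H^1_{\rm f}(\BQ_p, V) = 1$, any pair of elements of $H^1_{\rm f}$ pair to zero under $[-,-]_{V,\zeta}^{\mathrm{dR}}$. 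Using the de Rham analogue of \eqref{equation, comparison of pairings}, namely $\{w_V^{\mathrm{dR}}(x), y\} = [x,y]_{V,\zeta}^{\mathrm{dR}}\otimes \zeta^{\otimes -1}$, this means $w_V^{\mathrm{dR}}(x)$ lies in the annihilator of $H^1_{\rm f}$ with respect to $\{-,-\}$, which equals $H^1_{\rm f}$ itself by the Lagrangian property. So $w_V^{\mathrm{dR}}$ preserves $H^1_{\rm f}$, and since $w_V^{\mathrm{dR}}$ is an involution on a two-dimensional space with two distinct eigenspaces, $H^1_{\rm f}$ must coincide with $H^1_{\epsilon,\mathrm{dR}}(\BQ_p, V)$ for some $\epsilon \in \{\pm 1\}$.

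To determine $\epsilon$, I would choose a basis $\{x, y\}$ of $H^1(\BQ_p, V)$ with $x \in H^1_{\rm f}$ and $y \notin H^1_{\rm f}$ and compute the ratio $\epsilon = [x,y]_{V,\zeta}^{\mathrm{dR}}/\{x,y\}$ (up to the canonical $\zeta$-normalization). A natural choice is $x = \exp_V(v)$ for a basis $v$ of $t_V$, and $y$ with $\exp^*_V(y)$ a basis of $\mathrm{Fil}^0 D_{\mathrm{dR}}(V)$, which is one-dimensional by our Hodge--Tate weight assumption $(1-k, k)$. The Tate pairing $\{x, y\}$ then matches, up to a sign, the de Rham pairing $\langle v, \exp^*_V(y)\rangle_{\mathrm{dR}}$ on $t_V \times \mathrm{Fil}^0 D_{\mathrm{dR}}(V)$ via the compatibility of the Bloch--Kato exponential and dual exponential. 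On the other hand, expanding $\varepsilon^{\mathrm{dR}}_{L,\zeta}(V)$, the pairing $[x,y]_{V,\zeta}^{\mathrm{dR}}$ equals $\Gamma(V)\cdot \theta_L(V)(x \wedge y)$ post-composed with the trivialization $\theta_{\mathrm{dR},L}(V,\zeta)$, the latter incorporating precisely the factor $\varepsilon_L(W(V),\zeta)^{-1}$ and the normalization $t_\zeta^{-h_V}$. Under our identifications this yields $[x,y]_{V,\zeta}^{\mathrm{dR}}/\{x,y\} = -\Gamma(V)\varepsilon(W(V))$, giving $\epsilon = -\Gamma(V)\varepsilon(W(V))$ as asserted.

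The main obstacle is the sign bookkeeping in the last step: I must carefully track the minus sign arising from the duality between $\exp_V$ and $\exp^*_V$ (equivalently, from the connecting map in \eqref{exp} at the middle degree) and its compatibility with the ordering conventions in the determinant isomorphisms defining $\theta_L(V)$ and $\theta_{\mathrm{dR},L}(V,\zeta)$. This is a standard but delicate $p$-adic Hodge theory computation, ultimately reducing the assertion to the compatibility already proved in \cite{Narank1} for the de Rham epsilon isomorphism with Tate duality, together with Lemma~\ref{prop:gamma} identifying $\Gamma(V) = (-1)^{k-1}$.
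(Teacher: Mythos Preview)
Your proposal is correct and follows essentially the same route as the paper: the heart of the argument in both cases is unpacking the composite defining $\varepsilon^{\mathrm{dR}}_{L,\zeta}(V)$ to see that for $x\in H^1_{\rm f}(\BQ_p,V)$ and arbitrary $y$, the skew pairing satisfies $[x,y]^{\mathrm{dR}}_{V,\zeta}\otimes\zeta^{-1}=-\Gamma(V)\varepsilon(W(V))\{x,y\}$, whence $w_V^{\mathrm{dR}}(x)=-\Gamma(V)\varepsilon(W(V))\,x$. The paper performs this computation directly (which yields stability of $H^1_{\rm f}$ and the eigenvalue simultaneously), so your preliminary stability step is valid but unnecessary; the sign bookkeeping is done by explicitly tracing through $\exp^{-1}$, $\exp^*$, the wedge map $\overline{x}\otimes y\mapsto -x\wedge y$, and $\theta_{\mathrm{dR},L}(V,\zeta)$, rather than by reducing to a compatibility statement from \cite{Narank1}.
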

\begin{proof}
It suffices to show that $H^1_{\rm f}(\mathbb{Q}_p, V)\subset H^1_{\epsilon, \mathrm{dR}}(\mathbb{Q}_p, V)$ since both are one dimensional. 

The de Rham epsilon isomorphism $\varepsilon_{L,\zeta}^{\mathrm{dR}}(V) : \mathbf{1}_L\isom \Delta_{L}(V)$ is induced by the 
following composite of isomorphisms 
\begin{multline*}
\mathrm{det}_L(H^1(\mathbb{Q}_p, V))\isom H^1_{\rm f}(\mathbb{Q}_p, V)\otimes_L(H^1(\mathbb{Q}_p, V)/H^1_{\rm f}(\mathbb{Q}_p, V))\\
\xrightarrow{\Gamma(V)\cdot \mathrm{exp}^{-1}\otimes \mathrm{exp}^*} (D_{\mathrm{dR}}(V)/\mathrm{Fil}^0D_{\mathrm{dR}}(V))\otimes_L\mathrm{Fil}^0D_{\mathrm{dR}}(V)
\xrightarrow{\overline{x}\otimes y\mapsto -x\wedge y(=y\wedge x)}
\mathrm{det}_L(D_{\mathrm{dR}}(V))\\
\isom D_{\mathrm{dR}}(\mathrm{det}(V))=\frac{1}{t}\Delta_{L,2}(V)
\xrightarrow{y\mapsto \varepsilon_L(W(V))ty} \Delta_{L,2}(V)
\end{multline*}
Here we note that $\overline{x}\otimes y$ is sent to $y\wedge x=-x\wedge y$ since the natural isomorphism is given by 
$$\mathrm{Fil}^0D_{\mathrm{dR}}(V)\otimes_L(D_{\mathrm{dR}}(V)/\mathrm{Fil}^0D_{\mathrm{dR}}(V))\isom \mathrm{det}_L(D_{\mathrm{dR}}(V)) : y\otimes \overline{x}\mapsto y\wedge x.$$
By the canonical isomorphism $V\isom V^*(1)\otimes_L\Delta_{L,2}(V)(-1)$, 
we  identify the above composite 
$$(D_{\mathrm{dR}}(V)/\mathrm{Fil}^0D_{\mathrm{dR}}(V))\otimes_L\mathrm{Fil}^0D_{\mathrm{dR}}(V)\isom 
\mathrm{det}_L(D_{\mathrm{dR}}(V))\\
\isom D_{\mathrm{dR}}(\mathrm{det}_L(V))=\frac{1}{t}\Delta_{L,2}(V)$$ with 
\begin{multline*}
(D_{\mathrm{dR}}(V^*(1))/\mathrm{Fil}^0D_{\mathrm{dR}}(V^*(1))\otimes_L\Delta_{L,2}(V)(-1))\otimes_L\mathrm{Fil}^0D_{\mathrm{dR}}(V)\\
\xrightarrow{(f\otimes z)\otimes y\mapsto -f(y)\otimes z}D_{\mathrm{dR}}(L(1))\otimes_L \Delta_{L,2}(V)(-1)
=\frac{1}{t}L(1)\otimes_L\Delta_{L,2}(V)(-1)=\frac{1}{t}\Delta_{L,2}(V). 
\end{multline*}
Since 
$$\{\mathrm{exp}_V(a), b\}_V=[a, \mathrm{exp}^*(b)]_{\mathrm{dR}}$$
for any $a\in D_{\mathrm{dR}}(V^*(1))$, $b\in H^1(\mathbb{Q}_p, V)$, 
it follows that the above composite $$\mathrm{det}_L(H^1(\mathbb{Q}_p, V))\isom\Delta_{L,2}(V)$$
 maps $x\wedge y$, for $x\in H^1_{\rm f}(\mathbb{Q}_p, V), y\in H^1(\mathbb{Q}_p, V)$, to
 $$-\Gamma(V)\varepsilon_{L}(W(V))\{x, y\}\otimes \zeta.$$

 On the other hand, by the definition of $w_{V}^{\mathrm{dR}}$, this isomorphism is also defined by  
 $$\mathrm{det}(H^1(\mathbb{Q}_p, V))\isom \Delta_{L,2}(V) : x\wedge y\mapsto 
 \{w_V^{\mathrm{dR}}(x), y\}\otimes \zeta$$ for any $x, y\in H^1(\mathbb{Q}_p, V)$. 
 Therefore,  we obtain
 $$\{w_V^{\mathrm{dR}}(x), y\}=-\Gamma(V)\varepsilon_{L}(W(V))\{x, y\}$$
  for $x\in H^1_{\rm f}(\mathbb{Q}_p, V)$ and 
  any  $y\in H^1(\mathbb{Q}_p, V)$. We conclude that 
  $$w_V^{\mathrm{dR}}(x)=-\Gamma(V)\varepsilon_{L}(W(V))x$$ for $x\in H^1_{\rm f}(\mathbb{Q}_p, V)$, 
  which implies that $H^1_{\rm f}(\mathbb{Q}_p, V)\subset H^1_{\epsilon, \mathrm{dR}}(\mathbb{Q}_p, V)$.

\end{proof}

\subsection{{Construction of  $w_T$ via Colmez's operator $w$}} 
In this subsection we give an explicit construction of the involution $w_T$ using Colmez's operator $w_*$ on the $(\varphi, \Gamma)$-module $D=D(T)$ associated to 
a rank two generic symplectic self-dual pair $(R, T)$.

The involution corresponds to the action of $\begin{pmatrix}0 & 1\\ 1 & 0\end{pmatrix}$ on 
the associated representation $\Pi_p(D)$ of $\mathrm{GL}_2(\mathbb{Q}_p)$ by the $p$-adic local Langlands correspondence. 
This construction of $w_T$ is essentially the same as in \S\ref{ss:lsd via Kato's eps} since Kato's epsilon isomorphism $\varepsilon_{R, \zeta}(T) : \mathbf{1}_R\isom \Delta_R(T)$ in Theorem 
\ref{thm, rank two epsilon} is itself defined  \cites{NaKato,J}  using Colmez's operator $w_*$. 

Throughout this subsection, we fix an odd prime $p$. 

\subsubsection{Backdrop}
Let $(R, T)$ be a generic symplectic self dual  $G_{\BQ_p}$-representation.
 Without loss of generality, we assume that $R$ is Artinian local. Indeed, the results in this subsection follow
for a general $R$ from the Artinian local case by taking the limit.

Set $\Gamma := \Gal(\BQ_p (\mu_{p^\infty })/\BQ_p )$, and let $\chi_{\cyc} : \Gamma \simeq 
\BZ_p^\times$ be
the $p$-adic cyclotomic character. 
 For $b \in \BZ_p^\times$, define
$\sigma_b \in \Gamma$ such that $\chi_{\cyc} (\sigma_b) = b$.

Define a topological ring $\mathcal{E}_R=R(\!(X)\!)$ such that $R[\![X]\!]$ is open in $\mathcal{E}_R$ and the induced topology on $R[\![X]\!]$ coincides with the $X$-adic topology.  The ring $\mathcal{E}_R$ is equipped with actions of the Frobenius 
$\varphi$ and $\Gamma:=\mathrm{Gal}(\BQ_p(\zeta_{p^\infty})/\BQ_p)$, which act as $R$-algebra continuous homomorphisms characterized by 
$$\varphi(X)=(1+X)^p-1, \ \ \gamma(X)=(1+X)^{\chi_{\mathrm{cyc}}(\gamma)}-1 \ \ (\gamma \in \Gamma).$$
Let $D=D(T)$ be the \'etale $(\varphi, \Gamma)$-module 
associated to $T$ by the covariant Fontaine functor  \cite{Fo}, namely $D$ is a finite free $\mathcal{E}_R$-module equipped with a Frobenius structure 
$$\varphi : \varphi^*(D)=D\otimes_{\mathcal{E}_R, \varphi}\mathcal{E}_R\isom D$$ and a commuting continuous semilinear $\Gamma$-action. 
Let $\psi$ be the left inverse of $\varphi$ on $D$ defined by  
\[
\psi: D \rightarrow D :  x=\sum_{i=0}^{p-1}(1+X)^i\varphi(x_i) \mapsto x_0. 
\]

We have the following key link between the $(\varphi,\Gamma)$-module $D$ and the Iwasawa cohomology   $H^1_{\rm Iw}(\BQ_p,T):=\varprojlim_{n} H^1(\BQ_p(\zeta_{p^n}),T)$. 
\begin{prop}\label{prop, D-H^1}
There exists a canonical $R[\![\Gamma]\!]$-module isomorphism $$D^{\psi=1}\cong H^1_{\mathrm{Iw}}(\BQ_p, {T}).$$ 
In particular,  we have a canonical $R$-module isomorphism $$D^{\psi=1}/(\gamma-1)\isom H^1(\BQ_p, {T}), $$ 
where $\gamma$ is 
a topological generator of $\Gamma$. 
\end{prop}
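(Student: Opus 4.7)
The plan is to invoke the classical theorem of Fontaine, refined by Herr and Cherbonnier--Colmez, which identifies Iwasawa cohomology of a continuous $p$-adic representation with the $\psi$-invariants of its \'etale $(\varphi, \Gamma)$-module. Since $R$ is Artinian local, $T$ is a finite continuous $G_{\BQ_p}$-module and $D = D(T)$ is an \'etale $(\varphi, \Gamma)$-module of finite length over $\mathcal{E}_R$, so this theory applies directly.

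The first step is to establish that the two-term complex
\[
C_\psi^\bullet(D) := [\, D \xrightarrow{\psi - 1} D \,]
\]
placed in degrees $[1, 2]$ is quasi-isomorphic to the Iwasawa cochain complex $R\Gamma_{\mathrm{Iw}}(\BQ_p, T)$. This is obtained by combining the Herr complex computing $R\Gamma(\BQ_p, T \otimes_R \Lambda^\iota)$ --- which Shapiro's lemma identifies with $R\Gamma_{\mathrm{Iw}}(\BQ_p, T)$ --- with the standard comparison between the $\varphi$- and $\psi$-presentations, the latter allowing the $\gamma - 1$ column of the full Herr complex to be eliminated after tensoring with $\Lambda^\iota$. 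Taking $H^1$ then yields the canonical $R$-module isomorphism $D^{\psi = 1} \cong H^1_{\mathrm{Iw}}(\BQ_p, T)$.

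For the $R[\![\Gamma]\!]$-equivariance, note that $\Gamma$ acts on $D^{\psi = 1}$ by restriction of its action on $D$ (since $\Gamma$ commutes with $\varphi$, hence with $\psi$), while on $H^1_{\mathrm{Iw}}(\BQ_p, T) = \varprojlim_n H^1(\BQ_p(\zeta_{p^n}), T)$ it acts by transport along the cyclotomic tower. Under Shapiro's lemma both actions correspond to the natural $\Gamma$-action on the auxiliary module $\Lambda^\iota$, and the Herr-type identification is manifestly $\Gamma$-equivariant at each step of its construction.

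The second assertion is then immediate: quotienting the first isomorphism by $\gamma - 1$ and applying part 5) of Proposition \ref{a} --- which asserts that the natural surjection $H^1_{\mathrm{Iw}}(\BQ_p, T)/(\gamma - 1) \twoheadrightarrow H^1(\BQ_p, T)$ is an isomorphism under the generic symplectic self-dual hypothesis --- yields $D^{\psi = 1}/(\gamma - 1) \isom H^1(\BQ_p, T)$. The only nontrivial ingredient is the Fontaine--Herr--Cherbonnier--Colmez comparison for $R$-coefficient representations, which is by now entirely standard in the Artinian setting; the main task in writing out the proof is to make the canonical and $\Gamma$-equivariant nature of the identification explicit.
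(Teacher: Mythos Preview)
Your proposal is correct and matches the paper's approach exactly: the paper simply cites \cite{CC} (Cherbonnier--Colmez) for the first isomorphism and invokes part 5) of Proposition~\ref{a} for the ``in particular'' clause. Your elaboration via the Herr complex and Shapiro's lemma is a faithful unpacking of what that citation provides, and your treatment of the second assertion is identical to the paper's.
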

This proposition is the content of {\cite{CC} and `in particular' part follows from part 5) of Proposition \ref{a}.

Colmez defined
 \cite[II.1]{ColpLL}
 the $\Gamma$-equivariant $R$-linear operator 
$
w_* : D^{\psi=0} \rightarrow D^{\psi=0, \iota} 
$ by 
\begin{equation}\label{eq:Col-involution}
w_*(x):=\lim_{n\rightarrow +\infty} \sum_{i \in \BZ_p^\times \!\!\mod p^n}
(1+X)^{1/i}\sigma_{-1/i^2}
(\varphi^n\psi^n((1+X)^{-i}x)).
\end{equation}
 Here   
$D^{\psi=0, \iota}$ is $D^{\psi=0}$ but the $\Gamma$-action  is
twisted by the involution $\iota: \Gamma \rightarrow \Gamma, \gamma \mapsto \gamma^{-1}$. 
The operator $w_*$ is related to the action of  $\begin{pmatrix} 0 & 1 
\\ 1 & 0\end{pmatrix} $ on the  $\mathrm{GL}_2(\BQ_p)$-representation 
associated to $T$ under the $p$-adic local Langlands correspondence, and a key to 
Colmez's construction \cite{ColpLL} of $\mathrm{GL}_2(\BQ_p)$-representations from $(\varphi,\Gamma)$-modules. 

The operator $w_{*}$ also underlies the third-named author's proof \cites{NaKato,J} of Kato's local epsilon conjecture for rank two representations of $G_{\BQ_p}$. 
In fact, for de Rham representations, he related the action of $w_*$ to the corresponding epsilon constants.

To relate $D^{\psi=1}$ and $D^{\psi=0}$, consider the map 
\[
1-\varphi: D^{\psi=1} \rightarrow D^{\psi=0}. 
\]
Its kernel is $D^{\varphi=1}$ 
and the image is denoted  by 
$$\mathscr{C}(D):=(1-\varphi)D^{\psi=1}.$$
Our strategy to construct an involution on $H^1 (\BQ_p, T)$: Colmez's operator $w_*$ on $D^{\psi=0}$ indues an involution on $D^{\psi=1}/(\gamma-1)$ (see~Proposition~\ref{prop, D-H^1}).
 
In fact, if $T$ is an irreducible $L$-representation where $L$ is a finite extension of $\BQ_p$, then  
$D^{\varphi=1}=0$ and 
Colmez ~\cite[Prop.~V.2.1]{ColpLL} 
showed\footnote{We have $\delta_D(p)=1$ in the self-dual case.} that $w_*(\mathscr{C}(D)) \subset \mathscr{C}(D)$. 
We generalize Colmez's proof of Proposition V.2.1 in \cite{ColpLL} to families of Galois representations and  reducible cases (see~\S\ref{ss:Col-involution}).

\subsubsection{Preliminaries}
Following Colmez, put
$$D\boxtimes \mathbb{Q}_p:=\{x=(x^{(n)})_{n\in \mathbb{N}}\in D^{\mathbb{N}}\mid \psi(x^{(n+1)})=x^{(n)} (n\geq 0)\},$$
$$D\boxtimes \mathbf{P}^1(=D\boxtimes_{\mathbf{1}} \mathbf{P}^1):=\{(z_1, z_2)\in D^2\mid w_*((1-\varphi\psi)(z_1))=(1-\varphi\psi)(z_2)\}.$$
We also consider 
$$P(\mathbb{Q}_p):=\begin{pmatrix}\mathbb{Q}_p^{\times}& \mathbb{Q}_p 
\\ 0 & 1\end{pmatrix} \subset B(\mathbb{Q}_p):=\begin{pmatrix}\mathbb{Q}_p^{\times}& \mathbb{Q}_p 
\\ 0 & \mathbb{Q}_p^\times\end{pmatrix} \subset \mathrm{GL}_2(\mathbb{Q}_p).$$

Recall that $D\boxtimes \mathbb{Q}_p$ and $D\boxtimes \mathbf{P}^1$ are equipped with an action of $B(\mathbb{Q}_p)$ and $\mathrm{GL}_2(\mathbb{Q}_p)$ 
respectively  with the trivial central character since $T$ is assumed to be symplectic self-dual \cite[II.1]{ColpLL}. For example, we have
$$\begin{pmatrix} a& 0 \\ 0 & 1\end{pmatrix}\cdot(x^{(n)})_{n\in \mathbb{N}}=(\sigma_a(x^{(n)}))_{n\in \mathbb{N}} \ (a\in \mathbb{Z}_p^\times),$$ 
$$\begin{pmatrix} p& 0 \\ 0 & 1\end{pmatrix}\cdot(x^{(n)})_{n\in \mathbb{N}}=(x^{(n+1)})_{n\in \mathbb{N}},$$ 
$$\begin{pmatrix} p^{-1}& 0 \\ 0 & 1\end{pmatrix}\cdot(x^{(n)})_{n\in \mathbb{N}}=(x^{(n-1)})_{n\in \mathbb{N}},$$
 where $x^{(-1)}:=\psi(x^{(0)})$, and   
$$\begin{pmatrix}0 & 1\\ 1 &0\end{pmatrix}\cdot (z_1, z_2)=(z_2, z_1),$$ 
$$\begin{pmatrix} a& 0 \\ 0 & 1\end{pmatrix}\cdot (z_1, z_2)=(\sigma_a(z_1), \sigma_a^{-1}(z_2))\ (a\in \mathbb{Z}_p^\times), $$ 
$$\begin{pmatrix} p& 0 \\ 0 & 1\end{pmatrix}\cdot (z_1, z_2)=(z_1', z_2')$$ with $\psi (z_1')=z_1, z_2'=\psi(z_2)$. 

 The following is the content of \cite[Prop.~II.1.14]{ColpLL}
\begin{prop}
There exists a canonical $B(\mathbb{Q}_p)$-equivariant map 
$$\mathrm{Res}_{\mathbb{Q}_p} : D\boxtimes \mathbf{P}^1\rightarrow D\boxtimes \mathbb{Q}_p : z\mapsto 
\left(\mathrm{Res}_{\mathbb{Z}_p}\left(\begin{pmatrix} p^n & 0 \\ 0 & 1\end{pmatrix}\cdot z\right)\right)_{n\in \mathbb{N}}$$
such that 
$$\mathrm{Ker}(\mathrm{Res}_{\mathbb{Q}_p})=\{(0, z)\mid z\in D^{\mathrm{nr}}\}$$ 
for $D^{\mathrm{nr}}:=\cap_{n=1}^\infty \varphi^n(D)$. 
\end{prop}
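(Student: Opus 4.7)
The plan is to verify three properties of the map $\mathrm{Res}_{\BQ_p}$: well-definedness (i.e., landing in $D\boxtimes\BQ_p$), $B(\BQ_p)$-equivariance, and the claimed description of the kernel. The underlying picture is Colmez's $\mathbf{P}^1$-model, in which $D\boxtimes\mathbf{P}^1$ encodes ``sections'' on $\mathbf{P}^1(\BQ_p)$ via two charts $\BZ_p$ and $w\cdot\BZ_p$ (with $w=\begin{pmatrix}0&1\\ 1&0\end{pmatrix}$), glued by the operator $w_*$; the map $\mathrm{Res}_{\BZ_p}$ extracts the first coordinate, and the formula for $\mathrm{Res}_{\BQ_p}$ collects the first coordinates of the translates of $z$ by the shrinking sequence of neighbourhoods $p^n\BZ_p$ of $0$.

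For well-definedness I would set $x^{(n)}:=\mathrm{Res}_{\BZ_p}\bigl(\begin{pmatrix} p^n & 0\\ 0&1\end{pmatrix}z\bigr)$ and use the factorisation $\begin{pmatrix} p^n & 0\\ 0&1\end{pmatrix}=\begin{pmatrix} p^{-1} & 0\\ 0&1\end{pmatrix}\begin{pmatrix} p^{n+1} & 0\\ 0&1\end{pmatrix}$ together with the fact that, after restriction to the first coordinate, the action of $\begin{pmatrix} p^{-1} & 0\\ 0&1\end{pmatrix}$ on $D\boxtimes\mathbf{P}^1$ implements $\psi$ on $D$; this yields $\psi(x^{(n+1)})=x^{(n)}$, as required. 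For $B(\BQ_p)$-equivariance I would reduce to generators $\Gamma\simeq\BZ_p^{\times}$, $\begin{pmatrix} p & 0\\ 0&1\end{pmatrix}$, the unipotent $\begin{pmatrix}1&\BZ_p\\ 0&1\end{pmatrix}$, and the centre: the centre acts trivially by the self-dual hypothesis, and the remaining cases reduce to direct manipulation of the explicit formulas for the $B(\BQ_p)$-actions on $D\boxtimes\mathbf{P}^1$ and $D\boxtimes\BQ_p$ recalled just before the proposition.

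The kernel description is the main obstacle. For one inclusion, if $z=(z_1,z_2)\in\mathrm{Ker}(\mathrm{Res}_{\BQ_p})$, then $n=0$ forces $z_1=0$, and the defining relation of $D\boxtimes\mathbf{P}^1$ becomes $w_*(0)=(1-\varphi\psi)(z_2)$, so $z_2\in\mathrm{Ker}(1-\varphi\psi)=\varphi(D)$. Vanishing of $x^{(n)}$ for $n\geq 1$, together with the explicit action of $\begin{pmatrix} p^n & 0\\ 0&1\end{pmatrix}$ on the second coordinate (which interleaves $\psi$ and $w_*$), should propagate this to $z_2\in\varphi^n(D)$ for every $n$, i.e. $z_2\in D^{\mathrm{nr}}$. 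Conversely, given $z\in D^{\mathrm{nr}}$ one has $\varphi\psi(z)=z$ (writing $z=\varphi(\psi(z))$ and using $\psi(D^{\mathrm{nr}})\subseteq D^{\mathrm{nr}}$), whence $(0,z)\in D\boxtimes\mathbf{P}^1$; an induction on $n$ then shows that $\begin{pmatrix} p^n & 0\\ 0&1\end{pmatrix}(0,z)$ retains the shape $(0,z')$ with $z'\in D^{\mathrm{nr}}$, so its first coordinate vanishes. The subtle point is the propagation step, which demands careful bookkeeping of how the torus action mixes the two coordinates through $w_*$; in the symplectic self-dual setting the usual central twist in Colmez's formulas is trivial, so the argument will follow the strategy of \cite[II.1]{ColpLL}.
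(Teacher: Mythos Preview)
The paper does not give a proof of this proposition at all: it is simply attributed to Colmez as ``the content of \cite[Prop.~II.1.14]{ColpLL}''. So there is nothing to compare your argument against in this paper; you are in effect reconstructing Colmez's original proof, and your outline is in the right spirit.

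Your well-definedness and equivariance steps are fine. For the kernel, your forward inclusion is correct at $n=0,1$ but the propagation you allude to needs a clean inductive statement. The right way to run it is: assuming $x^{(0)}=\cdots=x^{(n)}=0$, show inductively that $\begin{pmatrix}p^m&0\\0&1\end{pmatrix}(0,z_2)=(0,\psi^m(z_2))$ for $m\leq n$ and that $\psi^m(z_2)\in\varphi(D)$; the step uses that if the first coordinate of $\begin{pmatrix}p&0\\0&1\end{pmatrix}(0,y)$ vanishes then the $D\boxtimes\mathbf{P}^1$-relation forces $(1-\varphi\psi)(\psi(y))=0$, i.e.\ $\psi(y)\in\varphi(D)$, and combining with $y=\varphi\psi(y)$ gives $y\in\varphi^2(D)$. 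Iterating yields $z_2\in\varphi^{n+1}(D)$ for all $n$, hence $z_2\in D^{\mathrm{nr}}$. The ``mixing through $w_*$'' you worry about does not actually occur here because once the first coordinate is zero the gluing relation degenerates to $0=(1-\varphi\psi)(z_2')$; no genuine use of $w_*$ beyond its injectivity (it is an involution) is needed.
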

We equip $D^{\mathrm{nr}}$ with the structure of a $\mathbb{Q}_p^\times=\begin{pmatrix}\mathbb{Q}_p^{\times}& 0
\\ 0 & 1\end{pmatrix} $-module by\footnote{Note that $\varphi : D^{\mathrm{nr}}\rightarrow D^{\mathrm{nr}}$ is an isomorphism.} 
$$\begin{pmatrix} a& 0 \\ 0 & 1\end{pmatrix}\cdot z=\sigma_a^{-1}(z) \ (a\in \mathbb{Z}_p^\times), \ \ 
\begin{pmatrix} p& 0 \\ 0 & 1\end{pmatrix}\cdot z=\psi(z)(=\varphi^{-1}(z)), \ \ \begin{pmatrix} p^{-1}& 0 \\ 0 & 1\end{pmatrix}\cdot z=\varphi(z).$$
Then the map 
$$D^{\mathrm{nr}}\rightarrow \mathrm{Ker}(\mathrm{Res}_{\mathbb{Q}_p}) : z\mapsto (0,z)$$ is 
a $\mathbb{Q}_p^\times$-equivariant isomorphism. 

Let 
$D^{\sharp}$ be the largest compact $\psi$-stable sub $R[\![X]\!]$-module of $D$ on which $\psi$ is surjective, 
$D^{\natural}$ be the smallest compact sub $R[\![X]\!]$-module of $D$ which is stable by 
$\psi$ and generates $D$ as an $R(\!(X)\!)$-module (see I.3.2 of \cite{ColpLL}). 
\begin{defn}
For $\circ\in\{\sharp, \natural\}$, define 
$$D^{\circ}\boxtimes \mathbb{Q}_p:=\{(x^{(n)})_{n\in \mathbb{N}}\in D\boxtimes \mathbb{Q}_p\mid x^{(n)}\in D^{\circ} \  (n\in \mathbb{N})\}, $$
$$D^{\circ}\boxtimes \mathbf{P}^1:=\{z \in D\boxtimes \mathbf{P}^1\mid  \mathrm{Res}_{\mathbb{Q}_p}(z)\in D^{\circ}\boxtimes \mathbb{Q}_p\}.$$
\end{defn}
Note that the above $R$-submodules are $B(\mathbb{Q}_p)$-stable. In fact, we have the following. 
\begin{thm}[Colmez]
For $\circ\in\{\sharp, \natural\}$, the $R$-submodules
 $D^{\circ}\boxtimes \mathbf{P}^1 \subset D\boxtimes \mathbf{P}^1$ are
stable by $\mathrm{GL}_2(\mathbb{Q}_p)$. 
\end{thm}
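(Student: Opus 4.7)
The plan is to use the decomposition $\mathrm{GL}_2(\BQ_p) = B(\BQ_p) \cup B(\BQ_p) \cdot w_0 \cdot B(\BQ_p)$, where $w_0 = \begin{pmatrix} 0 & 1 \\ 1 & 0 \end{pmatrix}$, so that stability under $\mathrm{GL}_2(\BQ_p)$ reduces to stability under $B(\BQ_p)$ and under $w_0$. The first is essentially tautological: $\mathrm{Res}_{\BQ_p}$ is $B(\BQ_p)$-equivariant by the preceding proposition, and the submodules $D^\natural$ and $D^\sharp$ are compact $\psi$-stable $R[\![X]\!]$-modules that are also preserved by $\sigma_a$ for $a \in \BZ_p^\times$, encoding the action of the diagonal and unipotent elements of $P(\BQ_p)$; together with the trivial central character coming from the symplectic self-dual hypothesis, this yields $B(\BQ_p)$-stability of $D^\circ \boxtimes \mathbf{P}^1$.

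The theorem thus reduces to $w_0$-stability. Since $w_0$ swaps the coordinates of $(z_1,z_2) \in D \boxtimes \mathbf{P}^1$, the question becomes: if $\mathrm{Res}_{\BQ_p}(z_1,z_2) \in D^\circ \boxtimes \BQ_p$, does the same hold for $(z_2,z_1)$? The defining relation $w_*((1-\varphi\psi)z_1) = (1-\varphi\psi)z_2$ together with the (nearly) involutive nature of $w_*$ up to the twist $\iota$ reduces the problem to showing that Colmez's operator $w_*$ preserves $\mathscr{C}(D) \cap D^\circ$ inside $D^{\psi=0}$. The degenerate contribution from $\mathrm{Ker}(\mathrm{Res}_{\BQ_p}) \cong D^{\mathrm{nr}}$ should be handled separately by a direct verification that $D^{\mathrm{nr}} \cap D^\circ$ is stable under the $\BQ_p^\times$-action (i.e.\ under $\varphi$ and $\varphi^{-1} = \psi$), using the defining properties of $D^\natural$ and $D^\sharp$ together with the identity $\varphi(D^{\mathrm{nr}}) = D^{\mathrm{nr}}$.

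The main obstacle is showing $w_*(D^{\psi=0} \cap D^\circ) \subseteq D^{\psi=0} \cap D^\circ$ for $\circ \in \{\natural, \sharp\}$. The explicit formula \eqref{eq:Col-involution} defining $w_*$ is an infinite sum whose convergence and integrality on compact submodules is delicate: each summand mixes the Iwahori twist $\sigma_{-1/i^2}$, multiplication by $(1+X)^{1/i}$, and the smoothing composite $\varphi^n\psi^n$. Following Colmez, one would exploit the fact that iterates of $\psi$ strictly improve $X$-adic valuation on $D^\natural$ and $D^\sharp$, so that the tail of the series can be bounded uniformly on these compact submodules; for $D^\natural$ the minimality among $\psi$-stable compact generating submodules should force closure under $w_*$, whereas for $D^\sharp$ one uses the $\psi$-surjectivity together with maximality. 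This integrality estimate is the technical heart of the argument, and any alternative route seems to require essentially the same compactness analysis of Colmez's formula.
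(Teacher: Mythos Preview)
Your Bruhat reduction is sound and is indeed the framework behind Colmez's argument: stability under $B(\mathbb{Q}_p)$ is recorded in the paper immediately before the theorem, so the content is exactly the $w_0$-stability, which as you say amounts to showing that $w_*$ preserves the compact submodules $D^\circ$ inside $D^{\psi=0}$.

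The gap is in your proposed proof of this last step. The abstract extremality properties of $D^\natural$ (minimal compact $\psi$-stable generating submodule) and $D^\sharp$ (maximal compact $\psi$-stable submodule with $\psi$ surjective) do not by themselves yield $w_*$-stability: there is no a priori reason the intricate averaging in \eqref{eq:Col-involution} should respect these extremal objects, and the convergence/integrality estimate you sketch is precisely the point that is \emph{not} known to go through directly for a general Artinian $R$. Colmez does not attempt this direct estimate. Instead, he first reduces (via the base-change compatibility in \cite[Prop.~II.2.15]{ColpLL}) to the case of the universal framed deformation $\mathbb{T}^{\Box}_{\overline{\rho}}$ over $R^{\Box}_{\overline{\rho}}$, and then proves $\mathrm{GL}_2(\mathbb{Q}_p)$-stability over the universal ring by a specialization argument: one checks it at a Zariski-dense set of crystalline points (where the $p$-adic local Langlands correspondence is explicit enough to verify stability) and concludes by density, using that the universal ring is reduced and $p$-torsion free. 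This is \cite[Thm.~II.3.3]{ColpLL}.

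The paper's proof simply records this chain of reductions and notes that the required ring-theoretic properties of $R^{\Box}_{\overline{\rho}}$ (reduced, $p$-torsion free, crystalline points Zariski-dense in every irreducible component of the generic fiber) were established unconditionally only recently by B\"ockle--Iyengar--Pa\v{s}k\=unas \cite{BIP}. So the argument is fundamentally deformation-theoretic rather than a direct integrality analysis of $w_*$; your sketch misses this ingredient entirely.
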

\begin{proof} 
It suffices to show that the  
stability holds for a universal (framed) deformation $\BT_{\ov{\rho}}^{\Box}$ over a universal (framed) deformation ring $R_{\ov{\rho}}^{\Box}$ by the proof of  \cite[Prop.~II.2.15]{ColpLL}. 

For the universal case, it is proved in  \cite[Thm.~II.3.3]{ColpLL} under the 
assumption that the corresponding 
universal (framed) deformation 
ring is reduced and $p$-torsion free, and contains 
a Zariski dense subset  consisting of crystalline points in every irreducible component of its generic fiber. 
These properties are established 
for arbitrary universal (framed) deformation 
 rings by the recent work \cite{BIP} of B\"{o}ckle--Iyenger--Pa\v{s}k\=unas. 
\end{proof}

For $\alpha_p=\begin{pmatrix}p^{-1}& 0 \\ 0 & 1\end{pmatrix}$, we have a 
$\Gamma\isom \begin{pmatrix}\mathbb{Z}_p^\times& 0 \\ 0 & 1\end{pmatrix}$-equivariant isomorphism 
\begin{equation}\label{eq, psi natural}
D^{\psi=1}\isom \left(D^{\sharp}\boxtimes \mathbb{Q}_p\right)^{\alpha_p=1} : x\mapsto (x)_{n\in \mathbb{N}}
\end{equation}
since $(D^{\sharp})^{\psi=1}=D^{\psi=1}$ by Proposition II. 4.2 of \cite{Colmira}, via which we identify both sides. 

As composite of $(\mathrm{Res}_{\mathbb{Q}_p})^{\alpha_p=1} : \left(D^{\natural}\boxtimes \mathbf{P}^1\right)^{\alpha_p=1}\rightarrow 
\left(D^{\natural}\boxtimes \mathbb{Q}_p\right)^{\alpha_p=1}$
 and the canonical inclusion $$\left(D^{\natural}\boxtimes \mathbb{Q}_p\right)^{\alpha_p=1}\hookrightarrow 
 \left(D^{\sharp}\boxtimes \mathbb{Q}_p\right)^{\alpha_p=1}=D^{\psi=1}
 \isom H^1_{\mathrm{Iw}}(\mathbb{Q}_p, T),$$ we obtain a $\Gamma\isom \begin{pmatrix}\mathbb{Z}_p^\times& 0 \\ 0 & 1\end{pmatrix}$-equivariant morphism 
 $$\left(D^{\natural}\boxtimes \mathbb{Q}_p\right)^{\alpha_p=1}\rightarrow H^1_{\mathrm{Iw}}(\mathbb{Q}_p, T).$$
 Taking its quotients by $\sigma_a-1$ for a topological generator $a\in \mathbb{Z}_p^\times$ yields an $R$-module morphism 
$$
 \left(D^{\natural}\boxtimes \mathbf{P}^1\right)^{\alpha_p=1}/(\sigma_a-1)\rightarrow H^1_{\mathrm{Iw}}(\mathbb{Q}_p, T)/(\gamma-1)
 \isom H^1(\mathbb{Q}_p,T)
$$
 by part 5) of Proposition~\ref{a}. A key result below is that it is an isomorphism (see~Theorem~\ref{c}). 
 
 We begin with a preliminary. 
 
\begin{lem}\label{b} 
For a topological generator $a\in \mathbb{Z}_p^\times$, 
we have
 $$(D^{\mathrm{nr}})^{\varphi=1, \sigma_a=1}=((D^{\mathrm{nr}})^{\varphi=1})/(\sigma_a-1)=0,$$
 $$ (D^{\mathrm{nr}}/(1-\varphi))/(\sigma_a-1)=(D^{\mathrm{nr}}/(1-\varphi))^{\sigma_a=1}=0.$$
 \end{lem}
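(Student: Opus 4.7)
The plan is to exploit that $D^{\mathrm{nr}}$ is a finite-length $R$-module on which $\varphi$ acts bijectively (standard via Colmez's theory of $D^{\sharp}$, $D^{\natural}$), combined with the Herr-complex description of local Galois cohomology and the generic hypothesis.

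First, I would observe that $(D/D^{\mathrm{nr}})^{\varphi=1}=0$: if $\varphi x-x=y\in D^{\mathrm{nr}}$, then $\varphi^{n}x=x+y+\varphi y+\cdots+\varphi^{n-1}y$, and since each $\varphi^{i}y\in D^{\mathrm{nr}}\subseteq\varphi^{n}(D)$ it follows that $x\in\varphi^{n}(D)$ for all $n$, hence $x\in D^{\mathrm{nr}}$. Consequently $(D^{\mathrm{nr}})^{\varphi=1}=D^{\varphi=1}$, and the Herr complex identifies $D^{\varphi=1,\sigma_{a}=1}$ with $H^{0}(\mathbb{Q}_{p},T)$, which vanishes by the generic assumption (the residual vanishing $H^{0}(\mathbb{Q}_{p},\overline{T}_{\mathfrak{m}})=0$ propagates to $R$ by the standard Artinian devissage). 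This gives the first assertion $((D^{\mathrm{nr}})^{\varphi=1})^{\sigma_{a}=1}=0$. The second, $(D^{\mathrm{nr}})^{\varphi=1}/(\sigma_{a}-1)=0$, then follows from the basic principle that the $R$-lengths of kernel and cokernel of an endomorphism of a finite-length $R$-module agree, applied to $\sigma_{a}-1$ on the finite-length module $(D^{\mathrm{nr}})^{\varphi=1}$. The same principle applied to $1-\varphi$ on $D^{\mathrm{nr}}$ shows $D^{\mathrm{nr}}/(1-\varphi)$ is also of finite $R$-length.

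For the pair of vanishings involving $D^{\mathrm{nr}}/(1-\varphi)$, I would use the symplectic self-duality $T\cong T^{*}(1)$. Via Tate local duality this gives $H^{2}(\mathbb{Q}_{p},T)\cong H^{0}(\mathbb{Q}_{p},T^{*}(1))^{\vee}=0$, and at the level of $(\varphi,\Gamma)$-modules Colmez's pairing combined with the self-duality of $T$ induces a perfect pairing between the finite-length modules $(D^{\mathrm{nr}})^{\varphi=1}$ and $D^{\mathrm{nr}}/(1-\varphi)$ which, up to an appropriate twist, exchanges $\sigma_{a}$-invariants and $\sigma_{a}$-coinvariants. Transferring the vanishings from the first pair through this duality yields
\[
(D^{\mathrm{nr}}/(1-\varphi))^{\sigma_{a}=1}=(D^{\mathrm{nr}}/(1-\varphi))/(\sigma_{a}-1)=0.
\]

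The main obstacle is making this duality precise. A naive attempt fails: because $D(R(1))$ has $\varphi$ acting as multiplication by $p$, the module $D^{\mathrm{nr}}(R(1))$ vanishes over an Artinian base (where $p$ is nilpotent), so Colmez's pairing $D(T)\otimes D(T^{*}(1))\to D(R(1))$ does not literally restrict to a nonzero pairing on the $D^{\mathrm{nr}}$-parts. One must instead either extract the duality through a suitable trace/residue on $D(R(1))$ before restriction, or argue via Matlis duality on the finite-length pieces while tracking the $\varphi$- and $\sigma_{a}$-twists carefully so that the resulting duality has the claimed equivariance. This is the technical heart of the argument; an alternative is to combine the trace relation $\mathrm{Tr}(y)\in((D^{\mathrm{nr}})^{\varphi=1})^{\sigma_{a}=1}=0$ (obtained from iterating $(\sigma_{a}-1)x=(1-\varphi)y$) with the bijectivity of $1-\varphi$ on $(D^{\mathrm{nr}})^{\sigma_{a}=1}$, thereby reducing the remaining assertion to the vanishing of $\hat{H}^{-1}$ of $\sigma_{a}$ on $D^{\mathrm{nr}}$.
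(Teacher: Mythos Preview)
Your argument for the first two vanishings is correct and matches the paper's. The gap is in the second pair, and neither of your proposed routes closes it. The self-duality $T\cong T^*(1)$ does not produce a self-pairing on $D^{\mathrm{nr}}$: Colmez's perfect pairing is between $D^{\sharp}/D^{\natural}$ and $D(T^*(1))^{\mathrm{nr}}$, so after invoking self-duality one only recovers $D^{\mathrm{nr}}\cong(D^{\sharp}/D^{\natural})^{\vee}$, not a pairing $D^{\mathrm{nr}}\times D^{\mathrm{nr}}\to(-)$ exchanging $\varphi$ with $\varphi^{-1}$. Your trace alternative does reduce the question to the vanishing of $\hat H^{-1}$ for the finite cyclic image of $\sigma_a$ acting on $D^{\mathrm{nr}}$, but that vanishing is not obvious and does not follow from what you have already established.

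The paper avoids duality entirely. It packages all four vanishings as $H^0=H^1=H^2=0$ for the Koszul-type complex
\[
C^{\bullet}_{\varphi,\sigma_a}(D^{\mathrm{nr}}):\quad D^{\mathrm{nr}}\xrightarrow{(1-\varphi)\oplus(1-\sigma_a)}D^{\mathrm{nr}}\oplus D^{\mathrm{nr}}\xrightarrow{(1-\sigma_a,\ \varphi-1)}D^{\mathrm{nr}};
\]
finite $\mathbb{Z}_p$-length gives Euler characteristic zero, so one is reduced to $H^0=0$ (your argument) and $H^2=D^{\mathrm{nr}}/(1-\varphi,1-\sigma_a)=0$. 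The latter is proved by d\'evissage on the Jordan--H\"older factors of $D^{\mathrm{nr}}$: each is of the form $\mathbb{F}(\delta)$ for one of the characters $\delta_1,\delta_2$ appearing in a filtration of $\overline{D}$, and $H^2_{\varphi,\sigma_a}(\mathbb{F}(\delta))=\mathbb{F}/\bigl((1-\delta(p)),(1-\delta(a))\bigr)$ vanishes unless $\delta=\mathbf{1}$. In the borderline case $\{\delta_1,\delta_2\}=\{\mathbf{1},\overline{\chi}_p\}$, the generic hypothesis forces $\overline{D}$ to be a nonsplit extension of $\mathbb{F}$ by $\mathbb{F}(1)$, and one checks directly that $\overline{D}^{\mathrm{nr}}=\mathbb{F}(1)$, so $\mathbf{1}$ never occurs as a factor of $D^{\mathrm{nr}}$. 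This composition-series computation, which uses $\varphi$ and $\sigma_a$ simultaneously on one-dimensional pieces rather than attempting to dualize, is the missing ingredient.
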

 \begin{proof}
 Define a complex $$C^*_{\varphi,\sigma_a}(D^{\mathrm{nr}}) : [D^{\mathrm{nr}}\xrightarrow{(1-\varphi)\oplus(1-\sigma_a)}D^{\mathrm{nr}}\oplus D^{\mathrm{nr}}
 \xrightarrow{(1-\sigma_a, \varphi-1)}D^{\mathrm{nr}}]$$
 of degree $[0,2]$. Denoting its cohomology by $H^i_{\varphi,\sigma_a}(D^{\mathrm{nr}})$, 
   we have 
 $$H^0_{\varphi,\sigma_a}(D^{\mathrm{nr}})=(D^{\mathrm{nr}})^{\varphi=1, \sigma_a=1}, \ \ H^2_{\varphi,\sigma_a}(D^{\mathrm{nr}})
 =(D^{\mathrm{nr}})/(1-\varphi,1-\sigma_a).$$
 Moreover, we have a canonical short exact sequence 
 $$0\rightarrow (D^{\mathrm{nr}})^{\varphi=1}/(\sigma_a-1)\rightarrow H^1_{\varphi,\sigma_a}(D^{\mathrm{nr}})\rightarrow 
 (D^{\mathrm{nr}}/(1-\varphi))^{\sigma_a=1}\rightarrow 0.$$
 
 Hence it suffices to show that $H^i_{\varphi,\sigma_a}(D^{\mathrm{nr}})=0$ for all $i$. Since $D^{\mathrm{nr}}$ is a $\mathbb{Z}_p$-module of finite length\footnote{Recall that $R$ is Artinian local by assumption.} 
 by Proposition I.3.3 of \cite{ColpLL}, we have 
 $$\sum_{i=0}^2(-1)^i\mathrm{length}_{\mathbb{Z}_p}(H^i_{\varphi,\sigma_a}(D^{\mathrm{nr}}))=0.$$ Therefore, it suffices to show that 
 $H^i_{\varphi,\sigma_a}(D^{\mathrm{nr}})=0$ for $i\in\{0,2\}$. 
 
 For $i=0$, we have $$H^i_{\varphi,\sigma_a}(D^{\mathrm{nr}})=(D^{\mathrm{nr}})^{\varphi=1,\sigma_a=1}=D^{\varphi=1,\sigma_a=1}\isom 
 H^0(\mathbb{Q}_p, T)=0.$$ 
 Hence, it remains to show that $H^2_{\varphi,\sigma_a}(D^{\mathrm{nr}})=0$. 
 
 When $\overline{D}=D(\overline{T})$ is absolutely irreducible, 
 we have $D^{\mathrm{nr}}=0$ since $\overline{D}^{\mathrm{nr}}=0$. Since the functor $D\mapsto D^{\mathrm{nr}}$ is left exact by Proposition I.3.3 of \cite{ColpLL}, it follows that $H^2_{\varphi,\sigma_a}(D^{\mathrm{nr}})=0$.

 When $\overline{D}$ is not absolutely irreducible, 
 it sits in the 
 following exact sequence
 $$0\rightarrow \mathcal{E}_{\mathbb{F}}(\delta_1)\rightarrow \overline{D}\rightarrow \mathcal{E}_{\mathbb{F}}(\delta_2)
 \rightarrow 0$$ for some characters 
 $\delta_1, \delta_2 : \mathbb{Q}_p^{\times}\rightarrow \mathbb{F}^{\times}$ such that $\delta_1\delta_2=\overline{\chi}_p$, the latter being mod $p$ cyclotomic character. Here, for any character $\delta : \mathbb{Q}_p^{\times}\rightarrow \mathbb{F}^{\times}$, 
 $\mathcal{E}_{\mathbb{F}}(\delta)=\mathcal{E}_{\mathbb{F}}e_{\delta}$ is a $(\varphi,\Gamma)$-module over $\mathcal{E}_{\mathbb{F}}$
 defined by $$\varphi(e_{\delta})=\delta(p)e_{\delta}, \gamma(e_{\delta})=\delta(\ov{\chi}_p(\gamma))e_{\delta}.$$
 Since $\mathcal{E}_{\mathbb{F}}(\delta)^{\mathrm{nr}}=\mathbb{F}(\delta)$ for any character $\delta : \mathbb{Q}_p^\times \rightarrow \mathbb{F}^\times$, 
 we obtain an exact sequence 
 \begin{equation}\label{eq:ex}
 0\rightarrow \mathbb{F}(\delta_1)\rightarrow \overline{D}^{\mathrm{nr}}\rightarrow \mathbb{F}(\delta_2)
 \end{equation}
 of $\mathbb{F}[\varphi,\Gamma]$-modules. Hence, the set of Jordan--H\"older factors of $D^{\mathrm{nr}}$ is a subset of 
 $\{\mathbb{F}(\delta_1), \mathbb{F}(\delta_2)\}$. 
 
 If $\delta_1, \delta_2\not=\mathbf{1}$, then 
 $H^2_{\varphi,\sigma_a}(D^{\mathrm{nr}})=0$ since $H^2_{\varphi,\sigma_a}(\mathbb{F}(\delta))=0$ for any $\delta\not=\mathbf{1}$. 
 If $\delta_1$ or $\delta_2$ equals $\mathbf{1}$, then the other character is $\overline{\chi}_p$. 
 As $T$ is generic, we may assume that $\delta_1=\overline{\chi}_p$.  
  Then the exact sequence \eqref{eq:ex} becomes 
 $$0\rightarrow  \mathcal{E}_{\mathbb{F}}(1)\rightarrow \overline{D}\rightarrow  \mathcal{E}_{\mathbb{F}}\rightarrow 0,$$
which is non-split since $H^0_{\varphi,\sigma_a}(\overline{D})=H^0(\mathbb{Q}_p, \overline{T})=0$ by the generic assumption. 
We claim that the natural inclusion $\mathbb{F}(1)\rightarrow \overline{D}^{\mathrm{nr}}$ is bijective. If not, then we have 
the short exact sequence 
$$0\rightarrow \mathbb{F}(1)\rightarrow \overline{D}^{\mathrm{nr}}\rightarrow \mathbb{F}\rightarrow 0.$$
Taking the associated long exact sequence for $H^i_{\varphi,\sigma_a}(-)$, 
 we obtain the following exact sequence
$$0\rightarrow H^0_{\varphi,\sigma_a}( \mathbb{F}(1))\rightarrow H^0_{\varphi,\sigma_a}(\overline{D}^{\mathrm{nr}})\rightarrow 
H^0_{\varphi,\sigma_a}( \mathbb{F})\rightarrow 
H^1_{\varphi,\sigma_a}( \mathbb{F}(1))\rightarrow\cdots .$$
Since $H^i_{\varphi,\sigma_a}( \mathbb{F}(1))=0$ for all $i$, it follows that 
$$H^0_{\varphi,\sigma_a}(\overline{D}^{\mathrm{nr}})\isom 
H^0_{\varphi,\sigma_a}( \mathbb{F})=\mathbb{F},$$
which contradicts our assumption that $H^0_{\varphi,\sigma_a}(\overline{D}^{\mathrm{nr}})=H^0_{\varphi,\sigma_a}(\overline{D})=0$. 
Therefore, we have $\mathbb{F}(1)=\overline{D}^{\mathrm{nr}}$. Hence we conclude that $H^2_{\varphi,\sigma_a}(D^{\mathrm{nr}})=0$ since 
$H^2_{\varphi,\sigma_a}(\mathbb{F}(1))=0$. 
 \end{proof}
 \begin{thm}\label{c}
For a topological generator $a\in \mathbb{Z}_p^\times$, 
the map  $$\left(D^{\natural}\boxtimes \mathbf{P}^1\right)^{\alpha_p=1}/(\sigma_a-1)\rightarrow 
 H^1(\mathbb{Q}_p,T)$$ is an isomorphism. 
 \end{thm}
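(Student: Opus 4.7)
The plan is to apply the long exact sequences arising from the $B(\mathbb{Q}_p)$-equivariant short exact sequence
\[
0 \to D^{\mathrm{nr}} \to D^{\natural}\boxtimes \mathbf{P}^1 \xrightarrow{\mathrm{Res}_{\mathbb{Q}_p}} D^{\natural}\boxtimes \mathbb{Q}_p \to 0,
\]
in which $D^{\mathrm{nr}}\hookrightarrow D^{\natural}\boxtimes \mathbf{P}^1$ is given by $z\mapsto (0,z)$ and $\alpha_p$ restricts to $\varphi$ on $D^{\mathrm{nr}}$. The kernel identification is provided by the preceding proposition; surjectivity is obtained by lifting any $y\in D^{\natural}\boxtimes \mathbb{Q}_p$ to some $(z_1,z_2)\in D\boxtimes \mathbf{P}^1$ via the surjectivity of $\mathrm{Res}_{\mathbb{Q}_p}$ on the larger module, and observing that such a lift automatically lies in $D^{\natural}\boxtimes \mathbf{P}^1$ because its image $y$ already lies in $D^{\natural}\boxtimes \mathbb{Q}_p$ by hypothesis.

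A direct computation, using that $\alpha_p$-invariance of a $\psi$-compatible sequence $(x^{(n)})_n$ forces the sequence to be constant at a $\psi$-fixed element, then gives
\[
(D^{\natural}\boxtimes \mathbb{Q}_p)^{\alpha_p=1} = (D^{\natural})^{\psi=1}.
\]
Combined with the standard Colmez-theoretic inclusion $D^{\psi=1}\subseteq D^{\natural}$ (so that $(D^{\natural})^{\psi=1}=D^{\psi=1}$) together with Propositions \ref{prop, D-H^1} and \ref{a}(5), this canonically identifies the quotient $(D^{\natural}\boxtimes \mathbb{Q}_p)^{\alpha_p=1}/(\sigma_a-1)$ with $H^1(\mathbb{Q}_p,T)$. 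The theorem therefore reduces to showing that $\mathrm{Res}_{\mathbb{Q}_p}$ induces an isomorphism
\[
(D^{\natural}\boxtimes \mathbf{P}^1)^{\alpha_p=1}/(\sigma_a-1)\xrightarrow{\sim}(D^{\natural}\boxtimes \mathbb{Q}_p)^{\alpha_p=1}/(\sigma_a-1).
\]

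To establish this, I would take the long exact sequence in $\alpha_p$-Koszul cohomology of the short exact sequence above, split it into two short exact sequences of $\sigma_a$-modules at the connecting map, and apply the long exact sequence in $\sigma_a$-Koszul cohomology to each. Noting that each submodule of a finite-length $R$-module $M$ satisfying $M^{\sigma_a=1}=M/(\sigma_a-1)=0$ inherits the vanishing of its $\sigma_a$-Koszul cohomology, a routine diagram chase shows that the displayed map is an isomorphism provided all four $(\varphi,\sigma_a)$-Koszul cohomology groups of $D^{\mathrm{nr}}$ vanish, namely
\[
(D^{\mathrm{nr}})^{\varphi=1,\sigma_a=1}=(D^{\mathrm{nr}})^{\varphi=1}/(\sigma_a-1)=(D^{\mathrm{nr}}/(\varphi-1))^{\sigma_a=1}=D^{\mathrm{nr}}/(\varphi-1,\sigma_a-1)=0.
\]
These are precisely the four identities furnished by Lemma \ref{b}, whose proof uses the genericity hypothesis via a case analysis on the Jordan--H\"older factors of $\overline{D}^{\mathrm{nr}}$. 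The hard part is the bookkeeping for this double Koszul cohomology argument, where each of the four forms of vanishing in Lemma \ref{b} plays a distinct role in turning the long exact sequences into an isomorphism; it is fortunate (and clearly by design) that Lemma \ref{b} provides exactly what is needed.
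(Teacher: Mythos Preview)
Your overall strategy matches the paper's: reduce to the $\mathrm{Res}_{\mathbb{Q}_p}$ step via the short exact sequence with kernel $D^{\mathrm{nr}}$, then run a double Koszul argument controlled by Lemma~\ref{b}. However, there is a genuine gap at the point where you write
\[
(D^{\natural}\boxtimes \mathbb{Q}_p)^{\alpha_p=1}=(D^{\natural})^{\psi=1}=D^{\psi=1}.
\]
The first equality is fine, but the second is not the ``standard Colmez-theoretic inclusion'' you claim. What Colmez proves (Proposition~II.4.2 of \cite{Colmira}) is that $D^{\psi=1}=(D^{\sharp})^{\psi=1}$, i.e.\ $D^{\psi=1}\subseteq D^{\sharp}$; there is no general statement placing $D^{\psi=1}$ inside the \emph{smaller} lattice $D^{\natural}$. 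Indeed, the discrepancy $(D^{\sharp}/D^{\natural})^{\psi=1}$ is Pontryagin dual to $D^{\mathrm{nr}}/(1-\varphi)$ (via \cite[Prop.~I.3.4(ii)]{ColpLL}), and Lemma~\ref{b} only shows this module has trivial $\sigma_a$-Koszul cohomology, not that it vanishes outright. In the reducible case it can be nonzero.

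The paper therefore inserts an additional step you are missing: it uses the exact sequence
\[
0\to D^{\natural}\boxtimes \mathbb{Q}_p\to D^{\sharp}\boxtimes \mathbb{Q}_p\to D^{\sharp}/D^{\natural}\to 0
\]
from \cite[Prop.~I.3.10]{ColpLL}, takes $\alpha_p$-invariants (using that $\alpha_p-1$ is surjective on $D^{\natural}\boxtimes \mathbb{Q}_p$ by \cite[Prop.~I.3.16(i)]{ColpLL}) to obtain
\[
0\to (D^{\natural}\boxtimes \mathbb{Q}_p)^{\alpha_p=1}\to D^{\psi=1}\to (D^{\sharp}/D^{\natural})^{\psi=1}\to 0,
\]
and then applies the duality with $D^{\mathrm{nr}}$ together with the two vanishings $(D^{\mathrm{nr}}/(1-\varphi))^{\sigma_a=1}=(D^{\mathrm{nr}}/(1-\varphi))/(\sigma_a-1)=0$ from Lemma~\ref{b} to conclude that the inclusion becomes an isomorphism \emph{after} passing to $(\sigma_a-1)$-coinvariants. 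This is precisely where the two ``$D^{\mathrm{nr}}/(1-\varphi)$'' identities in Lemma~\ref{b} are spent; the two ``$(D^{\mathrm{nr}})^{\varphi=1}$'' identities then handle the $\mathrm{Res}_{\mathbb{Q}_p}$ step exactly as you outline. So your instinct that all four pieces of Lemma~\ref{b} are needed is correct, but they are distributed across \emph{two} comparisons ($D^{\natural}\boxtimes\mathbf{P}^1$ versus $D^{\natural}\boxtimes\mathbb{Q}_p$, and $D^{\natural}\boxtimes\mathbb{Q}_p$ versus $D^{\sharp}\boxtimes\mathbb{Q}_p$), not the single one you describe.
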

 \begin{proof}
 
 In view of Proposition \ref{prop, D-H^1} and \eqref{eq, psi natural}, 
 it suffices to show that the maps 
 $$\left(D^{\natural}\boxtimes \mathbf{P}^1\right)^{\alpha_p=1}/(\sigma_a-1)\rightarrow 
 \left(D^{\natural}\boxtimes \mathbb{Q}_p\right)^{\alpha_p=1}/(\sigma_a-1)$$ 
 induced by $\mathrm{Res}_{\mathbb{Q}_p}$ and 
 $$\left(D^{\natural}\boxtimes \mathbb{Q}_p\right)^{\alpha_p=1}/(\sigma_a-1)\rightarrow 
 \left(D^{\sharp}\boxtimes \mathbb{Q}_p\right)^{\alpha_p=1}/(\sigma_a-1)$$
 induced by the inclusion $\left(D^{\natural}\boxtimes \mathbb{Q}_p\right)^{\alpha_p=1}\hookrightarrow 
 \left(D^{\sharp}\boxtimes \mathbb{Q}_p\right)^{\alpha_p=1}$ are isomorphisms.

 As for the latter map, we utilize the following $\begin{pmatrix}\mathbb{Q}_p^\times& 0 \\ 0 & 1\end{pmatrix}$-equivariant exact sequence 
 $$0\rightarrow D^{\natural}\boxtimes \mathbb{Q}_p\hookrightarrow 
 D^{\sharp}\boxtimes \mathbb{Q}_p\xrightarrow{(x^{(n)})_{n\in \mathbb{N}}\mapsto x^{(0)}} D^{\sharp}/D^{\natural}\rightarrow 0,$$
as in Proposition I.3.10 of \cite{ColpLL}, where $\begin{pmatrix}\mathbb{Q}_p^\times& 0 \\ 0 & 1\end{pmatrix}$ acts on $D^{\sharp}/D^{\natural}$ by\footnote{Note that $\psi$ is bijective on $D^{\sharp}/D^{\natural}$.}
$$\begin{pmatrix}a& 0 \\ 0 & 1\end{pmatrix}\cdot \overline{x}=\overline{\sigma_a(x)} \  (a\in \mathbb{Z}_p^\times), \ \  
 \begin{pmatrix}p^{-1}& 0 \\ 0 & 1\end{pmatrix}\cdot \overline{x}=\overline{\psi(x)}$$  for 
 $\overline{x}\in D^{\sharp}/D^{\natural}$. 
 Since $\alpha_p-1$ is surjective on $D^{\natural}\boxtimes \mathbb{Q}_p$ by  \cite[Prop.~I.3.16 (i)]{ColpLL}, we obtain the following 
 $\Gamma\isom \begin{pmatrix}\mathbb{Z}_p^\times& 0 \\ 0 & 1\end{pmatrix}$-equivariant 
 short exact sequence
 $$0\rightarrow \left(D^{\natural}\boxtimes \mathbb{Q}_p\right)^{\alpha_p=1}\rightarrow 
 \left(D^{\sharp}\boxtimes \mathbb{Q}_p\right)^{\alpha_p=1}\rightarrow (D^{\sharp}/D^{\natural})^{\psi=1}\rightarrow 0.$$
 Hence, it suffices to show that 
 $((D^{\sharp}/D^{\natural})^{\psi=1})^{\sigma_a=1}=(D^{\sharp}/D^{\natural})^{\psi=1}/(\sigma_a-1)=0$. 
 Since there exists a perfect pairing 
 $$\{\ , \} : D^{\sharp}/D^{\natural}\times D^{\mathrm{nr}}\rightarrow \mathbb{Q}_p/\mathbb{Z}_p$$ such that 
 $$\{\sigma_a(x), y\}=\{x, \sigma_a^{-1}(y)\}  \text{ and } \{\psi(x), y\}=\{x, \varphi(y)\}$$  for $x\in D^{\sharp}/D^{\natural}$ and 
 $y\in D^{\mathrm{nr}}\isom D^*(1)^{\mathrm{nr}}$  (cf.~\cite[Prop.~I.3.4 (ii)]{ColpLL}), we obtain 
 $$(((D^{\sharp}/D^{\natural})^{\psi=1})^{\sigma_a=1})^{\vee}\isom (D^{\mathrm{nr}}/(1-\varphi))/(\sigma_a-1)=0$$
 and 
 $$((D^{\sharp}/D^{\natural})^{\psi=1}/(\sigma_a-1))^{\vee}\isom (D^{\mathrm{nr}}/(1-\varphi))^{\sigma_a=1}=0$$
 by Lemma \ref{b}.
 
 It remains to show that $\left(D^{\natural}\boxtimes \mathbf{P}^1\right)^{\alpha_p=1}/(\sigma_a-1)\rightarrow 
 \left(D^{\natural}\boxtimes \mathbb{Q}_p\right)^{\alpha_p=1}/(\sigma_a-1)$ is an $R$-module isomorphism.  We note that 
 the map $\mathrm{Res}_{\mathbb{Q}_p}$ induces the following $\mathbb{Q}_p^\times$-equivariant exact sequence
$$0\rightarrow D^{\mathrm{nr}}\xrightarrow{z\mapsto (0,z)}D^{\natural}\boxtimes \mathbf{P}^1\xrightarrow{\mathrm{Res}_{\mathbb{Q}_p}} D^{\natural}\boxtimes \mathbb{Q}_p\rightarrow 0$$
by (the same proof as in) Remarque II.2.3 (ii) of \cite{ColpLL}. In turn we obtain the following $\Gamma$-equivariant exact sequence
$$0\rightarrow (D^{\mathrm{nr}})^{\varphi=1}\rightarrow (D^{\natural}\boxtimes \mathbf{P}^1)^{\alpha_p=1}\rightarrow (D^{\natural}\boxtimes \mathbb{Q}_p)^{\alpha_p=1}
\rightarrow D^{\mathrm{nr}}/(1-\varphi).$$
Denote the image of $(D^{\natural}\boxtimes \mathbb{Q}_p)^{\alpha_p=1}
\rightarrow D^{\mathrm{nr}}/(1-\varphi)$ by $M$. Since  
$$M^{\sigma_a=1}\subset (D^{\mathrm{nr}}/(1-\varphi))^{\sigma_a=1}=0$$ by Lemma \ref{b}, 
it follows that $M^{\sigma_a=1}=0$, and so $M/(\sigma_a-1)=0$. 
In view of this vanishing and the fact that $(D^{\mathrm{nr}})^{\varphi=1}/(1-\sigma_a)=0$ (cf.~Lemma \ref{b}), it follows that 
the map \[\left(D^{\natural}\boxtimes \mathbf{P}^1\right)^{\alpha_p=1}/(\sigma_a-1)\rightarrow 
 \left(D^{\natural}\boxtimes \mathbb{Q}_p\right)^{\alpha_p=1}/(\sigma_a-1)\]
  is bijective by a simple diagram chase. 
  \end{proof}
\subsubsection{The involution}\label{ss:Col-involution}
In this subsection we construct the sought after involution on $H^1(\BQ_p,T)$.

Put $w=\begin{pmatrix}0 & 1\\ 1& 0\end{pmatrix}$. 
Since $\alpha_p^{-1}\cdot w=\begin{pmatrix}p & 0 \\ 0 & p\end{pmatrix}\cdot w\cdot\alpha_p$, 
we have\footnote{Note that $D^{\natural}\boxtimes \mathbf{P}^1$ has trivial central character.} 
$$\alpha_p^{-1}(w(x))=
\begin{pmatrix}p & 0 \\ 0 & p\end{pmatrix}(w(\alpha_p(x)))
=w(\alpha_p(x))=w(x)$$
for any $x\in \left(D^{\natural}\boxtimes\mathbf{P}^1\right)^{\alpha_p=1}=
\left(D^{\natural}\boxtimes \mathbf{P}^1\right)^{\alpha_p^{-1}=1}$. 

Hence $w$ induces an involution 
$$w : \left(D^{\natural}\boxtimes \mathbf{P}^1\right)^{\alpha_p=1}\rightarrow \left(D^{\natural}\boxtimes \mathbf{P}^1\right)^{\alpha_p=1}$$
which satisfies $w(\sigma_a(x))=\sigma_a^{-1}(w(x))$. In turn, it induces an $R$-module involution
$$w : \left(D^{\natural}\boxtimes \mathbf{P}^1\right)^{\alpha_p=1}/(\sigma_a-1)\rightarrow \left(D^{\natural}\boxtimes \mathbf{P}^1\right)^{\alpha_p=1}/(\sigma_a-1).$$ 
\begin{thm}\label{thm, involution}
Let $(R, T)$ be a generic symplectic self-dual $G_{\BQ_p}$-representation of rank two. Then 
the above involution $w$ is compatible with Colmez's operator $w_* : D^{\psi=0}\rightarrow D^{\psi=0}$ as in \eqref{eq:Col-involution}. 
Moreover, it induces an $R$-module involution  
$$w'_T : H^1(\mathbb{Q}_p, T)\rightarrow H^1(\mathbb{Q}_p, T)$$
by Theorem \ref{c}, 
which coincides with the involution $w_T$ 
as in \S\ref{subsubsection, rank 2 kato self-dual}.
\end{thm}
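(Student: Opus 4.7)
The plan is to prove the theorem in three steps, with the identification $w'_T = w_T$ being the substantive part.

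First, I would verify the compatibility of $w$ with Colmez's operator $w_{*}$. By the very definition of $D\boxtimes \mathbf{P}^1$, any $z = (z_1, z_2) \in (D^{\natural}\boxtimes \mathbf{P}^1)^{\alpha_p=1}$ satisfies $w_*((1-\varphi\psi)(z_1)) = (1-\varphi\psi)(z_2)$. Under the identification \eqref{eq, psi natural}, $z_1$ corresponds to an element of $D^{\psi=1}$, so $\psi(z_1) = z_1$ and the relation reads $w_*((1-\varphi)z_1) = (1-\varphi\psi)(z_2)$. Hence the action $w(z) = (z_2, z_1)$ corresponds, via $\mathrm{Res}_{\mathbb{Q}_p}$ and modulo the kernel $D^{\mathrm{nr}}$, to the assignment $z_1 \mapsto z_2$ characterized by the restriction of $w_*$ to $\mathscr{C}(D) = (1-\varphi)D^{\psi=1}$. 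This is the compatibility asserted; the well-definedness of the assignment uses the stability $w_*(\mathscr{C}(D)) \subset \mathscr{C}(D)$ established through the proof of Theorem \ref{c}, namely the vanishing in Lemma \ref{b}.

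Second, the construction of the involution $w'_T$ on $H^1(\mathbb{Q}_p, T)$ is then immediate. The relation $w \circ \sigma_a = \sigma_a^{-1} \circ w$ on $\alpha_p$-invariants (derived in the paragraph preceding the theorem from the braid-type identity $\alpha_p^{-1} \cdot w = \mathrm{diag}(p,p) \cdot w \cdot \alpha_p$) ensures that $w$ descends to an $R$-module involution on the quotient $(D^{\natural}\boxtimes \mathbf{P}^1)^{\alpha_p=1}/(\sigma_a - 1)$, and Theorem \ref{c} identifies this quotient with $H^1(\mathbb{Q}_p, T)$, yielding $w'_T$.

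The main obstacle is to show $w'_T = w_T$. The key input is that $\varepsilon_{R,\zeta}(T)$ for rank two representations is itself constructed from $w_*$ in \cites{NaKato,J}. Concretely, under the isomorphism $H^1(\mathbb{Q}_p, T) \cong D^{\psi=1}/(\gamma-1)$ of Proposition \ref{prop, D-H^1}, the skew-symmetric pairing $[-,-]_{T,\zeta}$ underlying $\varepsilon_{R,\zeta}(T)$ admits an explicit expression as a $(\varphi,\Gamma)$-module cup product involving $w_*$, and the Tate pairing $\{-,-\}_T$ admits an analogous description in the same framework. Combining these with the defining relation $\{w_T(x), y\}_T \otimes \zeta = [x, y]_{T,\zeta}$ from Lemma \ref{2}, one sees that $w_T$ must be the transport of $w$ through Theorem \ref{c}, i.e. $w_T = w'_T$. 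The verification is a direct but notationally intricate comparison, requiring careful tracking of sign conventions, the duality identification \eqref{eq:dual}, and the explicit cocycle formulas implicit in \cites{NaKato,J}; the substance reduces to matching explicit formulas rather than any further conceptual input. I expect this bookkeeping, particularly reconciling the twist by $\Delta_{R,2}(T)$ and the normalization of the Tate cup product against the Colmez pairing, to be the hardest part of the argument.
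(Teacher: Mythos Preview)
Your proposal is correct and follows essentially the same approach as the paper. The paper's proof is extremely terse: for the compatibility with $w_*$ it cites the proof of \cite[Prop.~V.2.1]{ColpLL}, and for the identification $w'_T = w_T$ it cites \S3.2.2 and pp.~313--314 of \cite{NaKato} to verify that $w'_T$ satisfies the same characterization property \eqref{equation, comparison of pairings} as $w_T$, which is precisely the strategy you outline (your reference to Lemma~\ref{2} and the construction of $\varepsilon_{R,\zeta}(T)$ via $w_*$ in \cite{NaKato} is exactly the point).
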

\begin{proof} 
The compatibility with $w_*$ follows from the proof of  Proposition V.2.1 of \cite{ColpLL}. 

By \S3.2.2 and pp.~313-314  of \cite{NaKato}, we see that $w'_T$ also satisfies the characterization 
property \eqref{equation, comparison of pairings} of $w_T$. Hence, the two constructions coincide. 
\end{proof}
\subsection{Proofs of Theorems~\ref{thm, main} and \ref{thm, main3}}
\begin{proof} The existence of the local sign decomposition as in Theorem \ref{thm, main} is a consequence of 
  the above two different constructions of the involution $w_T$, which coincide by Theorem~\ref{thm, involution}.

The functorial property 2) is clear from the second construction of the local sign decomposition since Colmez's operator $w_*$ is functorial. 
The property 3) follows from part 4) of Proposition \ref{a} and the functorial property of $w_*$, or,  from the property 1) of Conjecture \ref{conj, epsilon} which is 
proved in Theorem \ref{thm, rank two epsilon}. 
The first property follows from Propositions \ref{prop, decomposition and Lagrangian}. 

 As for the fourth property, note that $H^1_{\hat{\varepsilon}(V)}(\BQ_p ,V)=H^1_{\rm f}(\BQ_p,V)$ by Proposition \ref{prop, de Rham property}. For a $G_{\BQ_p}$-stable lattice $T\subset V$, we have $H^1_{\hat{\varepsilon}(V)}(\BQ_p ,T) \subset H^1_{\rm f} (\BQ_p , T)$. Since both are Lagrangian $R$-submodules of $H^1 (\BQ_p , T)$ of rank one, we conclude that 
 $H^1_{\hat{\varepsilon}(V)}(\BQ_p ,T) = H^1_{\rm f} (\BQ_p , T)$ by  
 Lemma~\ref{prop, lagrangian}.  
 
Similarly, 
the involution $w_{T\otimes_{\mathbb{Z}_p}\mathbb{Q}_p}$ defined in \S 4.2 and Proposition \ref{a3} lead to Theorem~\ref{thm, main3}. 
\end{proof}
\begin{remark}
 The above verification of the de Rham property 4)
relies on Kato's local epsilon conjecture. 
It is based on \cite[Prop.~3.14]{NaKato}, 
which was the key and the deepest part of the third-named author's proof of the conjecture for rank two  representations.
\end{remark}

\section{Characterization of the signed submodules}\label{s: sgn submodules}

In this section, we give a characterization of the signed submodules appearing in the local sign decomposition 
via  de Rham specializations (see Proposition~\ref{prop, decomp by using Z}), and an application: 
 the vanishing of the universal norm subgroup over a universal local deformation ring (see Theorem~\ref{thm, univ}). It relies on  Zariski density of crystalline points with a given sign on the deformation ring as established in appendix \ref{s:appendix}. We also give some examples of the signed submodules for reducible representations (see Proposition \ref{prop, reducible sign}).

The characterization will be used in our proof of Rubin-type conjectures (see~section~\ref{s:Rubin}), and the reducible example will be used in our compatibility of Mazur--Rubin and Deligne--Langlands local constants  for non-generic representations (see~\S\ref{subsubsection, anomalous}). 

In this section we fix an odd prime $p$.

\subsection{de Rham characterization and universal norms}
The characterization of signed submodules 
as in Proposition~\ref{prop, uniqueness}
employs one de Rham specialization and rigidity of Lagrangian submodules of modules of rank two
(cf.~Lemma~\ref{prop, lagrangian}). 
In this subsection  we show that sufficiently many de Rham specializations also characterize 
the decomposition without the Lagrangian property. 

\subsubsection{The characterization}
Let $(R, T)$ be a generic symplectic self-dual $G_{\BQ_p}$-representation. 
For a de Rham specialization $s$ given by $R \rightarrow R_s$, put $T_s:=T\otimes_R R_s$. 
\begin{defn}
For $\varepsilon \in \{\pm\}$, 
define an $R$-submodule $Z_{\varepsilon}(T)\subset H^1(\BQ_p, T)$ by 
\[
Z_\varepsilon(T):=\{v \in H^1(\BQ_p, T)\,|\, s(v) \in H^1_\mathrm{f}(\BQ_p, T_s) \; \text{for any de Rham specialization $s$ 
with  $\hat{\varepsilon}(T_s)=-\varepsilon \cdot 1$} \}. 
\]
The universal norm subgroup $H^1_{\mathrm{f}}(\BQ_p, T)$  of $H^1(\BQ_p, T)$ is defined by\footnote{If $T$ is de Rham, then $H^1_{\rm f}(\BQ_p,T)$ coincides with the Bloch--Kato subgroup.} 
\[H^1_{\mathrm{f}}(\BQ_p, T):=Z_+(T)\cap Z_-(T),\]
i.e. it consists of elements $v$ whose any de Rham specialization $s(v)$ belongs to $H^1_{\rm f}(\BQ_p,T_s)$.
\end{defn}
The main result of this subsection is the following. 
\begin{prop}\label{prop, decomp by using Z}
Let $(R, T)$ be a generic symplectic self-dual $G_{\BQ_p}$-representation of rank two and 
$\varepsilon \in \{\pm \}$. 
 Suppose that the de Rham specializations with sign $-\varepsilon$ are Zariski dense for the pair $(R, T)$. 
 Then $$Z_\varepsilon(T)=H^1_{\varepsilon}(\BQ_p, T).$$ 
 In particular,  if the $\varepsilon$-de Rham specializations are Zariski dense for any $\varepsilon \in\{\pm \}$,  
 then $H^1_{\mathrm{f}}(\BQ_p, T)=\{0\}$. 
\end{prop}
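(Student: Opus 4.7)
The plan is to prove both inclusions separately, using only the functorial properties of the local sign decomposition in Theorem~\ref{thm, main}, plus the Zariski density hypothesis at one crucial step.

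First I would establish the inclusion $H^1_\varepsilon(\BQ_p,T) \subseteq Z_\varepsilon(T)$, which does not need Zariski density and holds even without it. Take $v\in H^1_\varepsilon(\BQ_p,T)$, and let $s: R\to R_s$ be any de Rham specialization with $\hat{\varepsilon}(T_s)=-\varepsilon$. By the base change property~3) of Theorem~\ref{thm, main}, $s(v)\in H^1_\varepsilon(\BQ_p,T_s)$. By the de Rham property~4), $H^1_\varepsilon(\BQ_p,T_s)=H^1_{-\hat{\varepsilon}(T_s)}(\BQ_p,T_s)=H^1_{\rm f}(\BQ_p,T_s)$. So $s(v)\in H^1_{\rm f}(\BQ_p,T_s)$, i.e. $v\in Z_\varepsilon(T)$.

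For the reverse inclusion, take $v\in Z_\varepsilon(T)$ and decompose $v=v_++v_-$ via the local sign decomposition. The goal is to show $v_{-\varepsilon}=0$. Fix a de Rham specialization $s$ with $\hat{\varepsilon}(T_s)=-\varepsilon$. By base change, $s(v_\pm)\in H^1_{\pm}(\BQ_p,T_s)$, and the direct sum decomposition $H^1(\BQ_p,T_s)=H^1_+(\BQ_p,T_s)\oplus H^1_-(\BQ_p,T_s)$ is honest. Since $s(v)\in H^1_{\rm f}(\BQ_p,T_s)=H^1_\varepsilon(\BQ_p,T_s)$ by property~4), the component $s(v_{-\varepsilon})\in H^1_{-\varepsilon}(\BQ_p,T_s)$ must vanish. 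Since $H^1_{-\varepsilon}(\BQ_p,T)$ is free of rank one over $R$ with some basis $e$, write $v_{-\varepsilon}=re$; then $s(r)\cdot s(e)=0$ in $H^1_{-\varepsilon}(\BQ_p,T_s)$, and since $s(e)$ is a basis of this free rank-one $R_s$-module (again by base change), this forces $s(r)=0$ in $R_s$.

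At this stage the Zariski density hypothesis enters: the element $r\in R$ lies in the kernel of every de Rham specialization of sign $-\varepsilon$, and Zariski density of such specializations (in the appropriate generic fiber of $\Spec R$, as made precise by the density results referenced via Appendix~\ref{s:appendix}) forces $r=0$. Hence $v_{-\varepsilon}=0$ and $v=v_\varepsilon\in H^1_\varepsilon(\BQ_p,T)$. Finally, when de Rham specializations of both signs are Zariski dense, one obtains $Z_\pm(T)=H^1_\pm(\BQ_p,T)$, and the last assertion is immediate from the directness of the decomposition: $H^1_{\rm f}(\BQ_p,T)=Z_+(T)\cap Z_-(T)=H^1_+(\BQ_p,T)\cap H^1_-(\BQ_p,T)=\{0\}$.

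The main obstacle is the last step of the argument: rigorously extracting the vanishing of $r\in R$ (as opposed to $r\in R[1/p]$, or $r$ modulo nilpotents) from vanishing at a Zariski dense set of de Rham specializations. This is where the precise formulation of Zariski density from the appendix is needed—for the universal framed deformation rings arising in applications this is clean by the reducedness and $p$-torsion freeness results of B\"ockle--Iyengar--Pa\v{s}k\=unas, but for a general pair $(R,T)$ one has to take the notion of Zariski density to precisely mean that any $r\in R$ vanishing at all such points is zero (otherwise the statement would have to be replaced by an equality after quotienting by the appropriate ideal), which is compatible with the usage in Theorem~\ref{thm, universal norm zero}.
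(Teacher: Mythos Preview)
Your proof is correct and follows essentially the same approach as the paper: the paper also establishes the easy inclusion $H^1_\varepsilon(\BQ_p,T)\subseteq Z_\varepsilon(T)$ via base change and the de Rham property, then for the reverse inclusion decomposes $x\in Z_\varepsilon(T)$ as $x=r_+v_++r_-v_-$, observes that $s(x_{-\varepsilon})\in H^1_{-\varepsilon}(\BQ_p,T_s)\cap H^1_\varepsilon(\BQ_p,T_s)=\{0\}$ for each de Rham specialization $s$ of sign $-\varepsilon$, and invokes Zariski density to conclude $r_{-\varepsilon}=0$. Your closing caveat about what ``Zariski dense'' must mean for a general $R$ is well taken and matches how the paper implicitly uses the term.
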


The proof of Proposition \ref{prop, decomp by using Z} in the next subsection yields the following. 

\begin{cor}\label{cor, decomp by using Z}
Let $S$ be a subset of the set of specializations of $R$.  
For $\varepsilon \in\{\pm \}$, 
put 
\[
Z_{S, \varepsilon}(T):=\{v \in H^1(\BQ_p, T)\,|\, s(v) \in H^1_\mathrm{f}(\BQ_p, T_s) \; \text{for any de Rham specialization $s \in S$ 
with $\hat{\varepsilon}(T_s)=-\varepsilon \cdot 1$} \}. 
\]
Suppose that $S$ contains 
Zariski dense de Rham specializations with sign $-\varepsilon$.  
Then 
\[Z_{S, \varepsilon}(T)=Z_{\varepsilon}(T)=H^1_\varepsilon(\BQ_p,T).\] 
\end{cor}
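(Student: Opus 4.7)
The plan is to extract Corollary~\ref{cor, decomp by using Z} directly from the proof of Proposition~\ref{prop, decomp by using Z}, by observing that this proof uses only the Zariski density of sign-$(-\varepsilon)$ de Rham points and does not actually require these points to exhaust all such specializations. First I would record the tautological inclusion $Z_\varepsilon(T) \subseteq Z_{S,\varepsilon}(T)$: imposing the defining condition on a smaller set of specializations yields a larger submodule. Combined with the equality $Z_\varepsilon(T) = H^1_\varepsilon(\BQ_p, T)$ of Proposition~\ref{prop, decomp by using Z}, the task reduces to establishing the reverse inclusion $Z_{S,\varepsilon}(T) \subseteq H^1_\varepsilon(\BQ_p, T)$.

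For the reverse inclusion I would decompose any $v \in Z_{S,\varepsilon}(T)$ as $v = v_+ + v_-$ with $v_\pm \in H^1_\pm(\BQ_p, T)$ using the local sign decomposition of Theorem~\ref{thm, main}. For each de Rham $s \in S$ with $\hat{\varepsilon}(T_s) = -\varepsilon$, the base change property~3) of Theorem~\ref{thm, main} gives $s(v_\pm) \in H^1_\pm(\BQ_p, T_s)$, while the de Rham property~4) identifies $H^1_\varepsilon(\BQ_p, T_s) = H^1_{\rm f}(\BQ_p, T_s)$. Since by hypothesis $s(v) \in H^1_{\rm f}(\BQ_p, T_s) = H^1_\varepsilon(\BQ_p, T_s)$, the direct sum decomposition at the specialization forces $s(v_{-\varepsilon}) = 0$.

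The remaining step is to deduce $v_{-\varepsilon} = 0$ from the vanishing of its specializations at a Zariski dense subset of points: since $H^1_{-\varepsilon}(\BQ_p, T)$ is free of rank one over $R$, writing $v_{-\varepsilon}$ as a scalar multiple of a fixed generator reduces this to the fact that an element of $R$ whose image vanishes at a Zariski dense subset of $\Spec R$ must itself be zero. This is precisely the mechanism already deployed in the proof of Proposition~\ref{prop, decomp by using Z}, which is why I do not expect a genuine obstacle here; the only substantive point of the corollary is to recognize that the argument depends solely on the density of $S \cap \{s \,:\, \hat{\varepsilon}(T_s) = -\varepsilon\}$ rather than on access to every sign-$(-\varepsilon)$ specialization. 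Chaining the inclusions $H^1_\varepsilon(\BQ_p, T) = Z_\varepsilon(T) \subseteq Z_{S,\varepsilon}(T) \subseteq H^1_\varepsilon(\BQ_p, T)$ then yields the desired equalities.
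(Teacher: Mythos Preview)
Your proposal is correct and follows essentially the same approach as the paper, which simply notes that the corollary is yielded by the proof of Proposition~\ref{prop, decomp by using Z}. You have correctly spelled out that this means rerunning that argument with $Z_{S,\varepsilon}(T)$ in place of $Z_\varepsilon(T)$, using only the Zariski density of the sign-$(-\varepsilon)$ de Rham points in $S$ to force $r_{-\varepsilon}=0$.
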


\subsubsection{Proof of Proposition~\ref{prop, decomp by using Z}}

\begin{prop}\noindent
\begin{itemize}
\item[i)] We have  $H^1_{\pm}(\BQ_p, T)\subset Z_{\pm}(T)$. 
\item[ii)]The natural map induces surjective $R$-module morphisms  
\[
H^1_\pm(\BQ_p, T)\rightarrow Z_\pm(T)/H^1_{\mathrm{f}}(\BQ_p, T). 
\]
In particular, if  the universal norm subgroup $H^1_{\mathrm{f}}(\BQ_p, T)$ is trivial, then 
 $H^1_\pm(\BQ_p, T)=Z_\pm(T)$.
\end{itemize}
\end{prop}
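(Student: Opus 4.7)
The plan is to derive both parts directly from properties 3) (base change) and 4) (de Rham characterization) of Theorem~\ref{thm, main}, together with the decomposition $H^1(\BQ_p,T)=H^1_+(\BQ_p,T)\oplus H^1_-(\BQ_p,T)$.

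For part i), I would take $v\in H^1_\varepsilon(\BQ_p,T)$ and any de Rham specialization $s$ with $\hat{\varepsilon}(T_s)=-\varepsilon$. The base change property pushes $v$ into $H^1_\varepsilon(\BQ_p,T_s)$, and then the de Rham property identifies this submodule with $H^1_{-\hat{\varepsilon}(T_s)}(\BQ_p,T_s)=H^1_{\rm f}(\BQ_p,T_s)$. Unwinding the definition of $Z_\varepsilon(T)$ then yields $v\in Z_\varepsilon(T)$.

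For part ii), the natural map in question is the inclusion from part i) followed by the quotient modulo $H^1_{\rm f}(\BQ_p,T)$. To establish surjectivity, I would take an arbitrary $v\in Z_\varepsilon(T)$ and decompose it as $v=v_++v_-$ using Theorem~\ref{thm, main}. Since $v_\varepsilon\in H^1_\varepsilon(\BQ_p,T)\subset Z_\varepsilon(T)$ by part i), the $R$-module $Z_\varepsilon(T)$ being closed under differences gives $v_{-\varepsilon}=v-v_\varepsilon\in Z_\varepsilon(T)$. Applying part i) to the opposite sign also gives $v_{-\varepsilon}\in H^1_{-\varepsilon}(\BQ_p,T)\subset Z_{-\varepsilon}(T)$. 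Hence $v_{-\varepsilon}\in Z_+(T)\cap Z_-(T)=H^1_{\rm f}(\BQ_p,T)$, which shows $v\equiv v_\varepsilon$ modulo $H^1_{\rm f}(\BQ_p,T)$. The concluding assertion that $H^1_\pm(\BQ_p,T)=Z_\pm(T)$ when $H^1_{\rm f}(\BQ_p,T)=\{0\}$ is then immediate from combining the surjection with the inclusion of part i).

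The argument is short and encounters no real obstacle; both parts are direct consequences of functoriality and the de Rham property of the local sign decomposition. The only point requiring care is the simultaneous use of part i) for both signs to ensure that the ``wrong-sign'' component $v_{-\varepsilon}$ lands in the intersection $Z_+(T)\cap Z_-(T)$, and this relies crucially on the $R$-linearity of specialization maps and of each submodule $Z_{\pm}(T)$.
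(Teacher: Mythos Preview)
Your proposal is correct and follows essentially the same approach as the paper's own proof: part i) uses the base change and de Rham properties of Theorem~\ref{thm, main} exactly as you describe, and part ii) decomposes an element of $Z_\varepsilon(T)$ via the local sign decomposition and observes that the wrong-sign component lies in $Z_+(T)\cap Z_-(T)=H^1_{\rm f}(\BQ_p,T)$.
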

\begin{proof}
i) Suppose that  $\hat{\varepsilon}(T_s)=-\varepsilon$. Then we have 
$H^1_\mathrm{f}(\BQ_p, T_s)=
H^1_{\varepsilon}(\BQ_p, T_s)$ by part 4) of Theorem~\ref{thm, main}. Hence $x \in H^1_\varepsilon(\BQ_p, T)$ implies that $s(x) \in H^1_\mathrm{f}(\BQ_p, T_s)$, concluding the proof. 

ii) We only discuss the plus case. 
Take an element $x \in Z_+(T)$, and consider the decomposition $x=x_++x_-$ arising from the local sign decomposition. 
By part i), we have $$x_-=x-x_+\in Z_+(T)\cap H^1_-(\BQ_p, T) \subset  Z_+(T)\cap Z_-(T).$$ 
Hence $x\equiv x_+$ modulo a universal norm. 
 \end{proof}

\begin{proof}[Proof of Proposition~\ref{prop, decomp by using Z}]
Fix an $R$-basis $v_\pm$ of $H^1_{\pm}(\BQ_p, T)$. 

Let $x \in Z_{\varepsilon}(T)$ and consider its local sign decomposition $$x=x_++x_-=r_+v_++r_-v_-$$ 
for $r_{\pm} \in R$. Let $s$ be a de Rham specialization with sign $-\varepsilon$. 
Then we have $s(x) \in H^1_{\mathrm{f}}(\BQ_p, T_s)=H^1_{\varepsilon}(\BQ_p, T_s)$, and so 
\[
s(x_{-\varepsilon})=s(x-x_{\varepsilon})\in H^1_{-\varepsilon}(\BQ_p, T_s)\cap H^1_{\varepsilon}(\BQ_p, T_s)=\{0\}.
\]
Hence, it follows that 
$r_{-\varepsilon}=0$  by the  Zariski density assumption. 
We then have $x=x_\varepsilon \in H^1_\varepsilon(\BQ_p,T)$, concluding the proof.  

The vanishing of $H^1_{\rm f}(\BQ_p,T)$ follows from the property $H^1_+(\BQ_p, T) \cap H^1_-(\BQ_p, T)=\{0\}$ of the local sign decomposition. 
 \end{proof}

\subsubsection{An example: universal deformations}\label{ss: sgn examples}
\begin{thm}\label{thm, univ}
Let $\ov{\rho}: G_{\BQ_p} \ra \Aut_{\BF}(\ov{T})$ be a symplectic self-dual two-dimensional representation over a finite field $\BF$ of characteristic $p$.
 Let $R_{\ov{\rho}}^{\Box}$ be a universal framed deformation ring classifying framed lifts of $\ov{\rho}$ with determinant $\chi_\cyc$ and $\BT_{\ov{\rho}}^{\Box}$ the universal deformation. 
 Then for $\varepsilon \in \{\pm \}$ we have 
 $$
 Z_{\varepsilon}(\BT_{\ov{\rho}}^{\Box})=H^{1}_{\varepsilon}(\BQ_p,\BT_{\ov{\rho}}^{\Box}).
 $$
In particular, $$H^1_{\mathrm{f}}(\BQ_p,\BT_{\ov{\rho}}^{\Box})=0,$$ 
and the local sign decomposition 
is uniquely determined by the properties 2)-4) of Theorem~\ref{thm, main}.  
\end{thm}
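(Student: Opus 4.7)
The plan is to combine Proposition~\ref{prop, decomp by using Z} with the Zariski density of crystalline specializations of prescribed sign on the universal framed deformation space, which is the content of Theorem~\ref{thm, density sgn crystalline} of Appendix~\ref{s:appendix}. That density statement asserts that for each $\varepsilon \in \{\pm\}$, the set of crystalline specializations $s$ of $\BT_{\ov{\rho}}^{\Box}$ with $\hat{\varepsilon}((\BT_{\ov{\rho}}^{\Box})_s)=-\varepsilon$ is Zariski dense in $\Spec R_{\ov{\rho}}^{\Box}$. Feeding this into Proposition~\ref{prop, decomp by using Z} will immediately yield $Z_\varepsilon(\BT_{\ov{\rho}}^{\Box})=H^1_\varepsilon(\BQ_p,\BT_{\ov{\rho}}^{\Box})$ for both signs, and intersecting then gives
\[
H^1_{\rm f}(\BQ_p,\BT_{\ov{\rho}}^{\Box})
=Z_+(\BT_{\ov{\rho}}^{\Box})\cap Z_-(\BT_{\ov{\rho}}^{\Box})
=H^1_+(\BQ_p,\BT_{\ov{\rho}}^{\Box})\cap H^1_-(\BQ_p,\BT_{\ov{\rho}}^{\Box})
=\{0\}
\]
by the direct sum property of the local sign decomposition.

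For the uniqueness claim, I would argue as follows. Suppose $H^1(\BQ_p,\BT_{\ov{\rho}}^{\Box})=H^1_{+,\mathrm{alt}}\oplus H^1_{-,\mathrm{alt}}$ is a second decomposition into free rank one submodules satisfying the properties 2)-4) at the universal level. Combining property 2) (functoriality under specialization maps) with property 4) (identification with the Bloch--Kato subgroup at de Rham points), the image of $H^1_{\varepsilon,\mathrm{alt}}$ under any de Rham specialization $s$ with $\hat{\varepsilon}((\BT_{\ov{\rho}}^{\Box})_s)=-\varepsilon$ lies in $H^1_{\rm f}(\BQ_p,(\BT_{\ov{\rho}}^{\Box})_s)$, so $H^1_{\varepsilon,\mathrm{alt}}\subset Z_\varepsilon(\BT_{\ov{\rho}}^{\Box})=H^1_\varepsilon(\BQ_p,\BT_{\ov{\rho}}^{\Box})$. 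A simple diagram chase then forces equality: decomposing $x\in H^1_\varepsilon$ against the alternative decomposition as $x=y+z$ with $y\in H^1_{+,\mathrm{alt}}\subset H^1_+$ and $z\in H^1_{-,\mathrm{alt}}\subset H^1_-$ gives $x-y=z\in H^1_+\cap H^1_-=\{0\}$, hence $x=y\in H^1_{\varepsilon,\mathrm{alt}}$, and therefore $H^1_{\varepsilon,\mathrm{alt}}=H^1_\varepsilon$. To propagate uniqueness from the universal case to an arbitrary generic symplectic self-dual pair $(R,T)$, I would reduce to $R$ complete local with finite residue field (the other cases follow by choosing a $G_{\BQ_p}$-stable lattice and passing to each maximal component), and then invoke property 3): such $(R,T)$ arises from a continuous $\BZ_p$-algebra map $R_{\ov{T}}^{\Box}\to R$ applied to $\BT_{\ov{T}}^{\Box}$, and uniqueness at the universal level descends.

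The main obstacle in this plan is the Zariski density statement itself, which is deferred to Appendix~\ref{s:appendix}. The delicate point will be that the completed $\varepsilon$-constant $\hat{\varepsilon}=\Gamma\cdot\varepsilon$ amalgamates the Weil--Deligne $\varepsilon$-constant with the parity of the Hodge--Tate weights packaged in the $\Gamma$-factor, so one must produce abundant crystalline deformations realising each prescribed sign along every irreducible component of $\Spec R_{\ov{\rho}}^{\Box}$. A natural strategy is to start from any given crystalline point, perturb the Hodge--Tate weights to toggle $\Gamma$, and combine this with an analysis of how the crystalline $\varepsilon$-constant varies under such perturbations, exploiting the known smoothness of the crystalline locus and the geometric structure of the space of trianguline parameters on $\Spec R_{\ov{\rho}}^{\Box}$ as developed by Chenevier, Colmez and others.
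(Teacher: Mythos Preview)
Your approach matches the paper's (one-line) proof: feed the Zariski density of crystalline points with prescribed sign (Theorem~\ref{thm, density sgn crystalline}) into Proposition~\ref{prop, decomp by using Z}; the extra detail you supply for the uniqueness clause is correct and is exactly what the paper leaves implicit. One small correction: the property you invoke as ``2) (functoriality under specialization maps)'' is actually property 3) (base change along $R\to R'$), though this does not affect the argument since you are assuming 2)--4). Your closing speculation about proving the appendix density via trianguline parameters is not the paper's route---the appendix instead uses the Breuil--M\'ezard conjecture (via Kisin, Pa\v{s}k\=unas, Hu--Tan, Tung) to produce a crystalline lift of each weight parity, and then adapts the B\"ockle--Iyengar--Pa\v{s}k\=unas density argument, the key point being that their perturbations preserve Hodge--Tate weights modulo $2(p-1)$, hence modulo $2$.
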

\begin{proof}
In light of the Zariski density of crystalline points with a given sign on $R_{\ov{\rho}}^{\Box}$ as established in Theorem~\ref{thm, density sgn crystalline}, the assertion is a consequence of Proposition~\ref{prop, decomp by using Z}. 
\end{proof}
\begin{remark} 
Nekov\'a\v{r} conjectured that the universal norm subgroup is torsion for cyclotomic deformation of a non-ordinary representation, as later shown by Perrin-Riou \cite{PR-norm} and Berger \cite{Ber05}. 
The above setting of universal deformations complements the 
(conjectures in the)
 literature. 
\end{remark}
\subsection{A signed submodule of reducible representations}

\subsubsection{A preliminary}
\begin{lem}\label{lemma, reducible lagrangian}
 Let $(R,T)$ be a generic symplectic self-dual $G_{\BQ_p}$-representation of rank $2n$. 
 Suppose that $T$ has a $G_{\BQ_p}$-subrepresentation $T_1$ that 
is a Lagrangian $R$-submodule 
 with respect to the symplectic pairing associated with $(R, T)$. 
 Put $T_2:=T/T_1$. 
Suppose that either 
$H^0(\BQ_p, \overline{T}_2)=0$, or $R$ is a PID and $H^1(\BQ_p, T_2)$ is free. 
Then the image of  the natural map 
$$H^1(\BQ_p, T_1) \rightarrow H^1(\BQ_p, T)$$
 is a Lagrangian submodule of $H^1(\BQ_p, T)$. 
\end{lem}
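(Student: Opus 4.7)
The plan is as follows. Since $T_1 \subset T$ is Lagrangian for the symplectic pairing $\langle\,,\,\rangle : T \times T \to R(1)$, this pairing vanishes on $T_1 \otimes T_1$ and descends to a pairing $T_1 \otimes T_2 \to R(1)$ realizing a canonical $G_{\BQ_p}$-equivariant isomorphism $T_1 \cong T_2^*(1)$ (comparison of ranks with $T_1 = T_1^\perp$ guarantees surjectivity). Writing $\iota : H^1(\BQ_p, T_1) \to H^1(\BQ_p, T)$ and $\pi : H^1(\BQ_p, T) \to H^1(\BQ_p, T_2)$ for the maps in the long exact sequence of $0 \to T_1 \to T \to T_2 \to 0$, I would set $N := \mathrm{Im}(\iota)$ and verify the three conditions of Definition~\ref{def:Lagrangian} in turn.

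Isotropy of $N$ is immediate: the cup-product on $N \times N$ factors through the restriction of $\langle\,,\,\rangle$ to $T_1 \otimes T_1$, which vanishes. For the self-annihilation $N = N^\perp$, the crucial observation is that the Tate pairing $\{\cdot,\cdot\}_T$ on $H^1(\BQ_p, T)$, when one argument lies in $N$, factors through $\pi$ as the Tate local duality pairing
\[
H^1(\BQ_p, T_1) \times H^1(\BQ_p, T_2) \longrightarrow R
\]
coming from $T_1 \cong T_2^*(1)$. Provided this auxiliary pairing is non-degenerate on the right and $\ker(\pi) = N$, one then deduces $z \in N^\perp$ iff $\pi(z) = 0$ iff $z \in N$.

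For the projectivity conditions and the duality identification $N \cong \Hom_R(H^1(\BQ_p, T)/N, R)$ in Definition~\ref{def:Lagrangian}, I would split into the two hypotheses. Under $H^0(\BQ_p, \overline{T}_2) = 0$, genericity of $(R, T)$ forces $H^0(\BQ_p, \overline{T}_1) \hookrightarrow H^0(\BQ_p, \overline{T}) = 0$, and then $T_1 \cong T_2^*(1)$ and Tate local duality give $H^2(\BQ_p, \overline{T}_i) = 0$ for $i \in \{1,2\}$. A Nakayama and $\mathfrak{m}$-adic induction parallel to the proof of Proposition~\ref{a} then shows that $H^1(\BQ_p, T_i)$ is free of rank $n$, that $\iota$ is injective, $\pi$ surjective, and the induced Tate duality pairing between them is perfect; the resulting short exact sequence splits as $H^1(\BQ_p, T_2)$ is free, giving all the desired structural properties of $N$. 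In the PID case, projectivity of $N$ and of $H^1(\BQ_p, T)/N$ is automatic, since $H^1(\BQ_p, T)$ is free of rank $2n$ by Proposition~\ref{a} and $\mathrm{Im}(\pi) \subset H^1(\BQ_p, T_2)$ is a submodule of a free module, and submodules of free modules over a PID are free.

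The hard part will be establishing the compatibility between the Tate self-pairing on $H^1(\BQ_p, T)$ and the Tate local duality pairing $H^1(\BQ_p, T_1) \times H^1(\BQ_p, T_2) \to R$ induced by $T_1 \cong T_2^*(1)$: although formally a diagram chase at the cochain level, it requires careful bookkeeping of the factorization of the symplectic pairing through $T_1 \otimes T_2$ together with the sign and normalization of the local invariant map on $H^2(\BQ_p, R(1))$. In the PID case a secondary technicality is verifying non-degeneracy of the auxiliary pairing under the weaker hypothesis, which I expect to follow from the freeness of $H^1(\BQ_p, T_2)$ combined with a universal-coefficients argument applied to the local duality complex.
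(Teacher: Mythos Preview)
Your approach is correct and essentially the same as the paper's: both rest on the adjunction $(p_1(x),y)_T=(x,p_2(y))_{T_1,T_2}$ coming from the factorization of the symplectic pairing through $T_1\otimes T_2\cong T_1\otimes T_1^*(1)$, followed by case-by-case verification of the freeness/rank conditions on $N$ and $H^1(\BQ_p,T)/N$. Two remarks on emphasis. First, what you flag as ``the hard part'' is in fact immediate: the adjunction is nothing more than functoriality of cup product applied to the maps $T_1\hookrightarrow T$ and $T\twoheadrightarrow T_2$, together with the observation that $\langle\,,\,\rangle|_{T_1\times T}$ factors through $T_1\times T_2$; no sign or normalization bookkeeping is needed beyond that. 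Second, in the PID case the paper takes a slightly more direct route than yours: rather than establishing non-degeneracy of the auxiliary Tate pairing on $H^1(\BQ_p,T_1)\times H^1(\BQ_p,T_2)$ to conclude $N=N^\perp$, it simply observes that $H^1(\BQ_p,T)/N=\mathrm{Im}(\pi)$ is free (submodule of the free $H^1(\BQ_p,T_2)$ over a PID) and that $N$ is free of rank $n$ (submodule of the free $H^1(\BQ_p,T)$, with rank computed from Euler--Poincar\'e), after which Lagrangianness follows formally from $N$ being an isotropic direct summand of half rank in a module with perfect pairing. Your route via non-degeneracy of the auxiliary pairing also works (freeness of $H^1(\BQ_p,T_2)$ plus Pontryagin--Tate duality does give perfectness), but it is a detour.
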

\begin{proof}
 First, we show that the image is an isotropic submodule. 
 
In view of  
 the Lagrangian property and self-duality, we have $T_1^*(1) \cong T_2$. 
This leads to the following commutative diagram, 
 which is induced by the cup product and the symplectic self-duality: 
 \[
\xymatrix{
(\;,\;): \; H^1(\mathbb{Q}_p, T_1) \!\!\!\!\!\!\!\!\!\!\!\!\ar@<5.5ex>[d]^{p_1} &\!\!\!\!\!\!\!\! \!\!\!\!\times\!\!\!\!\!\!\!\! \!\!\!\! & \!\!\!\!\!\!\!\!\!\!\!\!H^1(\mathbb{Q}_p, T_2)  \ar[r] & R \\ 
 (\;,\;):\; H^1(\mathbb{Q}_p, T)  \!\!\!\!\!\!\!\!\!\!\!\! & \!\!\!\!\!\!\!\!\!\!\!\!\times\!\!\!\!\!\!\!\!\!\!\!\!   & \!\!\!\!\!\!\!\!\!\!\!\!H^1(\mathbb{Q}_p, T) \ar@<3.0ex>[u]_{p_2}\ar[r] & R. 
}
\]
Then for $x \in H^1(\mathbb{Q}_p, T_1)$ and $y \in H^1(\mathbb{Q}_p, T)$, 
we have $(p_1(x), y)=(x, p_2(y))$. Hence, $(p_1(x), p_1(x))=0$. 

Assume that $H^0(\BQ_p, \overline{T}_2)=0$. Then as in the proof of part 3) of Proposition \ref{a}, 
$H^1(\BQ_p, {T}_2)$ is a free $R$-module of rank $n$. 
We also have $H^2(\BQ_p, {T}_1)=0$.  
Hence, the cokernel of $H^1(\BQ_p, T_1) \rightarrow H^1(\BQ_p, T)$ is free of rank $n$. 
Next, assume that $R$ is a PID and $H^1(\BQ_p, T_2)$ is free. 
Then, the cokernel of $H^1(\BQ_p, T_1) \rightarrow H^1(\BQ_p, T)$ is free. 
Since $H^0(\BQ_p,T_1)\subset H^0(\BQ_p,T)=0$, the Euler--Poincar\'e formula shows that 
the image of $H^1(\BQ_p, T_1) \ra H^1(\BQ_p,T)$ has free rank $n$. 
Hence, it 
   is a direct summand of rank $n$ and the proof concludes. 
\end{proof}

\subsubsection{A signed submodule}\label{ss, sgn sub}
The purpose of this subsection is to prove the following. 

\begin{prop}\label{prop, reducible sign}
Let $T$ be a generic symplectic self-dual de Rham representation of $G_{\BQ_p}$
of rank $2$ with coefficients in $\CO$, the integer ring of a finite extension of $\BQ_p$. 
Suppose that there exists an exact sequence 
$$0\rightarrow T_1 \rightarrow T \rightarrow T_2\rightarrow 0$$
of $G_{\BQ_p}$-representations 
for $T_i$ ($i=1,2$) free of rank $1$ and 
the Hodge--Tate weight of $T_1$  positive. 
\begin{itemize}
\item[i)] Suppose that $T_2$ is the trivial representation $\mathcal{O}$. Then the natural map 
$$H^1(\mathbb{Q}_p, T_1)\rightarrow H^1(\mathbb{Q}_p, T)$$ induces an $\CO$-module isomorphism 
$$ H^1(\mathbb{Q}_p, T_1)/\mathrm{Im}(\delta)\isom H^1_+(\mathbb{Q}_p, T),$$
where $\delta : H^0(\mathbb{Q}_p, T_2)\rightarrow H^1(\mathbb{Q}_p, T_1)$ is the boundary map. In particular, if $T$ is crystalline, then  
 the natural map induces 
$H^1_{\rm{f}}(\mathbb{Q}_p, T)\isom H^1_{\rm{f}}(\mathbb{Q}_p, T_2)$, and 
 if $T$ is non-crystalline, then 
 $H^1_{\rm{f}}(\mathbb{Q}_p, T_1)\isom H^1_{\rm{f}}(\mathbb{Q}_p, T)$. 
 \item[ii)] Suppose that $T_2$ is a non-trivial representation of $G_{\BQ_p}$. 
Write the character associated to $\overline{T}_1$ as $\omega^r\mu$ 
for $\omega$ the Teichm\"uller character, $\mu$ unramified and put $\epsilon=(-1)^r$. Then we have 
 $$H^1_{\rm{f}}(\mathbb{Q}_p, T)=H^1_{\epsilon}(\mathbb{Q}_p, T),$$
and  
 the natural map 
$$H^1(\mathbb{Q}_p, T_1)=H^1_{\rm{f}}(\mathbb{Q}_p, T_1)\rightarrow H^1_{\rm{f}}(\mathbb{Q}_p, T)=H^1_{\epsilon}(\mathbb{Q}_p, T)$$
is injective and of finite index. It is surjective if and only if $H^0(\BQ_p, \overline{T}_2)=0$. 
\end{itemize}
\end{prop}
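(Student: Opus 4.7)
The plan is to identify the image of the natural map $H^1(\BQ_p, T_1) \to H^1(\BQ_p, T)$ as a Lagrangian $\CO$-submodule of $H^1(\BQ_p, T)$, and then to pinpoint which signed submodule it is by computing $\hat{\varepsilon}(T)$ and invoking Theorem~\ref{thm, main}(4) together with the uniqueness of Lagrangians given by Lemma~\ref{prop, lagrangian}. A key input is that symplectic self-duality forces $T_2 \cong T_1^*(1)$, so that $T_1 \subset T$ is itself a Lagrangian subrepresentation. Lemma~\ref{lemma, reducible lagrangian} (or a direct long exact sequence argument using $H^2(\BQ_p, T) = 0$ and the Euler--Poincar\'e formula from Proposition~\ref{a}) then yields that the image is a Lagrangian of the expected rank.

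For part~i), $T_2 = \CO$ forces $T_1 = \CO(1)$ and Hodge--Tate weights $(1,0)$, so $\Gamma(T) = 1$ by Corollary~\ref{cor, dR ssd two}. I would compute $\varepsilon(T)$ via the Tate-style formula $\varepsilon(M,\zeta) = \varepsilon(\rho,\zeta)\cdot\det(-\mathrm{Fr}_p\mid M^{I_p}/(M^{N=0})^{I_p})$: the semisimple Weil part $\mathbf{1}\oplus\omega_1$ is unramified and contributes $1$, while the monodromy correction yields $+1$ in the crystalline case (where $N = 0$) and $-1$ in the strictly semistable case (a Steinberg-type contribution). Hence $\hat{\varepsilon}(T) = +1$ or $-1$ respectively, and Theorem~\ref{thm, main}(4) gives $H^1_{\mathrm{f}}(\BQ_p, T) = H^1_-$ or $H^1_+$ accordingly. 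To uniformly identify the image with $H^1_+$, I would analyze $\mathrm{Im}(\delta) \subset H^1(\BQ_p, T_1)$: in the crystalline case the extension class of $T$ lies in $H^1_{\mathrm{f}}(\BQ_p, T_1)$ and a rank count forces $\mathrm{Im}(\delta) = H^1_{\mathrm{f}}(\BQ_p, T_1)$, so the quotient maps into the Lagrangian complementary to $H^1_- = H^1_{\mathrm{f}}(T)$, hence equals $H^1_+$; in the non-crystalline case $H^1_{\mathrm{f}}(T_1)$ is transverse to $\mathrm{Im}(\delta)$ and maps isomorphically into $H^1_+ = H^1_{\mathrm{f}}(T)$. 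The supplementary assertions on $H^1_{\mathrm{f}}(\BQ_p, T)$ then follow from the long exact sequence: in the crystalline case the map $H^1_-(T) \to H^1(T_2)$ is injective and lands in $H^1_{\mathrm{f}}(T_2)$, a standard compatibility of Bloch--Kato subgroups with the extension when $T$ is crystalline, whereas the non-crystalline case gives $H^1_{\mathrm{f}}(T_1) \isom H^1(T_1)/\mathrm{Im}(\delta) \isom H^1_+(T) = H^1_{\mathrm{f}}(T)$ directly.

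For part~ii), since $T_2 \neq \CO$ its character is non-trivial, so $H^0(\BQ_p, T_2) = 0$, $\delta = 0$, and local Tate duality forces $H^1(\BQ_p, T_1) = H^1_{\mathrm{f}}(\BQ_p, T_1)$. The natural map is then injective with Lagrangian image contained in $H^1_{\mathrm{f}}(\BQ_p, T)$ by functoriality, and as both sides are free $\CO$-modules of rank one it has finite index. To identify the sign, I would compute $\hat{\varepsilon}(T) = (-1)^{k-1}\varepsilon(T_1)\varepsilon(T_2)$ after confirming that the monodromy on $\mathrm{WD}(T)$ vanishes under our hypotheses (the Steinberg pattern being the sole obstruction, excluded by $T_2 \neq \CO$ combined with the Frobenius eigenvalue mismatch); writing the character of $T_1$ as $\chi_{\cyc}^{k}\eta\lambda$ with $\eta$ finite order and $\lambda$ unramified, and recalling that the restriction of $\eta$ to tame inertia is $\omega^r$, the formulas for $\varepsilon$ of characters combined with the duality $T_2 = T_1^*(1)$ collapse to $\hat{\varepsilon}(T) = -(-1)^r$, whence $H^1_{\mathrm{f}}(\BQ_p, T) = H^1_\epsilon(\BQ_p, T)$. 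The surjectivity criterion follows by Nakayama from the reduced long exact sequence: surjectivity mod $\fm_\CO$ is equivalent to the vanishing of the boundary $H^0(\BQ_p, \overline{T_2}) \to H^1(\BQ_p, \overline{T_1})$, whence the stated dichotomy.

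The main obstacle will be the explicit $\varepsilon$-constant calculation in part~ii), particularly tracking the Gauss-sum contributions from the ramified components of $\psi$ and $\chi_{\cyc}\psi^{-1}$ and verifying they produce a sign depending only on the Teichm\"uller exponent $r$. A secondary subtlety is confirming $N = 0$ on $\mathrm{WD}(T)$ under the hypotheses of part~ii), for which a short case analysis on the Frobenius eigenvalue pattern of $D_{\mathrm{pst}}(T)$ suffices.
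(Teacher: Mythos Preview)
Your overall strategy matches the paper's: identify the image of $H^1(\BQ_p,T_1)\to H^1(\BQ_p,T)$ as a Lagrangian via Lemma~\ref{lemma, reducible lagrangian}, compute $\hat\varepsilon(T)$ from the Weil--Deligne side, and pin down the sign using Theorem~\ref{thm, main}(4) together with Lemma~\ref{prop, lagrangian}. Your analysis of part~i), including the monodromy dichotomy and the identification of $\mathrm{Im}(\delta)$ with $H^1_{\rm f}(\BQ_p,\CO(1))$ in the crystalline case, is essentially the paper's argument.

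There is, however, a genuine gap in the ``only if'' direction of the surjectivity criterion in part~ii). You claim that surjectivity mod $\fm_\CO$ is equivalent to the vanishing of the residual boundary $\bar\delta\colon H^0(\BQ_p,\overline{T}_2)\to H^1(\BQ_p,\overline{T}_1)$, but this does not follow. When $H^0(\BQ_p,\overline{T}_2)\neq 0$ one has $\overline{T}_1\cong\BF(1)$, so $H^1(\BQ_p,\overline{T}_1)$ is two-dimensional while $H^1(\BQ_p,T_1)/\fm_\CO$ is only a one-dimensional subspace of it (base change fails because $H^2(\BQ_p,T_1)$ has nontrivial $\pi$-torsion, the character of $T_2$ being congruent to $1$ but not equal to $1$). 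The image of $H^1(\BQ_p,\overline{T}_1)\to H^1(\BQ_p,\overline{T})$ is one-dimensional with kernel $\mathrm{Im}\,\bar\delta$, and nothing in your argument rules out that this image coincides with $H^1_\epsilon(\BQ_p,\overline{T})=H^1_{\rm f}(\BQ_p,T)/\fm_\CO$. The nonvanishing of $\bar\delta$ does not by itself force $H^1(T_1)/\fm_\CO$ into the kernel.

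The paper closes this gap by constructing an auxiliary de Rham lift $T'$ of the residual extension $0\to\BF(1)\to\overline{T}\to\BF\to 0$ with $T'_1=\CO(1)$, $T'_2=\CO$, and applying part~i) to $T'$ together with base change of the local sign decomposition to identify the image of $H^1(\BQ_p,\overline{T}_1)\to H^1(\BQ_p,\overline{T})$ as $H^1_+(\BQ_p,\overline{T})$. Since in this situation $r$ is odd (so $\hat\varepsilon(T)=+1$ and $H^1_{\rm f}(\BQ_p,T)=H^1_-(\BQ_p,T)$), the reduction of $H^1(T_1)$ lands in $H^1_+(\overline{T})\cap H^1_-(\overline{T})=0$, which is the contradiction. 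So the non-surjectivity genuinely relies on part~i) via an auxiliary lift, not on a direct Nakayama/boundary argument.
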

\begin{proof}

i) Assume that $T_2=\mathcal{O}$, and so $T_1=\mathcal{O}(1)$ by the symplectic self-duality. 

Since $T_1$ is of rank one, it is Lagrangian for the symplectic pairing on $T$. 
Thus, $$X:=H^1(\mathbb{Q}_p, T_1)/\mathrm{Im}(\delta)$$ is a Lagrangian $\mathcal{O}$-submodule of 
$H^1(\mathbb{Q}_p, T)$ by Lemma \ref{lemma, reducible lagrangian}. 
Note that $H^1(\mathbb{Q}_p, \mathcal{O})$ is free.  
Hence, in light of the local sign decomposition for $T$ and Lemma~\ref{prop, lagrangian}, it suffices to show that 
$X \subseteq H^1_+(\mathbb{Q}_p, T),$ 
or equivalently
$$
X \cap H^1_-(\mathbb{Q}_p, T)=0$$ 
in $H^1(\mathbb{Q}_p, T)$.

Let $z\in H^1(\mathbb{Q}_p, \mathcal{O}(1))$ be the extension class 
corresponding to the given one 
$$\alpha:=[0\rightarrow \mathcal{O}(1)\rightarrow T\rightarrow \mathcal{O}\rightarrow 0].$$ 
Note that $\mathrm{Im}(\delta)=\mathcal{O}z.$ 
Then taking the long exact sequences associated to $\alpha$ and $\mathbf{B}_{\mathrm{crys}}\otimes_{\mathbb{Z}_p}\alpha$, and letting $V=T\otimes_{\BZ_p}\BQ_p$, we obtain the following 
commutative diagram with exact horizontal maps: 
\[
\xymatrix{
H^0(\mathbb{Q}_p, T)=0 \ar[d] \ar[r]& H^0(\mathbb{Q}_p, \mathcal{O})=\mathcal{O} \ar[d] \ar[r]^-{\delta}  &H^1(\mathbb{Q}_p, \mathcal{O}(1)) \ar[d]\ar[r]^-{\mathrm{can}} & H^1(\mathbb{Q}_p, T) \ar[d]\\ D_{\mathrm{crys}}(V) \ar[r]^-{\delta_1} & D_{\mathrm{crys}}(L)=L \ar[r]^-{\delta_2}& H^1(\mathbb{Q}_p, \mathbf{B}_{\mathrm{crys}}\otimes_{\mathbb{Q}_p}L(1)) \ar[r]^-{\delta_3} &
H^1(\mathbb{Q}_p, \mathbf{B}_{\mathrm{crys}}\otimes_{\mathbb{Q}_p}V).   \\
}
\]

Note that $V$ is crystalline or non-crystalline semistable with  Hodge--Tate weights $(0,1)$
since it is an extension of semistable representations and 
$H^1_{\rm g}(\BQ_p,\mathbb{Q}_p(1))=H^1(\BQ_p,\mathbb{Q}_p(1))$.
(cf. \cite[\S 6]{Ber}.)

We first assume that $V$ is crystalline. Then 
$$\mathcal{O}z\subseteq H^1_{\rm f}(\mathbb{Q}_p, \mathcal{O}(1))$$
and the map $\delta_3$ is injective since $\delta_1$ is surjective. 
The above inclusion is in fact an equality $$\mathcal{O}z=H^1_{\rm f}(\mathbb{Q}_p, \mathcal{O}(1))$$
as $H^1(\mathbb{Q}_p, \mathcal{O}(1))/\mathcal{O}z$ is torsion-free since so is $H^1(\mathbb{Q}_p, T)$. 
Note that the inverse image of $H^1_{\rm f}(\mathbb{Q}_p, T)$ by the map $H^1(\mathbb{Q}_p, \mathcal{O}(1))\rightarrow H^1(\mathbb{Q}_p, T)$ is 
$H^1_{\rm f}(\mathbb{Q}_p, \mathcal{O}(1))$ since $\delta_3$ is injective. Therefore, we obtain 
$$
X\cap H^1_-(\mathbb{Q}_p, T)
=(H^1(\mathbb{Q}_p, \mathcal{O}(1))/H^1_{\rm f}(\mathbb{Q}_p, \mathcal{O}(1)))\cap H^1_{\rm f}(\mathbb{Q}_p, T)=0$$
 in $H^1(\mathbb{Q}_p, T)$ as $H^1_-(\mathbb{Q}_p, T)=H^1_{\rm f}(\mathbb{Q}_p, T)$ by the property 4) of Theorem \ref{thm, main}. Here we note that 
 $\hat{\varepsilon}(V)=\varepsilon(V)=1$ since $\mathrm{D}_{\mathrm{pst}}(V)=\mathbb{Q}_p^{\mathrm{ur}}\otimes_{\mathbb{Q}_p}\mathrm{D}_{\mathrm{crys}}(V)$ is unramified as a representation of $W_{\mathbb{Q}_p}$ and $N=0$. We now show that 
 $$H^1_{\rm{f}}(\mathbb{Q}_p, T)\isom H^1_{\rm{f}}(\mathbb{Q}_p, T_2).$$ 
 The intersection of the kernel of $H^1(\mathbb{Q}_p, T)\rightarrow H^1(\mathbb{Q}_p, T_2)$ and 
 $H^1_{\rm{f}}(\mathbb{Q}_p, T)=H^1_{-}(\mathbb{Q}_p, T)$ is $0$ by what we have just shown. 
 Hence, it induces an injective map $H^1_{\rm{f}}(\mathbb{Q}_p, T)\rightarrow H^1_{\rm{f}}(\mathbb{Q}_p, T_2)$ 
 of rank $1$ modules. 
 Since the cokernel of $H^1(\mathbb{Q}_p, T)\rightarrow H^1(\mathbb{Q}_p, T_2)$ is contained 
 in the free module $H^2(\mathbb{Q}_p, T_1)$ and 
 $\mathrm{Im}\,H^1(\mathbb{Q}_p, T)=\mathrm{Im}\,H^1_-(\mathbb{Q}_p, T)$, it follows that the map
 $H^1_{\rm{f}}(\mathbb{Q}_p, T)\rightarrow H^1_{\rm{f}}(\mathbb{Q}_p, T_2)$ is surjective. 
 
It remains to consider the case where $V$ is non-crystalline semistable. Then we have
$$\mathcal{O}z\cap H^1_{\rm f}(\mathbb{Q}_p, \mathcal{O}(1)) (=\mathrm{Ker}(\mathrm{can})\cap H^1_{\rm f}(\mathbb{Q}_p, \mathcal{O}(1))) =0,$$
hence we obtain the following inclusions
$$H^1(\mathbb{Q}_p, \mathcal{O}(1))/\mathrm{Im}(\delta)=H^1(\mathbb{Q}_p, \mathcal{O}(1))/\mathcal{O}z\supseteq H^1_{\rm f}(\mathbb{Q}_p, \mathcal{O}(1))
\subseteq H^1_{\rm f}(\mathbb{Q}_p, T)=H^1_+(\mathbb{Q}_p, T),$$
where the last equality holds by the property 4) of Theorem \ref{thm, main}. 
Here we note that $\hat{\varepsilon}(V)=\varepsilon(V)=-1$ since $\mathrm{D}_{\mathrm{pst}}(V)=\mathbb{Q}_p^{\mathrm{ur}}\otimes_{\mathbb{Q}_p}\mathrm{D}_{\mathrm{st}}(V)$ is unramified as a representation of $W_{\mathbb{Q}_p}$ and $$\mathrm{det}(-\mathrm{Fr}_p\mid \mathrm{D}_{\mathrm{pst}}(V)/\mathrm{D}_{\mathrm{pst}}(V)^{N=0}
)=\mathrm{det}(-\mathrm{Fr}_p\mid \mathrm{D}_{\mathrm{pst}}(L))=-1.$$ 
Since $H^1(\mathbb{Q}_p, \mathcal{O}(1))/\mathcal{O}z$ is torsion-free and $V$ is non-crystalline, 
we have $H^1(\mathbb{Q}_p, \mathcal{O}(1))=\mathcal{O}z+H^1_{\rm{f}}(\mathbb{Q}_p, \mathcal{O}(1))$. 
Hence, $H^1(\mathbb{Q}_p, \mathcal{O}(1))/\mathcal{O}z=H^1_{\rm{f}}(\mathbb{Q}_p, \mathcal{O}(1)).$
This implies the equality 
$$H^1(\mathbb{Q}_p, \mathcal{O}(1))/\mathrm{Im}(\delta)=H^1_+(\mathbb{Q}_p, T)$$ 
 since both are 
Lagrangian submodules of $H^1(\BQ_p,T)$.

ii) Write the character associated with $T_1$ as $\chi_{\cyc}^m \omega^n \mu\chi$ for  
$\mu$ unramified and $\chi$ of finite $p$-power order. 
Then\footnote{Note that there is no contribution from the monodoromy part.} we have $\varepsilon(V)=\omega^n(-1)$ and 
$\Gamma(V)=(-1)^{m-1}$, implying $$\hat{\varepsilon}(V)=(-1)^{m+n-1}=(-1)^{r-1}.$$  
Therefore, 
we have 
$H^1_{\rm{f}}(\BQ_p, T)=H^1_{\epsilon}(\BQ_p, T)$ by the property 4) of Theorem~\ref{thm, main}.

Since $T_2$ is non-trivial, the natural map $H^1(\BQ_p, T_1)\rightarrow H^1(\BQ_p, T)$ is injective and 
the image is a submodule of $H^1_{\rm{f}}(\BQ_p, T)$ of finite index by an argument in the above part i).

If $H^0(\BQ_p, \overline{T}_2)=0$, then the quotient is contained in 
the free module $H^1(\BQ_p, T_2)$, hence, $H^1(\BQ_p, T_1)=H^1_{\rm{f}}(\BQ_p, T)$. 
Suppose that $H^0(\BQ_p, \overline{T}_2)\not=0$. 
Then $\overline{T}_2\cong \mathbb{F}$ and $\overline{T}_1\cong \mathbb{F}(1)$, and 
the residual exact sequence is of the form  
\begin{equation}\label{eq,res}
0 \rightarrow \mathbb{F}(1) \rightarrow \overline{T} \rightarrow \mathbb{F}\rightarrow 0.
\end{equation}
We also have $r \equiv m+n \equiv 1\mod 2$, and hence $\hat{\varepsilon}(V)=+1$. 
Since the map 
\[H^1_{\rm{g}}(\BQ_p,  \mathcal{O}(1))=H^1(\mathbb{Q}_p, \mathcal{O}(1))\rightarrow H^1(\mathbb{Q}_p, \mathbb{F}(1))\] is surjective, 
there exists 
an exact sequence 
\[
0 \rightarrow \mathcal{O}(1) \rightarrow T' \rightarrow \mathcal{O}\rightarrow 0
\]
whose extension class is a lift of that of the residual sequence \eqref{eq,res} and $T'$ de Rham.
Hence, by part i) and the base change property of the local sign decomposition, 
the image of 
$H^1(\BQ_p, \overline{T}_1)\rightarrow H^1(\BQ_p, \overline{T})$
is equal to $H^1_+(\BQ_p, \overline{T})$.

Suppose that the image of $H^1(\BQ_p, {T}_1)\rightarrow H^1(\BQ_p, {T})$ is isomorphic to 
$H^1_{\rm{f}}(\BQ_p, {T})=H^1_{-}(\BQ_p, {T})$. Then again by the base change property, 
the image of 
$H^1(\BQ_p, \overline{T}_1)\rightarrow H^1(\BQ_p, \overline{T})$
equals $H^1_-(\BQ_p, \overline{T})$, which is a contradiction. 
\end{proof}
\begin{remark}
In particular, the proposition relates the Bloch--Kato subgroup of $T$ with that of $T_1$ or $T_2$. The various possibilities do not seem to be discussed in the literature. 
\end{remark}

\begin{cor}\label{cor, the reducible sign}
 Let $\overline{T}$ be a generic symplectic self-dual representation of $G_{\BQ_p}$ 
of rank $2$ with coefficients in a finite field $\mathbb{F}$. 
Suppose that $\overline{T}$ has a subrepresentation $\overline{T}_1$ of rank $1$ 
with character $\omega^r\mu$ for  
$\mu$ unramified, and put $\epsilon=(-1)^r$.
\begin{itemize}
\item[i)] If $\ov{T}_1 \cong \mathbb{F}(1)$, then 
$H^1_+(\BQ_p, \overline{T})=\mathrm{Im}\,H^1(\BQ_p, \overline{T}_1)$. 
\item[ii)] If $\ov{T}_1$ is not isomorphic to $\mathbb{F}(1)$, then $H^1_{\epsilon}(\BQ_p,\overline{T})=H^1(\BQ_p,\ov{T}_1)$.
\end{itemize}
\end{cor}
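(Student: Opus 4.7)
The plan is to deduce the corollary from Proposition~\ref{prop, reducible sign} by lifting $\ov{T}$ to a de Rham symplectic self-dual $G_{\BQ_p}$-representation preserving the subrepresentation structure, and then descending via the base change property (part 3) of Theorem~\ref{thm, main}.

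To construct the lift, let $\CO$ be the integer ring of a finite extension of $\BQ_p$ with residue field $\BF$. In case~i), where $\ov{T}_1\cong\BF(1)$ and $\ov{T}_2\cong\BF$ by self-duality, set $T_1:=\CO(1)$ and $T_2:=\CO$. Since $H^2(\BQ_p,\CO(1))\cong\CO$ is $\pi$-torsion free by Tate duality, the long exact sequence attached to $0\to\CO(1)\xrightarrow{\pi}\CO(1)\to\BF(1)\to 0$ shows that the reduction $H^1(\BQ_p,\CO(1))\twoheadrightarrow H^1(\BQ_p,\BF(1))$ is surjective, so the residual extension class lifts to yield the desired $T$. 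In case~ii), where $\ov{T}_1$ has character $\omega^r\mu$ with $\omega^r\mu\neq\omega$, lift this character to $\tilde\chi_1:=\chi_{\mathrm{cyc}}^m\omega^{r-m}\tilde\mu$ for some $m\geq 2$ and $\tilde\mu$ the Teichm\"uller lift of $\mu$. By local Tate duality, $H^2(\BQ_p,T_1\otimes T_2^{-1})\cong H^0(\BQ_p,\CO(\tilde\chi_1^{-2}\chi_{\mathrm{cyc}}^2))^\vee$ vanishes since $\tilde\chi_1^2$ has Hodge--Tate weight $2m\geq 4\neq 2$, so the residual extension class lifts to yield a de Rham, symplectic self-dual, generic $T$ (genericity by Nakayama) with $T_1$ of positive Hodge--Tate weight and $T_2$ non-trivial.

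Applying Proposition~\ref{prop, reducible sign} to this $T$: in case~i), part~i) gives $\mathrm{Im}(H^1(\BQ_p,T_1)\to H^1(\BQ_p,T))=H^1_+(\BQ_p,T)$; in case~ii), part~ii) combined with $H^0(\BQ_p,\ov{T}_2)=0$ (which holds since $\ov{T}_2\not\cong\BF$) gives $\mathrm{Im}(H^1(\BQ_p,T_1)\to H^1(\BQ_p,T))=H^1_\epsilon(\BQ_p,T)$ with $\epsilon=(-1)^r$.

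To descend, note that the reduction $H^1(\BQ_p,T_1)\to H^1(\BQ_p,\ov{T}_1)$ is surjective in both cases: in case~i) by the Tate sequence argument above, and in case~ii) because $H^2(\BQ_p,T_1)\cong H^0(\BQ_p,\CO(\tilde\chi_1^{-1}\chi_{\mathrm{cyc}}))^\vee=0$ (as $\tilde\chi_1$ has Hodge--Tate weight $m\geq 2\neq 1$). Combined with the base change equality $H^1_\pm(\BQ_p,T)\otimes_{\CO}\BF=H^1_\pm(\BQ_p,\ov{T})$ via the natural commutative diagram, this identifies the target signed submodule with the image of $H^1(\BQ_p,\ov{T}_1)\to H^1(\BQ_p,\ov{T})$, which in case~ii) is identified with $H^1(\BQ_p,\ov{T}_1)$ itself because the natural map is injective when $H^0(\BQ_p,\ov{T}_2)=0$. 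The main technical difficulty is arranging the lift, specifically ensuring the vanishing of the $H^2$-obstructions; this is achieved by selecting the Hodge--Tate weight sufficiently large, which is possible precisely because the hypotheses in both cases avoid the critical character coincidences.
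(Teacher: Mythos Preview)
Your approach matches the paper's: lift $(\ov{T},\ov{T}_1)$ to a de Rham pair $(T,T_1)$, apply Proposition~\ref{prop, reducible sign}, and descend via the base change property. However, there is an error in your lifting step for case~ii). Local Tate duality for $\CO$-lattices reads $H^2(\BQ_p,\CO(\psi))\cong H^0(\BQ_p,L/\CO(\psi^{-1}\chi_{\cyc}))^\vee$, with $L/\CO$-coefficients rather than $\CO$-coefficients; the latter $H^0$ vanishes whenever $\psi^{-1}\chi_{\cyc}$ is nontrivial (as your Hodge--Tate weight argument shows), but the correct $H^0$ vanishes only when the \emph{residual} character is nontrivial. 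For $\psi=\tilde\chi_1^2\chi_{\cyc}^{-1}$ this residual is $(\omega^{r-1}\mu)^{-2}$, which is trivial precisely when $\omega^{r-1}\mu$ has order~$2$ (e.g.\ $r=(p+1)/2$ and $\mu=1$, or $r=1$ and $\mu$ the unramified quadratic character). In that subcase $H^2(\BQ_p,T_1\otimes T_2^{-1})$ is a nonzero finite cyclic $\CO$-module for every $m\geq 2$, the reduction map on $H^1$ has cokernel~$\BF$, and the residual extension class need not lift.

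The repair is to allow $m=1$. In case~ii) the hypothesis $\ov{T}_1\not\cong\BF(1)$ gives $\omega^{r-1}\tilde\mu\neq 1$, so $T_2=\CO(\omega^{1-r}\tilde\mu^{-1})$ remains nontrivial. When $\omega^{r-1}\mu$ has order~$2$ its Teichm\"uller lift also has order~$2$, whence $T_1\otimes T_2^{-1}\cong\CO(\chi_{\cyc})$ and surjectivity follows from torsion-freeness of $H^2(\BQ_p,\CO(1))\cong\CO$ exactly as in your case~i) argument; in the remaining subcase the correct Tate duality already yields $H^2=0$. The lifted $T$ is de Rham since $H^1_g(\BQ_p,L(\psi))=H^1(\BQ_p,L(\psi))$ for any de Rham character $\psi$ of Hodge--Tate weight~$\geq 1$. (The same duality slip recurs in your descent paragraph for $H^2(\BQ_p,T_1)$, but there the residual $\omega^{1-r}\mu^{-1}$ is nontrivial by hypothesis, so the conclusion survives.)
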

\begin{proof}
This follows from the above proof of Proposition \ref{prop, reducible sign} by choosing an appropriate lift. 
For example, in the case i), we choose $T'$ as in the proof. 
\end{proof}

\begin{cor}\label{cor, lsd decomposable}
Let $(R, T)$ be a generic symplectic self-dual $G_{\BQ_p}$-representation of rank two. 
Suppose that $T$ is decomposable, i.e. $$T=T_1\oplus T_2$$ as a $G_{\BQ_p}$-representation for $T_i$ of rank one.  
Write the character associated to $\overline{T}_1$ as $\omega^{r}\mu$ for 
$\mu$ unramified, 
and put $\varepsilon=(-1)^{r}$. 
Then $$H^1_{\varepsilon}(\BQ_p,T)=H^1(\BQ_p,T_1),\qquad H^1_{-\varepsilon}(\BQ_p,T)=H^1(\BQ_p,T_2).$$ 
\end{cor}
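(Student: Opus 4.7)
The plan is to combine the rigidity of Lagrangian submodules in rank two (Lemma \ref{prop, lagrangian}) with the residual sign calculation of Corollary \ref{cor, the reducible sign}, propagated through the base-change property 3) of Theorem \ref{thm, main}. First I would localize at a maximal ideal so that, without loss of generality, $R$ is a Noetherian local ring (which is harmless by the base-change property and the fact that both sides of the claimed equality are stable under localization). The decomposition $T = T_1 \oplus T_2$ and functoriality of Galois cohomology yield
$$H^1(\BQ_p, T) = H^1(\BQ_p, T_1) \oplus H^1(\BQ_p, T_2).$$
Using $\det T \cong R(1)$ together with the self-duality, the symplectic pairing identifies $T_2 \cong T_1^*(1)$, and genericity of $(R,T)$ then forces $H^0(\BQ_p, \overline{T}_i) = 0$ for $i=1,2$. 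A standard Euler--Poincar\'e and Nakayama argument, as in Proposition \ref{a}, shows that each $H^1(\BQ_p, T_i)$ is a free $R$-module of rank one.

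Next I would check that each $H^1(\BQ_p, T_i)$ is Lagrangian in $H^1(\BQ_p, T)$ with respect to the symmetric Tate pairing. Because $T_i$ has rank one, the skew-symmetric self-duality pairing on $T$ restricts to the zero pairing on $T_i \otimes T_i$, so cup product makes $H^1(\BQ_p, T_i)$ isotropic; being a rank-one direct summand of the rank-two free module $H^1(\BQ_p, T)$, it is Lagrangian. Since $2 \in R^{\times}$, Lemma \ref{prop, lagrangian} then implies that $H^1(\BQ_p, T)$ admits exactly two Lagrangian submodules, and therefore
$$\{H^1_{+}(\BQ_p, T),\, H^1_{-}(\BQ_p, T)\} = \{H^1(\BQ_p, T_1),\, H^1(\BQ_p, T_2)\}$$
as sets of submodules of $H^1(\BQ_p, T)$.

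It then remains to identify the correct matching of signs with summands. For this I would reduce modulo $\mathfrak{m}_R$: the base-change property 3) of Theorem \ref{thm, main} gives $H^1_\varepsilon(\BQ_p, T) \otimes_R R/\mathfrak{m}_R = H^1_\varepsilon(\BQ_p, \overline{T})$ inside $H^1(\BQ_p, \overline{T})$, while part 2) of Proposition \ref{a} gives $H^1(\BQ_p, T_i) \otimes_R R/\mathfrak{m}_R = H^1(\BQ_p, \overline{T}_i)$. Genericity of $\overline{T}$ excludes $\overline{T}_1 \cong \mathbb{F}$; it also excludes $\overline{T}_1 \cong \mathbb{F}(1)$, since otherwise $\det \overline{T} = \overline{\chi}_{\mathrm{cyc}}$ would force $\overline{T}_2 \cong \mathbb{F}$, contradicting genericity on $\overline{T}_2$. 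Hence case ii) of Corollary \ref{cor, the reducible sign} applies and yields $H^1_\varepsilon(\BQ_p, \overline{T}) = H^1(\BQ_p, \overline{T}_1)$ for $\varepsilon = (-1)^r$. Since the summands $H^1(\BQ_p, \overline{T}_1)$ and $H^1(\BQ_p, \overline{T}_2)$ are distinct inside $H^1(\BQ_p, \overline{T})$, matching residues forces $H^1_\varepsilon(\BQ_p, T) = H^1(\BQ_p, T_1)$, and $H^1_{-\varepsilon}(\BQ_p, T) = H^1(\BQ_p, T_2)$ then follows from the direct sum decomposition.

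The main obstacle is already carried by Corollary \ref{cor, the reducible sign}, which handles the residual sign; the only genuinely new ingredient here is ruling out the degenerate residual shape $\overline{T}_1 \cong \mathbb{F}(1)$ via self-duality and genericity, so that case ii) of that corollary applies. Once that is done the argument is bookkeeping among four candidate Lagrangians.
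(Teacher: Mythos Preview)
Your proof is correct and follows essentially the same route as the paper's: establish that $H^1(\BQ_p,T_i)$ are Lagrangian (the paper cites Lemma~\ref{lemma, reducible lagrangian}, you argue it directly), then pin down the sign via Corollary~\ref{cor, the reducible sign}~ii) applied to $\overline{T}$. You make explicit several steps the paper leaves implicit---the localization to $R$ local, the invocation of Lemma~\ref{prop, lagrangian} to match the two Lagrangian pairs, and the exclusion of $\overline{T}_1\cong\mathbb{F}(1)$ needed to land in case~ii)---but these are exactly the details underlying the paper's terse two-sentence proof.
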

\begin{proof}
Since $H^0(\BQ_p, \overline{T}_i)=0$   for $i \in \{1,2\}$, the submodules $H^1(\BQ_p,  \ov{T}_i) \subset H^1(\BQ_p,\ov{T})$ and $H^1(\BQ_p,T_i) \subset H^1(\BQ_p,T)$ are Lagrangian by
Lemma \ref{lemma, reducible lagrangian}. The sign of $H^1(\BQ_p,  T_i)$ and $H^1(\BQ_p,  \overline{T}_i)$ 
is determined by Cororally \ref{cor, the reducible sign} ii). 
\end{proof}

{\appendix

\section{Zariski density of crystalline points with a sign}\label{s:appendix}
\subsection{Setting}
Let $p$ be an odd prime. Let $\ov{\BQ}_p$ be an algebraic closure of $\BQ_p$, $G_{\BQ_p}=\Gal(\ov{\BQ}_{p}/\BQ_p)$ and $I_{p} \subset G_{\BQ_{p}}$ the inertia subgroup. 

Let $\chi_{\cyc}: G_{\BQ_{p}} \ra \BZ_p^\times$ denote 
the $p$-adic cyclotomic character. We normalise the Hodge--Tate weights so that the one of $\chi_{\cyc}$ is $1$. Let $\omega: G_{\BQ_{p}} \ra \BF_{p}^\times$ denote the mod $p$ reduction of $\chi_{\cyc}$.

\subsection{Existence of a lift} We consider the following existence of a lift of rank two, symplectic self-dual mod $p$ representation of $G_{\BQ_p}$ to characteristic zero. The proof is based on a suggestion of
 V. Pa\v{s}k\=unas, to whom we are grateful. 
\begin{thm}\label{thm, Paskunas}
Let 
$
\ov{\rho}: G_{\BQ_{p}} \ra \GL_{2}(\BF) 
$ 
be a continuous representation with $\det(\ov{\rho})=\omega$ 
 for $\BF$ a finite field of characteristic $p$. 
Then there exists a continuous lift 
$$
\rho: G_{\BQ_{p}} \ra \GL_{2}(W(\BF))
$$
of $\ov{\rho}$ such that 
\begin{itemize}
\item[i)] $\det(\rho)=\chi_{\cyc}$, 
\item[ii)] $\rho$ is crystalline, 
\item[iii)] The associated Hodge--Tate weights are $(k,1-k)$ for some $k\in \BZ_{\geq 1}$ of a {\it given} parity,
\item[iv)] $\rho$ is irreducible. 
\end{itemize}
\end{thm}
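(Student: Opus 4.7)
The plan is to construct $\rho$ explicitly via (weakly) admissible filtered $\varphi$-modules, dividing into cases according to whether $\bar\rho$ is absolutely irreducible or reducible, and separately ensuring the residual identification, determinant condition, and irreducibility in each case.

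Suppose first that $\bar\rho$ is absolutely irreducible. Then $\bar\rho|_{G_{\BQ_{p^2}}}$ splits as a sum of two characters, and $\bar\rho \cong \mathrm{Ind}_{G_{\BQ_{p^2}}}^{G_{\BQ_p}} \bar\psi$ for some character $\bar\psi$. The plan is to lift $\bar\psi$ to a crystalline character $\psi: G_{\BQ_{p^2}} \to W(\BF)^\times$ whose Hodge--Tate weights at the two embeddings $\BQ_{p^2} \hookrightarrow \overline{\BQ}_p$ are $k$ and $1-k$, with $k$ of the prescribed parity. Then $\rho := \mathrm{Ind}_{G_{\BQ_{p^2}}}^{G_{\BQ_p}}(\psi)$ is crystalline with Hodge--Tate weights $(k, 1-k)$ and is automatically irreducible because $\psi \neq \psi^\sigma$ (their HT weights differ); after an unramified twist of $\psi$ we arrange $\det \rho = \chi_{\cyc}$. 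Both parities of $k$ are accessible by exchanging $\psi$ with $\psi^\sigma$.

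Next suppose $\bar\rho$ is reducible, fitting into $0 \to \bar\chi_1 \to \bar\rho \to \bar\chi_2 \to 0$ with $\bar\chi_1 \bar\chi_2 = \omega$. Here the plan is to construct a filtered $\varphi$-module $D = L e_1 \oplus L e_2$ over $L := W(\BF)[1/p]$ with Frobenius $\varphi$ whose Newton polygon lies strictly above the Hodge polygon of slopes $(k, 1-k)$, together with a generic Hodge line $\mathrm{Fil}^k D \subset D$. The associated $V_\cris(D)$ is then crystalline with HT weights $(k,1-k)$ and, after tuning $\det \varphi$, has determinant $\chi_{\cyc}$; strict Newton--Hodge dominance forces irreducibility. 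To match $\bar\rho$ residually, we would adjust the entries of $\varphi$ and the filtration line modulo $p$ via the explicit reduction calculus for two-dimensional crystalline representations (Berger--Li--Zhu, Breuil), and the parity of $k$ would be handled by swapping the roles of $\bar\chi_1$ and $\bar\chi_2$ in the associated Wach module.

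The central difficulty lies in securing all three conditions --- residual identification with $\bar\rho$, exact determinant $\chi_{\cyc}$, and irreducibility --- simultaneously. Irreducibility is built into each construction. The residual identification in the reducible case, especially for indecomposable $\bar\rho$ (above all the \emph{peu ramifi\'e} case), demands fine control over reductions of crystalline representations; these are available through Berger--Li--Zhu-type formulas and the Breuil--M\'ezard framework, which we would invoke as a black box. Alternatively, one may work directly on the universal framed deformation ring $R_{\bar\rho}^\Box$ with fixed determinant $\chi_{\cyc}$: by the theorem of B\"ockle--Iyengar--Pa\v{s}k\=unas, this ring is reduced, $p$-torsion-free, and its generic fibre contains Zariski-dense families of crystalline points of arbitrarily large HT weights of either parity, from which a generic member yields the sought-after $\rho$.
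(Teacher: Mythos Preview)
Your proposal takes a genuinely different route from the paper, but it is incomplete in the reducible case and the alternative via B\"ockle--Iyengar--Pa\v{s}k\=unas has a circularity problem.

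For absolutely irreducible $\bar\rho$, your induction-from-$\BQ_{p^2}$ construction is essentially correct, but the parity claim needs repair. Exchanging $\psi$ with $\psi^\sigma$ gives the \emph{same} induced representation, so that cannot change the parity of $k$. What does work is this: writing $\bar\psi|_{I}=\omega_2^m$, a crystalline lift $\psi$ with weights $(k,1-k)$ at the two embeddings forces $k(1-p)\equiv m-p\pmod{p^2-1}$, so $k$ is determined modulo $p+1$ and hence has fixed parity. Lifting instead $\bar\psi^\sigma$ (with $\bar\psi^\sigma|_I=\omega_2^{pm}$) to a character with weights $(k',1-k')$ gives $k'-k\equiv m\pmod{p+1}$; since $\det\bar\rho=\omega$ forces $m\equiv 1\pmod{p-1}$ (so $m$ is odd), the parities of $k$ and $k'$ differ, and the two induced representations are genuinely distinct lifts of $\bar\rho$. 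There is also a field-of-definition issue when $\BF_{p^2}\not\subset\BF$, which you should address.

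For reducible $\bar\rho$ your argument is a sketch that ultimately punts to ``Berger--Li--Zhu-type formulas and the Breuil--M\'ezard framework'' as a black box. Matching an arbitrary (possibly non-split, peu ramifi\'e) extension class via explicit weakly admissible modules, for \emph{both} parities of $k$, is not something these references hand you directly. And your alternative---invoking density of crystalline points ``of either parity'' on $R_{\bar\rho}^{\Box,\chi_{\cyc}}$---is precisely what the paper proves \emph{as a corollary} of the present theorem (their Theorem~\ref{thm, density sgn crystalline} uses Theorem~\ref{thm, Paskunas} to get non-emptiness before running the density argument), so appealing to it here is circular.

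The paper avoids all of this by a short, uniform argument: for $k$ large (say $2k-2\geq 2(p^2-1)$), the proven Breuil--M\'ezard formula gives $e(R_{\bar\rho}^{\Box,\mathrm{cr},k}/\varpi)=\sum_{\bar\sigma}m_{\bar\sigma}^{\mathrm{cr}}(\lambda_k)\,\mu_{\bar\sigma}(\bar\rho)\geq 2$, since every Serre weight $\bar\sigma$ with trivial central character occurs in $\bar\sigma_k$ with multiplicity $\geq 2$ and at least one has $\mu_{\bar\sigma}(\bar\rho)\neq 0$. Hence $R_{\bar\rho}^{\Box,\mathrm{cr},k}\neq 0$; and since the reducible locus has multiplicity~$1$, an irreducible crystalline lift with weights $(k,1-k)$ exists for $k$ of either parity. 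This handles all $\bar\rho$ at once with no case analysis.
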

\begin{proof} We primarily follow the notation of \cite{Tu} (see also \cites{Ki1,Ki2,Pa}). In particular $R_{\ov{\rho}}^{\Box,1}$ denotes the associated framed deformation ring, classifying framed deformations with determinant 
$\chi_{\cyc}$. 

For an integer $k\in \BZ_{\geq1}$, put $\lambda_{k}=(k,1-k)$. Let $$R_{\ov{\rho}}^{\Box,{\rm cr},k}:= R_{\ov{\rho}}^{\Box,1,{\rm cr}}(\lambda_{k}, 1\oplus 1, \chi_{\cyc})$$
be an associated crystalline deformation ring \cite{Ki1}: 
the quotient of $R_{\ov{\rho}}^{\Box,1}$, classifying crystalline deformations of $p$-adic Hodge type $(\lambda_{k}, 1\oplus 1, \chi_{\cyc})$. In particular the corresponding Hodge--Tate weights are $(k,1-k)$ and the determinant 
$\chi_{\cyc}$. It suffices to show that the corresponding Hilbert--Samuel multiplicity $e(R_{\ov{\rho}}^{\Box,{\rm cr},k}/\varpi)$ is non-zero, where $\varpi$ denotes a uniformiser of $W(\BF)$. Indeed, then 
$R_{\ov{\rho}}^{\Box,{\rm cr},k}$ will be non-zero and being $p$-torsion free, its generic fiber will also be non-zero, leading to a desired lift satisfying the properties i)-iii). 

The positivity of Hilbert--Samuel multiplicity will be shown 
under the large weight hypothesis: 
\begin{equation}\label{w-l}
2k-2 \geq 2(p^2-1),
\end{equation}
which will be assumed henceforth. 

By the Breuil--M\'ezard conjecture \cite[Thm.~1.2]{Tu}, we have
\begin{equation}\label{BM_f}
e(R_{\ov{\rho}}^{\Box,{\rm cr},k}/\varpi)=\sum_{\ov{\sigma}} m_{\ov{\sigma}}^{\rm cr}(\lambda_k)\mu_{\ov{\sigma}}(\ov{\rho}). 
\end{equation}
Here $\ov{\sigma}$ varies over smooth irreducible $\BF$-representations of $\GL_{2}(\BZ_p)$, $m_{\ov{\sigma}}^{\rm cr}(\lambda_k):=m_{\ov{\sigma}}^{\rm cr}(\lambda_{k},1\oplus 1)\in \BZ_{\geq 0}$ denotes the $\GL_{2}(\BZ_p)$-multiplicity as in \cite[p.~2176]{Tu}, 
and 
$\mu_{\ov{\sigma}}(\ov{\rho}) \in \BZ_{\geq 0}$  the Hilbert-Samuel multiplicity of a crystalline deformation ring as in \cite[Rem.~1.3]{Tu}. Note that $$\mu_{\ov{\sigma}}(\ov{\rho})\neq 0 \iff \Hom_{\GL_{2}(\BZ_{p})}(\ov{\sigma},\beta(\ov{\rho}))\neq 0$$ 
(cf.~\cite[Thm.~1.1]{Pa}), 
where $\beta(\ov{\rho})$ denotes the smooth $\BF$-representation associated to $\ov{\rho}$ by Colmez \cite{Co}. A necessary condition for this non-vanishing is that the central character of $\ov{\sigma}$ equals $\mathbf{1}$. We recall that the Breuil--M\'ezard conjecture \cites{BM,Ki2} is unconditionally resolved for odd primes $p$ by the work of Kisin \cite{Ki2}, Pa\v{s}k\=unas \cite{Pa}, Hu--Tan \cite{HT} and Tung \cite{Tu}.

Put $\sigma_{k}={\rm Sym}^{2k-2} E^2\otimes \det^{1-k}$ and let $\ov{\sigma}_k$ denote the associated residual representation. In view of \eqref{w-l} and the proof of \cite[Lem.~A.2]{Pa0}, any smooth irreducible $\BF$-representation $\ov{\sigma}$ of $\GL_{2}(\BZ_p)$ with trivial central character 
appears as a subquoteint of the residual representation $\ov{\sigma}_{k}$ with multiplicity at least two. That is, for such $\ov{\sigma}$, we have $
m_{\ov{\sigma}}^{\rm cr}(\lambda_{k})\geq 2$. 
Therefore, in view of \eqref{BM_f} and the above paragraph, it follows that 
\begin{equation}\label{HM_lb}
e(R_{\ov{\rho}}^{\Box,{\rm cr},k}/\varpi)\geq 2, 
\end{equation}
completing the existence of a lift $\rho$ satisfying the properties i)-iii).

Now we consider the property iv). 
If it does not hold for any lift $\rho$ satisfying i), ii) and iii) for a given parity, then any such lift is reducible. 
On the other hand, the reducible locus has Hilbert--Samuel multiplicity one. Hence, in light of \eqref{HM_lb}, the proof concludes.

\end{proof}

\subsection{Zariski density of crystalline points with a sign}
An application of the existence of crystalline lift with a given sign 
is the following Zariski density of such crystalline points. 
\begin{thm}\label{thm, density sgn crystalline}
Let 
$
\ov{\rho}: G_{\BQ_{p}} \ra \GL_{2}(\BF) 
$ 
be a continuous representation with $\det(\ov{\rho})=\omega$ 
 for $\BF$ a finite field of characteristic $p$ and $R^{\square, \chi_{\mathrm{cyc}}}_{\overline{\rho}}$ the associated universal framed deformation ring with determinant $\chi_\cyc$.
Then for each sign $\varepsilon\in \{\pm \}$, the set $\mathcal{S}_{\varepsilon}^{ \chi_{\mathrm{cyc}}}$ consisting of $x\in \mathrm{Spec}R^{\square, \chi_{\mathrm{cyc}}}_{\overline{\rho}}[1/p]$ whose 
associated $p$-adic Galois representation $\rho_x$ is crystalline with Hodge--Tate weights $(k, 1-k)$ for   
$k\in \BZ_{\geq 1}$ with $(-1)^k=\varepsilon \cdot 1$ is Zariski 
dense in $\mathrm{Spec} R^{\square, \chi_{\mathrm{cyc}}}_{\overline{\rho}}[1/p]$. 
\end{thm}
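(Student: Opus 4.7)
The approach combines the existence of crystalline lifts with prescribed parity (Theorem A.2) with Kisin's theory of crystalline deformation rings \cite{Ki1} and a short Noetherianity argument.

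First, I would use Theorem A.2 to produce, for each sign $\varepsilon \in \{\pm\}$, an infinite family of integers $k \geq 1$ with $(-1)^k = \varepsilon \cdot 1$ such that the framed crystalline deformation ring $R^{\square, \mathrm{cr}, k}_{\overline{\rho}} := R^{\square, 1, \mathrm{cr}}_{\overline{\rho}}(\lambda_k, \mathbf{1}\oplus\mathbf{1}, \chi_{\mathrm{cyc}})$ is non-zero. Indeed, the proof of Theorem A.2 establishes the Hilbert--Samuel multiplicity bound $e(R^{\square, \mathrm{cr}, k}_{\overline{\rho}}/\varpi) \geq 2$ for all sufficiently large $k$, with the parity of $k$ as a free parameter. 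Hence each $X_k := \mathrm{Spec}\, R^{\square, \mathrm{cr}, k}_{\overline{\rho}}[1/p]$ is a non-empty closed subscheme of $Y := \mathrm{Spec}\, R^{\square, \chi_{\mathrm{cyc}}}_{\overline{\rho}}[1/p]$ whose closed points all lie in $\mathcal{S}_\varepsilon^{\chi_{\mathrm{cyc}}}$.

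Next, by Kisin's theorem \cite{Ki1} applied in the fixed-determinant setting, each $X_k$ is formally smooth (hence reduced) and equidimensional of codimension exactly one in $Y$. Since every closed point of $X_k$ has Hodge--Tate weights $(k, 1-k)$---locally constant integer invariants---the subschemes $X_k$ are pairwise disjoint for distinct $k$, and their irreducible components are pairwise distinct as $k$ varies over our family.

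The Zariski density of $\mathcal{S}_\varepsilon^{\chi_{\mathrm{cyc}}}$ in $Y$ then follows by a short Noetherianity argument. Suppose for contradiction that the closure $Z := \overline{\mathcal{S}_\varepsilon^{\chi_{\mathrm{cyc}}}}$ is a proper closed subscheme of $Y$; then there is an irreducible component $Y_0$ of $Y$ not contained in $Z$. By the B\"ockle--Iyengar--Pa\v{s}k\=unas density \cite{BIP} of crystalline points on each component combined with Theorem A.2, there are infinitely many $k$ in our parity-$\varepsilon$ family with $X_k \cap Y_0 \neq \emptyset$; each such $X_k \cap Y_0$ is a codimension-one closed subscheme of $Y_0$ contained in the proper closed subscheme $Z \cap Y_0 \subsetneq Y_0$. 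The irreducible components of these $X_k \cap Y_0$ give infinitely many distinct irreducible components of $Z \cap Y_0$, contradicting the Noetherianity of $Y_0$.

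The principal technical obstacle is the codimension-one claim for $X_k$ in $Y$ in the fixed-determinant setting, which refines Kisin's original statement for the unfixed-determinant framed deformation ring. This is handled by analysing the determinant morphism $Y \to \mathrm{Spec}\, R^{\square}_\omega[1/p]$ and verifying via a fiber-dimension computation that imposing $\det = \chi_{\mathrm{cyc}}$ preserves the codimension-one property of the crystalline loci $X_k$ for all $k$ in our family.
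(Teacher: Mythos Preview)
Your approach has a genuine dimension error that breaks the Noetherianity argument. For $K=\BQ_p$ and $d=2$, the framed fixed-determinant deformation ring $R^{\square,\chi_{\cyc}}_{\overline\rho}$ has relative dimension $6$ over $\CO$, while Kisin's crystalline ring $R^{\square,\mathrm{cr},k}_{\overline\rho}$ with fixed determinant has relative dimension $4$ (tangent space $H^1_f(\mathrm{ad}^0\rho)$ has dimension $1$, and framing adds $3$). Hence each $X_k$ has codimension \emph{two} in $Y$, not one. With this, your contradiction evaporates: the proper closed subset $Z\cap Y_0$ could perfectly well be a single irreducible $5$-dimensional subscheme containing infinitely many pairwise-disjoint $4$-dimensional $X_k\cap Y_0$'s, with no violation of Noetherianity.

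There is also a circularity in the step ``by \cite{BIP} density combined with Theorem~A.2, infinitely many $k$ of parity $\varepsilon$ satisfy $X_k\cap Y_0\neq\emptyset$.'' The density result of \cite{BIP} gives crystalline points of \emph{some} weight on $Y_0$, and Theorem~A.2 gives $X_k\neq\emptyset$ for large $k$ of the right parity---but neither statement places a parity-$\varepsilon$ crystalline point on a \emph{prescribed} component $Y_0$. That is essentially a weak form of what you are trying to prove, and naive twisting by a character cannot fix it in the fixed-determinant setting (a twist by $\theta$ multiplies the determinant by $\theta^2$).

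The paper's proof proceeds differently: it imports the rigid-analytic twisting argument of \cite{BIP2} (their Corollary~4.3 and Lemma~4.1) and checks that the mechanism there preserves parity. Concretely, in that argument one moves from a fixed $\rho\in\mathcal{S}_\varepsilon^{\chi_{\cyc}}$ to nearby $\rho_x\otimes\theta$; the Hodge--Tate weights of $\rho_x$ are congruent to those of $\rho$ modulo $2(p-1)$, and those of $\theta$ are divisible by $p-1$. Since $p$ is odd, $p-1$ is even, so parity is preserved throughout---this is the key observation, and it replaces the dimension-counting you attempt.
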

\begin{proof}
The Zariski density of crystalline points assuming the existence of a lift as in Theorem~\ref{thm, Paskunas}
has already been proved in \cite{BIP2}, albeit without specifying the sign. 
 In the following outline we modify the argument of {\it loc. cit.} to account for the sign. 
We primarily follow the notation of \cite{BIP2}.

First, it suffices to show that $\mathcal{S}_{\varepsilon}^{\chi_{\mathrm{cyc}}}$ is Zariski dense in the rigid generic fiber 
$\mathfrak{X}_{\overline{\rho}}^{\square, \chi_{\mathrm{cyc}}}$ of the formal scheme $\mathrm{Spf}R^{\square, \chi_{\mathrm{cyc}}}_{\overline{\rho}}$ by 
\cite[Lem.~5.1]{BIP2}. To show this density, it is enough to check that the proof of \cite[Cor.~4.3]{BIP2} works also for $\mathcal{S}_{\varepsilon}^{\chi_{\mathrm{cyc}}}$ instead of $\mathcal{S}^{\psi}$ therein. 
 Instead of $\mathcal{S}'$ in that proof, we consider 
the subset $\mathcal{S}'_{\varepsilon}$ of $\mathfrak{X}_{\overline{\rho}}^{\square, \chi_{\mathrm{cyc}}}\times_{\mathrm{Sp}L}\mathcal{X}^{\mathrm{rig}}$ corresponding to pairs 
$(\rho', \theta)$ such that $\rho'\in \mathcal{S}_{\varepsilon}^{\chi_{\mathrm{cyc}}}$ and $\theta$ is crystalline. 
Since $\mathcal{S}_{\varepsilon}^{ \chi_{\mathrm{cyc}}}$ is non-empty by Theorem \ref{thm, Paskunas}, 
so is $\mathcal{S}'_{\varepsilon}$ 
 as it contains a point corresponding to a pair $(\rho, \mathbf{1})$ for 
  $\rho \in \mathcal{S}_{\varepsilon}^{ \chi_{\mathrm{cyc}}}$. 
  
 Then the density of 
$\mathcal{S}_{\varepsilon}^{\chi_{\mathrm{cyc}}}$ follows by the proof of \cite[Cor.~4.3]{BIP2}. Here we note that if we consider 
$\mathcal{S}^{\mathrm{cn}}_{\mathrm{cr}}$ in that proof for a fixed $\rho\in \mathcal{S}_{\varepsilon}^{\chi_{\mathrm{cyc}}}$, then 
any $\rho_x\otimes \theta$ in the statement of \cite[Lem.~4.1]{BIP2} belongs to $\mathcal{S}_{\varepsilon}^{\chi_{\mathrm{cyc}}}$, i.e. the parity of Hodge--Tate weights of $\rho_x\otimes \theta$ are the same as those of $\rho$ since the Hodge--Tate weights of 
$\rho_x$ are congruent to those of $\rho$ $\mod{dt}$, and those of 
$\theta$ are congruent to $0 
\mod{t}$ for $d=2$ and $t=|\mu_{\mathrm{tor}}(\mathbb{Q}_p)|=p-1$. In particular, they are congruent $\mod {2}$ since $p$ is odd. 
\end{proof}

}
\part{Applications}\label{part II}
\setcounter{section}{5}

\section{Mazur--Rubin arithmetic local constants, epsilon constants and the $p$-parity conjecture}\label{s:mr}
In this section we consider some phenomena around the $p$-parity conjecture. The central result is a compatibility of Mazur--Rubin and Deligne--Langlands local constants for rank two symplectic self-dual de Rham representations of $G_{\BQ_p}$ (see Theorem~\ref{thm, MR}). It leads to new cases of the relative $p$-parity conjecture (see Theorem~\ref{thm, parity-family}) as well as the $p$-parity conjecture for Hilbert modular forms over totally real fields (see Theorem~\ref{thm, parity}). Along the way, we study a local sign-like decomposition for non-generic representations of $G_{\BQ_p}$ (see Theorem~\ref{thm, anomalous basis}).

In this section we fix an odd prime $p$. 
\subsection{Backdrop} 
This subsection introduces the $p$-parity conjecture and Mazur--Rubin arithmetic local constants  (cf.~\cites{MR,NekMRl}). 

\subsubsection{$p$-parity conjecture} 
Let $M$ be a pure motive\footnote{in the naive sense \cite{NekMRl}} defined over a number field $F$ with coefficients in a number field $L_M$. 

For a prime $\mathfrak{p}$ of $L_M$ above $p$, let $V:=M_\mathfrak{p}$ be the $p$-adic representation of $G_{F}$ associated to $M$ 
with coefficients in $L:=L_{M,\mathfrak{p}}$. 
Suppose that $M$ is symplectic self-dual, i.e. 
there exists an isomorphism 
$$M\cong M^\vee(1)$$ of pure motives  inducing a skew-symmetric isomorphism  $V \cong V^\vee(1)$
 of $L[G_F]$-modules. 
 
Let $H^1_{\rm{f}}(F, V)$ be the Block--Kato Selmer group associated to $V$ and put 
\[
\chi_{\rm{f}}(F, V)=\dim_{L} H^1_{\rm{f}}(F, V).
\]
Here we note that $\dim_{L} H^0(F, V)=0$ since $M$ is pure and self-dual. 
For a place $v$ of $F$, let $\varepsilon_v(V)\in\{\pm 1\}$ be the local $\varepsilon$-constant, 
defined using a non-trivial additive character $\psi$ and a Haar measure that is 
self-dual with respect to $\psi$ (cf.~Lemma~\ref{lem, epsilon=pm 1}).
\begin{defn}\label{def:epsilon}
The global $\varepsilon$-constant associated to $V$ is defined as $$\varepsilon(V):=\prod_v \varepsilon_v(V).$$ 
\end{defn} 
The $\varepsilon$-constant $\varepsilon_v(V)$ for any finite place $v$
and the archimedean component $\prod_{w |\infty}\varepsilon_w(V)$
 are determined by the local Galois representations $V|_{G_{F_v}}$ and $V|_{G_{F_w'}}$ for all $w'|p$, respectively.   
More precisely, for an integer $m$ and place $w'$ above $p$, put 
\[
h_{m}(V|_{G_{F_{w'}}}):=\dim_{L} \mathrm{gr}^m D_{\rm{dR}}(V|_{G_{F_{w'}}}), \quad g_{w'}^-(V):=\sum_{m<0} m\, 
h_m(V|_{G_{F_{w'}}}), \quad g_{w'}^+(V):=\sum_{m>0} m\, h_m(V|_{G_{F_{w'}}}). 
\]
Then the global $\varepsilon$-constant equals
\[
\varepsilon(V)=\prod_{v \in S_{\mathrm{f}}} \varepsilon_v(V)\cdot
\prod_{w \in S_\infty} \varepsilon_w(V) 
\]
where $S_{\rm f}$ and $S_\infty$ denote the set of finite and infinite places of $F$ and\footnote{Fix an isomorphism  
 $\lambda: \overline{\BQ}_p\cong \BC$. Then for each embedding $\iota_v: F \hookrightarrow \overline{\BQ}_p$, 
we can deduce Hodge theoretic information about a symplectic self-dual motive $M$ defined over $F$
at the archimedean place corresponding to $\lambda \circ \iota_v$  
from its $p$-adic realization with respect to $\iota_v$. 
A subtlety is that there is a one to one correspondence 
between archimedean and $p$-adic embeddings via $\lambda$, 
but not between the archimedean and $p$-adic places of $F$, namely $\# S_\infty \not= \# S_p$ in general. 
Hence, one typically does not expect a finer term-by-term relation than (\ref{equation, def arch epsilon}).} 
\begin{equation}\label{equation, def arch epsilon}
\prod_{w \in S_\infty} \varepsilon_w(V):=(-1)^{\sum_{w' \in S_p} g_{w'}^-(V)}\cdot (-1)^{r_2(F) \dim_{L} \!V/2}
\end{equation} 
for $S_p$ the set of places above $p$ (cf. \cite{NekPIIIe}).

We now let $V$ to be a geometric $p$-adic representation of $G_F$ defined over a $p$-adic local field $L$, 
i.e. unramified outside finitely many places and de Rham at places above $p$. 
Put $$\chi_{\rm f}(F,V)=\dim_{L}H^1_{\rm f}(F,V)-\dim_{L}H^0(F,V).$$
Define $\varepsilon(V)$ as in Definition~\ref{def:epsilon}.

The Bloch--Kato conjecture suggests the following.

\begin{conj}\label{conj, p-parity}($p$-parity conjecture) 
For any symplectic self-dual and geometric $p$-adic representation $V$ of $G_F$, 
we have
$$
\varepsilon(V)=(-1)^{\chi_{\rm{f}}(F, V)}.
$$
\end{conj}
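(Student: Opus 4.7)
The plan is to establish the conjecture by bootstrapping from a relative version via the local sign decomposition. The key observation is that for symplectic self-dual representations of rank two, the decomposition $H^1(\BQ_p,T)=H^1_+(\BQ_p,T)\oplus H^1_-(\BQ_p,T)$ of Theorem~\ref{thm, main} is a categorical refinement of the Mazur--Rubin arithmetic local constant: since the sign labelling the Bloch--Kato Lagrangian $H^1_{\rm f}(\BQ_p,T)$ is determined by $\hat{\varepsilon}(V)$ and the labelling is functorial in $T$, the decomposition simultaneously encodes both quantities that appear on the two sides of the relative parity identity.

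First, I would establish the local compatibility (Theorem~\ref{thm, MR, intro}): for residually symplectically isomorphic de Rham representations $T_1,T_2$ of $G_{\BQ_p}$ of rank two, base change of the local sign decomposition identifies both $(-1)^{\delta_p(T_1,T_2)}$ and $\hat{\varepsilon}(V_1)/\hat{\varepsilon}(V_2)$ with the same invariant of the relative position of two Lagrangians inside $H^1(\BQ_p,\overline{T})$. Combined with Nekov\'a\v{r}'s analogous compatibility at $v\nmid p$, Mazur--Rubin's formula \eqref{equation, intro relative selmer} then promotes to the relative $p$-parity conjecture (Conjecture~\ref{conj, relative p-parity, intro}): whenever $V,V'$ are symplectic self-dual geometric $G_F$-representations that are residually symplectically isomorphic, one has $(-1)^{\chi_{\rm f}(F,V)-\chi_{\rm f}(F,V')}=\varepsilon(V)/\varepsilon(V')$, so parity is preserved across residual congruence classes.

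Second, I would reduce the absolute statement for a given $V$ to a known case by constructing a residual companion. For Galois representations attached to Hilbert modular newforms, the approach is to use residual modularity together with Hida or eigenvariety-type deformations to find a congruent newform of parallel weight two (or of $p$-ordinary, or of CM type) where parity is already established by Nekov\'a\v{r}, the Dokchitser brothers, or Johansson--Newton. For more general motivic $V$, one would first invoke Fontaine--Mazur to realize $V$ as a piece of a motive, then apply automorphy lifting and base change to reduce to an automorphic companion with known parity. Crucially, since the local compatibility uses $\hat{\varepsilon}$ rather than $\varepsilon$, changes in Hodge--Tate weights during deformation are absorbed into the $\Gamma$-factor and do not disturb the argument.

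The main obstacle will be the construction of residual companions with known parity outside the rank two modular setting: this requires residual modularity theorems (currently available only for low-dimensional or small-residual-image cases) and automorphy lifting results that can handle supercuspidal or potentially crystalline behavior at $p$, precisely the regime in which the naive epsilon constant is hardest to control. A secondary difficulty arises for Eisenstein-type residual representations, where failure of irreducibility breaks the clean Lagrangian picture and forces one to invoke the anomalous variant of the local sign decomposition (Theorem~\ref{thm, main3}) together with careful bookkeeping of the degenerations in Galois cohomology — in particular, $H^2(\BQ_p,V)\ne 0$ contributions at Eisenstein primes must be tracked through the Tate--Poitou sequence to preserve the parity identity after the non-generic local correction.
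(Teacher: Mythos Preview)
The statement is Conjecture~\ref{conj, p-parity}, which the paper presents as an \emph{open conjecture}; it is not proved there. What the paper establishes are the special cases in Theorems~\ref{thm, parity-family} and~\ref{thm, parity}: relative parity for rank two geometric representations with $p$ totally split in $F$, and absolute parity for Hilbert modular newforms under that splitting hypothesis together with residual irreducibility. Your outline accurately reconstructs the paper's strategy for \emph{those} cases---local compatibility via the sign decomposition, combination with Nekov\'a\v{r}'s $v\nmid p$ result and the Mazur--Rubin formula~\eqref{equation, relative selmer}, then anchoring to a congruent form of known parity.

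As a proof of the full conjecture, however, there are two structural gaps that you flag as ``obstacles'' but do not resolve. First, the local sign decomposition of Theorem~\ref{thm, main} exists only for rank two representations of $G_{\BQ_p}$; there is no version for higher rank or for $G_{F_v}$ with $F_v\ne\BQ_p$ (the paper records the higher rank case as future work in \S1.5), so your local compatibility step is simply unavailable outside the rank two, $p$-totally-split regime. Second, your reduction of a general geometric $V$ to an automorphic anchor invokes the Fontaine--Mazur conjecture and automorphy lifting in a generality that is not known; the paper's actual anchoring (proof of Theorem~\ref{thm, parity}) is unconditional and specific---Hida's congruence to a parallel weight two form, where Nekov\'a\v{r} has already established parity. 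Your plan is thus a faithful sketch of how the paper handles its special cases, but it does not, and with the tools currently available cannot, prove Conjecture~\ref{conj, p-parity} as stated.
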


\subsubsection{Relative $p$-parity conjecture}  
\begin{conj}(relative $p$-parity conjecture)\label{conj, relative p-parity}
For any two symplectic self-dual and geometric $p$-adic representations $V$ and $V'$ of $G_F$, 
we have 
\begin{equation}\label{conj, relative p-parity II}
(-1)^{\chi_{\rm{f}}(F, V)-\chi_{\rm{f}}(F, V')}=\varepsilon(V)/\varepsilon(V').
\end{equation}
\end{conj}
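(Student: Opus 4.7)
The plan is to prove the conjecture in the case addressed by Theorem~\ref{parity-family, intro}: $V$ and $V'$ are rank-two representations of $G_F$ that are residually symplectically isomorphic, and $p$ is totally split in $F$. The general conjecture appears out of reach with current techniques. First I would choose $G_F$-stable symplectic lattices $T\subset V$ and $T'\subset V'$ whose mod-$\fp$ reductions are isomorphic as symplectic self-dual representations, so that the Mazur--Rubin formula \eqref{equation, intro relative selmer} applies and gives
\[
\chi_{\rm f}(F,V)-\chi_{\rm f}(F,V')\equiv\sum_{v\,\text{finite}}\delta_v(T,T')\pmod 2.
\]
This reduces the parity statement to the local-constant identity
\[
\frac{\varepsilon(V)}{\varepsilon(V')}=\prod_{v\,\text{finite}}(-1)^{\delta_v(T,T')}.
\]

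Next I would verify this identity place by place. For $v\nmid p$, Nekov\'a\v{r}'s compatibility \eqref{equation, MR vs DL, intro} gives $\varepsilon_v(V)/\varepsilon_v(V')=(-1)^{\delta_v(T,T')}$. For $v\mid p$, the total-splitting hypothesis identifies $F_v$ with $\BQ_p$, whereupon Theorem~\ref{thm, MR, intro} yields the completed version $\hat{\varepsilon}_p(V_v)/\hat{\varepsilon}_p(V'_v)=(-1)^{\delta_v(T,T')}$; equivalently,
\[
\frac{\varepsilon_p(V_v)}{\varepsilon_p(V'_v)}=\frac{\Gamma(V'_v)}{\Gamma(V_v)}\cdot(-1)^{\delta_v(T,T')}.
\]
Multiplying over all finite places leaves a residual factor $\prod_{v\mid p}\Gamma(V'_v)/\Gamma(V_v)$ beyond the target sign.

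It remains to show that this residual factor cancels against the archimedean ratio $\prod_{w\mid\infty}\varepsilon_w(V)/\varepsilon_w(V')$, which by formula~\eqref{equation, def arch epsilon} equals $(-1)^{\sum_{w'\mid p}(g^-_{w'}(V)-g^-_{w'}(V'))}$ since the dimension factor cancels. For a rank-two symplectic self-dual de Rham representation with Hodge--Tate weights $(k,1-k)$, a direct calculation using Lemma~\ref{prop:gamma} yields $g^-_{w'}(V)=-k$ and $\Gamma(V_{w'})=(-1)^{k-1}$; since $\Gamma\in\{\pm 1\}$, one then obtains $(-1)^{g^-_{w'}(V)-g^-_{w'}(V')}=\Gamma(V_{w'})/\Gamma(V'_{w'})$ at each $w'\mid p$. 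Taking the product over $w'\mid p$ gives the desired cancellation, and the conjectural identity follows.

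The hard part will be the compatibility at $v\mid p$, which is the content of Theorem~\ref{thm, MR, intro}; its proof rests on the local sign decomposition of Part~\ref{part I} together with the identification of signed submodules with Bloch--Kato subgroups via the completed $\varepsilon$-constant. The two standing hypotheses are essential: residual symplectic isomorphism underpins the Mazur--Rubin formula, while total splitting of $p$ reduces the $p$-adic local question to $G_{\BQ_p}$, the setting where the local sign decomposition is formulated. Relaxing either hypothesis, or extending beyond rank two, will require both a higher-rank categorical framework for symplectic self-duality and new local-global comparison techniques.
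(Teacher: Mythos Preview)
Your proposal is correct and follows essentially the same strategy as the paper's proof of Theorem~\ref{thm, parity-family}: reduce via the Mazur--Rubin formula~\eqref{equation, relative selmer}, handle $v\nmid p$ by Nekov\'a\v{r}'s compatibility, handle $v\mid p$ by Theorem~\ref{thm, MR}, and then match the residual $\Gamma$-ratio against the archimedean ratio. The one minor difference is in this last step: you compute directly in rank two, using $g^-_{w'}(V)=-k$ and $\Gamma(V_{w'})=(-1)^{k-1}$, whereas the paper argues more abstractly from self-duality that $g_v^-(V_i)+g_v^+(V_i)\equiv -1\pmod 2$ for each $i$, whence the $g^-$-difference and the $g^+$-difference (which is the $\Gamma$-ratio) agree mod $2$. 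Both arguments are valid and yield the same conclusion; the paper's version is rank-independent but no more is needed here.
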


The above conjecture is evidently implied by the $p$-parity conjecture for $V$ and $V'$. 
In practice, one first attempts to approach the relative $p$-parity conjecture for a $p$-adic family $\CV$ containing $V$, which is amenable to $p$-adic methods. Then one may aim to approach the $p$-parity conjecture for $V$ via establishing it for a specific specialization of $\CV$ with simple arithmetic features, such as Selmer rank zero or one. 
\subsubsection{Mazur--Rubin arithmetic local constants} 
Let $V$ and $V'$ be symplectic self-dual and geometric $p$-adic representations of $G_F$ 
that are residually symplectically isomorphic, i.e. 
they are deformations of a fixed representation of $G_F$ over a finite field. 

Then Mazur and Rubin \cite[Thm.~1.4]{MR} proved that
\begin{equation}\label{equation, relative selmer}
\chi_{\rm{f}}(F, V)-\chi_{\rm{f}}(F, V') \equiv  \sum_{v: \text{finite}} \delta_v(T, T')\quad  \mod 2
\end{equation}
for local constants $\delta_v (T,T')$ as recalled below. 
So the left hand side of 
\eqref{conj, relative p-parity II} 
 is a product of local signs.

To introduce the invariant $\delta_v (T,T')$, 
let $T \subset V$ and $T'\subset V'$ be symplectic self-dual lattices such that  
$\overline{T}:=T/\mathfrak{p}T$ and $\overline{T}':=T'/\mathfrak{p}T'$ 
are symplectically isomorphic 
over a finite field $\BF$. 
For each finite place $v$, define 
Selmer structures $\mathscr{F}_v$ and $\mathscr{F}_v'$
on $H^1(F_v, \overline{T})$ 
by 
\[
\mathscr{F}_v:=\mathrm{Im}\left(H^1_{\rm{f}}(F_v, T)  \rightarrow H^1(F_v, \overline{T}) \right) 
\] 
and 
\[
\mathscr{F}_v':=\mathrm{Im}\left(H^1_{\rm{f}}(F_v, T')  \rightarrow H^1(F_v, \overline{T'}) \cong H^1(F_v, \overline{T}) \right). 
\]
Here the last isomorphism arises from our assumption $\overline{T}'\cong \overline{T}$, 
$H^1_{\rm{f}}(F_v, T)$ is the Bloch--Kato subgroup if $v|p$, and otherwise 
\[
H^1_{\rm{f}}(F_v, T):=\mathrm{Ker}\left( H^1(F_v, T) \rightarrow H^1(F_v, V)/H^1(G_{F_v}/I_v, V^{I_v}) \right)
\] 
for $I_v$ the inertia subgroup. 
Then $\mathscr{F}_v$ and $\mathscr{F}_v'$ are Lagrangian subspaces of $H^1(F_v, \overline{T})$.

\begin{defn}\label{def:MR} Let $v$ be a finite place of $F$.
\begin{itemize}
\item[i)] For symplectic self-dual representations $T$ and $T'$ of $G_{F_v}$ 
over a complete Noetherian local ring $R$ with residue field $\BF$ 
and an $\BF$-linear symplectic isomorphism $\ov{T}' \cong \ov{T}$ of $G_{F_v}$-representations, 
the Mazur--Rubin arithmetic local constant $\delta_v (T,T')$ is defined by 
\[
\delta_v(T, T')=\dim_{\BF} \left(\mathscr{F}_v/\mathscr{F}_v\cap \mathscr{F}_v' \right) \mod 2\quad \in \BZ/2\BZ.
\] 
\item[ii)] More generally, for 
Lagrangian submodules $L$ and $L'$ of $H^1(F_{v},T)$ and $H^1(F_v , T')$ respectively, and 
an  $\BF$-linear symplectic isomorphism $\ov{f}:\ov{T}' \cong \ov{T}$ of $G_{F_v}$-representations, 
define 
$$
\delta_{v}(L,L')=\dim_{\BF}(\ov{L}/\ov{L}\cap \ov{f}(\ov{L}')) \mod{2} \in \BZ/2\BZ.
$$
\end{itemize}
\end{defn}
Note that the above invariants are defined for local representations, unlike the preceding set-up of the $p$-parity conjecture. Here is an example. 
\begin{example}
Let $A$ and $B$ be elliptic curves defined over $F_v$ such that $A[p]\cong B[p]$ as $\BF_{p}[G_{F_v}]$-modules. 
Then we have
\[
\delta_v(A, B)=\dim_{\mathbb{F}_p} 
\overline{A(F_v)}/(\overline{A(F_v)} \cap \overline{B(F_v)}) \mod 2, 
\]
where $\overline{A(F_v)}:=A(F_v)\otimes_{\BZ} \BZ/p\BZ$ and $\overline{B(F_v)}:=B(F_v)\otimes_{\BZ} \BZ/p\BZ$ denote the Kummer images in $H^1(F_v, A[p])$, the latter via the given isomorphism $B[p]\cong A[p]$.
\end{example}

We now show that the local constants 
do not depend on the choice of 
an isomorphism $\overline{T}'\cong \overline{T}$. 

\begin{prop} 
Let $F_v$ be a non-archimedean local field and $\BF$ a field of odd  characteristic $p>0$.
Let $V$ be a symplectic self-dual representation of $G_{F_v}$ over $\BF$ and $\langle \;, \; \rangle$ the pairing on $V$.
Suppose that there exists a $G_{F_v}$-equivariant $\BF$-linear isomorphism $f: V \rightarrow V$ and  $c\in\BF^\times$ such that 
$$\langle f(x), f(y) \rangle=c \langle x, y \rangle$$ for 
all $x, y \in V$. 
Let $f: H^1(F_{v}, V)\rightarrow H^1(F_{v}, V)$ also denote the induced map. 
Then for any Lagrangian subspace $L$ of $H^1(F_{v}, V)$ with respect to the Tate pairing, we have
$$
\delta_v (L, f(L))=0 \in \mathbb{F}_2. 
$$
In particular,  the  invariant $\delta_v(L,L')$ as in Definition~\ref{def:MR} and so the Mazur--Rubin arithmetic local constant $\delta_v(T, T')$ do not depend on the choice of 
an $\BF$-linear symplectic isomorphism $\overline{T}'\cong \overline{T}$. 
\end{prop}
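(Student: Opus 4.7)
The plan is to reduce the problem to a connectedness statement about the action of $f$ on the Lagrangian Grassmannian and then to exhibit an $f$-invariant reference Lagrangian. Since $L \subset W := H^1(F_v, V)$ is a Lagrangian for the non-degenerate symmetric Tate pairing, $W$ is a hyperbolic symmetric bilinear form space of some even dimension $2n$. Over a field of odd characteristic the Lagrangian Grassmannian $\mathrm{Lag}(W)$ has exactly two connected components (``types''), and one has the classical identity $\delta_v(L, L') \equiv \mathrm{type}(L) + \mathrm{type}(L') \pmod 2$ for any two Lagrangians $L, L'$; in particular the cocycle relation $\delta_v(L_1, L_2) + \delta_v(L_2, L_3) \equiv \delta_v(L_1, L_3) \pmod 2$ holds.

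Since $f$ is a bijective similitude of $W$ it permutes the two types either trivially or by a swap, so $\delta_v(L, fL)$ depends only on $f$ (and not on $L$), defining a group homomorphism
\[
\epsilon \colon \mathrm{GSp}_{G_{F_v}}(V) \longrightarrow \mathbb{Z}/2, \qquad \delta_v(L, fL) = \epsilon(f).
\]
The proposition is therefore equivalent to the assertion that $\epsilon \equiv 0$.

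To verify $\epsilon(f) = 0$, I would exhibit a canonical Lagrangian $L_0 \subset W$ preserved by every $G_{F_v}$-equivariant automorphism of $V$; then $\epsilon(f) = \delta_v(L_0, f L_0) = \delta_v(L_0, L_0) = 0$ at once. For $v \nmid p$ one takes $L_0 := H^1_{\mathrm{ur}}(F_v, V)$, which is canonical (hence preserved by $f$) and is Lagrangian by standard local Tate duality for symplectic self-dual $V$. For $v \mid p$ in the paper's principal case — $V$ a generic symplectic self-dual $G_{\mathbb{Q}_p}$-representation of rank two — one takes $L_0 := H^1_{+}(F_v, V)$ from the local sign decomposition of Theorem~\ref{thm, main}, whose functoriality property~2) gives $f(L_0) = L_0$.

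The main obstacle is handling $v \mid p$ for general-rank representations over the mod-$p$ coefficient field $\mathbb{F}$, where neither unramified cohomology nor the rank-two local sign decomposition applies directly. The conceptual resolution is that $\mathrm{GSp}_{G_{F_v}}(V)$ is a connected linear algebraic group: by Jordan--H\"older applied to $V$ as a $G_{F_v}$-module over $\mathbb{F}$ it is an extension of a product of groups of the form $\mathrm{GL}_m(D)$ (with $D$ a finite-dimensional division algebra over $\mathbb{F}$) intersected with the symplectic similitude condition, by a unipotent group. Since all of these factors are geometrically connected, the image of $\mathrm{GSp}_{G_{F_v}}(V)$ in $\mathrm{GSp}(W)$ lies in the identity component, which is exactly the subgroup of type-preserving similitudes; this forces $\epsilon \equiv 0$. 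Rigorously executing this algebraic-group argument in the mod-$p$ category, and in particular checking that the scalars $\mathbb{F}^{\times}\cdot \mathrm{id}_V$ (on which $\epsilon$ visibly vanishes) generate enough of $\mathrm{GSp}_{G_{F_v}}(V)$ modulo its derived subgroup, is the most delicate step of the proof.
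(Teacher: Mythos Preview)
Your overall strategy --- reduce to exhibiting an $f$-invariant Lagrangian $L_0$ and conclude via transitivity --- is exactly the paper's. Your $v \nmid p$ argument via $H^1_{\mathrm{ur}}$ is correct, and the rank-two $v \mid p$ case via the local sign decomposition is also fine, but neither covers the general-rank statement at $v \mid p$.

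For that general case your connectedness argument is not a proof. Over the finite field $\BF$, geometric connectedness of the algebraic group $\mathrm{GSp}_{G_{F_v}}(V)$ says nothing directly about how its $\BF$-points permute the two types of Lagrangians in $H^1(F_v,V)$, and the claim that scalars generate enough of the abelianization is neither justified nor obviously true (it would require a separate structure theorem for the equivariant similitude group). You flag this yourself as ``the most delicate step,'' but in fact no argument is given there.

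The paper's proof is uniform in $v$ and avoids both the case split and any structure theory. The trick you are missing: since $\mathrm{char}\,\BF = p$, replacing $f$ by the odd power $f^{p^m}$ for $m \gg 0$ forces it to be diagonalizable (the nilpotent part of each Jordan block is killed), and a short transitivity computation gives $\delta_v(L, f^{p^m}L) = \delta_v(L, fL)$. Once $f$ is diagonalizable, its eigenspaces $V_\lambda$ are $G_{F_v}$-stable, and after reducing to an isometry and grouping $V_\lambda$ with $V_{\lambda^{-1}}$ one obtains an orthogonal decomposition of $V$, hence of $H^1(F_v,V)$. On each orthogonal piece an $f$-invariant Lagrangian is written down by hand: take $H^1(F_v, V_\lambda)$ when $\lambda \neq \pm 1$, and take any Lagrangian (which exists after a scalar extension making the form a sum of hyperbolic planes) when $\lambda = \pm 1$, since $f$ acts by a scalar there.
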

\begin{proof} 
First, note that the independence of local constants is a consequence of the asserted vanishing. 
Indeed, let $f_1, f_2: \ov{T}'\cong \ov{T}$ 
be two $\BF$-linear symplectic isomorphisms 
of $G_{F_v}$-representations, and $\delta_{v,1}(\cdot), \delta_{v,2}(\cdot)$ the associated invariants. Then $f:=f_{2} \circ f_{1}^{-1}: \ov{T} \cong \ov{T}$ is an $\BF$-linear isomorphism, and we have
$$
\delta_{v,2}(L,L')=\delta_{v,2}(L,f(L))+\delta_{v,2}(f(L),f_{2}(L'))=\delta_{v,1}(L,L').
$$
Here the first equality follows from the transitivity of the local invariant $\delta_{v,2}$ as in \cite[Cor.~2.5]{KMR} and the second from the asserted vanishing. 

To approach the vanishing, we proceed via a number of reductions. First, we reduce to the case that $f \in \mathrm{Aut}_{\BF}(V)$ is diagonalizable. 
Indeed, without loss of generality, we may assume that $\BF$ contains  the eigenvalues of $f$. 
Then the linear map $f^{p^m}$ is diagonalizable 
for a sufficiently large integer $m$: restricting $f$ to a generalized eigenspace of $V$, 
we have 
$$f=\alpha I +N$$ for $\alpha I$ a scalar map and $N$ a nilpotent operator and so 
$f^{p^m}=\alpha^{p^m}I+N^{p^m}=\alpha^{p^m}I$ for an integer $m$ with $N^{p^m}=0$. 
Note that replacing $f$ with $f^{p^m}$ does not affect the assertion since 
for any integer $n$,  we have 
\[
\delta_{v}(L, f^{2n}(L))=\delta_{v}(L, f^{n}(L))+\delta_{v}(f^{n}(L), f^{2n}(L))
=\delta_{v}(L, f^{n}(L))+\delta_{v}(L, f^n(L))=0
\] 
and so
\[
\delta_v (L, f^{2n+1}(L))=\delta_v (L, f(L))+\delta_v (f(L), f^{2n+1}(L))
=\delta_v (L, f(L))+\delta_v (L, f^{2n}(L))=\delta_v (L, f(L)).
\]
Hence, by replacing $f$ with its odd power $f^{p^m}$, we suppose that $f$ is diagonalizable.

Next, we reduce to the existence of an $f$-invariant Lagrangian subspace 
$L_0$ of $H^1(F_{v}, V)$. 
Indeed, then 
\[
\delta_v (L, f(L))=\delta_v (L, L_0)+\delta_v (L_0, f(L))=\delta_v (L, L_0)+\delta_v (f^{-1}(L_0), L)
=\delta_v (L, L_0)+\delta_v (L_0, L)=0, 
\]
as desired. Here the first equality uses the transitive property \cite[Cor.~2.5]{KMR} of the invariant $\delta_v$.

 We may suppose that $f$ is an isometry. 
Let $$V=\oplus_{\lambda \in \BF^\times} V_\lambda$$ be the eigenspace decomposition of $f$. 
Since the $G_{F_v}$-action commutes with $f$, the $\BF$-subspaces $V_\lambda \subset V$ are stable under the $G_{F_v}$-action. 
If $\lambda \mu\not=1$, then 
$V_\lambda$ and $V_\mu$ are orthogonal. 
Let $\epsilon \in \{+1, -1\}$ act on $\BF^\times$ by $\epsilon \cdot x=x^\epsilon$, 
and write $S=\BF^\times /\{\pm 1\}$ for the quotient space of this action. 
 For $\overline{\lambda}\not=\overline{1} \in S$, 
put $V_{\overline{\lambda}}:=V_\lambda \oplus V_{\lambda^{-1}}$. 
Then 
\[V=V_1\oplus V_{-1} \oplus_{\overline{\lambda} \in S\setminus\{\overline{1}\}} V_{\overline{\lambda}}
\]
is an orthogonal decomposition. Hence, the restriction of the pairing $\langle \;, \; \rangle$
 on $V$ to the $\BF$-subspaces
$V_{\pm 1}$ or $V_{\overline{\lambda}}$ is non-degenerate. 
Note that $H^1(F_{v}, V_{\pm 1})$ and $H^1(F_{v}, V_{\overline{\lambda}})$ are orthogonal to each other
with respect to the Tate pairing since  
it is induced by composition of the cup product with the pairing  on $V$. 
Hence, the existence of a Lagrangian is further reduced to the case that $V=V_{\pm1}$ or $V= V_{\overline{\lambda}}$ for $\ov{\lambda} \in S\setminus \{\ov{1}\}$.

If $V= V_{\overline{\lambda}}$ for $\ov{\lambda} \in S\setminus \{\ov{1}\}$, 
then $L_0 = H^{1}(F_{v}, V_{\lambda})$ or $L_0 = H^{1}(F_{v}, V_{\lambda^{-1}})$ is a desired $f$-invariant Lagrangian of $H^1(F_v, V_{\ov{\lambda}})$. 
As for the remaining case $V=V_{\pm 1}$, and $f$ is simply multiplication by $\pm 1$. 
 Since $V$ is symplectic self-dual, it is even dimensional, and then so is $H^1(F_v, V)$ by the Euler--Poicar\'e formula. 
The Tate pairing on $H^1(F_v,V)$ is non-degenerate and symmetric, and the associated matrix can be diagonalized with non-zero diagonal entries. 
By a scalar extension, we may assume that $\BF$ contains  
square roots of $-1$ and 
the diagonal entries. Hence, we can transform the quadratic form associated with the Tate pairing 
to a form with signature $(n,n)$ for $n= \dim_{\BF} H^1(F_v, V)/2$ and 
the diagonal coefficients being $\pm 1$. Then the existence of a desired Lagrangian $L_0$ is apparent.

\end{proof}

\subsubsection{A conjecture on the compatibility of local constants} 
We propose a conjecture on the compatibility of Mazur--Rubin arithmetic local constant and epsilon constants.

In the setting of the relative $p$-parity conjecture as in \S\ref{conj, relative p-parity}, recall that 
$(-1)^{\chi_{\rm{f}}(F, V)-\chi_{\rm{f}}(F, V')}$ 
conjecturally equals 
 $\varepsilon(V)/\varepsilon(V')$. 
Since the latter is a product of ratios of local $\varepsilon$-constants, 
Mazur and Rubin \cite{MR} surmised a connection between their invariant $\delta_v(T,T')$ and 
the ratio $\varepsilon_v(V)/\varepsilon_v(V')$. 
Mazur--Rubin \cite{MR} and Nekov\'a\v{r} \cites{NekMRl, NekMRtame} proved results toward it in various cases. 
However, neither of them 
proposed a conjecture on the compatibility of these local constants.
 
For symplectic self-dual $G_{\BQ_p}$-representations, we propose the following. 

\begin{conj}\label{conj, MR}(compatibility of local constants)
Let $p$ be an odd prime. 
Let $T_1$ and $T_2$ be symplectic self-dual $\CO_L$-representations of $G_{\BQ_p}$ for $L$ a finite extension of $\BQ_p$. 
Suppose that $V_i := T_{i} \otimes_{\BZ_p} \BQ_p$ is de Rham for $i\in\{1,2\}$.
Suppose also that 
 $T_1$ and $T_2$  are residually symplectically isomorphic. 
Then 
\begin{equation}\label{equation, our MR vs DL, conj}
 \frac{\hat{\varepsilon}_p(V_1)}{\hat{\varepsilon}_p(V_2)}:=
 \frac{\Gamma(V_1)\varepsilon_p(V_1)}{\Gamma(V_2)\varepsilon_p(V_2)}=(-1)^{\delta_p(T_1, T_2)}. 
\end{equation}
\end{conj}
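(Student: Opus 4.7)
The plan is to deduce Conjecture \ref{conj, MR} from the local sign decomposition (Theorem \ref{thm, main}), treating the generic and anomalous residual cases separately.

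First, assume $\overline{T} := \overline{T}_1 \cong \overline{T}_2$ is generic. Fix a symplectic isomorphism $f: \overline{T}_2 \cong \overline{T}_1$ and let $\mathscr{F}_i \subset H^1(\BQ_p, \overline{T})$ denote the image of $H^1_{\rm f}(\BQ_p, T_i)$, transported through $f$ when $i=2$. Property 3) of Theorem \ref{thm, main} (base change) identifies $H^1_\pm(\BQ_p, T_i) \otimes_{\CO_L} \BF$ with $H^1_\pm(\BQ_p, \overline{T}_i)$; property 2) applied to $f$ matches the $\pm$ decompositions on the two sides; and property 4) gives $H^1_{\rm f}(\BQ_p, T_i) = H^1_{-\hat{\varepsilon}_p(V_i)}(\BQ_p, T_i)$. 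These three inputs combine to show $\mathscr{F}_i = H^1_{-\hat{\varepsilon}_p(V_i)}(\BQ_p, \overline{T})$. By Lemma \ref{prop, lagrangian} the rank two module $H^1(\BQ_p, \overline{T})$ has exactly two Lagrangian lines, complementary to each other, so $\mathscr{F}_1 = \mathscr{F}_2$ iff $\hat{\varepsilon}_p(V_1) = \hat{\varepsilon}_p(V_2)$, and in the opposite case $\mathscr{F}_1 \cap \mathscr{F}_2 = \{0\}$. A direct calculation of $\dim_\BF(\mathscr{F}_1/\mathscr{F}_1 \cap \mathscr{F}_2) \bmod 2$ in each of the two cases yields $\hat{\varepsilon}_p(V_1)/\hat{\varepsilon}_p(V_2) = (-1)^{\delta_p(T_1, T_2)}$. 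Thus in the generic setting the compatibility is an essentially formal consequence of the functoriality and base change of the local sign decomposition.

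The anomalous case ($H^0(\BQ_p, \overline{T}) \neq 0$) requires more care, since Theorem \ref{thm, main} does not apply. Self-duality together with $\det \overline{T} = \overline{\chi}_{\mathrm{cyc}}$ forces $\overline{T}$ to fit in a short exact sequence $0 \to \overline{T}_1' \to \overline{T} \to \overline{T}_2' \to 0$ with $\{\overline{T}_1', \overline{T}_2'\} = \{\BF, \BF(1)\}$. The plan is to replace each $T_i$, within its residual isomorphism class and without changing $\hat{\varepsilon}_p(V_i)$ or $\delta_p(T_1,T_2)$, by a de Rham lift fitting in an analogous symplectic filtration whose rank one subrepresentation has positive Hodge--Tate weight. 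Proposition \ref{prop, reducible sign} then computes $H^1_{\rm f}(\BQ_p, T_i)$ explicitly, and Corollary \ref{cor, the reducible sign} identifies the reduction $\mathscr{F}_i$ as an explicit Lagrangian of $H^1(\BQ_p, \overline{T})$. Matching this with $\hat{\varepsilon}_p(V_i) = \Gamma(V_i)\varepsilon_p(V_i)$, where $\Gamma(V_i)$ is given by Lemma \ref{prop:gamma} and $\varepsilon_p(V_i)$ is computed from the explicit Weil--Deligne representation of a trianguline $V_i$ (as in the proof of Proposition \ref{prop, reducible sign}), a finite case analysis over the possibilities (the ordering of the Jordan--H\"older factors, and crystalline versus non-crystalline semistable lifts) gives the equality.

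The main obstacle is precisely the anomalous case: there is no integral local sign decomposition to invoke, so one must build a substitute by hand out of the reducible filtration and verify that the sign $\hat{\varepsilon}_p(V_i)$ selects the correct Lagrangian $\mathscr{F}_i$ in every subcase. The saving grace is that the residual Jordan--H\"older factors are severely constrained to $\BF$ and $\BF(1)$, and Proposition \ref{prop, reducible sign} together with Corollary \ref{cor, the reducible sign} provides a sufficiently precise description of the Bloch--Kato subgroups and their reductions for the argument to go through.
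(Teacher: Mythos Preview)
Your generic case is correct and essentially identical to the paper's argument.

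Your anomalous case has a genuine gap. You propose to replace each $T_i$ by a de Rham lift sitting in a rank-one filtration, and then invoke Proposition~\ref{prop, reducible sign} and Corollary~\ref{cor, the reducible sign}. There are two problems with this.

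First, $V_i$ itself may be \emph{irreducible} even though $\overline{T}$ is anomalous (an irreducible de Rham $V_i$ can have residual representation with a trivial Jordan--H\"older factor). In that case $T_i$ does not fit in any rank-one filtration, and your proposed replacement by a reducible lift ``without changing $\hat{\varepsilon}_p(V_i)$ or $\delta_p(T_1,T_2)$'' is circular: knowing which reducible lifts share the sign $\hat{\varepsilon}_p(V_i)$, or that such a replacement preserves $\delta_p$, is essentially equivalent to the compatibility you are trying to prove. The paper handles this irreducible case by a quite different and substantial argument (\S\ref{subsubsection, anomalous}): one builds a chain of lattices $T=T_0\subset T_1\subset\cdots\subset T_m$ inside $V_i$ with $T_m$ generic, applies the local sign decomposition to $T_m$, and then propagates signed bases back along the chain by an inductive analysis (Theorem~\ref{thm, anomalous basis}). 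This yields $\delta_p(\overline{H^1_{\rm f}(\BQ_p,T_i)},\,H^1(\BQ_p,C))=-\hat{\varepsilon}_p(V_i)$ for each $i$ separately, with $H^1(\BQ_p,C)$ serving as a common reference Lagrangian; transitivity of $\delta_p$ then finishes.

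Second, even in the reducible case, Proposition~\ref{prop, reducible sign} and Corollary~\ref{cor, the reducible sign} are stated for \emph{generic} $T$ (resp.~$\overline{T}$), so they cannot be applied directly to your anomalous $T_i$. The paper's reducible anomalous case (Propositions~\ref{prop, reducible MRC 1} and~\ref{prop, reducible MRC 2}) again proceeds by comparing $\overline{H^1_{\rm f}(\BQ_p,T_i)}$ to the reference Lagrangian $H^1(\BQ_p,C)$, using auxiliary generic lattices and the local sign decomposition for those; this is not the case analysis you sketch.
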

In this section we prove the conjecture for rank two representations (see~Theorem~\ref{thm, MR}).

\subsection{Compatibility of Mazur--Rubin and Deligne--Langlands local constants}

The central result of this section is the following. 
\begin{thm}\label{thm, MR}
Let $p$ be an odd prime. 
Let $T_1$ and $T_2$ be symplectic self-dual $\CO_L$-representations of $G_{\BQ_p}$ of rank two for $L$ a finite extension of $\BQ_p$.
Suppose that $V_i = T_{i} \otimes_{\BZ_p} \BQ_p$ is de Rham for $i\in\{1,2\}$, and 
$T_1$ and $T_2$  are residually symplectically isomorphic. 
Then Conjecture~\ref{conj, MR} is true for $T_1$ and $T_2$, i.e. 
$$
 \frac{\hat{\varepsilon}_p(V_1)}{\hat{\varepsilon}_p(V_2)}=
 (-1)^{\delta_p(T_1, T_2)}. 
 $$
\end{thm}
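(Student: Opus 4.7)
The plan is to deduce the compatibility from the functoriality and base change properties of the local sign decomposition (Theorem~\ref{thm, main}), first in the generic case and then to reduce the non-generic case to it via an analysis of reducible residual representations. The central observation is that the Mazur--Rubin invariant $\delta_p(T_1,T_2)$ compares the images in $H^1(\BQ_p,\overline{T})$ of the Bloch--Kato subgroups $H^1_{\rm f}(\BQ_p,T_i)$, while Theorem~\ref{thm, main} identifies these subgroups with the signed submodules $H^1_{-\hat{\varepsilon}_p(V_i)}(\BQ_p,T_i)$.

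Assume first that the common residual representation $\overline{T}\cong \overline{T}_1\cong \overline{T}_2$ is generic, so that $H^0(\BQ_p,\overline{T})=0$ and both $(\CO_L,T_1)$ and $(\CO_L,T_2)$ satisfy the hypotheses of Theorem~\ref{thm, main}. By the base change property 3) of the local sign decomposition we obtain isomorphisms $H^1_{\pm}(\BQ_p,T_i)\otimes_{\CO_L}\BF\isom H^1_{\pm}(\BQ_p,\overline{T}_i)$ for each $i\in\{1,2\}$; by the functoriality property 2) applied to the symplectic $\BF$-isomorphism $\overline{T}_1\isom \overline{T}_2$, the induced map $H^1(\BQ_p,\overline{T}_1)\isom H^1(\BQ_p,\overline{T}_2)$ respects this $\pm$-decomposition. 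Finally, by the de Rham property 4), the image $\mathscr{F}_i\subset H^1(\BQ_p,\overline{T})$ of $H^1_{\rm f}(\BQ_p,T_i)=H^1_{-\hat{\varepsilon}_p(V_i)}(\BQ_p,T_i)$ equals precisely $H^1_{-\hat{\varepsilon}_p(V_i)}(\BQ_p,\overline{T})$ under the above identification. Since $H^1_{+}(\BQ_p,\overline{T})$ and $H^1_{-}(\BQ_p,\overline{T})$ are distinct one-dimensional $\BF$-subspaces with zero intersection, we read off that $\delta_p(T_1,T_2)=0$ precisely when $\hat{\varepsilon}_p(V_1)=\hat{\varepsilon}_p(V_2)$ and $\delta_p(T_1,T_2)=1$ otherwise, which is exactly \eqref{equation, our MR vs DL, conj}.

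For the non-generic case, where $H^0(\BQ_p,\overline{T})\neq 0$, I would argue that $\overline{T}$ must be reducible as a symplectic self-dual representation of rank two, and apply a sign-like decomposition for such representations. Write $\overline{T}$ as an extension $0\to\overline{T}_a\to\overline{T}\to\overline{T}_b\to 0$ with $\overline{T}_a$ a Lagrangian line; by Proposition~\ref{prop, reducible sign} (and its residual avatar Corollary~\ref{cor, the reducible sign}), the image of $H^1_{\rm f}(\BQ_p,T_i)$ in $H^1(\BQ_p,\overline{T})$ is computed in terms of the canonical Lagrangian arising from $\overline{T}_a$, again determined by a completed sign that is governed by the Hodge--Tate weights and the residual Frobenius semisimple part of $V_i$. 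Matching the two descriptions of $\mathscr{F}_i$ then yields the same trichotomy as above.

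The key obstacle will be the anomalous case, especially when $\overline{T}$ has the shape $0\to \BF(1)\to\overline{T}\to\BF\to 0$, since there the signed submodule in the residual cohomology is not produced directly by Theorem~\ref{thm, main} but by the ad hoc construction of Corollary~\ref{cor, the reducible sign}. One must verify that the lift produced there is compatible with the local sign decomposition in the generic deformations of $\overline{T}$, which is what gives meaning to the compatibility in the non-generic boundary. This is precisely the categorification viewpoint advertised in \S\ref{subsubsection, anomalous} of the introduction, and I expect to invoke it in the form of a density/deformation argument: lift $(T_1,T_2)$ to a pair of generic deformations over a larger base, apply the generic case, and descend.
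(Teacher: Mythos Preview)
Your treatment of the generic case is correct and matches the paper's proof verbatim: the base change, functoriality, and de Rham properties of Theorem~\ref{thm, main} pin down $\mathscr{F}_i=H^1_{-\hat{\varepsilon}_p(V_i)}(\BQ_p,\overline{T})$ and the computation of $\delta_p$ follows.

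The non-generic case, however, has a real gap. Neither of the results you cite applies. Proposition~\ref{prop, reducible sign} requires $T$ itself (not $\overline{T}$) to be reducible \emph{and} generic; Corollary~\ref{cor, the reducible sign} explicitly assumes $\overline{T}$ is generic. In the anomalous case $H^0(\BQ_p,\overline{T})\neq 0$, so $H^1(\BQ_p,\overline{T})$ is four-dimensional, Theorem~\ref{thm, main} produces no signed decomposition there, and there is nothing to ``descend'' to from a generic deformation. Your proposed density argument also does not touch the essential difficulty: $\delta_p(T_1,T_2)$ is computed inside this four-dimensional residual cohomology, and the images $\overline{H^1_{\rm f}(\BQ_p,T_i)}$ are two-dimensional Lagrangians whose position relative to each other is not controlled by any lift.

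What the paper actually does (\S\ref{subsubsection, anomalous}) is substantially more work. It fixes the reference Lagrangian $H^1(\BQ_p,C)\subset H^1(\BQ_p,\overline{T})$ for $C$ the trivial subrepresentation, and proves directly that $\delta_p(\overline{H^1_{\rm f}(\BQ_p,T)},H^1(\BQ_p,C))=-\hat{\varepsilon}_p(V)$ for \emph{each} anomalous de Rham lift $T$ (Theorem~\ref{thm, anomalous MR}); transitivity of $\delta_p$ then gives the result. The proof of Theorem~\ref{thm, anomalous MR} splits into the reducible case for $T$ (three separate propositions depending on the shape of the rank-one sub) and the irreducible case, where one builds a chain of lattices $T=T_0\subset T_1\subset\cdots\subset T_m$ with $T_m$ generic and tracks a hand-built signed basis down the chain (Theorem~\ref{thm, anomalous basis}). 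This lattice-chain argument is the missing idea in your proposal.
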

\begin{proof}
We first consider the generic case. That is, we assume that $H^0(\BQ_p, \overline{T}_1)=0$ and and so $H^0(\BQ_p, \overline{T}_2)=0$. 
Let $\BF$ denote the residue field of $\mathcal{O}$. 

By the base change and de Rham properties of the local sign decomposition as in Theorem~\ref{thm, main}, we have a canonical surjection
\[
H^1_{\rm f}(\BQ_p,T_i)=H^1_{-\hat{\varepsilon}_p(V_i)}(\BQ_p,T_i) \;\longrightarrow\; H^1_{-\hat{\varepsilon}_p(V_i)}(\BQ_p,T_i)\otimes_{\CO} \BF=H^1_{-\hat{\varepsilon}_p(V_i)}(\BQ_p,\overline{T}_i) \subset H^1(\BQ_p,\overline{T}_i)\]
for $i\in\{1,2\}$. 
In light of the functoriality of the local sign decomposition for the isomorphism $\overline{T}_1 \cong \overline{T}_2$, it thus follows that  
\[
\delta_p(T, T')=
\begin{cases}
 0  \qquad \text{if \;$\hat{\varepsilon}_p(V_1)=\hat{\varepsilon}_p(V_2)$,}\\
 1  \qquad \text{if \;$\hat{\varepsilon}_p(V_1)\not=\hat{\varepsilon}_p(V_2)$.}
\end{cases}
\]
The assertion follows from this.

The non-generic or ``anomalous" case is a consequence of Theorem~\ref{thm, anomalous MR} in the next subsection by the transitivity of the invariant $\delta_p$ (cf.~\cite[Cor.~2.5]{KMR}).

\end{proof}
\begin{remark}
In light of the above proof, the local sign decomposition may be viewed as an absolute version of the Mazur--Rubin arithmetic local constant. 
\end{remark}

\subsection{The anomalous case}\label{subsubsection, anomalous}
In this subsection we prove a compatibility of Mazur--Rubin and Deligne--Langlands local constants for non-generic representations of $G_{\BQ_p}$, henceforth refereed to as anomalous. It relies on a local sign-like decomposition for anomalous representations, which may be viewed as the main result of the subsection. 

\subsubsection{Compatibility of Mazur--Rubin and Deligne--Langlands local constants bis}
Let $\overline{T}$ be an anomalous symplectic self-dual $G_{\BQ_p}$-representation over a finite field $\BF$ of characteristic $p$, i.e. $H^0(\BQ_p,\ov{T})\neq 0$. 
Let $C$ be the one-dimensional trivial subrepresentation of $\overline{T}$. 

By definition, we have an exact 
 \begin{equation*}
\xymatrix{
0 \ar[r] & C \ar[r]
  &  
\overline{T}  \ar[r]  &  C(1) \ar[r]  &  0
}
 \end{equation*}
 of $G_{\BQ_p}$-representations, 
 and so an exact sequence
 \begin{equation}\label{equation, C dual}
\xymatrix{  
0 \ar[r] & H^1(\BQ_p,C)   \ar[r]  & H^1(\BQ_p,\overline{T}) \ar[r]  &  H^1(\BQ_p,C(1))
\ar[r]  &   0. 
}
 \end{equation}
 Note that $H^1(\BQ_p,\overline{T})$ is a four-dimensional $\BF$-vector space.

Let $\mathcal{O}$ be the integer ring of a finite extension of $\BQ_p$ 
with uniformizer $\pi$ and the residue field $\BF$.  
Suppose that $T$ is a symplectic self-dual lift of $\overline{T}$ 
with coefficients in $\mathcal{O}$ such that $V:=T\otimes_{\BZ_p} \BQ_p$ is de Rham. 
A main result of this subsection is the following.

\begin{thm}\label{thm, anomalous MR}
Let $T$ be a symplectic self-dual $G_{\BQ_p}$-representation over $\mathcal{O}$ of rank two.   
Suppose that $V:=T\otimes_{\BZ_p}\BQ_p$ is de Rham and 
the residual representation $\overline{T}$ has a one-dimensional trivial representation $C$. Then we have
 \[
\delta_p(\overline{H^1_{\rm{f}}(\BQ_p,T)}, H^1(\BQ_p,C))
=-\hat{\varepsilon}_p(V)
 \]
 where $\overline{H^1_{\rm f}(\BQ_p,T)}$ denotes the residual image in $H^1(\BQ_p,\ov{T})$.  
\end{thm}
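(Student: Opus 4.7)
The plan is to identify both $\overline{H^1_{\rm f}(\BQ_p, T)}$ and $H^1(\BQ_p, C)$ as $2$-dimensional Lagrangian subspaces of the $4$-dimensional $\BF$-vector space $H^1(\BQ_p, \overline{T})$ (with respect to its symmetric Tate pairing), and then to read off the sign $\hat{\varepsilon}_p(V)$ from their relative position. That $H^1(\BQ_p, C)$ is Lagrangian follows from Lemma~\ref{lemma, reducible lagrangian}, since $C \subset \overline{T}$ is $1$-dimensional and hence self-annihilating. For $\overline{H^1_{\rm f}(\BQ_p, T)}$, a dimension count on $H^1(\BQ_p, T) \cong \mathcal{O}^2 \oplus \mathcal{O}/\pi^a$ (with the torsion contributed by $H^0(\BQ_p, \overline{T}) = C$) exhibits it as $2$-dimensional, Lagrangian by descent from the Lagrangian $H^1_{\rm f}(\BQ_p, V) \subset H^1(\BQ_p, V)$ provided by Theorem~\ref{thm, main3}.

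The core computation will be carried out on an explicit reducible reference lift. For $p \geq 3$, there exists a crystalline lift
\[
T_0:\quad 0 \rightarrow \mathcal{O}(\chi_{\rm cyc}^{p-1}) \rightarrow T_0 \rightarrow \mathcal{O}(\chi_{\rm cyc}^{2-p}) \rightarrow 0
\]
of the given anomalous $\overline{T}$, with $V_0 = T_0 \otimes \BQ_p$ generic, unramified Weil--Deligne representation giving $\varepsilon_p(V_0) = 1$, and $\Gamma(V_0) = (-1)^{p-2} = -1$, so $\hat{\varepsilon}_p(V_0) = -1$. By Proposition~\ref{prop, reducible sign}~ii), since $\overline{T}_{0,2} = \BF(\omega)$ has no $G_{\BQ_p}$-invariants for odd $p$, the natural map $H^1(\BQ_p, \mathcal{O}(\chi_{\rm cyc}^{p-1})) \cong H^1_{\rm f}(\BQ_p, T_0)$ is an isomorphism. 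The vanishing $H^2(\BQ_p, \mathcal{O}(\chi_{\rm cyc}^{p-1})) = 0$ (via Tate duality applied to the non-trivial character $\chi_{\rm cyc}^{2-p}$) implies that the residual map $H^1(\BQ_p, \mathcal{O}(\chi_{\rm cyc}^{p-1}))/\pi \to H^1(\BQ_p, C)$ is an isomorphism, whence $\overline{H^1_{\rm f}(\BQ_p, T_0)} = H^1(\BQ_p, C)$ inside $H^1(\BQ_p, \overline{T})$. Thus $\delta_p(\overline{H^1_{\rm f}(T_0)}, H^1(\BQ_p, C)) = 0 \equiv -\hat{\varepsilon}_p(V_0) \pmod 2$, establishing the theorem for $T_0$.

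The remaining task is to extend to an arbitrary $T$ with anomalous residual. When $H^0(\BQ_p, V) = 0$, Theorem~\ref{thm, main3} provides the sign decomposition of $H^1(\BQ_p, V)$, and the desired dichotomy to establish is that $\overline{H^1_{\rm f}(\BQ_p, T)} = H^1(\BQ_p, C)$ precisely when $\hat{\varepsilon}_p(V) = -1$, and meets $H^1(\BQ_p, C)$ transversely (in a $1$-dimensional subspace) when $\hat{\varepsilon}_p(V) = +1$. This dichotomy will follow from a deformation-and-specialization argument on the universal framed deformation ring $R^{\square,\chi_{\rm cyc}}_{\overline{T}}$: the sign-fixed crystalline locus is Zariski dense in each irreducible component (Theorem~\ref{thm, density sgn crystalline} applies without a genericity hypothesis), and the position of $\overline{H^1_{\rm f}}$ in $H^1(\BQ_p, \overline{T})$ is locally constant there; comparison with $T_0$ (for sign $-1$) and an analogous reducible reference realizing sign $+1$ then yields the dichotomy. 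The case $H^0(\BQ_p, V) \neq 0$ will be treated separately: self-duality forces $V$ to be a crystalline extension $0 \to \BQ_p \to V \to \BQ_p(1) \to 0$ with $\hat{\varepsilon}_p(V) = +1$, and a direct Bloch--Kato computation shows that $\overline{H^1_{\rm f}(\BQ_p, T)}$ is transverse to $H^1(\BQ_p, C)$, yielding $\delta_p = 1 \equiv -\hat{\varepsilon}_p(V) \pmod 2$.

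The principal difficulty is the generic-$V$ dichotomy: since Theorem~\ref{thm, main3}'s sign decomposition exists only over $L$-coefficients, showing that the residual image $\overline{H^1_{\rm f}(\BQ_p, T)}$ in $H^1(\BQ_p, \overline{T})$ is determined solely by $\hat{\varepsilon}_p(V)$ requires either the above Zariski-density argument tied carefully to the reference lifts, or an integral $(\varphi,\Gamma)$-module analysis extending Colmez's involution $w_T$ (Theorem~\ref{thm, involution}) to the residual setting.
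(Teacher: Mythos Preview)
Your computation for the reference lift $T_0$ is essentially correct, though the citation of Proposition~\ref{prop, reducible sign}~ii) is illegitimate: that proposition requires $T$ to be \emph{generic} (i.e.\ $H^0(\BQ_p,\overline{T})=0$), whereas your $T_0$ is anomalous by construction. The conclusion $\overline{H^1_{\rm f}(\BQ_p,T_0)}=H^1(\BQ_p,C)$ can nonetheless be salvaged by a direct argument (noting that $H^1(\BQ_p,\mathcal{O}(\chi_{\rm cyc}^{p-1}))$ has torsion since $\overline{T}_{0,1}=C$ is trivial, so its mod-$\pi$ image genuinely fills the $2$-dimensional $H^1(\BQ_p,C)$).

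The genuine gap is the passage from $T_0$ to an arbitrary lift $T$. Your ``deformation-and-specialization'' argument cannot work as stated, for the following reason. Theorem~\ref{thm, main3} only furnishes a sign decomposition over $R[1/p]$; it gives no integral structure on $H^1(\BQ_p,\BT^{\Box}_{\overline\rho})$ and hence no control over the residual image $\overline{H^1_{\rm f}(\BQ_p,T_s)}\subset H^1(\BQ_p,\overline{T})$ at closed points $s$. The assertion that this residual image is ``locally constant'' on the crystalline locus is neither made precise nor justified---and indeed there is no a priori reason the $2$-dimensional Lagrangian $\overline{H^1_{\rm f}(\BQ_p,T_s)}$ should be the \emph{same} subspace for all $s$ of a given sign (the statement to be proved only constrains its $\delta_p$-distance to $H^1(\BQ_p,C)$ modulo $2$). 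Zariski density of signed crystalline points is of no help here, since the Bloch--Kato condition is only defined at de Rham points and does not interpolate. You yourself flag this as ``the principal difficulty'' and offer no resolution.

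The paper's proof takes a completely different route, avoiding the deformation ring entirely and instead manufacturing access to the \emph{integral} local sign decomposition of Theorem~\ref{thm, main} at an auxiliary \emph{generic} object. When $V$ is reducible this is done by a case analysis (three subcases according to the shape of the rank-one sub $T_1\subset T$), in each case passing to a different generic representation $T'$ with the same residual; when $V$ is irreducible, one builds a chain of lattices $T=T_0\subset T_1\subset\cdots\subset T_m\subset V$ with $T_m$ generic, applies Theorem~\ref{thm, main} to $T_m$, and then tracks the signed bases $v_i^{\pm}$ back down the chain by explicit matrix computations (Theorem~\ref{thm, anomalous basis}). The key maneuver---replacing the anomalous $T$ by something generic \emph{inside the same $V$} (irreducible case) or with the same $\overline{T}$ (reducible case)---is precisely what gives integral control, and is absent from your proposal.
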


The above compatibility implies the compatibility of local constants as in Theorem~\ref{thm, MR} for anomalous representations. 

We approach Theorem~\ref{thm, anomalous MR} in the reducible and irreducible cases in the next two subsections. The strategy employs local sign decomposition for auxiliary generic representations, leading to a local sign-like decomposition 
for anomalous de Rham representations (cf.~Theorem~\ref{thm, anomalous basis}). 
  \subsubsection{The reducible case}\label{subsubsection, title}

 Let ${T}$ be a symplectic self-dual de Rham representation of $G_{\BQ_p}$ of rank two over an integer ring $\CO$ of a $p$-adic local field with residue field $\BF$. Put $V=T\otimes_{\BZ_p}\BQ_p$. 
 
 Throughout \S\ref{subsubsection, title} we assume that  
$T$ is anomalous, i.e. $H^0(\BQ_p,  \overline{T})\not=0$, and there is an extension 
\[
0 \rightarrow T_1 \rightarrow T \rightarrow T_2 \rightarrow 0
\]
of $G_{\BQ_p}$-representations for 
$T_1$ of rank one. We may and do assume that the Hodge--Tate weight of $T_1$ is positive. 
Indeed, if the extension is non-split, then this holds, 
and 
if it is decomposable, this may be assumed 
without loss of generality. 

We approach Theorem~\ref{thm, anomalous MR} by analyzing the following three subcases, 
 the key tool being local sign decomposition. 

\begin{prop}
 Suppose that  $\overline{T}_1\cong C$. 
Then $\hat{\varepsilon}_p(V)=-1$ and 
\[H^1_{\rm{f}}(\BQ_p, {T}_1)\cong \mathrm{Im}\,H^1(\BQ_p, {T}_1) =H^1_{\rm{f}}(\BQ_p, {T}),\] the isomorphism being induced by the map 
$H^1(\BQ_p, {T}_1)\rightarrow H^1(\BQ_p, {T})$.  
In particular, $\overline{H^1_{\rm{f}}(\BQ_p,T)}=H^1(\BQ_p,C)$. 
\end{prop}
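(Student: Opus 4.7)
The plan is to first establish $\hat\varepsilon_p(V)=-1$ by analyzing the character $T_1$, then to identify $H^1_{\mathrm{f}}(\BQ_p,T)$ as the image of $H^1(\BQ_p,T_1)$, and finally to derive the residual equality by combining free and Bockstein contributions.

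First I would determine $T_1$ as a Galois character. Since $T_1$ has positive Hodge--Tate weight $m\ge 1$ and reduces to the trivial representation $C$, it must be of the form $\chi_{\mathrm{cyc}}^m\mu\eta$ with $\mu$ unramified and $\eta$ of finite $p$-power order, both having trivial residual character. The triviality of $\overline{T}_1$ forces $\omega^m=1$, so $m\equiv 0\pmod{p-1}$; as $p$ is odd, $m$ is even and $m\ge p-1\ge 2$. Lemma \ref{prop:gamma} then yields $\Gamma(V)=(-1)^{m-1}=-1$. For the epsilon constant, multiplicativity of $\varepsilon_p$ in the filtration $0\to \mathrm{WD}(V_1)\to \mathrm{WD}(V)\to \mathrm{WD}(V_2)\to 0$ with $\mathrm{WD}(V_2)=\mathrm{WD}(V_1)^{-1}\omega_1$, combined with Tate's identity $\varepsilon(\chi)\varepsilon(\chi^{-1})=\chi(-1)$ and the computation $V_1(-1)=(-1)^m=1$, yields $\varepsilon_p(V)=1$ after accounting for any monodromy contribution. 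Hence $\hat\varepsilon_p(V)=-1$.

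Second, for the Bloch--Kato identification, the vanishing $H^0(\BQ_p,T_2)=0$ (as $V_2=\chi_{\mathrm{cyc}}^{1-m}\mu^{-1}\eta^{-1}\ne\mathbf{1}$) makes the natural map $H^1(\BQ_p,T_1)\to H^1(\BQ_p,T)$ injective. For the de Rham character $V_1$ with Hodge--Tate weight $m\ge 2$ and $H^0(V_1)=H^0(V_1^*(1))=0$, the Euler--Poincar\'e formula gives $\dim_L H^1(V_1)=1$, and the Bloch--Kato exponential sequence then forces $H^1(V_1)=H^1_{\mathrm{f}}(V_1)$; functoriality of $H^1_{\mathrm{f}}$ places the image inside $H^1_{\mathrm{f}}(T)$. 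Since $\dim_L H^1_{\mathrm{f}}(V)=1$ by Corollary \ref{cor, dR ssd two}, the ranks match and $H^1_{\mathrm{f}}(T_1)\to H^1_{\mathrm{f}}(T)$ is an isomorphism, giving the claimed equality $\mathrm{Im}\,H^1(\BQ_p,T_1)=H^1_{\mathrm{f}}(\BQ_p,T)$.

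Third, for the residual equality $\overline{H^1_{\mathrm{f}}(\BQ_p,T)}=H^1(\BQ_p,C)$, the anomalous condition $H^0(\BQ_p,\overline{T})\ne 0$ contributes torsion to $H^1(\BQ_p,T)$ via the Bockstein $H^0(\BQ_p,\overline{T})\to H^1(\BQ_p,T)[\pi]$, and this torsion automatically lies in $H^1_{\mathrm{f}}(\BQ_p,T)$. Both the free-part image and the Bockstein image land in $H^1(\BQ_p,C)\subset H^1(\BQ_p,\overline{T})$: the former by functoriality through $T_1\hookrightarrow T$, the latter because the Bockstein for $\overline{T}$ restricted to $C=\overline{T}_1$ factors through $H^1(\overline{T}_1)=H^1(C)$. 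A dimension count — with both sides two-dimensional — then gives equality. The principal obstacle I expect is the epsilon computation $\varepsilon_p(V)=1$: when the extension $0\to V_1\to V\to V_2\to 0$ fails to be semistable, the monodromy operator $N$ on $\mathrm{WD}(V)$ may be nontrivial, and tracking its contribution in conjunction with the symplectic self-duality requires care.
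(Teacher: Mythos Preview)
There are two genuine gaps.

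First, your parametrization $T_1=\chi_{\mathrm{cyc}}^m\mu\eta$ with $\eta$ of $p$-power order omits a possible Teichm\"uller factor: in general $T_1=\chi_{\mathrm{cyc}}^m\omega^n\mu\eta$, and the hypothesis $\overline{T}_1\cong C$ forces only $m+n\equiv 0\pmod{p-1}$ and $\overline{\mu}=1$, not $n=0$. For instance $T_1=\chi_{\mathrm{cyc}}\omega^{-1}$ has $m=1$. Hence your intermediate claims that $m$ is even, that $m\geq p-1\geq 2$, that $\Gamma(V)=-1$, and that $\varepsilon_p(V)=1$ are all false in general. The correct computation (as in Proposition~\ref{prop, reducible sign}\,ii)) gives $\Gamma(V)=(-1)^{m-1}$ and $\varepsilon_p(V)=\omega^n(-1)=(-1)^n$, whence $\hat{\varepsilon}_p(V)=(-1)^{m+n-1}=-1$ since $m+n$ is even. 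Your monodromy worry is unnecessary: if $V_1$ is crystalline then so is $V$ (the extension class lies in $H^1(V_1V_2^{-1})=H^1_{\mathrm{f}}(V_1V_2^{-1})$ since $V_1V_2^{-1}$ has positive Hodge--Tate weight and residue $\BF(-1)$), so $N=0$; if $V_1$ is non-crystalline then $\mathrm{WD}(V)^{I_p}=0$ and the monodromy correction factor is trivially $1$.

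Second, and more critically, ``injective with matching ranks, hence an isomorphism'' is invalid over $\CO$: multiplication by $\pi$ on $\CO$ is a counterexample. You have $\mathrm{Im}\,H^1(\BQ_p,T_1)\subseteq H^1_{\mathrm{f}}(\BQ_p,T)$ with both of $\CO$-rank one, but equality requires showing the image is \emph{saturated} in $H^1(\BQ_p,T)$. The paper supplies exactly this: since $\overline{T}_2\cong\BF(1)$ has $H^0(\BQ_p,\overline{T}_2)=0$ (as $p$ is odd), the module $H^1(\BQ_p,T_2)$ is $\CO$-torsion-free, so the cokernel of $H^1(T_1)\to H^1(T)$, which injects into $H^1(T_2)$, is torsion-free. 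As $H^1_{\mathrm{f}}(T)$ is also saturated in $H^1(T)$, the inclusion of rank-one saturated submodules forces equality. Without this step the claimed equality---and hence your third paragraph---does not follow.
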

\begin{proof}
The sign $\hat{\varepsilon}_p(V)$ is computed as in the proof of Proposition \ref{prop, reducible sign} ii). 
(Note that $m+n \equiv 0 \mod 2$ in this case.)

Consider the diagram 
\[
\xymatrix{
 H^0(\mathbb{Q}_p, T_2)=0 \ar[d] \ar[r]^-{\delta}  &H^1(\mathbb{Q}_p, T_1) \ar[d]\ar[r]^-{\mathrm{can}} & H^1(\mathbb{Q}_p, T) \ar[d]\\ 
  D_{\mathrm{crys}}(V_2) \ar[r]^-{\delta_2}& H^1(\mathbb{Q}_p, \mathfrak{B}_{\mathrm{crys}}\otimes_{\mathbb{Q}_p}V_1) \ar[r]^-{\delta_3} &
H^1(\mathbb{Q}_p, \mathfrak{B}_{\mathrm{crys}}\otimes_{\mathbb{Q}_p}V).   \\
}
\]
We show that $\delta_2$ is the zero map. If $V_2$ is not crystalline, this is trivial 
since $D_{\mathrm{crys}}(V_2)=0$. Suppose that $V_2$ is crystalline, and 
it is sufficient to show that $V$ is crystalline. 
Then $V_1$ is crystalline 
by the symplectic self-duality. Since $\overline{T}_1\otimes \overline{T}_2^{\otimes -1} \cong \mathbb{F}(-1)$, 
the crystalline 
representation $T_1\otimes T_2^{\otimes -1}$ is isomorphic to neither $\mathcal{O}$ nor $\mathcal{O}(1)$, 
and has positive Hodge-Tate weight. Hence, 
$H^1_{\mathrm{f}}(\BQ_p, V_1\otimes V_2^{\otimes -1})=H^1(\BQ_p, V_1\otimes V_2^{\otimes -1})$, and 
this implies that the extension $V$ is crystalline. 

The $\CO$-module $H^1(\mathbb{Q}_p, T_2)$ is torsion-free by part 3) of Proposition~\ref{a} since $\overline{T}_2\cong \mathbb{F}(1)$. 
Hence, the assertion follows. 
\end{proof}

\begin{prop}\label{prop, reducible MRC 1}
Suppose that $\overline{T}_1\cong \mathbb{F}(1)$ and $T_1\not\cong \mathcal{O}(1)$. 
Then $\hat{\varepsilon}_p(V)=+1$ and  
\[\delta_p(\overline{H^1_{\rm{f}}(\BQ_p,T)}, H^1(\BQ_p,C))=-1.\]
\end{prop}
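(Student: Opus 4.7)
The plan is to prove the two claims separately: the $\hat{\varepsilon}$-identity and the value of $\delta_p$.

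For $\hat{\varepsilon}_p(V) = +1$: this computation follows verbatim from the first paragraph of the proof of Proposition~\ref{prop, reducible sign} ii), which derives $\hat{\varepsilon}_p(V)$ solely from the Weil--Deligne representation and the Hodge--Tate weights of $V$---quantities insensitive to genericity. Writing the character of $\overline{T}_1$ as $\omega^{r}\mu$ with $\mu$ unramified, the residual isomorphism $\overline{T}_1 \cong \mathbb{F}(1)$ forces $r = 1$ and $\mu = 1$, whence $\hat{\varepsilon}_p(V) = (-1)^{r-1} = +1$.

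For the $\delta$-computation, I would first observe that the anomalous hypothesis forces the splitting $\overline{T} \cong \overline{T}_1 \oplus C$: since $H^0(\BQ_p, \mathbb{F}(1)) = 0$ for odd $p$, the line $C \hookrightarrow \overline{T}$ meets $\overline{T}_1$ trivially and projects isomorphically onto $\overline{T}/\overline{T}_1 \cong \mathbb{F}$. This yields $H^1(\BQ_p, \overline{T}) = H^1(\BQ_p, \overline{T}_1) \oplus H^1(\BQ_p, C)$, which by Corollary~\ref{cor, lsd decomposable} is precisely the local sign decomposition of $\overline{T}$, with $H^1(\BQ_p, C)$ as the $+$-part $H^1_{+}(\BQ_p, \overline{T})$.

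Next, from the long exact sequence of $0 \to T_1 \to T \to T_2 \to 0$, together with $\dim_L H^1_{\rm f}(\BQ_p, V) = \dim_L H^1(\BQ_p, V_1) = 1$ and the rational vanishings implied by $\chi \neq 1$, I would extract a short exact sequence of $\mathcal{O}$-modules
\[
0 \longrightarrow H^1(\BQ_p, T_1) \longrightarrow H^1_{\rm f}(\BQ_p, T) \longrightarrow H^1(\BQ_p, T_2)_{\rm tors} \longrightarrow 0,
\]
which splits since the quotient is $\pi$-torsion and the subobject is torsion-free. The quotient is nonzero because $T_2$ is a nontrivial lift of the trivial character with $\chi^{-1} \equiv 1 \pmod{\pi}$, making $(V_2/T_2)^{G_{\BQ_p}}$ nonzero. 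Hence $H^1_{\rm f}(\BQ_p, T) \cong \mathcal{O} \oplus \mathcal{O}/\pi^{m}$ for some $m \geq 1$, and its mod-$\pi$ reduction has $\mathbb{F}$-dimension two and injects into $H^1(\BQ_p, \overline{T})$; thus $\overline{H^1_{\rm f}(\BQ_p, T)}$ is a $2$-dimensional Lagrangian.

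Finally, I would locate $\overline{H^1_{\rm f}(\BQ_p, T)}$ inside the decomposition $H^1(\BQ_p, \overline{T}_1) \oplus H^1(\BQ_p, C)$. By functoriality, the free-summand contribution---the reduction of $H^1(\BQ_p, T_1) \hookrightarrow H^1(\BQ_p, T)$---lands in the $H^1(\BQ_p, \overline{T}_1)$-component and is $1$-dimensional there. The torsion-summand contribution projects to the $H^1(\BQ_p, C)$-component via the composition $H^1_{\rm f}(\BQ_p, T) \twoheadrightarrow H^1(\BQ_p, T_2)_{\rm tors} \to H^1(\BQ_p, \overline{T}_2) = H^1(\BQ_p, C)$, which is injective on the torsion summand of $H^1(\BQ_p, T_2)$ and gives a $1$-dimensional image in $H^1(\BQ_p, C)$. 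Consequently $\overline{H^1_{\rm f}(\BQ_p, T)}$ is a ``diagonal'' Lagrangian meeting $H^1(\BQ_p, C)$ in exactly $1$ dimension, so $\delta_p = 1 \bmod 2$ and $(-1)^{\delta_p} = -1 = -\hat{\varepsilon}_p(V)$, as required. The main obstacle is this final projection step: for a possibly non-split $T$, one must verify that the $H^1(\BQ_p, C)$-component of the reduction of any lift to $H^1_{\rm f}(\BQ_p, T)$ of a torsion class in $H^1(\BQ_p, T_2)$ is well-defined and nonzero, which should follow from the splitting of the residual sequence together with compatibility of Bockstein boundaries with the projection $\overline{T} \twoheadrightarrow C$.
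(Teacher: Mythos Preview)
Your $\hat{\varepsilon}_p(V)=+1$ computation is correct and matches the paper. Your appeal to Corollary~\ref{cor, lsd decomposable}, however, is illegitimate: that corollary requires the pair to be \emph{generic}, whereas $\overline{T}$ here is anomalous by hypothesis (and the two-dimensional $\BF$-spaces $H^1(\BQ_p,\overline{T}_1)$ and $H^1(\BQ_p,C)$ are certainly not the rank-one pieces of any local sign decomposition). This is cosmetic---you never use the sign labeling of $\overline{T}$ afterward---but should be removed.

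The substantive gap is exactly the one you flag as the ``main obstacle'', and your proposed resolution via Bockstein compatibility does not close it. The torsion of $H^1_{\rm f}(\BQ_p,T)$ is $C_0 := H^0(\BQ_p, V/T)$, of length~$n$ say, and it injects into $H^1(\BQ_p, T_2)_{\rm tors}\cong\CO/\pi^{m'}$ via $H^0(\BQ_p,V/T)\hookrightarrow H^0(\BQ_p,V_2/T_2)$. When the extension $0\to V_1/T_1\to V/T\to V_2/T_2\to 0$ is non-split one may have $n<m'$, and then the image of a generator of $C_0$ in $H^1(\BQ_p,T_2)$ lies in $\pi H^1(\BQ_p,T_2)$ and hence dies in $H^1(\BQ_p,\overline{T}_2)=H^1(\BQ_p,C)$. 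Your argument would then place $\overline{H^1_{\rm f}(\BQ_p,T)}$ inside $H^1(\BQ_p,\overline{T}_1)$ and give $\delta_p\equiv 0$, contrary to the statement. (Your short exact sequence is also not established: surjectivity onto all of $H^1(\BQ_p,T_2)_{\rm tors}$ is unclear, since the cokernel of $H^1(\BQ_p,T)\to H^1(\BQ_p,T_2)$ lives in $H^2(\BQ_p,T_1)$, which is nonzero here.)

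The paper takes a different route. After observing $\overline{H^1_{\rm f}(\BQ_p,T)}\neq H^1(\BQ_p,C)$ via the $H^1(\BQ_p,T_1)$-contribution (as you do), it reduces to showing $\overline{H^1_{\rm f}(\BQ_p,T)}\cap H^1(\BQ_p,C)\neq 0$. For this it introduces the auxiliary lattice $T'\supset T$ with $T'/T=C_0$ and splits into two cases. If $T'$ is anomalous, a direct argument shows the torsion of $H^1(\BQ_p,T)$ reduces into $H^1(\BQ_p,C)$. If $T'$ is \emph{generic}, the local sign decomposition legitimately applies to $T'$: combining the de~Rham property of Theorem~\ref{thm, main} with Corollary~\ref{cor, the reducible sign}, the image of $\overline{H^1_{\rm f}(\BQ_p,T)}$ under $g:H^1(\BQ_p,\overline{T})\to H^1(\BQ_p,C(1))$ is forced into the one-dimensional kernel of $h:H^1(\BQ_p,C(1))\to H^1(\BQ_p,\overline{T}')$, and a dimension count then yields $\overline{H^1_{\rm f}(\BQ_p,T)}\cap\ker g=\overline{H^1_{\rm f}(\BQ_p,T)}\cap H^1(\BQ_p,C)\neq 0$.
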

\begin{proof} 
The sign $\hat{\varepsilon}_p(V)$ is computed as before. 

The residual exact sequence 
\[
0 \rightarrow \overline{T}_1 \rightarrow \overline{T} \rightarrow \overline{T}_2 \rightarrow 0
\]
splits since $H^0(\BQ_p,  \overline{T})\not=0$. 
In particular, we have the exact sequence
\[
0 \rightarrow 
H^1(\mathbb{Q}_p,\overline{T}_1) \rightarrow 
H^1(\mathbb{Q}_p,\overline{T}) \rightarrow H^1(\mathbb{Q}_p,\overline{T}_2) \rightarrow 0. 
\]
Since $T_1$ is neither isomorphic to 
$\mathcal{O}(1)$ nor $\mathcal{O}$, the $\CO$-module $H^1(\mathbb{Q}_p, T_1)$ is free of rank one and equals 
$H^1_{\rm{f}}(\mathbb{Q}_p, T_1)$. 
So the image of $\overline{H^1(\mathbb{Q}_p, T_1)}$ 
in $H^1(\mathbb{Q}_p,\overline{T})$ is a one-dimensional 
subspace  contained in $\overline{H^1_{\rm{f}}(\mathbb{Q}_p, T)}$. 
In particular, $\overline{H^1_{\rm{f}}(\mathbb{Q}_p, T)}\not={H^1(\mathbb{Q}_p, C)}$. 
Hence, it suffices to show that $\overline{H^1_{\rm{f}}(\mathbb{Q}_p, T)}\cap{H^1(\mathbb{Q}_p, C)}\not=0$. 

Put $C_0=H^0(\BQ_p, V/T)$.  
The pull-back of the sequence 
\[
0 \rightarrow T \rightarrow V \rightarrow V/T \rightarrow 0
\]
by $C_0$ yields the exact sequence 
\[
0 \rightarrow T \rightarrow T' \rightarrow C_0 \rightarrow 0
\]
of $G_{\BQ_p}$-representations. 
This defines the residual exact sequence 
\[
0 \rightarrow C \rightarrow \overline{T}  \rightarrow \overline{T}' \rightarrow \overline{C}_0 \rightarrow 0.
\] 
In the following we simply denote the image of $\overline{T}  \rightarrow \overline{T}'$ by $C(1)$.

First, assume $T'$ is anomalous. Then 
\[0 \rightarrow C(1) \rightarrow \overline{T}' \rightarrow \overline{C}_0 \rightarrow 0\]
splits. Hence, the sequence
\[0 \rightarrow H^1(\BQ_p, C) \rightarrow H^1(\BQ_p, \overline{T}) \rightarrow  H^1(\BQ_p, \overline{T}')\] is exact. 
Therefore, the natural image of $C_0 \isom H^1(\BQ_p, T)_{\mathrm{tor}}$ in $H^1(\BQ_p, \overline{T})$ 
defines a one-dimensional subspace of $H^1(\BQ_p, C) \cap \overline{H^1_{\rm{f}}(\mathbb{Q}_p, T)}$. 

Assume that $T'$ is generic. Hence, it has local sign decomposition as in Theorem~\ref{thm, main}. Consider the commutative diagram 
\[
\xymatrix{
H^1(\mathbb{Q}_p, T) \ar[d]\ar[rr]^{\!\!\!\!\!\!f} & &H^1_+(\mathbb{Q}_p, T')
 \oplus H^1_-(\mathbb{Q}_p, T') \ar[d]\\ 
H^1(\mathbb{Q}_p, \overline{T}) \ar[r]^g & 
  H^1(\mathbb{Q}_p, C(1)) \ar[r]^{\!\!\!\!\!\!\!\!\!\!\!\!\!\!\!\!\!\!h} & H^1_+(\mathbb{Q}_p, \overline{T}')\oplus H^1_-(\mathbb{Q}_p, \overline{T}').   
}
\]
The image of $H^1_{\rm{f}}(\mathbb{Q}_p, T)$ by $f$ 
is contained in $H^1_{\rm{f}}(\mathbb{Q}_p, T')=H^1_{-}(\mathbb{Q}_p, T')$. 
Hence, the image of $\overline{H^1_{\rm{f}}(\mathbb{Q}_p, T)}$ by $h\circ g$ is in 
$H^1_{-}(\mathbb{Q}_p, \overline{T}')$. However,
the image of $h$ is contained in $H^1_{+}(\mathbb{Q}_p, \overline{T}')$ 
 by Corollary \ref{cor, the reducible sign}. 
Therefore, the image of $\overline{H^1_{\rm{f}}(\mathbb{Q}_p, T)}$ by $h\circ g$ 
must be $0$, and so 
$g(\overline{H^1_{\rm{f}}(\mathbb{Q}_p, T)})$ is contained in 
the one-dimensional subspace $\ker(h)$. 
Since $\dim_{\BF}\overline{H^1_{\rm{f}}(\mathbb{Q}_p, T)}=2$, it follows that 
$\overline{H^1_{\rm{f}}(\mathbb{Q}_p, T)}$ 
and $\ker(g)=H^1(\mathbb{Q}_p, C)$ have non-trivial intersection. 
\end{proof}

\begin{prop}\label{prop, reducible MRC 2}
Suppose that ${T}_1\cong \mathcal{O}(1)$. Then we have 
\[\delta_p(\overline{H^1_{\rm{f}}(\BQ_p,T)}, H^1(\BQ_p,C))=-\hat{\varepsilon}_p(V).\]
Moreover,  if 
$T$ is decomposable, then $\overline{H^1_{\rm{f}}(\BQ_p,T)}\cap H^1(\BQ_p,C)=\overline{H^1_{\rm{f}}(\BQ_p, \mathcal{O})}$ and  if $T$ is non-crystalline, then 
$\overline{H^1_{\rm{f}}(\BQ_p,T)}\cap H^1(\BQ_p,C)=0$. 
\end{prop}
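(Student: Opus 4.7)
The plan is to distinguish three cases by the extension class $\alpha \in H^1(\BQ_p, \mathcal{O}(1))$ of the sequence $0 \to \mathcal{O}(1) \to T \to \mathcal{O} \to 0$ (with $T_2 \cong \mathcal{O}$ forced by the symplectic self-duality). The anomalous hypothesis gives $\alpha \in \pi H^1(\BQ_p, \mathcal{O}(1))$, so the residual sequence splits into $\ov{T} \cong C(1) \oplus C$ and $H^1(\BQ_p, \ov{T}) = H^1(\BQ_p, C(1)) \oplus H^1(\BQ_p, C)$. I would partition into: (a) $\alpha = 0$ (decomposable, crystalline); (b.i) $\alpha \neq 0$ with $V := T \otimes_{\BZ_p} \BQ_p$ crystalline; (b.ii) $V$ non-crystalline semistable. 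A Weil--Deligne computation analogous to the proof of Proposition~\ref{prop, reducible sign} i) shows $\Gamma(V) = +1$ always, $\varepsilon_p(V) = +1$ in cases (a), (b.i) and $\varepsilon_p(V) = -1$ in (b.ii), so $\hat{\varepsilon}_p(V) = +1$ for crystalline $V$ and $-1$ otherwise.

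First I would verify that $\dim_\BF \ov{H^1_{\rm f}(\BQ_p, T)} = 2$ uniformly. Since $H^1_{\rm f}(\BQ_p, T)$ contains the full torsion of $H^1(\BQ_p, T)$ and the quotient $H^1(\BQ_p, T)/H^1_{\rm f}(\BQ_p, T)$ is torsion-free, the reduction map $H^1_{\rm f}(\BQ_p, T)/\pi \hookrightarrow H^1(\BQ_p, \ov{T})$ is injective, and a direct count via the long exact sequence from $0 \to \mathcal{O}(1) \to T \to \mathcal{O} \to 0$ yields dimension $2$ in all three cases. Case (a) is then immediate: the splitting $T = \mathcal{O}(1) \oplus \mathcal{O}$ sends the two summands of $H^1_{\rm f}(\BQ_p, T)$ into $H^1(\BQ_p, C(1))$ and $H^1(\BQ_p, C)$ respectively, so $\ov{H^1_{\rm f}(\BQ_p, T)} \cap H^1(\BQ_p, C) = \ov{H^1_{\rm f}(\BQ_p, \mathcal{O})}$ is one-dimensional, matching $\delta_p = 1$.

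For case (b.ii), Bloch--Kato functoriality together with $\alpha \otimes 1 \notin H^1_{\rm f}(\BQ_p, V_1)$ gives $H^1_{\rm f}(\BQ_p, V) = \mathrm{Im}(H^1_{\rm f}(\BQ_p, V_1) \to H^1(\BQ_p, V))$, so a primitive integral generator of $H^1_{\rm f}(\BQ_p, T)$ can be chosen as the image of an element of $H^1(\BQ_p, T_1)$. Its reduction, and likewise the torsion of $\ov{H^1_{\rm f}(\BQ_p, T)}$, factors through $H^1(\BQ_p, \ov{T}_1) = H^1(\BQ_p, C(1))$, so $\ov{H^1_{\rm f}(\BQ_p, T)} \subset H^1(\BQ_p, C(1))$; this gives intersection zero with $H^1(\BQ_p, C)$ and $\delta_p \equiv 0 \pmod{2}$, as stated.

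The remaining case (b.i) is the delicate one. Here $L \alpha_V = H^1_{\rm f}(\BQ_p, V_1)$ forces the image of $H^1_{\rm f}(\BQ_p, V_1)$ in $H^1(\BQ_p, V)$ to vanish, and the Bloch--Kato sequence makes $H^1_{\rm f}(\BQ_p, V)$ map isomorphically onto $H^1_{\rm f}(\BQ_p, V_2)$, the unramified line. Consequently the reduction $\ov{\xi}$ of an integral generator $\xi$ projects nontrivially to $H^1(\BQ_p, C)$, and writing $\ov{\xi} = \ov{\xi}_C + \ov{\xi}_{C(1)}$ under the canonical splitting, one needs $\ov{\xi}_{C(1)}$ to be parallel to the residual torsion class in $\ov{H^1_{\rm f}(\BQ_p, T)}$. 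The hard step will be verifying this parallelism by tracking the integral Bloch--Kato exponential modulo $\pi$ and using that $\alpha \in \pi H^1_{\rm f}(\BQ_p, \mathcal{O}(1))$ forces the residual correction to lie along the principal-unit direction; once established, the intersection reduces to the single line $\BF \ov{\xi}_C$, yielding $\delta_p = 1 = -\hat{\varepsilon}_p(V)$.
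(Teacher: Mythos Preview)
Your case split and the arguments for (a) and (b.ii) are correct. In fact your treatment of (b.ii) is cleaner than the paper's: once you note that the image of $H^1_{\rm f}(\BQ_p,T)$ in $H^1(\BQ_p,T_2)$ is torsion (since rationally $H^1_{\rm f}(\BQ_p,V)$ comes from $V_1$) and that $H^1(\BQ_p,T_2)=H^1(\BQ_p,\CO)$ is torsion-free, you get $H^1_{\rm f}(\BQ_p,T)\subset\mathrm{Im}(H^1(\BQ_p,T_1)\to H^1(\BQ_p,T))$ directly, hence $\ov{H^1_{\rm f}(\BQ_p,T)}\subset H^1(\BQ_p,C(1))$. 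The paper instead passes to the auxiliary generic lattice $T'$ and uses the local sign decomposition there.

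Case (b.i), however, has a real gap, and it is precisely the step you flag as ``hard''. The parallelism $\ov{\xi}_{C(1)}\in\BF\ov{w}$ is exactly what the paper extracts from the main theorem. Concretely: the paper forms $T'\supset T$ with $T'/T=C_0=H^0(\BQ_p,V/T)$; this $T'$ is generic, and since $\hat\varepsilon_p(V)=+1$ one has $H^1_{\rm f}(\BQ_p,T')=H^1_-(\BQ_p,T')$. By Corollary~\ref{cor, the reducible sign} (a corollary of the local sign decomposition), the map $h:H^1(\BQ_p,C(1))\to H^1(\BQ_p,\ov{T}')$ has image $H^1_+(\BQ_p,\ov{T}')$. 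The composite $\ov{H^1_{\rm f}(\BQ_p,T)}\xrightarrow{g}H^1(\BQ_p,C(1))\xrightarrow{h}H^1(\BQ_p,\ov{T}')$ agrees with the reduction of $f:H^1(\BQ_p,T)\to H^1(\BQ_p,T')$, hence lands in $H^1_-(\BQ_p,\ov{T}')$; the sign mismatch forces $h\circ g(\ov{H^1_{\rm f}(\BQ_p,T)})=0$, so $g(\ov{H^1_{\rm f}(\BQ_p,T)})\subset\ker h=\BF\ov{\alpha_0}=\BF\ov{\beta}$. That is the parallelism.

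Your proposed substitute --- ``tracking the integral Bloch--Kato exponential modulo $\pi$'' and invoking $\alpha\in\pi H^1_{\rm f}(\BQ_p,\CO(1))$ to force the principal-unit direction --- does not supply this. The projection $g$ onto $H^1(\BQ_p,C(1))$ is defined via the \emph{residual} splitting $\ov T=C\oplus C(1)$, which has no lift to $T$; so $\ov{\xi}_{C(1)}$ is not the reduction of any natural integral invariant of $\xi$, and there is no a priori reason it should avoid the ``uniformiser'' line in $H^1(\BQ_p,\BF(1))$. Without an input of the strength of the sign decomposition on $T'$, I do not see how to close this case.
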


\begin{proof}
If $T=\mathcal{O}\oplus \mathcal{O}(1)$, 
then we have $$H^1_{\rm{f}}(\BQ_p, T)=H^1_{\rm{f}}(\BQ_p, \mathcal{O})\oplus H^1_{\rm{f}}(\BQ_p, \mathcal{O}(1))\cong \mathcal{O} \oplus \mathcal{O}.$$ 
Hence, $\overline{H^1_{\rm{f}}(\BQ_p,T)}\cap H^1(\BQ_p,C)=\overline{H^1_{\rm{f}}(\BQ_p, \mathcal{O})}$. 

Suppose that $T$ is a non-split extension of $\mathcal{O}$ by $\mathcal{O}(1)$. 
As in the proof of Proposition \ref{prop, reducible MRC 1}, 
 the image of $\overline{H^1_{\rm{f}}(\mathbb{Q}_p, T_1)}$ 
in $H^1(\mathbb{Q}_p,\overline{T})$ is a one-dimensional 
subspace  contained in $\overline{H^1_{\rm{f}}(\mathbb{Q}_p, T)}$. 
In particular, $\overline{H^1_{\rm{f}}(\mathbb{Q}_p, T)}\not={H^1(\mathbb{Q}_p, C)}$. 
Hence, it suffices to show that 
\[
\overline{H^1_{\rm{f}}(\mathbb{Q}_p, T)}\cap{H^1(\mathbb{Q}_p, C)} =
\begin{cases}
\text{non-zero}  \qquad \text{if \;$T$ crystalline,}\\
 0  \qquad \text{if \;$T$ non-crystalline.}
\end{cases}
\]

We proceed as in the proof of 
Proposition \ref{prop, reducible MRC 1}, and use the notation therein. 
 First, we show that $T'$ is generic. 
 Take an $\CO$-basis $\{e_1, e_2\}$ of $T$ such that 
 \[
 \sigma(e_1)=\kappa_{\rm{cyc}}(\sigma)e_1, \quad 
  \sigma(e_2)=e_2+b(\sigma) e_1
 \]
 for $\sigma \in G_{\BQ_p}$ and $b(\sigma) \in \mathcal{O}$. 
 Let $(\pi^n)$ be the ideal generated by $b(\sigma)$ for all 
 $\sigma \in G_{\BQ_p}$.
 Put $b'(\sigma):=b(\sigma)/\pi^n$ and $e_2':= e_2/\pi^n$. 
 Then we have $$C_0\cong \mathcal{O}/(\pi^n).$$ 
 It follows that $T'=\mathcal{O}e_1+\mathcal{O}e_2'$. 
 Therefore, the extension class is represented by the cocycle $b'(\sigma)$, 
 which is non-zero modulo $\pi$. Hence, $T'$ is generic. 

If $V$ is crystalline, then $\hat{\varepsilon}_p(V)=+1$ and the argument in the proof of 
Proposition \ref{prop, reducible MRC 1} applies. 
Suppose that $V$ is non-crystalline. 
Put $T_2':=\mathcal{O}e_2'$. Then, we have a commutative diagram with 
exact rows 
\[
\xymatrix{
0 \ar[r]  &T_1 \ar[r]\ar@{=}[d] & T \ar[r] \ar[d]&T_2  \ar[r]\ar[d]  & 0\\
0 \ar[r] &T_1 \ar[r] & T' \ar[r] &T_2'  \ar[r]  & 0.
}
\]
Since $T'$ is generic, the image of $H^1_{\rm{f}}(\mathbb{Q}_p, T_1)$ 
in $H^1(\mathbb{Q}_p, T')$ equals $H^1_{\rm{f}}(\mathbb{Q}_p, T')$ by Proposition \ref{prop, reducible sign}. 
Hence, the map 
\[
 \overline{H^1_{\rm{f}}(\mathbb{Q}_p, T)} 
\rightarrow \overline{H^1_{\rm{f}}(\mathbb{Q}_p, T')}=\overline{H^1_{-}(\mathbb{Q}_p, T')}=H^1_{-}(\mathbb{Q}_p, \overline{T}')
\]
is surjective. 
In particular, the image of $\overline{H^1_{\rm{f}}(\mathbb{Q}_p, T)}$ by $h\circ g$ is one-dimensional. 
Let $\overline{v}$ be an element of $\overline{H^1_{\rm{f}}(\mathbb{Q}_p, T)}$ such that 
$$h\circ g(\overline{v})\not=0.$$
On the other hand, $\overline{H^1(\mathbb{Q}_p, T)_{\rm{tor}}}$ is not contained in the kernel of $g$. 
In fact, the element $w \in H^1(\mathbb{Q}_p, T)_{\rm{tor}}$ represented by 
the generator $e_2' \mod T \in C_0$ is given 
by the cocycle $b'(\sigma) e_1$ and so its reduction $\overline{w}$ is  not contained in $C=\BF\overline{e_2}$. Therefore, 
$\overline{w} \notin H^1(\mathbb{Q}_p, C)$, and we also have $h\circ g(\overline{w})=0$ 
since $f(w)=0$. 
Hence, $\{\overline{v}, \overline{w}\}$ is an $\BF$-basis of 
$\overline{H^1_{\rm{f}}(\mathbb{Q}_p, T)}$, and so
$\overline{H^1_{\rm{f}}(\mathbb{Q}_p, T)}\cap{H^1(\mathbb{Q}_p, C)}=0.$ 
\end{proof}

 \subsubsection{The irreducible case} 
  Let ${T}$ be an anomalous symplectic self-dual de Rham representation of $G_{\BQ_p}$ of rank two over an integer ring $\CO$ of a $p$-adic local field.
  Let $\pi \in \CO$ be a uniformizer and $\BF$ the residue field. Put $V=T\otimes_{\BZ_p}\BQ_p$, 
which we assume to be irreducible throughout this subsection.

We approach Theorem~\ref{thm, MR} by relating $T$ to a generic representation by a sequence of isogenies, and employing local sign decomposition for the latter. The main result is Theorem~\ref{thm, anomalous basis} below. 

Let $C' \subset V/T$ be the one-dimensional $\BF$-vector space with trivial $G_{\BQ_p}$-action associated with $C$. 
Then, by the pull-back by $C'$ of 
 \begin{equation}\label{equation, Tp}
\xymatrix{
0 \ar[r] & T \ar[r]
  &  
V  \ar[r]  &  V/T \ar[r]  &  0, 
}
 \end{equation}
  we have\footnote{The notation $T_1$ differs from the one in \S\ref{subsubsection, title}.} an exact sequence 
   \begin{equation}\label{equation, T_1}
\xymatrix{
0 \ar[r] & T \ar[r]
  &  
T_1  \ar[r]  &  C' \ar[r]  &  0. 
}
 \end{equation}
 The Snake lemma then leads to  an exact sequence 
  \begin{equation}
  \xymatrix{
0 \ar[r] & C \ar[r]
  &  
\overline{T}   \ar[r]  & \overline{T}_1 \ar[r]^\rho  &  C'
\ar[r]   & 0. 
}
 \end{equation}
 
 We begin with a preliminary. 
 \begin{lem}\label{lem, torsion places}
 We regard $C'$ as a submodule of $H^1(\BQ_p,T)$ by 
 the connecting map of (\ref{equation, T_1}). 
The natural image of $C' \subset H^1(\BQ_p,T)$ in $H^1(\BQ_p,\overline{T})$ is contained in $H^1(\BQ_p,C)$  
if and only if $H^0(\BQ_p,\overline{T}_1)\not=0$. In particular, 
$\rho$ has a $G_{\BQ_p}$-equivariant splitting if and only if 
the image of $C'$ is contained  in $H^1(\BQ_p,C)$. 
\end{lem}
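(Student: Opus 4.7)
The plan is to recast both conditions as vanishing of a single connecting homomorphism. First, I would apply the snake lemma to multiplication by $\pi$ on \eqref{equation, T_1}. Since $T$ and $T_1$ are torsion-free while $C'$ is killed by $\pi$, this yields the four-term exact sequence $0 \to C' \to \overline{T} \to \overline{T}_1 \to C' \to 0$. The leftmost arrow $C' \hookrightarrow \overline{T}$ is $G_{\BQ_p}$-equivariant, so its image lies in $\overline{T}^{G_{\BQ_p}}$. Because $V$ is symplectic self-dual, $\det \overline{T} = \omega$ is nontrivial ($p$ being odd), whence the one-dimensional quotient $K := \overline{T}/C$ satisfies $K \cong \det \overline{T} = \omega$; in particular $\overline{T}^{G_{\BQ_p}} = C$ and $K^{G_{\BQ_p}} = 0$. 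The four-term sequence therefore decomposes into $0 \to C \to \overline{T} \to K \to 0$ together with the sequence $0 \to K \to \overline{T}_1 \to C' \to 0$ coming from $\rho$.

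Next, naturality of the connecting map applied to the commutative diagram
\[
\xymatrix{
0 \ar[r] & T \ar[r]\ar[d] & T_1 \ar[r]\ar[d] & C' \ar[r]\ar@{=}[d] & 0 \\
0 \ar[r] & K \ar[r] & \overline{T}_1 \ar[r] & C' \ar[r] & 0
}
\]
(with vertical arrows being reduction modulo $\pi$) identifies the composite $C' \xrightarrow{\delta} H^1(\BQ_p, T) \to H^1(\BQ_p, \overline{T}) \to H^1(\BQ_p, K)$ with the connecting map $\delta_K : C' \to H^1(\BQ_p, K)$ of the lower row. The long exact sequence of $0 \to C \to \overline{T} \to K \to 0$ exhibits the image of $H^1(\BQ_p, C)$ in $H^1(\BQ_p, \overline{T})$ as the kernel of the reduction to $H^1(\BQ_p, K)$; hence the image of $C'$ in $H^1(\BQ_p, \overline{T})$ lies in $H^1(\BQ_p, C)$ if and only if $\delta_K = 0$.

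To conclude, I would unravel $\delta_K = 0$ via the long exact sequence of $0 \to K \to \overline{T}_1 \to C' \to 0$:
\[
0 = H^0(\BQ_p, K) \to H^0(\BQ_p, \overline{T}_1) \to C' \xrightarrow{\delta_K} H^1(\BQ_p, K).
\]
So $\delta_K = 0$ is equivalent to surjectivity of the evaluation map $H^0(\BQ_p, \overline{T}_1) \to C'$; since its kernel $H^0(\BQ_p, K)$ vanishes, surjectivity is further equivalent to $H^0(\BQ_p, \overline{T}_1) \neq 0$, yielding the main equivalence. The \emph{in particular} clause is then immediate: a $G_{\BQ_p}$-equivariant splitting of $\rho$ is by definition a $G_{\BQ_p}$-invariant lift of a generator of $C'$, i.e., the surjectivity of this same evaluation map.

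The only (mild) subtlety is verifying $K \cong \omega$ and hence $H^0(\BQ_p, K) = 0$; this is where the oddness of $p$ and the symplectic self-duality of $V$ both intervene to preclude an extra $G_{\BQ_p}$-invariant line in $\overline{T}$. The remainder is a routine diagram chase.
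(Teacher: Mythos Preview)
Your proof is correct and uses essentially the same idea as the paper's: both identify the obstruction to landing in $H^1(\BQ_p,C)$ as the image in $H^1(\BQ_p,K)$ with $K=\overline{T}/C\cong\omega$, and both exploit $H^0(\BQ_p,K)=0$ to reduce the question to whether $\overline{T}_1$ admits a $G_{\BQ_p}$-invariant lift of a generator of $C'$. The paper carries this out by an explicit cocycle computation (choosing a lift $t_1\in T_1$ of $c'$, writing $\overline t_\sigma=\sigma(t_1)-t_1\bmod\pi T$, and unraveling when $\overline t_\sigma\in C$), whereas you package the same computation as the vanishing of the connecting homomorphism $\delta_K:C'\to H^1(\BQ_p,K)$ via naturality and the long exact sequences of the two short exact sequences into which the snake four-term sequence splits. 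Your formulation is slightly cleaner and makes the role of $H^0(\BQ_p,K)=0$ more transparent (in the paper this is implicit in the claim that $\rho$ induces $H^0(\BQ_p,\overline{T}_1)\cong H^0(\BQ_p,C')$), but there is no genuinely new ingredient.
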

\begin{proof}
 Fix a generator $c'$ of $C'$ and let $t_1 \in T_1$ be a lift of $c'$. 
 For $\sigma \in G_{\BQ_p}$, put $$t_\sigma:=\sigma(t_1)-t_1 \in T \text{ and } 
 \overline{t}_\sigma:=t_\sigma \mod \pi T.$$ 
 Note that the latter is a cocycle associated with $c'$ in $H^1(\BQ_p,\overline{T})$. 
 Since $C$ is generated by $c:=\pi c'$, the element defined by the cocyle
 $\overline{t}_\sigma$ is in $H^1(\BQ_p,C)$ if and only if $\overline{t}_\sigma=a_\sigma c$ for some $a_\sigma \in \mathcal{O}$. 
In other words, if and only if 
 \[
 \sigma(t_1)-t_1=a_\sigma\pi t_1+\pi s_\sigma=\pi(a_\sigma t_1+s_\sigma)
 \]
 for some $a_\sigma \in \mathcal{O}$ and $s_\sigma \in T$. 
 
 Note that a general element of $T_1$ is of the form $at_1+s$ for some $a\in \mathcal{O}$ and $s \in T$. 
 Hence, the above displayed condition is equivalent to $\sigma(t_1)-t_1 \in \pi T_1$ for all $\sigma$, and in turn to 
$\overline{t_1} \in H^0(\BQ_p,\overline{T}_1)$. 
On the other hand, suppose that there exists a non-zero element $\overline{s}_1 \in H^0(\BQ_p,\overline{T}_1)$.  
Then $\rho$ induces $H^0(\BQ_p,\overline{T}_1)\cong H^0(\BQ_p,C')$, and we may assume that $\rho(\overline{s}_1)=c'$.
Hence, $\overline{s}_1=\overline{t}_1$ and the proof concludes. 
  \end{proof}

If $T_1$ is also anomalous, then we construct $T_2$ by replacing $T$ in the construction \eqref{equation, T_1} with $T_1$. 
Hence, inductively, we obtain a sequence 
$$ 
T_0:=T \subset T_1 \subset T_2 \subset \cdots \subset T_m
$$
of $G_{\BQ_p}$-stable $\CO$-lattices in $V$ such that 
\begin{equation}\label{eq, lattice chain}
H^0(\BQ_p,\overline{T}_i)\not=0 \text{ if $0\leq i<m$ and } H^0(\BQ_p,\overline{T}_m)=0.
\end{equation}
Note that such an $m$ exists by the irreducibility of $V$. 
For $0\leq i<m$, write $C'_i \subset H^0(\BQ_p,V/T_i)$ for the submodule corresponding to the one-dimensional trivial representation in $\overline{T}_i$. 
For simplicity of notation, we often let $C$ denote the one-dimensional trivial representation in $\overline{T}_i$ and
also identify it with $C'_i$ by the multiplication by $\pi$. By construction, we have an exact sequence 
 \begin{equation}\label{equation, T_i}
\xymatrix{
0 \ar[r] & T_i \ar[r]
  &  
T_{i+1}  \ar[r]  &  C'_i \ar[r]  &  0 
}
 \end{equation}
of $G_{\BQ_p}$-representations.

\begin{lem} Pick an integer $0\leq i \leq m$.  Let $C_i:=H^0(\BQ_p,V/T_i)$ be the $\CO$-torsion part of $H^1(\BQ_p,T_i)$. 
\begin{itemize}
\item[i)] The free part $H^1(\BQ_p,T_i)_{\rm{fr}}:=H^1(\BQ_p,T_i)/C_i$ is of rank two, and the image 
 $\overline{H^1(\BQ_p,T_i)} \subset H^1(\BQ_p,\overline{T}_i)$
 of  $H^1(\BQ_p,T_i)$  is three dimensional. 
 Moreover, there exists an $\BF$-basis of $\overline{H^1(\BQ_p,T_i)}$ consisting of the image of an $\CO$-basis of $H^1(\BQ_p,T_i)_{\rm{fr}}$ and a generator of $C_i$. 
\item[ii)] For $0<i<m$, the residual representation $\overline{T}_i$ decomposes as $\overline{T}_i^+\oplus \overline{T}_i^-$ 
where $\overline{T}_i^- \cong C$ and by the canonical projection, we have 
$\overline{T}_i^+\cong \overline{T}_{i-1}^+\cong C(1)$. 
\end{itemize}
\end{lem}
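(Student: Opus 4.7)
The plan is to establish part i) by combining Euler--Poincar\'e with an analysis of the Bockstein sequence modulo $\pi$, and part ii) by exploiting the Snake lemma for the short exact sequence $0 \to T_{i-1} \to T_i \to C'_{i-1} \to 0$ together with the anomalous hypothesis.

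For part i), I would first observe that irreducibility of $V$ gives $H^0(\BQ_p,V) = 0$, and symplectic self-duality gives $H^2(\BQ_p,V) \cong H^0(\BQ_p,V^*(1))^* = 0$; the Euler--Poincar\'e formula then yields $\dim_L H^1(\BQ_p,V) = 2$, so $H^1(\BQ_p,T_i)_{\rm fr}$ has $\CO$-rank two. Next I would show that $C_i$ is a cyclic $\CO$-module: writing $V/T_i = \bigcup_n \pi^{-n}T_i/T_i$, the $\pi$-torsion of $C_i = (V/T_i)^{G_{\BQ_p}}$ is contained in $(\pi^{-1}T_i/T_i)^{G_{\BQ_p}} = H^0(\BQ_p,\overline{T}_i)$, which has $\BF$-dimension one for $0 \leq i < m$ (the unique trivial summand in $\overline{T}_i$). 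Hence $C_i$ is cyclic. The Bockstein sequence for $0 \to T_i \xrightarrow{\pi} T_i \to \overline{T}_i \to 0$ identifies the image $\overline{H^1(\BQ_p,T_i)}$ with $H^1(\BQ_p,T_i)/\pi H^1(\BQ_p,T_i)$, and using the standard decomposition $H^1(\BQ_p,T_i) \cong H^1(\BQ_p,T_i)_{\rm fr} \oplus C_i$ of a finitely generated $\CO$-module into free and torsion parts, this image decomposes as $H^1(\BQ_p,T_i)_{\rm fr}/\pi \oplus C_i/\pi$, of $\BF$-dimension $2 + 1 = 3$. The basis statement is then manifest from this decomposition.

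For part ii), I would apply the Snake lemma to the commutative diagram with rows $0 \to T_{i-1} \to T_i \to C'_{i-1} \to 0$ and vertical maps given by multiplication by $\pi$. Since $T_{i-1}, T_i$ are $\CO$-torsion-free whereas $\pi$ annihilates $C'_{i-1} \cong C$, this yields the four-term exact sequence $0 \to C \to \overline{T}_{i-1} \to \overline{T}_i \to C \to 0$, and an explicit cocycle chase ($y \in C$ lifts to $\tilde{y} \in T_i$, and $\pi\tilde{y} \bmod \pi T_{i-1}$ lies in a $G$-invariant line) identifies the leftmost $C$ with the unique trivial subrepresentation of $\overline{T}_{i-1}$. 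Consequently the image of $\overline{T}_{i-1}$ in $\overline{T}_i$ is canonically $\overline{T}_{i-1}/C$. I would then induct on $i$: the base case $i=1$ uses the constraint $\det\overline{T} = \overline{\chi}_{\rm cyc}$ combined with the trivial sub $C$ to force $\overline{T}/C \cong C(1)$, and the inductive step uses the split decomposition $\overline{T}_{i-1} \cong C \oplus C(1)$ already established at the previous index. In all cases $\overline{T}_i$ fits in $0 \to C(1) \to \overline{T}_i \to C \to 0$, whose long exact sequence reads
\[
0 \to H^0(\BQ_p,\overline{T}_i) \to H^0(\BQ_p,C) \xrightarrow{\partial} H^1(\BQ_p,C(1))
\]
with $\partial$ carrying a generator of $H^0(\BQ_p,C) = C$ to the extension class. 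For $0 < i < m$ the anomalous condition $H^0(\BQ_p,\overline{T}_i) \neq 0$ forces $\partial = 0$, so the extension splits as $\overline{T}_i \cong C \oplus C(1)$, with $\overline{T}_i^+$ identified with the image of $\overline{T}_{i-1}^+ \cong C(1)$.

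The most delicate point in this plan is the inductive identification of the image of $\overline{T}_{i-1}$ in $\overline{T}_i$ as precisely the $C(1)$-subrepresentation, which relies on the split decomposition of $\overline{T}_{i-1}$ already having been established at the previous inductive step. The base case $i=1$ circumvents this by using only the determinantal constraint on $\overline{T}$ together with the anomalous hypothesis at $i=0$; beyond that, the identification $\overline{T}_{i-1}/C \cong C(1)$ is immediate from the split structure.
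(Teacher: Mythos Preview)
Your proposal is correct and follows essentially the same approach as the paper: Euler--Poincar\'e and the structure of finitely generated $\CO$-modules for part i), and the Snake-lemma four-term sequence together with the anomalous hypothesis for part ii). Your version is more explicit (you verify cyclicity of $C_i$ and phrase the splitting via the vanishing of the boundary $\partial$, whereas the paper simply notes that $\overline{T}_i$ contains both $C(1)$ as the image of $\overline{T}_{i-1}$ and $C$ from $H^0(\BQ_p,\overline{T}_i)\neq 0$, hence decomposes), but the substance is identical.
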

\begin{proof}
i) The first part is clear since $H^1(\BQ_p,V_i)=H^1(\BQ_p,V)$ is two dimensional. 
Hence, we abstractly have 
 \[
 H^1(\BQ_p,T_i) \cong \mathcal{O}\oplus \mathcal{O} \oplus C_i
 \]
and so $\overline{H^1(\BQ_p,T_i)}:=H^1(\BQ_p,T_i)/\pi$ is three dimensional. \\
ii) By the Snake lemma, we have an exact sequence 
  \begin{equation}
  \xymatrix{
0 \ar[r] & C \ar[r]
  &  
\overline{T}_{i-1}   \ar[r]  & \overline{T}_i \ar[r]  &  C'_{i-1}
\ar[r]   & 0. 
}
 \end{equation}
 Hence, $\overline{T}_i$ contains a submodule isomorphic to $\overline{T}_{i-1}^+\cong C(1)$. Since $i<m$, it also contains a submodule isomorphic to $C$. 
\end{proof}
 
 Put $T=T_0$, $\overline{T}_0^-:=C$ and $\overline{T}_0^+:=\overline{T}/C$.

\begin{thm}\label{thm, anomalous basis}
  Let ${T}$ be an anomalous symplectic self-dual de Rham representation of $G_{\BQ_p}$ of rank two over an integer ring $\CO$ of a $p$-adic local field. Suppose that $T \otimes_{\BZ_p}\BQ_p$ is irreducible. 
  Let $T=T_{0} \subset T_1 \subset \dots \subset T_m$ be a chain of $G_{\BQ_p}$-stable $\CO$-lattices as above with $T_m$ being generic.
Let $v^\pm_m$ be an $\CO$-basis of the signed submodule $H^1_\pm(\BQ_p,T_m) \subset H^1(\BQ_p,T_m)$. 
Then for $0\leq i<m$, there exist elements $v^+_i, v^-_i \in H^1(\BQ_p,T_i)$ with the following properties. 

Write $H^1_\pm(\BQ_p,T_i)$ for the $\CO$-submodule of $H^1(\BQ_p,T_i)$ generated by $v_i^\pm$ and 
the torsion part $C_i$. 
\begin{itemize}
\item[1)] The elements $v^+_i, v^-_i$  form a basis of $H^1(\BQ_p,T_{i})_{\rm{fr}}$.  
\item[2)] $\overline{H^1_\pm(\BQ_p,T_i)}$ is a Lagrangian subspace of $H^1(\BQ_p,\overline{T}_i)$. 
 \item[3)] The $\BF$-dimension of $\overline{H^1_+(\BQ_p,T_i)}\cap H^1(\BQ_p,\overline{T}_i^-)$ is even, and 
 that of $\overline{H^1_-(\BQ_p,T_i)}\cap H^1(\BQ_p,\overline{T}_i^-)$ odd and so the latter equals $1$. 
In other words, 
 for $\epsilon \in \{\pm\}$, we have 
 \[
\delta_p(\overline{H^1_\epsilon(\BQ_p,T_i)}, H^1(\BQ_p,\overline{T}_i^-))=(-1)^{\dim_{\BF} \overline{H^1_\epsilon(\BQ_pT_i)}\cap H^1(\BQ_p,\overline{T}_i^-)}=\epsilon \cdot 1.
 \]
 \item[4)] Consider the map $f_i: H^1(\BQ_p,T_i) \rightarrow H^1(\BQ_p,T_{i+1})$ induced by \eqref{equation, T_i}. 
 If $H^1(\BQ_p,\overline{T}_i^-) \cap \overline{C}_i=0$, then 
 \[
 f(v_i^+)-v^+_{i+1} \in C_{i+1}, \quad f(v_i^-)=\pi v^-_{i+1}. 
 \]
 If $\overline{C}_i \subset H^1(\BQ_p,\overline{T}_i^-)$, then 
 \[
 f(v_i^+)=\pi v^+_{i+1}, \quad f(v_i^-)-v^-_{i+1} \in C_{i+1}. 
 \]
\end{itemize}
In particular, we have $H^1_{\rm{f}}(\BQ_p,T_i)=H^1_{\epsilon}(\BQ_p,T_i)$ for 
$\epsilon=-\hat{\varepsilon}_p(V)$, and 
 \[
\delta_p(\overline{H^1_{\rm{f}}(\BQ_p,T)}, H^1(\BQ_p,C))
=-\hat{\varepsilon}_p(V).
 \]
\end{thm}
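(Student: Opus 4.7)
The argument will proceed by descending induction on $i$, using the local sign decomposition at the generic lattice $T_m$ (Theorem~\ref{thm, main}) as the input and descending via the canonical maps $f_i:H^1(\BQ_p,T_i)\to H^1(\BQ_p,T_{i+1})$ induced by~\eqref{equation, T_i}. At $i=m$ we take $v_m^{\pm}$ to be the given $\CO$-basis of $H^1_\pm(\BQ_p,T_m)$.

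For the inductive step from $i+1$ to $i$, the irreducibility of $V$ gives $H^0(\BQ_p,T_i)=H^0(\BQ_p,T_{i+1})=0$, so~\eqref{equation, T_i} produces
\[
0\lra C_i'\lra H^1(\BQ_p,T_i)\xrightarrow{f_i} H^1(\BQ_p,T_{i+1})\xrightarrow{\partial} H^1(\BQ_p,C_i'),
\]
with $C_i'\hookrightarrow H^1(\BQ_p,T_i)$ identifying $C_i'$ with the torsion $C_i$. Because $H^1(\BQ_p,C_i')\cong\BF^2$ is annihilated by $\pi$, the elements $\pi v_{i+1}^+$ and $\pi v_{i+1}^-$ always lift under $f_i$; the heart of the step is to show that exactly one of the unscaled elements $v_{i+1}^{\pm}$ lifts modulo torsion, and to identify which. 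Reducing~\eqref{equation, T_i} modulo $\pi$ yields, via the Snake Lemma, the four-term sequence $0\to \overline{T}_i^-=C\to\overline{T}_i\to\overline{T}_{i+1}\to C\to 0$, from which one computes the image of $\partial$ and compares it residually with $\overline{H^1_\pm(\BQ_p,T_{i+1})}$ using the Lagrangian property from the inductive hypothesis: the Lagrangian whose intersection with $H^1(\BQ_p,\overline{T}_{i+1}^-)$ is odd-dimensional is precisely the sign that fails to lift freely across $f_i$. This yields the dichotomy of Property~4), and we set $v_i^\pm$ accordingly. Properties 1) and 4) are then built in; Property 2) follows from the naturality of the Tate pairing along $f_i$ together with the rigidity of Lagrangians in rank two (Lemma~\ref{prop, lagrangian}); and Property 3) is obtained by tracking the residual images of $v_i^\pm$ through the inductive hypothesis together with the shift between the positions of $\overline{C}_i$ and $\overline{C}_{i+1}$, with the base case $i=m-1$ handled by a direct application of Corollary~\ref{cor, the reducible sign} to $\overline{T}_{m-1}$.

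For the final compatibility, we propagate the de Rham equality $H^1_{\rm f}(\BQ_p,T_m)=H^1_{-\hat{\varepsilon}_p(V)}(\BQ_p,T_m)$ from Theorem~\ref{thm, main}(4) downward using Property 4). Since $V_i=V$ for all $i$, the $\BQ_p$-level Bloch--Kato subgroup $H^1_{\rm f}(\BQ_p,V)$ is preserved along $f_i\otimes\BQ_p$, and combined with the $\pi$-scaling dictated by Property 4) this forces $H^1_{\rm f}(\BQ_p,T_i)=H^1_{-\hat{\varepsilon}_p(V)}(\BQ_p,T_i)$ for every $0\le i<m$. Applied at $i=0$, Property 3) then gives the claimed value $-\hat{\varepsilon}_p(V)$ for $\delta_p(\overline{H^1_{\rm f}(\BQ_p,T)},H^1(\BQ_p,C))$. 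The main obstacle will be the residual comparison in the inductive step: making the reduction of $\partial$ talk precisely to the reductions of $\overline{H^1_\pm(\BQ_p,T_{i+1})}$, and doing this carefully enough that the alternating position of the torsion class $\overline{C}_i$ across the chain does not disturb the resulting sign equality at $i=0$.
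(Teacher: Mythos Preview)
Your overall architecture (descending induction from the generic lattice $T_m$, lifting along the maps $f_i$) matches the paper, but the key mechanism you propose for the dichotomy in Property~4) is wrong, and this is where the real work lies.

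You claim that ``the Lagrangian whose intersection with $H^1(\BQ_p,\overline{T}_{i+1}^-)$ is odd-dimensional is precisely the sign that fails to lift freely across $f_i$.'' But by the inductive hypothesis (Property~3 at level $i+1$), the odd-intersection sign is \emph{always} $-$, so your rule would force $v_{i+1}^+$ to lift freely at every step. This is false. The long exact sequence of~\eqref{equation, T_i} continues as
\[
H^1(\BQ_p,T_{i+1})\to H^1(\BQ_p,C_i')\to H^2(\BQ_p,T_i)\to H^2(\BQ_p,T_{i+1})\to 0,
\]
and since $H^2(\BQ_p,T_j)\cong C_j^{\vee}$, one gets exactly two cases: $|C_i|=|\BF|\cdot|C_{i+1}|$ (cokernel of $f_i$ is one-dimensional) or $|C_i|=|C_{i+1}|$ (cokernel two-dimensional). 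In the first case $v_{i+1}^+$ lifts modulo torsion; in the second case it is $v_{i+1}^-$ that lifts, regardless of whether the $+$-intersection at level $i+1$ has dimension $0$ or $2$. So the intersection parity at level $i+1$ does not determine the dichotomy; what does is the position of $\overline{C}_i$ relative to $H^1(\BQ_p,\overline{T}_i^-)$, as stated in Property~4), and this is exactly governed by the Case~I/Case~II split. The paper treats Case~II by a careful analysis of the two-dimensional Lagrangian $\mathrm{Im}\,\iota_i\subset H^1(\BQ_p,\overline{T}_{i+1})$ using the Tate pairing, further splitting into two subcases; none of this is captured by your parity heuristic.

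Two smaller points: the kernel $C_i'$ of $f_i$ is a single $\BF$-line, not the full torsion $C_i=H^0(\BQ_p,V/T_i)$, which can be a larger cyclic $\CO$-module (indeed the growth of $|C_i|$ along the chain is what drives the case split above). And Lemma~\ref{prop, lagrangian} concerns rank-two modules, so it does not apply to the four-dimensional space $H^1(\BQ_p,\overline{T}_i)$; Property~2) is instead deduced from Property~4) by observing that $f_i$ respects the Tate pairing and sends $v_i^{\pm}$ to a multiple of the isotropic $v_{i+1}^{\pm}$.
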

\begin{proof}
Note that the property 2) is a consequence of the property 4). 
Indeed, the cup product pairing $(x, c) \in \mathcal{O}$ must be $0$ for any $x \in H^1(\BQ_p,T_i)$ and $c \in C_i$, 
and 4) implies that  $v_i^\pm$ is isotropic. 
Hence, the subspace $\overline{H^1_\pm(\BQ_p,T_i)}$ is maximal isotropic. 
In particular, $(\overline{v_i}^+, \overline{v_i}^-)\not=0$.

In the following we construct the desired elements $v_i^\pm$
by induction for $i=m-1, m-2, \cdots, 0$. 

\vskip2mm

{\it The base case}. 
First, suppose that $i=m-1$. 
Then the $\CO$-module module $H^1(\BQ_p,{T}_m)$ is free by property 3) of Proposition~\ref{a}, and so  $C_m=0$. 
The exact sequence (\ref{equation, T_i}) induces the following one
  \begin{equation}
  \xymatrix{
0 \ar[r] & C'_{m-1} \ar[r]
  &  
H^1(\BQ_p,{T}_{m-1})   \ar[r]^{f_{m-1}}  & H^1(\BQ_p,{T}_m) \ar[r]  &  H^1(\BQ_p,C'_{m-1})
\ar[r] &  H^2(\BQ_p,T_{m-1})
\ar[r]   & 0. 
}
 \end{equation}
 Since $C'_{m-1}$ is a one dimensional $\BF$-vector space with trivial $G_{\BQ_p}$-action and 
  $H^2(\BQ_p,T_{m-1})$ is also a one dimensional $\BF$-vector space, so is the cokernel of $f_{m-1}$. 
 Note also that $C'_{m-1}=C_{m-1}$. 
 Then we have the following commutative diagram 
  {\footnotesize
 \begin{equation}
\xymatrix{
      C'_{m-1}  \ar@{^{(}->}[r]        &  H^1(\BQ_p,T_{m-1})       \ar[d] \ar[rr]^{f_{m-1}}   &    & 
      H^1_+(\BQ_p,T_m)\oplus H^1_-(\BQ_p,T_m)   \ar[d]  \\
                   \quad H^1(\BQ_p,\overline{T}^-_{m-1})\ar@{^{(}->}[rd]       & \overline{H^1(\BQ_p,T_{m-1})}   \ar[d]\ar[rr]   & &      \overline{H^1_+(\BQ_p,T_m)}\oplus            \overline{H^1_-(\BQ_p,T_m)}  \ar@{=}[d] \\
                                                 & H^1(\BQ_p,\overline{T}_{m-1}) \ar[rr]  \ar[rd]^{p_{m-1}}        &  &    
               H^1_+(\BQ_p,\overline{T}_{m})\oplus H^1_-(\BQ_p,\overline{T}_{m}) \\
                                                 &                                                         & \qquad H^1(\BQ_p,\overline{T}^+_{m-1})
               \cong H^1(\BQ_p,\overline{T}^+_{m}) \ar[ur]^{\iota_{m}}  \ar[rd] &  \\
                                                             &       
                            H^0(\BQ_p,\overline{T}^-_{m})\cong \BF\;    \ar@{^{(}->}[ur]                                          &     &  0.
}
 \end{equation}
 }
Let $\{w_1, w_2 \} \subset H^1(\BQ_p, T_{m-1})$ be a (lift of) basis of the free part. 
Write 
$$f_{m-1}(w_1)=av^+_m+bv^-_m, \quad f_{m-1}(w_2)=cv^+_m+dv_m^-.$$
Since the image of $\iota_{m-1}$ is the one dimensional $\BF$-vector space $H^1_+(\BQ_p,\overline{T}_{m})$ by 
Corollary \ref{cor, the reducible sign}, 
we have $\pi | b$ and $\pi| d$. Since the cokernel of $f_{m-1}$ is one dimensiontal $\BF$-vector space, 
$a$ or $c$ is a unit.
Hence, by changing the variable, we may assume that $a=1$, $c=0$. 
Then considering the cokernel of $f_{m-1}$ again, $d/\pi$ must be a unit.  
Hence, we can choose $w_1$, $w_2$ so that  $f_{m-1}(w_1)=v_m^+$ and  $f_{m-1}(w_2)=\pi v_m^-$. 
Since $H^0(\BQ_p,\overline{T}_m)=0$, we have $\overline{C}_{m-1} \cap H^1(\BQ_p,\overline{T}_{m-1}^-)=0$ 
by Lemma \ref{lem, torsion places}, and so 
the kernel of $\iota_{m-1}$ is isomorphic to the image of $C'_{m-1}$. 
Hence, we can modify $w_2$ by an element of $C'_{m-1}$ so that $\overline{w}_2 \in H^1(\BQ_p,\overline{T}_{m-1}^-)$.
Then define $v_{m-1}^+=w_1$ and  $v_{m-1}^-=w_2$.  

\vskip2mm
{\it The inductive step}.
We now suppose that $i<m-1$ and that the desired elements $v_{i+1}^+$ and  $v_{i+1}^-$ exist by the induction hypothesis.

The exact sequence (\ref{equation, T_i}) induces the following one
 {\begin{equation}\label{equation, HT_iT_i+1}
  \xymatrix{
  C_i' \ar@{^{(}->} [r]  
  &  
H^1(\BQ_p,{T}_{i})   \ar[r]^{f_i}  & H^1(\BQ_p,{T}_{i+1}) \ar[r]^{g_{i+1}}  &  H^1(\BQ_p,C_i')
\ar[r]^h   \ar[r] &  H^2(\BQ_p,T_{i})
\ar@{->>}[r] &  H^2(\BQ_p,T_{i+1}).
}
 \end{equation}
 }
 Since $H^2(\BQ_p,T_{i})=C_i^\vee$ and $H^2(\BQ_p,T_{i+1})=C_{i+1}^\vee$ are both cyclic, 
 we have 
 \begin{equation}\label{size cases}
 |C_i|=|C_{i+1}| \text{ or } |C_i|=|\BF||C_{i+1}|.
 \end{equation}
 \vskip2mm
 {\it Case I}. Suppose that $|C_i|=|\BF||C_{i+1}|$. 
 
 Then in the exact sequence  (\ref{equation, HT_iT_i+1}), 
 the image of $h$ is one dimensional and hence, so is $\mathrm{Im} \,g_{i+1}$. 
The map $f_i$ induces a map $C_i \rightarrow C_{i+1}$ with kernel $C_i'$. By looking at the order, 
it must be surjective. Hence, $g_{i+1}(C_{i+1})=0$, and 
one of the elements $\{v_{i+1}^+,v_{i+1}^-\}$ generates the one dimensional space $\mathrm{Im}\, g_{i+1}$. 

We have the following commutative diagram
  {
 \begin{equation}\label{equation, big diagram II}
\xymatrix{
 C_i' \ar@{^{(}->} [r]       &  H^1(\BQ_p,T_{i})       \ar[d] \ar[rr]^{f_i}   &   &   H^1(\BQ_p,T_{i+1})   \ar[d]  \ar@{->>}[r] & \mathrm{Im}\,g_{i+1} 
\ar[d] \\
              H^1(\BQ_p,\overline{T}_{i}^-) \ar@{^{(}->}[rd]       & \overline{H^1(\BQ_p,T_{i})}   \ar[d]\ar[rr]   &   &  \overline{H^1(\BQ_p,T_{i+1})}
                 \ar[d] \ar[r] & H^1(\BQ_p,C'_i)\\
                                & H^1(\BQ_p,\overline{T}_{i}) \ar[rr]^{\overline{f}_i}  \ar@{->>}[rd]^{p_i}      &  &   
                H^1(\BQ_p,\overline{T}_{i+1}) \ar@{->>}[ur]_{\overline{g}_{i+1}} \ar@{->>}[rd]^{p_{i+1}}&                 \\
                                          &                                                         &  H^1(\BQ_p,\overline{T}_{i}^+) \ar@{^{(}->}[ur]^{\iota_i}  &  
               &H^1(\BQ_p,\overline{T}_{i+1}^+).
}
 \end{equation}
 }
 By construction, $p_{i+1}\circ \iota_i$ is an isomorphism. 
 In particular, $\mathrm{Im} \,\iota_i$ 
does not intersect non-trivially with $\ker\,p_{i+1}=H^1(\BQ_p,\overline{T}_{i+1}^-)$.
 Since $\overline{f}_i$ induces an isomorphism $\overline{C}_i \cong \overline{C}_{i+1}$ on one dimensional spaces, 
 we have $\overline{C}_{i}\cap H^1(\BQ_p,\overline{T}_{i}^-)=0$.
 
 We now determine the image of $\iota_i$. 
Let $c_{i+1}$ be a generator of $C_{i+1}$. 
 Since $\overline{c}_{i+1} \in \overline{f}_i(\overline{C}_i)$, it is contained in $\mathrm{Im} \,\iota_i$. 
In particular, $H^1_+(\BQ_p,T_{i+1})\not= H^1(\BQ_p,\overline{T}_{i+1}^-)$, and so
$$\dim_{\BF}H^1_+(\BQ_p,T_{i+1})\cap H^1(\BQ_p,\overline{T}_{i+1}^-)=0, \quad \dim_{\BF} H^1_-(\BQ_p,T_{i+1})\cap H^1(\BQ_p,\overline{T}_{i+1}^-)=1$$
by the induction hypothesis. 
 In particular, the images by $p_{i+1}$ of $\overline{v}_{i+1}^+, \overline{c}_{i+1} \in H^1_+(\BQ_p,T_{i+1})$ generate the two dimensional space 
$H^1(\BQ_p,\overline{T}_{i+1}^+)$, and 
$\overline{v}_{i+1}^-+\alpha \overline{c}_{i+1} \in  H^1(\BQ_p,\overline{T}_{i+1}^-)$ for some $\alpha \in \mathcal{O}$. 
Take $d_{i+1} \in H^1(\BQ_p,\overline{T}_{i+1}^-)$ so that 
 $\{ \overline{v}_{i+1}^+, \overline{v}_{i+1}^-, \overline{c}_{i+1}, d_{i+1}\}$ is 
 a basis of $H^1(\BQ_p,\overline{T}_{i+1})$. 
 Note that $\mathrm{Im} \,\iota_i$ is a two dimensional space containing $\overline{c}_{i+1}$ and not 
 intersecting $H^1(\BQ_p,\overline{T}_{i+1}^-)$. So 
 there exist $a, b \in \BF$ such that 
 \begin{equation}\label{Im}
 \overline{v}_{i+1}^++a(\overline{v}_{i+1}^-+\alpha \overline{c}_{i+1})+bd_{i+1} \in \mathrm{Im} \,\iota_i.
 \end{equation}
 Since $\mathrm{Im} \,\iota_i$ is Lagrangian and $\overline{c}_{i+1} \in \mathrm{Im} \,\iota_i$, the above element is  orthogonal to $\overline{c}_{i+1}$. 
 The element $d_{i+1}$ is not orthogonal to $c_{i+1}$ because otherwise, $c_{i+1}$ is orthogonal to all elements 
 of $H^1(\BQ_p,\overline{T}_{i+1})$. Therefore, we have $b=0$. 
 Then the element \eqref{Im} is isotropic only when $a=0$. 
Hence, the space  $\mathrm{Im} \,\iota_i$ is generated by  $\overline{v}_{i+1}^{+}$ and $\overline{c}_{i+1}$.

Since  $C_i \rightarrow C_{i+1}$ is surjective, we may 
choose a (lift of) basis $\{w_1, w_2\}$ of the free part of $H^1(\BQ_p,T_i)$ 
satisfying 
\[f_i(w_1)=mv^+_{i+1}+n(v^-_{i+1}+\alpha c_{i+1}), \quad f_i(w_2)=m'v^+_{i+1}+n'(v^-_{i+1}+\alpha c_{i+1})\]
for some $m, n, m', n' \in \mathcal{O}$.  
Since $\overline{f_i(w_j)} \in  \mathrm{Im} \,\iota_i$, we have $n \equiv n' \equiv 0 \mod \pi$. 
Considering that $\mathrm{Im} \,g_{i+1}$ is a one dimensional $\BF$-vector space, $m$ or $m'$ is a unit.  
Hence, by changing the basis, we may assume that $m=1$ and $m'=0$.  Then again, by  the one dimensionality of $\mathrm{Im} \,g_{i+1}$, 
the ratio $n'/\pi$ must be a unit. Hence, by changing the basis, we may assume that 
\[f_i(w_1)=v^+_{i+1}, \quad f_i(w_2)=\pi(v^-_{i+1}+\alpha c_{i+1}).\]
Since $\pi\alpha c_{i+1} \in f_i(C_i)$, we obtain the desired basis by putting $w_1=v_i^+$ and $w_2=v_i^-$. 
Then the image of $\overline{H^1_+(\BQ_p,T_i)}$ by $\overline{f}_i$ is 
 the two dimensional space  $\mathrm{Im} \,\iota_i$. Hence, $\dim_{\BF}\overline{H^1_+(\BQ_p,T_i)}\cap H^1(\BQ_p,\overline{T}_i^-)=0$. We have $v_i^-  \in \ker\,\overline{f_i}=\ker\,p_i=H^1(\BQ_p,\overline{T}_i^-)$ 
 and $\overline{C_i} \cap H^1(\BQ_p,\overline{T}_i^-)=0$. It follows that 
 $\dim_{\BF}\overline{H^1_-(\BQ_p,T_i)}\cap H^1(\BQ_p,\overline{T}_i^-)=1.$  
 \vskip2mm
 {\it Case II}. 
 Suppose that $|C_i|=|C_{i+1}|$  in \eqref{size cases}. 
 
 Then we have the exact sequence 
  \begin{equation}
  \xymatrix{
0 \ar[r] & C'_i \ar[r]
  &  
H^1(\BQ_p, {T}_{i})   \ar[r]^{f_i}  & H^1(\BQ_p,{T}_{i+1}) \ar[r]  &  H^1(\BQ_p,C_i')
\ar[r]   & 0. 
}
 \end{equation}
 Note that $f_i$ induces a map $C_i \rightarrow C_{i+1}$ whose kernel is $C_i'$. 
By considering the order, the cokernel $C_{i+1}/f_i(C_i)$ is one-dimensional. 
 Hence, the image of a generator $c_{i+1}$ of $C_{i+1}$ in $H^1(\BQ_p,C_i')$ is non-zero. 
The  three dimensional $\BF$-vector space  $\overline{H^1(\BQ_p,{T}_{i+1})}$ has a basis 
$\{\overline{v}_{i+1}^+, \overline{v}_{i+1}^-, \overline{c}_{i+1}\}$, and 
the $\BF$-vector space $H^1(\BQ_p,C_i')$ is generated by the image of 
$c_{i+1}$, and one of the images of $v_{i+1}^+$ and $v_{i+1}^-$. 

As before, we proceed via the commutative diagram (\ref{equation, big diagram II}).  
 We now have two cases depending on the dimension of $\overline{H^1_+(\BQ_p,T_{i+1})}\cap H^1(\BQ_p,\overline{T}_i^-)$, which is $0$ or $2$ 
 by the induction hypothesis. 
 \vskip2mm
 {\it Subcase 1}. First suppose that $$\overline{H^1_+(\BQ_p,T_{i+1})}\cap H^1(\BQ_p,\overline{T}_{i+1}^-)=0.$$
 
 Then the images of $\overline{v}_{i+1}^+$ and $\overline{c}_{i+1}$ by $p_{i+1}$ generate 
 $H^1(\BQ_p,\overline{T}_{i+1}^+)$, and $\overline{v}_{i+1}^-+\alpha \overline{c}_{i+1} \in  H^1(\BQ_p,\overline{T}_{i+1}^-)$ for some $\alpha \in \mathcal{O}$. 
 Take $d_{i+1} \in H^1(\BQ_p,\overline{T}_{i+1}^-)$ so that 
 $\{ \overline{v}_{i+1}^+, \overline{v}_{i+1}^-, \overline{c}_{i+1}, d_{i+1}\}$ is
 a basis of $H^1(\BQ_p, \overline{T}_{i+1})$. 
 Since $p_{i+1}\circ \iota_i$ is an isomorphism, there exist $a, b, c, d \in \BF$ such that 
 \[
 \overline{v}_{i+1}^{++}:=\overline{v}_{i+1}^++a(\overline{v}_{i+1}^-+\alpha \overline{c}_{i+1})+bd_{i+1}, 
 \quad  \overline{c}_{i+1}':=\overline{c}_{i+1}+c(\overline{v}_{i+1}^-+\alpha \overline{c}_{i+1})+dd_{i+1}
 \in \mathrm{Im} \,\iota_i.
 \]
 Since $H^1(\BQ_p,\overline{T}_{i+1}^-)$ is Lagrangian and $\overline{c}_{i+1}$ is not orthogonal to $d_{i+1}$ as before, $d$ must be zero for 
$\overline{c}_{i+1}'$ to be isotropic. Then $c$ is non-zero, for $\overline{g}_{i+1}(\overline{c}_{i+1})
=g_{i+1}({c}_{i+1})\not=0$ and so $\overline{c}_{i+1} \not\in \mathrm{Im} \,\iota_i$. If  $b=0$, then $a=0$ since 
$\overline{v}_{i+1}^{++}$ is isotropic and in turn $\overline{v}_{i+1}^{++}=\overline{v}_{i+1}^{+}$ is not orthogonal to  $\overline{c}_{i+1}'$
because $c\not=0$. This would contradict the fact that $\mathrm{Im} \,\iota_i$ is Lagrangian and hence  $b\not=0$. 
We conclude that $\mathrm{Im} \,\iota_i$ is generated by 
\[
 \overline{v}_{i+1}^{++}:=\overline{v}_{i+1}^++a(\overline{v}_{i+1}^-+\alpha \overline{c}_{i+1})+bd_{i+1}, 
 \quad  \overline{c}_{i+1}':=\overline{c}_{i+1}+c(\overline{v}_{i+1}^-+\alpha \overline{c}_{i+1})
 \]
 with $bc\not=0$. 
 
As before, choose $\{ w_1, w_2\}$ as a (lift of) basis of the free part of $H^1(\BQ_p,T_i)$, and 
write 
\[f_i(w_1)=l v^+_{i+1}+m(v^-_{i+1}+\alpha c_{i+1})+nc_{i+1}, \quad f_i(w_2)=l'v^+_{i+1}+m'(v_{i+1}^-+\alpha c_{i+1})+n'c_{i+1}\]
for $l, m, n, l', m', n'  \in \mathcal{O}$.  
Since $\overline{f_i(w_j)} \in  \mathrm{Im} \,\iota_i$ and it has no $d_{i+1}$-component,  $\overline{f_i(w_j)}$ 
is a scalar multiple of  $\overline{c}_{i+1}'$. Hence, $l\equiv l' \equiv 0 \mod \pi$. 
Since the cokernel of $f_i$ is a two-dimensional $\BF$-vector space, $m$ or $m'$ must be a unit.
Therefore, by changing the variable if necessary, we may assume that $m=1$ and $m'=0$.  Then $\mathrm{Coker} \, f_i$ is 
generated by the images of $v^+_{i+1}$ and $c_{i+1}$. Since it is a two-dimensional $\BF$-vector space, it follows that 
$l'/\pi$ is a unit, and so we may assume that $l'=\pi$ and $l=0$ by changing the basis. 
Thus, we have 
\[f_i(w_1)=(v^-_{i+1}+\alpha c_{i+1})+nc_{i+1}, \quad f_i(w_2)=\pi v^+_{i+1}+n'c_{i+1},\]
and $\pi| n'$ by the two-dimensionality of $\mathrm{Coker} \, f_i$. 
Since $\pi c_{i+1} \in \mathrm{Im}\, f_i$, there exist elements $v^+_i, v^-_i$ such that 
\[
f_i(v^-_i)-v^-_{i+1} \in C_{i+1}, \quad f_i(v^+_i)=\pi v^+_{i+1}.
\] 
In this case we have $\overline{v}^+_{i}, \overline{c}_i \in H^1(\BQ_p, \overline{T}_{i}^-)$ since they are killed by $\overline{f}_i$, and 
$\overline{H^1_-(\BQ_p,T_{i})}\cap H^1(\BQ_p,\overline{T}_{i}^-)$ is generated by $\overline{c}_i$ since $v^-_{i}$ is not killed by $\overline{f}_i$. 
Hence, $\dim_{\BF}\overline{H^1_+(\BQ_p,T_{i})}\cap H^1(\BQ_p,\overline{T}_{i}^-)=2$ and 
\[\dim_{\BF}\overline{H^1_-(\BQ_p,T_{i})}\cap H^1(\BQ_p,\overline{T}_{i}^-)=1.\] 
\vskip2mm
{\it Subcase 2}. 
 Finally, suppose that $$\dim_{\BF} \overline{H^1_+(\BQ_p,T_{i+1})}\cap H^1(\BQ_p,\overline{T}_{i+1}^-)=2.$$
 
 Then $\overline{v}^+_{i+1}, \overline{c}_{i+1} \in H^1(\BQ_p, \overline{T}_{i+1}^-)$, and 
 the space
$\overline{H^1_-(\BQ_p,T_{i+1})}\cap H^1(\BQ_p,\overline{T}_{i+1}^-)$, which is one dimensional by the induction hypothesis, 
 is generated by $\overline{c}_{i+1}$. 
 Since $H^1(\BQ_p,\overline{T}_{i+1})=\mathrm{Im} \,\iota_i+H^1(\BQ_p,\overline{T}_{i+1}^-)$, 
 there exist $a, b \in \BF$ such that 
 \[
 \overline{v}_{i+1}^{--}:=\overline{v}_{i+1}^-+a\overline{v}_{i+1}^++b \overline{c}_{i+1} \in \mathrm{Im} \,\iota_i. 
 \]
Then take $d_{i+1} \in \mathrm{Im} \,\iota_i$ so that 
 $\{\overline{v}_{i+1}^{--}, d_{i+1}\}$ generates $\mathrm{Im} \,\iota_i$. Since  $\overline{v}_{i+1}^{--}$ is isotropic, we have $a=0$. 
 
As before, choose $\{ w_1, w_2\}$ as a (lift of) basis of the free part of $H^1(\BQ_p,T_i)$, and 
write 
\[f_i(w_1)=l v^+_{i+1}+m({v}_{i+1}^{-}+bc_{i+1})+nc_{i+1}, \quad f_i(w_2)=l'v^+_{i+1}+m'(v_{i+1}^-+b c_{i+1})+n'c_{i+1}\]
for $l, m, n, l', m', n'  \in \mathcal{O}$. 
Since $\overline{f_i(w_j)} \in  \mathrm{Im} \,\iota_i$ and it has no $d_i$-component, 
it follows that $l \equiv l' \equiv n \equiv n' \equiv 0 \mod \pi$. 
Then, by changing the basis, we may assume that $m=1$ and $m'=0$, and in turn $l'=\pi$ and $l=n=n'=0$. 
(Note that $\pi c_{i+1} \in f(C_i)$.) Thus,  
\[f_i(w_1)={v}_{i+1}^{-}+bc_{i+1}, \quad f_i(w_2)=\pi v^+_{i+1}.\]
So we put $v_i^-:=w_1$ and $v_i^+:=w_2$. 
In this case note that $\overline{v}^+_{i}, \overline{c}_i \in H^1(\BQ_p,\overline{T}_{i}^-)$, and 
$\overline{H^1_-(\BQ_p,T_{i})}\cap H^1(\BQ_p,\overline{T}_{i}^-)$ is generated by $\overline{c}_i$. 
Hence, it follows that $\dim_{\BF} \overline{H^1_+(\BQ_p,T_{i})}\cap H^1(\BQ_p,\overline{T}_{i}^-)=2$ and 
\[\dim_{\BF} \overline{H^1_-(\BQ_p,T_{i})}\cap H^1(\BQ_p, \overline{T}_{i}^-)=1.\] 

As for the last assertion, note that $H^1_\pm(\BQ_p,V_i)=H^1_\pm(\BQ_p,V_m)$ and 
$H^1_\pm(\BQ_p,T_i)$ contains the torsion part of $H^1(\BQ_p,T_i)$. 
\end{proof}

\subsection{Relative $p$-parity conjecture}

\begin{thm}\label{thm, parity-family}
Let $F$ be a number field and $p$ an odd prime totally split in $F$. 
Let $T_1$ and $T_2$ be geometric symplectic self-dual $\CO_L$-representations of $G_{F}$ of rank two 
which are residually symplectically isomorphic, where $L$ is a finite extension of $\BQ_p$. 
 Put $V_i = T_i \otimes_{\BZ_p} \BQ_p$.
Then the relative $p$-parity Conjecture \ref{conj, relative p-parity} is true for $V_1$ and $V_2$, i.e. 
\[
(-1)^{\chi_{\rm{f}}(F, V_1)-\chi_{\rm{f}}(F, V_2)}=\varepsilon(V_1)/\varepsilon(V_2). 
\]
\end{thm}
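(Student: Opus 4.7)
The plan is to show that Theorem~\ref{thm, MR} fits exactly into the Mazur--Rubin framework once the archimedean factor is cleanly accounted for. By \eqref{equation, relative selmer} of Mazur--Rubin, we have
\[
(-1)^{\chi_{\rm f}(F,V_1)-\chi_{\rm f}(F,V_2)}=\prod_{v \text{ finite}}(-1)^{\delta_v(T_1,T_2)},
\]
so it suffices to establish the equality $\varepsilon(V_1)/\varepsilon(V_2)=\prod_{v \text{ finite}}(-1)^{\delta_v(T_1,T_2)}$. The strategy is to handle finite places away from $p$ by Nekov\'a\v{r}'s compatibility, the places above $p$ by Theorem~\ref{thm, MR}, and to absorb the resulting $\Gamma$-factors into the archimedean term of $\varepsilon(V_i)$.

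First, for each finite place $v\nmid p$, Nekov\'a\v{r}'s local result \eqref{equation, MR vs DL, intro} (see~\cite{NekMRl}) applied to the symplectically congruent pair $(T_1|_{G_{F_v}},T_2|_{G_{F_v}})$ yields
\[
\varepsilon_v(V_1)/\varepsilon_v(V_2)=(-1)^{\delta_v(T_1,T_2)}.
\]
For each $v \mid p$, the hypothesis that $p$ is totally split in $F$ gives $F_v=\BQ_p$, so $V_1|_{G_{F_v}}$ and $V_2|_{G_{F_v}}$ are symplectic self-dual de Rham $G_{\BQ_p}$-representations of rank two which are residually symplectically isomorphic. Theorem~\ref{thm, MR} therefore applies and gives
\[
\varepsilon_v(V_1)/\varepsilon_v(V_2)=\bigl(\Gamma(V_2|_{G_{F_v}})/\Gamma(V_1|_{G_{F_v}})\bigr)\cdot (-1)^{\delta_v(T_1,T_2)}.
\]

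The remaining, and really the only delicate, step is to check that the ratio of archimedean factors in the definition \eqref{equation, def arch epsilon} of the global $\varepsilon$-constant cancels the $\Gamma$-factor discrepancy arising from the places above $p$. Since $\dim V_1=\dim V_2=2$, the term $(-1)^{r_2(F)\dim V_i/2}$ cancels in the ratio, and for each $w'\mid p$ with Hodge--Tate weights $(k_{i,w'},1-k_{i,w'})$, $k_{i,w'}\geq 1$, one computes directly that $g_{w'}^{-}(V_i)=1-k_{i,w'}$, hence
\[
(-1)^{g_{w'}^{-}(V_i)}=(-1)^{k_{i,w'}-1}=\Gamma(V_i|_{G_{F_{w'}}}),
\]
using Corollary~\ref{cor, dR ssd two}. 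Multiplying the three contributions --- finite away from $p$, finite above $p$, and archimedean --- the $\Gamma$-factors cancel exactly, leaving
\[
\varepsilon(V_1)/\varepsilon(V_2)=\prod_{v \text{ finite}}(-1)^{\delta_v(T_1,T_2)},
\]
which combined with Mazur--Rubin gives the claimed equality. The main technical input is Theorem~\ref{thm, MR}, which supplies the novel local compatibility at $p$; everything else in the argument is the bookkeeping that matches the $\Gamma$-factors in $\hat{\varepsilon}_p$ against the archimedean sign contribution in the global $\varepsilon$-constant, and is where the hypothesis that $p$ splits completely in $F$ is used.
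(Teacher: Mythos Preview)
Your approach is essentially the same as the paper's: Mazur--Rubin's formula \eqref{equation, relative selmer} for the Selmer parity, Nekov\'a\v{r}'s compatibility \eqref{equation, MR vs DL, intro} at $v\nmid p$, Theorem~\ref{thm, MR} at $v\mid p$, and a check that the archimedean $\varepsilon$-ratio absorbs the leftover $\Gamma$-factors.

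There is, however, a computational slip in the archimedean step. With the paper's conventions, $h_m(V)=\dim_L\mathrm{gr}^mD_{\mathrm{dR}}(V)$ and Hodge--Tate weight $r$ corresponds to $\mathrm{gr}^{-r}$, so for Hodge--Tate weights $(k,1-k)$ with $k\geq 1$ one has $h_{-k}=h_{k-1}=1$ and hence $g_{w'}^-(V_i)=-k_{i,w'}$, not $1-k_{i,w'}$. (You may have implicitly taken $h_m$ to be the multiplicity of $m$ as a Hodge--Tate weight.) Consequently $(-1)^{g_{w'}^-(V_i)}=(-1)^{k_{i,w'}}=-\Gamma(V_i|_{G_{F_{w'}}})$, not $+\Gamma(V_i|_{G_{F_{w'}}})$. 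This sign is harmless here, because the argument only needs the ratio
\[
(-1)^{g_{w'}^-(V_1)-g_{w'}^-(V_2)}=\Gamma(V_1|_{G_{F_{w'}}})/\Gamma(V_2|_{G_{F_{w'}}}),
\]
where the extra $-1$'s cancel; so your proof still goes through once the intermediate formula is corrected. The paper handles this step without computing $g^-$ explicitly: it uses self-duality ($h_m=h_{-1-m}$) to show $g_v^-(V_i)+g_v^+(V_i)\equiv -1\pmod 2$ for each $i$ and each $v\in S_p$, and then invokes $\Gamma(V_i|_{G_{F_v}})=(-1)^{g_v^+(V_i)}$ from Lemma~\ref{prop:gamma}.
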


\begin{proof} 
Note that $F_v=\BQ_p$ for $v|p$ by our assumption.

In light of of (\ref{equation, relative selmer}), the compatibility of local constants (\ref{equation, MR vs DL, intro}) for $v \nmid p$ and the compatibility for $v|p$ as established in Theorem \ref{thm, MR}, 
it suffices to show that 
\[
\prod_{v \in S_\infty} \frac{\varepsilon_v(V_1)}{\varepsilon_{v}(V_2)} =
\prod_{v \in S_p} \frac{\Gamma(V_1|_{G_{F_v}})}{\Gamma(V_2|_{G_{F_v}})}. 
\]
By definition \eqref{equation, def arch epsilon} of the left hand side, 
this is equivalent to showing that 
\begin{equation}\label{eq, ssd comp}
(-1)^{\sum_{v \in S_p} g_v^-(V_1)-g_v^-(V_2)}=(-1)^{\sum_{v \in S_p} g_v^+(V_1)-g_v^+(V_2)}.
\end{equation}

The self-duality implies that 
$$
h_{m}(V_i|_{G_{F_v}})=h_{m}(V_i^*(1)|_{G_{F_v}})=h_{-1-m}(V_i^*(1))|_{G_{F_v}} 
$$
for $i\in\{1,2\}$. 
Therefore, letting $ h_m(V_i):=h_{m}(V_i|_{G_{F_v}})$, we have
\begin{align*}
g_v^-(V_i)+g_v^+(V_i) 
&=\sum_{m \in \BZ} mh_m(V_i)\\
&=\sum_{m<0} m\, h_m(V_i)+\sum_{m \geq 0} m\, h_{-1-m}(V_i)\\
&\equiv \sum_{m<0} m\, h_m(V_i)-\sum_{m \geq 0} (-1-m)\, h_{-1-m}(V_i)-\sum_{m \geq 0} \, h_{-1-m}(V_i)\\
&= - \dim_L D_v(V_i)/\mathrm{Fil^0}(D_v(V_i))\\
&=-\dim_L V_i\,/2\\
&=-1. 
\end{align*}
The assertion \eqref{eq, ssd comp} follows from this. 
\end{proof}

\subsection{$p$-parity conjecture for Hilbert modular forms}
\subsubsection{Setup}
Let $F$ be a totally real field.

Denote by $I$ the set of embedding $F \hookrightarrow \overline{\BQ}$
and put $t=\sum_{\sigma \in I}\sigma  \in \BZ[I].$ 
 Let $k=\sum_{\sigma \in I}k_{\sigma}\sigma \in 2\BZ_{>0}[I]$ and 
put 
\[k_{0}:=\max\{k_{\sigma} | \sigma \in I\}, \qquad v:=\frac{1}{2}\cdot(k_0t-k) \in \BZ[I].
\] 
 For an integral ideal $\fn$ of $F$,
put 
\[
U_0(\fn)=\left\{\begin{pmatrix}a&b\\
c&d 
\end{pmatrix} \in \GL_2(\widehat{\CO}_F) \;\middle | \; c \in \fn \widehat{\CO}_F\right\},
\]
where  $\widehat{\CO}_F:=\CO_F\otimes_{\BZ}\widehat{\BZ}$
for $\widehat{\BZ}=\varprojlim_n \BZ/n\BZ$. 
Let $S_{k}(U_{0}(\fn))$ denote the space $S_{k,v+k-t, I}(U_0(\fn); \mathrm{M}_2(F);\BC )$ of cuspidal Hilbert modular forms  of weight $k$ and level $U_0(\fn)$ as in \cite[\S 2]{H} .

Let $f \in S_{k}(U_{0}(\fn))$ be a normalized newform. It is an eigenform for all Hecke operators $T(\fa)$ for ideals $\fa \subseteq \CO_F$ (cf.\ \cite[\S 2]{H}). Fix an embedding $\iota_p: \ov{\BQ}\hookrightarrow \ov{\BQ}_p$.
Let $$\rho_f:G_F\to \Aut_{L}(V_f)$$ be an associated two dimensional irreducible Galois representation over a $p$-adic local field $L$.  
For any prime $\mathfrak{l}\nmid p\fn$ of $F$, the Galois representation $V_f$ is unramified at $\mathfrak{l}$ and 
\[
\mathrm{Tr}(\rho_f(\Frob_\mathfrak{l}))=\lambda_f(\mathfrak{l}),\quad \det(\rho_f(\Frob_\mathfrak{l}))=N_{F/\BQ}(\mathfrak{l})^{k_0-1} 
\]
 (cf.\ \cite[Thm.\ 2.43]{H06}). 
Here $\Frob_\mathfrak{l}$ is the geometric Frobenius at $\fl$, $N_{F/\BQ}$ denotes the norm map, 
$L$ is a sufficiently large finite extension of $\BQ_p$ containing the valuation ring $\widehat{\CO}(v)\subseteq \overline{\BQ}_p$ as in \cite[p.\ 312]{H} and 
the Hecke eigenvalues $\lambda_f(\fa)$. 
Note that $\det_L(V_f) \cong L(1-k_0)$ and so $V_f(k_{0}/2)$ is symplectic self-dual.  

Let $L(V_{f}(k_{0}/2),s)$ denote the associated complex $L$-function, normalized to have center at $s=0$.
\subsubsection{Results}
\begin{prop}\label{prop, reduced-parity}
Let $F$ be a totally real field and $p$ an odd prime totally split in $F$. 
Let $f\in S_{k}(U_{0}(\fn))$ be a Hilbert modular newform over $F$ of weight $k\in 2\BZ_{>0}[I]$ and $V_f$ an associated $p$-adic $L$-representation of $G_F$ as above. 
 Let $\BF$ denote the residue field of $\CO_L$. 
Suppose that 
 there exists $g\in S_{k^{\prime}}(U_0(\fn^{\prime}))$
 for some $k^{\prime}\in \BZ_{>0}[I]$ and an ideal  $\fn^{\prime} \subseteq \CO_F$ 
 satisfying the following. 
\begin{itemize}
\item[a)]
There exists an isomorphism of $\BF[G_F]$-modules 
$$\overline{T}_f(k_0/2)\cong \overline{T}_g(k^{\prime}_0/2)$$ 
for an $\CO_L$-lattice $T_g$ of $V_g$,
where $L$ is possibly enlarged so that $V_g$ is also defined over $L$. 
\item[b)]
The $p$-parity Conjecture \ref{conj, p-parity} is true for the symplectic self-dual $G_F$-representation $V_{g}(k^{\prime}_{0}/2)$. 
\end{itemize}
Then the $p$-parity Conjecture \ref{conj, p-parity} is true for 
$V_{f}(k_{0}/2)$, i.e. 
\[
\ord_{s=0}L(V_f(k_0/2),s)\equiv \dim_{L}H^1_{\mathrm{f}}(F, V_f(k_0/2)) \mod 2.
\]
\end{prop}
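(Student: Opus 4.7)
My plan is to derive this from the relative $p$-parity Theorem~\ref{thm, parity-family} by comparing $V_f(k_0/2)$ with $V_g(k^{\prime}_0/2)$. First, I would check that the pair $(V_f(k_0/2), V_g(k^{\prime}_0/2))$ satisfies the hypotheses of Theorem~\ref{thm, parity-family}: both are two-dimensional geometric symplectic self-dual $L$-representations of $G_F$ (the self-duality coming from $\det V_f(k_0/2)=\chi_{\mathrm{cyc}}$), the prime $p$ is assumed totally split in $F$, and hypothesis (a) supplies a residual symplectic isomorphism $\overline{T}_f(k_0/2) \cong \overline{T}_g(k^{\prime}_0/2)$ (any $\BF[G_F]$-isomorphism between two-dimensional representations with the same cyclotomic determinant is automatically symplectic up to scalar and may be rescaled).

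Applying Theorem~\ref{thm, parity-family} then yields
\[
(-1)^{\chi_{\rm f}(F, V_f(k_0/2)) - \chi_{\rm f}(F, V_g(k^{\prime}_0/2))} = \varepsilon(V_f(k_0/2))/\varepsilon(V_g(k^{\prime}_0/2)).
\]
Combining this with hypothesis (b), namely $\varepsilon(V_g(k^{\prime}_0/2))=(-1)^{\chi_{\rm f}(F, V_g(k^{\prime}_0/2))}$, gives
\[
\varepsilon(V_f(k_0/2)) = (-1)^{\chi_{\rm f}(F, V_f(k_0/2))}.
\]
Since $\rho_f$ is irreducible, $H^0(F, V_f(k_0/2))=0$ and so $\chi_{\rm f}(F, V_f(k_0/2)) = \dim_{L} H^1_{\rm f}(F, V_f(k_0/2))$.

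To pass from $\varepsilon$-constants to the order of vanishing, I would invoke the functional equation of the completed $L$-function: for the Hilbert cusp form $f$, the completed $L$-function $\Lambda(V_f(k_0/2),s)$ satisfies $\Lambda(V_f(k_0/2),s) = \varepsilon(V_f(k_0/2))\,\Lambda(V_f(k_0/2),-s)$, which forces
\[
\varepsilon(V_f(k_0/2)) = (-1)^{\ord_{s=0} L(V_f(k_0/2),s)}.
\]
Chaining these identities delivers $\ord_{s=0} L(V_f(k_0/2),s) \equiv \dim_{L} H^1_{\rm f}(F, V_f(k_0/2)) \mod 2$, as required.

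I do not anticipate any serious obstacle: all the substantive local analysis (in particular, the compatibility between Mazur--Rubin arithmetic local constants and $\hat{\varepsilon}$-constants at primes above $p$, and the treatment of places $v\nmid p$) has been absorbed into Theorem~\ref{thm, parity-family}. The only points to confirm are the automatic symplectic nature of the residual isomorphism for rank two representations with equal cyclotomic determinant, the reduction $\chi_{\rm f} = \dim H^1_{\rm f}$ via irreducibility of $\rho_f$, and the standard functional equation for $L(V_f(k_0/2),s)$.
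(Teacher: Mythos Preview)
Your approach is correct and is exactly the route the paper takes: the paper's proof is a single line, ``This is a direct consequence of Theorem~\ref{thm, parity-family},'' and you have simply unpacked that consequence in detail, including the passage from $\varepsilon(V_f(k_0/2))$ to $\ord_{s=0}L(V_f(k_0/2),s)$ via the functional equation, which the paper treats as implicit in the ``i.e.''
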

\begin{proof}
This is a direct consequence of Theorem \ref{thm, parity-family}.
\end{proof}
\begin{thm}\label{thm, parity}
Let $f\in S_{k}(U_{0}(\fn))$ be a Hilbert modular newform over a totally real field $F$ as in Proposition \ref{prop, reduced-parity}. 
Suppose that 
the residual representation $\ov{T}_f$ is irreducible.
Then the $p$-parity Conjecture \ref{conj, p-parity} is true for 
$V_{f}(k_{0}/2)$, i.e. 
\[
\ord_{s=0}L(V_f(k_0/2),s)\equiv \dim_{L}H^1_{\mathrm{f}}(F, V_f(k_0/2)) \mod 2.
\]
\end{thm}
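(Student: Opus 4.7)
In view of Proposition~\ref{prop, reduced-parity}, it suffices to produce a Hilbert modular newform $g$ over $F$ of some weight $k'\in 2\BZ_{>0}[I]$ and trivial central character, together with an $\CO_L$-lattice $T_g\subset V_g$ (after possibly enlarging $L$), such that $\overline{T}_g(k_0'/2)\simeq \overline{T}_f(k_0/2)$ as $\BF[G_F]$-modules and such that the $p$-parity conjecture is already known for $V_g(k_0'/2)$. The theorem will then follow directly from that proposition.

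My strategy will be to arrange $g$ so that $V_g$ is crystalline at each prime $v\mid p$ of $F$, making $g$ finite slope and non-supercuspidal above $p$. The $p$-parity conjecture for such $V_g(k_0'/2)$ is then accessible by the Johansson--Newton theorem \cite{JN} (using Nekov\'a\v{r}'s parallel weight two case \cites{NekMRsome, NekSC} as a seed in the appropriate component of the Hilbert eigenvariety). To produce such a $g$, set $\overline{\rho}:=\overline{T}_f(k_0/2)\otimes_{\CO_L}\BF$; by hypothesis it is absolutely irreducible, totally odd, has determinant the mod-$p$ cyclotomic character, and is modular (being $\overline{\rho}_f$ up to the Tate twist by $k_0/2$). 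Since $p$ is odd and totally split in $F$, the Serre weight conjecture for $\mathrm{GL}_2$ over totally real fields (Kisin, Gee, Gee--Liu--Savitt, Gee--Newton) supplies an allowed Serre weight at each $v\mid p$, and a Fujiwara--Kisin type modularity lifting theorem realises any such tuple by a crystalline newform $g$ with the prescribed residual representation and some auxiliary level~$\fn'$.

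The main anticipated obstacle is verifying the technical hypotheses of the Serre-weight, modularity-lifting, and Johansson--Newton inputs: in particular, an adequacy or ``big image'' condition on $\overline{\rho}|_{G_{F(\zeta_p)}}$, avoidance of pathologies at small $p$, and the $[F:\BQ]$-odd restriction in \cite{JN}. When these fail, the standard remedy is a solvable totally real base change $F'/F$ over which the hypotheses do hold: one proves the $p$-parity conjecture for $V_f(k_0/2)|_{G_{F'}}$ by the mechanism above and transfers it back to $F$ via Nekov\'a\v{r}'s solvable base change compatibility \cite{NekSC}. The essential new ingredient, which in particular permits the passage from a supercuspidal newform $f$ at primes above $p$ to a crystalline, non-supercuspidal congruent newform $g$ (excluded by the eigenvariety methods of Johansson--Newton), is the local compatibility of Mazur--Rubin and completed Deligne--Langlands local constants proved in Theorem~\ref{thm, MR} and packaged in Proposition~\ref{prop, reduced-parity}.
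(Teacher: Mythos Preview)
Your route is genuinely different from the paper's, and considerably heavier. The paper bypasses Serre weights, modularity lifting, and eigenvarieties entirely: it invokes Hida's classical congruence theorem \cite[Thm.~3.2]{H}, \cite[Thm.~2.3]{H89} to produce a parallel weight two eigenform $h\in S_{2t,t}(U_0(p^\alpha\fn))$ with $\lambda_f(\fl)\equiv\lambda_h(\fl)\bmod\fm$ for $\fl\nmid p\fn$, sets $g=h\otimes\omega^{k_0/2-1}$ (a Teichm\"uller twist to trivialise the nebentype), checks that $\overline{T}_f(k_0/2)\simeq\overline{T}_g(1)$ via absolute irreducibility of $\overline{T}_f$, and then appeals directly to Nekov\'a\v{r}'s parallel-weight-two theorem \cite[Thm.~C]{NekMRtame}. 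That theorem covers all parallel weight two Hilbert newforms with no finite-slope, crystallinity, or $[F:\BQ]$-parity hypothesis, so the reduction is clean.

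Your proposal, by contrast, has gaps you acknowledge but do not close. The modularity-lifting input you need (to realise a crystalline $g$ with prescribed $\overline{\rho}$) carries Taylor--Wiles hypotheses, typically that $\overline{\rho}|_{G_{F(\zeta_p)}}$ be absolutely irreducible and often $p\ge 5$; the bad-dihedral case and $p=3$ are not repaired by a solvable base change, since the obstruction is intrinsic to $\overline{\rho}$. The Johansson--Newton step inherits its own restriction when $[F:\BQ]$ is odd, and your descent back to $F$ via \cite{NekSC} is not a black box: one must control both sides of the parity relation under induction/restriction and match the auxiliary form $g$ over $F'$ to one over $F$. Each of these might be handled with additional work, but the paper's point is precisely that none of it is necessary: Hida's 1988 congruence already lands you in parallel weight two, where Nekov\'a\v{r} applies unconditionally.
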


\begin{proof}
It suffices to  show the existence of a Hilbert modular newform $g$ as in Proposition \ref{prop, reduced-parity}.

By Hida, $f$ is congruent to a parallel weight two normalized 
Hecke eigenform $h \in S_{2t,t}(U_{0}(p^{\alpha}\fn))$  for some $\alpha \ge 0$ (see \cite[Thm.\ 3.2]{H} and \cite[Thm.\ 2.3]{H89}), i.e.
$\lambda_h(\mathfrak{l}) \in \CO_L$ and 
\begin{equation}\label{eq, cong}
\lambda_f(\mathfrak{l}) \equiv \lambda_h(\mathfrak{l}) \bmod  \fm, \qquad (\mathfrak{l} \nmid p\fn)
\end{equation}
(see also~\cites{Sh,Oh}). 
Here $S_{2t,t}(U_{0}(p^{\alpha}\fn))$ denotes the space $S_{2t,t,I}(S(p^{\alpha});\BC)$ in \cite[p. 143]{H89} with $S=U_0(\fn)$. 
Note that $h$ has Neben character $\epsilon^{2-k_0}$
where $\epsilon:(\CO_F\otimes \BZ_p)^{\times} \to \BZ_p^{\times} $ denotes  composition of the norm map $(\CO_F\otimes \BZ_p)^{\times}\to \BZ_p^{\times}$ and the Teichm\"uller character (cf.~\cite[p.\ 143]{H89}).

Put $$g=h\otimes \omega^{\frac{k_0}{2}-1} \in S_{2t,t}(U_{0}(p^{\alpha}\fn)).$$
 Since $\overline{T}_f$ is irreducible and totally odd,
 it is absolutely irreducible. 
Then in view of \eqref{eq, cong}, it follows that 
 $$\overline{T}_f(k_0/2) \simeq \overline{T}_g(1)$$ 
 as $\BF[G_F]$-modules. 
As for the property b), the $p$-parity conjecture for the weight two Hilbert modular form $h$ is due to  
Nekov\'a\v{r} \cite[Thm.\ C]{NekMRtame}, concluding the proof. 
\end{proof}

\section{Rubin-type conjectures}\label{s:Rubin}

 In this section we formulate and prove an analogoue of Rubin's conjecture over ramified quadratic extensions of $\BQ_p$, and a refinement of the original conjecture over the unramified quadratic extension of $\BQ_p$ 
(see~ Theorems~\ref{thm, lsd ind}, \ref{thm, rubin decomposition} and \ref{thm, rubin decomposition ramified}). 
The approach is based on the  local sign decomposition for a  
symplectic self-dual family of Galois representations of $G_{\BQ_p}$ 
 induced from conjugate symplectic self-dual characters over a quadratic extension of $\BQ_p$.

Throughout, we fix an odd prime $p$. 

\subsection{Epsilon constants}
The aim of this subsection is to calculate epsilon constants of Weil--Deligne representations of $W_{\BQ_p}$ of rank two induced by characters\footnote{For primes $p>2$, any irreducible Weil--Deligne representation of rank two is induced by a character.}. Their variation in $p$-adic anticyclotomic families leads to a backdrop for Rubin-type conjectures.

Throughout this subsection, a representation means a Weil--Deligne representation, and in particular, with complex coefficients.

\subsubsection{Set-up}
Let $F$ be a finite field extension of $\BQ_p$ and 
$K/F$ a quadratic field extension. 
Denote  the associated quadratic character of $F^\times$ 
by $\omega_{K/F}$.

As before, let $\omega_1$ be the unramified character of the Weil group $W_K$  
mapping geometric Frobenius to $q^{-1}$ for $q:=|\CO_K/\fm_{\CO_K}|$.
We identify that  $K^\times \cong W_{K}^{\rm ab}$
by Artin reciprocity sending a uniformizer to a geometric Frobenius. 
\begin{defn}
A continuous character 
 $\psi: K^\times \rightarrow \BC^\times$ is 
 conjugate symplectic self-dual with respect to $F$ if 
\begin{equation}\label{equation,csd}
\psi |_{F^{\times}}=\omega_{K/F}\cdot \omega_1.
\end{equation}
\end{defn}
In the case $F=\BQ_p$ we omit the expression `with respect to $F$'.

\begin{lem}\label{prop, conj self-dual}
 Let $\psi: K^\times \rightarrow \BC^\times$ be a continuous character. 
 Then $\mathrm{Ind}_{K/F} \psi$ is symplectic self-dual of weight $-1$ 
 if and only if $\psi$ is conjugate symplectic self-dual with respect to $F$. 
\end{lem}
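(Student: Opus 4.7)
The plan is to reduce the equivalence to a single determinantal condition on $\mathrm{Ind}_{K/F}\psi$, which translates directly into the stated self-duality of $\psi$. Throughout, observe that induction of a character preserves triviality of the monodromy operator, so both sides live in the world of Weil--Deligne representations with $N=0$ and the compatibility $B(Nm,n)+B(m,Nn)=0$ is automatic.

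First I would establish: for any two-dimensional Weil--Deligne representation $V$ of $W_F$ with $N=0$, $V$ is essentially symplectic self-dual of weight $-1$ if and only if $\det V = \omega_1$. One direction uses the canonical wedge pairing
\[
V \otimes V \longrightarrow \wedge^2 V = \det V,
\]
which is non-degenerate, skew-symmetric, and $W_F$-equivariant with values in the character $\det V$; when $\det V = \omega_1$, it is precisely the form making $V\otimes\omega_1^{-1/2}$ symplectic self-dual. The converse is immediate from $\det(V\otimes \omega_1^{-1/2}) = \det V \cdot \omega_1^{-1}$ being trivial for any self-dual rank-two $V\otimes\omega_1^{-1/2}$.

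Next, I would apply the standard determinant formula for quadratic inductions, namely
\[
\det\bigl(\mathrm{Ind}_{K/F}\psi\bigr) \;=\; \omega_{K/F}\cdot(\psi|_{F^\times}),
\]
which follows from the decomposition $\mathrm{Ind}_{K/F}\mathbf{1} = \mathbf{1}\oplus\omega_{K/F}$ (giving $\det\mathrm{Ind}_{K/F}\mathbf{1} = \omega_{K/F}$) together with the local class field theoretic compatibility of the transfer $W_F^{\rm ab}\to W_K^{\rm ab}$ with the natural inclusion $F^\times \hookrightarrow K^\times$. Combining with the previous step and using $\omega_{K/F}^2 = 1$, the condition $\det(\mathrm{Ind}_{K/F}\psi) = \omega_1$ rearranges to $\psi|_{F^\times} = \omega_{K/F}\cdot \omega_1$, which is precisely \eqref{equation,csd}.

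The only point requiring care is interpreting $\omega_1$ consistently as the unramified character of the ambient Weil group: on the right-hand side of \eqref{equation,csd}, $\omega_1$ is to be read as the unramified character of $W_F$ sending geometric Frobenius to $q_F^{-1}$, which matches the twist defining ``weight $-1$'' for representations of $W_F$ (rather than $\omega_1^K|_{F^\times}$, which differs by a factor of $[K:F]$ in the valuation). No substantive obstacle is anticipated; the argument is essentially a bookkeeping of determinants and the identification of the induced representation's determinant via the formula above.
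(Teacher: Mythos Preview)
Your proposal is correct and follows essentially the same approach as the paper: both reduce the question to the determinant condition $\det(\mathrm{Ind}_{K/F}\psi)=\omega_1$ via the formula $\det(\mathrm{Ind}_{K/F}\psi)=\omega_{K/F}\cdot\psi|_{F^\times}$. The paper's proof is a one-line citation of this determinant formula (together with a reference to \cite{GGP}), while you have simply spelled out the surrounding details---the wedge-pairing justification in rank two and the care with which $\omega_1$ is read over $F$---that the paper leaves implicit.
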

\begin{proof}
This is a special case of \cite[Lem.~3.5 (i)]{GGP}. 
In our case, it 
simply follows from the determinant formula for induced representations: 
 $$\det (\mathrm{Ind}_{K/F} \psi)=\omega_{K/F}\cdot\psi |_{F^\times}.$$
 \end{proof}
 
For a character $\psi$ of $K^\times$, let $a(\psi)$ denote the exponent of its conductor, namely
$a(\psi)=0$ if $\psi$ is unramified, and otherwise, it is the smallest positive
integer $m$ such that $\psi$ is trivial on units $\equiv 1 \mod{\varpi^{m}}$ 
for $\varpi$ a uniformizer. We often refer to $a(\psi)$ as the conductor of $\psi$.

In the rest of the subsection we suppose that 
$F=\BQ_p$ 
and consider self-dual epsilon constants as in \S\ref{ss:Epsilon}.

\subsubsection{The unramified case.}\label{subsubsection, epsilon, unramified}
The main result of this subsection is Proposition~\ref{prop, root number, inert}. 

Let $K$ denote the unramified quadratic extension of $\BQ_p$. Let $\delta \in K$ be a $p$-adic unit such that $K=\BQ_p(\delta)$ and $\delta^2 \in \BQ_p$. 

\begin{lem}\label{lem, wdc unram}\noindent 
\begin{itemize}
\item[i)]There is a unique unramified conjugate symplectic self-dual character $\phi_K$ of $K^\times$. 
\item[ii)] Any conjugate symplectic self-dual character of $K^\times$ is of the form $\phi_K \chi$, where 
$\chi$  is a character of $K^\times$ such that $\chi|_{\BQ_p^\times}=1$. 
\end{itemize}
\end{lem}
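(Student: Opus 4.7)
The plan is to unpack the defining condition \eqref{equation,csd} explicitly in the unramified setting $K/\BQ_p$ and reduce both parts to an evaluation at a uniformizer. Since $K/\BQ_p$ is unramified, the rational prime $p$ serves as a uniformizer of both $\BQ_p$ and $K$, and the residue field of $K$ has $q=p^2$ elements. Under the reciprocity isomorphism $K^\times \cong W_K^{\mathrm{ab}}$ (normalized so that a uniformizer maps to a geometric Frobenius), the unramified character $\omega_1$ corresponds to the character of $K^\times$ sending $p \mapsto q^{-1} = p^{-2}$ and trivial on $\CO_K^\times$. Restricting via the inclusion $\BQ_p^\times \hookrightarrow K^\times$ yields the character $p \mapsto p^{-2}$, trivial on $\BZ_p^\times$. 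The quadratic character $\omega_{K/\BQ_p}$ sends $p \mapsto -1$ and is trivial on $\BZ_p^\times$. Hence condition \eqref{equation,csd} translates to
\[
\psi(p) = -p^{-2}, \qquad \psi|_{\BZ_p^\times} = 1.
\]

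For part i), I would use the fact that an unramified character of $K^\times$ is uniquely determined by its value at the uniformizer $p$. The requirement $\psi(p) = -p^{-2}$ together with triviality on $\CO_K^\times$ (which is consistent with the above, since $\BZ_p^\times \subset \CO_K^\times$) singles out a unique character $\phi_K$, completing the existence and uniqueness claim.

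For part ii), I would first check directly from the displayed condition that $\phi_K$ is itself conjugate symplectic self-dual. Given then an arbitrary conjugate symplectic self-dual character $\psi$ of $K^\times$, the character $\chi := \psi \cdot \phi_K^{-1}$ satisfies
\[
\chi|_{\BQ_p^\times} = (\omega_{K/\BQ_p}\omega_1)\cdot(\omega_{K/\BQ_p}\omega_1)^{-1} = \mathbf{1},
\]
while conversely any $\chi$ trivial on $\BQ_p^\times$ produces a conjugate symplectic self-dual character $\phi_K\chi$. This exhibits the set of conjugate symplectic self-dual characters of $K^\times$ as a $\phi_K$-torsor over $\mathrm{Hom}(K^\times/\BQ_p^\times, \BC^\times)$, proving the assertion.

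The argument is essentially a direct computation, and I do not anticipate any serious obstacle; the only subtlety is keeping careful track of the class field theory conventions (uniformizer $\leftrightarrow$ geometric Frobenius, and $\omega_1$ evaluated at $q^{-1}$), so as to correctly identify the restriction of $\omega_1$ to $\BQ_p^\times$.
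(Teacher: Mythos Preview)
Your approach is exactly the paper's: unpack \eqref{equation,csd} and evaluate at the uniformizer $p$. There is, however, a discrepancy in the value you compute. The paper obtains $\psi(p)=-p^{-1}$, not $-p^{-2}$. The point is that in \eqref{equation,csd} both sides are characters of $F^\times=\BQ_p^\times$, so the $\omega_1$ appearing there must be the unramified character of $W_{\BQ_p}$ (sending $p\mapsto p^{-1}$), not the one for $W_K$. This is forced by Lemma~\ref{prop, conj self-dual}: for $\mathrm{Ind}_{K/\BQ_p}\psi$ to be essentially symplectic self-dual of weight $-1$ one needs $\det(\mathrm{Ind}_{K/\BQ_p}\psi)=\omega_1$ as a character of $W_{\BQ_p}$, and combining with $\det(\mathrm{Ind}_{K/\BQ_p}\psi)=\omega_{K/\BQ_p}\cdot\psi|_{\BQ_p^\times}$ gives $\psi(p)=-p^{-1}$. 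Admittedly the paper's own setup sentence (``let $\omega_1$ be the unramified character of $W_K$\ldots'') invites your reading, but the proof and the surrounding lemmas make the intended convention clear. The logical structure of your argument is unaffected by this: uniqueness in i) and the torsor description in ii) go through verbatim once the correct value is inserted.
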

\begin{proof} 
Let $\psi$ be a conjugate symplectic self-dual character of $K^\times$. 
By (\ref{equation,csd}), we have $\psi(p)=-p^{-1}$ and so part i) follows. 
In turn, it implies part ii).
\end{proof}

\begin{prop}\label{prop, root number, inert}
 Let $\phi_K$ and $\chi$ be characters of $K^\times$ as in Lemma~\ref{lem, wdc unram}. 
 Then we have 
\[
\varepsilon(\mathrm{Ind}_{K/\BQ_p} (\phi_K \chi))=\chi(\delta)(-1)^{a(\chi)}.
\] 
In particular, if the order of $\chi$ is odd, then $\varepsilon(\mathrm{Ind}_{K/\BQ_p} (\phi_K \chi))=(-1)^{a(\chi)}$. 
\end{prop}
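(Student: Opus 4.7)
The plan is to combine inductivity of $\varepsilon$-factors with an explicit Gauss sum computation.  First I would invoke Langlands--Deligne inductivity: fixing a nontrivial additive character $\psi_0$ of $\BQ_p$ of conductor zero and the self-dual Haar measure $dx$ on $\BQ_p$ (i.e.\ $\vol(\BZ_p)=1$), one has
\[
\varepsilon(\mathrm{Ind}_{K/\BQ_p}(\phi_K\chi),\psi_0,dx)=\lambda_{K/\BQ_p}(\psi_0)\cdot\varepsilon(\phi_K\chi,\psi_0\circ\mathrm{Tr}_{K/\BQ_p},dy),
\]
where $dy$ is the self-dual Haar measure on $K$ with respect to $\psi_0\circ\mathrm{Tr}$; since $K/\BQ_p$ is unramified and $\psi_0\circ\mathrm{Tr}$ again has conductor zero, $dy$ assigns volume one to $\CO_K$.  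Because $\mathrm{Ind}_{K/\BQ_p}(\phi_K\chi)$ is symplectic self-dual of weight $-1$ by Lemma~\ref{prop, conj self-dual}, the left-hand side lies in $\{\pm 1\}$ and is independent of $\psi_0$, justifying this choice.

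Next I would invoke the classical fact that for an unramified quadratic extension and $\psi_0$ of conductor zero, $\lambda_{K/\BQ_p}(\psi_0)=1$ (a standard computation, e.g.\ via Deligne's \emph{Les constantes locales} or Bushnell--Henniart).  This reduces the problem to computing the character $\varepsilon$-factor $\varepsilon(\phi_K\chi,\psi_0\circ\mathrm{Tr},dy)$.  Since $\phi_K$ is unramified, $a(\phi_K\chi)=a(\chi)=:n$, and Tate's explicit formula, applied with $p$ as a uniformizer of $K$, yields
\[
\varepsilon(\phi_K\chi,\psi_0\circ\mathrm{Tr},dy) = (\phi_K\chi)(p^{-n})\cdot \int_{\CO_K^\times}(\phi_K\chi)^{-1}(u)\,\psi_0(\mathrm{Tr}(u p^{-n}))\,du.
\]
From Lemma~\ref{lem, wdc unram} we have $\phi_K(p)=-p^{-1}$, so $\phi_K(p^{-n})=(-1)^n p^n$, which together with the $p^{-n}$ coming from the volume in Tate's formula produces the factor $(-1)^{n}=(-1)^{a(\chi)}$.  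Since $\chi(p)=1$, the pre-integral factor contributes exactly $(-1)^{a(\chi)}$.

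The remaining step is to show that the integral equals $\chi(\delta)$.  Here I would use that $\chi$ is trivial on $\BZ_p^\times$ to decompose $\CO_K^\times = \BZ_p^\times\cdot(1+\delta\,\CO_K^{})$ (or, more efficiently, perform the substitution $u\mapsto \delta u$ to align $u p^{-n}$ with $\delta u p^{-n}$, for which $\mathrm{Tr}(\delta u p^{-n})$ ranges symmetrically through $p^{-n}\BZ_p$ modulo $\BZ_p$); this substitution introduces the factor $\chi^{-1}(\delta)^{-1}=\chi(\delta)$, after which the resulting Gauss sum becomes the standard self-dual Gauss sum whose value is $1$.  The factor $\chi(\delta)$ emerges from the nontriviality of $\chi$ on $\CO_K^\times/\BZ_p^\times$, consistent with the fact that $\delta$ is (up to $\BZ_p^\times$) a generator of this quotient on the unit side.

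The main obstacle will be the careful bookkeeping of Haar measure normalizations, and in particular, verifying that the substitution realigning $\psi_0\circ\mathrm{Tr}$ with a standard additive character on $K$ produces exactly the factor $\chi(\delta)$ without spurious sign discrepancies.  Since $\delta^2\in\BQ_p^\times$ and the minimal polynomial of $\delta$ has derivative $2\delta$, the element $\delta$ encodes the different of $K/\BQ_p$ (trivial in this unramified setting) up to a unit; tracing this geometric content through Tate's Gauss sum is the crux of the argument and pins down the sign $\chi(\delta)$.
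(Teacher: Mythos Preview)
Your overall architecture matches the paper's: reduce via inductivity to the abelian factor $\varepsilon(\phi_K\chi,\xi_K,dx)$ on $K$ (the paper phrases this as a virtual-representation ratio rather than via $\lambda_{K/\BQ_p}$, but since $\lambda_{K/\BQ_p}=1$ for an unramified extension this is the same), strip off the unramified twist $\phi_K$ to produce $(-1)^{a(\chi)}$, and then identify the remaining phase with $\chi(\delta)$.  The paper does the second step with Tate's unramified-twist formula \cite[(3.4.6)]{T}, and for the third step it simply invokes Fr\"ohlich--Queyrut \cite[Thm.~3]{FQ}, which gives $\varepsilon(\chi,\xi_K,dx)\equiv\chi(\delta)\bmod^{\times}\BR_{>0}$ whenever $\chi|_{\BQ_p^\times}=1$.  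Since the induced $\varepsilon$-factor is already known to be $\pm 1$, working modulo positive reals suffices and your worry about Haar-measure bookkeeping evaporates.

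The genuine gap is in your substitute for Fr\"ohlich--Queyrut.  The decomposition $\CO_K^\times=\BZ_p^\times\cdot(1+\delta\CO_K)$ is meaningless as written: $\delta$ is a unit in the unramified case, so $1+\delta\CO_K=1+\CO_K$ is not a subgroup, and even $\BZ_p^\times\cdot(1+p\CO_K)$ misses the $(p^2-1)$-th roots of unity.  The substitution $u\mapsto\delta u$ does pull out $\chi(\delta)$, but the resulting integral involves $\psi_0(\mathrm{Tr}(\delta u)p^{-n})=\psi_0(\delta(u-\bar u)p^{-n})$, which depends only on the imaginary part of $u$, while $\chi^{-1}(u)$ does not factor through that projection; there is no ``standard self-dual Gauss sum equal to $1$'' waiting at the end.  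The actual proof of Fr\"ohlich--Queyrut uses a conjugation symmetry $u\mapsto\bar u$ to show the Gauss sum is real and then a separate positivity argument---nontrivial work that your sketch does not capture.  The clean fix is to cite \cite{FQ} directly, exactly as the paper does.
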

\begin{proof}
 In the following the notation is as in \S\ref{ss: self-dual epsilon}. 
 
 Let $\xi$ be a non-trivial additive character of $\BQ_p$ with $n(\xi)=0$ and put 
 $\xi_{K}:=\xi \circ {\rm Tr}_{K/\BQ_p}$. Let $dx$ be a Haar measure on $K$ 
 which is self-dual with respect to $\xi_{K}$. 
 Then, $n(\xi_K)=0$ and $\int_{\mathcal{O}_K}dx=1$. 
Since $K/\BQ_p$ and $\phi_K$ are both unramified, we have 
$\varepsilon(\phi_K, \xi_K, dx)=\varepsilon(\mathrm{Ind}_{K/\BQ_p} \phi_K)=1$. Hence, we have
\begin{equation}\label{equation, virtual rep}
\varepsilon(\mathrm{Ind}_{K/\BQ_p} (\phi_K \chi))=
\frac{\varepsilon(\mathrm{Ind}_{K/\BQ_p} \phi_K \chi)}{\varepsilon(\mathrm{Ind}_{K/\BQ_p} \phi_K)}=
\frac{\varepsilon(\phi_K\chi, \xi_K, dx)}{\varepsilon( \phi_K, \xi_K, dx)}=\varepsilon(\phi_K\chi, \xi_K, dx).  
\end{equation}
Note that 
$[\mathrm{Ind}_{K/\BQ_p} \phi_K\chi]-[\mathrm{Ind}_{K/\BQ_p} \phi_K]$ is 
a virtual representation of degree $0$ (cf.~\cite[(3.4.8)]{T}). 

For Weil--Deligne representations $V$ and $W$ with $W$ unramified, recall that 
\begin{equation}\label{equation, epsilon unramified twist}
\varepsilon(V\otimes W, \xi, dx)=\varepsilon(V, \xi, dx)^{\mathrm{dim}\,W}\cdot \det W(\varpi)^{a(V)+n(\xi) \dim V}
\end{equation}
(cf.~\cite[(3.4.6)]{T}).  
Together with Fr\"ohlich--Queyrut \cite[Thm.~3]{FQ}, we have 
\[
\varepsilon(\phi_K \chi, \xi_K, dx )=\varepsilon(\chi, \xi_K, dx )\cdot \phi_K(p)^{a(\chi)}
\equiv \chi(\delta)(-1)^{a(\chi)} \;\mod^{\!\!\!\times} \,\mathbb{R}_{>0}. 
\]

Hence, the assertion follows from (\ref{equation, virtual rep}).  
\end{proof}

\subsubsection{The ramified case.}
The main result of this subsection is Proposition~\ref{prop, ramified epsilon}.

Let $K$ denote a ramified quadratic extension of $\BQ_p$. 
Since $p$ is odd, $K$ has two possibilities. 
Let $\delta \in K$ be a uniformizer such that $K=\BQ_p(\delta)$ and $\delta^2 \in \BQ_p$. 
Let $\xi$ be a non-trivial additive character 
of $\BQ_p$ such that $n(\xi)=0$. 
Then $\xi_{K}:=\xi\circ \mathrm{Tr}_{K/\BQ_p}$ is an additive character of $K$ of conductor $1$. 

 Any conjugate self-dual character of $K^\times$ is ramified since so is 
$\omega_{K/\BQ_p}$.

\begin{lem}\label{lem, WD csd} 
\noindent 
\begin{itemize}
\item[i)] There are exactly two 
conjugate symplectic self-dual characters $\psi$ of $K^\times$ of conductor one. More precisely, they  depend on the choice of a square root 
$\sqrt{p^{*}}$of
$p^{*}:=\left(\frac{-1}{p}\right)p$, and are characterized by 
$$\psi(\delta)=\sqrt{p^{*}}^{-1}.$$ 
In particular, their ratio equals the unramified quadratic character of $K^\times$. 
\item[ii)] Any conjugate symplectic self-dual character of $K^\times$ is of the form $\phi_K \chi$ for  
$\chi$  a character of $K^\times$ such that $\chi|_{\BQ_p^\times}=1$. 
\item[iii)] For any $\chi$ as above, 
the conductor $a(\chi)$ is even. 
\end{itemize}
\end{lem}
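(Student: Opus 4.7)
\textbf{Proof plan for Lemma \ref{lem, WD csd}.}

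The plan is to exploit the decomposition $K^{\times}=\delta^{\BZ}\times \CO_K^{\times}$ together with the filtration by principal units $U^{(n)}_K=1+\fm_K^n$, reducing all three assertions to explicit computations with the norm map and the Hilbert symbol. The overall strategy is that any conjugate symplectic self-dual character is determined by its value on $\delta$ and by its reduction $\bar\psi:\BF_p^{\times}\to\BC^{\times}$ (once the conductor is $\le 1$), and the symplectic self-duality constraint $\psi|_{\BQ_p^{\times}}=\omega_{K/\BQ_p}\cdot\omega_1$ pins both down up to sign.

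For part (i), I would first note that $\omega_{K/\BQ_p}$ is ramified because $K/\BQ_p$ is ramified, so any conjugate symplectic self-dual $\psi$ has $a(\psi)\geq 1$; a character $\psi$ of conductor $\le 1$ factors through $K^{\times}/U^{(1)}_K$ and is determined by the pair $(\psi(\delta),\bar\psi)$ where $\bar\psi:k_K^{\times}=\BF_p^{\times}\to\BC^{\times}$. Using $\omega_1|_{\BZ_p^{\times}}=1$ and $\omega_{K/\BQ_p}|_{\BZ_p^{\times}}=(\,\cdot\,/p)$ (the Legendre symbol), the constraint on units forces $\bar\psi=(\,\cdot\,/p)$ uniquely. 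Writing $\delta^2=\epsilon p$ with $\epsilon\in\BZ_p^{\times}$, the constraint $\psi(p)=\omega_{K/\BQ_p}(p)\cdot p^{-1}$ gives
\[
\psi(\delta)^{2}=\bar\psi(\bar\epsilon)^{-1}\omega_{K/\BQ_p}(p)\,p^{-1},
\]
so $\psi(\delta)$ is determined up to sign. The explicit formula $\psi(\delta)=\sqrt{p^{*}}^{\,-1}$ is then checked by a Hilbert-symbol computation of $\omega_{K/\BQ_p}(p)=(p,\epsilon p)_{p}$, case-by-case for $K=\BQ_p(\sqrt{p})$ (where $\epsilon=1$) and $K=\BQ_p(\sqrt{up})$ (with $u$ a non-square unit); in both cases the identity $(p,-\epsilon p)_p=1$ (since $-\epsilon p=N_{K/\BQ_p}(\delta)$) together with $(p,-1)_p=(-1/p)$ gives $\psi(\delta)^2=(p^{*})^{-1}$, as required. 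This also shows the two solutions differ by the sign of $\psi(\delta)$, i.e.\ by the unramified quadratic character of $K^{\times}$.

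Part (ii) is then a one-line consequence: if $\psi_1,\psi_2$ are two conjugate symplectic self-dual characters, their ratio $\chi=\psi_1/\psi_2$ satisfies $\chi|_{\BQ_p^{\times}}=1$; conversely, multiplying a fixed $\phi_K$ from part (i) by any such $\chi$ preserves the self-duality condition.

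For part (iii), which I expect to be the main conceptual point of the lemma, I would argue by contradiction: assume $a(\chi)=2m-1$ is odd with $m\geq 1$. The key input is that the inclusion $\BQ_p^{\times}\subset K^{\times}$ induces, for each $m\geq 1$, a surjection
\[
U^{(m-1)}_{\BQ_p}\cdot U^{(2m-1)}_{K}=U^{(2m-2)}_{K},
\]
because $p^{m-1}=\epsilon^{-(m-1)}\delta^{2m-2}$ has exact $\fm_K$-valuation $2m-2$, so the map $U^{(m-1)}_{\BQ_p}/U^{(m)}_{\BQ_p}\to U^{(2m-2)}_{K}/U^{(2m-1)}_{K}$ is an injection $\BF_p\hookrightarrow\BF_p$, hence an isomorphism. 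Since $\chi$ is trivial on $\BQ_p^{\times}$ (hence on $U^{(m-1)}_{\BQ_p}$) and on $U^{(2m-1)}_{K}$ by the conductor assumption, it is trivial on $U^{(2m-2)}_{K}$, contradicting $a(\chi)=2m-1$. The main obstacle is bookkeeping the interaction between the two filtrations $U^{(\bullet)}_K$ and $U^{(\bullet)}_{\BQ_p}$ under ramification; once this is organized cleanly, both part (i)'s explicit formula and part (iii)'s parity statement fall out.
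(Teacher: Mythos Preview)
Your proposal is correct and follows essentially the same approach as the paper, which is extremely terse (three lines). The one simplification worth noting: for part~(i) the paper evaluates $\psi(\delta^2)$ directly from the self-duality constraint as $\omega_{K/\BQ_p}(\delta^2)\,\omega_1(\delta^2)=\left(\frac{-1}{p}\right)p^{-1}=(p^*)^{-1}$, avoiding your detour through $\psi(p)$ and the case split on $\epsilon$; for part~(iii) the paper simply observes that if $a(\chi)$ were odd then $1+\delta^{a(\chi)-1}\in\BQ_p^{\times}$, which is exactly your filtration argument specialized to a single element.
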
 
\begin{proof}
In view of (\ref{equation,csd}), we have $$\psi(\delta^2)=\left(\frac{-1}{p}\right)p^{-1}$$ and 
$\psi |_{\mu_{p-1}}$ is uniquely determined. Thus, part i) follows.  Part ii) is immediate. 

As for iii), note that $\chi |_{\BQ_p^\times}=1$, and $\chi(1+\delta^{a(\chi)-1})$ is a primitive $p$-th root of unity. 
\end{proof}

Our formula for epsilon constants will involve the following Gauss-like sum. 
\begin{defn}
For a ramified character of $K^\times$ with $\chi|_{\BQ_p^\times}=1$, 
define 
\[
G_{\chi, \delta}:=\sum_{a \in \BF_p^\times}\left(\frac{a}{p}\right)\chi(1+\delta^{a(\chi)-1})^a. 
\]
\end{defn}  
\begin{prop}\label{prop, ramified epsilon}
Let $p$ be an odd prime and $K$ a ramified quadratic extension of $\BQ_p$.
 Let $\phi_K$ be a conjugate symplectic self-dual character of $K^\times$ of conductor one. 
 \begin{itemize}
\item[ i)] We have 
 \[
 \varepsilon(\mathrm{Ind}_{K/\BQ_p}\phi_K)=\omega_{K/\BQ_p}(-2).
 \]
 \item[ii)] Let $\chi$ be a ramified character of $K^\times$ such that $\chi|_{\BQ_p^\times}=1$. Then
 \begin{align*}
 \varepsilon(\mathrm{Ind}_{K/\BQ_p}\phi_K\chi)
 &=\omega_{K/\BQ_p}(-2)\cdot\phi_K\chi(\delta)\cdot\left(\frac{-1}{p}\right)^{\frac{a(\chi)}{2}} G_{\chi, \delta}\\
 &=\omega_{K/\BQ_p}(p)\phi_K\chi(\delta) 
 \left(\frac{-1}{p}\right)^{\frac{a(\chi)}{2}}\left(\frac{\overline{v}_\chi}{p}\right)\sum_{a \in \BF_p^\times}\left(\frac{a}{p}\right)\xi(\frac{a}{p}),
\end{align*}
where $v_\chi \in \mathcal{O}_K^\times$ is an element such that  \[
\chi(1+\delta^{a(\chi)-1}a)=\xi_{K}({v_\chi^{-1} a}/{\delta^2}) \quad \text{for $a \in \BZ_p$}
\]
and $\ov{v}_\chi$ denotes its image in the residue field.  
\end{itemize}
\end{prop}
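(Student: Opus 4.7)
The plan is to reduce the computation of $\varepsilon(\mathrm{Ind}_{K/\BQ_p}\phi_K\chi)$ to a product of the Langlands constant $\lambda(K/\BQ_p,\xi)$ for the quadratic extension $K/\BQ_p$ and the abelian epsilon constant $\varepsilon(\phi_K\chi,\xi_K,dx_{\xi_K})$, and then evaluate both factors by explicit Gauss sum calculations.

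First, I would fix a non-trivial additive character $\xi$ of $\BQ_p$ of conductor zero and set $\xi_K:=\xi\circ\mathrm{Tr}_{K/\BQ_p}$, which has conductor $n(\xi_K)=1$ since $K/\BQ_p$ is ramified. Deligne's inductivity formula together with the self-dual normalization (cf.\ \S\ref{ss:Epsilon}, \S\ref{ss: self-dual epsilon}) gives
\[
\varepsilon(\mathrm{Ind}_{K/\BQ_p}\psi)=\lambda(K/\BQ_p,\xi)\cdot\varepsilon(\psi,\xi_K,dx_{\xi_K})
\]
for any character $\psi$ of $K^{\times}$, where $\lambda(K/\BQ_p,\xi)=\varepsilon(\omega_{K/\BQ_p},\xi,dx_\xi)$. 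The classical quadratic Gauss sum computation for the ramified quadratic character of $\BQ_p^\times$ gives
\[
\lambda(K/\BQ_p,\xi)=\omega_{K/\BQ_p}(-2)\cdot \frac{1}{\sqrt{p}}\sum_{a\in\BF_p^\times}\left(\tfrac{a}{p}\right)\xi(a/p)=\omega_{K/\BQ_p}(-2)\cdot\sqrt{p^{*}}/\sqrt{p},
\]
which is a unit of absolute value one whose sign yields the prefactor $\omega_{K/\BQ_p}(-2)$ (resp.~$\omega_{K/\BQ_p}(p)$) in the two displayed equalities.

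For part i), since $a(\phi_K)=1$, the integral
\[
\varepsilon(\phi_K,\xi_K,dx_{\xi_K})=q_K^{\,(a(\phi_K)+n(\xi_K))/2}\int_{\delta^{-2}\CO_K^{\times}}\phi_K^{-1}(x)\xi_K(x)\,dx_{\xi_K}
\]
collapses, after a change of variables $x=\delta^{-2}u$ with $u\in \CO_K^\times$ and partitioning $\CO_K^\times$ by residue classes modulo $\delta$, to $\phi_K(\delta^{-2})$ times a trivial sum since $\phi_K|_{1+\delta\CO_K}=1$. Using $\phi_K(\delta)=\sqrt{p^{*}}^{-1}$ from Lemma~\ref{lem, WD csd}, the product $\lambda(K/\BQ_p,\xi)\cdot \varepsilon(\phi_K,\xi_K,dx_{\xi_K})$ simplifies to the asserted value $\omega_{K/\BQ_p}(-2)$.

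For part ii), the character $\phi_K\chi$ has conductor $a(\chi)\geq 2$ by Lemma~\ref{lem, WD csd}, and the abelian epsilon formula yields
\[
\varepsilon(\phi_K\chi,\xi_K,dx_{\xi_K})=\phi_K\chi(c)\cdot \int_{\CO_K^\times}(\phi_K\chi)^{-1}(u)\,\xi_K(u/c)\,du
\]
for $c\in K^\times$ of valuation $a(\chi)+1$, for example $c=\delta^{a(\chi)+1}$. Using the parametrization $\chi(1+\delta^{a(\chi)-1}a)=\xi_K(v_\chi^{-1}a/\delta^2)$ defining $v_\chi$, one writes $u=u_0(1+\delta^{a(\chi)-1}a)$ with $u_0$ running through representatives of $\CO_K^\times/(1+\delta^{a(\chi)-1}\CO_K)$ and $a\in \CO_K/\delta \CO_K\cong \BF_p$, and the integral splits as a product: the inner sum over $a$ collapses to a delta function on a residue class of $u_0$ (forcing $u_0\equiv \delta^{a(\chi)-1}\pmod{\delta^{a(\chi)}}$ up to a unit that introduces $v_\chi$), and the resulting character sum over the remaining variable is a quadratic Gauss sum $\sum_{a\in \BF_p^\times}\left(\tfrac{a}{p}\right)\xi_K(\cdot)$. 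The factor $\left(\tfrac{-1}{p}\right)^{a(\chi)/2}$ arises from the power of $\delta^2\equiv (-1)p/((-1)/p)\pmod{\text{units}}$ that is absorbed into $\xi_K$, using that $a(\chi)$ is even.

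The main obstacle will be the bookkeeping: tracking the self-dual Haar measure normalization relative to $\xi_K$ (since $n(\xi_K)=1$ forces a factor of $q_K^{1/2}=\sqrt{p}$ that must be cancelled against the Langlands constant $\lambda(K/\BQ_p,\xi)$), and carefully propagating the signs $\phi_K(\delta)=\sqrt{p^{*}}^{-1}$, $\omega_{K/\BQ_p}(-2)$, $\omega_{K/\BQ_p}(p)$ and $\left(\frac{-1}{p}\right)^{a(\chi)/2}$ through the identification $p\cdot \omega_{K/\BQ_p}(-2)=p^{*}\cdot\omega_{K/\BQ_p}(p)$ to reconcile the two displayed forms of the answer.
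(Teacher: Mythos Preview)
Your overall framework (inductivity plus evaluation of the Langlands constant $\lambda(K/\BQ_p,\xi)=\varepsilon(\omega_{K/\BQ_p},\xi,dx_\xi)$) matches the paper, and your treatment of part~i) is essentially the paper's: both compute $\varepsilon(\phi_K,\xi_K,dx)$ as an explicit integral and combine with the quadratic Gauss sum $\varepsilon(\omega_{K/\BQ_p})$ to obtain $\omega_{K/\BQ_p}(-2)$.

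For part~ii), however, your direct Gauss--sum reduction is where things go wrong. The paper does \emph{not} evaluate $\varepsilon(\phi_K\chi,\xi_K)$ by a delta--function collapse; it invokes Lamprecht's formula \cite[Prop.~1]{TLC}, which for a character of even conductor $a(\chi)$ gives
\[
W(\phi_K\chi,\xi_K)\sim(\phi_K\chi)(c_\chi)\,\xi_K(c_\chi^{-1})
\]
(modulo $\BR_{>0}\cdot\mu_{p^\infty}$), where $c_\chi=\delta^{a(\chi)+1}v_\chi$. This yields a pure sign $\phi_K\chi(\delta)\left(\tfrac{-1}{p}\right)^{a(\chi)/2}\left(\tfrac{\overline{v}_\chi}{p}\right)$ with no residual Gauss sum. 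The quadratic Gauss sum $\sum_{a}\left(\tfrac{a}{p}\right)\xi(a/p)$ in the statement comes \emph{entirely} from the Langlands constant $\varepsilon(\omega_{K/\BQ_p})$, not from $\varepsilon(\phi_K\chi,\xi_K)$. Your sketch misattributes it.

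Concretely, your single collapse step (correctly) forces $u_0\equiv v_\chi^{-1}\pmod{\delta}$---not the condition ``$u_0\equiv\delta^{a(\chi)-1}\pmod{\delta^{a(\chi)}}$'', which is meaningless for a unit $u_0$. After that, one must still sum over $w\in\CO_K/\delta^{a(\chi)-2}\CO_K$ parametrising $u_0=v_\chi^{-1}(1+\delta w)$; for $a(\chi)=2$ this is trivial, but for $a(\chi)>2$ it requires further iterations (this is precisely the stationary--phase argument underlying Lamprecht's formula). In every case the end result is a single character value, not a quadratic Gauss sum. So either carry out the full iteration, or---as the paper does---cite Lamprecht and be done with it.
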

\begin{proof}
In the following, the notation for epsilon constants is as in \S\ref{ss:Epsilon}. 

Let $\xi$ be a non-trivial additive character of $\BQ_p$ with $n(\xi)=0$ as above, and $dx_\xi$ the self-dual Haar measure on $\BQ_p$ with respect to $\xi$.
Then $\xi_{K}:=\xi\circ \mathrm{Tr}_{K/\BQ_p}$ is an additive character of $K$ with $n(\xi_K)=1$ and let $dx$ be a Haar measure on $K$.

For any character $\eta$ of $K^\times$ with $\eta|_{\BQ_p^\times}=1$, we have 
\begin{align}\label{equation, epsilon induction} 
  \varepsilon(\mathrm{Ind}_{K/\BQ_p}\phi_K\eta, \xi, dx_\xi)&= 
  \varepsilon(\mathrm{Ind}_{K/\BQ_p} 1, \xi, dx_\xi) \cdot 
 \frac{\varepsilon(\phi_K\eta, \xi_K, dx)}{\varepsilon(1, \xi_K, dx)}\notag\\
& =  \frac{\varepsilon(1, \xi, dx_\xi)}{\varepsilon(1, \xi_K, dx)} \cdot 
\varepsilon(\omega_{K/\BQ_p}, \xi, dx_\xi)\varepsilon(\phi_K\eta, \xi_K, dx), 
\end{align}
where 
the second equality follows from the decomposition $\mathrm{Ind}_{K/\BQ_p} 1= 1 \oplus \omega_{K/\BQ_p}$.
Note that $\varepsilon(\omega_{K/\BQ_p}, \xi, dx_\xi)$ is a quadratic Gauss sum by definition. 

We have
\begin{align*}
 \varepsilon(\phi_K, \xi_K, dx)&=\int_{p^{-1}\mathcal{O}_K^\times}\phi_K^{-1}(x)\xi_K(x)dx\\
 &=p\phi_K(p)\sum_{a \in \BF_p^\times} \phi_K(a)^{-1}\xi_K(a/p) \int_{1+\fm_{\CO_K}} dx\\
 &=\omega_{K/\BQ_p}(2p)\sum_{a \in \BF_p^\times} \omega_{K/\BQ_p}(a)\xi(a/p) \int_{1+\fm_{\CO_K}} dx. 
\end{align*}
The latter equals $\omega_{K/\BQ_p}(2)\varepsilon(\omega_{K/\BQ_p}, \xi, dx_\xi)$ up to a positive real number. 
Hence,  substituting $\eta=1$ in 
\eqref{equation, epsilon induction}, we have
\[
\varepsilon(\mathrm{Ind}_{K/\BQ_p}\phi_K)=\omega_{K/\BQ_p}(2) \frac{\varepsilon(\omega_{K/\BQ_p}, \xi, dx_\xi)^2}{|\varepsilon(\omega_{K/\BQ_p}, \xi, dx_\xi)|^2}
=\omega_{K/\BQ_p}(-2).
\]

Now we consider part ii). 
In the following 
for a Weil--Deligne representation $W$ of $W_K$ we let 
 $W(\rho, \xi_K)$ denote the root number $W(\rho,\xi_K):=\varepsilon(\rho, \xi_K, dx)/|\varepsilon(\rho, \xi_K, dx)|$, which is independent of the choice of the Haar measure $dx$. 
Moreover for $a, b \in \BC^\times$ we write $a \sim b$ if $ab^{-1} \in \BR_{>0}\cdot\mu_{p^\infty}$. 

By Lamprecht's formula \cite[Prop.~1]{TLC}, we have 
\[
W(\phi_K\chi, \xi_K)\sim (\phi_K\chi)(c_\chi)\xi_K(c^{-1}_\chi).
\]
Here $c_\chi\in K^\times$  is an element with $v_K(c_\chi)=a(\chi)+n(\xi_K)=a(\chi)+1$ so that 
\begin{equation}\label{eq: chi xi}
\chi(1+y)=\xi_K(c^{-1}_\chi y)
\end{equation}
 for any $y$ satisfying $v_K(y)\geq a(\chi)/2$, where $v_K$ is the normalized valuation of $K$ with $v_K(\delta)=1$.

Since $c_\chi$ has odd valuation, $\chi(c_\chi\delta^{-1})$ is a $p$-power root of unity with $p \not=2$. 
Hence, in view of \eqref{eq: chi xi}, it follows that  
$$W(\phi_K\chi, \xi_K)\sim \phi_K(c_\chi)\chi(\delta).$$ 
Write $c_\chi=\delta^{a(\chi)+1}v_\chi$ for $v_\chi \in \mathcal{O}_K^\times$. 
Then for $a\in\BZ_p$, we have 
\[
\chi(1+\delta^{a(\chi)-1}a)=\xi_{K}({v_\chi^{-1} a}/{\delta^2})
=\xi(u{v_\chi^{-1} a}/{p}) \quad \text{}
\]
for $u:=2p/\delta^2$ and 
\[
W(\phi_K\chi, \xi_K)\sim \phi_K(c_\chi)\chi(\delta)\sim \phi_K\chi(\delta)\left(\frac{-1}{p}\right)^{\frac{a(\chi)}{2}}\left(\frac{\overline{v}_\chi}{p}\right) .
\] 
As for the latter relation, we have used the fact $\phi_K(-\delta^2)\sim 1$ since 
$-\delta^2 \in \BQ_p$ is a norm from $K$. 
 
Hence, by (\ref{equation, epsilon induction}), it follows that 
\begin{align*}
 \varepsilon(\mathrm{Ind}_{K/\BQ_p}\phi_K\chi)&=\omega_{K/\BQ_p}(p)\phi_K\chi(\delta) 
 \left(\frac{-1}{p}\right)^{\frac{a(\chi)}{2}}\left(\frac{\overline{v}_\chi}{p}\right)\sum_{a \in \BF_p^\times}\left(\frac{a}{p}\right)\xi(a/p).\\
 &=\omega_{K/\BQ_p}(-2)\phi_K\chi(\delta)\left(\frac{-1}{p}\right)^{\frac{a(\chi)}{2}}\sum_{a \in \BF_p^\times}\left(\frac{a}{p}\right)\chi(1+\delta^{a(\chi)-1}a).
\end{align*}
\end{proof}

\begin{cor}\label{cor, ramified root number}
In the setting of Proposition \ref{prop, ramified epsilon},  
suppose that the order of $\chi$ is a power of $p$.  
\begin{itemize}
\item[i)] For $b \in \mathbb{Z}_p^\times$, we have 
\[
\varepsilon(\mathrm{Ind}_{K/\BQ_p}\phi_K\chi^b)=\left(\frac{b}{p}\right)\varepsilon(\mathrm{Ind}_{K/\BQ_p}\phi_K\chi)
\]
\item[ii)] If $\chi^p\not=1$, then 
\[
\varepsilon(\mathrm{Ind}_{K/\BQ_p}\phi_K\chi^{-\delta^2})=\varepsilon(\mathrm{Ind}_{K/\BQ_p}\phi_K\chi)
\]
\end{itemize}
\end{cor}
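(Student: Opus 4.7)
The plan is to extract both equalities from the explicit formula in Proposition~\ref{prop, ramified epsilon}~ii). Two preliminary observations simplify the formula considerably when $\chi$ has $p$-power order: first, $\chi(\delta)^{2}=\chi(\delta^{2})=1$ since $\delta^{2}\in\BQ_{p}^{\times}$, so $\chi(\delta)\in\mu_{p^{\infty}}\cap\{\pm 1\}=\{1\}$ as $p$ is odd; second, the factor $\phi_{K}(\delta)=\sqrt{p^{*}}^{-1}$ cancels the quadratic Gauss sum $\sum_{a\in\BF_{p}^{\times}}(a/p)\xi(a/p)=\sqrt{p^{*}}$ appearing in the formula. Consequently, for any ramified character $\chi'$ of $p$-power order with $\chi'|_{\BQ_{p}^{\times}}=1$,
\[
\varepsilon(\mathrm{Ind}_{K/\BQ_{p}}\phi_{K}\chi')=\omega_{K/\BQ_{p}}(p)\left(\tfrac{-1}{p}\right)^{a(\chi')/2}\left(\tfrac{\bar v_{\chi'}}{p}\right).
\]

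For part~i), I would first note that $a(\chi^{b})=a(\chi)$ for $b\in\BZ_{p}^{\times}$, since $x\mapsto x^{b}$ is an automorphism of $\mu_{p^{n}}$ and preserves the filtration-kernel defining the conductor. Then, for $a(\chi)\geq 2$ the higher-order binomial terms in $(1+\delta^{a(\chi)-1}a)^{b}$ lie in $U^{2(a(\chi)-1)}\subseteq U^{a(\chi)}$, so
\[
\chi^{b}(1+\delta^{a(\chi)-1}a)=\chi(1+b\delta^{a(\chi)-1}a)=\xi_{K}(v_{\chi}^{-1}b\,a/\delta^{2}),
\]
which forces $v_{\chi^{b}}=v_{\chi}/b$, hence $(\bar v_{\chi^{b}}/p)=(\bar v_{\chi}/p)(b/p)$. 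The claim in part~i) then follows by forming the ratio in the simplified formula.

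For part~ii), write $-\delta^{2}=-pu$ where $u=\delta^{2}/p\in\BZ_{p}^{\times}$, so that $\chi^{-\delta^{2}}=(\chi^{-p})^{u}$; by part~i) it suffices to prove $\varepsilon(\mathrm{Ind}\,\phi_{K}\chi^{-p})=(u/p)\,\varepsilon(\mathrm{Ind}\,\phi_{K}\chi)$. The conductor relation $a(\chi^{-p})=a(\chi)-2$ comes from the observation that, for $m$ large enough, the $p$-th power map induces the bijection $U^{m}/U^{m+1}\xrightarrow{\sim}U^{m+2}/U^{m+3}$ given on representatives by $1+\delta^{m}c\mapsto 1+p\,\delta^{m}c=1+\delta^{m+2}c/u$ modulo $U^{m+3}$. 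For the change in $v$-parameter, using $p=\delta^{2}/u$ one computes
\[
(1+\delta^{a(\chi)-3}a)^{-p}\equiv 1-p\,\delta^{a(\chi)-3}a\equiv 1-\delta^{a(\chi)-1}a/u\pmod{U^{a(\chi)}},
\]
so $\chi^{-p}(1+\delta^{a(\chi)-3}a)=\xi_{K}(-v_{\chi}^{-1}a/(u\delta^{2}))$, giving $v_{\chi^{-p}}\equiv -v_{\chi}u\pmod{\delta}$. Plugging $a(\chi^{-p})=a(\chi)-2$ and $\bar v_{\chi^{-p}}=-\bar u\,\bar v_{\chi}$ into the simplified formula yields
\[
\frac{\varepsilon(\mathrm{Ind}\,\phi_{K}\chi^{-p})}{\varepsilon(\mathrm{Ind}\,\phi_{K}\chi)}=\left(\tfrac{-1}{p}\right)^{-1}\left(\tfrac{-\bar u\,\bar v_{\chi}^{2}}{p}\right)=\left(\tfrac{-1}{p}\right)\left(\tfrac{-\bar u}{p}\right)=\left(\tfrac{u}{p}\right),
\]
and combining with the factor $(u/p)$ coming from $\chi^{-\delta^{2}}=(\chi^{-p})^{u}$ gives $\varepsilon(\mathrm{Ind}\,\phi_{K}\chi^{-\delta^{2}})=(u/p)^{2}\varepsilon(\mathrm{Ind}\,\phi_{K}\chi)=\varepsilon(\mathrm{Ind}\,\phi_{K}\chi)$, as asserted.

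The main obstacle is the $p$-adic analysis in part~ii), specifically the expansion of $(1+\delta^{m-1}a)^{-p}$ modulo $U^{m+1}$. For $p\geq 5$ (or $m\geq 3$) the term $py$ in the binomial expansion strictly dominates $y^{p}$ and the computation is clean. When $p=3$ and $m=2$ the two terms have equal $K$-valuation and both contribute, but Fermat's congruence $a^{p}\equiv a\pmod{p}$ ensures that the $y^{p}$ correction can be absorbed into the lower-order terms after multiplying out by $p=\delta^{2}/u$; verifying that this absorption yields precisely the predicted $v_{\chi^{-p}}$ (and in particular that the $\BF_{p}$-reduction remains nonzero, i.e.\ that $1+\bar u\ne 0$ in the relevant subcase) is the key technical point to check carefully.
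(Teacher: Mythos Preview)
Your approach is correct and follows the same strategy as the paper: both parts are read off from Proposition~\ref{prop, ramified epsilon}~ii). The paper, however, uses the \emph{first} displayed formula there (the one with the Gauss-like sum $G_{\chi,\delta}$), while you use the second (the one with the parameter $v_{\chi}$). For part~ii) this leads to a small organizational difference. The paper treats $\chi':=\chi^{-\delta^{2}}$ in one step: from the identity $\chi(1+\delta^{a(\chi)-1})=\chi'(1-\delta^{a(\chi')-1})$ it obtains $G_{\chi,\delta}=\bigl(\tfrac{-1}{p}\bigr)G_{\chi',\delta}$ directly, and the factor $(-1/p)$ cancels against the change in $(-1/p)^{a(\chi)/2}$. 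Since $G_{\chi,\delta}$ already absorbs the unit $u=\delta^{2}/p$, there is no need to split $-\delta^{2}=-pu$ and invoke part~i) as you do; the paper's route is therefore two lines rather than a paragraph. Your factorization through $\chi^{-p}$ is equivalent but slightly longer.

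One point in your favor: you explicitly flag the edge case $p=3$, $a(\chi')=2$, where the leading-order binomial approximation $(1+\delta^{a(\chi)-3}a)^{-p}\equiv 1-p\,\delta^{a(\chi)-3}a$ can fail because $y^{p}$ competes with $py$. The paper's terse proof passes over this in silence (its key identity rests on the same approximation). Your proposed fix via Fermat is on the right track, though as you note the verification that the resulting $\bar v_{\chi^{-p}}$ is nonzero in all subcases still needs to be written out.
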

\begin{proof}
This is a consequence of  Proposition \ref{prop, ramified epsilon}. 

For illustration, we consider part ii). Put $\chi':=\chi^{-\delta^2}$. 
Then $a(\chi)=a(\chi')+2$ and $$\chi(1+\delta^{a(\chi)-1})=\chi(1-\delta^{a(\chi')-1})^{-\delta^2}=\chi'(1-\delta^{a(\chi')-1}).$$  
Hence $G_{\chi, \delta}=\left(\frac{-1}{p}\right)G_{\chi', \delta}$ and the assertion follows 
from Proposition \ref{prop, ramified epsilon} since $\chi(\delta)=1$. 
\end{proof}

\subsection{Conjugate symplectic self-dual characters}
In this subsection we explicitly describe conjugate symplectic self-dual $p$-adic Galois characters over quadratic extensions of $\BQ_p$.

\subsubsection{Notions}
Fix an algebraic closure $\ov{\BQ}_p$ of $\BQ_p$.

Let $F$ be a finite extension of $\BQ_p$. Recall that $\chi_\cyc : G_{F} \ra \BZ_p^\times$ denotes the cyclotomic character, which we view as a character of $F^\times$ via Artin reciprocity map of local class field theory. 
Let $K/F$ be a quadratic field extension and 
denote  the associated quadratic character of $F^\times$ by $\omega_{K/F}$. 
\begin{defn}\label{def:csd}
A continuous character 
 $\psi: G_K \rightarrow \overline{\BQ}_p^\times$ is 
 conjugate symplectic self-dual with respect to $F$ if 
\begin{equation}\label{equation,p-csd}
\psi |_{F^{\times}}=\omega_{K/F}\cdot \chi_{\cyc}
\end{equation}
where we view $\psi$ as a character of $K^\times$ via Artin reciprocity map. 
\end{defn}

Note that a $p$-adic character of $K^\times$ satisfying  (\ref{equation,p-csd}) extends uniquely to 
a continuous character of $G_K$ via the Artin reciprocity map. 
In this way we regard  
such characters of $K^\times$ as Galois characters.

\begin{lem}
 Let $\psi: G_K \rightarrow \overline{\BQ}_p^\times$ be a continuous character. 
 Then $\mathrm{Ind}_{K/F} \psi$ is symplectic self-dual  
 if and only if $\psi$ is conjugate symplectic self-dual with respect to $F$. 
\end{lem}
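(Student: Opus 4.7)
The plan is to mirror the proof of Lemma~\ref{prop, conj self-dual}, replacing the complex Weil--Deligne setting with the $p$-adic Galois setting. The key input is the standard determinant formula for an induced representation of a subgroup of index two: for a character $\psi$ of $G_K$, one has
\[
\det\bigl(\mathrm{Ind}_{K/F}\psi\bigr) = \omega_{K/F}\cdot \bigl(\psi|_{G_F}\bigr),
\]
where $\psi|_{G_F}$ denotes the restriction to $G_F$ via the transfer / Verlagerung, which under the local reciprocity isomorphism $F^{\times}\cong G_F^{\mathrm{ab}}$ corresponds to $\psi|_{F^\times}$ (with $\psi$ viewed as a character of $K^\times$).

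Given this, I would proceed in three short steps. First, recall that a two-dimensional $p$-adic Galois representation of $G_F$ is symplectic self-dual (in the sense of Definition~\ref{def:ssd}) if and only if its determinant equals the $p$-adic cyclotomic character $\chi_{\mathrm{cyc}}$; this was stated as Example~\ref{ex, ssd}(b). Second, apply the determinant formula above to the two-dimensional representation $\mathrm{Ind}_{K/F}\psi$: it is symplectic self-dual iff
\[
\omega_{K/F}\cdot\psi|_{F^\times} = \chi_{\mathrm{cyc}}.
\]
Third, using $\omega_{K/F}^{2}=1$, rewrite this equality as $\psi|_{F^\times} = \omega_{K/F}\cdot \chi_{\mathrm{cyc}}$, which is exactly the defining condition~\eqref{equation,p-csd} for $\psi$ to be conjugate symplectic self-dual with respect to $F$.

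The only point requiring care is the identification of $\psi|_{G_F}$ with $\psi|_{F^\times}$ under the Artin reciprocity map, to reconcile the Galois-theoretic and the idele-theoretic sides of the statement. In our normalization (uniformizers go to geometric Frobenii), this is a standard compatibility of local class field theory with the transfer, so there is no real obstacle; the argument is essentially formal once one has the determinant-of-induction formula in hand.
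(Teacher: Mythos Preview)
Your proposal is correct and follows essentially the same approach as the paper: the paper's proof simply says to proceed as in Lemma~\ref{prop, conj self-dual}, which in turn reduces to the determinant formula $\det(\mathrm{Ind}_{K/F}\psi)=\omega_{K/F}\cdot\psi|_{F^\times}$ combined with the rank-two criterion of Example~\ref{ex, ssd}(b). Your additional remark about the transfer/reciprocity compatibility is a helpful clarification but not something the paper spells out.
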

\begin{proof}
We proceed as in the proof of Lemma \ref{prop, conj self-dual}.
\end{proof}

In the rest of this subsection let $R$ be a commutative profinite ring.
\begin{defn}\label{def:ac} 
A continuous character $\psi: G_K \rightarrow R^\times$ is 
anticyclotomic with trivial 
central character with respect to $F$ if 
$$\psi(cgc^{-1})=\psi(g^{-1}) \text{ and }  \psi(c^2)=1$$ for any $g \in G_K$ and 
an extension $c \in G_{F}$ of the  non-trivial element of $\mathrm{Gal}(K/F)$. 
\end{defn}

When $F=\BQ_p$, we omit the expression `with respect to $F$'.

\begin{lem}\label{lemma, trivial central character}\noindent
\begin{itemize}
\item[i)] 
Definition~\ref{def:ac} does not depend on the choice of an extension $c\in G_F$ of the non-trivial element of $\Gal(K/F)$.
\item[ii)] A character $\psi: G_K \rightarrow R^\times$ is anticyclotomic with trivial 
central character with respect to $F$ if and only if 
$\psi|_{F^\times}=1$. 
\end{itemize}
\end{lem}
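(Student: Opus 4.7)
The plan is to use a direct group-theoretic manipulation for part i) and to invoke the Verlagerung (transfer) compatibility of local class field theory for part ii).

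For part i), I fix two lifts $c, c' \in G_F$ of the nontrivial element of $\mathrm{Gal}(K/F)$ and write $c' = ch$ with $h \in G_K$. The first condition is visibly independent of the choice of lift: since $\psi$ factors through $G_K^{\mathrm{ab}}$, conjugation by any element of $G_K$ acts trivially on $\psi$, whence
\[
\psi(c'gc'^{-1}) = \psi(c\,hgh^{-1}c^{-1}) = \psi(cgc^{-1}) = \psi(g^{-1}).
\]
For the second condition, the expansion
\[
c'^2 = chch = c^2 \cdot (c^{-1}hc) \cdot h
\]
combined with the first condition applied to $g = c^{-1}hc \in G_K$ (yielding $\psi(c^{-1}hc) = \psi(h^{-1})$) gives $\psi(c'^2) = \psi(c^2) \cdot \psi(h^{-1}) \cdot \psi(h) = \psi(c^2) = 1$.

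For part ii), I use that under local class field theory (with the paper's geometric Frobenius normalization) the inclusion $F^\times \hookrightarrow K^\times$ corresponds to the Verlagerung $V : G_F^{\mathrm{ab}} \to G_K^{\mathrm{ab}}$. Since $\mathrm{rec}_F(F^\times)$ is dense in $G_F^{\mathrm{ab}}$ and $\psi \circ V$ is continuous, $\psi|_{F^\times} = 1$ is equivalent to $\psi \circ V = 1$ on all of $G_F^{\mathrm{ab}}$. For the index-two inclusion $G_K \subset G_F$ with coset representatives $\{1, c\}$, the explicit Verlagerung formula reads
\[
V(g) \equiv g \cdot (c^{-1}gc) \bmod [G_K, G_K] \quad (g \in G_K), \qquad V(c) \equiv c^2 \bmod [G_K, G_K].
\]
Applying $\psi$ turns the first into $\psi(g)\psi(c^{-1}gc) = 1$ for every $g \in G_K$, which after substituting $g \mapsto cgc^{-1}$ is precisely the conjugation condition $\psi(cgc^{-1}) = \psi(g^{-1})$; and the second directly gives $\psi(c^2) = 1$. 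The converse direction runs in reverse; one also checks the general Verlagerung formula $V(ch) = h \cdot chc$ for $h\in G_K$ reduces, via $\psi(chc) = \psi(chc^{-1})\psi(c^2) = \psi(h^{-1})$, to the two established identities, so nothing further is needed.

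The main point requiring care will be the class-field-theoretic translation between the idelic and Galois sides, namely the compatibility of $F^\times \hookrightarrow K^\times$ with the Verlagerung under the normalization adopted in \S\ref{ss:Kato s-u} of the paper, together with the resulting explicit formulas for $V$ on the coset representatives $\{1, c\}$. Once these compatibilities are in place, the remainder of the argument is a purely formal manipulation.
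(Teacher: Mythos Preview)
Your proof is correct and follows essentially the same approach as the paper: a direct group-theoretic manipulation for part i) (writing the second lift as $c' = ch$ and expanding $c'^2$), and the transfer/Verlagerung compatibility of local class field theory for part ii), with the explicit index-two transfer formulas on the two cosets. Your write-up is slightly more explicit about the converse direction in ii), but there is no substantive difference in strategy.
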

\begin{proof}
Let $c_1$ be another extension and put $d=cc_1^{-1}$. Then for $g\in G_K$,  
\[
\psi(c_1gc_1^{-1})=\psi(d) \psi(c_1gc_1^{-1}) \psi(d)^{-1}=\psi(cgc^{-1})=\psi(g^{-1}). 
\] 
Since $\psi(cdc^{-1})=\psi(d^{-1})$ and $\phi(c^2)=1$, we have $\psi(c_1^{-1}c^{-1})=\psi(c_1c^{-1})$.
This implies that 
\[\psi(c_1^2)= \psi(c_1c^{-1})\psi(c c_1)=\psi(c c_1)\psi(c_1c^{-1})=1.
\]

The restriction to $F^\times$ corresponds to the transfer map $t: G_{F}^{\rm{ab}} \rightarrow G_{K}^{\rm{ab}}$ 
under the Artin reciprocity. Recall that $t(g)=g \cdot (c^{-1} g c)$ if $g \in G_{K}^{\rm{ab}}$ and  
$t(g)=c^{-1}g^2c=(c^{-1}gc)^2$ if $g \in G_{F}^{\rm{ab}} \setminus G_{K}^{\rm{ab}}$. 
The assertion ii) follows from this and i). 
\end{proof}

\begin{defn} 
Let $K/F$ be a quadratic field extension as above. 
An abelian field extension $L$ of $K$ is a generalized dihedral extension of $F$ if 
\begin{itemize}
\item[i)] $L$ is Galois over $F$ and,
\item[ii)] There exists an element $c \in \mathrm{Gal}(L/F)$ 
of order two extending the non-trivial element of $\mathrm{Gal}(K/F)$ such that 
\[
\mathrm{Gal}(L/F)= \mathrm{Gal}(L/K) \rtimes \langle c \rangle 
\]
satisfying $cgc^{-1}=g^{-1}$ for any $g \in \mathrm{Gal}(L/K)$. 
\end{itemize}
\end{defn}
By definition, any continuous character $\psi: G_K \rightarrow R^\times$ factoring
through a generalized dihedral extension of $F$ 
is anticyclotomic with trivial 
central character with respect to $F$. 
An example is provided by characters factoring through the anticyclotomic $\BZ_p$-extension of a quadratic extension of $\BQ_p$.

The following preliminary will be used in some of our later arguments. 

 \begin{lem}\label{lem, ac char} Let $p$ be an odd prime and let $K$ be a quadratic field extension of $\BQ_p$.
 Let $\Gamma$ be the Galois group of the anticyclotomic $\BZ_p$-extension of $K$. 
 \begin{itemize}
\item[i)] Let $\pi$ be a uniformizer of $\mathcal{O}_K$. 
 Then  the image of 
 \[(1+\pi \mathcal{O}_K)/(1+p\BZ_p) \rightarrow  (1+\pi \CO_K)_{\mathrm{free}}:= (1+\pi \CO_K)/(1+\pi \CO_K)_{{\mathrm{tor}}}, \quad \alpha \mapsto \alpha /c(\alpha)
 \]
is isomorphic to $\BZ_p$. 
 Moreover, $1+\pi O_K$ is free as a group if $K/\BQ_p$ is unramified or $K/\BQ_p$ is ramified and $p\not=3$. 
 In the excluded case, the torsion part of $1+\pi\CO_K$ is isomorphic to $\mu_3 \cap K^\times$. 
\item[ ii)] If $K/\BQ_p$ is unramified,
then  a character $\psi: G_K  \rightarrow \overline{\BQ}_p^\times$ factors 
through $\Gamma$ 
 if and only 
  if $\psi|_{\BQ_p^\times}=1$ and $\psi|_{K^\times_{\rm{tor}}}=1$. 
\item[ iii)] If $K/\BQ_p$ is ramified, then a character $\psi: G_K  \rightarrow \overline{\BQ}_p^\times$ factors 
through $\Gamma$ 
 if and only 
  if $\psi|_{\BQ_p^\times}=\psi|_{\mu_p \cap K^{\times}}=\psi(\delta)=1$,
  where $\delta$ is a uniformizer of $K^{\times}$ such that $\delta^2 \in \BQ_p^{\times}.$
 \end{itemize}
\end{lem}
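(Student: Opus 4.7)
The plan is to analyze the $\BZ_p[c]$-module structure of $1+\pi\CO_K$ via the $p$-adic logarithm, and then translate the question of $\psi$ factoring through $\Gamma$ into explicit conditions on $\psi$ viewed as a character of $K^\times$ via local class field theory.

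For part (i), I would apply the $p$-adic logarithm on a deep enough congruence subgroup $1+\pi^n\CO_K$ to identify it with a free $\BZ_p$-module of rank $[K:\BQ_p]=2$; hence $1+\pi\CO_K$ itself is a $\BZ_p$-module of rank $2$, whose torsion part equals $\mu_{p^\infty}\cap K^\times$. Since $p$ is odd and $[K:\BQ_p]=2$, this torsion part equals $\mu_p\cap K^\times$, which is non-trivial iff $[\BQ_p(\zeta_p):\BQ_p]=p-1$ divides $2$, forcing the single exceptional case $p=3$, $K=\BQ_3(\sqrt{-3})$ in which it equals $\mu_3$. The map $\alpha\mapsto \alpha/c(\alpha)$ has kernel $(1+\pi\CO_K)\cap \BQ_p^\times = 1+p\BZ_p$ (using $\pi\CO_K\cap \BZ_p = p\BZ_p$ in both cases), so it descends to an injection of $(1+\pi\CO_K)/(1+p\BZ_p)$ into $1+\pi\CO_K$; since the image satisfies $\beta\cdot c(\beta)=1$, on tensoring with $\BQ_p$ it lies in the $(-1)$-eigenspace of $c$, which has $\BQ_p$-dimension one. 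It follows that the image in $(1+\pi\CO_K)_{\mathrm{free}}$ is a $\BZ_p$-submodule of rank one inside a free $\BZ_p$-module, hence itself free of rank one.

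For parts (ii) and (iii), identify $\Gamma$ via local class field theory with the maximal $\BZ_p$-free quotient of $K^\times$ on which $c\in\Gal(K/\BQ_p)$ acts by $-1$; then $\psi$ factors through $\Gamma$ iff, as a character of $K^\times$ via the reciprocity map, it is trivial on the kernel of $K^\times\twoheadrightarrow\Gamma$. In the unramified case I take $\pi=p\in\BQ_p^\times$, so this kernel is generated by $\BQ_p^\times$ and $K^\times_{\mathrm{tor}}=\mu_{p^2-1}$ (all of the latter prime to $p$ since $\zeta_p\notin K$), yielding (ii). In the ramified case, the uniformizer $\delta$ satisfies $\delta^2\in\BQ_p^\times$, so its image in $K^\times/\BQ_p^\times$ has order $2$, prime to $p$, and hence must be killed; moreover $\psi|_{\BQ_p^\times}=1$ already kills the prime-to-$p$ torsion $\mu_{p-1}$ of $K^\times$, and the only remaining potential torsion is $\mu_p\cap K^\times$, addressed by the last condition in (iii).

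The main obstacle is the exceptional case $p=3$, $K=\BQ_3(\sqrt{-3})$, where $\mu_3\subset 1+\delta\CO_K$ introduces genuine $p$-torsion. Part (i) isolates this torsion, and the condition $\psi|_{\mu_p\cap K^\times}=1$ in (iii) is precisely what distinguishes characters factoring through the pro-$p$ group $\Gamma$ from those which merely descend modulo $\BQ_p^\times$ and $\delta$.
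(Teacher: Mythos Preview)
Your proposal is correct and follows essentially the same approach as the paper: analyze $1+\pi\CO_K$ via the $p$-adic logarithm/exponential to isolate the torsion (which is nontrivial only when $p=3$ and $K=\BQ_3(\sqrt{-3})$), and then describe the kernel of $K^\times\twoheadrightarrow\Gamma$ via local class field theory. Your write-up is in fact more complete than the paper's own proof, which only spells out the torsion claim in (i) and the exact sequence $0\to(1+\pi\CO_K)/(1+p\BZ_p)\to K^\times/\BQ_p^\times\to\BZ/2\BZ\to 0$ for (iii); your eigenspace argument for the first assertion of (i) and your explicit treatment of (ii) fill in steps the paper leaves implicit.
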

\begin{proof}
If $K/\BQ_p$ is unramified or $K/\BQ_p$ is ramified and $p\not=3$, then $1+\pi \mathcal{O}_K$ is torsion-free
since the exponential map is defined on $\pi \mathcal{O}_K$.  
In the excluded case note that 
$$1+\pi \mathcal{O}_K=(\mu_3\cap K) \times (1+\pi^2\mathcal{O}_K),$$ and $1+\pi^2\mathcal{O}_K$ is free by the same argument. 

As for part iii), it is a consequence of the fact that the natural exact sequence
\[
0\to (1+\pi\CO_K)/(1+p\BZ_p) \to K^{\times}/\BQ_p^{\times} \to \BZ/2\BZ\to 0
\]
splits uniquely such that $\delta\in K^{\times}$ is the lift of $1 \in \BZ/2\BZ$. 
\end{proof}

\subsubsection{The unramified case}

In this subsection $K$ denotes the unramified quadratic extension of $\BQ_p$.

\begin{lem}\label{lem, csd I}
Let $\mathscr{F}_\pi$ be the Lubin--Tate formal group over $\mathcal{O}_K$ for the uniformizing parameter $\pi$ of $K$. Then 
the associated Lubin--Tate character $$\phi_\pi : G_K \ra \Aut_{\CO_K} T_\pi \mathscr{F}_\pi$$ 
is conjugate symplectic self-dual 
if and only if $\pi=-p$. 
\end{lem}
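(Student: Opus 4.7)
The strategy is to explicate both sides of the defining condition $\phi_\pi|_{\BQ_p^\times} = \omega_{K/\BQ_p}\cdot\chi_\cyc$ of Definition~\ref{def:csd} as concrete characters of $\BQ_p^\times$ via local class field theory, and then compare.

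First I would recall the standard description of the Lubin--Tate character. Under the geometric Frobenius normalization of $\rec_K$ fixed in \S\ref{ss:Kato s-u}, the composition
$$\phi_\pi \circ \rec_K : K^\times \longrightarrow \CO_K^\times$$
is the unique continuous character sending $u \mapsto u^{-1}$ for $u \in \CO_K^\times$ and $\pi \mapsto 1$. Applying this to $\widehat{\BG}_m$ over $\BZ_p$, which is the Lubin--Tate formal group for the uniformizer $p$, one obtains $\chi_\cyc \circ \rec_{\BQ_p}(u) = u^{-1}$ for $u \in \BZ_p^\times$ and $\chi_\cyc \circ \rec_{\BQ_p}(p) = 1$. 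Since $K/\BQ_p$ is the unramified quadratic extension, $\omega_{K/\BQ_p}$ is trivial on $\BZ_p^\times$ and sends $p$ to $-1$.

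Next, since $K/\BQ_p$ is unramified, every uniformizer of $K$ has the form $\pi = p\epsilon$ for some $\epsilon \in \CO_K^\times$. For an arbitrary $\alpha = u p^n \in \BQ_p^\times$ with $u \in \BZ_p^\times$ and $n \in \BZ$, I would rewrite it inside $K^\times$ as $\alpha = (u\epsilon^{-n})\pi^n$ with $u\epsilon^{-n} \in \CO_K^\times$ and compute
$$\phi_\pi(\rec_K(\alpha)) = (u\epsilon^{-n})^{-1} = u^{-1}\epsilon^n,$$
whereas
$$\bigl(\omega_{K/\BQ_p}\cdot\chi_\cyc\bigr)(\alpha) = (-1)^n\cdot u^{-1}.$$
These two characters of $\BQ_p^\times$ agree for every $u$ and $n$ if and only if $\epsilon^n = (-1)^n$ for every $n \in \BZ$, equivalently $\epsilon = -1$, i.e.\ $\pi = -p$.

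The only step requiring genuine attention is pinning down the Lubin--Tate description of $\phi_\pi$ in the geometric Frobenius normalization used throughout the paper; beyond that the argument is a direct unwinding of local class field theory and no deeper input is needed.
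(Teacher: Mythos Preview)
Your proof is correct and follows the same approach as the paper's, reducing the condition $\phi_\pi|_{\BQ_p^\times}=\omega_{K/\BQ_p}\cdot\chi_\cyc$ to the single requirement $\phi_\pi(p)=-1$ and hence $\pi=-p$. One minor correction: under the geometric Frobenius normalization fixed in \S\ref{ss:Kato s-u}, the Lubin--Tate and cyclotomic characters send a unit $u$ to $u$ (not to $u^{-1}$, which is the arithmetic convention); since you apply the same sign to both sides of the comparison, this does not affect the argument or the conclusion.
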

\begin{proof}
Note that $\phi_\pi(\pi)=1$ and $\phi_\pi(u)=u$ for $u \in \mathcal{O}_K^\times$. 
Then $\phi_\pi$ is conjugate symplectic self-dual if and only if 
$\phi_\pi(p)=-1$. 
The assertion follows from this. 
\end{proof}
\begin{remark}
The lemma recovers the well-known fact that 
the formal group of a CM elliptic curve defined over $\BQ_{p}$ with good supersingular reduction at $p$ 
is the Lubin--Tate formal group of height for the uniformizing parameter $-p$. 
\end{remark}

In view of the lemma, we let $\pi=-p$ henceforth. 
Let $\phi^*_\pi$ denote the character  $\phi_\pi \circ c$ for $c\in\mathrm{Gal}(K/\BQ_p)$ the non-trivial element and 
$$\psi_\pi:=\frac{\phi_\pi}{\phi^*_\pi}.$$

\begin{lem}\label{lem, shape of conj self-dual I}
A $p$-adic character of $K^\times$  is conjugate symplectic self-dual and de Rham if and only if  
it is of the form $\phi_\pi \psi_\pi^k\chi$ 
where  $k \in \BZ$ and $\chi$ a finite order character of $K^\times$ with $\chi |_{\BQ_p^\times}=1$. 
\end{lem}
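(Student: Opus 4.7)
The plan is to verify both implications via the structure of de Rham characters of $G_K$ combined with Lemma~\ref{lemma, trivial central character}. For the ``if'' direction, given $\psi = \phi_\pi \psi_\pi^k \chi$, the restrictions to $\BQ_p^\times$ multiply: since $c$ acts trivially on $\BQ_p^\times$, we have $\phi_\pi^*|_{\BQ_p^\times} = \phi_\pi|_{\BQ_p^\times}$ and hence $\psi_\pi|_{\BQ_p^\times} = 1$; combined with $\chi|_{\BQ_p^\times}=1$ and Lemma~\ref{lem, csd I}, this gives $\psi|_{\BQ_p^\times} = \omega_{K/\BQ_p}\chi_{\cyc}$, so $\psi$ is conjugate symplectic self-dual. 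The de Rham property is clear since $\phi_\pi$ is crystalline (Lubin--Tate), $\psi_\pi$ is a ratio of Lubin--Tate characters, and $\chi$ is finite order.

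For the ``only if'' direction, suppose $\psi$ is conjugate symplectic self-dual and de Rham, and set $\xi := \psi \phi_\pi^{-1}$. Comparing restrictions to $\BQ_p^\times$ via Lemma~\ref{lem, csd I} gives $\xi|_{\BQ_p^\times}=1$, so by Lemma~\ref{lemma, trivial central character}(ii), $\xi$ is anticyclotomic with trivial central character; and $\xi$ remains de Rham. I would then compute the Hodge--Tate weights of $\xi$: writing them as $(b_1,b_2)$ at the two embeddings $K\hookrightarrow\ov{\BQ}_p$, the restriction of $\xi$ to a small open neighborhood of $1$ in $\BZ_p^\times\subset K^\times$ looks like $u\mapsto u^{b_1+b_2}$ because both embeddings restrict to the identity on $\BQ_p$; triviality of $\xi$ on $\BZ_p^\times$ forces $b_1+b_2=0$, so $(b_1,b_2)=(k,-k)$ for some $k\in\BZ$, matching the HT weights of $\psi_\pi^k$. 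Now let $\chi := \xi\psi_\pi^{-k}$; this is a de Rham character with HT weights $(0,0)$, hence Hodge--Tate trivial, and by the standard structure of such characters, its restriction to the inertia $I_K$ has finite image, so $\chi = \lambda\cdot\eta$ with $\lambda$ unramified and $\eta$ of finite order. Since $\chi|_{\BQ_p^\times}=1$, in particular $\chi(p)=1$; an unramified character $\lambda$ is trivial on $\BQ_p^\times$ iff $\lambda(p)=1$ iff $\lambda(\pi)=1$ (using $p=-\pi$ and that $\lambda$ kills units), i.e., iff $\lambda=1$. Hence the unramified piece must already be absorbed into $\eta$: more precisely, by adjusting $\eta$ we may take $\chi$ to be finite order with $\chi|_{\BQ_p^\times}=1$, and then $\psi = \phi_\pi\psi_\pi^k\chi$ as desired.

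The main obstacle will be the careful Hodge--Tate bookkeeping in the second paragraph, specifically the passage from ``$\xi$ trivial on $\BZ_p^\times$'' to ``sum of HT weights vanishes'' and the verification that a Hodge--Tate-trivial character of $G_K$ that is trivial on $\BQ_p^\times$ must be finite order. Both are standard once unwound, but they require invoking the structure theorem for de Rham characters of a local field (essentially the classification via Lubin--Tate characters times an unramified character times a finite-order character, with HT weights prescribing the Lubin--Tate exponents) and tracking the two embeddings consistently. The rest is formal manipulation with the explicit formulas $\phi_\pi(u)=u$, $\phi_\pi^*(u)=c(u)$ for $u\in\CO_K^\times$ and $\phi_\pi(\pi)=\phi_\pi^*(\pi)=1$.
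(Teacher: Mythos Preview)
Your approach is essentially the paper's, just reorganized: the paper quotes the structure theorem $\psi = \phi_\pi^a \psi_\pi^b \mu\chi$ (with $\mu$ unramified, $\chi$ finite order) from \cite{BCon}, evaluates at $p$ to force $\mu$ finite order, and then restricts to $\BQ_p^\times$ to obtain $a=1$ and $\chi|_{\BQ_p^\times}=1$; you instead divide by $\phi_\pi$ at the outset and reconstruct the relevant piece of the structure theorem by hand via a Hodge--Tate weight count. One imprecision to fix: from $\chi|_{\BQ_p^\times}=1$ you cannot conclude $\lambda|_{\BQ_p^\times}=1$ (hence $\lambda=1$), only that $\lambda(p)=\eta(p)^{-1}$ is a root of unity, so the unramified character $\lambda$---being determined by its value on a uniformizer---is of finite order, which is all you need to conclude that $\chi$ is finite order.
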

\begin{proof}
Any de Rham character $\psi$ of $G_K$ is of the form $\phi_{\pi}^a \psi_{\pi}^b \mu \chi$,  
where $a, b \in \BZ$, $\mu$ is an unramified character and $\chi$ a finite order character 
(cf. \cite[App.~B]{BCon}). 

Considering the value of such a conjugate symplectic self-dual $\psi$ at $p$, it follows that $\mu$ is of finite order. So we may assume that $\mu=1$. Then, 
\[
\omega_{K/\BQ_p}\chi_{\mathrm{cyc}}=\psi|_{\BQ_p^\times}=\omega_{K/\BQ_p}^a\chi_{\mathrm{cyc}}^a\chi|_{\BQ_p^\times}.
\]
Hence, $a=1$ and $\chi|_{\BQ_p^\times}=1$.
\end{proof}

\begin{prop}\label{prop, unramified p-adic WD char}
Let  $\psi$ be a conjugate symplectic self-dual $p$-adic character $$\psi=\phi_\pi \psi_\pi^k\chi: K^\times \rightarrow L^\times$$ 
for  $L$ a finite extension of $K$, $k \in \BZ$ and $\chi$ a finite order character with $\chi |_{\BQ_p^\times}=1$. 
Pick an embedding $\tau : L \hookrightarrow \overline{\BQ}_p$ and an isomorphism $\iota: \overline{\BQ}_p \cong \BC$. 
Then the Weil--Deligne representation $\mathrm{WD}(\psi)_{\iota \circ \tau}$ associated to $\psi$ is the character $\phi_K \chi$,  where 
$\phi_K$ is the unique unramified conjugate symplectic self-dual Weil-Deligne character of $K^\times$ 
and  $\chi$ is regarded as a $\BC$-valued character via the embedding $\iota\circ \tau: L \hookrightarrow \BC$. 
\end{prop}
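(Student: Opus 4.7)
The plan is to decompose $\psi = \psi_0 \cdot \chi$, where $\psi_0 := \phi_\pi \psi_\pi^k$ is crystalline and $\chi$ is finite order with $\chi|_{\BQ_p^\times}=1$, and to analyze each factor separately. Since Fontaine's $D_{\rm pst}$-functor is compatible with tensor products, so is the Weil--Deligne functor of \S\ref{ss:Gal-WD}, giving
\[
\mathrm{WD}(\psi)_{\iota\circ \tau} = \mathrm{WD}(\psi_0)_{\iota\circ \tau} \otimes \mathrm{WD}(\chi)_{\iota\circ \tau}.
\]

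First I would handle $\chi$: because $\chi$ has finite order and values in $L^\times$, it factors through a finite quotient of $G_K$ and is in particular potentially unramified (hence potentially crystalline with all Hodge--Tate weights equal to $0$). The standard identification of $D_{\rm pst}$ on Artin characters then yields $\mathrm{WD}(\chi)_{\iota\circ\tau} = \iota\circ\tau\circ \chi$, regarded as a $\BC$-valued character of $W_K \cong K^\times$ via local Artin reciprocity.

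Next I would analyze $\psi_0$. Both $\phi_\pi$ and its Galois conjugate $\phi_\pi^*$ are Lubin--Tate characters (for $\pi = -p$), so they are crystalline; hence so is $\psi_0 = \phi_\pi \psi_\pi^k$. Consequently $\mathrm{WD}(\psi_0)_{\iota\circ\tau}$ is an unramified character of $W_K$. To identify it with $\phi_K$, I would use the conjugate symplectic self-duality of $\psi$: the condition $\psi|_{\BQ_p^\times} = \omega_{K/\BQ_p}\chi_{\rm cyc}$ and $\chi|_{\BQ_p^\times}=1$ force $\psi_0|_{\BQ_p^\times} = \omega_{K/\BQ_p}\chi_{\rm cyc}$, so $\psi_0$ itself is conjugate symplectic self-dual as a Galois character. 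By the compatibility of $\mathrm{WD}$ with induction (Proposition~\ref{prop, induced p-adic WD}), the Weil--Deligne representation $\mathrm{Ind}_{K/\BQ_p}\mathrm{WD}(\psi_0)_{\iota\circ\tau} \cong \mathrm{WD}(\mathrm{Ind}_{K/\BQ_p}\psi_0)_{\iota\circ\tau}$ is symplectic self-dual of weight $-1$; Lemma~\ref{prop, conj self-dual} (applied on the Weil--Deligne side) then tells us that $\mathrm{WD}(\psi_0)_{\iota\circ\tau}$ is a conjugate symplectic self-dual WD character of $K^\times$. Being both unramified and conjugate symplectic self-dual, it must equal $\phi_K$ by the uniqueness statement in Lemma~\ref{lem, wdc unram}(i).

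Combining the two factors yields $\mathrm{WD}(\psi)_{\iota\circ\tau} = \phi_K \cdot (\iota\circ\tau\circ\chi)$, as claimed. The main subtlety is the identification $\mathrm{WD}(\chi)_{\iota\circ\tau} = \iota\circ\tau\circ\chi$, which requires carefully unwinding Fontaine's construction of $D_{\rm pst}$ on potentially unramified characters and tracking its compatibility with the $p$-adic reciprocity normalization adopted in \S\ref{ss:notation}; once this is in hand, the remaining steps reduce to the formal properties of $\mathrm{WD}$ (tensor-product and induction compatibility) together with the one-line uniqueness of Lemma~\ref{lem, wdc unram}(i).
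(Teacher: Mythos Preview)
Your proof is correct and follows essentially the same approach as the paper: decompose $\psi=\psi_0\chi$ with $\psi_0=\phi_\pi\psi_\pi^k$ crystalline, use tensor compatibility of $\mathrm{WD}$, and identify $\mathrm{WD}(\psi_0)$ with $\phi_K$ via crystallinity (hence unramified) plus conjugate self-duality plus the uniqueness in Lemma~\ref{lem, wdc unram}(i). The paper is slightly terser on why $\mathrm{WD}(\psi_0)$ is conjugate symplectic self-dual (it just asserts this), whereas you justify it via induction compatibility; the paper also appends an alternate explicit computation of $D_{\mathrm{pst}}(\phi_\pi)$, $D_{\mathrm{pst}}(\phi_\pi^*)$, $D_{\mathrm{pst}}(\psi_\pi)$ showing directly that $\varphi^2$ acts by $\pi^{-1}$, which you do not need.
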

\begin{proof}
Since $\phi_\pi \psi_\pi^k$ is crystalline, it corresponds to $\phi_K$, the unique unramified 
conjugate symplectic self-dual character. Hence, the proposition follows by noting that 
 the functor associating a $p$-adic potentially semistable Galois representation to 
a Weil--Deligne representation is compatible with tensor product.  

In the following we describe an alternate explicit argument. 

We may assume that $\psi\in\{\phi_\pi, \phi_\pi^*, \psi_\pi\}$. 
First we consider the case $\psi \in \{\phi_\pi , \phi^*_\pi\}$. 
Let $D(\psi)=D_{\mathrm{pst}}(\psi)$ be the associated filtered $(\varphi, N, G_K)$-module.  
It is a $K^{\rm{un}}\otimes_{\BQ_p} L$-module of rank one. 
Since $\psi$ is crystalline, note that 
$D(\psi)=K^{\rm{un}}\otimes_{K} D$ where $D:=D_{\rm{crys}}(\psi)$
 is a  $K\otimes_{\BQ_p} L$-module of rank one with $N=0$. For $\psi=\phi_\pi$, the operator 
$\varphi^2$  acts by $\pi^{-1}$ on $D$, and  
 for $\psi=\phi^*_\pi$, it also 
acts by $\pi/p^2=\pi^{-1}$ 
since $\phi_\pi \cdot \phi_\pi^*$ is the cyclotomic character over  $K$. 
In particular,  for $\psi=\psi_\pi$, the operator $\varphi^2$  acts trivially.

Now we consider the case $\psi=\phi_\pi \psi_\pi^k$.
Since the $G_K$-action on $D$ is trivial, 
the Weil group acts on the associated Weil--Deligne representation by 
\[
g\cdot v:=g\circ \varphi^{fv(g)}v=g\circ \varphi^{2v(g)}v=\pi^{-v(g)}v, 
\] 
which coincides with $\phi_K$. 
\end{proof}

\subsubsection{The ramified case}\label{subsubsection, ramified p-adic WD}

In this subsection $K$ denotes a ramified quadratic extension of $\BQ_p$. 
Let $\delta \in K$ be a uniformizer such that $\delta^2 \in \BQ_p$.

Let $\phi_\delta$ be the Lubin--Tate character with parameter $\delta$. 
Being crystalline, it is not conjugate symplectic self-dual, however its certain twist is as we now introduce. 
Let $\theta$ be a tamely ramified $p$-adic character of $K^\times$ such that
\[
\theta(\delta^2)=\left( \frac{-1}{p}\right) \cdot \frac{\delta^2} {p}, \quad \theta|_{\BZ_p^\times}=\omega_{K/\BQ_p}|_{\BZ_p^\times}. 
\]
Depending on the choice of a square root of $p^* :=\left(\frac{-1}{p}\right)p$, 
there are exactly two such characters $\theta$ characterized by $\theta(\delta)=\delta/\sqrt{p^*}$.
Put 
$$\phi_{\theta}=\phi_{\delta}\cdot\theta, \quad
\psi_\theta=\frac{\phi_\theta}{\phi_\theta\circ c}$$
for $c\in\Gal(K/\BQ_p)$ the non-trivial element.

\begin{lem}\label{lem, shape of conj self-dual II}
\noindent\begin{itemize}
\item[i)] The $p$-adic character $\phi_\theta$ is conjugate symplectic self-dual. 
\item[ii)] 
A $p$-adic character of $G_K$  is conjugate symplectic self-dual and de Rham if and only if  
it is of the form $\phi_\theta \psi^k_\theta\chi$ 
where  $k \in \BZ$ and $\chi$ a finite order character with $\chi |_{\BQ_p^\times}=1$. 
\end{itemize}
\end{lem}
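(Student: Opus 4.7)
The plan is to verify (i) by a direct computation of the restriction $\phi_\theta|_{\BQ_p^{\times}}$ on generators, and then to deduce (ii) by combining (i) with the classification of de Rham characters of $G_K$, essentially mirroring the argument used in the proof of Lemma~\ref{lem, shape of conj self-dual I}.

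For (i), I would decompose $\BQ_p^{\times} = \langle p\rangle \times \BZ_p^{\times}$ and evaluate $\phi_\theta = \phi_\delta\cdot \theta$ on each factor. On $\BZ_p^{\times}$, the Lubin--Tate identity $\phi_\delta(u) = u$ (cf.~Lemma~\ref{lem, csd I}) together with $\theta|_{\BZ_p^{\times}} = \omega_{K/\BQ_p}|_{\BZ_p^{\times}}$ yields $\phi_\theta(u) = u\cdot\omega_{K/\BQ_p}(u) = \omega_{K/\BQ_p}(u)\chi_{\cyc}(u)$, under the normalization $\chi_{\cyc}(u) = u$ on $\BZ_p^{\times}$ fixed in \S\ref{ss:Kato s-u}. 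On $p$, I would write $\delta^2 = p\cdot u_0$ with $u_0 \in \BZ_p^{\times}$ (so $u_0 = 1$ or $-1$ according to whether $K = \BQ_p(\sqrt{p})$ or $K = \BQ_p(\sqrt{-p})$), combine $\phi_\delta(\delta^2) = 1$ with $\theta(\delta^2) = \left(\frac{-1}{p}\right)\delta^2/p$, and use $-\delta^2 = N_{K/\BQ_p}(\delta)$ to get $\omega_{K/\BQ_p}(-\delta^2) = 1$, i.e.\ $\omega_{K/\BQ_p}(-1) = \omega_{K/\BQ_p}(\delta^2)$. A short sign-bookkeeping in each of the two cases for $u_0$ then yields $\phi_\theta(p) = \omega_{K/\BQ_p}(p)$, which equals $\omega_{K/\BQ_p}(p)\chi_{\cyc}(p)$ by the normalization $\chi_{\cyc}(p) = 1$.

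For (ii), the ``if'' direction follows from (i) together with two observations: $\psi_\theta|_{\BQ_p^{\times}} = 1$ (since conjugation by $c$ fixes $\BQ_p^{\times}$, so $(\phi_\theta\circ c)|_{\BQ_p^{\times}} = \phi_\theta|_{\BQ_p^{\times}}$; this is a special case of Lemma~\ref{lemma, trivial central character}), and $\chi|_{\BQ_p^{\times}} = 1$ by hypothesis; de Rhamness is automatic since $\phi_\delta$ is crystalline, $\theta$ and $\chi$ are finite order, and $\psi_\theta$ is a ratio of de Rham characters. For the ``only if'' direction, I would invoke the classification \cite[App.~B]{BCon}: every de Rham character of $G_K$ has the form $\phi_\delta^a(\phi_\delta\circ c)^b\mu\chi'$ with $a, b \in \BZ$, $\mu$ unramified, and $\chi'$ finite order. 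Absorbing $\theta^{a+b}$ and its conjugate into the finite-order factor, this can be rewritten as $\phi_\theta^{a+b}\psi_\theta^{-b}\mu\chi''$ with $\chi''$ still finite order. Imposing $\psi|_{\BQ_p^{\times}} = \omega_{K/\BQ_p}\chi_{\cyc}$ and using $\phi_\theta|_{\BQ_p^{\times}} = \omega_{K/\BQ_p}\chi_{\cyc}$, $\psi_\theta|_{\BQ_p^{\times}} = 1$, and $\mu|_{\BZ_p^{\times}} = 1$ forces $a+b = 1$; the residual identity $(\mu\chi'')|_{\BQ_p^{\times}} = 1$ then forces $\chi''|_{\BZ_p^{\times}} = 1$ and $\mu(p) = \chi''(p)^{-1}$, so $\mu$ is of finite order. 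Absorbing $\mu$ into $\chi''$ yields the claimed form $\phi_\theta\psi_\theta^k\chi$ with $k = -b$ and $\chi|_{\BQ_p^{\times}} = 1$.

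The main obstacle will be the bookkeeping in (i), which requires careful tracking of the local class field theory normalization (in particular $\chi_{\cyc}(p) = 1$ and $\chi_{\cyc}(u) = u$ on $\BZ_p^{\times}$), the Lubin--Tate convention $\phi_\delta(\delta) = 1$, the sign in $\theta(\delta^2) = \left(\frac{-1}{p}\right)\delta^2/p$, and the norm residue identity $\omega_{K/\BQ_p}(-\delta^2) = 1$. Once the defining equality of conjugate symplectic self-duality in (i) is in hand, the argument for (ii) is a formal consequence of the structure theorem for de Rham characters, exactly as in the unramified case.
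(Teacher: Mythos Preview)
Your proposal is correct and follows essentially the same approach as the paper: direct verification of $\phi_\theta|_{\BQ_p^\times} = \omega_{K/\BQ_p}\chi_{\cyc}$ for (i), and the de Rham character classification plus the argument of Lemma~\ref{lem, shape of conj self-dual I} for (ii). The paper's proof is extremely terse (``follows directly from the definition'' for (i), and for (ii) it writes the de Rham classification immediately in the form $\phi_\theta^a\psi_\theta^b\mu\chi$ rather than $\phi_\delta^a(\phi_\delta\circ c)^b\mu\chi'$ as you do), but the underlying computation is the same. One small remark: your case split on $u_0 \in \{\pm 1\}$ is unnecessary, since the computation $\phi_\theta(p) = \left(\frac{-1}{p}\right)/\omega_{K/\BQ_p}(u_0)$ together with $\omega_{K/\BQ_p}(\delta^2) = \omega_{K/\BQ_p}(-1) = \left(\frac{-1}{p}\right)$ works uniformly for any unit $u_0 = \delta^2/p$; also, the normalization $\chi_{\cyc}(u) = u$ on $\BZ_p^\times$ is not stated explicitly in \S\ref{ss:Kato s-u} but is implicit in the paper's conventions (cf.\ the proof of Lemma~\ref{lem, csd I}).
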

\begin{proof}
Part i) follows directly from the definition. 

As for ii), 
any de Rham character $\psi$ of $G_K$ is of the form $\phi_\theta^a \psi_\theta^b \mu \chi$,  
where $a, b \in \BZ$, $\mu$ an unramified character and $\chi$ a finite order character. 
Using it, one may proceed as in the proof of Lemma~\ref{lem, shape of conj self-dual I}. 
\end{proof}

\begin{prop}\label{prop, ramified p-adic WD char}
Let  $\psi$ be a conjugate symplectic self-dual $p$-adic character $$\psi=\phi_\theta \psi_\theta^k\chi: G_K \rightarrow L^\times$$ 
where  $L$ is a finite extension of $K$, $k \in \BZ$ and $\chi$ a finite character with $\chi |_{\BQ_p^\times}=1$. 
Pick an embedding $\tau : L \hookrightarrow \overline{\BQ}_p$ and an isomorphism $\iota: \overline{\BQ}_p \cong \BC$. 
Then the Weil--Deligne representation $\mathrm{WD}(\psi)_{\iota \circ \tau}$ associated to $\psi$ is the character $\psi_{K} \chi$,  where 
$\psi_K$ is the unique conjugate symplectic self-dual character of $K^\times$ of conductor $1$
 with 
 $$\psi_K(\delta)=\iota \circ \tau(\psi(\delta)/\delta)=(-1)^k\left(\frac{-1}{p}\right)^k\iota \circ \tau(\theta(\delta)/\delta)$$ 
 as in~Lemma~\ref{lem, WD csd} i) 
 and  $\eta$ is regarded as a $\BC$-valued character via the embedding $\iota\circ \tau$.  
\end{prop}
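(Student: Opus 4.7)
The plan is to adapt the template of Proposition~\ref{prop, unramified p-adic WD char} to the ramified setting, where the additional complications arise because $K_0=\BQ_p$ (so $D_{\mathrm{crys}}$ lives over $L$ rather than $K_0\otimes_{\BQ_p}L$) and the uniformizer $\delta$ is not contained in the maximal unramified subfield $K_0$.

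First I would invoke compatibility of the functor $\mathrm{WD}(\cdot)_{\iota\circ\tau}$ with tensor products of de Rham representations to reduce the assertion to the individual factors: the Lubin--Tate character $\phi_\delta$ (crystalline), the tame finite order twist $\theta$, the ratio $\psi_\theta=\phi_\theta/\phi_\theta^c$, and the finite order character $\chi$. For any character of finite order the functor is tautological, i.e.\ $\mathrm{WD}(\theta)_{\iota\circ\tau}=\iota\circ\tau\circ\theta$ (and similarly for $\chi$), so the substantive content is to identify $\mathrm{WD}(\phi_\delta)$ and $\mathrm{WD}(\psi_\theta)$.

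Next I would analyze $D_{\mathrm{pst}}(\phi_\delta)=D_{\mathrm{crys}}(\phi_\delta)$ (since $\phi_\delta$ is crystalline). Because $K_0=\BQ_p$, this is a one-dimensional $L$-vector space on which $G_K$ acts trivially and $\varphi$ acts by a scalar $\alpha_\delta\in L^\times$. I would pin down $\alpha_\delta$ via the intrinsic identity $\phi_\delta\cdot\phi_\delta^c=\chi_{\mathrm{cyc}}|_{G_K}$ of Lubin--Tate characters, combined with the known Frobenius eigenvalue on $D_{\mathrm{crys}}(\chi_{\mathrm{cyc}}|_{G_K})$. Because $f=[K_0:\BQ_p]=1$, the Weil group action on the Weil--Deligne representation is then $g\cdot v=\varphi^{v_K(g)}v=\alpha_\delta^{v_K(g)}v$, so $\mathrm{WD}(\phi_\delta)$ is the unramified character of $W_K$ sending a geometric Frobenius to $\alpha_\delta$. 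A parallel analysis identifies the crystalline part of $\psi_\theta$.

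With $\mathrm{WD}(\phi_\delta)$ unramified and $\mathrm{WD}(\theta)=\iota\circ\tau\circ\theta$ of conductor one, the product $\mathrm{WD}(\phi_\theta)$ is a character of $K^\times$ of conductor one which is conjugate symplectic self-dual (transfer the self-duality of $\phi_\theta$ via Lemma~\ref{prop, conj self-dual} applied on both the Galois and Weil--Deligne sides). By Lemma~\ref{lem, WD csd}(i), such a character is uniquely determined by its value at $\delta$, so it suffices to verify $\mathrm{WD}(\phi_\theta)(\delta)=\iota\circ\tau(\phi_\theta(\delta)/\delta)$; the factor $1/\delta$ reflects the single Hodge--Tate weight $1$ of $\phi_\theta$ (equivalently, the corresponding twist of $\chi_{\mathrm{cyc}}|_{G_K}$). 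An analogous computation for $\psi_\theta^k$ then produces the factor $(-1)^k(-1/p)^k$, using $\psi_\theta(\delta)=\theta(\delta)/\theta^c(\delta)$, the relation $\theta^c(\delta)=\theta(-\delta)=\theta(-1)\theta(\delta)$, and the explicit value of $\omega_{K/\BQ_p}(-1)$ on the two ramified quadratic extensions of $\BQ_p$. The second equality displayed in the statement is then a bookkeeping exercise using $\phi_\delta(\delta)=1$.

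The main obstacle is the careful determination of the Frobenius eigenvalue $\alpha_\delta$ on $D_{\mathrm{crys}}(\phi_\delta)$ when $\delta\notin K_0=\BQ_p$: unlike in the unramified case of Proposition~\ref{prop, unramified p-adic WD char}, one cannot simply read off the scalar from the uniformizer, and the computation has to be done indirectly through the cyclotomic identity above. An alternative route is to invoke the general recipe expressing the Weil--Deligne representation of a de Rham character in terms of the character and its labeled Hodge--Tate weights, but then one must carefully keep track of the embedding conventions $\tau\colon L\hookrightarrow\overline{\BQ}_p$ and $\iota\colon\overline{\BQ}_p\cong\BC$ to land on the stated formula.
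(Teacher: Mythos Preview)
Your approach is essentially the same as the paper's: reduce via tensor-product compatibility of $\mathrm{WD}(\cdot)_{\iota\circ\tau}$ to the building blocks $\phi_\delta$, unramified characters, and finite-order characters, and then proceed as in Proposition~\ref{prop, unramified p-adic WD char}. The paper's proof is a one-line pointer to that earlier argument, whereas you spell out the ramified-case subtleties ($K_0=\BQ_p$, so $f=1$ and the Frobenius eigenvalue on $D_{\mathrm{crys}}(\phi_\delta)$ must be extracted indirectly) in more detail; this is fine and does not constitute a different route.
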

\begin{proof}
It suffices to 
calculate $WD(\rho)_{\iota\circ \tau}$ for $\rho$ being $\phi_\delta$, an unramified character or a finite character, for which one may 
proceed as in the proof of Lemma \ref{prop, unramified p-adic WD char}. 
\end{proof}

\subsection{Rubin-type conjectures}  
\subsubsection{Set-up}  As before, fix an algebraic closure $\ov{\BQ}_p$ of $\BQ_p$ 
and finite extensions of $\BQ_p$ are taken in $\ov{\BQ}_p$.

Let $K$ be a quadratic extension of $\BQ_p$ and $L$ a finite extension of $K$.  
Let $$\phi: G_K \rightarrow L^\times$$
be a de Rham conjugate symplectic self-dual character
with Hodge--Tate\footnote{precisely, the Hodge--Tate weights of $\mathrm{Ind}_{K/\BQ_p}\phi$} weights $(k+1,-k)$ for $k\in\BZ_{\geq 0}$. 
For example, we may take $\phi=\phi_{\pi}\psi_{\pi}^k$ as in~Lemma~\ref{lem, csd I} if $K$ is unramified, 
and $\phi=\phi_{\theta}\psi_{\theta}^k$ as in Lemma \ref{lem, shape of conj self-dual II} if $K$ is ramified. 
Let $T_{\phi}$ be the free $\mathcal{O}_L$-module of rank one on which $G_K$ acts by $\phi$. 

 Let $K_\infty$ denote the anticyclotomic $\BZ_p$-extension of $K$ with Galois group $\Gamma$, 
 namely the unique $\BZ_p$-extension of $K$ which is generalized dihedral over $\BQ_p$.   Let $\Lambda$ denote the Iwasawa algebra $\mathcal{O}_L[\![\Gamma]\!]$.
We denote by $K_n$ the $n$-th layer. 

We regard a  character $\chi:  \Gamma \rightarrow \overline{\BQ}^\times_p$
as a character of $G_K$ via the natural map
 $G_K \rightarrow \mathrm{Gal}(K_\infty/K)=\Gamma$.  
 \begin{defn}\noindent 
 \begin{itemize}
\item[i)] For an integer $n\geq 0$, let $\Xi_n$ denote the set of finite order characters $\chi:  \Gamma \rightarrow \overline{\BQ}^\times_p$ of conductor $a(\chi)=n$. Put $\Xi=\cup_n \Xi_n$. 
\item[ii)] Let $\Xi_{\rm dR}$ denote the subset of $\BC_p^\times$-valued characters of $\Gamma$ which are de Rham. 
Define a partition of $\Xi_{\rm dR}$  by
$$
\Xi_{\rm dR}=\Xi_{\phi,\rm dR}^+ \cup \Xi_{\phi,\rm dR}^{-}, \qquad
{\Xi}^\pm_{\phi,\rm dR}=\{\chi\in\Xi_{\rm dR}  \;|\;\ \hat{\varepsilon}({\rm \Ind}_{K/\BQ_p}\phi\chi^{-1})=\pm 1\}.
$$  
\end{itemize}
 \end{defn}

We emphasize that the definition of the partition $\Xi_{\phi,\rm dR}^\pm$ is based on completed epsilon constants of self-dual induced representations, which do not depend on choices of additive character and embedding in view of self-duality, as opposed to 
conjugate symplectic self-dual characters.

A primary Iwasawa-theoretic object is the anticyclotomic deformation 
$$ \BT_\phi= T_\phi^{\otimes -1}(1) \otimes_{\CO_L}\Lambda^{\iota},$$
where $\Lambda^\iota$ is the $\Lambda$-module free of rank one equipped with
 $G_K$ action defined by $g a = [g]^{-1}a$ for $g\in G_K$ and $a\in \Lambda$, and 
 $[g]\in \Gamma (\subseteq \Lambda^{\times})$ denotes the image of $ g\in G_K$ under the natural surjection 
 $G_K\to \Gamma.$  
Put 
$$
\tilde{\mathbb{T}}_\phi=\mathrm{Ind}_{K/\BQ_p} (T_{\phi}^{\otimes -1}(1)\otimes_{\mathcal{O}_L} \Lambda^{\iota}). 
$$

\begin{lem}\label{lem, induced rep satisfy the condition}\noindent
\begin{itemize}
\item[i)] The $\Lambda$-module $\tilde{\mathbb{T}}_\phi$ is symplectic self-dual of rank two. 
\item[ii)] Suppose that $p>3$ or $K=\BQ_3(\sqrt{3})$ if $p=3$. 
Then $\tilde{\BT}_\phi$ is generic, i.e.  $H^0(\BQ_p, \tilde{\mathbb{T}}_\phi\otimes_{\Lambda} (\Lambda/\mathfrak{m}_{\Lambda}))=0$.
\end{itemize}
\end{lem}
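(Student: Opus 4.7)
The plan is to verify both assertions directly from the explicit description $\tilde{\mathbb{T}}_\phi = \mathrm{Ind}_{K/\BQ_p}(\eta)$ where $\eta := \phi^{-1}\chi_{\cyc} \otimes_{\CO_L} \Lambda^{\iota}$ is the rank one $\Lambda$-valued character of $G_K$ whose underlying free $\Lambda$-module of rank one gives $\BT_\phi$. Throughout, $\phi^c$ denotes the twist $g \mapsto \phi(cgc^{-1})$ for $c \in G_{\BQ_p}$ a lift of the nontrivial element of $\Gal(K/\BQ_p)$.

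For part (i), I would first observe that $\eta$ is itself conjugate symplectic self-dual with respect to $\BQ_p$, in the sense that $\eta\cdot\eta^c = \chi_{\cyc}$ on $G_K$. The character $\phi$ is conjugate symplectic self-dual by hypothesis, and one sees from the transfer formula (or equivalently from $\phi|_{\BQ_p^\times} = \omega_{K/\BQ_p}\chi_{\cyc}$ restricted to $G_K$) that this is equivalent to $\phi\phi^c = \chi_{\cyc}|_{G_K}$; hence $(\phi^{-1}\chi_{\cyc})(\phi^{-1}\chi_{\cyc})^c = \chi_{\cyc}^{-1}\chi_{\cyc}^2 = \chi_{\cyc}$. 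The anticyclotomic factor $\Lambda^{\iota}$ is killed by $1+c$ by Lemma~\ref{lemma, trivial central character}, so it contributes trivially to $\eta\eta^c$. Combining these, $\eta\eta^c = \chi_{\cyc}$, and Example~\ref{ex, ssd} e) then shows that $\tilde{\mathbb{T}}_\phi$ is a symplectic self-dual $\Lambda$-representation of $G_{\BQ_p}$. The rank is $2$ because $[K:\BQ_p] = 2$ and $\eta$ is $\Lambda$-free of rank one. Equivalently, one computes $\det\tilde{\mathbb{T}}_\phi = \omega_{K/\BQ_p}\cdot \eta|_{\BQ_p^{\times}} = \omega_{K/\BQ_p}^{2}\chi_{\cyc} = \chi_{\cyc}$ and invokes Example~\ref{example, ssd two} b2).

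For part (ii), I would reduce $\tilde{\mathbb{T}}_\phi$ modulo $\mathfrak{m}_\Lambda$. Since the maximal ideal of $\Lambda = \CO_L[\![\Gamma]\!]$ corresponds to the augmentation $[\gamma]\mapsto 1$ followed by the reduction $\CO_L \twoheadrightarrow \BF := \CO_L/\mathfrak{m}_L$, the anticyclotomic variable specializes to $1$ and the residual representation is
\[
\ov{\tilde{\mathbb{T}}}_\phi \cong \mathrm{Ind}_{K/\BQ_p}(\ov{\phi}^{-1}\ov{\chi}_{\cyc})
\]
as $\BF[G_{\BQ_p}]$-module. By Frobenius reciprocity,
\[
H^0(\BQ_p, \ov{\tilde{\mathbb{T}}}_\phi) = \mathrm{Hom}_{G_K}(\mathbf{1}, \ov{\phi}^{-1}\ov{\chi}_{\cyc}),
\]
which is nonzero if and only if $\ov{\phi} = \ov{\chi}_{\cyc}$ on $G_K$. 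Reducing the identity $\phi\phi^c = \chi_{\cyc}$ modulo $\mathfrak{m}_L$ yields $\ov{\phi}\,\ov{\phi}^c = \ov{\chi}_{\cyc}$ on $G_K$. If $\ov{\phi} = \ov{\chi}_{\cyc}$, then $\ov{\phi}^c = \mathbf{1}$; applying $c$ once more, $\ov{\phi} = \mathbf{1}$, so $\ov{\chi}_{\cyc} = \mathbf{1}$ on $G_K$, i.e.\ $\mu_p \subset K$. Since $[\BQ_p(\mu_p):\BQ_p] = p-1$ while $[K:\BQ_p] = 2$, this forces $p = 3$ and $K = \BQ_3(\mu_3) = \BQ_3(\sqrt{-3})$, precisely the excluded case. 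In all cases allowed by the hypothesis the obstruction is avoided, and we conclude $H^0(\BQ_p, \ov{\tilde{\mathbb{T}}}_\phi) = 0$.

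No serious obstacle is anticipated; the heart of the argument is the reformulation of conjugate symplectic self-duality as the multiplicative identity $\phi\phi^c = \chi_{\cyc}$ on $G_K$, after which both parts follow by short computations and a degree count ruling out $\mu_p \subset K$ outside the stated exceptional case.
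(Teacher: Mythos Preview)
Your proof is correct. For part (i) you follow essentially the same route as the paper: both arguments boil down to the determinant computation $\det\tilde{\mathbb T}_\phi=\omega_{K/\BQ_p}\cdot(\phi^{-1}\chi_{\cyc}|_{\BQ_p^\times})=\chi_{\cyc}$. One small quibble: your reference to Example~\ref{example, ssd two}~b2) is misplaced, since that item concerns de Rham representations over the integers of a $p$-adic field, not over $\Lambda$; the general rank-two criterion you want is Example~\ref{ex,~ssd}~b). Also, conjugate symplectic self-duality of $\phi$ \emph{implies} $\phi\phi^c=\chi_{\cyc}|_{G_K}$ via the transfer formula, but calling this ``equivalent'' overstates slightly (only this direction is needed).

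For part (ii) your argument genuinely differs from the paper's. The paper first uses $\phi^{-1}\chi_{\cyc}=\phi^c$ to replace $\mathrm{Ind}(\overline{\phi}^{-1}\overline{\chi}_{\cyc})$ by $\mathrm{Ind}(\overline{\phi})$, reducing to $\overline{\phi}\neq\mathbf 1$, and then verifies this by a case-by-case analysis: for $K/\BQ_p$ unramified it evaluates $\overline{\phi}(p)=-1$; for $K$ ramified and $p>3$ it shows $\overline{\phi}|_{\BZ_p^\times}$ is nontrivial; for $K=\BQ_3(\sqrt{3})$ it computes $\overline{\phi}(\delta)$ to be a primitive fourth root of unity. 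You instead keep the original character $\overline{\phi}^{-1}\overline{\chi}_{\cyc}$ and argue uniformly: if it were trivial then $\overline{\phi}=\overline{\chi}_{\cyc}$, the reduced self-duality $\overline{\phi}\,\overline{\phi}^c=\overline{\chi}_{\cyc}$ forces $\overline{\phi}=\mathbf 1$, whence $\mu_p\subset K$, which a degree count excludes outside $p=3$, $K=\BQ_3(\sqrt{-3})$. Your route is shorter and case-free, and in fact yields a nominally stronger conclusion (it also covers $p=3$ with $K$ unramified, which the lemma's hypothesis excludes but the paper's case analysis also handles). The paper's approach, on the other hand, exhibits concrete witnesses to $\overline{\phi}\neq\mathbf 1$ in each case.
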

\begin{proof}Note that $\Lambda^{\iota}|_{\BQ_p^\times}$ is the trivial representation 
by Lemma \ref{lemma, trivial central character} ii), and so 
\begin{align*}
 \det \tilde{\mathbb{T}}_\phi 
&=\omega_{K/\BQ_p} \cdot (\phi^{-1}\chi_{\rm{cyc}} |_{\BQ_p^\times }\otimes \Lambda^{\iota}|_{\BQ_p^\times})=\chi_{\rm{cyc}} 
\end{align*}
by the determinant formula for induced representations (note that 
$\chi_{\rm{cyc}}|_{\BQ_p^{\times}}=\chi_{\rm{cyc}}^2$), 
yielding part i). 

As for part ii), first note that $H^0(\BQ_p, \tilde{\mathbb{T}}_\phi\otimes (\Lambda/\mathfrak{m}_{\Lambda}))$ is isomorphic to
\[ 
H^0(\BQ_p, \mathrm{Ind}_{K/\BQ_p}(T_{\phi}^{\otimes -1}(1)\otimes_{\CO_L} \CO_L/\fm_{\CO_L} ) )
\cong H^0(\BQ_p, \mathrm{Ind}_{K/\BQ_p}(T_{\phi}\otimes_{\CO_L} \CO_L/\fm_{\CO_L} ) )
\cong 
H^0(K, \overline{T}_\phi)\]
for $\overline{T}_{\phi}:=T_\phi \otimes_{\CO_L}\CO_L/\fm_{\CO_L}$,
where the second isomorphism is due to the self-duality of $\phi$ and the definition of $\mathrm{Ind}_{K/\BQ_p}$. 
We claim that $H^0(K, \overline{T}_\phi)=0$, i.e. $G_K$ acts non-trivially on $\ov{T}_\phi$. 
Indeed, 
if $K/\BQ_p$ is unramified, 
 then 
$p \in K^\times$ acts on $\ov{T}_\phi$ 
 as multiplication by $-1$. 
 As for the ramified case, first suppose that $p>3$. 
 Then $\ov{\phi}|_{\BZ_p^\times}$ is non-trivial. 
If $p=3$ and $K=\BQ_3(\sqrt{3})$, then $\omega_{K/\BQ_3}(3)=-1$. In turn $\ov{\phi}(\delta)$ is a primitive fourth root of unity for $\delta\in K^\times$ as in \S\ref{subsubsection, ramified p-adic WD} and so $\delta$
acts non-trivially on $\ov{T}_\phi$. 

\end{proof}

\begin{remark}
 The mod $p$ reduction of $\mathrm{Ind}_{K/\BQ_p}\tilde{\BT}_\phi$ is irreducible
  if and only if $K/\BQ_p$ is 
 unramified. 
\end{remark}

A fundamental invariant of $\BT_\phi$ is the Galois cohomology $H^1(K,\BT_\phi)$. We introduce the following submodules. 
\begin{defn}
For a subset $S$ of de Rham specializations of $\BT_\phi$, 
define
$$
H^1_{S, \pm}(K,\BT_\phi)=\{v \in H^1(K, \BT_\phi)\,|\, s(v) \in H^1_\mathrm{f}(K, \BT_{\phi,s}) \; \text{for any $s\in S$ 
with $\hat{\varepsilon}(\tilde{\BT}_s)=\mp 1$} \},
$$
where $\tilde{\BT}_s$ denotes the specialization of $\BT_\phi$ at $s$. Similarly, define 
$$
H^1_{S, \pm}(K,\BT_\phi \otimes_{\BZ_p}\BQ_p) =
\{v \in H^1(K, \BT_\phi) \otimes_{\BZ_p}\BQ_p \,|\, s(v) \in H^1_\mathrm{f}(K, \BT_{\phi,s}\otimes_{\BZ_p}\BQ_p) \; \text{for any $s\in S$ 
with $\hat{\varepsilon}(\tilde{\BT}_s)=\mp 1$} \}.
$$
\end{defn}
We emphasize that the above definition is based on completed epsilon constants of induced representations. 
One seeks to study the $\Lambda$-module structure of the dialectic modules $H^1_{S,\pm}(K,\BT_\phi)$ and their relation with 
the ambient module 
$H^1(K,\BT_\phi)$. This is the content of 
 the next subsection.

\subsubsection{A general Rubin-type conjecture} 
\begin{thm}\label{thm, lsd ind}
Let $p$ be an odd prime and $K$ a quadratic extension of $\BQ_p$. 
Let $\phi$ be a de Rham conjugate symplectic self-dual character of $G_K$ 
valued in a $p$-adic local field $L$. Put $T_\phi=\CO_L(\phi)$ and $\BT_\phi=T_{\phi}^{\otimes -1}(1)\otimes_{\mathcal{O}_L} \Lambda^\iota$ for $\Lambda$ the anticyclotomic Iwasawa algebra over $K$ with $\CO_L$-coefficients. Let $S$ be a subset of de Rham specializations of $\BT_\phi$ such that $S\cap \Xi_{\phi,\rm dR}^+$ and $S\cap \Xi_{\phi,\rm dR}^-$ are infinite. If $p=3$, suppose that either $K=\BQ_3(\sqrt{3})$ or $K=\BQ_3(\sqrt{-3})$ and $\phi \nequiv 1 \mod{\fm_{\CO_L}}$.
\begin{itemize}
\item[i)] The $\Lambda$-modules $H^1_{S,\pm}(K,\BT_\phi)$ are free of rank one and independent of the choice of $S$. 
\item[ii)] The $\Lambda$-submodules $H^1_{\pm}(K,\BT_\phi):=H^1_{S,\pm}(K,\BT_\phi) \subset H^1(K, \BT_\phi)$ are Lagrangian. 
\item[iii)] As $\Lambda$-modules, we have 
$$
H^1(K,\BT_\phi)=H^1_+(K,\BT_\phi) \oplus H^1_{-} (K,\BT_\phi). 
$$
\end{itemize}
In the excluded case 
that $p=3$, $K=\BQ_3(\sqrt{-3})$ and $\phi \equiv 1 \mod{\fm_{\CO_L}}$, the assertions as in parts i)-iii) hold\footnote{Recall that $H^i(K,\BT_\phi\otimes_{\BZ_p}\BQ_p):=H^i(K,\BT_\phi)\otimes_{\BZ_p}\BQ_p.$ 
We can show that $H^1(K,\BT_\phi)$ is $\Lambda$-torsion free.} 
 for the $\Lambda \otimes_{\BZ_p}\BQ_p$-modules $H^1_{\cdot}(K,\BT_\phi \otimes_{\BZ_p}\BQ_p)$.
In particular as $\Lambda\otimes_{\BZ_p}\BQ_p$-modules we have 
$$
H^1(K,\BT_\phi \otimes_{\BZ_p}\BQ_p) =H^1_+(K,\BT_\phi \otimes_{\BZ_p}\BQ_p) \oplus 
H^1_{-} (K,\BT_\phi \otimes_{\BZ_p}\BQ_p). 
$$
\end{thm}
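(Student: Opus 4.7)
\medskip

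\noindent\textbf{Proof proposal.} The plan is to transfer the problem from the conjugate self-dual rank one situation over $K$ to the symplectic self-dual rank two situation over $\BQ_p$, and then invoke the local sign decomposition machinery of Part~\ref{part I}. Concretely, consider the induced representation $\tilde{\BT}_\phi = \Ind_{K/\BQ_p}\BT_\phi$. Shapiro's lemma provides a canonical $\Lambda$-module isomorphism
\[
\mathrm{sh}\colon H^1(K,\BT_\phi) \xrightarrow{\sim} H^1(\BQ_p,\tilde{\BT}_\phi),
\]
which is compatible with specialization (since induction commutes with base change in $\Lambda$) and with the Tate pairing on both sides up to the natural self-duality of $\tilde{\BT}_\phi$ from Lemma~\ref{lem, induced rep satisfy the condition}(i). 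Under the generic hypotheses of Lemma~\ref{lem, induced rep satisfy the condition}(ii) (i.e. $p>3$, or $p=3$ with $K=\BQ_3(\sqrt{3})$, or $p=3$ with $K=\BQ_3(\sqrt{-3})$ and $\phi\not\equiv 1$), the pair $(\Lambda,\tilde{\BT}_\phi)$ is a generic symplectic self-dual representation of rank two, so Theorem~\ref{thm, main} yields a functorial Lagrangian decomposition
\[
H^1(\BQ_p,\tilde{\BT}_\phi) \;=\; H^1_+(\BQ_p,\tilde{\BT}_\phi)\oplus H^1_-(\BQ_p,\tilde{\BT}_\phi)
\]
into free $\Lambda$-submodules of rank one. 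I will \emph{define} $H^1_\pm(K,\BT_\phi)$ via $\mathrm{sh}$ as the preimages of these submodules, whence parts i), ii), iii) follow at once, modulo matching this definition with the $S$-dependent one.

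The matching step uses Corollary~\ref{cor, decomp by using Z}: it suffices to show that for any de Rham specialization $s$ of $\BT_\phi$ corresponding to a character $\chi\in \Xi_{\mathrm{dR}}$, the specialization $\tilde{\BT}_{\phi,s}$ of the induced family coincides with $\Ind_{K/\BQ_p}(\phi\chi^{-1})^*(1)\cong \Ind_{K/\BQ_p}(\phi\chi^{-1})$ (using self-duality), and that the completed $\varepsilon$-constant $\hat\varepsilon_p(\Ind_{K/\BQ_p}(\phi\chi^{-1}))$ controls which signed submodule contains the Bloch--Kato local conditions of the specialization. The key identification
\[
\mathrm{sh}\!\left( H^1_{\mathrm{f}}(K,\BT_{\phi,s}) \right) \;=\; H^1_{\mathrm{f}}(\BQ_p,\tilde{\BT}_{\phi,s})
\]
follows from the compatibility of Shapiro's isomorphism with $D_{\mathrm{cris}}$ and the exponential map. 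Combined with part 4) of Theorem~\ref{thm, main}, this gives $H^1_{\mathrm{f}}(\BQ_p,\tilde{\BT}_{\phi,s}) = H^1_{-\hat\varepsilon_p(\tilde{\BT}_{\phi,s})}(\BQ_p,\tilde{\BT}_{\phi,s})$, which matches precisely the sign convention in the definition of $H^1_{S,\pm}(K,\BT_\phi)$. Applying Corollary~\ref{cor, decomp by using Z} then yields $H^1_{S,\pm}(K,\BT_\phi) = H^1_\pm(K,\BT_\phi)$, provided $S$ contains Zariski dense de Rham specializations of each sign. The infinitude of $S\cap\Xi_{\phi,\mathrm{dR}}^{\pm}$ combined with the one-dimensional Krull dimension of $\Lambda\otimes\BQ_p$ makes Zariski density automatic: an infinite set of maximal ideals in the $p$-adic Iwasawa algebra is Zariski dense.

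For the exceptional case ($p=3$, $K=\BQ_3(\sqrt{-3})$, $\phi\equiv 1\bmod\fm_{\CO_L}$), Lemma~\ref{lem, induced rep satisfy the condition}(ii) fails: $H^0(\BQ_p,\overline{\tilde{\BT}}_\phi)\neq 0$, so Theorem~\ref{thm, main} does not directly apply. Here I invoke Theorem~\ref{thm, main3} instead, after verifying the hypothesis $H^2(\BQ_p,\tilde{\BT}_\phi\otimes_{\BZ_p}\BQ_p)=0$. By Lemma~\ref{lem, van} this reduces to checking $H^0(\BQ_p,\tilde{\BT}_{\phi,s})=0$ at every $\BQ_p$-rational specialization $s$; using that $\phi\chi^{-1}$ is conjugate symplectic self-dual of weight $(k+1,-k)$ and hence cannot be the trivial character for any de Rham $\chi$ (by comparing Hodge--Tate weights and central characters), the required vanishing holds generically, and a direct inspection at the residual level suffices. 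Theorem~\ref{thm, main3} then yields the decomposition of $H^1(\BQ_p,\tilde{\BT}_\phi\otimes\BQ_p)$ into locally free Lagrangian $\Lambda\otimes\BQ_p$-submodules of rank one, and the same Shapiro/specialization argument transports it to $H^1(K,\BT_\phi\otimes\BQ_p)$.

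The main obstacle I anticipate is the verification that Shapiro's isomorphism transports Bloch--Kato subgroups correctly at de Rham specializations, together with the epsilon-constant matching: one must check that the completed $\varepsilon$-constant $\hat\varepsilon_p$ of the induced symplectic self-dual representation is the invariant governing the sign of the Bloch--Kato Lagrangian, not the (ill-defined in isolation) epsilon of $\phi\chi^{-1}$ itself. This is where the careful calculations of \S\ref{subsubsection, epsilon, unramified} and Proposition~\ref{prop, ramified epsilon} enter: the partition $\Xi_{\phi,\mathrm{dR}}^\pm$ is defined precisely so that these two conventions agree, so the matching will be essentially tautological once the Shapiro/$H^1_{\mathrm{f}}$ compatibility is in place.
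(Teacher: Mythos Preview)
Your proposal is correct and follows essentially the same route as the paper: Shapiro's lemma to pass to $\tilde{\BT}_\phi$, Lemma~\ref{lem, induced rep satisfy the condition} for genericity, Theorem~\ref{thm, main} for the decomposition, and Corollary~\ref{cor, decomp by using Z} for the $S$-matching, with Theorem~\ref{thm, main3} handling the excluded case. The paper's proof is terser on the Zariski density and Shapiro/$H^1_{\rm f}$ compatibility points you spell out, and for the vanishing of $H^0$ in the excluded case it observes directly that $T_s|_{\BQ_p^\times}=\omega_{K/\BQ_p}\chi_{\cyc}$ is non-trivial rather than arguing via Hodge--Tate weights.
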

\begin{proof}
By Shapiro's lemma, note that 
$$
H^1(K,\BT_\phi)=H^1(\BQ_p , \tilde{\BT}_\phi). 
$$
Hence, in view of Lemma \ref{lem, induced rep satisfy the condition}, the assertions i)-iii) are consequences of the local sign decomposition as in Theorem~\ref{thm, main} and Corollary \ref{cor, decomp by using Z}. Moreover, we have
\begin{equation}\label{eq, K-pm}
H^1_\pm(K,\BT_\phi)=H^1_\pm(\BQ_p , \tilde{\BT}_\phi). 
\end{equation}

As for part iv) note that 
$$
H^0(K,T_s)=0
$$
for all characteristic zero specializations $T_s$ of $\BT_\phi$ since $T_s|_{\BQ_p^\times}$ is non-trivial, and so $H^2(\BQ_p,\BT_\phi \otimes \BQ_p)=0$ by Lemma~\ref{lem, van}. Hence, Theorem~\ref{thm, main3} yields the local sign decomposition
$$
H^1(K,\BT_\phi \otimes_{\BZ_p}\BQ_p) =H^1_+(K,\BT_\phi \otimes_{\BZ_p}\BQ_p) \oplus 
H^1_{-} (K,\BT_\phi \otimes_{\BZ_p}\BQ_p). 
$$
The signed submodules satisfy the property i) by the proof of Corollary \ref{cor, decomp by using Z}. 
\end{proof}

\subsubsection{A Rubin-type conjecture: over the unramified quadratic extension}

In this subsection we express Theorem \ref{thm, lsd ind}  in a form closer to the original Rubin conjecture \cite{Ru}.

Throughout  
$K$ denotes the unramified quadratic extension of $\BQ_p$.
Let $\delta\in K$ be a $p$-adic unit such that $K=\BQ_p(\delta)$ and $\delta^2 \in \BZ_p^\times$. 
Let $\phi$ be a $p$-adic conjugate symplectic self-dual de Rham character of $G_K$. 
By Lemma \ref{lem, shape of conj self-dual I}, 
it is of the form $\phi=\phi_\pi\psi_\pi^k\eta$ where $\eta$ is a finite character with $\eta|_{\BQ_p^\times}=1$. 
For simplicity, we say $\eta$ is the finite character associated to $\phi$.

\begin{defn} Let $\Xi=\Xi^+ \cup \Xi^-$ be a partition of finite order non-trivial characters of the anticyclotomic Galois group $\Gamma$ defined by 
\[
\Xi^+=\cup_{n=1}^\infty\;\Xi_{2n}, \quad  \Xi^-=\cup_{n=1}^\infty\;\Xi_{2n+1}. 
\]
\end{defn}
\noindent Note that the elements of $\Xi^\pm$ are wildly ramified characters.

Towards Rubin's original conjecture \cite{Ru}, we establish the following. 
\begin{thm}\label{cor, Rubin's conjecture refined} Let the setting\footnote{Rubin assumed that $p\geq 5$, but we also allow $p=3$ (cf.~\cite{YZ}).}
 be as in \S\ref{ss:RuC}. 
\begin{itemize}
\item[i)] For the conjugate symplectic self-dual character $\phi=\phi_\pi$, 
there is a canonical 
 isomorphism $$V_{\infty}^{*} \cong H^1(K,{\mathbb{T}}_\phi),$$ and 
the decomposition in 
Theorem~\ref{thm, lsd ind} iii) coincides with the one conjectured by Rubin \cite[Conj.~2.2]{Ru}. 
\item[ii)] We have 
\begin{align*}
 V^{*, \pm}_\infty=\{v \in V_\infty^* \;|\;\ \delta_\chi(v)=0 \quad \text{for every $\chi \in {\Xi}^\mp_{\phi,\rm dR}$} \}. 
\end{align*}
\end{itemize}
\end{thm}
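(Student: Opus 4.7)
The plan is to transport Rubin's set-up into Galois cohomology via Kummer theory, apply Theorem~\ref{thm, lsd ind} to match the two decompositions, and then read off the Coates--Wiles characterization from the explicit reciprocity law. I will structure the argument in three steps.

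First, to construct the canonical isomorphism $V_\infty^* \cong H^1(K, \BT_{\phi_\pi})$ asserted in part i), recall that by Lemma~\ref{lem, csd I} the Lubin--Tate character $\phi_\pi$ for $\pi = -p$ is conjugate symplectic self-dual and $T_\pi \SF \cong T_{\phi_\pi}$ as $G_K$-modules. Classical Kummer theory applied to $\SF$ identifies $U_\infty^* = \varprojlim_n (U_n \otimes_{\BZ_p}\CO)^*$ with the two-variable Iwasawa cohomology $H^1_{\rm Iw}(K_\infty/K, T_{\phi_\pi}^{\otimes -1}(1))$ as $\CO[\![\Gal(K_\infty/K)]\!]$-modules; projecting to the anticyclotomic $\BZ_p$-line and invoking Shapiro's lemma converts this into the required canonical isomorphism $V_\infty^* \cong H^1(K, \BT_{\phi_\pi})$.

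Second, Theorem~\ref{thm, lsd ind} applied with $\phi = \phi_\pi$ (whose hypotheses are satisfied since $K/\BQ_p$ is unramified, so the $p=3$ excluded case does not arise) yields the decomposition $H^1(K, \BT_{\phi_\pi}) = H^1_+(K, \BT_{\phi_\pi}) \oplus H^1_-(K, \BT_{\phi_\pi})$ into Lagrangian free rank-one $\Lambda$-submodules, characterized by the vanishing of specializations in the Bloch--Kato subgroup at de Rham points of a fixed sign. To identify $V^{*,\pm}_\infty$ with these, I will invoke the explicit reciprocity law for $\SF$ (due to Wiles and Kato) to express the Coates--Wiles logarithmic derivative $\delta_\chi$, for finite-order $\chi \in \Xi$, as a non-zero multiple of the composition of specialization at $\chi$, the dual exponential map for the resulting de Rham character, and pairing with the N\'eron differential of $\SF$. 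Under this identification the vanishing of $\delta_\chi(v)$ is equivalent to the $\chi$-specialization of $v$ lying in the Bloch--Kato subgroup, so Rubin's $V^{*,\pm}_\infty$ is mapped onto $H^1_\pm(K, \BT_{\phi_\pi})$; this proves that the two decompositions coincide.

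For part ii), by Fact~\ref{lem, unram eps} one has $\hat{\varepsilon}_p(\mathrm{Ind}_{K/\BQ_p}(\phi_\pi \chi^{-1})) = \pm 1$ precisely when $a(\chi)=a(\chi^{-1})$ is even (resp.\ odd), so Rubin's partition $\Xi = \Xi^+ \sqcup \Xi^-$ by conductor parity coincides on finite-order characters with the sign partition induced by completed $\varepsilon$-constants. Since $\Gamma$ is anticyclotomic, every de Rham character of $\Gamma$ has Hodge--Tate weight zero and is therefore of finite order, so Rubin's defining vanishing condition for $V^{*,\pm}_\infty$ is exactly the reformulation asserted in part ii). The main obstacle will lie in the explicit reciprocity bookkeeping in the second step: carefully tracking the normalization of $\delta_\chi$ from \cite{BKO24}, the $\iota$-twist in the definition of $\BT_{\phi_\pi}$, and the contribution of Hodge--Tate weights to the completed $\varepsilon$-constant, so that the sign identification via Fact~\ref{lem, unram eps} is consistent with the Bloch--Kato / dual exponential characterization of signed submodules.
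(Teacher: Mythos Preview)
Your treatment of part i) is essentially the paper's argument: Kummer theory plus Shapiro give the identification $V_\infty^*\cong H^1(K,\BT_{\phi_\pi})$ (the paper cites \cite[\S2.2]{BKO24} for exactly this), and then the explicit reciprocity law identifies $\delta_\chi$ with the dual exponential, so that Rubin's vanishing condition becomes ``specialization lies in $H^1_{\mathrm f}$''. Combined with Proposition~\ref{prop, root number, inert} and Corollary~\ref{cor, decomp by using Z}, this pins down $V_\infty^{*,\pm}=H^1_\pm(K,\BT_{\phi_\pi})$.

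Your argument for part ii), however, rests on a false claim. You assert that since $\Gamma$ is anticyclotomic, every de Rham character of $\Gamma$ has Hodge--Tate weight zero and hence is of finite order. This is not true: the character $\psi_\Gamma:=\psi_\pi\,\omega_2^{p-1}$ (with $\psi_\pi=\phi_\pi/\phi_\pi^*$) factors through $\Gamma$ by Lemma~\ref{lem, ac char} ii), is de Rham since $\psi_\pi$ is crystalline and $\omega_2^{p-1}$ is of finite order, yet has Hodge--Tate weights $(1,-1)$ and is of infinite order. All its integer powers and their finite-order twists likewise lie in $\Xi_{\phi,\mathrm{dR}}$ but not in $\Xi$. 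So $\Xi^\mp_{\phi,\mathrm{dR}}$ is strictly larger than $\Xi^\mp$, and part ii) is a genuine strengthening of Rubin's defining condition, not a tautological reformulation.

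The fix is immediate from what you already proved in part i). Once $V_\infty^{*,\pm}=H^1_\pm(K,\BT_{\phi_\pi})$, Theorem~\ref{thm, lsd ind} i) (equivalently Corollary~\ref{cor, decomp by using Z}) says that $H^1_{S,\pm}(K,\BT_{\phi_\pi})$ is independent of the choice of $S$, provided $S$ meets both signs in an infinite (Zariski dense) set. Taking $S=\Xi_{\mathrm{dR}}$ in place of $S=\Xi$ gives
\[
V_\infty^{*,\pm}=H^1_\pm(K,\BT_{\phi_\pi})=H^1_{\Xi_{\mathrm{dR}},\pm}(K,\BT_{\phi_\pi}),
\]
and the right-hand side is exactly the set described in part ii) once $\delta_\chi$ is read via the dual exponential for general de Rham $\chi$. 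This is how the paper closes the argument.
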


\begin{proof}
For the canonical isomorphism $V_{\infty}^{*} \cong H^1(K,{\mathbb{T}}_\phi)$, we refer to 
\cite[\S 2.2]{BKO24}, where the identification of 
the map $\delta_\chi$ as in \S\ref{ss:RuC} with the dual exponential map is also explained. 

For $v \in H^1(K,{\mathbb{T}}_\phi)$ and $\chi\in\Xi^\mp$, we denote by $\chi(v)$ the image of $v$ 
under the specialization map 
\begin{equation*}
H^1(K, \mathbb{T}_{\phi})   \to H^1(K, \mathbb{T}_{\phi})\otimes_{\Lambda,\chi^{-1}}K_{\chi} \to H^1(K, T^{\otimes -1}(1)\otimes  {\Lambda}\otimes_{\Lambda,\chi^{-1}}K_{\chi})=  H^1(K, T_{\phi}^{\otimes -1}(1)(\chi))
\end{equation*}
induced by the $\CO_K$-homomorphism $\Lambda\to K_{\chi}:=K(\mathrm{Im}(\chi))$ sending
 $[\gamma] \in \Gamma$ to $\chi^{-1}(\gamma)$. 
Let $\exp_{\chi}^*$ denotes the composition of the above and the dual exponential map for $V_{\phi}^{\otimes -1}(1)(\chi)$. 
We put $\varepsilon_{\chi}:= \hat{\varepsilon}(\mathrm{Ind}_{K/\BQ_p}(T_\phi^{\otimes -1}(1)(\chi))=\hat{\varepsilon}(\mathrm{Ind}_{K/\BQ_p}(\phi\chi^{-1}))$. 

Then, by \cite[(2.9)]{BKO24}, 
we have 
 \begin{align*}
 V_{\infty}^{*,\pm} &=  \{ v \in H^1(K,{\mathbb{T}}_\phi)\;|\; \exp^*_\chi v=0 \;\text{for all $\chi\in\Xi^\mp$}\}\\
 &= \{ v \in H^1(K,{\mathbb{T}}_\phi)\;|\; \chi(v) \in H^1_{\rm f}(K, T_\phi^{\otimes -1}(1)(\chi))\;
 \text{for all $\chi\in\Xi^\mp$} \}\\
  &= \{ v \in H^1(K,{\mathbb{T}}_\phi)\;|\; \chi(v) \in H^1_{-\varepsilon_{\chi}}(K, T_\phi^{\otimes -1}(1)(\chi))\;
  \text{for all $\chi\in\Xi^\mp$}\}\\ 
   &= \{ v \in H^1(K,{\mathbb{T}}_\phi)\;|\; \chi(v) \in H^1_{\pm}(K, T_\phi^{\otimes -1}(1)(\chi))\;
   \text{for all $\chi\in\Xi^\mp$}\} \\
   &= H^1_\pm(K,{\mathbb{T}}_\phi).
\end{align*} 
Here, the second last equality relies on Proposition~\ref{prop, root number, inert} and 
the last  on Corollary~\ref{cor, decomp by using Z}. Then the assertion follows from  
Theorem \ref{thm, lsd ind}. 
\end{proof}
\begin{remark}
Part i) has been proved in \cite{BKO21} by a different method, which led to a new proof \cite{BKOY} of Kato's local epsilon conjecture in this case. 
Moreover, part ii) has been proved except for finitely many $\chi$
in \cite[\S5]{BKOY} by using 
elliptic units and Kato's explicit reciprocity law. 
\end{remark}

Now we explicate Theorem \ref{thm, lsd ind} for general conjugate symplectic self-dual characters.

\begin{thm}\label{thm, rubin decomposition}
Let $p$ be an odd prime and $K$ the unramified quadratic extension of $\BQ_p$. 
Let $\phi$ be a de Rham conjugate symplectic self-dual character of $G_K$ with Hodge--Tate weights $(k+1,-k)$ for $k\geq 0$ and of finite character $\eta$.  
Put $T_\phi=\CO_K(\phi)$ 
and $\BT_\phi=T_{\phi}^{\otimes -1}(1)\otimes_{\mathcal{O}_K} \Lambda^\iota$ for $\Lambda$ the anticyclotomic Iwasawa algebra over $K$ with $\CO_K$-coefficients. 
\begin{itemize}
\item[i)] The $\Lambda$-modules $H^1_{\pm}(K,\BT_\phi)$ as in Theorem~\ref{thm, lsd ind} are given by  
\[
H^1_\pm(K,\mathbb{T}_\phi)=\{x \in H^1(K, \mathbb{T}_\phi)\;|\; \exp^*_{\chi}(x)=0 \; \text{for almost all $\chi \in S^\mp$}\} 
\]
where 
\[
S^\mp=
\begin{cases}
  \Xi^\mp \quad (k: \text{even}), \\
   \Xi^\pm \quad (k: \text{odd})
\end{cases}
\text{if $\eta(\delta)=1$}, 
\quad S^\mp=
\begin{cases}
  \Xi^\pm \quad (k: \text{even}), \\
   \Xi^\mp \quad (k: \text{odd})
\end{cases}
\text{if $\eta(\delta)=-1$}.
\]
Moreover, if $\eta$ is at most tamely ramified, this remains true for all characters  $\chi \in S^\mp$ and 
$\Xi^+$ can be replaced with $\Xi^+ \cup \{\rm{1}\}$. 
\item[ii)] The $\Lambda$-submodules $H^1_{\pm}(K,\BT_\phi) \subset H^1(K, \BT_\phi)$ are Lagrangian. 
\item[iii)]As $\Lambda$-modules, we have 
$$
H^1(K,\BT_\phi)=H^1_+(K,\BT_\phi) \oplus H^1_{-} (K,\BT_\phi). 
$$ 
\end{itemize}

\end{thm}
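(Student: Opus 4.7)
The plan is to deduce Theorem \ref{thm, rubin decomposition} from Theorem \ref{thm, lsd ind} by computing the completed epsilon constant $\hat{\varepsilon}_p(\mathrm{Ind}_{K/\BQ_p}\phi\chi^{-1})$ for all but finitely many finite order anticyclotomic characters $\chi$ of $\Gamma$, and matching the resulting partition $\Xi_{\mathrm{dR}} = \Xi^+_{\phi,\mathrm{dR}} \cup \Xi^-_{\phi,\mathrm{dR}}$ with the combinatorial partition $\Xi = \Xi^+ \cup \Xi^-$ according to the parity of the conductor. Once this matching is established, parts (ii) and (iii) of the theorem are immediate from Theorem \ref{thm, lsd ind} (ii), (iii), and part (i) follows from Corollary \ref{cor, decomp by using Z} upon translating the vanishing of $\exp^*_\chi$ into the Bloch--Kato condition $\chi(x)\in H^1_{\mathrm{f}}$.

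\textbf{Key computation.} By Lemma \ref{lem, shape of conj self-dual I} write $\phi = \phi_\pi \psi_\pi^k \eta$. Twisting by the finite order $\chi$ preserves the Hodge--Tate weights $(k+1,-k)$ of $\mathrm{Ind}_{K/\BQ_p}\phi$, so Corollary \ref{cor, dR ssd two} gives $\Gamma(\mathrm{Ind}_{K/\BQ_p}\phi\chi^{-1}) = (-1)^k$. By Proposition \ref{prop, unramified p-adic WD char}, the associated Weil--Deligne representation is $\phi_K \cdot \eta\chi^{-1}$, and Proposition \ref{prop, root number, inert} yields
\[
\varepsilon_p(\mathrm{Ind}_{K/\BQ_p}\phi\chi^{-1}) = (\eta\chi^{-1})(\delta)\cdot(-1)^{a(\eta\chi^{-1})}.
\]
Since $\delta^2 \in \BQ_p^\times$ and $\chi|_{\BQ_p^\times}=1$, one has $\chi(\delta)^2=1$; as $\chi$ has $p$-power order with $p$ odd, this forces $\chi(\delta)=1$. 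Similarly $\eta(\delta)\in\{\pm 1\}$. Hence
\[
\hat{\varepsilon}_p(\mathrm{Ind}_{K/\BQ_p}\phi\chi^{-1}) = \eta(\delta)\cdot(-1)^{k + a(\eta\chi^{-1})}.
\]
For all but finitely many $\chi\in\Xi$ (namely those with $a(\chi) > a(\eta)$), we have $a(\eta\chi^{-1}) = a(\chi)$, giving $\hat{\varepsilon}_p = \eta(\delta)(-1)^{k+a(\chi)}$.

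\textbf{Case analysis and conclusion.} Tabulating on the parities of $k$ and the sign $\eta(\delta)$, a routine check shows that $\chi\in\Xi^{+}_{\phi,\mathrm{dR}}$ (resp.\ $\Xi^{-}_{\phi,\mathrm{dR}}$) iff $\chi$ lies in the set of conductor parity matching the sign rule above. Concretely, when $\eta(\delta)=1$ we find $\Xi^\mp_{\phi,\mathrm{dR}}\cap\Xi = \Xi^\mp$ (resp.\ $\Xi^\pm$) if $k$ is even (resp.\ odd), up to the finite set $\{\chi : a(\chi)\leq a(\eta)\}$; the $\eta(\delta)=-1$ cases swap these. In all cases both $\Xi^+_{\phi,\mathrm{dR}}$ and $\Xi^-_{\phi,\mathrm{dR}}$ are infinite, so Theorem \ref{thm, lsd ind} applies and yields parts (ii), (iii), as well as the characterization of part (i) via vanishing of $\exp^*_\chi$ for almost all $\chi\in S^\mp$, the description of $S^\mp$ being read off from the table.

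\textbf{Strengthening for tame $\eta$ and the main obstacle.} When $a(\eta)\leq 1$, the equality $a(\eta\chi^{-1}) = a(\chi)$ already holds for every non-trivial $\chi\in\Xi$, so the sign formula is exact: every $\chi\in S^\mp$ (not just almost all) has $\hat{\varepsilon}_p(V_\chi) = \mp 1$, and hence $\exp^*_\chi$ must vanish on $H^1_\pm(K,\BT_\phi)$ by the de Rham property of the local sign decomposition; the reverse inclusion then follows from the fact that $H^1_\pm(K,\BT_\phi)$ is already characterized by vanishing at a cofinite subset of $S^\mp$, and the Lagrangian constraint of part (ii) rules out any strict enlargement. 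Finally, evaluating the sign formula at $\chi=1$ with $\eta$ trivial gives $\hat{\varepsilon}_p = (-1)^k$, aligning with the sign of $\Xi^+$ (conductor $0$ being even), so one may adjoin the trivial character to $\Xi^+$ wherever it appears. The main technical subtlety lies in this last strengthening, namely ensuring that the ``almost all'' statement arising from Zariski density (Corollary \ref{cor, decomp by using Z}) can be promoted to a sharp ``for all'' characterization in the tame regime; this rests on the rigidity of the Lagrangian splitting established in Theorem \ref{thm, lsd ind}.
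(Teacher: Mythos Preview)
Your proposal is correct and follows essentially the same approach as the paper: compute the completed epsilon constant $\hat{\varepsilon}_p(\mathrm{Ind}_{K/\BQ_p}\phi\chi^{-1}) = \eta(\delta)(-1)^{k+a(\eta\chi^{-1})}$ via Propositions~\ref{prop, unramified p-adic WD char} and~\ref{prop, root number, inert}, then invoke Theorem~\ref{thm, lsd ind}. The paper's proof is in fact terser than yours, recording only this formula and leaving the case analysis and the ``moreover'' clause implicit; your more detailed treatment (including the observation that $\chi(\delta)=1$ since $\chi$ has $p$-power order, and the chain of inclusions promoting ``almost all'' to ``all'' in the tame case) fills in exactly what the paper omits.
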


\begin{proof} 
By Propositions \ref{prop, unramified p-adic WD char}, \ref{prop, root number, inert} and \ref{prop, induced p-adic WD}, the corresponding $\varepsilon$-constant is given by 
\[
\varepsilon(\mathrm{Ind}_{K/\BQ_p}\phi\chi)=
\varepsilon(\mathrm{Ind}_{K/\BQ_p}\phi_K\eta\chi)={\eta(\delta)}(-1)^{a(\eta\chi)}. 
\]
Then the completed $\varepsilon$-constant equals ${\eta(\delta)}(-1)^{a(\eta\chi)+k}$. 
\end{proof}

\begin{cor}
\label{cor, lsd ind special}
 For any crystalline conjugate symplectic self-dual character $\phi$ of $G_K$ with Hodge--Tate weights $(k+1,-k)$ and $k\geq 1$, for example 
$\phi=\phi_\pi \psi_\pi^k$ as in Lemma~\ref{lem, shape of conj self-dual I}, the decomposition as in Theorem~\ref{thm, rubin decomposition} ii) 
coincides with the one conjectured by B\"uy\"ukboduk \cite{Kazim}, 
resolving  Conjectures~2.5 and 2.13 of {\it loc. cit}. 
\end{cor}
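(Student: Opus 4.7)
The plan is to deduce this corollary directly from Theorem \ref{thm, rubin decomposition}, as the main theoretical content has already been packaged there; what remains is essentially a dictionary lookup against B\"uy\"ukboduk's formulation.

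First I would reduce to the tamely ramified clause of Theorem \ref{thm, rubin decomposition} i). By Lemma \ref{lem, shape of conj self-dual I}, any conjugate symplectic self-dual de Rham character of $G_K$ factors as $\phi_\pi \psi_\pi^k \eta$ for a unique integer $k$ and finite order character $\eta$ with $\eta|_{\BQ_p^\times} = 1$. Requiring $\phi$ to be crystalline forces $\eta$ itself to be unramified, and together with $\eta|_{\BQ_p^\times} = 1$ this leaves only $\eta(\delta) \in \{\pm 1\}$; in particular $\eta$ is trivially tamely ramified. For the displayed example $\phi = \phi_\pi \psi_\pi^k$ one has $\eta = \mathbf{1}$ and hence $\eta(\delta) = 1$.

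Next I would invoke the refined clause of Theorem \ref{thm, rubin decomposition} i) to obtain the genuine (not merely ``almost all'') identity
\[
H^1_\pm(K,\BT_\phi) = \{\, x \in H^1(K,\BT_\phi) \;|\; \exp^*_\chi(x) = 0 \text{ for all } \chi \in S^\mp \,\},
\]
where $S^\mp$ is determined by $\eta(\delta)$ and the parity of $k$, with the understanding that $\Xi^+$ is augmented by the trivial character. Thus we have in hand an explicit characterization of the Lagrangian signed submodules purely in terms of vanishing of dual exponential maps along the anticyclotomic tower.

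The remaining step is to match this explicit description with B\"uy\"ukboduk's plus/minus conditions as in \cite[Conj.~2.5 and Conj.~2.13]{Kazim}, which are also expressed via vanishing of dual exponentials at anticyclotomic characters, partitioned according to the parity of the conductor and shifted according to the Hodge--Tate weights of $\phi$. I expect the main, albeit largely clerical, obstacle to be the reconciliation of sign conventions: his ``$+$'' and ``$-$'' are defined independently of the local sign decomposition, so one must verify that his partition of $\Xi$ coincides with ours. This is ensured by the completed $\varepsilon$-constant computation $\hat{\varepsilon}(\mathrm{Ind}_{K/\BQ_p}\phi\chi) = \eta(\delta)(-1)^{a(\eta\chi)+k}$ extracted in the proof of Theorem \ref{thm, rubin decomposition} from Propositions \ref{prop, unramified p-adic WD char} and \ref{prop, root number, inert}. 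Once this dictionary is recorded, the assertions of \cite[Conj.~2.5, Conj.~2.13]{Kazim}---namely that the so-defined plus/minus modules are free of rank one, Lagrangian, and together split $H^1(K,\BT_\phi)$---follow immediately from parts ii) and iii) of Theorem \ref{thm, rubin decomposition} (equivalently, Theorem \ref{thm, lsd ind}).
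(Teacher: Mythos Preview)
Your proposal is correct and matches the paper's approach: the corollary is stated without its own proof, as it is meant to follow immediately from Theorem~\ref{thm, rubin decomposition} together with the explicit $\hat{\varepsilon}$-computation recorded in its proof, and your proposal is precisely a careful unpacking of that deduction. One minor sharpening: once $\eta$ is unramified and $\eta|_{\BQ_p^\times}=1$, the fact that $p$ is a uniformizer of the unramified extension $K$ forces $\eta=\mathbf{1}$ outright (not merely $\eta(\delta)\in\{\pm1\}$, since $\delta$ is a unit), so you land directly in the $\eta(\delta)=1$ column of the table in Theorem~\ref{thm, rubin decomposition}~i).
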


\begin{remark}\label{rmk:Rubin}\noindent 
Theorem \ref{thm, rubin decomposition} and Corollary \ref{cor, lsd ind special} have applications 
to CM Iwasawa theory at inert primes $p$. 
For example, it leads to an integral $p$-adic $L$-function and a main conjecture, generalizing our results in the good reduction case \cite{BKO21} to the non-semistable reduction case, and it also 
resolves the signed Iwasawa main conjecture in ~\cite[Thm.~B]{Kazim}. 
\end{remark}

In the following we consider higher weight specializations of the signed submodules. 

Let $\omega_2$ be the character of $G_K$ such that $\omega_2(-p)=1$ and 
$\omega_2|_{\mathcal{O}_K^\times}$ is the Teichm\"uller character. 
The anticyclotomic character $\psi_\pi=\phi_\pi/\phi_\pi^*$ has trivial central character and 
$\psi_\pi|_{K^\times_{\mathrm{tor}}}=\omega_2^{1-p}$. 
Hence, $\psi_\Gamma:=\psi_\pi \omega_2^{p-1}$ factors through the anticyclotomic Galois group $\Gamma$ by Lemma~\ref{lem, ac char}.

\begin{cor}\label{higher-weight-unram}
Let $\eta$ be a finite character of $G_K$ such that $\eta|_{\BQ_p^\times}=1$. 
For an integer $l \geq 0$, let $\epsilon_l$ denote the sign of 
$(-1)^{l} \left(\frac{-1}{p}\right)^{l}$. 
Then we have 
\[
H^1_\pm(K, \mathbb{T}_{\phi_\pi\eta})=\{x \in H^1(K, \mathbb{T}_{\phi_\pi\eta})\;|\; \exp^*_{\psi_\Gamma^l\chi}(x)=0 \; 
\text{for $\chi \in S^{\mp \epsilon_l}$ with $a(\chi)>a(\eta)$}\},
\]
where $\exp^*_{\psi_\Gamma^l\chi}$ is the composition of
\[
\begin{split}
H^1(K, \mathbb{T}_{\phi_\pi\eta})&= \varprojlim_{m}H^1(K_m, T_{\phi_\pi\eta}^{\otimes -1}(1))
\cong   \varprojlim_{m}H^1(K_m, T_{\phi_\pi\eta}^{\otimes -1}(1))\otimes \psi_{\Gamma}^{-l}
=\varprojlim_{m}H^1(K_m, T_{\phi_\pi\eta\psi_{\Gamma}^l}^{\otimes -1}(1))\\
&  \rightarrow 
H^1(K, T_{\phi_\pi\eta\psi_{\Gamma}^l}^{\otimes -1}(1)(\chi))
\end{split}
\]
and the dual exponential map.
In particular, we have 
\[
H^1_\pm(K, \mathbb{T}_{\phi_\pi\eta})\otimes \psi_\Gamma^{-l}
=H^1_{\pm\epsilon_l}(K, \mathbb{T}_{\phi_\pi\psi_\Gamma^l\eta}).
\] 
\end{cor}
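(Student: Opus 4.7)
The plan is to reduce Corollary~\ref{higher-weight-unram} to Theorem~\ref{thm, rubin decomposition} via a twist argument. The key observation is that $\phi' := \phi_\pi\eta\psi_\Gamma^l = (\phi_\pi\psi_\pi^l)\cdot(\eta\omega_2^{l(p-1)})$ is again a de Rham conjugate symplectic self-dual character of $G_K$, with Hodge--Tate weights $(l+1,-l)$ and finite character part $\eta' := \eta\omega_2^{l(p-1)}$, so Theorem~\ref{thm, rubin decomposition} already characterizes its signed submodules. One identifies, canonically as $\Lambda[G_K]$-modules, $\mathbb{T}_{\phi_\pi\eta}\otimes\psi_\Gamma^{-l}\cong \mathbb{T}_{\phi'}$, yielding an isomorphism $H^1(K,\mathbb{T}_{\phi_\pi\eta})\otimes\psi_\Gamma^{-l}\cong H^1(K,\mathbb{T}_{\phi'})$ of $\Lambda$-modules under which the composite $\exp^*_{\psi_\Gamma^l\chi}$ displayed in the corollary corresponds, by construction, to the ordinary specialization map $\exp^*_\chi$ on the right-hand side.

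Next, I apply Theorem~\ref{thm, rubin decomposition} to $\phi'$ to read off $H^1_\pm(K,\mathbb{T}_{\phi'}) = \{y : \exp^*_\chi(y)=0 \text{ for almost all } \chi\in S_{\phi'}^\mp\}$, where $S_{\phi'}^\mp$ is determined by the pair $(\eta'(\delta), l \bmod 2)$. To compute $\eta'(\delta)$, I evaluate $\omega_2(\delta)^{p-1}$ using Euler's criterion: writing $\bar\delta\in\BF_{p^2}$ with $\bar\delta^2$ a non-square in $\BF_p^\times$ gives $\bar\delta^{p-1}=(\bar\delta^2)^{(p-1)/2}=-1$, whose Teichm\"uller lift is $-1$, so $\omega_2(\delta)^{l(p-1)}=(-1)^l$ and $\eta'(\delta) = (-1)^l\eta(\delta)$. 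The second assertion of the corollary, $H^1_\pm(K,\BT_{\phi_\pi\eta})\otimes\psi_\Gamma^{-l}=H^1_{\pm\epsilon_l}(K,\BT_{\phi'})$, then amounts to computing the single sign $\epsilon_l$ by which the twist shifts the local sign decomposition.

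To identify $\epsilon_l$, I compare the completed epsilon constants $\hat{\varepsilon}(\mathrm{Ind}_{K/\BQ_p}(\phi'\chi))$ and $\hat{\varepsilon}(\mathrm{Ind}_{K/\BQ_p}(\phi_\pi\eta\chi))$ at each anticyclotomic specialization $\chi$ using Propositions~\ref{prop, unramified p-adic WD char} and~\ref{prop, root number, inert}, and invoke property 4) of Theorem~\ref{thm, main} together with the de Rham characterization of Corollary~\ref{cor, decomp by using Z} to translate the ratio of signs into the shift $\epsilon_l$. The contribution $(-1)^l$ appears from two natural sources: the $\Gamma$-factor shift $(-1)^l$ accompanying the change of Hodge--Tate weights from $(1,0)$ to $(l+1,-l)$, and the factor $\omega_2(\delta)^{l(p-1)}=(-1)^l$ in the epsilon value.

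The hard part will be isolating the Legendre symbol $(-1/p)^l$: since the ratio of the two epsilon expressions produced above already cancels cleanly for anticyclotomic $\chi$ of high conductor, the $(-1/p)^l$ must come from a more delicate interplay between the tame twist $\omega_2^{l(p-1)}$ and the Gauss-sum-like contribution to the root number, in the spirit of Proposition~\ref{prop, ramified epsilon}. Verifying this carefully, and checking that the resulting $\epsilon_l$ matches the specified sign of $(-1)^l(-1/p)^l$, is the technical heart of the argument; once it is in place, the displayed characterization of $H^1_\pm(K,\BT_{\phi_\pi\eta})$ by vanishing of $\exp^*_{\psi_\Gamma^l\chi}$ follows by transporting Theorem~\ref{thm, rubin decomposition} along the twist isomorphism, and the last sentence of the corollary is then an immediate consequence.
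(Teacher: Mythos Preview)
Your approach is exactly the paper's: identify $\phi':=\phi_\pi\eta\psi_\Gamma^l=\phi_\pi\psi_\pi^l\cdot(\eta\,\omega_2^{l(p-1)})$, note that $\exp^*_{\psi_\Gamma^l\chi}$ on $H^1(K,\BT_{\phi_\pi\eta})$ corresponds under the twist to $\exp^*_\chi$ on $H^1(K,\BT_{\phi'})$, and then compare completed epsilon constants at anticyclotomic specialisations, invoking Theorem~\ref{thm, lsd ind}~i). The paper does not detour through Theorem~\ref{thm, rubin decomposition} as you do, but writes the completed epsilon directly: with $\xi:=\omega_2^{l(p-1)}$ it records
\[
\hat\varepsilon\bigl(\mathrm{Ind}_{K/\BQ_p}\phi_\pi\psi_\Gamma^l\eta\chi\bigr)=(-1)^l\,\xi\eta\chi(\delta)\,(-1)^{a(\xi\eta\chi)}
\]
from Proposition~\ref{prop, root number, inert} and the $\Gamma$-factor, and then asserts $\xi(\delta)=\bigl(\tfrac{-1}{p}\bigr)^l$, which yields the stated $\epsilon_l$ after comparing with $\hat\varepsilon(\mathrm{Ind}_{K/\BQ_p}\phi_\pi\eta\chi)$ for $a(\chi)>a(\xi\eta)$.

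Your difficulty is real, and your proposed resolution is not the right one. Your computation $\omega_2(\delta)^{p-1}=-1$ is correct: since $\bar\delta\in\BF_{p^2}\setminus\BF_p$ with $\bar\delta^2\in\BF_p^\times$ a non-square, Frobenius sends $\bar\delta$ to $-\bar\delta$, so $\bar\delta^{p-1}=-1$ and its Teichm\"uller lift is $-1$. Hence $\xi(\delta)=(-1)^l$, and combining with the $\Gamma$-factor $(-1)^l$ gives a ratio of $+1$, not $\epsilon_l$. There is no Gauss-sum mechanism available here to supply $\bigl(\tfrac{-1}{p}\bigr)^l$: Proposition~\ref{prop, root number, inert} is the complete unramified formula and contains no such term, and Proposition~\ref{prop, ramified epsilon} applies only in the ramified case. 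So the ``hard part'' you anticipate has no resolution along the lines you suggest; the discrepancy you have uncovered is between your correct value $\xi(\delta)=(-1)^l$ and the value $\xi(\delta)=\bigl(\tfrac{-1}{p}\bigr)^l$ that the paper's proof asserts and that the stated $\epsilon_l$ requires.
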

\begin{proof}
Put  $\xi:=\omega_2^{(p-1)l}$ for simplicity. 
Then the Weil--Deligne character associated with 
$\phi_\pi\psi_\Gamma^l\eta\chi$ is $\phi_K\xi\eta\chi$ for any $\chi \in \Xi$. 

So we have
$$
\hat{\varepsilon}({\rm Ind}_{K/\BQ_p}\phi_\pi\psi_\Gamma^l\eta\chi)=(-1)^{l}\varepsilon({\rm Ind}_{K/\BQ_p}\phi_K\xi\eta\chi)=(-1)^{l}\eta\xi\chi(\delta) (-1)^{a(\xi\eta\chi)}=(-1)^l \left(\frac{-1}{p}\right)^{l} (-1)^{a(\xi\eta\chi)}\eta\chi(\delta)$$ 
by Proposition~\ref{prop, root number, inert}. 
If $a(\chi)>a(\xi\eta)$, then we in turn have  
$$
\hat{\varepsilon}({\rm Ind}_{K/\BQ_p} \phi_\pi\psi_\Gamma^l\chi)= \epsilon_l \cdot 
\hat{\varepsilon}({\rm Ind}_{K/\BQ_p} \phi_\pi\chi).  
$$ 
Hence, the assertion follows from 
Theorem~\ref{thm, lsd ind} i).  
\end{proof}

\subsubsection{A Rubin-type conjecture: over ramified quadratic extensions} The main result of this subsection is 
Theorem~\ref{thm, rubin decomposition ramified}. 

Throughout $K$ denotes a ramified quadratic extension of $\BQ_p$.
Let $\delta \in K$ be a uniformizer such that $K=\BQ_p(\delta)$ and $\delta^2 \in \BQ_p$. 
For a more concrete formulation of the Rubin-type conjecture, 
let $\phi_1$ be  a conjugate symplectic self-dual $p$-adic de Rham character of $G_K$  
with Hodge--Tate weights $(k+1, -k)$ so that $k \geq 0$. We assume that 
the Weil--Deligne character associated to $\phi_1$ has 
conductor $1$. 

For a  finite order character $\chi: \Gamma \rightarrow \ov{\BQ}_p^\times$, put 
\[
\varepsilon_{\phi_1}(\chi):=
\begin{cases}
 \phi_1(\delta)\delta^{-1}  \sum_{a \in \BF_p^\times }\left(\frac{a}{p}\right) \chi(1+\delta^{a(\chi)-1})^a \qquad &(\chi\not=1)\\
\; 1  \qquad &(\chi=1).
\end{cases}
\] 
Note that $\varepsilon_{\phi_1}(\chi)\in\{\pm 1\}$ and it is independent of the choice of $\delta$. 
\begin{lem}\label{lem, ramified completed epsilon} 
Let $\phi_1$ be  a conjugate symplectic self-dual $p$-adic de Rham character of $G_K$ of conductor one  
with Hodge--Tate weights $(k+1, -k)$ for $k \geq 0$, and $\chi: \Gamma \rightarrow \ov{\BQ}_p^\times$ a finite order character. 
\begin{itemize}
\item[i)] We have 
\[
\hat{\varepsilon}(\mathrm{Ind}_{K/\BQ_p}\phi_1\chi)=(-1)^k\omega_{K/\BQ_p}(-2)\left(\frac{-1}{p}\right)^{\frac{a(\chi)}{2}}\varepsilon_{\phi_1}(\chi).  
\] 
\item[ii)] For $b \in \mathbb{Z}_p^\times$, we have 
\[
\hat{\varepsilon}(\mathrm{Ind}_{K/\BQ_p}\phi_1\chi^b)=\left(\frac{b}{p}\right)\hat{\varepsilon}(\mathrm{Ind}_{K/\BQ_p}\phi_1\chi)
\]
Moreover, if $\chi^p\not=1$, then 
\[
\hat{\varepsilon}(\mathrm{Ind}_{K/\BQ_p}\phi_1\chi^{-\delta^2})=\hat{\varepsilon}(\mathrm{Ind}_{K/\BQ_p}\phi_1\chi).
\]
\end{itemize}
\end{lem}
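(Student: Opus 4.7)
The plan is to decompose $\hat{\varepsilon}=\Gamma\cdot\varepsilon$ and compute each factor using the results already established for Weil--Deligne representations. The key inputs are Proposition~\ref{prop, induced p-adic WD} (compatibility of WD with induction), Proposition~\ref{prop, ramified p-adic WD char} (identification of the WD character associated to $\phi_1$), Proposition~\ref{prop, ramified epsilon} (explicit formula for the epsilon constant of the induction), Corollary~\ref{cor, ramified root number} (variation under $\chi \mapsto \chi^b$ and $\chi\mapsto \chi^{-\delta^2}$), and Corollary~\ref{cor, dR ssd two} (formula for the $\Gamma$-factor in rank two).

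For part i), first apply Proposition~\ref{prop, induced p-adic WD} to obtain a canonical identification $\mathrm{WD}(\mathrm{Ind}_{K/\BQ_p}\phi_1\chi)\cong \mathrm{Ind}_{K/\BQ_p}\mathrm{WD}(\phi_1\chi)$ after fixing embeddings. By Proposition~\ref{prop, ramified p-adic WD char} (with trivial finite character, since we assume $\phi_1$ is of conductor one), $\mathrm{WD}(\phi_1)$ is the unique conjugate symplectic self-dual Weil--Deligne character $\psi_K$ of $K^\times$ of conductor one with $\psi_K(\delta)=\iota\circ\tau(\phi_1(\delta)/\delta)$. Since $\chi$ is of finite order and tensor products are preserved, $\mathrm{WD}(\phi_1\chi)=\psi_K\cdot\chi$. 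Now $\chi$ is anticyclotomic factoring through $\Gamma$, so by Lemma~\ref{lem, ac char}~iii) we have $\chi(\delta)=1$, whence $\psi_K\chi(\delta)=\psi_K(\delta)$. Substituting into the formula of Proposition~\ref{prop, ramified epsilon}~ii) yields
\[
\varepsilon(\mathrm{Ind}_{K/\BQ_p}\phi_1\chi)=\omega_{K/\BQ_p}(-2)\cdot\phi_1(\delta)\delta^{-1}\cdot\left(\frac{-1}{p}\right)^{a(\chi)/2} G_{\chi,\delta}=\omega_{K/\BQ_p}(-2)\left(\frac{-1}{p}\right)^{a(\chi)/2}\varepsilon_{\phi_1}(\chi)
\]
for $\chi\neq 1$, while the case $\chi=1$ reduces to Proposition~\ref{prop, ramified epsilon}~i) and matches the convention $\varepsilon_{\phi_1}(1)=1$. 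Finally, since $\mathrm{Ind}_{K/\BQ_p}\phi_1\chi$ has Hodge--Tate weights $(k+1,-k)$ (the finite character $\chi$ contributing trivially), Corollary~\ref{cor, dR ssd two} gives $\Gamma(\mathrm{Ind}_{K/\BQ_p}\phi_1\chi)=(-1)^{(k+1)-1}=(-1)^k$, and multiplying yields the asserted formula.

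For part ii), the twist $\chi\mapsto \chi^b$ with $b\in\BZ_p^\times$, as well as $\chi\mapsto \chi^{-\delta^2}$ (which makes sense since $\Gamma\cong \BZ_p$ and $-\delta^2\in\BZ_p$), preserves the anticyclotomic character on $\Gamma$ and does not alter the Hodge--Tate weights, hence does not change the $\Gamma$-factor. Therefore the assertions reduce to the corresponding identities for $\varepsilon(\mathrm{Ind}_{K/\BQ_p}\psi_K\chi^{?})$, which follow directly from Corollary~\ref{cor, ramified root number} applied to the WD character $\psi_K=\phi_K$ (in the notation of that corollary). The main technical point throughout the argument is tracking the embedding $\iota\circ\tau:L\hookrightarrow\BC$ that identifies the $p$-adic quantity $\phi_1(\delta)\delta^{-1}$ with the complex number $\psi_K(\delta)$, so that the products of Gauss-type factors land in $\{\pm 1\}$ and agree with the intrinsic value of $\hat{\varepsilon}$; this is a bookkeeping issue rather than a substantive difficulty, since by Rohrlich's theorem the self-dual $\varepsilon$-constant is independent of such choices.
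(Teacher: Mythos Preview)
Your proof is correct and follows essentially the same approach as the paper, which tersely cites Propositions~\ref{prop, ramified epsilon} and~\ref{prop, ramified p-adic WD char} for part~i) and Corollary~\ref{cor, ramified root number} for part~ii), noting that $\chi(\delta)=1$ and that $\phi_1(\delta)\delta^{-1}$ is the value at $\delta$ of the associated conductor-one Weil--Deligne character. Your write-up makes the $\Gamma$-factor computation and the use of Proposition~\ref{prop, induced p-adic WD} explicit; the only minor inaccuracy is the phrase ``with trivial finite character'': the conductor-one hypothesis on $\phi_1$ refers to its Weil--Deligne character, so the finite part $\eta$ in the decomposition $\phi_1=\phi_\theta\psi_\theta^k\eta$ need only be unramified, not trivial---but this does not affect the argument, since all that is used is that $\mathrm{WD}(\phi_1)$ is one of the two conductor-one characters of Lemma~\ref{lem, WD csd}~i).
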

\begin{proof} 
Part i) follows 
from  Propositions \ref{prop, ramified epsilon} and \ref{prop, ramified p-adic WD char} since $\chi(\delta)=1$, and ii) from 
Corollary~\ref{cor, ramified root number}. 
Note that $\phi_1(\delta)\delta^{-1}$ is the value at $\delta$ of 
the Weil--Deligne character of conductor $1$ associated to $\phi_1$, and 
that $a(\chi)$ is even by Lemma \ref{lem, WD csd} iii). 
\end{proof}

\begin{defn}
Let $\Xi_{n}=\Xi^{+}_{\phi_1,n} \cup \Xi^{-}_{\phi_1,n}$
be a partition of finite order anticyclotomic characters of $\Gamma$ of (additive) conductor $n \in \BZ_{\geq 1}$ defined by 
\[
\Xi^\pm_{\phi_1, n}=\{\chi \in \Xi_n\;|\: \hat{\varepsilon}(\mathrm{Ind}_{K/\BQ_p}\phi_1\chi^{-1})=\pm 1\}.
\]
Put $\Xi^\pm_{\phi_1}=\cup_{n=0}^\infty \Xi^\pm_{\phi_1, n}$. 
\end{defn}
Note that $\Xi_n=\emptyset$ for $n$ odd by Lemma \ref{lem, WD csd} iii).

\begin{lem}\noindent
\begin{itemize} 
\item[i)] For $n\geq 2$, we have 
\[
\# \Xi^+_{\phi_1, n}=\# \Xi^-_{\phi_1, n}. 
\]
\item[ii)] The partition $\Xi_{n}=\Xi^{+}_{\phi_1,n} \cup \Xi^{-}_{\phi_1,n}$ is canonical. Moreover, its labeling depends on the choice of $\phi_1$ and it is determined by the labeling of $\Xi_{2}$.
\end{itemize}
\end{lem}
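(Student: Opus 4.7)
The plan is to exploit the two elementary transformations of anticyclotomic characters provided by part ii) of Lemma \ref{lem, ramified completed epsilon}: the exponentiation $\chi\mapsto\chi^{b}$ for $b\in\BZ_p^{\times}$, which multiplies $\hat{\varepsilon}(\mathrm{Ind}_{K/\BQ_p}\phi_1\chi^{-1})$ by the Legendre symbol $\bigl(\tfrac{b}{p}\bigr)$, and the exponentiation $\chi\mapsto\chi^{-\delta^{2}}$, which preserves $\hat{\varepsilon}$ whenever $\chi^{p}\ne 1$ and drops the additive conductor by $2$. For part i), I would first fix $b\in\BZ_p^{\times}$ with $\bigl(\tfrac{b}{p}\bigr)=-1$. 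Exponentiation by $b$ preserves the order of a character and hence its additive conductor, so $\chi\mapsto\chi^{b}$ is a bijection on $\Xi_{n}$ for each $n\ge 2$. Applying Lemma \ref{lem, ramified completed epsilon} ii) to $\chi^{-1}$ yields
\[
\hat{\varepsilon}(\mathrm{Ind}_{K/\BQ_p}\phi_1(\chi^{b})^{-1})
=\bigl(\tfrac{b}{p}\bigr)\hat{\varepsilon}(\mathrm{Ind}_{K/\BQ_p}\phi_1\chi^{-1})
=-\hat{\varepsilon}(\mathrm{Ind}_{K/\BQ_p}\phi_1\chi^{-1}),
\]
so this bijection swaps $\Xi^{+}_{\phi_1,n}$ and $\Xi^{-}_{\phi_1,n}$, which proves the equal cardinality.

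For the canonicity claim in part ii), I would note that by Lemma \ref{prop, conj self-dual} the representation $\mathrm{Ind}_{K/\BQ_p}\phi_1\chi^{-1}$ is symplectic self-dual, and hence by Lemma \ref{lem, epsilon=pm 1} its $\varepsilon$-constant lies canonically in $\{\pm 1\}$, independent of any choice of additive character or embedding into $\BC$. Together with the canonical $\Gamma$-factor, the completed $\varepsilon$-constant $\hat{\varepsilon}$ is an intrinsic sign, so the partition of $\Xi_{n}$ is well-defined. The dependence on $\phi_1$ then follows from Lemma \ref{lem, WD csd} i): the only other conjugate symplectic self-dual de Rham character of $G_K$ of conductor one with the prescribed Hodge--Tate weights is $\phi_1'=\phi_1\mu$, where $\mu$ is the unramified quadratic character of $K^{\times}$. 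Since $\mu(\delta)=-1$, the explicit formula in Lemma \ref{lem, ramified completed epsilon} i) gives $\varepsilon_{\phi_1'}(\chi^{-1})=\mu(\delta)\,\varepsilon_{\phi_1}(\chi^{-1})=-\varepsilon_{\phi_1}(\chi^{-1})$, so the labelings attached to $\phi_1$ and $\phi_1'$ are exactly opposite.

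Finally, to show that the labeling is determined by its restriction to $\Xi_{2}$, I would proceed by downward induction on $n$. For $\chi\in\Xi_{n}$ with $n\ge 4$, the character $\chi$ has order strictly greater than $p$, hence $\chi^{p}\ne 1$; the second statement of Lemma \ref{lem, ramified completed epsilon} ii), applied to $\chi^{-1}$, then yields
\[
\hat{\varepsilon}(\mathrm{Ind}_{K/\BQ_p}\phi_1(\chi^{-\delta^{2}})^{-1})
=\hat{\varepsilon}(\mathrm{Ind}_{K/\BQ_p}\phi_1\chi^{-1}).
\]
The conductor computation in the proof of Corollary \ref{cor, ramified root number} ii) gives $a(\chi^{-\delta^{2}})=a(\chi)-2$, so $\chi\mapsto\chi^{-\delta^{2}}$ is a label-preserving map $\Xi^{\pm}_{\phi_1,n}\to\Xi^{\pm}_{\phi_1,n-2}$; iterating $(n-2)/2$ times exhibits every $\chi\in\Xi_{n}$ as having the same label as a specific element of $\Xi_{2}$, so the labeling on $\Xi_{n}$ is determined by that on $\Xi_{2}$. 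The principal technical point to verify carefully is the conductor-descent identity $a(\chi^{-\delta^{2}})=a(\chi)-2$ together with the order-versus-conductor relation that yields $\chi^{p}\ne 1$ for $n\ge 4$; both are local facts about the ramification filtration of the anticyclotomic $\BZ_p$-extension already encountered in the proof of Corollary \ref{cor, ramified root number} ii).
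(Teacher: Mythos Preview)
Your proposal is correct and follows essentially the same approach as the paper: the paper's proof invokes Lemma~\ref{lem, ramified completed epsilon}~ii), noting that $\chi\mapsto\chi^{u}$ with $\bigl(\tfrac{u}{p}\bigr)=-1$ swaps $\Xi^{+}_{\phi_1,n}$ and $\Xi^{-}_{\phi_1,n}$, and that $\chi\in\Xi^{\pm}_{\phi_1,n+2}$ if and only if $\chi^{-\delta^{2}}\in\Xi^{\pm}_{\phi_1,n}$. Your write-up is more detailed---in particular your discussion of canonicity via self-duality and of the dependence on $\phi_1$ goes beyond what the paper spells out---but the underlying argument is the same.
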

\begin{proof} 
This follows from Lemma~\ref{lem, ramified completed epsilon} ii). 
In fact, pick an element $u \in \BZ_p^\times$ for which 
$\left(\frac{u}{p}\right)=-1$. Then, note that 
$\chi \in \Xi^+_{\phi_1, n}$ if and only if $\chi^u \in \Xi^-_{\phi_1, n}$, and 
$\chi \in \Xi^\pm_{\phi_1, n+2}$ if and only if $\chi^{-\delta^2} \in \Xi^\pm_{\phi_1, n}$.

\end{proof}

\begin{thm}\label{thm, rubin decomposition ramified}
Let $p$ be an odd prime and $K$ a ramified quadratic extension of $\BQ_p$. 
Let $\phi_1$ be a de Rham conjugate symplectic self-dual character of $K^\times$ with Hodge--Tate weights $(k+1,-k)$ for $k\geq 0$ and 
valued in a $p$-adic local field $L$.  
Suppose that the Weil--Deligne character associated to $\phi_1$ has conductor $1$. 
Put $T_{\phi_1}=\CO_L(\phi_1)$ 
and $\BT_{\phi_1}=T_{\phi_1}^{\otimes -1}(1)\otimes_{\mathcal{O}_L} \Lambda^\iota$ for $\Lambda$ the anticyclotomic Iwasawa algebra over $K$ with $\CO_L$-coefficients. 
If $p=3$, suppose that either $K=\BQ_3(\sqrt{3})$, or $K=\BQ_3(\sqrt{-3})$ and $\phi_1 \nequiv 1 \mod{\fm_{\CO_L}}$. 
\begin{itemize}
\item[i)] The $\Lambda$-modules $H^1_{\pm}(K,\BT_{\phi_1})$ as in Theorem~\ref{thm, lsd ind} are given by 
\[
H^1_\pm(K,\mathbb{T}_{\phi_1})=\{x \in H^1(K, \mathbb{T}_{\phi_1})\;|\; \exp^*_{\chi}(x)=0 \; \text{for all $\chi \in \Xi_{\phi_1,n}^\mp$}\} . 
\] 
\item[ii)] The $\Lambda$-submodules $H^1_{\pm}(K,\BT_{\phi_1}) \subset H^1(K, \BT_{\phi_1})$ are Lagrangian. 
\item[iii)]As $\Lambda$-modules, we have 
$$
H^1(K,\BT_{\phi_1})=H^1_+(K,\BT_{\phi_1}) \oplus H^1_{-} (K,\BT_{\phi_1}). 
$$
\end{itemize}
In the excluded case that $p=3$, $K=\BQ_3(\sqrt{-3})$ and $\phi_1 \equiv 1 \mod{\fm_{\CO_L}}$, the assertions as in parts i)-iii) hold for the $\Lambda \otimes_{\BZ_p}\BQ_p$-modules $H^1_{\cdot}(K,\BT_{\phi_1} \otimes_{\BZ_p}\BQ_p)$. 
\end{thm}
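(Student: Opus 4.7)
The plan is to deduce the theorem from the anticyclotomic local sign decomposition of Theorem~\ref{thm, lsd ind} applied with $S=\Xi=\bigcup_{n\ge 0}\Xi_n$, and then to identify its intrinsic Bloch--Kato characterization of $H^1_\pm(K,\BT_{\phi_1})$ with the explicit $\exp^*_\chi$-vanishing criterion in part (i).

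First I would invoke Shapiro's lemma to rewrite $H^1(K,\BT_{\phi_1})\cong H^1(\BQ_p,\tilde{\BT}_{\phi_1})$, where $\tilde{\BT}_{\phi_1}$ is a rank-two symplectic self-dual $\Lambda$-representation of $G_{\BQ_p}$ that is generic by Lemma~\ref{lem, induced rep satisfy the condition}(ii) under the non-excluded hypotheses on $(p,K,\phi_1)$. To apply Theorem~\ref{thm, lsd ind} with the choice $S=\Xi$, it suffices to check that both $\Xi^+_{\phi_1}$ and $\Xi^-_{\phi_1}$ are infinite; this follows from Lemma~\ref{lem, ramified completed epsilon}(ii), where the involution $\chi\mapsto \chi^u$ for a non-square $u\in\BZ_p^\times$ interchanges $\Xi^+_{\phi_1,n}$ with $\Xi^-_{\phi_1,n}$ within each fixed conductor, while $|\Xi_n|\to\infty$ as $n\to\infty$. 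Theorem~\ref{thm, lsd ind} then yields the free, Lagrangian and direct summand properties (ii), (iii), together with the abstract description of $H^1_\pm(K,\BT_{\phi_1})$ as those $v$ whose specialization at every $\chi\in\Xi^\mp_{\phi_1}$ lies in the Bloch--Kato subgroup $H^1_{\mathrm{f}}(K,T_{\phi_1}^{\otimes -1}(1)(\chi))$; Lemma~\ref{lem, ramified completed epsilon}(i) identifies the sign $\hat\varepsilon(\mathrm{Ind}_{K/\BQ_p}\phi_1\chi^{-1})$ with membership in $\Xi^\pm_{\phi_1}$, matching the labelling in the statement.

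To upgrade the Bloch--Kato condition to the vanishing $\exp^*_\chi(v)=0$ asserted in (i), I would use that $V_\chi:=T_{\phi_1}^{\otimes -1}(1)(\chi)\otimes_{\CO_L}L$ is a one-dimensional de Rham $G_K$-character with non-zero Hodge--Tate weight ($-k$ or $k+1$, since $k\ge 0$); the Bloch--Kato exact sequence for $V_\chi$ then gives $H^1_{\mathrm{f}}(K,V_\chi)=\ker(\exp^*_\chi)$, and the lattice-level equivalence $s(v)\in H^1_{\mathrm{f}}(K,T_{\phi_1}^{\otimes -1}(1)(\chi))\Longleftrightarrow \exp^*_\chi(v)=0$ follows from the $\CO_L$-flatness of $H^1_\pm(K,\BT_{\phi_1})$ provided by Theorem~\ref{thm, lsd ind}. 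The excluded case ($p=3$, $K=\BQ_3(\sqrt{-3})$, $\phi_1\equiv 1\pmod{\fm_{\CO_L}}$) is handled by the same route after inverting $p$: the hypothesis of Theorem~\ref{thm, main3} is verified via Lemma~\ref{lem, van}, since every de Rham specialization of $\tilde{\BT}_{\phi_1}\otimes_{\BZ_p}\BQ_p$ is an induction from $G_K$ of a character whose restriction to $\BQ_p^\times$ is a non-trivial finite order twist of $\chi_{\mathrm{cyc}}$, forcing $H^0(K,V_s)=0$; the final clause of Theorem~\ref{thm, lsd ind} then delivers the rational analogue of (i), (ii), (iii). The main forward-looking points are the counting input (infinitude of $\Xi^\pm_{\phi_1}$) and the passage from the rational to the integral statement in the generic range; both are settled by machinery already in place, so the argument is otherwise a direct assembly of previously established results.
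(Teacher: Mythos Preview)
Your proposal is correct and follows essentially the same route as the paper: both deduce everything from Theorem~\ref{thm, lsd ind} applied with $S=\Xi_{\phi_1}$, using Lemma~\ref{lem, ramified completed epsilon} to guarantee infinitely many characters of each sign. Two small expository points: the equivalence $s(v)\in H^1_{\mathrm{f}}\Longleftrightarrow \exp^*_\chi(v)=0$ does not need flatness of $H^1_\pm$---it is immediate from the definition of $H^1_{\mathrm{f}}$ for a lattice as the preimage of $H^1_{\mathrm{f}}$ for the vector space together with $H^1_{\mathrm{f}}(K,V_\chi)=\ker(\exp^*)$ (the latter since $V_\chi$ is non-crystalline, so $D_{\mathrm{crys}}(V_\chi)=0$); and there is no ``passage from rational to integral'' in the non-excluded range, since Theorem~\ref{thm, lsd ind} already yields the integral decomposition directly there.
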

\begin{proof}
By Theorem~\ref{thm, lsd ind} i), 
we have 
\[
H^1_\pm(K,\BT_{\phi_1})=\{x \in H^1(K,\mathbb{T}_{\phi_1})\;|\; \exp^*_{\chi}(x)=0 \quad \text{for all $\chi \in \Xi^\mp_{\phi_1}$}\}.
\]
Hence, Theorem~\ref{thm, lsd ind} concludes the proof. 
\end{proof}

Note that the anticyclotomic character $\psi_{1}=\phi_{1}/\phi_{1}^*$ has trivial central character and it factors through the anticyclotomic Galois group $\Gamma$ by Lemma~\ref{lem, ac char}. 
We have the following description of signed submodules under higher weight specializations. 

\begin{cor}\label{twist-ramified}
For an integer $l \geq 0$, let $\epsilon_l$ denote the sign of 
$(-1)^{l} \left(\frac{-1}{p}\right)^{l}.$
Then we have
\begin{align*}
H^1_\pm(K, \mathbb{T}_{\phi_1})= 
 \{x \in H^1(K, \mathbb{T}_{\phi_1})\;|\; \exp^*_{ \psi_{1}^l \chi}(x)=0 \; \text{for all $\chi \in \Xi^{\mp \epsilon_l}_{\phi_{1}}$}\},
\end{align*} 
where $\exp^*_{ \psi_{1}^l \chi}$ is defined in the same way as in
 Corollary \ref{higher-weight-unram}. 
In particular, 
\begin{align*}
H^1_\pm(K, \mathbb{T}_{\phi_1})\otimes \psi_1^{-l} &=H^1_{\pm\epsilon_l}(K, \mathbb{T}_{\phi_1\psi_1^l}).
\end{align*} 
\end{cor}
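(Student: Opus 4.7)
The plan is to follow the template of Corollary~\ref{higher-weight-unram}, adapted to the ramified setting. The natural twist isomorphism
\[
H^1(K, \BT_{\phi_1}) \otimes_\Lambda \Lambda\psi_1^{-l} \;\cong\; H^1(K, \BT_{\phi_1\psi_1^l})
\]
intertwines $\exp^*_{\psi_1^l\chi}$ on the left with $\exp^*_\chi$ on the right, for every finite-order anticyclotomic character $\chi$ of $\Gamma$. Observing that $\phi_1\psi_1^l$ is again a de Rham conjugate symplectic self-dual character of $G_K$ with the same residual character as $\phi_1$, and (as we verify below) Weil--Deligne conductor still equal to $1$, Theorem~\ref{thm, rubin decomposition ramified} applies to $\phi_1\psi_1^l$ and characterizes $H^1_\pm(K,\BT_{\phi_1\psi_1^l})$ via the vanishing of $\exp^*_\chi$ for $\chi \in \Xi^\mp_{\phi_1\psi_1^l}$.

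The proof then reduces to the sign identity
\[
\hat{\varepsilon}\bigl(\mathrm{Ind}_{K/\BQ_p}(\phi_1\psi_1^l\chi^{-1})\bigr) \;=\; \epsilon_l\cdot \hat{\varepsilon}\bigl(\mathrm{Ind}_{K/\BQ_p}(\phi_1\chi^{-1})\bigr)
\]
for every finite-order anticyclotomic character $\chi$ of positive conductor, which I would decompose into $\Gamma$- and $\varepsilon$-factor contributions. Since $\phi_1$ has Hodge--Tate weights $(k+1, -k)$ and $\psi_1=\phi_1/\phi_1^c$ has Hodge--Tate weights $(2k+1, -(2k+1))$ at the two embeddings of $K$ into $\overline{\BQ}_p$, the induced representation $\mathrm{Ind}_{K/\BQ_p}(\phi_1\psi_1^l\chi^{-1})$ has Hodge--Tate weights $(k+1+l(2k+1), -k-l(2k+1))$, so by Lemma~\ref{prop:gamma} the $\Gamma$-ratio equals $(-1)^{l(2k+1)}=(-1)^l$.

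For the $\varepsilon$-factor, I would invoke Lemma~\ref{lem, shape of conj self-dual II} to write $\phi_1=\phi_\theta\psi_\theta^a\eta_1$. The conductor-one hypothesis on the Weil--Deligne character of $\phi_1$, combined with Lemma~\ref{lem, WD csd}\,i)-ii), forces $\eta_1$ to be unramified (in fact $\eta_1\in\{1,\mu_{-1}\}$); in particular $\eta_1^2=1$. A direct computation then gives $\psi_1=\psi_\theta^{2a+1}\eta_1^2=\psi_\theta^{2a+1}$ and $\phi_1\psi_1^l=\phi_\theta\psi_\theta^{a+l(2a+1)}\eta_1$, so by Proposition~\ref{prop, ramified p-adic WD char} the Weil--Deligne characters of $\phi_1\psi_1^l$ and $\phi_1$ differ by the unramified character $\mu_{\epsilon_l}$ of $K^\times$ with $\mu_{\epsilon_l}(\delta)=\epsilon_l$, and in particular $\phi_1\psi_1^l$ again has Weil--Deligne conductor $1$. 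Applying Proposition~\ref{prop, ramified epsilon} directly to both $\phi_1\chi^{-1}$ and $\phi_1\psi_1^l\chi^{-1}$---or equivalently combining the unramified-twist formula \cite[(3.4.6)]{T} with the inductivity formula \cite[(3.4.8)]{T}---yields the required $\varepsilon$-ratio, which when assembled with the $\Gamma$-ratio produces the identity above.

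The main obstacle will be the careful bookkeeping of signs in the $\varepsilon$-factor step, including tracking the convention for evaluating unramified characters at the uniformizer $\delta$, the parity of the conductor exponents (which relies on Lemma~\ref{lem, WD csd}\,iii) to ensure $a(\chi)$ is even), and the $\omega_{K/\BQ_p}(-2)$ and Gauss-sum prefactors appearing in Proposition~\ref{prop, ramified epsilon}. Once the sign identity is in hand, the final assertion $H^1_\pm(K,\BT_{\phi_1})\otimes\psi_1^{-l}=H^1_{\pm\epsilon_l}(K,\BT_{\phi_1\psi_1^l})$ follows formally by transporting the first assertion of the corollary through the twist isomorphism.
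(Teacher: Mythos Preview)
Your proposal is correct and follows essentially the same strategy as the paper's proof, though you unpack considerably more detail. The paper's proof is two sentences: it invokes Proposition~\ref{prop, ramified p-adic WD char} to observe that the Weil--Deligne characters of $\phi_1\psi_1^l$ and $\phi_1$ differ by $\epsilon_l$, states the resulting identity $\hat{\varepsilon}(\mathrm{Ind}_{K/\BQ_p}\phi_1\psi_1^l\chi)=\epsilon_l\cdot\hat{\varepsilon}(\mathrm{Ind}_{K/\BQ_p}\phi_1\chi)$, and then cites Theorem~\ref{thm, lsd ind}\,i) (independence of the set $S$ of de Rham specializations) directly for $\phi_1$. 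You instead pass through the twist isomorphism and apply Theorem~\ref{thm, rubin decomposition ramified} to $\phi_1\psi_1^l$, which requires your explicit verification that the Weil--Deligne conductor is still one; this is a slight detour the paper avoids by working entirely on the $\phi_1$ side with the larger specialization set $\{\psi_1^l\chi:\chi\in\Xi\}$. Your separate computation of the $\Gamma$- and $\varepsilon$-ratios, and your expansion $\phi_1=\phi_\theta\psi_\theta^a\eta_1$ with $\eta_1$ unramified, are correct but more than the paper records---it treats the $\hat{\varepsilon}$-identity as essentially immediate from the Weil--Deligne comparison.
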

\begin{proof}
The Weil--Deligne characters associated to $\phi_1\psi_1^l$ is and $\phi_1$ differ by $\epsilon_l$ by Proposition \ref{prop, ramified p-adic WD char}. So we have 
$$
\hat{\varepsilon}({\rm Ind}_{K/\BQ_p}\phi_1\psi_1^l\chi)=(-1)^l \left(\frac{-1}{p}\right)^l 
\hat{\varepsilon}({\rm Ind}_{K/\BQ_p}\phi_1\chi).
$$
Therefore, 
Theorem~\ref{thm, lsd ind} i) concludes the proof. 
\end{proof}

\section{$p$-adic $L$-functions for CM elliptic curves at additive primes}\label{s:pL}
In this section we consider an elliptic curve defined over $\BQ$ with complex multiplication by 
an order of an imaginary quadratic field $\mathcal{K}$. For primes $p$ ramified in $\CK$, we construct a bounded  $p$-adic $L$-function for 
the deformation of the $p$-adic Tate module of $E$ 
arising from the anticyclotomic $\BZ_p$-extension $\CK_\infty^{\rm ac}$ of $\CK$, which is integral except the case $p=3$ and 
$\CK = \BQ(\sqrt{-3})$ (see~Theorem~\ref{thm, ram pL}). 
We also establish a formula for the variation of Mordell--Weil rank of $E$ along the subextensions of $\CK_\infty^{\rm ac} /\CK$ (see Theorem~\ref{vGZK}).

In the sequel \cite{BKNOa} we develop anticyclotomic CM Iwasawa theory at primes $p$ ramified in the CM field
more generally, including CM abelian varieties over $\BQ$ and CM elliptic curves defined over abelian extensions of $\CK$. 
To highlight the phenomena, we only consider CM curves over $\BQ$ in this section.

\subsection{Set-up}\label{ss: s-u}
\subsubsection{} Let $A$ be a CM elliptic curve defined over $\BQ$ and $\CK$ the CM field. 
Let $\omega_{\CK/\BQ}$ denote the quadratic character associated to the extension $\CK/\BQ$.

Note that the class number of $\mathcal{K}$ equals $1$ and $\mathcal{K}=\BQ(\sqrt{-q})$ 
for 
$$q\in\{1, 2, 3, 7, 11, 19, 43, 67, 163\}.$$  
We suppose that $q\geq 3$ and put $p=q$.
Then $p$ is the unique ramified prime in $\mathcal{K}$, and we write $p\CO_\CK=\fp^2$. 
Put $K:=\mathcal{K} \otimes_{\BQ} \BQ_p=\BQ_p(\sqrt{-p})$.

Fix embeddings $\iota_{\infty}:\overline{\BQ} \hookrightarrow  \BC$ and $\iota_p:\overline{\BQ}\hookrightarrow  \overline{\BQ}_p$. Let $$[\;]: \mathcal{K} \isom \mathrm{End}(A) \otimes_{\BZ} \BQ$$ be the isomorphism so that 
$[a]^*\omega_A=a\omega_A$ for a N\'eron differential $\omega_A$ of $A$ and any $a \in \mathcal{K}$. 
Let $\boldsymbol{\phi}$ be the global $p$-adic Hecke character associated to $A$. 
Then  the $p$-adic Tate module $T_pA$ naturally has a structure of $\CO_K$-module free of rank one on which $G_{\CK}$ acts by $\boldsymbol{\phi}$,
and denote by $T_{\boldsymbol{\phi}}$ the $\CO_K[G_{\CK}]$-module. 
Put $V_{\boldsymbol{\phi}}=T_{\boldsymbol{\phi}}\otimes_{\CO_K} K$. 

\begin{lem} We have a canonical isomorphism 
$$V_pA\otimes_{\BQ_p} K \cong \mathrm{Ind}_{\mathcal{K}/\BQ}\,V_{\boldsymbol{\phi}}$$
of $K[G_{\BQ}]$-modules. 
\end{lem}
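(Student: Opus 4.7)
The plan is to construct the canonical isomorphism via the étale-algebra decomposition of the base change. The complex multiplication equips $V_pA$ with the structure of a rank-one free $K$-module, and since CM is defined over $\mathcal{K}$, the subgroup $G_\mathcal{K}$ acts $K$-linearly via the Hecke character $\boldsymbol{\phi}$; in particular, the underlying $K[G_\mathcal{K}]$-module of $V_pA$ is precisely $V_{\boldsymbol{\phi}}$. A lift $\tilde c \in G_\BQ$ of the nontrivial element of $\Gal(\mathcal{K}/\BQ)$, however, acts only $K$-semilinearly: $\tilde c([a]v) = [\bar a]\,\tilde c(v)$. To promote this to a genuinely $K$-linear $G_\BQ$-representation, I would pass to $M := V_pA \otimes_{\BQ_p} K$, on which the tensor factor introduces a second commuting $K$-action, turning $M$ into a module over the $\BQ_p$-algebra $K \otimes_{\BQ_p} K$.

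Since $K/\BQ_p$ is separable quadratic, there is a canonical $\BQ_p$-algebra isomorphism $K \otimes_{\BQ_p} K \cong K \times K$ sending $a \otimes b$ to $(ab,\bar a b)$, and the fixed embedding $\iota_p$ canonically determines which factor is which. I would use the associated orthogonal idempotents $e_+, e_-$ to decompose $M = M_+ \oplus M_-$, where each summand is free of rank one over the diagonal copy of $K$.

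The Galois analysis is the substantive step. Because the CM action commutes with $G_\mathcal{K}$, the decomposition is $G_\mathcal{K}$-stable, and a direct calculation — matching the CM and tensor $K$-actions on each summand — will show that $G_\mathcal{K}$ acts on $M_+$ by $\boldsymbol{\phi}$ and on $M_-$ by the conjugate character $\boldsymbol{\phi}\circ c$. Meanwhile, conjugating by $\tilde c$ interchanges the two $K$-actions on $M$ via the semilinearity formula above, so $\tilde c$ swaps $e_+$ and $e_-$ and carries $M_+$ onto $M_-$. The assignment $v \mapsto e_+\cdot(v\otimes 1)$ then defines a $K[G_\mathcal{K}]$-linear isomorphism $V_{\boldsymbol{\phi}} \xrightarrow{\sim} M_+$, and I would invoke Frobenius reciprocity to promote this to a $K[G_\BQ]$-linear map
$$\mathrm{Ind}_{\mathcal{K}/\BQ} V_{\boldsymbol{\phi}} \longrightarrow M,$$
whose image contains both $M_+$ (from the trivial coset) and $M_-$ (via $\tilde c$). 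Surjectivity and a dimension count over $K$ (both sides have dimension two) then force it to be an isomorphism.

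No step is a serious obstacle, as this is largely formal CM theory combined with the étaleness of $K/\BQ_p$. The only mild subtlety is verifying that the resulting isomorphism is genuinely canonical — independent of the choice of $\tilde c$ and of the explicit representatives of $e_\pm$ — which will follow because Frobenius reciprocity depends only on the map $V_{\boldsymbol{\phi}} \hookrightarrow M_+$, and the labeling of $M_+$ versus $M_-$ is determined by the fixed embedding $\iota_p$.
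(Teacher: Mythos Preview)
Your argument is correct and essentially the same as the paper's: both decompose $V_pA\otimes_{\BQ_p}K$ over $G_{\mathcal{K}}$ as $V_{\boldsymbol{\phi}}\oplus V_{\boldsymbol{\phi}^c}$ and then apply Frobenius reciprocity. The only minor difference is in the final step: the paper invokes irreducibility of $V_pA\otimes_{\BQ_p}K$ as a $K[G_{\BQ}]$-module to conclude, whereas you check surjectivity directly from the fact that $\tilde c$ swaps $M_+$ and $M_-$ and then match dimensions---both are valid, and your version avoids appealing to irreducibility.
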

\begin{proof}
There exists a $K[G_\CK]$-equivariant canonical projection 
\[V_pA\otimes_{\BQ_p} K \cong 
V_{\boldsymbol{\phi}} \oplus V_{\boldsymbol{\phi}^c}
 \rightarrow V_{\boldsymbol{\phi}},\] 
where ${\boldsymbol \phi}^{c}:={\boldsymbol \phi} \circ c$ for $c\in \Gal(\CK/\BQ)$ the non-trivial element. 
Note that $V_pA\otimes_{\BQ_p} K$  has 
structure of a $K[G_{\BQ}]$-representation,  
extending the $K[G_\mathcal{K}]$-action. 
   Therefore, by the adjoint property of induced representations and 
 the irreducibility of $V_pA\otimes_{\BQ_p}K$, the assertion follows. 
 \end{proof}

For $\cdot \in \{\emptyset, \otimes^{-1} \}$, 
put $V_{\boldsymbol{\phi}}^{\cdot}=T_{\boldsymbol{\phi}}^{\cdot}\otimes_{\CO_K}K$. 

\subsubsection{}
Let $\CK_\infty^{\rm ac}$ be the anticyclotomic $\BZ_p$-extension of $\CK$ and $\CK_n^{\rm ac}$ its subextension of degree $p^n$ over $\CK$ for an integer $n\geq 0$. Put $\Lambda:=\mathcal{O}_K[\![\mathrm{Gal}(\mathcal{K}^{\rm ac}_\infty/\mathcal{K})]\!]$. 

We have the following variation of global epsilon constants along the anticyclotomic tower. 
\begin{prop}\label{lem, eps global} 
Let $A$ be a CM elliptic curve defined over $\BQ$ and $\CK$ the CM field. 
Let $p$ be an odd prime ramified in $\CK$, ${\boldsymbol \phi}$ the associated $p$-adic character of $G_\CK$ and $K$ the localization of $\CK$ at the prime above $p$.
Let $\chi: \Gal(\CK_{\infty}^{\rm ac}/\CK) \ra \ov{\BQ}^\times$ be a finite order character. 
\begin{itemize}
\item[i)] We have 
$$
\frac{\varepsilon(\boldsymbol{\phi} \chi)}{\varepsilon(\boldsymbol{\phi})}
=\frac{\varepsilon_p({\rm Ind}_{K/\BQ_p}\boldsymbol{\phi}\chi)}{\varepsilon_p({\rm Ind}_{K/\BQ_p}\boldsymbol{\phi})}.  
$$
\item[ii)] For $b \in \mathbb{Z}_p^\times$, 
\[
\varepsilon(\boldsymbol{\phi}\chi^b)=\left(\frac{b}{p}\right)\varepsilon(\boldsymbol{\phi}\chi). 
\]
\item[iii)] If $\chi^p\not=1$, then 
\[
\varepsilon(\boldsymbol{\phi}\chi^{-\delta^2})=\varepsilon(\boldsymbol{\phi}\chi) 
\]
for $\delta \in K$ a uniformizer\footnote{One may choose $\delta$ so that $\delta^2=-p$.} such that $\delta^2 \in \BQ_p$. 
\item[iv)] For an integer $n\geq 1$, let $\Xi_{\CK_n^{\rm ac}}$ denote the set of $\ov{\BQ}^\times$-valued characters of $\Gal(\CK_{n}^{\rm ac}/\CK)$
not factoring through $\Gal(\CK_{n-1}^{\rm ac}/\CK)$. Then we have 
$$
\#\{\chi \in \Xi_{\CK_n^{\rm ac}}| \varepsilon(\boldsymbol{\phi}\chi)=+1\}=
\#\{\chi \in \Xi_{\CK_n^{\rm ac}}| \varepsilon(\boldsymbol{\phi}\chi)=-1\}. 
$$
\end{itemize}
\end{prop}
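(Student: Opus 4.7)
The plan is to establish part i) by a place-by-place comparison of local $\varepsilon$-factors, reducing the global variation to the local one at $p$, after which parts ii) and iii) are immediate from Lemma~\ref{lem, ramified completed epsilon}~ii), and part iv) follows by orbit counting.

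For part i), I will use that $\varepsilon(\boldsymbol{\phi}\chi) = \prod_v \varepsilon_v(\Ind_{\CK/\BQ}\boldsymbol{\phi}\chi)$ factors into local $\varepsilon$-factors at places of $\BQ$. Since $p$ is the unique prime ramified in $\CK$ and the conductor of the CM elliptic curve $A/\BQ$ is a power of $p$, the character $\boldsymbol{\phi}$ is unramified outside $p$; likewise $\CK_\infty^{\rm ac}/\CK$ is unramified outside $p$, so $\chi$ is as well. At each finite place $v \neq p$ the local representation $\Ind_{\CK/\BQ}(\boldsymbol{\phi}\chi)|_{G_{\BQ_v}}$ is therefore unramified, and its local $\varepsilon$-factor equals $1$ with the standard self-dual normalisation, matching the factor for $\Ind_{\CK/\BQ}\boldsymbol{\phi}$. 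At the archimedean place both induced representations share the same Hodge structure because $\chi$ has finite order, so the archimedean contributions also cancel in the ratio. The ratio thereby collapses to the local factor at $p$, giving part i).

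Parts ii) and iii) will then follow at once from part i) together with Lemma~\ref{lem, ramified completed epsilon}~ii), applied with $\phi_1 = \boldsymbol{\phi}|_{G_K}$. One has to check the conductor-one hypothesis of that lemma: it holds because $A/\BQ$ has additive reduction at $p$ with conductor exponent two, so the attached Weil--Deligne character has conductor one. Moreover the $\Gamma$-factor depends only on Hodge--Tate weights and is invariant under twisting by a finite-order character, so the identities of Lemma~\ref{lem, ramified completed epsilon}~ii) for $\hat{\varepsilon}_p$ transfer to $\varepsilon_p$, and then to the global $\varepsilon$ via part i).

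For part iv), I will use that $\Gal(\CK_n^{\rm ac}/\CK)\cong \BZ/p^n\BZ$ is cyclic, so the characters in $\Xi_{\CK_n^{\rm ac}}$ -- those of exact order $p^n$ -- form a single orbit under the $(\BZ/p^n\BZ)^\times$-action $\chi \mapsto \chi^b$. Fixing any $\chi_0 \in \Xi_{\CK_n^{\rm ac}}$, part ii) gives $\varepsilon(\boldsymbol{\phi}\chi_0^b) = \left(\frac{b}{p}\right)\varepsilon(\boldsymbol{\phi}\chi_0)$, and the natural composite $(\BZ/p^n\BZ)^\times \twoheadrightarrow (\BZ/p\BZ)^\times \twoheadrightarrow \{\pm 1\}$ is a surjective group homomorphism whose kernel has index two, so exactly half of the characters $\chi_0^b$ land in each sign class. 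The main obstacle I anticipate is the careful local bookkeeping in part i), in particular verifying that no primes away from $p$ contribute nontrivially to the ratio; this rests on the fact that in the present setting the conductor of $A$ is a power of $p$ and the anticyclotomic tower is unramified outside $p$, both standard in this case.
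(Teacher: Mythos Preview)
Your overall strategy for part~i) --- place-by-place comparison of local $\varepsilon$-factors --- matches the paper's, and your deduction of ii)--iv) from part~i) via Lemma~\ref{lem, ramified completed epsilon}~ii) and orbit counting is correct and essentially what the paper does. However, there is a genuine gap in your execution of part~i).

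You assert that the conductor of $A/\BQ$ is a power of $p$, so that $\boldsymbol{\phi}$ is unramified outside $p$ and the local induced representations at $\ell\neq p$ are unramified with $\varepsilon$-factor $1$. This is false in general: the CM elliptic curves over $\BQ$ with CM field $\CK$ form an infinite family of quadratic twists, and twisting by a character ramified at some prime $\ell\neq p$ produces additive reduction at $\ell$. So $\boldsymbol{\phi}$ may well be ramified at places above such $\ell$, and neither $\varepsilon_\ell(\Ind_{\CK/\BQ}\boldsymbol{\phi})$ nor $\varepsilon_\ell(\Ind_{\CK/\BQ}\boldsymbol{\phi}\chi)$ need equal $+1$. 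The paper's argument is precisely the careful bookkeeping you flagged as the main obstacle but did not carry out: only $\chi$ is unramified at $v\mid\ell$, and the unramified-twist formula \eqref{equation, epsilon unramified twist} gives
\[
\varepsilon_v(\boldsymbol{\phi}_v\chi,\psi_v,dx_v)=\varepsilon_v(\boldsymbol{\phi}_v,\psi_v,dx_v)\cdot \chi(\varpi_v)^{a(\boldsymbol{\phi}_v)+n(\psi_v)},
\]
so the ratio is a $p$-power root of unity (as $\chi$ has $p$-power order). Passing to the induced representation, each local ratio $\varepsilon_\ell(\Ind\boldsymbol{\phi}\chi)/\varepsilon_\ell(\Ind\boldsymbol{\phi})$ is then a $p$-power root of unity; but both terms lie in $\{\pm 1\}$ by self-duality, and $\mu_{p^\infty}\cap\{\pm 1\}=\{1\}$ since $p$ is odd. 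This yields the desired cancellation with no hypothesis on the conductor of $A$ away from $p$. (Separately, your justification that $\boldsymbol{\phi}|_{G_K}$ has Weil--Deligne conductor one is a purely local claim at $p$ and does not follow from your global conductor assertion; it requires its own argument.)
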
 
\begin{proof}
i) Note that 
\[
\frac{\varepsilon(\boldsymbol{\phi} \chi)}{\varepsilon(\boldsymbol{\phi})}
=\frac{\varepsilon({\rm Ind}_{\mathcal{K}/\BQ}\boldsymbol{\phi}\chi)}{\varepsilon({\rm Ind}_{\mathcal{K}/\BQ}\boldsymbol{\phi})}
=\frac{\hat{\varepsilon}_p({\rm Ind}_{\mathcal{K}/\BQ}\boldsymbol{\phi}\chi)}{\hat{\varepsilon}_p({\rm Ind}_{\mathcal{K}/\BQ}\boldsymbol{\phi})}.  
\prod_{l\not= p} \frac{\varepsilon_l({\rm Ind}_{\mathcal{K}/\BQ}\boldsymbol{\phi}\chi)}{\varepsilon_l({\rm Ind}_{\mathcal{K}/\BQ}\boldsymbol{\phi})}.\]
Hence, for a place $v|l$ of $K$, it suffices to show that 
the local epsilon constants 
$\varepsilon_v({\boldsymbol \phi}_v\chi,\psi_v, dx_v)$ and $\varepsilon_v({\boldsymbol \phi}_v, \psi_v,dx_v)$ 
are equal up to a $p$-power root of unity, where ${\boldsymbol \phi}_v$ is the Weil--Deligne character associated to 
${\boldsymbol \phi}|_{G_{K_v}}$, $\psi_v$ is a non-trivial additive character of $K_v$, 
and $dx_v$ is a Haar measure on $K_v$. 
This equality is a direct consequence of (\ref{equation, epsilon unramified twist}) since $\chi$ is unramified at $v$. 

Parts ii)-iv) follow  from Lemma~\ref{lem, ramified completed epsilon} iii). 
\end{proof}
\begin{defn} 
For $n\geq 1$, let $\Xi_{\CK_{n}^{\rm ac}}=\Xi_{\CK_{n}^{\rm ac},\boldsymbol{\phi}}^{+} \cup \Xi_{\CK_{n}^{\rm ac},\boldsymbol{\phi}}^{-}$
be a partition of  $\ov{\BQ}^\times$-valued 
characters of $\Gal(\CK_{n}^{\rm ac}/\CK)$
 defined by 
$$
\Xi_{\CK_{n}^{\rm ac},\boldsymbol{\phi}}^{\pm} =\{\chi \in \Xi_{\CK_{n}^{\rm ac}} | \varepsilon(\boldsymbol{\phi}\chi^{-1})=\pm 1 \}.
$$
For $\cdot \in\{\emptyset,+,-\}$, 
put $\Xi_{\CK_{\infty}^{\rm ac},\boldsymbol{\phi}}^{\cdot}=\cup_{n} \Xi_{\CK_{n}^{\rm ac},\boldsymbol{\phi}}^{\cdot}$. 
\end{defn} 
If $\chi \in \Xi_{\CK_{n}^{\rm ac},{\boldsymbol \phi}}^-$, then $\varepsilon({\boldsymbol \phi}\chi^{-1})=-1$ and so
$$
L(\boldsymbol{\phi}\chi^{-1},1)=0 
$$
for $L(\boldsymbol{\phi}\chi,s)$ the associated Hecke $L$-function with center at the point $s=1$.
In view of this vanishing one may seek $p$-adic interpolation of a normalization of algebraic part of the Hecke $L$-values $L({\boldsymbol \phi}\chi^{-1},1)$ for $\chi \in \Xi_{\CK_n^{\rm ac},{\boldsymbol \phi}}^+$. A main result of this section is the existence of such a bounded $p$-adic $L$-function (see~Theorem~\ref{thm, ram pL}). 

\subsection{Variation of Mordell--Weil ranks}
This subsection establishes a formula for the Mordell--Weil rank of the CM elliptic curve $A$ along subextensions of the anticyclotomic $\BZ_p$-extension $\CK_\infty^{\rm ac}/\CK$. 

 The main result of this subsection is the following. 
 \begin{thm}\label{vGZK}
Let $A$ be a CM elliptic curve defined over $\BQ$ and $\CK$ the CM field. 
Let $p$ be an odd prime ramified in $\CK$ and $\CK_n^{\rm ac}$ the subextension of the anticyclotomic $\BZ_p$-extension $\CK_\infty^{\rm ac} /\CK$ of degree $p^n$ over $\CK$.
Then there exists a constant $c\in\BZ$ such that  for any sufficiently large integer $n$, we have 
 $$\rank_{\BZ}\,A(\CK_n^{\rm ac})=p^{n}+c. $$ 
\end{thm}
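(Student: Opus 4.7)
\emph{Setup and character decomposition.}  Set $\Gamma_n := \Gal(\CK_n^{\rm ac}/\CK)$ and $\widehat{\Gamma}_n := \Hom(\Gamma_n, \overline{\BQ}_p^\times)$.  Over $G_\CK$ there is a splitting $V_pA \otimes_{\BQ_p} K = V_{\boldsymbol\phi} \oplus V_{\boldsymbol\phi^c}$ with $V_{\boldsymbol\phi^c} \cong V_{\boldsymbol\phi}^*(1)$, so Shapiro's lemma together with Bloch--Kato Tate duality yields
\[
\dim_{\overline{\BQ}_p}\! H^1_f(\CK_n^{\rm ac}, V_pA)\otimes\overline{\BQ}_p \;=\; 2\!\! \sum_{\chi \in \widehat{\Gamma}_n}\! d(\chi), \qquad d(\chi) := \dim_{\overline{K}} H^1_f(\CK, V_{\boldsymbol\phi}\chi^{-1}).
\]
Along the anticyclotomic tower, the $p$-primary Tate--Shafarevich contribution stabilizes as $O(1)$ by standard cotorsion arguments in CM Iwasawa theory; hence the left-hand side equals $\rank_\BZ A(\CK_n^{\rm ac})$ up to a bounded additive constant.

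\emph{Parity and the epsilon count.}  For each $\chi \in \widehat{\Gamma}_n$ the induced $G_\BQ$-representation $\Ind_{\CK/\BQ}(V_{\boldsymbol\phi}\chi^{-1})$ is symplectic self-dual of rank two, and these representations are pairwise residually symplectically isomorphic since $\chi$ has $p$-power order.  Applying the relative $p$-parity conjecture (Theorem \ref{thm, parity-family}) over $\BQ$ to this family and anchoring the absolute parity at $\chi = 1$ (the classical $p$-parity for $A/\BQ$), I would deduce
\[
d(\chi) \;\equiv\; \frac{1-\varepsilon(\boldsymbol\phi\chi^{-1})}{2} \pmod{2}.
\]
By Proposition \ref{lem, eps global}(iv), for each $1 \leq m \leq n$ exactly half of the $p^m - p^{m-1}$ primitive characters of $\Gamma_m$ satisfy $\varepsilon(\boldsymbol\phi\chi^{-1}) = -1$; summing and including the trivial character, the total number of $\chi \in \widehat{\Gamma}_n$ of sign $-1$ equals $(p^n-1)/2 + \delta$ for a fixed $\delta \in \{0,1\}$. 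Hence $\sum_\chi d(\chi) \;\geq\; (p^n-1)/2 + \delta$.

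\emph{Upper bound via signed Iwasawa theory.}  The matching upper bound is the crux.  Theorem \ref{thm, Rubin's conjecture ram} provides the Lagrangian decomposition $H^1(K,\BT_{\boldsymbol\phi}) = H^1_+ \oplus H^1_-$, which supplies the local conditions at $p$ required to define an integral signed anticyclotomic Selmer $\Lambda$-module $\Sel_\varepsilon(\BT_{\boldsymbol\phi})$.  Combining the nonvanishing of the integral $p$-adic $L$-function $\mathscr{L}_{p,v_\varepsilon}(E) \in \Lambda$ from Theorem \ref{thm, pL} with a one-sided divisibility in the signed anticyclotomic main conjecture --- to be established via the Euler system of elliptic units adapted to the additive-reduction setting in the follow-up \cite{BKNOa} --- one sees that the Pontryagin dual of $\Sel_\varepsilon(\BT_{\boldsymbol\phi})$ is $\Lambda$-torsion with characteristic ideal dividing $(\mathscr{L}_{p,v_\varepsilon}(E))$.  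A Greenberg-type control theorem at layer $n$ then gives $\sum_\chi d(\chi) \leq (p^n-1)/2 + O(1)$ for $n \gg 0$.  Together with the lower bound of the previous paragraph and the rank identity of the first, this yields $\rank_\BZ A(\CK_n^{\rm ac}) = p^n + c$ for $n$ sufficiently large.

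The principal technical obstacle is precisely this one-sided divisibility: the customary Coleman-map / Perrin-Riou apparatus breaks down at the additive prime $p$, and must be replaced by the signed/Lagrangian formalism of Theorem \ref{thm, Rubin's conjecture ram} together with the refined $p$-adic $L$-function of Theorem \ref{thm, pL}.  The compatibility of the signed local conditions with the Kolyvagin-derivative argument for the elliptic unit Euler system --- and the resulting control at each layer of $\CK_\infty^{\rm ac}$ --- is where the real work lies.
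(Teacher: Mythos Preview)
Your approach is genuinely different from the paper's, and it has a real gap. The paper does not use Iwasawa theory, signed Selmer groups, or any divisibility in a main conjecture to prove this theorem. Instead it proceeds directly on the Mordell--Weil side: one decomposes the Weil restriction $B_n=\Res_{\CK_n^{\rm ac}/\CK}A$ up to isogeny as $A_n^+\times A_n^-\times A'$ according to the factorization $\Phi_{p^n}(X)=\Phi_{p^n}^+(X)\Phi_{p^n}^-(X)$ over $\CK$, identifies each $A_n^{\pm}$ with the CM abelian variety $A_\chi$ attached to the theta series of $\boldsymbol\phi\chi$, and then applies two classical inputs you never invoke: Rohrlich's nonvanishing theorem, which says $\ord_{s=1}L(\boldsymbol\phi\chi,s)=(1-\varepsilon(\boldsymbol\phi\chi))/2$ for all but finitely many $\chi$, and Gross--Zagier--Kolyvagin (Proposition~\ref{GZK}), which converts analytic rank $\leq 1$ into an exact Mordell--Weil rank. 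This gives $\rank_\BZ A_n^+(\BQ)=0$ and $\rank_\BZ A_n^-(\BQ)=(p^n-p^{n-1})/2$ for $n\gg 0$, and summing the increments yields the result. No Selmer-to-Mordell--Weil comparison, no control theorem, no Sha stabilization is needed.

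The gap in your argument is exactly where you locate it yourself: your upper bound depends on a one-sided divisibility in a signed anticyclotomic main conjecture that is not proved in this paper (you cite the sequel~\cite{BKNOa}). Without it the argument is incomplete. Moreover, even granting such a divisibility for the sign $\varepsilon$, it would only control the specializations of $\Sel_\varepsilon$, hence bound $d(\chi)$ for $\chi$ of one sign; you would still need a separate argument to force $d(\chi)\leq 1$ for almost all $\chi$ with $\varepsilon(\boldsymbol\phi\chi^{-1})=-1$, and the $p$-adic $L$-function of Theorem~\ref{thm, pL} says nothing at those characters (the interpolated $L$-values vanish there). Your claim that the Tate--Shafarevich contribution stabilizes as $O(1)$ is likewise unjustified in this additive, non-ordinary setting. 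By contrast, the Rohrlich + Gross--Zagier--Kolyvagin route gives both the upper and lower bounds simultaneously, character by character, with no appeal to Iwasawa-theoretic machinery.
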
 

We begin with some preliminaries.

For a finite order character $\chi$  of $\Gal(\CK_{\infty}^{\rm ac}/\CK)$,  
let $f_{\chi} \in S_2(\Gamma_{0}(D_{K}N_{\CK/\BQ}(\cond({\boldsymbol \phi}\chi)))$ denote the theta series  associated to the conjugate symplectic self-dual\footnote{the natural global analogue of Definition~\ref{def:csd}} Hecke character ${\boldsymbol \phi}\chi^{}$ over $\CK$, where  $\cond(\cdot)$ denotes the conductor.
In particular,  we have $$L(f_{\chi},s)=L({\boldsymbol \phi}\chi,s).$$ 

Denote by $A_{\chi}$ the CM abelian variety defined over $\BQ$ associated to $f_\chi$. Note that 
$$L(A_{\chi}/\BQ,s)=\prod_{\sigma: F_{\chi}\hookrightarrow \overline{\BQ}}L(f_{\chi}^{\sigma},s)$$
where $F_\chi$ is the Hecke field of $f_\chi$. 
Suppose that the order of $\chi$ is $p^n$. 
Then $F_\chi$ is the maximal totally real field in $\BQ(\zeta_{p^n})$. 
Note  that $\mathcal{K}=\BQ(\sqrt{-p})\subseteq \BQ(\zeta_{p^n})$, and so 
$\End_{\BQ}^0 (A_\chi) \cong F_\chi$ and 
 $\mathrm{End}_{\mathcal{K}}^0(A_{\chi})\cong \mathcal{K}\otimes_\BQ F_\chi \cong \BQ(\zeta_{p^n})$ 
 where $\mathrm{End}^0:=\mathrm{End} \otimes_\BZ \BQ$.  
As $G_{\BQ}$-representations,  we have 
\begin{equation}\label{equation, A_chi tate}
V_pA_\chi\otimes_{\BQ_p} \overline{\BQ}_p \cong \prod_{\sigma} \mathrm{Ind}_{\mathcal{K}/\BQ}
\;\overline{\BQ}_p ({\boldsymbol \phi}\chi^{\sigma})=\prod_{a \in (\BZ/p^n\BZ)^\times, \left(\frac{a}{p}\right)=1} 
\mathrm{Ind}_{\mathcal{K}/\BQ}\;\overline{\BQ}_p({\boldsymbol \phi}\chi^{a})
\end{equation}
where $\chi^\sigma$ runs through characters of $\Gal(\BQ(\zeta_{p^n})/\CK)\cong \Gal(F_\chi/\BQ)$.

 We have the following fundamental result towards the Birch and Swinnerton-Dyer conjecture due to Gross--Zagier \cite{GZ} and Kolyvagin (cf.~\cite[Prop.~3.5]{BKO24}).
  \begin{prop}\label{GZK}
 Suppose that $\mathrm{ord}_{s=1} L({\boldsymbol \phi}\chi^{},s)\in\{0,1\}$. 
 Then $\mathrm{dim}_{F_{\chi}} A_{\chi}(\BQ)\otimes_{\BZ} \BQ=\mathrm{ord}_{s=1} L({\boldsymbol \phi}\chi^{},s)$. 
 \end{prop}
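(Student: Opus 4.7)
The plan is to deduce Proposition \ref{GZK} from the classical Gross--Zagier--Kolyvagin theorem applied to the weight two CM newform $f_\chi$, together with the GL$_2$-type structure of $A_\chi/\BQ$ with $\End_\BQ^0(A_\chi)\cong F_\chi$.

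First I would translate the $F_\chi$-dimension formula into a statement about each Galois conjugate $f_\chi^\sigma$. Since $A_\chi(\BQ)\otimes_\BZ \BQ$ is an $F_\chi$-vector space and the Galois group $\Gal(F_\chi/\BQ)$ permutes the Hecke orbit of $f_\chi$ transitively, all the orders of vanishing $\mathrm{ord}_{s=1}L(f_\chi^\sigma,s)$ coincide with $\mathrm{ord}_{s=1}L(f_\chi,s)$. Combined with the factorization
\[
L(A_\chi/\BQ,s)=\prod_{\sigma:F_\chi\hookrightarrow \overline{\BQ}}L(f_\chi^\sigma,s),
\]
this reduces the claim to the identity $\dim_{F_\chi}A_\chi(\BQ)\otimes_\BZ \BQ=\mathrm{ord}_{s=1}L(f_\chi,s)$ under the hypothesis that this order lies in $\{0,1\}$.

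Next, I would handle the two cases separately. In analytic rank zero, the non-vanishing $L(f_\chi,1)\neq 0$ together with the fact that $f_\chi$ is CM of type ${\boldsymbol \phi}\chi$ over $\CK$ places us within the scope of Rubin's theorem on CM abelian varieties: the corresponding Bloch--Kato Selmer group has $F_\chi$-corank zero, so $A_\chi(\BQ)\otimes \BQ=0$, as desired. In analytic rank one, I would select an auxiliary imaginary quadratic field $F\neq \CK$ in which every rational prime dividing the level $D_\CK N_{\CK/\BQ}(\cond({\boldsymbol \phi}\chi))$ of $f_\chi$ splits. Gross--Zagier applied to the associated Heegner point on a suitable modular (or Shimura) curve then produces a point in $A_\chi(F)$ whose trace to $A_\chi(\BQ)$ has infinite order precisely when $L'(f_\chi,1)\neq 0$, which yields $\dim_{F_\chi}A_\chi(\BQ)\otimes \BQ\geq 1$. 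Kolyvagin's Euler system of Heegner divisors, specialized to the CM setting, then furnishes the matching upper bound.

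The main obstacle will be the verification that the Heegner hypothesis for $f_\chi$ can be arranged despite the distinguished role already played by $\CK$, and that Kolyvagin's Euler system is non-trivial in the CM setting (which is a standard but delicate point, since some ``exceptional'' Kolyvagin systems can degenerate when the newform has complex multiplication by the splitting field). Both points are handled in the literature and are summarised in the references underlying \cite[Prop.~3.5]{BKO24}; the argument there applies verbatim once the auxiliary field $F$ is fixed, completing the proof.
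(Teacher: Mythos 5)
The paper itself gives no argument beyond the citation of Gross--Zagier and Kolyvagin (via \cite[Prop.~3.5]{BKO24}), and your sketch follows exactly that standard route, so in outline you are doing the same thing as the paper. Two points, however, need repair before the sketch is actually correct.

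First, your ``reduction'' asserts that the orders of vanishing of all Galois conjugates $L(f_\chi^\sigma,s)$ coincide because $\Gal(F_\chi/\BQ)$ permutes the Hecke orbit transitively. Invariance of the analytic rank under Galois conjugation of a newform is not known, and the hypothesis of the proposition bears only on $L({\boldsymbol \phi}\chi,s)=L(f_\chi,s)$ itself, not on its conjugates. Fortunately this step is also unnecessary: the statement to be proved concerns the $F_\chi$-dimension of $A_\chi(\BQ)\otimes_{\BZ}\BQ$ and the single $L$-function $L(f_\chi,s)$, which is precisely the shape of the Gross--Zagier--Kolyvagin(--Logachev) theorem for $\GL_2$-type quotients of $J_0(N)$; you should simply drop the conjugate comparison and the factorization $L(A_\chi/\BQ,s)=\prod_\sigma L(f_\chi^\sigma,s)$, which plays no role here. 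Second, in the analytic rank one case the auxiliary imaginary quadratic field $F\neq\CK$ must be chosen not only so that the Heegner hypothesis holds for the level of $f_\chi$, but also so that the quadratic twist has non-vanishing central value, $L(f_\chi\otimes\omega_{F/\BQ},1)\neq 0$; only then does $\mathrm{ord}_{s=1}L(f_\chi/F,s)=1$, so that the Gross--Zagier formula makes the Heegner point non-torsion and its trace lands with infinite order in $A_\chi(\BQ)$ rather than in the $\omega_{F/\BQ}$-eigenspace. The existence of such $F$ rests on the non-vanishing theorems for quadratic twists (Waldspurger, Bump--Friedberg--Hoffstein, Murty--Murty); this is the point your appeal to \cite[Prop.~3.5]{BKO24} is silently carrying, and it should be made explicit alongside the CM caveat you already note for Kolyvagin's Euler system.
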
 

\begin{proof}[Proof of Theorem \ref{vGZK}] 
Let $B_n$ denote 
the Weil restriction $\Res_{\CK_n^{\rm ac}/\CK}(A_{/\CK_n^{\rm ac}})$ of $A$ over $\CK_{n}^{\rm ac}$. 
By \cite[Thm.~3]{DN}, we have 
$$\mathrm{End}^0(B_n)=\mathrm{End}^0(A) [\Gal(\CK_{n}^{\rm ac}/\CK)]=\mathcal{K}[\Gal(\CK_{n}^{\rm ac}/\CK)].$$ 
Here, the action of $\Gal(\CK_{n}^{\rm ac}/\CK)$ on $B_n(\mathcal{K})$ by the above equality is compatible with 
the Galois action of $A(\CK_{n}^{\rm ac})$ arising from the canonical identification $B_n(\mathcal{K})=A(\CK_{n}^{\rm ac})$. 

Let $\Phi_{p^n}(X)$ be the $p^n$-th cyclotomic polynomial and 
let $\Phi_{p^n}(X)=\Phi_{p^n}^+(X)\Phi_{p^n}^-(X)$ be the decomposition into monic polynomials in $\mathcal{K}[X]$. 
Fix a generator $\gamma$ of $\Gal(\CK_{n}^{\rm ac}/\CK)$ and put 
\[R_n^\cdot:=\mathcal{K}[\Gal(\CK_{n}^{\rm ac}/\CK)]/(\Phi_{p^n}^\cdot(\gamma))\]
for $\cdot \in \{\emptyset, \pm\}$. 
Then a character 
$\chi: \Gal(\CK_{n}^{\rm ac}/\CK) \rightarrow \overline{\BQ}^\times$ of order $p^n$ 
determines a 
specialization 
\[
R_n=R_n^+ \times R_n^-\rightarrow \overline{\BQ}. 
\] 
According to Proposition~\ref{lem, eps global} ii), we can give the sign labeling of $\Phi_n^\pm(X)$ so that 
$\chi$ factors through $R_n^+$ if and only if $\varepsilon({\boldsymbol \phi}\chi)=+1$. 

We have a decomposition of  $\mathcal{K}$-algebras
 \[\mathrm{End}^0(B_n)\cong R_n^+\times R_n^-\times R',\] 
and hence the isogeny decomposition 
\[
B_n  \sim A_n^+  \times A_n^-  \times A'
\]
for abelian varieties over $\mathcal{K}$. By construction, note that 
$A(\CK_{n-1}^{\rm ac})\otimes_{\BZ}\BQ=A'(\mathcal{K})\otimes_{\BZ}\BQ$ and 
\begin{equation*}
(A(\CK_n^{\rm ac})/A(\CK_{n-1}^{\rm ac}))\otimes_{\BZ} \BQ\cong 
 \left(A_{n}^+(\CK) \oplus A_{n}^-(\CK)\right) \otimes_{\BZ} \BQ. 
\end{equation*}
Note also that
\[
V_pA_n^\pm\otimes_{{K}} \overline{\BQ}_p \cong V_pA \otimes_{{K}} \hat{R}_n^\pm \otimes_{{K}} \overline{\BQ}_p \cong \prod_{\chi:R_n^\pm \rightarrow \overline{\BQ}} \overline{\BQ}_p ({\boldsymbol \phi}\chi)
\]
as $G_{\mathcal{K}}$-representations where $\hat{R}_n^\pm:={R}_n^\pm\otimes_{\mathcal{K}} K$. 
Hence, by (\ref{equation, A_chi tate}), we have 
\[V_pA_n^\pm\otimes_{\BQ_p} \overline{\BQ}_p\cong V_pA_\chi \otimes_{\BQ_p} \overline{\BQ}_p|_{G_{\mathcal{K}}}\]
 for any $\chi$ factoring through $R_n^\pm$. 
Then we have 
$A_n^\pm \sim A_\chi$ over $\mathcal{K}$ by Faltings. 
In particular, we may assume that $A_n^\pm$ is of the form $A_\chi$ and defined over $\BQ$. 

Put $V= A^{\pm}_n(\mathcal{K})\otimes_{\BZ}\BQ$. 
For the non-trivial element $c\in \mathrm{Gal}(\mathcal{K}/\BQ)$, consider the eigen decomposition 
$V=V^{c=1}\oplus V^{c=-1}.$
Then, $V^{c=1}=A^\pm_n(\BQ)\otimes_{\BZ}\BQ$, and the multiplication  by a purely totally imaginary element 
in $\mathrm{End}_\mathcal{K}^0(A_n^\pm)\cong \BQ(\zeta_{p^n})$ gives an isomorphism 
$V^{c=1}\cong V^{c=-1}$. Hence, 
\[\dim_\BQ (A^\pm_n(\mathcal{K})\otimes_{\BZ}\BQ)
=2\dim_{\BQ}(A^\pm_n(\BQ)\otimes_{\BZ}\BQ).\] 
By Rohrlich \cite{Ro'},
for sufficiently large $n$, 
we have 
$$
\ord_{s=1}L({\boldsymbol \phi}\chi,s)=\frac{1-\varepsilon({\boldsymbol \phi}\chi)}{2}. 
$$
Hence, by Proposition~\ref{GZK}, 
we have 
\[
\rank_{\BZ} A_{n}^+(\BQ)=0, \quad \rank_{\BZ} A_{n}^-(\BQ)=(p^n-p^{n-1})/2
\]
for almost all $n$. 
Therefore, $\rank_{\BZ} A(\CK_n^{\rm ac})-\rank_{\BZ}A(\CK_{n-1}^{\rm ac})=p^n-p^{n-1}$ for any such $n$, 
and the assertion follows. 
 \end{proof}

 \begin{remark}
 A generalization to CM elliptic curves not necessarily defined over $\BQ$  will appear in \cite{BKNOa}. 
 \end{remark}

  \subsection{Elliptic units}\label{ss:ell}
This subsection describes preliminaries regarding elliptic units following Kato \cite[\S 15.3]{Kato}. 

\subsubsection{Set-up}
We consider a slightly more general set-up. Let $\CK$ be any imaginary quadratic field and $p$ an odd prime. 

For a non-zero ideal $\fg$ of $\CO_{\mathcal{K}}$,
let $\mathcal{K}(\fg)$ denote the ray 
class field 
of conductor $\fg$. If $\fg = c\CO_\mathcal{K}$ for some $c \in \BZ\setminus\{0\},$
then let $\mathcal{K}[\fg]$ denote the ring class field associated to the order $\BZ+ c\CO_{\CK}$ of $\CO_K$. 
Put  
 $\mathcal{K}[p^{\infty}\fg] = \cup_{m\ge 0}\mathcal{K}[ p^m \fg]$, 
 $\Lambda_{\fg}=\CO_K[\![\Gal(\mathcal{K}[p^{\infty}\fg]/\mathcal{K})]\!]$ 
 and $Q(\Lambda_{\fg})$ its total quotient ring. 
 For a finitely generated $\BZ_p$-module $T$ with continuous action of $G_\mathcal{K}$ and $i\in \BZ$,
 define Iwasawa cohomology groups 
\begin{alignat*}{2}
 \mathbf{H}^i_{\fg}(T)&=\varprojlim_n H^i(\mathcal{K}(p^n \fg), T),  &\mathbf{H}^i_{\fg}(T\otimes \BQ_p)&= \mathbf{H}^i_{\fg}(T)\otimes_{\BZ_p}\BQ_p,\\
 \mathbf{H}^i_{\fg,  \mathrm{ac}}(T)&=\varprojlim_n H^i(\mathcal{K}[p^n \fg], T),\quad   &\mathbf{H}^i_{\fg, \mathrm{ac}}(T\otimes \BQ_p)&= \mathbf{H}^i_{\fg,  \mathrm{ac}}(T) \otimes_{\BZ_p} \BQ_p .
\end{alignat*}
If $\fg=(1)$, then we omit $\fg$ in the notation above.

Let $\mathfrak{f}$ be a non-zero ideal of $\CO_\mathcal{K}$ 
such that the natural map $\CO_{\CK}^\times \rightarrow (\CO_\mathcal{K}/\mathfrak{f})^\times$ 
is injective\footnote{If $\mathcal{K}$ is neither $\BQ(i)$ nor $\BQ(\sqrt{-3})$, then  
this holds if $\mathfrak{f}\not=(2)$.}.
To define elliptic units, we introduce another CM elliptic curve $E$ 
defined over $K(\ff)$ with 
an isomorphism $\iota: \mathcal{O}_\mathcal{K} \cong \mathrm{End}(E)$ such that the composite 
\[
\mathcal{O}_\mathcal{K} \xrightarrow{\iota} \mathrm{End}(E) \rightarrow \mathrm{End}_{\mathcal{K}'}(\mathrm{Lie} (E))=\mathcal{K}'
\]
coincides with the natural inclusion. 
We may assume that 
there exists a generator $\alpha$ of $E[\ff]$ and an isomorphism $E\cong \BC/\ff$ over $\BC$  
sending $\alpha$ to $1\bmod \ff$. 
Such a CM pair $(E, \alpha)$ (also with $\iota$) exists uniquely up to a canonical isomorphism (cf.\ \cite[(15.3.1)]{Kato}). 
Let $\mathfrak{a}$ be an ideal of $\mathcal{O}_{\mathcal{K}}$ coprime to $6p\mathfrak{f}$.

 Let 
\begin{equation}\label{ell unit}
{}_\mathfrak{a}z_{\mathfrak{f}}:={}_\mathfrak{a}\theta_{E}(\alpha)^{-1} \in \mathcal{K}(\mathfrak{f})^\times
\end{equation}
be the standard elliptic unit of modulus $\mathfrak{f}$ 
as in~\cite[\S15.4]{Kato}. 
For an ideal $\fg \subset \CO_\CK$, let 
$$\!_{\fa}z_{p^n \fg} \in H^1(\mathcal{K}(p^n \fg), \BZ_p(1))$$ also denote  the image of 
 $\!_{\fa}z_{p^n \fg}$ as in \eqref{ell unit} 
under the Kummer map. (Here, we assume $p^n \fg$ satisfies the condition on $\ff$ above.)
Then $\!_{\fa}z_{p^{\infty}\fg } = (\,\!_{\fa}z_{p^n \fg})_n$ is an element of ${\bf H}^1_{\fg}(\BZ_p(1)).$

 \subsubsection{Zeta morphism}
Let $A$ be a CM elliptic curve defined over $\BQ$ as in \S\ref{ss: s-u} and 
$\ff_A$ denotes the conductor of the associated Hecke character over $\CK$. 

 Fix an element   $\gamma \in V_{\boldsymbol{\phi}}^{\otimes-1}$ and 
 put $\fg =N_{\mathcal{K}/\BQ}\ff_{A}$.  
 Let 
 \[
\!_{\fa} z_{p^{\infty}\fg}^{\gamma} \in {\bf H}^1_{\fg}(T_{\boldsymbol{\phi}}^{\otimes -1}(1))\otimes_{\BZ_p}\BQ_p
 \]
 denotes the image of the elliptic unit $\!_{\fa}z_{p^{\infty}\fg }$   
under the composite
\[
 {\bf H}^1_{\fg}(\BZ_p(1)) \xrightarrow{\otimes \gamma}  {\bf H}^1_{\fg}(\BZ_p(1))\otimes V_{\boldsymbol{\phi}}^{\otimes -1} \xrightarrow{\sim} {\bf H}^1_{\fg}(V_{\boldsymbol{\phi}}^{\otimes -1}(1))
\]
(cf.~\cite[Prop.~15.9]{Kato}). 
If $\gamma \in T_{\boldsymbol{\phi}}^{\otimes-1}$, then 
 $
\!_{\fa} z_{p^{\infty}\fg}^{\gamma} \in {\bf H}^1_{\fg}(T_{\boldsymbol{\phi}}^{\otimes -1}(1)).
 $

Denote by $\!_{\fa} z^{\gamma,  \mathrm{ac}}_{p^{\infty}\fg} \in {\bf H}^1_{\fg,  \mathrm{ac}}(T_{\boldsymbol{\phi}}^{\otimes -1}(1))$
the anticyclotomic projection of $\!_{\fa} z^{\gamma}_{p^{\infty}\fg}$ 
and put
\[
z_{p^{\infty}\fg , \fa}^{\gamma,  \mathrm{ac}}=\left(N_{\mathcal{K}/\BQ}(\fa)- \boldsymbol{\phi}(\fa)^{-1}(\fa, \mathcal{K}[p^{\infty}\fg]/\mathcal{K})\right)^{-1}\!_{\fa}z_{p^{\infty} \fg }^{\gamma,  \mathrm{ac}} \in  {\bf H}^1_{\fg, \mathrm{ac}}(T_{\boldsymbol{\phi}}^{\otimes -1}(1)) \otimes_{\Lambda_{\fg, {\rm ac}}} Q(\Lambda_{\fg,  \mathrm{ac}}),  
\]
where $(\fa,\CK[p^\infty \fg]/\CK)\in\Gal(\CK[p^\infty \fg]/\CK)$ denotes the associated Artin symbol. 
\begin{lem}
For   $\gamma \in V_{\boldsymbol{\phi}}^{\otimes-1}$, 
the element $z_{p^{\infty}\fg,\fa}^{\gamma,  \mathrm{ac}}=:z_{p^{\infty}\fg}^{\gamma,  \mathrm{ac}}$ is independent of the choice of $\fa$ and 
$$z_{p^{\infty}\fg,\fa}^{\gamma,  \mathrm{ac}} \in {\bf H}^1_{\fg, \mathrm{ac}}(T_{\boldsymbol{\phi}}^{\otimes -1}(1)) \otimes_{\BZ_p}\BQ_{p}.$$
Moreover, if $\gamma \in T_{\boldsymbol{\phi}}^{\otimes-1}$, then
$z_{p^{\infty}\fg,\fa}^{\gamma,  \mathrm{ac}} \in {\bf H}^1_{\fg, \mathrm{ac}}(T_{\boldsymbol{\phi}}^{\otimes -1}(1))$. 
\end{lem}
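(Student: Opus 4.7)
The plan is a standard Euler system argument, following Kato's treatment of elliptic units. First I would establish the exchange relation: for any two ideals $\fa, \fb$ of $\CO_\CK$ coprime to $6p\fg$,
\[
 (N_{\CK/\BQ}(\fb) - (\fb, \CK(p^\infty\fg)/\CK))\cdot {}_\fa z_{p^\infty\fg} \,=\, (N_{\CK/\BQ}(\fa) - (\fa, \CK(p^\infty\fg)/\CK)) \cdot {}_\fb z_{p^\infty\fg}
\]
in $\mathbf{H}^1_\fg(\BZ_p(1))$. This is the classical distribution relation for the Kato-style theta elliptic units ${}_\fa \theta_E$ and is part of the standard Euler system package. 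Twisting by $\gamma \in V_\phi^{\otimes -1}$, projecting to the anticyclotomic tower, and using that the Artin symbol $(\fa, \CK[p^\infty\fg]/\CK)$ acts on $V_\phi^{\otimes -1}$ via $\phi(\fa)^{-1}$ by the reciprocity law, the relation becomes
\[
 \lambda_\fb \cdot {}_\fa z^{\gamma, \mathrm{ac}}_{p^\infty \fg} \,=\, \lambda_\fa \cdot {}_\fb z^{\gamma, \mathrm{ac}}_{p^\infty \fg}, \qquad \lambda_\fa := N_{\CK/\BQ}(\fa) - \phi(\fa)^{-1}(\fa, \CK[p^\infty\fg]/\CK).
\]

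Next I would verify that $\lambda_\fa$ is a non-zero-divisor in $\Lambda_{\fg, \mathrm{ac}}$. Since this ring is reduced and $p$-adically complete, it suffices to check that the image of $\lambda_\fa$ under every continuous $\overline{\BQ}_p^\times$-valued character $\chi$ of $\Gal(\CK[p^\infty \fg]/\CK)$ is non-zero; but $\chi(\lambda_\fa) = N_{\CK/\BQ}(\fa) - \phi(\fa)^{-1}\chi^{-1}(\fa)$ can vanish only on a proper Zariski-closed subset of the character variety. Hence $z^{\gamma, \mathrm{ac}}_{p^\infty \fg, \fa} := \lambda_\fa^{-1} \cdot {}_\fa z^{\gamma, \mathrm{ac}}_{p^\infty \fg}$ is well-defined in $\mathbf{H}^1_{\fg, \mathrm{ac}}(T_\phi^{\otimes -1}(1))\otimes_{\Lambda_{\fg, \mathrm{ac}}} Q(\Lambda_{\fg, \mathrm{ac}})$, and the exchange relation from the previous step yields the independence of $\fa$.

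For the membership in $\mathbf{H}^1_{\fg, \mathrm{ac}}(T_\phi^{\otimes -1}(1)) \otimes_{\BZ_p} \BQ_p$, I would choose an auxiliary $\fa_0$ coprime to $6p\fg$ such that $\lambda_{\fa_0}$ is a unit in $\Lambda_{\fg, \mathrm{ac}}\otimes_{\BZ_p}\BQ_p$ (and, for the integrality statement, already a unit in $\Lambda_{\fg, \mathrm{ac}}$ itself). Since $\sigma_{\fa_0} \equiv 1$ modulo the maximal ideal of $\Lambda_{\fg, \mathrm{ac}}$, this amounts to the congruence condition $N_{\CK/\BQ}(\fa_0) \not\equiv \phi(\fa_0)^{-1} \pmod{\fm_{\CO_K}}$; using $\phi(\fa_0)\overline{\phi(\fa_0)} = N_{\CK/\BQ}(\fa_0)$, this reduces to $\overline{\phi(\fa_0)}\not\equiv N_{\CK/\BQ}(\fa_0)^2 \pmod{\fm_{\CO_K}}$, which is evidently satisfied for all but finitely many residue classes of $\fa_0$. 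Once such $\fa_0$ is fixed, $\lambda_{\fa_0}^{-1} \cdot {}_{\fa_0} z^{\gamma, \mathrm{ac}}_{p^\infty \fg}$ lies in $\mathbf{H}^1_{\fg, \mathrm{ac}}(T_\phi^{\otimes -1}(1)) \otimes_{\BZ_p}\BQ_p$, and in $\mathbf{H}^1_{\fg, \mathrm{ac}}(T_\phi^{\otimes -1}(1))$ itself when $\gamma \in T_\phi^{\otimes -1}$.

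The main obstacle is the last step, namely the existence of an $\fa_0$ with $\lambda_{\fa_0}\in\Lambda_{\fg,\mathrm{ac}}^\times$ (as opposed to merely in $\Lambda_{\fg, \mathrm{ac}}\otimes\BQ_p$), which requires an explicit avoidance argument using the Hecke relation $\phi(\fa_0)\overline{\phi(\fa_0)} = N_{\CK/\BQ}(\fa_0)$ and Chebotarev to arrange the correct residue of $\phi(\fa_0)$ modulo $p$. Everything else is a direct consequence of the distribution relation combined with the fact that $\lambda_\fa$ is a regular element of $\Lambda_{\fg, \mathrm{ac}}$.
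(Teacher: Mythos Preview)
Your overall strategy—the exchange relation for ${}_\fa z$ followed by a good auxiliary choice of $\fa$—is the paper's strategy as well; the independence is exactly Kato \cite[(15.5.4)]{Kato}. The difference lies in how the ``main obstacle'' is resolved. The paper takes $\fa=(l)$ with $l$ a rational prime \emph{inert} in $\CK$ satisfying $l^3\not\equiv -1\pmod p$. For such $\fa$ the Artin symbol $(\fa,\CK[p^\infty\fg]/\CK)$ is literally the identity (a rational integer lies in every order $\BZ+c\CO_\CK$, hence is trivial in every ring class group), and one has $\boldsymbol\phi((l))=-l$ by the standard Euler-factor comparison (cf.\ \cite[Lem.~3.1]{Ru}). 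Thus $\lambda_\fa=l^2-(-l)^{-1}=l^{-1}(l^3+1)$ is a scalar in $\CO_K$, visibly a unit once $l^3\not\equiv -1\pmod p$. No Chebotarev or avoidance argument is needed.

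Your route is also viable, but one step is imprecise: the ring $\Lambda_{\fg,\mathrm{ac}}=\CO_K[\![\Gal(\CK[p^\infty\fg]/\CK)]\!]$ is in general only semi-local, since $\Gal(\CK[\fg]/\CK)$ can contribute a prime-to-$p$ part. Hence ``$\sigma_{\fa_0}\equiv 1$ modulo the maximal ideal'' is not automatic, and you would need the congruence $N(\fa_0)\not\equiv\boldsymbol\phi(\fa_0)^{-1}\chi(\fa_0)$ simultaneously for every character $\chi$ of that prime-to-$p$ part. This is still finitely many conditions and Chebotarev handles it, but it is a step you glossed over. The inert-prime trick sidesteps the issue entirely because the Artin symbol is the identity element, not merely congruent to it.
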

\begin{proof}
The independence is shown in \cite[(15.5.4)]{Kato}.

Pick a prime $l\nmid pN_{\mathcal{K}/\BQ}(\fg)$ 
 inert in $\mathcal{K}$ such that $l^3\not\equiv -1 \bmod p$ and 
put $\fa=(l)$. Then we have $N_{\mathcal{K}/\BQ}(\fa)=l^2$ and $\boldsymbol{\phi}(\fa)=-l$ (cf.\ \cite[Lem.\ 3.1]{Ru}).
Since $(\fa, \mathcal{K}[p^\infty \fg]/\mathcal{K})=1$, it follows 
that $$N_{\mathcal{K}/\BQ}(\fa)- \boldsymbol{\phi}(\fa)^{-1}(\fa, \mathcal{K}[p^{\infty}\fg]/\mathcal{K}) \in \Lambda_{\fg}^{\times}$$ 
and the assertion follows. 
\end{proof} 
   \begin{defn} 
   The anticyclotomic zeta morphism for ${\boldsymbol \phi}$ is defined by 
\[
\mathfrak{z}_{{\boldsymbol{\phi}},\fg}: \;T_{\boldsymbol{\phi}}^{\otimes-1} \rightarrow {\bf H}^1_{\fg, \mathrm{ac}}(T_{\boldsymbol{\phi}}^{\otimes -1}(1)), \quad \gamma \mapsto z_{p^{\infty}\fg }^{\gamma, \mathrm{ac}}.
\]
and its extension to $V_{\boldsymbol{\phi}}^{\otimes-1}$. 
Define $\mathfrak{z}_{{\boldsymbol{\phi}}}$ to be 
the composition of  $\mathfrak{z}_{{\boldsymbol{\phi}},\fg}$ and 
the correstriction  \[{\bf H}^1_{\fg, \mathrm{ac}}(T_{\boldsymbol{\phi}}^{\otimes -1}(1)) 
\rightarrow {\bf H}^1_{\mathrm{ac}}(T_{\boldsymbol{\phi}}^{\otimes -1}(1)) .\] 
\end{defn}

   \subsubsection{Explicit reciprocity law}\label{ss,erl}
The main result of this subsection is Theorem~\ref{thm, erl}.

As before, we fix embeddings $\iota_\infty: \overline{\BQ} \hookrightarrow \BC$ and 
$\iota_p: \overline{\BQ} \hookrightarrow \BC_p$. 
Put $\CO_{\CK,(p)}:=\CO_{\CK}\otimes_{\BZ} \BZ_{(p)}$ and 
fix an $\CO_{\CK,(p)}$-basis $\gamma_A$ of $\Hom_{\CO_{\CK, (p)}}(H_1(A(\BC), \BZ_{(p)}), \CO_{\CK,(p)})$
 and a minimal Weierstrass model of the elliptic curve $A$ over $\CO_{\mathcal{K},(p)}$ with N\'eron differential $\omega_A\in \mathrm{coLie}(A)$.
We regard $\gamma_A$ 
as an $\CO_K$-basis of $T_{\boldsymbol{\phi}}^{\otimes-1}$
via the isomorphism 
$H_1(A(\BC), \BZ)\otimes_{\BZ}\BZ_{p} \cong T_p(A)$ arising from the embedding $\iota_\infty$. 

Consider the period map 
$$\mathrm{per}: \mathrm{coLie}(A)\to H^1(A(\BC), \BC)= \mathrm{Hom}_{\BQ}(H_1(A(\BC), \BQ), \BC) : \omega\mapsto 
\left[\;\gamma\mapsto \int_{\gamma}\omega \;\right].$$
It factors through 
 the $\BC$-vector subspace $$\mathrm{Hom}_{\CK}(H_1(A(\BC), \BQ), \BC)
 =\mathrm{Hom}_{\mathcal{K}}(H_1(A(\BC), \BQ), \CK)\otimes_{\CK}\BC,$$
 hence obtains an $\CO_{\CK}$-linear map which we also denote by 
 $$\mathrm{per}: \mathrm{coLie}(A)\rightarrow \mathrm{Hom}_{\CO_{\CK, (p)}}(H_1(A(\BC), \BZ_{(p)}), \CO_{\CK,(p)})\otimes_{\CO_{\CK,(p)}}\BC.$$
 (cf.\ \cite[p.\ 257]{Kato}).
Define  $\Omega_{\infty} \in \BC^{\times}$
by $$\mathrm{per}(\omega_A)=\Omega_{\infty}\gamma_A.$$ 
Consider the complex uniformization $\BC/\Lambda_A \isom A(\BC)$ such that the pull-back of $\omega_A$ 
is $dz$. Then we have 
\[\Lambda_A \otimes_\BZ \BZ_{(p)} =\CO_{\CK,(p)}\Omega_\infty.\] 

By definition, we have the following.
\begin{lem}\label{lem, int} 
We have 
$
\mathfrak{z}_{{\boldsymbol{\phi}},\fg}({\gamma_A}) \in {\bf H}^1_{\fg, \mathrm{ac}}(T_{\boldsymbol{\phi}}^{\otimes -1}(1))
$
and 
$\mathfrak{z}_{{\boldsymbol{\phi}}}({\gamma_A}) \in {\bf H}^1_{\mathrm{ac}}(T_{\boldsymbol{\phi}}^{\otimes -1}(1))$. 
\end{lem}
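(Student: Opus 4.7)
The plan is to reduce the statement directly to the preceding lemma, whose conclusion already gives integrality for arbitrary $\gamma\in T_{\boldsymbol{\phi}}^{\otimes -1}$; hence the content of Lemma \ref{lem, int} is the single verification that the Betti basis $\gamma_A$ actually belongs to the integral lattice $T_{\boldsymbol{\phi}}^{\otimes -1}$ (and not merely to $V_{\boldsymbol{\phi}}^{\otimes -1}$).

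First I would unwind the definition of $\gamma_A$. By construction it is an $\CO_{\CK,(p)}$-basis of $\Hom_{\CO_{\CK,(p)}}(H_1(A(\BC),\BZ_{(p)}),\CO_{\CK,(p)})$. Tensoring with $\BZ_p$ over $\BZ_{(p)}$ and invoking the Betti--\'etale comparison $H_1(A(\BC),\BZ)\otimes_{\BZ}\BZ_p\cong T_pA$ (furnished by the fixed embedding $\iota_{\infty}$) identifies this module with $\Hom_{\CO_\CK\otimes_{\BZ}\BZ_p}(T_pA,\CO_\CK\otimes_{\BZ}\BZ_p)$. Under the canonical splitting $\CO_\CK\otimes_{\BZ}\BZ_p=\CO_K\times \CO_K$ (using that $p$ is ramified in $\CK$ and $K=\CK\otimes_{\BQ}\BQ_p$ up to normalization of the two factors, or more precisely using the idempotent attached to the Hecke character $\boldsymbol{\phi}$) the resulting projection picks out the $\boldsymbol{\phi}$-component $T_{\boldsymbol{\phi}}$, so that $\gamma_A$ becomes an $\CO_K$-basis of $T_{\boldsymbol{\phi}}^{\otimes -1}$. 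In particular $\gamma_A\in T_{\boldsymbol{\phi}}^{\otimes -1}$.

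With this in hand, the first assertion is immediate from the preceding lemma: indeed the map $\gamma\mapsto z_{p^\infty\fg}^{\gamma,\mathrm{ac}}$ was shown there to carry $T_{\boldsymbol{\phi}}^{\otimes -1}$ into $\mathbf{H}^1_{\fg,\mathrm{ac}}(T_{\boldsymbol{\phi}}^{\otimes -1}(1))$, the key input being the existence of an auxiliary ideal $\fa=(l)$ with $l$ inert in $\CK$, $l\nmid pN_{\CK/\BQ}(\fg)$ and $l^3\not\equiv -1\bmod p$, for which the Euler-type factor $N_{\CK/\BQ}(\fa)-\boldsymbol{\phi}(\fa)^{-1}(\fa,\CK[p^\infty\fg]/\CK)$ is a unit in $\Lambda_\fg$. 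Applying this to $\gamma=\gamma_A$ yields $\mathfrak{z}_{\boldsymbol{\phi},\fg}(\gamma_A)\in\mathbf{H}^1_{\fg,\mathrm{ac}}(T_{\boldsymbol{\phi}}^{\otimes -1}(1))$.

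Finally, $\mathfrak{z}_{\boldsymbol{\phi}}(\gamma_A)$ is defined as the image of $\mathfrak{z}_{\boldsymbol{\phi},\fg}(\gamma_A)$ under the corestriction $\mathbf{H}^1_{\fg,\mathrm{ac}}(T_{\boldsymbol{\phi}}^{\otimes -1}(1))\to\mathbf{H}^1_{\mathrm{ac}}(T_{\boldsymbol{\phi}}^{\otimes -1}(1))$, which preserves integral Iwasawa cohomology; hence the second assertion follows. The only non-bookkeeping point in the argument is the normalization identifying $\gamma_A$ with a generator of $T_{\boldsymbol{\phi}}^{\otimes -1}$, which is the principal (if minor) obstacle; once settled, everything else is formal.
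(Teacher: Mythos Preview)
Your approach is correct and matches the paper's, which simply records the lemma as holding ``by definition'': the paper has already stated that $\gamma_A$ is regarded as an $\CO_K$-basis of $T_{\boldsymbol{\phi}}^{\otimes -1}$, and the preceding lemma then gives integrality of the zeta morphism on that lattice, with corestriction handling the second assertion. One small slip in your unwinding: since $p$ is \emph{ramified} in $\CK$, the tensor product $\CO_\CK\otimes_{\BZ}\BZ_p$ is the single local ring $\CO_K$ (not a product $\CO_K\times\CO_K$), so no idempotent or projection is needed---$T_pA$ already coincides with $T_{\boldsymbol{\phi}}$ as a rank-one $\CO_K$-module, making the identification of $\gamma_A$ with a generator of $T_{\boldsymbol{\phi}}^{\otimes -1}$ even more immediate than you wrote.
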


Let $K_n$ denote the completion of $\mathcal{K}_n^{\rm{ac}}$ under the embedding $\iota_{p}:\ov{\BQ}\hookrightarrow \ov{\BQ}_p$,  and 
identify $\mathrm{Gal}(\mathcal{K}_n^{\rm{ac}}/\mathcal{K})$ with 
$\mathrm{Gal}(K_n/K)$ since $ h_\CK=1$. 
For a character $\chi: \mathrm{Gal}(\mathcal{K}_n^{\rm{ac}}/\mathcal{K}) \rightarrow \ov{\BQ}^\times$, consider 
the composite
\begin{align*}
\exp^*_{\omega_A,n}: H^1(K, \BT_{\boldsymbol \phi}^{})
\rightarrow H^1(K_n, V_{\boldsymbol{\phi}}^{\otimes -1}(1)) 
\stackrel{\;\exp^*\;}{\longrightarrow} D_{\rm{dR}}^0(K_n, V_{\boldsymbol{\phi}}^{\otimes -1}(1))
=K_n \omega_A \stackrel{\;\omega_A^\vee\;}{\longrightarrow}  K_n, 
\end{align*}
where $\exp^*$ denotes the dual exponential map for $T_{\boldsymbol{\phi}}^{\otimes -1}(1)$.

We have the following explicit reciprocity law for elliptic units due to Kato (cf.~\cite[Thm.~15.9]{Kato}). 
\begin{thm}\label{thm, erl} 
Let $A$ be a CM elliptic curve defined over $\BQ$ and $\CK$ the CM field. 
Let $p$ be an odd ramified prime and ${\boldsymbol \phi}$ the associated $p$-adic character of $G_\CK$. 
Then for any character $\chi:\Gal(\CK_{n}^{\rm ac}/\CK) \ra \ov{\BQ}^\times$, we have\footnote{
The left and right hand sides are elements of $\overline{\BQ}$ via the embeddings $\iota_p$ and $\iota_\infty$. 
} 
\begin{align*}
\sum_{\sigma \in \mathrm{Gal}(K_n/K)}\chi(\sigma)\exp^*_{\omega_A,n}(\mathfrak{z}_{{\boldsymbol{\phi}}}(\gamma_A)^{\sigma})=\frac{L(\overline{\boldsymbol{\phi}}\chi, 1)}{\Omega_\infty}=\frac{L({\boldsymbol{\phi}}\chi^{-1}, 1)}{\Omega_\infty}. 
\end{align*} 
\end{thm}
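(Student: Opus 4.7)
The plan is to deduce this explicit reciprocity law from Kato's general theorem \cite[Thm.~15.9]{Kato} for elliptic units by unpacking the definitions and projecting to the anticyclotomic direction. The three main ingredients are: (a) Kato's logarithmic-derivative formula expressing the dual exponential of the Kummer images of elliptic units in terms of $p$-adic periods of Siegel functions, (b) the Kronecker limit formula relating these periods to complex Hecke $L$-values, and (c) the compatibility between the N\'eron differential $\omega_A$ and the chosen $\CO_K$-basis $\gamma_A$ of $T_{\boldsymbol{\phi}}^{\otimes -1}$.

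First, I would work at the finite level $n$ with $\mathfrak{z}_{\boldsymbol{\phi},\fg}(\gamma_A)$ rather than $\mathfrak{z}_{\boldsymbol{\phi}}(\gamma_A)$, and then pass to the latter by the corestriction identity $\mathrm{cor}\circ \mathfrak{z}_{\boldsymbol{\phi},\fg} = \mathfrak{z}_{\boldsymbol{\phi}}$ combined with Frobenius relations for elliptic units at primes dividing $\fg$ but not the conductor $\ff_A$. Applying $\exp^*$ at the $n$-th layer and pairing with $\omega_A^\vee$ reduces the problem to evaluating, for each $\sigma \in \Gal(K_n/K)$, the Coates--Wiles-type logarithmic derivative of the Coleman power series attached to $(\!_{\fa}z_{p^m\fg})_m^\sigma$ on the formal group of $A$ over $\CO_K$.

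Second, the normalization $\mathrm{per}(\omega_A) = \Omega_\infty \gamma_A$ synchronizes the $p$-adic Hodge-theoretic and archimedean periods: it means that the identification of $D_{\rm dR}^0(V_{\boldsymbol{\phi}}^{\otimes -1}(1))$ with $K_n$ via $\omega_A$ matches the identification used to extract complex $L$-values. Summing with weights $\chi(\sigma)$, the elliptic unit distribution on the anticyclotomic tower collapses, via the standard Kronecker--Hecke identity for the $\!_{\fa}\theta_E$ functions, to the Hecke $L$-value $L(\overline{\boldsymbol{\phi}}\chi,1)/\Omega_\infty$. The auxiliary Euler factor $N_{\CK/\BQ}(\fa) - \boldsymbol{\phi}(\fa)^{-1}(\fa, \CK[p^\infty\fg]/\CK)$ introduced in the definition of $z^{\gamma,\mathrm{ac}}_{p^\infty\fg}$ cancels the spurious Euler factor at $\fa$ coming from $\!_{\fa}\theta_E$, producing the complete $L$-value.

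Third, the identity $L(\overline{\boldsymbol{\phi}}\chi,1) = L(\boldsymbol{\phi}\chi^{-1},1)$ is purely a statement about Hecke characters: the anticyclotomic property of $\chi$ gives $\chi^c = \chi^{-1}$, and the conjugate symplectic self-duality $\boldsymbol{\phi}\cdot\boldsymbol{\phi}^c = N_{\CK/\BQ}\cdot\chi_{\rm cyc}^{-1}$ together with the fact that $\overline{\boldsymbol{\phi}}$ agrees with $\boldsymbol{\phi}^c$ on $G_\CK^{\rm ab}$ implies the two Dirichlet series coincide term by term. The main obstacle is the careful bookkeeping of normalizations -- matching the various choices of Hecke character conventions, complex versus $p$-adic periods, and Euler factors at $\fa$, $p$, and primes dividing $\fg/\ff_A$ -- so that the specialization of Kato's general reciprocity law produces precisely the right-hand side displayed in the theorem.
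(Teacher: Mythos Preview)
The paper does not provide its own proof of this theorem; it simply attributes the result to Kato with the citation ``(cf.~\cite[Thm.~15.9]{Kato})'' and moves on. Your proposal is a reasonable outline of how the statement specializes from Kato's general explicit reciprocity law for elliptic units, and is entirely consistent with the paper's attribution. One minor point: the identity $L(\overline{\boldsymbol{\phi}}\chi,1)=L(\boldsymbol{\phi}\chi^{-1},1)$ follows most cleanly from the observation that $(\overline{\boldsymbol{\phi}}\chi)^c=\boldsymbol{\phi}\chi^{-1}$ (using $\overline{\boldsymbol{\phi}}=\boldsymbol{\phi}^c$ and $\chi^c=\chi^{-1}$), so that complex conjugation merely permutes the Euler factors; your formula $\boldsymbol{\phi}\cdot\boldsymbol{\phi}^c=N_{\CK/\BQ}\cdot\chi_{\rm cyc}^{-1}$ conflates Hecke-character and $p$-adic-Galois normalizations and is not needed for this step.
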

\begin{remark}
The above also holds for an inert $p$ if we replace $L({\boldsymbol{\phi}}\chi^{-1}, 1)$ by the imprimitive 
$L$-value $L_p({\boldsymbol{\phi}}\chi^{-1}, 1)$.
\end{remark}

\subsection{$p$-adic $L$-function}\label{ss:pL}
\subsubsection{Elliptic units and local sign decomposition} 
Let the setting be as in \S\ref{ss:ell}.

For the $p$-adic Hecke character $\boldsymbol{\phi}$, 
let $\mathbb{T}_{\boldsymbol \phi}$ denote the $G_{\CK}$-representation 
$T_{\boldsymbol{\phi}}^{\otimes -1}(1)\otimes_{\CO_K} 
\Lambda^{\iota}$ over the anticyclotomic Iwasawa algebra 
$\Lambda$,
where $\Lambda^{\iota}$ denotes the $\Lambda$-free module of rank one with basis $1\in \Lambda$
 on which $G_{\CK}$ acts by $ga=[g^{-1}]a$ for $g\in G_{\CK}$ and $a \in \Lambda.$
Then we have a canonical $\Lambda$-module isomorphism
$\varprojlim_m H^1(K_{m}, T_{\boldsymbol{\phi}}^{\otimes -1}(1))\cong H^1(K, \mathbb{T}_{\boldsymbol \phi}),$
by which we identify these modules. 

Note that $\mathbb{T}_{\boldsymbol \phi}$ is a conjugate symplectic self-dual $\Lambda$-representation of $G_K$.

By the analogue of Rubin's conjecture  as in~Theorems~\ref{thm, rubin decomposition} and \ref{thm, rubin decomposition ramified}, if either $p\ge 5$ or $p=3$ and ${\boldsymbol \phi} \nequiv 1 \mod{\fm_{\CO_L}}$ on $G_K$, then we have
$$
H^1(K,\BT_{\boldsymbol \phi})=H^1_+(K,\BT_{\boldsymbol \phi}) \oplus H^1_{-}(K,\BT_{\boldsymbol \phi}),
$$
and  if $p=3$ and ${\boldsymbol \phi} \equiv 1 \mod{\fm_{\CO_L}}$ on $G_K$, similarly defining
 \[
H^1_\pm(K,\mathbb{T}_{\boldsymbol \phi})=\{x \in H^1(K, \mathbb{T}_{\boldsymbol \phi})\;|\; \exp^*_{\chi}(x)=0 \; \text{for all $\chi \in \Xi_{{\boldsymbol\phi }|_{G_{K}},n}^\mp$}\},
\]
we have an analogous decomposition after tensoring with $\BQ_p$.

\begin{prop}\label{prop, zeta sign}
We have
$$\loc_{\fp}(\mathfrak{z}_{{\boldsymbol{\phi}}}(\gamma_A)) \in H^1_{\boldsymbol{\varepsilon}^{(p)}(\boldsymbol{\phi})}(K, \BT_{\boldsymbol \phi}^{}) \subset H^1(K, \BT_{\boldsymbol \phi}^{}),$$ 
where  
\[
{\varepsilon}^{(p)}(\boldsymbol{\phi}):=
\frac{{\varepsilon}(\boldsymbol{\phi})}{\varepsilon_p(\mathrm{Ind}_{K/\BQ_p}(\boldsymbol{\phi}_p))}.
\]
\end{prop}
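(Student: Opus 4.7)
The plan is to use the characterization of $H^1_\pm(K,\BT_{\boldsymbol\phi})$ furnished by Theorems \ref{thm, rubin decomposition} and \ref{thm, rubin decomposition ramified} via the vanishing of the dual exponential on the dense set of finite-order anticyclotomic characters, together with the explicit reciprocity law of Theorem \ref{thm, erl} and the global–local epsilon compatibility of Proposition \ref{lem, eps global}. The functional equation of the Hecke $L$-function then converts the dichotomy of global signs into the sign condition cutting out $H^1_{\varepsilon^{(p)}(\boldsymbol\phi)}(K,\BT_{\boldsymbol\phi})$.

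First, I would translate the membership $\loc_\fp(\mathfrak{z}_{\boldsymbol\phi}(\gamma_A))\in H^1_{\varepsilon^{(p)}(\boldsymbol\phi)}(K,\BT_{\boldsymbol\phi})$ into a vanishing statement: it is equivalent to showing
$$\exp^*_\chi(\loc_\fp(\mathfrak{z}_{\boldsymbol\phi}(\gamma_A)))=0\quad\text{for every finite order $\chi$ with }\hat{\varepsilon}_p(\mathrm{Ind}_{K/\BQ_p}(\boldsymbol\phi\chi^{-1}))=-\varepsilon^{(p)}(\boldsymbol\phi).$$
(In the exceptional $p=3$ case one only needs this vanishing on a Zariski dense subset by Corollary \ref{cor, decomp by using Z}; this is automatic since infinitely many such $\chi$ arise of every sufficiently large conductor by Proposition \ref{lem, eps global}.iv.)

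Next, I would compare the local epsilon partition with the global one. Since $\mathrm{Ind}_{K/\BQ_p}(\boldsymbol\phi\chi^{-1})$ has Hodge–Tate weights $(1,0)$ for every finite order $\chi$, Corollary \ref{cor, dR ssd two} gives $\Gamma(\mathrm{Ind}_{K/\BQ_p}(\boldsymbol\phi\chi^{-1}))=1$, so $\hat{\varepsilon}_p=\varepsilon_p$ throughout; Proposition \ref{lem, eps global}.i then rearranges to
$$\varepsilon(\boldsymbol\phi\chi^{-1})=\varepsilon^{(p)}(\boldsymbol\phi)\cdot\hat{\varepsilon}_p(\mathrm{Ind}_{K/\BQ_p}(\boldsymbol\phi\chi^{-1})).$$
Hence the condition $\hat{\varepsilon}_p(\mathrm{Ind}_{K/\BQ_p}(\boldsymbol\phi\chi^{-1}))=-\varepsilon^{(p)}(\boldsymbol\phi)$ characterizing the opposite signed submodule becomes exactly $\varepsilon(\boldsymbol\phi\chi^{-1})=-1$, forcing vanishing of the central $L$-value $L(\boldsymbol\phi\chi^{-1},1)$ via the functional equation.

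The explicit reciprocity law (Theorem \ref{thm, erl}) provides the interpolation
$$\sum_{\sigma\in\Gal(K_n/K)}\chi(\sigma)\exp^*_{\omega_A,n}(\mathfrak{z}_{\boldsymbol\phi}(\gamma_A)^\sigma)=\frac{L(\boldsymbol\phi\chi^{-1},1)}{\Omega_\infty},$$
and the left-hand side is, up to the fixed basis $\omega_A$, precisely $\exp^*_\chi(\loc_\fp(\mathfrak{z}_{\boldsymbol\phi}(\gamma_A)))$ after identifying cyclotomic Iwasawa cohomology with its anticyclotomic projection under Shapiro's lemma. Combining these three steps, for every $\chi$ of sign $-\varepsilon^{(p)}(\boldsymbol\phi)$ the dual exponential of $\loc_\fp(\mathfrak{z}_{\boldsymbol\phi}(\gamma_A))$ vanishes, which by the characterization above places the class in $H^1_{\varepsilon^{(p)}(\boldsymbol\phi)}(K,\BT_{\boldsymbol\phi})$.

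The main obstacle will be a careful bookkeeping of the trivial and low-conductor characters (where the interpolation formula of Theorem \ref{thm, erl} may be adjusted by auxiliary Euler factors arising from the ideal $\fa$ and from the $\fg$-imprimitivity) and the verification that the compatibility $\varepsilon(\boldsymbol\phi\chi^{-1})=\varepsilon^{(p)}(\boldsymbol\phi)\cdot\hat{\varepsilon}_p(\mathrm{Ind}_{K/\BQ_p}(\boldsymbol\phi\chi^{-1}))$ genuinely holds for the \emph{anticyclotomic} twists (which uses that $\chi$ is unramified at all primes outside $\fp$ and that anticyclotomic characters are trivial on $\BQ_p^\times$, ensuring the archimedean and tame local epsilon factors are unchanged under the twist); granting these, the argument reduces to plugging the vanishing of $L(\boldsymbol\phi\chi^{-1},1)$ into the Coates–Wiles-type formula.
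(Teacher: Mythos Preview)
Your proposal is correct and follows essentially the same route as the paper: use Proposition~\ref{lem, eps global}~i) to convert the local sign condition $\hat\varepsilon_p(\mathrm{Ind}_{K/\BQ_p}(\boldsymbol\phi\chi^{-1}))=-\varepsilon^{(p)}(\boldsymbol\phi)$ into the global condition $\varepsilon(\boldsymbol\phi\chi^{-1})=-1$, invoke the functional equation to get $L(\boldsymbol\phi\chi^{-1},1)=0$, and then apply Theorem~\ref{thm, erl} together with the characterization in Theorem~\ref{thm, rubin decomposition ramified}~i) (and Lemma~\ref{lem, int} for integrality). Your explicit remark that $\Gamma(\mathrm{Ind}_{K/\BQ_p}(\boldsymbol\phi\chi^{-1}))=1$ (so $\hat\varepsilon_p=\varepsilon_p$) and your handling of the exceptional $p=3$ case via Corollary~\ref{cor, decomp by using Z} are details the paper leaves implicit, but the argument is otherwise the same.
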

\begin{proof}
 If 
$\boldsymbol{\varepsilon}(\boldsymbol{\phi}\chi^{-1})=-1$, then 
$$\exp^*_\chi(\mathfrak{z}_{{\boldsymbol{\phi}}}(\gamma_A))=0$$ by Theorem~\ref{thm, erl}. In the following we relate this global epsilon constant to the local epsilon constant at $p$. 

For a finite order anticyclotomic character $\chi$ over $\CK$, we have 
\[
\frac{{\varepsilon}(\boldsymbol{\phi}\chi^{-1})}{{\varepsilon}(\boldsymbol{\phi})}=
\frac{\varepsilon_p(\mathrm{Ind}_{K/\BQ_p}(\boldsymbol{\phi}\chi^{-1})}{\varepsilon_p(\mathrm{Ind}_{K/\BQ_p}({\boldsymbol{\phi}}))}
\]
by Lemma \ref{lem, eps global} i). 
So it follows that 
 \[ 
 {\varepsilon}_p(\mathrm{Ind}_{K/\BQ_p}(\boldsymbol{\phi}\chi^{-1}))=
{\varepsilon}_p(\mathrm{Ind}_{K/\BQ_p}(\boldsymbol{\phi}))
\cdot
\frac{{\varepsilon}(\boldsymbol{\phi}\chi^{-1})}{{\varepsilon}(\boldsymbol{\phi})}.\] 
 Therefore, the condition $\boldsymbol{\varepsilon}(\boldsymbol{\phi}\chi^{-1})=-1$ is equivalent to 
the condition that \[ 
-{\varepsilon}_p(\mathrm{Ind}_{K/\BQ_p}(\boldsymbol{\phi}\chi^{-1}))=\frac{{\varepsilon}_p(\mathrm{Ind}_{K/\BQ_p}(\boldsymbol{\phi}))}{\boldsymbol{\varepsilon}(\boldsymbol{\phi})}=:\boldsymbol{\varepsilon}^{(p)}(\boldsymbol{\phi}). 
\]
So the assertion follows from Theorem~\ref{thm, rubin decomposition ramified} i) and Lemma~\ref{lem, int}. 
\end{proof}

\begin{remark}\noindent
\begin{itemize}
 \item[i)] If the conductor of ${\boldsymbol \phi}$ at $\fp$ equals one, then 
 $$
\varepsilon^{(p)}({\boldsymbol \phi})=\omega_{K/\BQ_p}(-2){\varepsilon}(\boldsymbol{\phi})
=\left(\frac{-2}{p}\right){\varepsilon}(\boldsymbol{\phi}) 
$$
by Proposition \ref{prop, ramified epsilon} i).
\item[ii)]Note that 
$\boldsymbol{\varepsilon}^{(p)}(\boldsymbol{\phi})=-\prod_{\ell\nmid p}\varepsilon_\ell(\mathrm{Ind}_{K/\BQ_p}(\boldsymbol{\phi}))$. 
 Hence, Proposition \ref{prop, zeta sign} is compatible with
 Kato's global epsilon conjecture for the anticyclotomic $\BZ_p$-deformation of ${\boldsymbol \phi}$, which predicts the functional equation  
 \cite[Conj.~4.4]{NaKato} for zeta elements. 
\end{itemize}
\end{remark}

\subsubsection{$p$-adic $L$-function}
Let $\varepsilon$ denote the sign of $\boldsymbol{\varepsilon}^{(p)}(\boldsymbol{\phi})$. 

In light of Proposition \ref{prop, zeta sign} we are led to the following bounded $p$-adic $L$-function at primes $p$ of additive reduction. 
\begin{defn}\label{def, pL}
If either $p\ge 5$ or $p=3$ and ${\boldsymbol \phi} \nequiv 1 \mod{\fm_{\CO_L}}$ on $G_K$ (Case I), then let  $v_\varepsilon$ be a basis of the $\Lambda$-module $H^1_{\varepsilon}(K, \mathbb{T}_{\boldsymbol \phi}^{})$. 
 In the excluded case that $p=3$ and ${\boldsymbol \phi} \equiv 1 \mod{\fm_{\CO_L}}$ on $G_K$ (Case II),
let $v_\varepsilon$ be a basis of the $\Lambda\otimes_{\BZ_p}\BQ_p $-module $H^1_{\varepsilon}(K, \mathbb{T}_{\boldsymbol \phi}^{})\otimes_{\BZ_p} \BQ_p$. 
Define a $p$-adic $L$-function 
$$\mathscr{L}_{p,v_\varepsilon}(A) \in 
\begin{cases}
 \Lambda \qquad &(\text{Case I})\\
\; \Lambda\otimes_{\BZ_p}\BQ_p  \qquad &(\text{Case II}).
\end{cases}
$$ 
by 
\[
\loc_{\fp}(\mathfrak{z}_{{\boldsymbol{\phi}}}(\gamma_A))=\mathscr{L}_{p,v_\varepsilon}(A)\cdot v_{\varepsilon}. 
\]
\end{defn}
Note that the $p$-adic $L$-function $\mathscr{L}_{p,v_\varepsilon}(A)$ also depends on the choice of an ideal $\mathfrak{f}$ as in Theorem~\ref{thm, erl}, which we suppress in the notation for simplicity. 
For a character $\chi \in \Xi_{\CK_{\infty}^{\rm ac}, \boldsymbol{\phi}}^{+}$, we have 
 \[\exp^*_\chi(v_\varepsilon) \in D^0_{\mathrm{dR}}(K, V_{\boldsymbol \phi}^{\otimes -1}(1)(\chi^{}))\subseteq K_{\infty}\otimes_KK_{\chi}\omega_A,\]
  and denote by $\delta^{\omega_A}_{v_\varepsilon}(\chi)$
   its image by the projection 
  \[K_{\infty}\otimes_K K_{\chi}\omega_A \rightarrow \overline{\BQ}_p,\quad  
  a\otimes b \omega_A\mapsto ab.\]
 By Corollary~\ref{cor, nv dual exp}, we have $\delta^{\omega_A}_{v_\varepsilon}(\chi)\not=0$. 
 
 An interpolation property of $\mathscr{L}_{p,v_\varepsilon}(A)$ is given by the following. 
\begin{thm}\label{thm, ram pL}
Let $A$ be a CM elliptic curve defined over $\BQ$ and $\CK$ the CM field. 
Let $p$ be an odd prime ramified in $\CK$ and ${\boldsymbol \phi}$ the associated $p$-adic character of $G_\CK$. For a basis $v_\varepsilon$ of the signed submodule $H^1_{\varepsilon}(K, \mathbb{T}_{\boldsymbol \phi})$ or  $H^1_{\varepsilon}(K, \mathbb{T}_{\boldsymbol \phi})\otimes_{\BZ_p}\BQ_p$  as in Definition~\ref{def, pL}, let $\mathscr{L}_{p,v_\varepsilon}(A)$ be the associated anticyclotomic $p$-adic $L$-function. 
Then for any $\chi \in \Xi_{\CK_{\infty}^{\rm ac}, \boldsymbol{\phi}}^{+}$ 
we have
 \[
 \chi^{-1}(\mathscr{L}_{p,v_\varepsilon}(A))=
 \frac{1}{\delta_{v_\varepsilon}^{\omega_A}(\chi)}\cdot \frac{L({\phi}\chi^{-1}, 1)}{\Omega_\infty}.  
 \]
  \end{thm}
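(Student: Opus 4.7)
The plan is to read off the interpolation formula directly from the defining equation of $\mathscr{L}_{p,v_\varepsilon}(A)$ by applying the dual exponential map twisted by $\chi$, and then invoking Kato's explicit reciprocity law (Theorem~\ref{thm, erl}) on one side of the resulting equality. The legitimacy of this procedure rests on: (a) the fact that $\loc_{\fp}(\mathfrak{z}_{{\boldsymbol\phi}}(\gamma_A))$ indeed lies in the signed submodule $H^1_{\varepsilon}(K, \BT_{\boldsymbol\phi})$ (respectively its rationalization in Case II), which is Proposition~\ref{prop, zeta sign}; (b) the fact that the latter is free of rank one over $\Lambda$ (respectively $\Lambda \otimes_{\BZ_p}\BQ_p$), guaranteed by Theorem~\ref{thm, Rubin's conjecture ram}; and (c) the non-vanishing of $\delta_{v_\varepsilon}^{\omega_A}(\chi)$ for $\chi \in \Xi^{+}_{\CK_\infty^{\rm ac}, \boldsymbol\phi}$, which is Corollary~\ref{cor, nv dual exp}. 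Together (a) and (b) ensure that $\mathscr{L}_{p,v_\varepsilon}(A)$ is well defined by
\[
\loc_{\fp}(\mathfrak{z}_{{\boldsymbol\phi}}(\gamma_A)) \;=\; \mathscr{L}_{p,v_\varepsilon}(A)\cdot v_\varepsilon,
\]
and (c) allows division by $\delta_{v_\varepsilon}^{\omega_A}(\chi)$ in the final formula.

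Concretely, I would fix $\chi \in \Xi^{+}_{\CK_\infty^{\rm ac}, \boldsymbol\phi}$ of conductor $p^n$, and apply $\exp^*_\chi$ to both sides of the defining equation above. On the right, since $\mathscr{L}_{p,v_\varepsilon}(A) \in \Lambda$ (or $\Lambda \otimes \BQ_p$) acts via the specialization $[\gamma] \mapsto \chi^{-1}(\gamma)$, one obtains
\[
\exp^*_\chi\bigl(\mathscr{L}_{p,v_\varepsilon}(A)\cdot v_\varepsilon\bigr) \;=\; \chi^{-1}\bigl(\mathscr{L}_{p,v_\varepsilon}(A)\bigr)\cdot \exp^*_\chi(v_\varepsilon) \;=\; \chi^{-1}\bigl(\mathscr{L}_{p,v_\varepsilon}(A)\bigr)\cdot \delta_{v_\varepsilon}^{\omega_A}(\chi)\cdot \omega_A,
\]
by the very definition of $\delta_{v_\varepsilon}^{\omega_A}(\chi)$. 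On the left, unwinding the definition of $\exp^*_\chi$ via Shapiro's lemma reduces it to the twisted sum
\[
\sum_{\sigma \in \Gal(K_n/K)}\chi(\sigma)\exp^*_{\omega_A, n}\bigl(\mathfrak{z}_{\boldsymbol\phi}(\gamma_A)^{\sigma}\bigr),
\]
which by Theorem~\ref{thm, erl} equals $L(\overline{\boldsymbol\phi}\chi,1)/\Omega_\infty = L(\boldsymbol\phi\chi^{-1},1)/\Omega_\infty$ (viewed as an element of $\overline{\BQ}_p$ via $\iota_p$ and $\iota_\infty$).  Matching the two sides and dividing by the non-zero quantity $\delta_{v_\varepsilon}^{\omega_A}(\chi)$ yields the announced interpolation formula.

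The truly substantive inputs have already been installed upstream: Theorem~\ref{thm, erl} (Kato's explicit reciprocity for elliptic units), the ramified Rubin-type conjecture (Theorem~\ref{thm, Rubin's conjecture ram}), and its consequence Corollary~\ref{cor, nv dual exp}.  Thus the remaining work is essentially bookkeeping: one must carefully check that the composite $\exp^*_\chi$ as defined in the Rubin-type setup (Shapiro's lemma, followed by the twisted sum $x \mapsto \sum_\sigma x^\sigma \chi(\sigma)$, followed by the dual exponential) matches the normalization used in Theorem~\ref{thm, erl}, so that the factor $\omega_A$ cancels from both sides.  The main obstacle, therefore, is not conceptual but notational: reconciling the Iwasawa-theoretic normalization of $\exp^*_\chi$ underlying the signed decomposition with Kato's normalization governing Theorem~\ref{thm, erl}, and in Case II verifying that every identity passes through tensoring with $\BQ_p$ without loss.
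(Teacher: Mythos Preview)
Your proposal is correct and matches the paper's own proof essentially line for line: apply $\exp^*_\chi$ to both sides of the defining equation $\loc_{\fp}(\mathfrak{z}_{\boldsymbol\phi}(\gamma_A)) = \mathscr{L}_{p,v_\varepsilon}(A)\cdot v_\varepsilon$, invoke Theorem~\ref{thm, erl} on the left, and divide by $\delta_{v_\varepsilon}^{\omega_A}(\chi)$. The paper packages the compatibility check you mention as a single commutative diagram relating $\exp^*_\chi$ to the specialization map $\chi^{-1}$ followed by the dual exponential, but this is exactly the bookkeeping you describe.
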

 \begin{proof}
 This is a simple consequence of Theorem~\ref{thm, erl}, 
 noting the commutative diagram
 \[
 \xymatrix{
 H^1(K, \mathbb{T}_{\boldsymbol \phi}) \ar[r]^-{\chi^{-1}} \ar[dr]_-{\exp_{\chi^{}}^*}& H^1(K, T_{\boldsymbol \phi}^{\otimes -1}(1)(\chi^{})) = H^1(K, T_{{\boldsymbol \phi}\chi^{-1}}^{\otimes -1}(1)))\ar[d]^{\exp^*} \\
 & D_{\mathrm{dR}}(K, V_{\boldsymbol \phi}^{\otimes -1}(1)(\chi^{})), 
 }
 \]
 where the horizontal arrow is due to $ H^1(K, \mathbb{T}_{\boldsymbol \phi})\otimes_{\Lambda,\chi^{-1}}
 K_\chi=H^1(K, T_{\varphi}^{\otimes -1}(1)(\chi^{}))$. 
 \end{proof}
 \begin{remark} 
 If the conductor of the Weil--Deligne representation associated to ${\boldsymbol \phi}|_{G_{K}}$ equals one, then the interpolation region for 
$\mathscr{L}_{p,v_{\varepsilon}}(A)$ may be explicitly stated as consisting of characters 
 $\chi \in \Xi^{\varepsilon}_{\phi|G_{K}}$, 
 where the local and global $p$-power order anticyclotomic characters are identified. 
  \end{remark}

\begin{bibdiv}
\begin{biblist}

\bibitem{BDP1} M. Bertolini, H. Darmon and K. Prasanna, \emph{Generalized Heegner cycles and $p$-adic Rankin L-series},  Duke Math. J. 162 (2013), no. 6, 1033--1148.

\bibitem{Ber} L. Berger, 
{\it Repr\'esentations $p$-adiques et \'equations diff\'erentielles}, Invent. Math. 148 (2002), no. 2, 219--284. 

\bibitem{Ber05} L. Berger, {\it Repr\'esentations de de Rham et normes universelles}, 
Bull. Soc. Math. France 133 (2005), no. 4, 601--618.

\bibitem{BP} R. Beuzart-Plessis, {\it Relative trace formulae and the Gan--Gross--Prasad conjectures}, EMS Press, Berlin, 2023, 1712--1743. 

\bibitem{BK} S. Bloch and K. Kato, {\it $L$-functions and Tamagawa numbers of motives}, Progr. Math., 86
Birkh\"{a}user Boston, Inc., Boston, MA, 1990, 333--400.

\bib{BIP}{article}
{
 author={G. B\"{o}ckle,}, 
  author={A. Iyengar},
   author={V. Pa\v{s}k\=unas},
title={On local Galois deformation rings},  
   journal = {Forum Math. Pi},
    volume = {11},
      year = {2023},
     pages = {Paper No. e30, 54}
}

\bibitem{BIP2}
G. B\"{o}ckle, A. Iyengar, and V. Pa\v{s}k\=unas, 
\emph{Zariski density of crystalline points}, Proc. Natl. Acad. Sci. USA, 
\textbf{120} (13), Paper no. e2221042120 (2023).

\bibitem{BBL1} A. Burungale, K. B\"uy\"ukboduk and A. Lei, \textit{Anticyclotomic Iwasawa theory of abelian varieties of $\GL_2$-type at non-ordinary primes}, Adv. Math. 439 (2024), Paper No. 109465, 63 pp. 

\bibitem{BBL2} A. Burungale, K. B\"uy\"ukboduk and A. Lei, \textit{Anticyclotomic Iwasawa theory of abelian varieties of $\GL_2$-type at non-ordinary primes II}, preprint, arXiv:2310.06813.

\bibitem{BHKO} A. Burungale, W. He, S. Kobayashi and K. Ota, {\it Hecke $L$-values, definite Shimura sets and mod $\ell$ non-vanishing}, preprint, arXiv:2408.13932. 

\bib{BJ}{article}
{ AUTHOR = {G. B\"ockle},
 AUTHOR = {A-K. Juschka},
     TITLE = {Irreducibility of versal deformation rings in the
              {$(p,p)$}-case for 2-dimensional representations},
   JOURNAL = {J. Algebra},
    VOLUME = {444},
      YEAR = {2015},
     PAGES = {81--123},
}

\bib{BM}{article}{AUTHOR = {C. Breuil and A. M\'ezard},
     TITLE = {Multiplicit\'es modulaires et repr\'esentations de {${\rm
              GL}_2({\bf Z}_p)$} et de {${\rm Gal}(\overline{\bf Q}_p/{\bf
              Q}_p)$} en {$l=p$}},
      NOTE = {With an appendix by Guy Henniart},
   JOURNAL = {Duke Math. J.},
    VOLUME = {115},
      YEAR = {2002},
    NUMBER = {2},
     PAGES = {205--310},
}

\bibitem{BKNOa} A. Burungale, S. Kobayashi, K. Nakamura and K. Ota, {\it Anticyclotomic CM Iwasawa theory at ramified primes}, preprint.

\bib{BKO21}{article} {
    AUTHOR = {A. Burungale},
    AUTHOR = {S. Kobayashi},
    AUTHOR = {K. Ota},
     TITLE = {Rubin's conjecture on local units in the anticyclotomic tower
              at inert primes},
   JOURNAL = {Ann. of Math. (2)},
    VOLUME = {194},
      YEAR = {2021},
    NUMBER = {3},
     PAGES = {943--966},
}

\bibitem{BKO24} A. Burungale, S. Kobayashi,  K. Ota, 
 {\it $p$-adic  $L$-functions and rational points on CM elliptic curves at inert primes}, 
J. Inst. Math. Jussieu 23 (2024), no. 3, 1417--1460.

\bibitem{BKOe} A. Burungale, S. Kobayashi and K. Ota, \textit{On the Tate--Shafarevich groups of CM elliptic curves over anticyclotomic $\BZ_p$-extensions at inert primes}, proceedings of the Bertolinifest, to appear.  

\bibitem{BKOd} A. Burungale, S. Kobayashi and K. Ota, 
\textit{The $p$-adic valuation of local resolvents, generalized Gauss sums and anticyclotomic Hecke $L$-values of imaginary quadratic fields at inert primes}, Amer. J. Math., to appear. 

\bibitem{BKOY} A. Burungale, S. Kobayashi,  K. Ota, S. Yasuda, 
 {\it Kato's epsilon conjecture for anticyclotomic CM deformations at inert primes},
J. Number Theory 270 (2025), 17--67.

\bibitem{BST} A. Burungale, C. Skinner and Y. Tian, \emph{The Birch and Swinnerton-Dyer conjecture: a brief survey}, Proc. Sympos. Pure Math., 104
American Mathematical Society, Providence, RI, 2021, 11--29. 

\bibitem{BST1} A. Burungale, C. Skinner, Y. Tian and X. Wan, \emph{Zeta elements for elliptic curves and applications}, preprint, arXiv:2409.01350.

\bib{Kazim}{article}{
    AUTHOR = {K. B\"uy\"ukboduk},
     TITLE = {On the anticyclotomic {I}wasawa theory of {CM} forms at
              supersingular primes},
   JOURNAL = {Rev. Mat. Iberoam.},
    VOLUME = {31},
      YEAR = {2015},
    NUMBER = {1},
     PAGES = {109--126},
}

\bibitem{CC}
F. Cherbonnier and P. Colmez, 
\emph{Th\'eorie d'Iwasawa des repr\'esentations $p$-adiques d'un corps local}, J. Amer. Math. Soc. 12 (1999), 241-268.

\bibitem{Co} P. Colmez, \emph{La s\'erie principale unitaire de $\GL_2(\BQ_p)$}, Ast\'erisque(2010), no.~330, 213--262.

\bibitem{Colmira}P. Colmez, {\it $(\varphi, \Gamma)$-modules et repr\'esentations du mirabolique de  $\mathrm{GL}_2(\BQ_p)$}, 
Ast\'erisque (2010), no. 330, 61--153.

\bibitem{ColpLL}P. Colmez, 
 {\it Repr\'esentations de  $\mathrm{GL}_2(\BQ_p)$  et  $(\varphi, \Gamma)$-modules}, 
Ast\'erisque (2010), no. 330, 281--509.

\bibitem{BCon} B. Conrad, 
{\it Lifting global representations with local properties}, preprint,  
https://math.stanford.edu/~conrad/papers/locchar.pdf.

\bibitem{CoWa} P. Colmez and S. Wang, {\it Une factorisation de la cohomologie compl\'et\'ee et du syst\`eme de Beilinson-Kato}, preprint, arXiv:2104.09200. 

\bibitem{CW} J. Coates and A. Wiles, {\it On the conjecture of Birch and Swinnerton-Dyer}, 
Invent. Math. 39 (1977), no. 3, 223--251.

\bibitem{Dee}J. Dee, 
{\it $\Phi$-$\Gamma$ modules for families of Galois representations}, 
J. Algebra 235 (2001), no.2, 636--664.

\bib{Del}{article}{
    AUTHOR = {Deligne, P.},
     TITLE = {Les constantes des \'equations fonctionnelles des fonctions
              {$L$}},
 BOOKTITLE = {Modular functions of one variable, {II} ({P}roc. {I}nternat.
              {S}ummer {S}chool, {U}niv. {A}ntwerp, {A}ntwerp, 1972)},
    SERIES = {Lecture Notes in Math.},
    VOLUME = {Vol. 349},
     PAGES = {501--597},
 PUBLISHER = {Springer, Berlin-New York},
      YEAR = {1973},
}

\bibitem{DN}C. Diem and N. Naumann, 
\textit{
On the structure of Weil restrictions of abelian varieties},
J. Ramanujan Math. Soc.18 (2003), no. 2, 153--174.

\bibitem{DD} T. Dockchitser and V. Dockchitser, {\it On the Birch-Swinnerton-Dyer quotients modulo squares}, 
Ann. of Math. (2) 172 (2010), no. 1, 567--596.

\bibitem{Fo} J. -M. Fontaine, {\it Sur certains types de repr\'esentations $p$-adiques du groupe de Galois d'un corps local : construction d'un anneau de Barsotti-Tate}, Ann. of Math. (2) 115 (1982), no. 3, 529--577.

\bib{FQ}{article} {
    AUTHOR = {A. Fr\"ohlich},
    AUTHOR = {J. Queyrut}
     TITLE = {On the functional equation of the {A}rtin {$L$}-function for  characters of real representations},
   JOURNAL = {Invent. Math.},
    VOLUME = {20},
      YEAR = {1973},
     PAGES = {125--138.}
}

\bibitem{FK}
{T. Fukaya and K. Kato}, {\it A formulation of conjectures on $p$-adic zeta functions in noncommutative Iwasawa theory}, Amer. Math. Soc. Transl. Ser. 2, 219
American Mathematical Society, Providence, RI, 2006, 1--85.

\bib{GGP}{article} {
    AUTHOR = {W. T. Gan. B. H. Gross and D. Prasad},
     TITLE = {Symplectic local root numbers, central critical {$L$} values,
              and restriction problems in the representation theory of
              classical groups},
   JOURNAL = {Ast\'erisque},
    NUMBER = {346},
      YEAR = {2012},
     PAGES = {1--109},
}

\bibitem{Gr83}
R. Greenberg,
\emph{On the {B}irch and {S}winnerton-{D}yer conjecture},
Invent. Math. 72 (1983), no. 2,
241--265.

\bibitem{Gr91} 
R. Greenberg, 
{\it Iwasawa theory and $p$-adic deformations of motives}, Motives (Seattle, WA, 1991), 193--223, 
Proc. Sympos. Pure Math., 55, Part 2, Amer. Math. Soc., Providence, RI, 1994.

\bibitem{Gr01} 
R. Greenberg, 
{\it Introduction to Iwasawa theory for elliptic curves}, Arithmetic algebraic geometry (Park City, UT, 1999), 
407--464, IAS/Park City Math. Ser., 9, Amer. Math. Soc., Providence, RI, 2001.

\bibitem{GZ} B. Gross and D. Zagier, \textit{Heegner points and derivatives of L-series}, 
Invent. Math. 84 (1986), no. 2, 225--320. 

\bib{H}{article}{
   author={H. Hida},
   title={On $p$-adic Hecke algebras for ${\rm GL}_2$ over totally real
   fields},
   journal={Ann. of Math. (2)},
   volume={128},
   date={1988},
   number={2},
   pages={295--384},
}

\bib{H89}{article}{
   author={H.~Hida},
   title={On nearly ordinary Hecke algebras for ${\rm GL}(2)$ over totally
   real fields},
   conference={
      title={Algebraic number theory},
   },
   book={
      series={Adv. Stud. Pure Math.},
      volume={17},
      publisher={Academic Press, Boston, MA},
   },
   date={1989},
   pages={139--169},
}

\bib{H06}{book}{
   author={H. Hida},
   title={Hilbert modular forms and Iwasawa theory},
   series={Oxford Mathematical Monographs},
   publisher={The Clarendon Press, Oxford University Press, Oxford},
   date={2006},
   pages={xiv+402},
}

\bibitem{HT} Y. Hu and F. Tan, \emph{The Breuil--M\'ezard conjecture for non-scalar split residual representations}, Ann. Sci. \'Ec. Norm. Sup\'er. (4) 48 (2015), no. 6, 1383--1421.

\bibitem{J}
J. N. Rodrigues Jacinto,
\textit{La conjecture {$\varepsilon$} locale de {K}ato en dimension 2},
Math. Ann. 372 (2018), no. 3-4, 1277--1334.

\bibitem{JLK} J. Johnson-Leung and G. Kings, {\it On the equivariant main conjecture for imaginary quadratic fields}, J. reine angew. Math. 653 (2011), 75--114. 

\bib{JN}{article}{
   author={C. Johansson},
   author={J. Newton},
   title={Parallel weight 2 points on Hilbert modular eigenvarieties and the
   parity conjecture},
   journal={Forum Math. Sigma},
   volume={7},
   date={2019},
   pages={Paper No. e27, 36},
}

\bibitem{Ka93} K. Kato,
\textit{Lectures on the approach to {I}wasawa theory for
              {H}asse--{W}eil {$L$}-functions via {$B_{\rm dR}$}. {II}},
              preprint.

\bibitem{Kato} K. Kato, {\it $p$-adic Hodge theory and values of zeta functions of modular forms},  Cohomologies p-adiques et applications arithm\'etiques. III
Ast\'risque No. 295(2004), ix, 117-290. 

\bibitem{Kz} N. Katz, {\it $p$-adic interpolation of real analytic Eisenstein series}, Ann. of Math. (2) 104 (1976), no. 3, 459--571.

\bibitem{Ki1} M. Kisin, \emph{Potentially semi-stable deformation rings}, J. Amer. Math. Soc. 21 (2008), 513--546.

\bibitem{Ki2} M. Kisin, \emph{The Fontaine-Mazur conjecture for $\GL_2$},  J. Amer. Math. Soc. 22 (2009), 641--690.

\bibitem{KM}F. F. Knudsen,  D. Mumford,
\textit{The projectivity of the moduli space of stable curves. {I}.
              {P}reliminaries on ``det'' and ``{D}iv''},
              Math. Scand. 39 (1976), no. 1, 19--55.

 \bibitem{KMR} Z. Klagbrun, B. Mazur and K. Rubin, \emph{Disparity in Selmer ranks of quadratic twists of elliptic curves},  Ann. of Math. (2) 178 (2013), no. 1, 287-320.
                      
\bibitem{Ko0} S. Kobayashi, \textit{Iwasawa theory for elliptic curves at supersingular primes}, Invent. Math. 152 (2003), no. 1, 1--36.

\bibitem{LZZ} Y. Liu, S-W. Zhang and W. Zhang, \textit{A  $p$-adic Waldspurger formula}, Duke Math. J. 167 (2018), no. 4, 743--833.

\bib{M}{article}{
   author={B. Mazur},
   title={An introduction to the deformation theory of Galois
   representations},
   conference={
      title={Modular forms and Fermat's last theorem},
      address={Boston, MA},
      date={1995},
   },
   book={
      publisher={Springer, New York},
   },
   date={1997},
   pages={243--311},
}

\bib{MR}{article}{
   author={B. Mazur},
   author={K. Rubin},
   title={Finding large Selmer rank via an arithmetic theory of local
   constants},
   journal={Ann. of Math. (2)},
   volume={166},
   date={2007},
   number={2},
   pages={579--612},
}

\bib{MR1}{article}{
   author={Mazur, B.},
   author={Rubin, K.},
   title={Growth of Selmer rank in nonabelian extensions of number fields},
   journal={Duke Math. J.},
   volume={143},
   date={2008},
   number={3},
   pages={437--461},
   issn={0012-7094},
}
\bib{MR2}{article}{
   author={B. Mazur},
   author={K. Rubin},
   title={Ranks of twists of elliptic curves and Hilbert's tenth problem},
   journal={Invent. Math.},
   volume={181},
   date={2010},
   number={3},
   pages={541--575},
   issn={0020-9910},
}

\bib{NaKato}{article}{
   author={K. Nakamura},
   title={Local $\varepsilon$-isomorphisms for rank two $p$-adic
   representations of ${\rm Gal}(\overline{\mathbf Q}_p/\mathbf Q_p)$ and a
   functional equation of Kato's Euler system},
   journal={Camb. J. Math.},
   volume={5},
   date={2017},
   number={3},
   pages={281--368},
}

\bibitem{Narank1} {K. Nakamura},  {\it A generalization of Kato's local 
$\varepsilon$-conjecture for  $(\varphi, \Gamma)$-modules over the Robba ring},
Algebra Number Theory 11 (2017), no. 2, 319--404.

\bibitem{Nak1} K. Nakamura, {\it Zeta morphisms for rank two universal deformations}, Invent. Math. 234 (2023), no. 1, 171--290.

\bib{NekSC}{article}{
   author={Nekov\'{a}\v{r}, J.},
   title={Selmer complexes},
   journal={Ast\'{e}risque},
   number={310},
   date={2006},
   pages={viii+559},
}

\bibitem{NekPIIIe} {J.~Nekov\'a\v{r}}, 
{\it Erratum for ``On the parity of ranks of Selmer groups. III'' cf. Documenta Math. 12 (2007), 243--274}, 
Doc. Math. 14 (2009), 191--194.

\bib{NekMRsome}{article}{
   author={J. Nekov\'{a}\v{r}},
   title={Some consequences of a formula of Mazur and Rubin for arithmetic
   local constants},
   journal={Algebra Number Theory},
   volume={7},
   date={2013},
   number={5},
   pages={1101--1120},
}

\bib{NekMRl}{article}{
   author={J.~Nekov\'{a}\v{r}},
   title={Compatibility of arithmetic and algebraic local constants (the
   case $\ell\ne p$)},
   journal={Compos. Math.},
   volume={151},
   date={2015},
   number={9},
   pages={1626--1646},
}

\bib{NekMRtame}{article}{
   author={Nekov\'{a}\v{r}, J.},
   title={Compatibility of arithmetic and algebraic local constants, II: the
   tame abelian potentially Barsotti-Tate case},
   journal={Proc. Lond. Math. Soc. (3)},
   volume={116},
   date={2018},
   number={2},
   pages={378--427},
}

\bib{Oh}{article}{
   author={M. Ohta},
   title={On $l$-adic representations attached to automorphic forms},
   journal={Japan. J. Math. (N.S.)},
   volume={8},
   date={1982},
   number={1},
   pages={1--47},
}

\bibitem{Ols} L. Olson, {\it Points of finite order on elliptic curves with complex multiplication}, Manuscripta Math.,
14:195--205, 1974.

\bib{PX}{article}{
   author={J. Pottharst},
   author={L. Xiao},
   title={On the parity conjecture in finite-slope families},
   journal={arXiv:1410.5050v2 [math.NT]},
}

\bibitem{Pa0} V.  Pa\v{s}k\={u}nas, \emph{Admissible unitary completions of locally  $\BQ_p$-rational representations of  $\GL_2(F)$},   Represent. Theory 14 (2010), 324--354.

\bibitem{Pa} V.  Pa\v{s}k\={u}nas, \emph{On the Breuil-M\'ezard conjecture}, Duke Math. J. 164 (2015), no. 2, 297--359.

 \bibitem{PRbook} {B. Perrin-Riou},
 {\it Fonctions $L$ $p$-adiques des repr\'esentations $p$-adiques},  
 Ast\'erisque No. 229 (1995), 198 pp. 

\bibitem{PR-norm}{B. Perrin-Riou}, {\it $p$-adic representations and universal norms. I. The crystalline case}, J. Amer. Math. Soc. 13 (2000), no. 3, 533--551.

\bibitem{PR-semistable}{B. Perrin-Riou}
{\it Th\'eorie d'Iwasawa des repr\'esentations $p$-adiques semi-stables}, 
 M\'em. Soc. Math. Fr. (N.S.) No. 84 (2001), vi+111 pp. 

 \bibitem{Po} R. Pollack,
{\it On the  $p$-adic  $L$-function of a modular form at a supersingular prime}, Duke Math. J. 118 (2003), no. 3, 523--558.

\bibitem{Ro'} D. Rohrlich, \textit{On L-functions of elliptic curves and anticyclotomic towers}, 
Invent. Math. 75 (1984), no. 3, 383--408. 

\bib{Ro}{article} {
    AUTHOR = {D. Rohrlich},
     TITLE = {Galois invariance of local root numbers},
   JOURNAL = {Math. Ann.},
    VOLUME = {351},
      YEAR = {2011},
    NUMBER = {4},
     PAGES = {979--1003},
}

\bib{Ru}{article}{
   author={K. Rubin},
   title={Local units, elliptic units, Heegner points and elliptic curves},
   journal={Invent. Math.},
   volume={88},
   date={1987},
   number={2},
   pages={405--422},
}

\bibitem{Sa} T. Saito, {\it The sign of functional equation of the $L$-function of an orthogonal motive}, 
Invent. Math. 120 (1995) 119-142. 

\bib{Sh}{article}{
   author={G. Shimura},
   title={An $\ell$-adic method in the theory of automorphic forms},
   journal={the text of a lecture at the conference Automorphic functions for arithmetically defined groups, Oberwolfach, Germany, July 28-August 3},
   date={1968},
}

\bibitem{Sk} C. Skinner, \emph{A converse to a theorem of Gross, Zagier and Kolyvagin}, Ann. of Math. 191 (2020), no. 2, 329--354.

\bib{T}{article}{
   author={Tate, J.},
   title={Number theoretic background},
   conference={
      title={Automorphic forms, representations and $L$-functions},
      address={Proc. Sympos. Pure Math., Oregon State Univ., Corvallis,
      Ore.},
      date={1977},
   },
   book={
      series={Proc. Sympos. Pure Math.},
      volume={XXXIII},
      publisher={Amer. Math. Soc., Providence, RI},
   },
   isbn={0-8218-1437-0},
   date={1979},
   pages={3--26},
}

\bib{TLC}{article}{
AUTHOR = {J. Tate},
     TITLE = {Local constants},
 BOOKTITLE = {Algebraic number fields: {$L$}-functions and {G}alois
              properties ({P}roc. {S}ympos., {U}niv. {D}urham, {D}urham,
              1975)},
     PAGES = {89--131},
 PUBLISHER = {Academic Press, London-New York},
      YEAR = {1977},
}

\bibitem{Tay} R. Taylor, \emph{Galois representations}, Higher Education Press, Beijing, 2002, 449--474.
\bibitem{Tu} S.-N. Tung, \emph{On the automorphy of $2$-dimensional potentially semistable deformation rings of  $G_{\BQ_p}$},  Algebra Number Theory 15 (2021), no. 9, 2173--2194.

\bibitem{V}
O. Venjakob, 
\textit{On {K}ato's local {$\epsilon$}-isomorphism conjecture for
              rank-one {I}wasawa modules},
              Algebra Number Theory \textbf{7} (2013), no. 10, 2369--2416.
              
 \bibitem{YZ} X. Yan and X. Zhu, {\it Rubin's conjecture on local units in the anticyclotomic tower at inert primes:  $p=3$  case}, Ramanujan J. 66 (2025), no. 3, 49.
\bibitem{YZZ} X. Yuan, S.-W. Zhang and W. Zhang, \textit{The Gross--Zagier formula on Shimura curves}, 
Annals of Mathematics Studies, 184. Princeton University Press, Princeton, NJ, 2013. x+256 pp.

\bibitem{Zh} W. Zhang, {\it Periods, cycles, and  $L$-functions: a relative trace formula approach}, World Scientific Publishing Co. Pte. Ltd., Hackensack, NJ, 2018, 487--521.

\end{biblist}
\end{bibdiv}

\end{document}